\newtheorem{theo}{Theorem}[section]
\newtheorem{cor}[theo]{Corollary}
\newtheorem{prop}[theo]{Proposition}
\newtheorem{lemma}[theo]{Lemma}
\newtheorem{conj}[theo]{Conjecture}
\theoremstyle{definition}
\newtheorem{defi}[theo]{Definition}
\theoremstyle{remark}
\newtheorem{remark}[theo]{Remark}
\newtheorem{notation}[theo]{Notation}
\newcommand{\BA}{{\mathbb{A}}}
\newcommand{\BC}{{\mathbb{C}}}
\newcommand{\BD}{{\mathbb{D}}}
\newcommand{\BE}{{\mathbb{E}}}
\newcommand{\BF}{{\mathbb{F}}}
\newcommand{\BG}{{\mathbb{G}}}
\newcommand{\BM}{{\mathbb{M}}}
\newcommand{\BN}{{\mathbb{N}}}
\newcommand{\BP}{{\mathbb{P}}}
\newcommand{\BQ}{{\mathbb{Q}}}
\newcommand{\BZ}{{\mathbb{Z}}}
\newcommand{\CC}{{\mathcal{C}}}
\newcommand{\CG}{{\mathcal{G}}}
\newcommand{\CI}{{\mathcal{I}}}
\newcommand{\CL}{{\mathcal{L}}}
\newcommand{\CM}{{\mathcal{M}}}
\newcommand{\CN}{{\mathcal{N}}}
\newcommand{\CO}{{\mathcal{O}}}
\newcommand{\CR}{{\mathcal{R}}}
\newcommand{\CS}{{\mathcal{S}}}
\newcommand{\CT}{{\mathcal{T}}}
\newcommand{\CU}{{\mathcal{U}}}
\newcommand{\CV}{{\mathcal{V}}}
\newcommand{\Hom}{\mathop{\rm Hom}\nolimits}
\newcommand{\Homint}{\underline{\mathsf{Hom}}}
\newcommand{\Ext}{\mathop{\rm Ext}\nolimits}
\newcommand{\Lim}{{\rm Lim}}
\newcommand{\Colim}{{\rm Colim}}
\newcommand{\ra}{\rightarrow}
\newcommand{\lra}{\longrightarrow}
\newcommand{\rap}{\stackrel{+}{\rightarrow}}
\newcommand{\Spec}{\mathop{{\bf Spec}}\nolimits}
\newcommand{\Sch}{\mathsf{Sch}}
\newcommand{\Sm}{\mathsf{Sm}}
\newcommand{\un}{\mathbbm{1}}
\newcommand{\Gm}{\mathbb{G}_\mathrm{m}}
\newcommand{\Ga}{\mathbb{G}_\mathrm{a}}
\newcommand{\charact}{\mathop{\rm char}\nolimits}
\newcommand{\Gal}{\mathop{\rm Gal}\nolimits}
\newcommand{\SH}{\mathop{\mathbf{SH}}\nolimits}
\newcommand{\DM}{\mathop{\mathbf{DM}}\nolimits}
\newcommand{\DA}{\mathop{\mathbf{DA}}\nolimits}
\newcommand{\HI}{\mathop{\mathbf{HI}}\nolimits}
\newcommand{\MM}{\mathop{\mathbf{MM}}\nolimits}
\newcommand{\Cpl}{\mathop{\mathbf{Cpl}}\nolimits}
\newcommand{\Sh}{\mathop{\mathbf{Sh}}\nolimits}
\newcommand{\id}{{\rm id}}
\newcommand{\Shv}{\mathbf{Sh}}
\newcommand{\D}{\mathsf{D}}
\newcommand{\Pic}{{\rm Pic}}
\newcommand{\sPic}{\mathcal{P}ic}
\newcommand{\NS}{{\rm NS}}
\newcommand{\Aut}{{\rm Aut}}
\newcommand{\Nis}{{\rm Nis}}
\newcommand{\Et}{{\rm Et}}
\newcommand{\Gr}{{\rm Gr}}
\newcommand{\Res}{\mathop{\rm Res}\nolimits}
\newcommand{\Sus}{\mathop{\rm Sus}\nolimits}
\newcommand{\Ev}{\mathop{\rm Ev}\nolimits}
\newcommand{\Coker}{\mathop{\rm Coker}\nolimits}
\newcommand{\Ker}{\mathop{\rm Ker}\nolimits}
\newcommand{\Jac}{\mathop{\rm Jac}\nolimits}
\newcommand{\JG}{\mathcal{JG}}
\newcommand{\DG}{\mathcal{DG}}
\newcommand{\sNS}{\mathcal{NS}}
\newcommand{\adj}{\mathop{\rm adj}\nolimits}
\newcommand{\Cone}{{\rm Cone}}
\newcommand{\LAlb}{\mathop{\rm LAlb}\nolimits}
\newcommand{\RPic}{\mathop{\rm RPic}\nolimits}
\newcommand{\Set}{\mathop{\rm Set}\nolimits}
\newcommand{\et}{\mathrm{\acute{e}t}}
\newcommand{\eff}{\mathrm{eff}}
\newcommand{\gm}{\mathrm{gm}}
\newcommand{\op}{\mathrm{op}}
\newcommand{\coh}{\mathrm{coh}}
\newcommand{\homo}{\mathrm{hom}}
\newcommand{\red}{\mathrm{red}}
\newcommand{\sm}{\mathrm{sm}}
\newcommand{\gsm}{\mathrm{gsm}}
\newcommand{\sgsm}{\mathrm{sgsm}}
\newcommand{\Ex}{{\rm Ex}}
\newcommand{\tr}{{\rm tr}}
\newcommand{\perf}{{\rm perf}}
\newcommand{\an}{{\rm an}}
\newcommand{\psh}{{\rm psh}}
\newcommand{\sh}{{\rm sh}}
\DeclareFontFamily{U}{russian}{}
\DeclareFontShape{U}{russian}{m}{n}
        { <5><6> wncyr5
        <7><8><9> wncyr7
        <10><10.95><12><14.4><17.28><20.74><24.88> wncyr10 }{}
\DeclareSymbolFont{Russian}{U}{russian}{m}{n}
\DeclareSymbolFontAlphabet{\mathcyr}{Russian}
\let\@math@cyr\mathcyr
\renewcommand{\mathcyr}[1]{\@math@cyr{\cyracc #1}}
\newcommand{\Bei}{\mathcyr{B}}
\newcommand{\std}{{\rm std}}
\DeclareFontFamily{OMX}{MnSymbolE}{}
\DeclareSymbolFont{MnLargeSymbols}{OMX}{MnSymbolE}{m}{n}
\DeclareFontShape{OMX}{MnSymbolE}{m}{n}{
    <-6>  MnSymbolE5
   <6-7>  MnSymbolE6
   <7-8>  MnSymbolE7
   <8-9>  MnSymbolE8
   <9-10> MnSymbolE9
  <10-12> MnSymbolE10
  <12->   MnSymbolE12
}{}
\DeclareFontShape{OMX}{MnSymbolE}{b}{n}{
    <-6>  MnSymbolE-Bold5
   <6-7>  MnSymbolE-Bold6
   <7-8>  MnSymbolE-Bold7
   <8-9>  MnSymbolE-Bold8
   <9-10> MnSymbolE-Bold9
  <10-12> MnSymbolE-Bold10
  <12->   MnSymbolE-Bold12
}{}
\let\llangle\@undefined
\let\rrangle\@undefined
\DeclareMathDelimiter{\llangle}{\mathopen}%
                     {MnLargeSymbols}{'164}{MnLargeSymbols}{'164}
\DeclareMathDelimiter{\rrangle}{\mathclose}%
                     {MnLargeSymbols}{'171}{MnLargeSymbols}{'171}
\title{Triangulated categories of relative $1$-motives}
\author{Simon Pepin Lehalleur\thanks{The author thanks the University of Z\"urich, where he conducted his PhD thesis which gave rise to this paper and the Freie Universit\"at Berlin, where he finished the write-up of the work as a post-doctoral student. The author also acknowledges the support of the Einstein Foundation, through the Einstein visiting Fellowship 0419745104 ``Algebraic Entropy, Algebraic Cycles'' of Professor V. Srinivas.}}
\affil{Freie Universit\"at Berlin}
\begin{document}

\maketitle

\begin{abstract}
The category $\DA^{1}(S,\BQ)$ of relative cohomological $1$-motives is the localising subcategory of the triangulated category $\DA(S,\BQ)$ of relative Voevodsky motives with rational coefficients over a scheme $S$ which is generated by cohomological motives of curves over $S$. We construct and study a candidate for the standard motivic $t$-structure on $\DA^1(S,\BQ)$ (for $S$ noetherian, finite-dimensional and excellent). We show this $t$-structure is non-degenerate and relate its heart $\MM^1(S)$ with Deligne $1$-motives over $S$; in particular, when $S$ is regular, the category of Deligne $1$-motives embeds in $\MM^1(S)$ fully faithfully. We also study the inclusion of $\DA^1(S)$ into the larger category $\DA^{\coh}(S)$ of relative cohomological motives on $S$, and prove that its right adjoint $\omega^1$ preserves compact objects. 
\end{abstract}

\tableofcontents

\section*{Introduction}
\addcontentsline{toc}{section}{Introduction}

We now have at our disposal a mature theory of triangulated categories of motivic sheaves with rational coefficients over general base schemes. Here are some of its highlights.
\begin{itemize}
\item Given a noetherian finite-dimensional scheme $S$, there is a tensor triangulated $\BQ$-linear category $\DA(S)$.
\item There are \emph{realisation functors} from $\DA(S)$ to classical triangulated categories of coefficients: derived categories of abelian sheaves for the classical topology in the Betti setting \cite{Ayoub_Betti} and derived categories of $\ell$-adic sheaves in the $\ell$-adic setting \cite{Ayoub_Etale} \cite{CDEt}.
\item The assignement $S\mapsto \DA(S)$ has a rich functoriality leading to a ``formalism of Grothendieck operations'' \cite{Ayoub_these_1} \cite{Ayoub_these_2} (including nearby and vanishing cycles) which is compatible via realisation functors with the classical Grothendieck operations for constructible sheaves in the Betti setting and for $\ell$-adic sheaves in the $\ell$-adic setting \cite[Theoreme 3.19]{Ayoub_Betti}, \cite[Theoreme 9.7]{Ayoub_Etale}.
\item Morphisms groups in $\DA(S)$ are related to rational algebraic $K$-theory for $S$ regular \cite[Corollary 14.2.14]{Cisinski_Deglise_BluePreprint} and to Bloch's higher Chow groups when $S$ is smooth over a field \cite[Example 11.23.]{Cisinski_Deglise_BluePreprint}.
\item A natural finiteness condition leads to a subcategory $\DA_c(S)$ of ``constructible'' motivic sheaves, which is stable under Grothendieck operations and maps to the constructible derived categories of classical coefficients via realisations functors \cite[\S 8]{Ayoub_Etale} \cite[\S 15]{Cisinski_Deglise_BluePreprint}.
\item The category $\DA(S)$ can be constructed in several ways, each of which captures important aspects of the theory: motives without transfers \cite{Ayoub_Etale}, Beilinson motives $\DM_{\Bei}(S)$ \cite{Cisinski_Deglise_BluePreprint}, motives with transfers $\DM(S)$ \cite{Cisinski_Deglise_BluePreprint} (in the case $S$ is geometrically unibranch), $h$-motives $\DM_h(S)$ \cite{VHS} \cite{Cisinski_Deglise_BluePreprint} \cite{CDEt}, etc. In each of those cases, it is constructed as the homotopy category of a stable combinatorial dg-model category, hence $\DA(S)$ admits natural enhancements as a stable dg-category, a stable derivator and a stable $(\infty,1)$-category.
\item When $S$ is the spectrum of a perfect field $k$, the category $\DA(k)$ is in particular equivalent to $\DM(k)$, which gives access to Voevodsky's cancellation theorem \cite{Voevodsky_Cancellation} and to the theory of homotopy invariant sheaves with transfers \cite{RedBook} \cite{MVW}.
\end{itemize}
 
In view of those achievements, a major open question is the existence of the motivic $t$-structure on $\DA(S)$, whose heart would provide an abelian category of mixed motivic sheaves realising the conjectures of Beilinson \cite{Jannsen_t}. Here is one possible statement in terms of the $\ell$-adic realisation. 

\begin{conj}\label{conj:mot_t_struct}
Let $S$ be a noetherian finite-dimensional scheme and $\ell$ a prime invertible on $S$.
\begin{itemize}
\item The $\ell$-adic realisation functor $R_{\ell}:\DA_c(S)\ra D^b_c(S_\et,\BQ_{\ell})$ is conservative.
\item There exists a non-degenerate $t$-structure $t_{\MM}$ on $\DA_c(S)$ such that if we equip $D^b_c(S_\et,\BQ_{\ell})$ with its standard $t$-structure, the functor $R_{\ell}$ is $t$-exact.  
\end{itemize}
\end{conj}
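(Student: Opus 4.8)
The plan is to reduce Conjecture~\ref{conj:mot_t_struct} to the case where $S$ is the spectrum of a field, construct the motivic heart there, and propagate the answer to general $S$ by a gluing argument in the style of Beilinson--Bernstein--Deligne. For the conservativity statement one uses that $\DA_c(S)$ is generated, as a thick triangulated subcategory, by the objects $f_{\sharp}\un_{X}(n)$ with $f\colon X\ra S$ smooth of finite type and $n\in\BZ$, and that $R_{\ell}$ commutes with the six operations. A dévissage along the localisation triangles, noetherian induction on $S$, and continuity (writing $S$ as a cofiltered limit of finite type $\BZ$-schemes) reduces conservativity of $R_{\ell}$ over $S$ to conservativity of $R_{\ell}$ over all residue fields of $S$, and then, by Galois descent, over their separable closures. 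Over such a field $k$ I would attack conservativity through weights: there is a bounded weight structure on $\DA_c(k)$ with heart the Chow motives $\Chow(k)$, and $R_{\ell}$ is weight-exact, so conservativity follows from faithfulness and conservativity of the $\ell$-adic realisation on $\Chow(k)$ together with a boundedness (motivic Lefschetz and Künneth) input. This last ingredient is exactly where Grothendieck's standard conjectures enter, so I postpone it and first build the $t$-structure.

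Over a separably closed field $k$, the plan is to construct the abelian category $\MM(k)$ of mixed motives and an equivalence $\DA_c(k)\simeq D^{b}(\MM(k))$ under which $R_{\ell}$ becomes the derived functor of an exact faithful realisation landing in the heart of the standard $t$-structure on $\ell$-adic sheaves. In characteristic zero one route is via Nori motives: there is already a realisation-compatible exact faithful functor $\DA_c(k)\ra D^{b}(\MM_{\mathrm{Nori}}(k))$, and upgrading it to an equivalence amounts to matching the motivic $\Ext$-groups computed in $\DA_c(k)$ with those of $\MM_{\mathrm{Nori}}(k)$ in degrees $>1$ --- a Beilinson--Soulé-type vanishing together with a higher $\Ext$ comparison. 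In arbitrary characteristic one instead constructs $\MM(k)$ Tannakianly: assuming the standard conjectures one makes numerical Chow motives $\Chow_{\num}(k)$ into a semisimple Tannakian category, and then builds $\MM(k)$ as representations of a pro-algebraic motivic Galois group extended by a pro-unipotent part governed by the conjectural Bloch--Beilinson filtration on Chow groups; the crucial output one needs is that $\MM(k)$ has $\Ext$-vanishing above degree $1$ on weight-graded pieces, so that $\DA_c(k)$ is its bounded derived category. In either route, once the equivalence $\DA_c(k)\simeq D^{b}(\MM(k))$ is in hand the tautological $t$-structure has $R_{\ell}$ exact by construction, and it is non-degenerate because $R_{\ell}$ is conservative and the standard $t$-structure on $D^{b}_{c}(k_\et,\BQ_\ell)$ is non-degenerate.

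The last step is to glue. Choose a stratification of $S$ by regular locally closed subschemes $S_{\alpha}$, each essentially of finite type over a field (in mixed characteristic one needs instead the analogous absolute construction over a Dedekind base, where still less is known), each carrying its motivic $t$-structure compatible with realisation. Then assemble a perverse-type $t$-structure on $\DA_c(S)$ by the recollement recipe: declare $M$ to be coconnective if each $i_{\alpha}^{*}M$ lies in the appropriate shift of the motivic heart on $S_{\alpha}$, and connective if each $i_{\alpha}^{!}M$ does, using the same perversity function as on $D^{b}_{c}(S_\et,\BQ_\ell)$. The nontrivial point is that the resulting truncation functors preserve constructibility; this follows once one knows that $i^{*}$, $i^{!}$, $j_{*}$, $j_{!}$ (and the motivic nearby and vanishing cycles) are motivically $t$-exact for these perverse $t$-structures up to the standard shifts, which one deduces stratum by stratum from $t$-exactness of $R_{\ell}$, conservativity, and the base-change and purity isomorphisms of the six-functor formalism. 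Because $R_{\ell}$ commutes with all six operations, the glued $t$-structure is $t$-exact for $R_{\ell}$, and non-degeneracy again descends from conservativity.

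The main obstacle is unquestionably the field case: at present there is no unconditional construction of $\MM(k)$ with the expected $\Ext$-vanishing, and conservativity of $R_{\ell}$ on all of $\DA_c(k)$ is not known --- it is essentially equivalent to a package comprising Grothendieck's standard conjectures (to control the semisimple part), the Bloch--Beilinson--Murre filtration conjectures (to control the pro-unipotent part), and a Beilinson--Soulé-type vanishing. Granting the field case, everything else in the plan --- the dévissage for conservativity, the recollement gluing, the descent of non-degeneracy --- is a matter of the now-standard six-functor technology. Consequently an unconditional theorem is realistic only on subcategories where the weight and $\Ext$ structure is completely understood: Artin motives, abelian-type motives, and --- as carried out in the body of this paper --- the subcategory $\DA^1(S)$ of relative $1$-motives, where $\MM^1(S)$ plays the role of the heart and the needed conservativity and weight inputs are available unconditionally.
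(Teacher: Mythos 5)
The statement you are addressing is Conjecture~\ref{conj:mot_t_struct}: the paper offers no proof of it, and explicitly treats it as inaccessible in general, so there is no argument of the paper's to compare yours against. Your text is, accordingly, not a proof but a conditional roadmap, and to your credit you say so: the field case rests on the standard conjectures, the Bloch--Beilinson--Murre filtration, and Beilinson--Soul\'e-type vanishing, none of which are available. That part of your assessment matches the paper's own discussion (the conjecture over a field of characteristic $0$ already implies the standard conjectures), and your closing remark correctly identifies that the paper's actual unconditional content is the restriction to $0$- and $1$-motives.

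Where your plan is too optimistic is the gluing step. You assert that, granting the field case, the recollement assembly over a stratification is ``a matter of the now-standard six-functor technology.'' The paper explicitly disagrees: it states that the natural approach of defining $\DA_c(S)_{\geq 0}$ and $\DA_c(S)_{\leq 0}$ by pullback to points is not known to yield a $t$-structure, even on compact objects, and that Bondarko's gluing argument is ``tailored for perverse $t$-structures and cannot be applied directly'' to the standard one. Concretely, the recollement recipe requires the gluing functors ($j_*$, $j_!$, the intermediate extension) to interact with the candidate truncations in a way that is only known for the self-dual middle perversity; for the zero perversity that the standard $t$-structure corresponds to, the preservation of constructibility by the truncation functors is precisely the open point, not a formal consequence of base change and purity. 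Even granting $t_{\MM}(K)$ for all residue fields $K$, Bondarko's theorem produces only ${}^p t_{\MM}(S)$, and passing from the perverse to the standard motivic $t$-structure is itself an unresolved step that the paper only describes as ``presumably'' possible. So your dichotomy --- hard field case, routine gluing --- should be replaced by: hard field case, and a genuinely open gluing problem for the non-perverse $t$-structure. This is in fact the reason the paper abandons the pointwise/gluing approach entirely even for $1$-motives and instead defines $t_{\MM,1}(S)$ as a generated $t$-structure in the sense of \cite[Definition 2.1.71]{Ayoub_these_1}, proving non-degeneracy and the relation to Deligne $1$-motives directly from that definition.
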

The $t$-structure $t_{\MM}$ is uniquely determined by the compatibility with $R_{\ell}$ if it exists, because we include conservativity of realisations in the statement.

The case where $S$ is the spectrum of a field (say of characteristic $0$) is already extremely interesting; the conjecture in that case implies the Beilinson-Soul\'e vanishing conjecture  for $K$-theory, Grothendieck's standard conjectures on algebraic cycles \cite{Beilinson_standard} and the Bloch-Beilinson-Murre conjectures on the structure of Chow groups of smooth projective varieties \cite{Jannsen_t}. Moreover, a theorem of Bondarko \cite[Theorem 3.1.4]{Bondarko_t} shows that for a large class of schemes, if $t_{\MM}(K)$ exists for any residue field $K$ of such a scheme $S$, then the perverse analogue $^p t_{\MM}(S)$ of $t_{\MM}(S)$ exists, and one can then presumably reconstruct the standard motivic $t$-structure from the perverse one as in \cite[4.6.2]{Saito_MHM}. 

Since the general conjecture seems inaccessible, one looks for subcategories of $\DA(S)$ where one can hope to construct the restriction of the conjectural $t$-structure. For $n\in \BN$, we introduce the subcategory $\DA_n(S)$ of homological $n$-motives, i.e., the subcategory generated by homological motives of smooth $S$-schemes of relative dimension less than or equal to $n$. It seems reasonable to conjecture further that $t_{\MM}$ should restrict to a $t$-structure $t_{\MM,n}$ on $\DA_{n,c}(S)$. For $n\geq 2$, we have no idea how to construct $t_{\MM,n}$ even when $S$ is a field. Our goal is to provide a reasonable candidate for $t_{\MM,0}$ and $t_{\MM,1}$. 

For a perfect field $k$, the structure of $\DA_1(k)$ and $t_{\MM,1}$ have already been extensively studied. Here is a summary of the main results, transferred from the set-up of $\DM^{\eff}$ in the original papers to $\DA$ via the cancellation theorem and the comparison theorem of \cite[Corollary 16.2.22]{Cisinski_Deglise_BluePreprint} (for details on these results, we refer the reader to Sections \ref{sec:omega_field} and \ref{t_struct_field}).

\begin{theo}[Voevodsky, Orgogozo \cite{Orgogozo}, Barbieri-Viale-Kahn \cite{BVK}, Ayoub-Barbieri-Viale \cite{Ayoub_Barbieri-Viale}, Ayoub \cite{Ayoub_2-mot}]\label{theo:main_theo_k}
Let $k$ be a perfect field and $\ell$ a prime different from $\mathrm{char}(k)$. 
\begin{enumerate}[label={\upshape(\roman*)}]
\item There exists a non-degenerate $t$-structure $t_{\MM,1}$ on $\DA_1(k)$ which restricts to $\DA_{1,c}(k)$.  
\item There is an $t$-exact equivalence of triangulated categories
\[
\DA_{1,c}(k)\simeq D^b(\CM_1(k))
\]
where $\CM_1(k)$ is the abelian category of Deligne $1$-motives over $k$ with rational coefficients \cite{Deligne_HodgeIII}.
\item The $\ell$-adic realisation functor $R_\ell :\DA_{1,c}(k)\ra D(k_{\et},\BQ_l)$ is conservative and $t$-exact.
\item The inclusion of $\DA_1(k)$ into the category $\DA_{\homo}(k)$ of all homological motives admits a left adjoint, the ``motivic Albanese functor'' $\LAlb:\DA_{\homo}(k)\ra \DA_1(k)$, which sends constructible objects to constructible objects, and whose value on the motive of a smooth $k$-variety $X$ is closely related to its semi-abelian Albanese variety.
\end{enumerate}
\end{theo}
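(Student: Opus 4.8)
The plan is to deduce each item of Theorem~\ref{theo:main_theo_k} from its counterpart in the effective setting $\DM^{\eff}(k,\BQ)$ and transport it faithfully along the equivalences relating the effective and $\Gm$-stable, transfers and non-transfers, models of the triangulated category of motives; over a perfect field this statement is a repackaging of results already in the literature rather than a new theorem. First I would recall that Voevodsky's cancellation theorem \cite{Voevodsky_Cancellation} identifies $\DM^{\eff}(k,\BQ)$ with the subcategory of effective objects of $\DM(k,\BQ)$, and that \cite[Corollary 16.2.22]{Cisinski_Deglise_BluePreprint} gives a monoidal equivalence $\DA(k,\BQ)\simeq \DM(k,\BQ)$ intertwining the functors $X\mapsto M(X)$. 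Under these, $\DA_1(k)$---the subcategory generated by homological motives of smooth $k$-schemes of dimension $\leq 1$---corresponds to the localizing subcategory of $\DM(k,\BQ)$ generated by motives of smooth curves, which is the image under cancellation of Voevodsky's effective $d_{\leq 1}$; that one obtains the same subcategory in the effective and in the stable worlds (so that inverting the Tate twist does not enlarge it) is a standard consequence of Gysin triangles and the projective bundle formula.

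With $\DA_1(k)$ so identified, (ii) is precisely the theorem of Barbieri-Viale-Kahn \cite{BVK}, refining Orgogozo \cite{Orgogozo}: an equivalence of triangulated categories $d_{\leq 1}\DM^{\eff}_{\gm}(k,\BQ)\simeq D^b(\CM_1(k))$, with $\CM_1(k)$ the abelian category of Deligne $1$-motives with rational coefficients \cite{Deligne_HodgeIII}. Transporting the standard $t$-structure on $D^b(\CM_1(k))$ across this equivalence and then along cancellation and comparison yields $t_{\MM,1}$ on $\DA_{1,c}(k)$; for (i) one extends it to the big category $\DA_1(k)$ by the usual mechanism---$\CM_1(k)$ is noetherian, so $D^b(\CM_1(k))$ embeds in $D(\Ind\,\CM_1(k))$ and the generated $t$-structure restricts back to the bounded subcategory---and non-degeneracy reduces to the elementary statement that an object of $D^b(\CM_1(k))$ with vanishing cohomology objects vanishes. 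For (iii), the $\ell$-adic realisation of $D^b(\CM_1(k))$ is the derived functor of the Tate realisation on $\CM_1(k)$, which is exact and faithful (a rational $1$-motive with trivial $\ell$-adic realisation is $0$), hence $t$-exact and conservative; its compatibility with the motivic realisation $R_\ell$ of \cite{Ayoub_Etale}\cite{CDEt} is checked on the generators $M(C)$ for $C$ a smooth curve, via the classical identification of $H^1_\et(C,\BQ_\ell)$ with the Tate module of the $1$-motive attached to $C$.

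Part (iv) is the motivic Albanese of Ayoub-Barbieri-Viale \cite{Ayoub_Barbieri-Viale} (see also \cite{Ayoub_2-mot}): the inclusion $\DA_1(k)\hookrightarrow \DA_{\homo}(k)$ admits a left adjoint $\LAlb$, constructed explicitly there; it preserves compact objects because its right adjoint---the inclusion itself---preserves coproducts, $\DA_1(k)$ being stable under coproducts in $\DA_{\homo}(k)$, and the relation between $\LAlb M(X)$ and the Albanese variety of a smooth $X$ is read off the explicit description in \loccit. The step demanding the most care throughout is the bookkeeping around the $t$-structure: one must confirm that the fully faithful functor $d_{\leq 1}\DM^{\eff}_{\gm}(k,\BQ)\to d_{\leq 1}\DM_{\gm}(k,\BQ)$ is an equivalence and is $t$-exact for the transported $t$-structures, so that passing from the effective to the stable picture neither adds objects nor shears the heart. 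Granted the effective statements this is essentially automatic, but it is exactly the kind of verification that becomes genuinely delicate in the relative situation treated in the body of the paper, where cancellation is not available and the effective/stable and homological/cohomological dictionaries must be set up by hand.
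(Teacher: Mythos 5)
Your overall route is the paper's: state the effective results of Orgogozo, Barbieri-Viale--Kahn and Ayoub--Barbieri-Viale/Ayoub in $\DM^{\eff}(k)$ and transport them along the cancellation theorem and the comparison $\DA(k)\simeq\DM(k)$ of \cite[Corollary 16.2.22]{Cisinski_Deglise_BluePreprint}; the paper carries out exactly this translation in Sections~\ref{sec:omega_field} and~\ref{t_struct_field} (Lemma~\ref{lemm:DA_DM_n}, Proposition~\ref{prop:omega_field}, and the comparison of $t_{\MM,1}(k)$ with Orgogozo's and Ayoub's $t$-structures). Items (ii) and (iv) are fine as you present them; your observation that $\LAlb$ preserves compact objects because its right adjoint (the inclusion, which preserves small sums) does is a legitimate alternative to citing \cite[Proposition 2.4.7]{Ayoub_Barbieri-Viale}. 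One small correction: the identification of the effective category $\DM^{\eff}_1(k)$ with the stable $\DM_1(k)$ needs no Gysin triangles or projective bundle formula, only cancellation (full faithfulness of $\Sigma^\infty_\tr$) plus the fact that both sides are generated by the same untwisted compact generators, which is how Lemma~\ref{lemm:DA_DM_n} argues.

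The genuine weak point is (i) on the big category. You propose to extend the $t$-structure from $\DA_{1,c}(k)\simeq D^b(\CM_1(k))$ to $\DA_1(k)$ by working inside $D(\Ind\CM_1(k))$; but the identification $\DA_1(k)\simeq D(\Ind\CM_1(k))$ is not available -- the paper explicitly flags it as an open question in the remark closing Section~\ref{t_struct_field} -- so ``the generated $t$-structure restricts back to the bounded subcategory'' is unjustified as stated. The paper, following \cite[Proposition 3.7]{Ayoub_2-mot}, instead defines the $t$-structure on the big $\DA_1(k)$ as the one generated (in the sense of \cite{Ayoub_these_1}) by the compact curve/Deligne generators, and then checks that the inclusion of $(\DA_{1,c}(k),t^{\Or}_1)$ is $t$-exact, so the big $t$-structure restricts to compacts. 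Correspondingly, your non-degeneracy argument (``a bounded complex of $1$-motives with vanishing cohomology objects is zero'') only covers $\DA_{1,c}(k)$; for all of $\DA_1(k)$ one needs the criterion for $t$-structures generated by compact objects invoked in Proposition~\ref{prop:t_non_degenerate}, whose input over a field is the vanishing $\Ext^n_{\CM_1(k)}(\BM_1,\BM_2)=0$ for $n\geq 2$ (Proposition~\ref{prop:morphisms_field}). Finally, for (iii), agreement of $R_\ell$ with the derived Tate realisation ``on the generators $M(C)$'' is an object-wise statement and does not by itself produce a comparison of triangulated functors; either cite the comparison carried out in \cite{BVK} or give an argument at the level of the realisation functor itself.
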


Our work builds on these results and the six operations formalism to produce a similar picture for $\DA_1(S)$. 

The most natural approach to a motivic $t$-structure on $\DA_1(S)$ would proceed by combining the $t$-structures on $\DA_1(s)$ provided by the previous theorem for all points $s$ of $S$ to a $t$-structure on $\DA_{1}(S)$, i.e., proving that the subcategories $\DA_{1}(S)_{\geq 0} := \{M\in \DA_{1}(S)|\forall s\in S,\ s^*M\in \DA_{1}(s)_{\geq 0}\}$ and $\DA_{1}(S)_{\leq 0} := \{M\in \DA_{1}(S)|\forall s\in S,\ s^*M\in \DA_{1}(s)_{\leq 0}\}$ form a $t$-structure, which would then automatically be compatible with standard $t$-structures on target categories of realisation functors when they are defined. We do not know how to prove this in general, even when restricting to subcategories of compact objects; the gluing arguments of \cite[\S 3.2]{Bondarko_t} are tailored for ``perverse'' $t$-structures and cannot be applied directly. We refer however to \cite{Vaish_IC3} for a different approach to the motivic $t$-structure for $1$-motives via gluing.

We thus implement an alternative approach, which is inspired by another description \cite[Proposition 3.7]{Ayoub_2-mot} of $t_{\MM,1}(k)$ for a perfect field $k$, as a generated $t$-structure in the sense of \cite[Definition 2.1.71]{Ayoub_these_1}. This leads to a $t$-structure $t_{\MM,1}(S)$ on $\DA_1(S)$ (Definition \ref{main_def_1}). Let us write $\MM_1(S)$ for its heart.

\begin{theo}[\ref{prop:t_non_degenerate}, \ref{theo:gen_in_heart}, \ref{theo:deligne_full}, \ref{cor:motive_group_heart}]
Let $S$ be a noetherian finite-dimensional excellent scheme.
\begin{enumerate}[label={\upshape(\roman*)}]
\item If $S$ is the spectrum of a perfect field $k$, the $t$-structure $t_{\MM,1}(k)$ coincides with the $t$-structure of Theorem~\ref{theo:main_theo_k}.
\item The $t$-structure $t_{\MM,1}(S)$ is non-degenerate.
\item Write $\CM_1(S)$ for the $\BQ$-linear category of Deligne $1$-motives over $S$ with rational coefficients. The natural functor $\Sigma^\infty:\CM_1(S)\ra \DA(S)$ factors through $\MM_1(S)$, and is fully faithful if $S$ is regular.
\item Let $G$ be a smooth commutative group scheme with connected fibres. Then the motive $\Sigma^\infty G_\BQ[-1]$ is in $\MM_1(S)$.
\end{enumerate}
\end{theo}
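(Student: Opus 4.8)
The plan is to obtain this as a corollary of part~(iii) (Theorem~\ref{theo:deligne_full}) by reducing an arbitrary $G$ to the semi-abelian case. If $G/S$ is a semi-abelian scheme, then $[0\to G]$ is a Deligne $1$-motive over $S$, and, modulo the bookkeeping point that $\Sigma^\infty$ of the two-term complex $[0\to G]$ agrees with $\Sigma^\infty G_\BQ[-1]$ (to be matched against the definition of $\Sigma^\infty$ on $\CM_1(S)$), part~(iii) gives $\Sigma^\infty G_\BQ[-1]\in\MM_1(S)$ immediately. Note that one cannot instead verify the claim fibrewise and conclude: it is precisely because $t_{\MM,1}(S)$ is not known to be the ``fibrewise'' $t$-structure that it is defined as a generated one, so membership in $\MM_1(S)$ is not detected by the functors $s^*$. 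Hence the real content is the passage from semi-abelian $G$ to a general smooth commutative $G$ with connected fibres, that is, disposing of the unipotent part.

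The reduction rests on two points. First, $\MM_1(S)$ is closed under extensions in $\DA_1(S)$, being the heart of a $t$-structure. Second, the rational motive of a smooth connected unipotent group vanishes: the scaling morphism $\Ga\times\BA^1_S\to\Ga$, $(x,t)\mapsto tx$, is an $\BA^1$-homotopy between the zero and the identity endomorphisms of $\Sigma^\infty\Ga_\BQ$, so $\Sigma^\infty\Ga_\BQ\simeq 0$; since a smooth connected unipotent group is fibrewise an iterated extension of additive groups, the same holds for it rationally. Consequently, whenever $G$ carries a normal subgroup scheme $U$ which is ``vectorial'' with $G/U$ semi-abelian, the projection induces an isomorphism $\Sigma^\infty G_\BQ\xrightarrow{\ \sim\ }\Sigma^\infty(G/U)_\BQ$ (in particular $\Sigma^\infty G_\BQ[-1]\in\DA_1(S)$), and the semi-abelian case finishes the argument. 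Over a field such a $U$ is provided by Chevalley's theorem, which also reconciles the claim with the field statement of Theorem~\ref{theo:main_theo_k}; over a general $S$ one first stratifies $S$ so that the relative unipotent radical of $G$ becomes a well-behaved subgroup scheme on each stratum, treats each stratum as above, and then reassembles.

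The reassembly is the crux. Over a general base there is no global Chevalley decomposition --- the unipotent rank of the fibres of $G$ can jump along $S$ --- so one genuinely works with a stratification $S=\coprod_\alpha S_\alpha$ and must promote ``$\Sigma^\infty G_\BQ[-1]|_{S_\alpha}\in\MM_1(S_\alpha)$ for all $\alpha$'' to ``$\Sigma^\infty G_\BQ[-1]\in\MM_1(S)$''. As stressed in the introduction, gluing is genuinely delicate for $t_{\MM,1}$, so a BBD-type recollement is not available off the shelf. The way out I would pursue is to isolate the minimal locality input actually needed: that the generators defining $t_{\MM,1}$ may be chosen stable (up to the six operations) under an open--closed decomposition and under étale pullback, so that --- without gluing the whole $t$-structure --- the bounded constructible object $\Sigma^\infty G_\BQ[-1]$, which lies in the heart on each stratum, is forced into $\MM_1(S)$ by the localisation triangles together with the non-degeneracy of Theorem~\ref{prop:t_non_degenerate}. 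Formulating and establishing that locality statement --- in particular the required one-sided $t$-exactness of $j_!$, $j^*$, $i^*$, $i_*$ for the chosen generators --- is where the real work lies; the rest is either formal or already contained in part~(iii).
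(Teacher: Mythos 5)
Your proposal only engages with part (iv), taking (iii) as given and saying nothing about (i) (comparison with the field case) or (ii) (non-degeneracy), which are separate statements (Section~\ref{t_struct_field} and Proposition~\ref{prop:t_non_degenerate}) with their own proofs. Within (iv), the reduction ``kill the unipotent part, then use the semi-abelian case'' is indeed the right first move, but you stop exactly where the content is: you acknowledge that passing from heart-membership over strata (or over a dense open) to heart-membership over $S$ is the crux, and then defer it (``where the real work lies''). The paper does not glue the $t$-structure at all, nor does it need $t$-exactness of $j_!,j^*,i^*,i_*$; its mechanism is different and more specific. First, Theorem~\ref{theo:gen_in_heart} is proved not just for $\Sigma^\infty\BM$ with $\BM\in\CM_1(S)$ (your part (iii)) but for the whole family $\DG_S=\{e_\sharp\Sigma^\infty\BM\}$ with $e$ \emph{\'etale}; since an open immersion is \'etale, the object $j_!\Sigma^\infty H'_\BQ[-1]$ for $H'$ semi-abelian over a dense open $U$ is \emph{itself a generator} and hence already in $\MM_1(S)$ -- this is established by the long vanishing computation $(\CV_n)$, $(\CV_n^\bot)$ against curve generators (compactification, purity, the $\CO^\times$/Pic computations of Appendix~\ref{sec:app_mot_coh}), not by a formal locality principle. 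Second, the closed part is handled by noetherian induction plus the $t$-exactness of $i_*$ for closed immersions (Proposition~\ref{prop:elementary_exactness}), which itself rests on the non-formal Proposition~\ref{prop:generators_closed_imm} (deformation theory of curves and spreading out). The localisation triangle then exhibits $\Sigma^\infty G_\BQ[-1]$ as an extension of two heart objects. Your sketched ``minimal locality input'' is essentially a restatement of what these results achieve, not a proof of it, so the proposal has a genuine gap at its central step.

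A secondary but real issue is your reduction of the unipotent part. Over an imperfect residue field a smooth connected unipotent group need not be an iterated extension of $\Ga$'s (wound unipotent groups), so ``fibrewise an iterated extension of additive groups'' fails, and there is in general no Chevalley-type subgroup scheme over a stratum to quotient by. The paper circumvents this by working at the generic point only: pass to the perfect closure $\eta^{\perf}$, kill the unipotent radical there, observe that the pushforward of the resulting semi-abelian motive back to $\eta$ is again (up to isogeny, hence rationally) of the form $\Sigma^\infty H'_\BQ$ for a semi-abelian $H'$ over $\eta$ by the separation property, and then spread $H'$ and the isomorphism out over a dense open. Your stratification-by-unipotent-radical strategy would need a substitute for this step.
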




The result (iv) on $\Sigma^\infty G_{\BQ}[-1]$ was announced in \cite{AHPL}; there, this motive appeared as the $H_1$ piece in a "K\"unneth-type" decomposition of the homological motive $M_S(G)$ \cite[Theorem 3.3]{AHPL}.

In the relative situation, it is unclear whether the left adjoint $\LAlb$ of the inclusion $\DA_1(S)\ra \DA_{\homo}(S)$ actually exists. We can however define a motivic analogue of the \emph{Picard scheme}. We have a category $\DA^1(S)$ of \emph{cohomological $1$-motives} (resp. $\DA^{\coh}(S)$ of cohomological motives) and it turns out that $\DA^1(S)=\DA_1(S)(-1)$ (Proposition~\ref{prop:hom_cohom_twists}), so that $\DA^1(S)$ also has a motivic $t$-structure $t^1_{\MM}=t_{\MM,1}(-1)$ which satisfies analogues of the theorems above. The inclusion $\DA^1(S)\ra \DA^{\coh}(S)$ admits a right adjoint $\omega^1:\DA^{\coh}(S)\ra \DA^1(S)$, as a corollary of Neeman's version of Brown representability for compactly generated categories; however, unlike most right adjoints constructed this way, $\omega^1$ satisfies a strong finiteness property.

\begin{theo}[\ref{thm:finiteness_omega1}]
Let $S$ be a noetherian finite-dimensional excellent scheme satifying resolution of singularities by alterations. Then $\omega^1$ sends compact objects to compact objects.  
\end{theo}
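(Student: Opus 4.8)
The plan is to reduce the statement to a generation/finiteness argument: since $\DA^{\coh}(S)$ is compactly generated and the compact objects are (up to direct factors) the cohomological motives $f_* \un_X(p)$ for $f:X\to S$ projective with $X$ regular (using resolution of singularities by alterations to restrict to such generators, à la Bondarko's weight structures), it suffices to show that $\omega^1$ sends each such generator to a compact object of $\DA^1(S)$. Indeed, $\omega^1$ is a right adjoint, hence triangulated and commutes with direct factors; and the full subcategory of $\DA^{\coh}(S)$ on which $\omega^1$ takes compact values is triangulated and thick, so if it contains a family of compact generators it contains all of $\DA^{\coh}_c(S)$. So the first step is to pin down an explicit set of compact generators of $\DA^{\coh}(S)$ consisting of motives of regular projective $S$-schemes with Tate twists, invoking the alteration hypothesis.

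The second step is to compute $\omega^1(f_*\un_X(p))$ for $f:X\to S$ projective with $X$ regular. Here I would use the projection/base-change compatibilities of $\omega^1$ with the six operations that should have been established earlier when $\omega^1$ was introduced: in particular I expect $\omega^1$ to commute with $f_*$ for $f$ projective (equivalently proper), because the inclusion $\DA^1\hookrightarrow\DA^{\coh}$ is compatible with $f^*$ on the nose and $f^*$ preserves $1$-motives, so by adjunction $\omega^1 f_* \simeq f_* \omega^1$. This reduces the problem to computing $\omega^1$ over $X$ itself on the twisted unit $\un_X(p)$, i.e.\ to understanding the "motivic Picard functor" applied to Tate twists over a regular scheme. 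For $p=0$ one expects $\omega^1(\un_X)$ to be built out of $\un_X$ and the relative Picard/$1$-motivic part of $X$; more generally the key computation is that $\omega^1(\un_X(p))$ is a cohomological $1$-motive which is compact, and this should follow from the field case (Theorem~\ref{theo:main_theo_k}(iv), the compactness of $\LAlb$, dualized) together with continuity/localization to spread the field statement out over the regular base $X$.

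The third step is the descent of compactness from the fibers: even granting that $\omega^1(\un_X(p))$ restricts over every point to a compact $1$-motive, compactness over $S$ is not a fiberwise condition, so one needs a genuine global argument. The natural tool is a localization (Mayer–Vietoris / closed-open recollement) induction on $\dim S$: stratify $S$, use the continuity of $\DA$ and of $\omega^1$ to reduce to the situation over the generic points of the strata (fields, where Theorem~\ref{theo:main_theo_k} applies), and then glue back using that $j_!$, $i_*$ of compact objects are compact and that $\omega^1$ is suitably compatible with $j^*$, $i^*$. This is where I expect the main obstacle to lie: verifying that $\omega^1$ commutes with the relevant operations — in particular with $j_!$ or with filtered colimits along the localization — and more fundamentally that the $1$-motivic truncation functor does not introduce infinite-rank phenomena as one moves in families (e.g.\ controlling the Picard scheme in a family, the failure of $h^1$ to be locally constant, and torsion/unipotent issues, which is precisely why one works with $\BQ$-coefficients and why excellence plus alterations are assumed). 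Concretely, the crux will be a boundedness statement: that for $f:X\to S$ projective with $X$ regular, the object $\omega^1(f_*\un_X(p))\in\DA^1(S)$ is dominated by finitely many motives of the form $g_*\un_Y(q)[n]$ with $g$ projective of relative dimension $\le 1$ — i.e.\ a ``motivic'' incarnation of the finite generation of relative Picard groups modulo torsion — and establishing this uniformly over $S$ is the heart of the proof.
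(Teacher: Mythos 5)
Your overall architecture (reduce to generators of regular/projective type, compute generically, then a localisation induction on $S$) is broadly the shape of the paper's proof, but your second step rests on a claim that is false and that, once removed, leaves the actual heart of the argument unsupplied. You assert $\omega^1 f_*\simeq f_*\omega^1$ for $f$ proper "by adjunction, since $f^*$ preserves $1$-motives". The adjunction argument would require the \emph{pushforward} $f_*$ to preserve $\DA^1$, which it does not: $f_*$ raises the dimension of generators (Proposition~\ref{prop:permanence_coh_n}), so $f_*\omega^1_X$ does not even land in $\DA^1(S)$; the commutation $\beta^1_f$ of Proposition~\ref{prop:omega_basics} is only invertible for $f$ \emph{finite}. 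A sanity check: $\BQ_X$ is already a $0$-motive, so $\omega^1_X(\BQ_X)=\BQ_X$, and your claimed commutation would give $\omega^1(f_*\BQ_X)\simeq f_*\BQ_X$, which is absurd for, say, a relative surface. Consequently your reduction "to computing $\omega^1(\un_X(p))$ over $X$ itself" collapses, and the genuinely hard input — the computation $\omega^1(f_*\BQ_X)\simeq \Sigma^\infty \mathrm{P}(X/S)(-1)[-2]$ for $f$ smooth projective Pic-smooth over a regular base (Theorem~\ref{theo:omega_1_Pic_smooth}), proved via the comparison map $\Theta_f$, the structure of the smooth Picard sheaf and the local constancy of $\sNS^{\sm}\otimes\BQ$ — is exactly the "boundedness statement" you defer to at the end without a mechanism. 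Spreading out the field case of $\LAlb$ by "continuity" does not work, because compactness of $\omega^1 M$ is not a fibrewise condition and $\omega^1$ is not known to commute with the relevant pullbacks ($\alpha^1_f$ is not known to be invertible in general).

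Your gluing step also needs a non-formal ingredient you do not have: $\omega^1$ is not known to commute with $i^*$ for a closed immersion (only with open/\'etale pullbacks and with $i_*$ for finite morphisms). The paper's induction circumvents this by first replacing $j_*j^*M$ by $N=j_*\omega^1 j^*M$ (legitimate since $\omega^1 j_*\simeq \omega^1 j_*\omega^1$, and $N$ is compact by the Pic-smooth computation over a dense regular open $U$ obtained from Proposition~\ref{prop:generic_pic_smooth}, plus preservation of constructibility by $j_*$), and only then applying the \emph{conditional} commutation $i^*\omega^1 N\simeq \omega^1 i^*N$ of Proposition~\ref{prop:omega_basics}~\ref{omega_key}, which is available precisely because $j^*N\in\DA^1(U)$. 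Without this device (or a substitute), the recollement step in your plan does not close, even granting the generic computation.
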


The main step of the proof is to compute $\omega^1$ in a special case, namely $\omega^1(f_*\BQ_X)$ with $S$ regular and $f:X\ra S$ smooth projective ``Pic-smooth'' (Definition~\ref{def:pic_smooth}). In this case, Theorem~\ref{theo:omega_1_Pic_smooth} shows that
\[
\omega^1 (f_*\BQ_X)\simeq \Sigma^\infty P(X/S)(-1)[-2]
\]
where $P(X/S)$ is the \emph{Picard complex} of $f$, an object closely related to the Picard scheme of $X$ over $S$. Another proof of Theorem~\ref{thm:finiteness_omega1} is given in \cite{Vaish_IC3} (for schemes of finite type over a field, but the arguments carry through in our context).

The results above on $t_{\MM,1}$ and $\omega^1$ also have (simpler) counterparts for the category $\DA_0(S)$ of $0$-motives and for the functor $\omega^0$, which we establish along the way.

The two main questions which this work leaves open are whether the $t$-structure $t_{\MM,1}$ restricts to compact objects and whether the resulting $t$-structure on $\DA_{1,c}(S)$ satisfies the analogue of Conjecture~\ref{conj:mot_t_struct}, i.e., whether the $\ell$-adic realisation on $\DA_{1,c}(S)$ is then $t$-exact. In \cite{constructible_1-mot}, we answer both questions positively. The fact that $t_{\MM,1}$ restricts to compact objects has also been established, with a different argument, in the preprint~\cite{Vaish_IC3}. 

We have chosen to work with motives with rational coefficients. The theory of triangulated categories of motives with integral coefficients naturally splits in two: a ``Nisnevich'' version and an ``étale'' version, depending on what topology we want to have descent for. Voevodsky already observed that the Nisnevich category $\DM_{\Nis}(S,\BZ)$ does not admit a motivic $t$-structure, even when $S$ is the spectrum of a field. Let $\Lambda$ be a ring of coefficients. Then, if we make the assumption that every prime is invertible either in $\Lambda$ or in $\mathcal{O}_S$, the category $\DA_{\et}(S,\Lambda)$ is rather well understood. The key statement is the relative rigidity theorem of Ayoub \cite[Theoreme 4.1]{Ayoub_Etale} which roughly tells us that the category of \'etale motives with torsion coefficients coincides with the derived category of torsion \'etale sheaves. Building on this, one can show that with these hypotheses, the motivic $t$-structure on $\DA_{\et}(S,\Lambda)$ exists if and only if it exists for $\DA_{\et}(S,\Lambda\otimes\BQ)$. More specifically for $1$-motives over a perfect field $k$ with exponential characteristic $p$, the motivic $t$-structure on $\DA_{\et,1}(k,\BZ[\frac{1}{p}])$ was constructed in \cite[Remark 2.7.2]{BVK}. It seems likely that the ideas of \cite{BVK} on $1$-motives with torsion and the relative rigidity theorem can be combined with the methods of this paper to give a satisfactory theory of $t_{\MM,1}$ and $\omega^1$ for relative \'etale motives with integral coefficients.

\subsection*{Structure of this paper}
Let $S$ be a finite-dimensional noetherian scheme. In Section~\ref{sec:subcats}, we introduce the categories $\DA_n(S)$ of homological $n$-motives (resp. $\DA^n(S)$ of cohomological $n$-motives) which are full subcategories of $\DA(S)$ generated as triangulated categories with small sums by homological (resp. cohomological) motives of smooth (resp. proper) $S$-schemes of relative dimension less or equal to $n$ (Definition \ref{def:subcats}). We then study their permanence properties under Grothendieck operations (Propositions \ref{prop:n_monoidal} to \ref{prop:permanence_hom_n}) and prove that the homological and cohomological variants are closely related (Proposition \ref{prop:hom_cohom_twists}).

In Section~\ref{sec:albanese}, we study the motives associated to smooth commutative group schemes over $S$ and prove that they live in $\DA_1(S)$ (Proposition~\ref{prop:smooth_complex_compact}). We also study motives attached to Deligne $1$-motives. Finally, we introduce a motive attached to what we call the Picard complex $P(X/S)$ of a morphism of schemes $f:X\ra S$. It is an object in a derived category of sheaves which packages together information about the relative connected components of $f$ and the Picard scheme of $X/S$; in some cases, $P(X/S)$ yields a motive in $\DA_{1,c}(S)$ (Corollary~\ref{cor:Picard_smooth}).

In Section~\ref{sec:mot_picard}, we introduce and study the right adjoint $\omega^1:\DA^\coh(S)\ra \DA^1(S)$ to the embedding of cohomological $1$-motives into cohomological motives. We first establish a number of relatively formal results involving its commutation properties with the six operations (Proposition \ref{prop:omega_basics}). The main result is then that $\omega^1$ preserves constructibility (Theorem~\ref{thm:finiteness_omega1}). This relies on combining techniques from \cite{Ayoub_Zucker} with a computation of $\omega^1(f_*\BQ_X)$ in a favorable situation: the precise statement involves the motive of the Picard complex from the previous section.

In Section~\ref{sec:t_structure}, we finally introduce a candidate for the motivic $t$-structure on $\DA_1(S)$ and $\DA^1(S)$, using the formalism of generated $t$-structures. A number of equivalent generating families can be used for this purpose (see Definition \ref{def:generators}). We prove some basic exactness properties for the six operations. The main result we show is that motives attached to Deligne $1$-motives lie in the heart $\MM_1(S)$ of the $t$-structure on $\DA_1(S)$, and that the category $\CM_1(S)$ embeds fully faithfully into $\MM_1(S)$ for $S$ regular. Most results in this section require the additional assumption that $S$ is excellent.

Appendix \ref{sec:app_deligne} provide technical results about Deligne $1$-motives over a general base. Appendix \ref{sec:app_mot_coh} gathers some computations of motivic cohomology groups for $\BQ(0)$ and $\BQ(1)$ which are used at several places in the text.

\section*{Acknowledgements}
This work is based on the main part of my PhD thesis, done under the supervision of Joseph Ayoub at the University of Z\"urich. I would like to thank him dearly for his constant support, both mathematical and personal. 

My thesis was reviewed by Annette Huber and Mikhail Bondarko, and I thank them for their comments. I would also like to thank Giuseppe Ancona, Luca Barbieri-Viale, Javier Fresan, Annette Huber, Shane Kelly, Andrew Kresch, Michel Raynaud, Vaibhav Vaish and Alberto Vezzani for discussions or e-mail exchanges on the topic of this paper. Finally, I thank the reviewer of the first submitted version for his or her extremely thorough reading, for catching several mistakes and providing many suggestions for improvement.

\section*{Background, conventions and notations}
\label{sec:conv_not}
\addcontentsline{toc}{section}{Conventions and notations}
We collect here several conventions and pieces of notation which will be used throughout this paper.

When considering several variants of a category in parallel, distinguished by a decoration, we put the decoration in parenthesis. For instance, when considering categories of both compact and non-compact motives, we write $\DA_{(c)}(S)$. 

\subsection*{Homological algebra in abelian and triangulated categories}

When discussing complexes in abelian categories and t-structures on triangulated categories, we consistently use homological indexing conventions. 

Let $F:\CT\ra \CT'$ be a triangulated functor between triangulated categories with $t$-structures. We say that $F$ is $t$-positive or right $t$-exact (resp. $t$-negative or left $t$-exact) if $F(\CT_{\geq 0})\subset \CT'_{\geq 0}$ (resp. $F(\CT_{\leq 0}) \subset \CT'_{\leq 0})$. 

Let $\CT$ be a triangulated category, and $\CG$ be a family of objects of $\CT$. We introduce a number of subcategories of $\CT$ generated in various ways by $\CG$. Recall that a triangulated subcategory is said to be thick (resp. localising) if it is stable by direct factors (resp. small sums).

We denote by $\langle \CG\rangle$ (resp. $\langle \CG\rangle_+$, $\langle \CG\rangle_-$)  the smallest thick triangulated subcategory of $\CT$ (resp. the smallest subcategory stable by extensions, positive shifts and direct factors, the smallest subcategory stable by extensions, negative shifts and direct factors) containing $\CG$. Assume now that $\CT$ admits small sums; by convention, this includes the hypothesis that small sums of distinguished triangles are distinguished triangles. We denote by $\llangle \CG\rrangle$ (resp. $\llangle \CG\rrangle_+$, $\llangle \CG\rrangle_-$)  the smallest localising triangulated subcategory of $\CT$ (resp. the smallest subcategory stable by extensions, small sums and $[+1]$, the smallest subcategory stable by extensions, small sums and $[-1]$) containing $\CG$. Note that $\langle \CG\rangle \subset \llangle \CG \rrangle$ by \cite[Lemme 2.1.17]{Ayoub_these_1}.

In the constructions above, we refer informally to $\CG$ as the \emph{generating family} and to objects of $\CG$ as \emph{generators}. In each case, these subcategories can be defined by an induction (transfinite in the $\llangle -\rrangle$ cases): start with the full subcategory with objects $\CG[\BZ]$; to pass to a successor ordinal, introduce, depending on the case, cones of all morphisms and direct factors of all objects, just the cones and direct factors, just the cocones and direct factors, the cones, direct factors and small sums, etc.; finally, to pass to a limit ordinal, take the union over all previous subcategories. These subcategories do not change if one replaces $\CT$ by a triangulated subcategory containing $\CG$ and which is stable by direct factors in the $\langle-\rangle$ case and stable under small sums in the $\llangle -\rrangle$ case. In practice, this means we do not need to specify the ambient triangulated category.

We adopt the notational convention that functors between triangulated categories are triangulated by default, i.e., we write $f_*$ for $Rf_*$, $f^*$ for $Lf^*$, $\otimes$ for $\otimes^L$, $a_\tr$ for $La_\tr$, etc. In the few cases where we need to refer to the ``underived functor'', that is, the underlying Quillen functor at the level of model categories, we underline the notation, i.e., we write $\underline{\smash{f}}_*$, $\underline{\smash{f}}^*$, $\underline{\otimes}$, $\underline{a}_{\tr}$, etc.

\subsection*{Schemes and group schemes}
Unless specified, all schemes are noetherian and finite-dimensional. The notation $\Sm/S$ (resp. $\Sch/S$) denotes the category of all smooth $S$-schemes (resp. all separated, locally of finite type $S$-schemes), usually considered as a site with the \'etale topology.

A geometric point of a scheme $S$ is a morphism $\bar{s}:\Spec(k)\ra S$ with $k$ an algebraically closed field.

\begin{defi}
A morphism $f:X\ra S$ between noetherian schemes is an alteration if it is proper, surjective, generically finite, and if the union of the fibers of $f$ above the finitely many generic points of $S$ is dense in $X$ (this is implied by the first three conditions if $X$ and $S$ are integral).

We say that a scheme $S$ admits the resolution of singularities by alterations if for any separated $S$-scheme $X$ of finite type and any nowhere dense closed subset $Z\subset X$, there is a projective alteration $g:X'\ra X$ with $X'$ regular and such that $g^{-1}(Z)$ is a strict normal crossing divisor.  
\end{defi}
 The best result available in this direction is due to Temkin \cite[Theorem 1.2.4]{Temkin_distillation}: any $S$ which is of finite type over a quasi-excellent scheme of dimension $\leq 3$ satisfies resolution of singularities by alterations.

Let us recall basic terminology and facts about exact sequences of group schemes. Let 
\[(C):0\ra G'\stackrel{i}{\ra} G\stackrel{p}{\ra} G''\ra 0\]
 be a sequence of commutative group schemes over a scheme $S$. We say that $(C)$ is exact if it induces an exact sequence of fppf sheaves on $\Sch/S$. If $(C)$ is exact, then $G'$ is the scheme-theoretic kernel of $p$ and $p$ is a surjective morphism of schemes. In the other direction, if $p$ is an fppf epimorphism and $G'$ is its scheme-theoretic kernel, then $(C)$ is exact. Moreover, if the group scheme $G'$ is smooth over $S$, then one obtains an equivalent definition by replacing the fppf topology with the \'etale topology. Indeed, $G\ra G''$ is an $G'$-torsor, because the action of $G'$ on $G$ is free, and an fppf torsor under a smooth group is also an étale torsor, because a smooth surjective morphism has local sections in the étale topology \cite[17.16.3 (ii)]{EGAIV_4}.

\subsection*{Triangulated categories of motives}

We work in the context of the stable homotopical $2$-functor $\DA^\et(-,\BQ)$ considered in \cite[\S 3]{Ayoub_Etale}. Most results in the paper are still valid, with the same proofs, for $\DA^{\et}(-,R)$ with $R$ a $\BQ$-algebra; however, we stick to $R=\BQ$ for simplicity.

 Since we only consider the \'etale topology and rational coefficients, we simplify the notation and write $\DA(S)$ for $\DA^\et(S,\BQ)$. The category $\DA(S)$ is equivalent to several other triangulated categories of motives with rational coefficients, e.g. Beilinson motives \cite{Cisinski_Deglise_BluePreprint}: see \cite[\S 16]{Cisinski_Deglise_BluePreprint} for various comparison theorems. 

By \cite{Ayoub_these_1}, the system of categories $\DA(-)$ admits the functoriality of the Grothendieck six operations. In particular, for any quasi-projective morphism $f:S\ra T$ of schemes, Ayoub constructs adjoint pairs 
\[f^*:\DA(T)\leftrightarrows \DA(S):f_*\]
\[f_!:\DA(S)\leftrightarrows \DA(T):f^!\] 
and when $f$ is smooth 
\[f_\sharp:\DA(S)\leftrightarrows \DA(T): f^*.\] 
There is a morphism of functors $f_!\ra f_*$, which is an isomorphism for $f$ projective. Given a smooth $S$-scheme $f:X\ra S$, we also write $M_{S}(X)$ for the homological motive $f_{\sharp}f^{*}\BQ_{S}\in\DA(S)$.

Note that for those operations, as well as for the pullbacks and pushforwards functors on derived categories of sheaves on $\Sm/-$, the notation $f^*$, $f_*$, $\ldots$ stands for the triangulated or derived functors. When we want to use the underived functor, we underline the functor: $\underline{f}^*, \underline{f} _*,\ldots$

In the definitions of the Grothendieck operations, one can relax the condition that $f$ is quasi-projective in the following ways.

\begin{enumerate}[label={\upshape(\roman*)}]
\item As observed in \cite[Appendice 1.A]{Ayoub_rigide}, one can define $f^*$ and $f_*$ for any morphism $f$ (without any finiteness hypothesis), and prove for instance that proper base change \cite[Proposition~3.5]{Ayoub_Etale}, the $\Ex_\sharp^*$ isomorphism \cite[Proposition~3.6]{Ayoub_Etale} and ``regular base change'' \cite[Corollaire 1.A.4]{Ayoub_rigide} still hold.
\item As observed in \cite[Theorem~2.2.14]{Cisinski_Deglise_BluePreprint}, one can define the exceptional functors $f_!$ and $f^!$ for any $f$ separated of finite type, and prove that all the properties in \cite{Ayoub_these_1} still hold (in particular with $f_!\simeq f_*$ for any $f$ proper). 
\end{enumerate}

We freely use these more general constructions and results.

The six operations for $\DA(-)$ satisfy a large number of properties and compatibilities (see \cite[Proposition~3.2]{Ayoub_Etale}, \cite[Scholie 1.4.2]{Ayoub_these_1}). For results which come up repeatedly in this paper, we introduce the following terminology. Let
\[
\xymatrix{
Z\ar[r]^{\tilde{g}} \ar[d]_{\tilde{f}} & X \ar[d]^{f}\\
W \ar[r]_{g} & Y
}
\]
be a cartesian square of morphisms of schemes.
\begin{itemize}
\item By the $\Ex^*_\sharp$ isomorphism (resp. the $\Ex^!_*$ isomorphism, the $\Ex^*_!$ isomorphism), we mean the natural isomorphism $\tilde{f}_\sharp \tilde{g}^*\stackrel{\sim}{\lra} g^*f_\sharp$ for $f$ smooth (resp. the natural isomorphism $\tilde{f}_*\tilde{g}^!\stackrel{\sim}{\lra} g^! f_*$, the natural isomorphism $g^*f_!\stackrel{\sim}{\lra} \tilde{f}_! \tilde{g}^*$).  
\item By ``smooth base change'', we mean the natural isomorphism $g^* f_*\stackrel{\sim}{\lra} \tilde{f}_*\tilde{g}^*$ for $g$ smooth.
\item By ``proper base change'', we mean the natural isomorphism $g^*f_*\stackrel{\sim}{\lra} \tilde{f}_* \tilde{g}^*$ for $f$ proper, and its generalisations $g^*f_!\stackrel{\sim}{\lra} \tilde{f}_! \tilde{g}^*$ and $f^{!}g_{*}\stackrel{\sim}{\lra}\tilde{g}_{*}\tilde{f}^{!}$ for $f$ separated of finite type.
\item Let $i:Z\rightarrow
X$ be a closed immersion and $j:U\rightarrow X$ be the complementary open
immersion. When we write ``by localisation'', we mean the use of
the distinguished triangle of functors 
\[
j_\sharp j^*\rightarrow \id\rightarrow i_*i^*\stackrel{+}{\rightarrow}.
\]
Dually, when we write ``by colocalisation'', we mean the use
of the distinguished triangle of functors
\[
i_!i^!\rightarrow \id\rightarrow j_*j^*\stackrel{+}{\rightarrow}.
\]
\item By ``relative purity'', we mean the fact that for any smooth morphism $f:S\ra T$ of pure relative dimension $d$, there are  isomorphisms of functors $f_!\simeq f_{\sharp}(d)[2d]$ and $f^!\simeq f^*(-d)[-2d]$.
\item By ``the separation property for $\DA$'', we mean the fact that for any surjective morphism of finite type (resp. any finite surjective radicial morphism) $f:S\ra T$, the functor $f^*:\DA(T)\ra \DA(S)$ is conservative (resp. an equivalence of categories) \cite[Th\'eor\`eme 3.9]{Ayoub_Etale}.
\item By ``absolute purity'', we mean the fact that for any regular immersion $i:S\ra T$ of pure codimension $d$, we have $i^!\BQ_T\simeq \BQ_S(-d)[-2d]$ (\cite[Corollaire 7.5]{Ayoub_Etale} and \cite[Remarque 11.2]{Ayoub_Etale}).
\item By ``cohomological $h$-descent'', we mean the fact that for any finite type morphism $f:S\ra T$ of quasi-excellent schemes and any hypercover $\pi_\bullet:S_{\bullet}\ra S$ in Voevodsky's $h$-topology, the natural morphism of functors
\[
f_*f^*(-) \ra f_*(\pi_\bullet)_*\pi_\bullet^*f^*(-)
\]
(which is part of the algebraic derivator structure for $\DA(-)$) is an isomorphism \cite[Theorem 14.3.4]{Cisinski_Deglise_BluePreprint}. In particular, we apply this in the case $f=\id$ and through the induced descent spectral sequence for morphisms groups in $\DA(-)$; namely, for such an hypercover $\pi_\bullet$ and for any motives $M,N\in \DA(S)$, there is a cohomological spectral sequence
\[
E^{p,q}_1=\DA(S_p)(\pi_p^*M,\pi_p^*N[q])\Rightarrow \DA(S)(M,N[p+q]).
\]
Note that this spectral sequence is only contained a priori in the right half-plane and so is not guaranteed to converge in general.

\end{itemize}

We also need some functoriality properties for categories of (effective) motives with transfers. For any noetherian finite-dimensional scheme $S$, we have tensor triangulated $\BQ$-linear categories $\DM^{(\eff)}(S)$. By \cite[\S 11.1.a.]{Cisinski_Deglise_BluePreprint}, when $S$ vary, these acquire the structure of a ``premotivic category'' in the sense of loc. cit.; in particular, for any morphism $f:T\ra S$, there are adjunctions
\[
f^*:\DM^{(\eff)}(S) \leftrightarrows \DM^{(\eff)}(T):f_*
\]
and, when $f$ is smooth, there are adjunctions
\[
f_{\sharp}:\DM^{(\eff)}(T) \leftrightarrows \DM^{(\eff)}(S):f^*
\]
These satisfy a smooth base change and a smooth projection formula. We write $\BQ_{S}^{\tr}$ for the monoidal unit of $\DM^{(\eff)}(S)$ and, for $f:X\ra S$ a smooth morphism, we write $M^{(\eff),\tr}_{S}(X)$ for the homological motives $f_{\sharp}f^{*}\BQ^{\tr}_{S}\in\DM^{(\eff)}(S)$.

\section{Triangulated categories of \texorpdfstring{$n$}a-motives}
\label{sec:subcats}

Categories of motives are naturally filtered by the dimension of ``geometric generators'', and such filtrations have been studied in various motivic contexts \cite{Beilinson_generic} \cite{Ayoub_Barbieri-Viale} \cite{Ayoub_2-mot}. We give definitions in the context of $\DA(-)$ and prove a number of basic results. Since such a treatment does not appear in the literature, we study a more general situation than is necessary for the rest of the paper; outside of this section, we are concerned with the special case of (co)homological $0$- and $1$-motives. Note that some of our results on the operations for cohomological motives are also discussed in \cite[\S 3.1]{Vaish_EM}.

\subsection{Definitions}
\label{sec:n_defs}

We fix a (noetherian, finite-dimensional) base scheme $S$ and an integer $n\geq 0$ for the remainder of this section.

\begin{defi}
\label{def:subcats}
  The category $\DA^{\coh}(S)$
  (resp. $\DA_{\homo}(S)$) of \emph{cohomological motives} (resp.
  \emph{homological motives}) is the full subcategory of $\DA(S)$ defined as
\[
\DA^{\coh}(S)=\ \llangle f_*\BQ_X|\mbox{ $f:X\rightarrow S$ proper morphism}\rrangle
\]
(resp.
\[
\DA_{\homo}(S)=\ \llangle f_{\sharp}\BQ_X|\mbox{ $f:X\rightarrow S$ smooth morphism}\rrangle).
\]
The category $\DA^n(S)$ (resp. $\DA_n(S)$) of \emph{cohomological $n$-motives} (resp. \emph{homological $n$-motives}) is the full subcategory of $\DA(S)$ defined as
\[
\DA^n(S)=\ \llangle f_*\BQ_X|\mbox{ $f:X\rightarrow S$ proper morphism of relative dimension $\leq n$}\rrangle
\]
(resp. 
\[
\DA_n(S)=\ \llangle f_\sharp\BQ_X|\mbox{ $f:X\rightarrow S$ smooth morphism of relative dimension $\leq n$}\rrangle).
\]
\end{defi}

\begin{remark}
\label{rmk:hom-n_coh-n}
As we will see in Proposition~\ref{prop:hom_cohom_twists}, the categories $\DA_{n}(S)$ and $\DA^n(S)$ are in fact equivalent as triangulated categories, so that many questions about $\DA^n(S)$ can be reduced to $\DA_n(S)$. In the special cases $n=0,1$, this is a crucial ingredient for several results in this paper. However to establish Proposition~\ref{prop:hom_cohom_twists} we need to study $\DA_n$ and $\DA^n$ in parallel.
\end{remark}

We have subcategories of smooth and geometrically smooth objects. Recall that an object $X$ in a symmetric monoidal category is said to be strongly dualisable if there exists an object $X^\vee$ together with morphisms $\epsilon:\un\ra X\otimes X^\vee$ and $\eta: X^{\vee}\otimes X\ra \un$ satisfying the classical adjunction triangle laws.

\begin{defi}
\label{def:sm_subcats}
The category $\DA^{\sm}(S)$ (resp, $\DA^{\coh}_{\sm}(S)$, $\DA_\homo^\sm(S)$) of \emph{smooth motives} (resp. \emph{smooth cohomological motives}, \emph{smooth homological motives}) is defined as
\[
\DA^{\sm}(S)=\ \llangle M\in\DA(S)| \mbox{ $M$ strongly dualisable }\rrangle
\]
(resp.
\[
\DA^{\coh}_{\sm}(S)=\ \llangle M\in \DA^{\coh}(S)| \mbox{ $M$ strongly dualisable in $\DA(S)$}\rrangle,
\]
\[
\DA_\homo^\sm(S)=\ \llangle M\in \DA_{\homo}(S)| \mbox{ $M$ strongly dualisable in $\DA(S)$}\rrangle).
\]
The category $\DA^{\gsm}(S)$ (resp. $\DA^{\coh}_{\gsm}(S)$, $\DA_\homo^\gsm(S)$) of \emph{geometrically smooth motives} (resp. \emph{geometrically smooth cohomological motives}
  resp. of \emph{geometrically smooth homological motives}) is the full subcategory of $\DA(S)$ defined as  
\[
\DA^{\gsm}(S)=\ \llangle f_\sharp\BQ_X(-n)|\mbox{ $f:X\rightarrow S$ proper smooth morphism},\ n\in\BZ\rrangle
\]
(resp.
\[
\DA^{\coh}_{\gsm}(S)=\ \llangle f_*\BQ_X|\mbox{ $f:X\rightarrow S$ proper smooth morphism}\rrangle,
\]
\[
\DA_\homo^\gsm(S)=\ \llangle f_\sharp\BQ_X|\mbox{ $f:X\rightarrow S$ proper smooth morphism}\rrangle).
\]
We then define their subcategories of $n$-motives as
\[\DA^n_{\gsm}(S)=\ \llangle \DA^n(S)\cap \DA^{\coh}_{\gsm,c}(S)\rrangle \]
\[\DA^{\gsm}_n(S)=\ \llangle \DA_n(S)\cap \DA^{\gsm}_{\homo,c}(S)\rrangle\]
\[\DA^n_{\sm}(S)=\ \llangle \DA_n(S)\cap \DA^{\coh}_{\sm,c}(S)\rrangle \]
\[\text{ and }\DA^{\sm}_n(S)= \llangle \DA_n(S)\cap \DA^{\sm}_{\homo,c}(S)\rrangle.\]
We also have categories of \emph{strongly geometrically smooth $n$-motives}
\[
\DA^{n}_{\sgsm}(S)=\ \llangle f_*\BQ_X|\mbox{ $f:X\rightarrow S$ proper smooth morphism of relative dimension }\leq n\rrangle,
\]
\[
\DA_n^\sgsm(S)=\ \llangle f_\sharp\BQ_X|\mbox{ $f:X\rightarrow S$ proper smooth morphism of relative dimension }\leq n\rrangle).
\]

\end{defi}

\begin{remark}
We have $\DA_{n}^{\sgsm}(S)\subset \DA_{n}^{\gsm}(S)$. Deciding whether this is an equality seems difficult, although we can prove this when $S$ is the spectrum of a field, see Proposition~\ref{prop:subcats_field}. Our motivation for introducing geometrically smooth $1$-motives is that the notion of strongly geometrically smooth $1$-motives is too strong for what we can actually establish about motives attached to Deligne $1$-motives, as the proof of Corollary~\ref{cor:Deligne_da_1} below shows. 
\end{remark}

\begin{lemma}
\label{lem:gsm_sm}
Geometrically smooth objects are smooth: we have $\DA^\gsm(S)\subset \DA^\sm(S)$, \\ $\DA^\gsm_\homo(S)\subset \DA^\sm_\homo(S)$, etc.
\end{lemma}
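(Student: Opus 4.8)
The plan is to reduce all three inclusions to a single fact: if $f\colon X\to S$ is smooth and proper, then $f_\sharp\BQ_X$ is strongly dualizable in $\DA(S)$. Granting this, each inclusion follows formally, because $\DA^\sm(S)$, $\DA^\sm_\homo(S)$ and $\DA^\coh_\sm(S)$ are, by definition, the smallest $\ll-\gg$-closed subcategories containing suitable families of strongly dualizable objects, so it suffices to check that the generators of the geometrically smooth categories are strongly dualizable objects lying in the relevant ambient category.

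First I would record the elementary closure properties of strong dualizability that are used: $\BQ_S$ is (self-)dualizable, $\BQ_S(1)$ is $\otimes$-invertible hence dualizable, tensor products and shifts of dualizable objects are dualizable, and a direct factor of a dualizable object is dualizable. In particular $M$ is strongly dualizable if and only if $M(n)[m]$ is, for all $n,m\in\BZ$.

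The substantive step is the dualizability of $f_\sharp\BQ_X$ for $f\colon X\to S$ smooth proper of relative dimension $d$. This is a standard consequence of the six operations formalism; the candidate dual is $f_*\BQ_X$, which by relative purity and $f_!\simeq f_*$ (properness) is just $f_\sharp\BQ_X(d)[2d]$. Indeed $\UHom(f_\sharp\BQ_X,\BQ_S)\simeq f_*\UHom(\BQ_X,f^*\BQ_S)\simeq f_*\BQ_X$, using the $f_\sharp$-projection formula and the adjunction $f_\sharp\dashv f^*$, and the evaluation and coevaluation maps can be built from the projection-formula isomorphisms for $f_\sharp$ and for $f_!=f_*$ together with the units and counits of the adjunctions $(f_\sharp,f^*)$ and $(f^*,f_*)$; alternatively one cites this directly from \cite{Cisinski_Deglise_BluePreprint} or \cite{Ayoub_these_1}.

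Finally I would assemble the inclusions. For $\DA^\gsm(S)\subset\DA^\sm(S)$: the generators $f_\sharp\BQ_X(-n)$ of $\DA^\gsm(S)$ ($f$ proper smooth, $n\in\BZ$) are strongly dualizable by the two previous steps, hence lie in $\DA^\sm(S)$; as $\DA^\sm(S)$ is $\ll-\gg$-closed, it contains $\DA^\gsm(S)$. For $\DA^\gsm_\homo(S)\subset\DA^\sm_\homo(S)$: each generator $f_\sharp\BQ_X$ of $\DA^\gsm_\homo(S)$ ($f$ proper smooth) is a homological motive and is strongly dualizable in $\DA(S)$, hence is one of the generators of $\DA^\sm_\homo(S)$. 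For $\DA^\coh_\gsm(S)\subset\DA^\coh_\sm(S)$: each generator $f_*\BQ_X\simeq f_\sharp\BQ_X(d)[2d]$ of $\DA^\coh_\gsm(S)$ is a cohomological motive and is strongly dualizable, hence a generator of $\DA^\coh_\sm(S)$. The variants with a dimension bound follow by intersecting with $\DA_n(S)$, resp. $\DA^n(S)$. The only real obstacle is the strong dualizability statement; if one wishes to prove rather than cite it, the work lies in checking the triangle identities, i.e., the compatibility of the two projection formulas with the relevant adjunction units and counits — everything else is formal manipulation of $\ll-\gg$-generated subcategories.
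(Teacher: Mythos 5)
Your proof is correct and follows essentially the same route as the paper, which disposes of the lemma in one line by invoking relative purity and the projection formula (via \cite[Lemma 4.2.8]{CDEt}) to get strong dualizability of $f_\sharp\BQ_X$ for $f$ smooth proper, exactly the key fact you reduce everything to, the rest being the same formal manipulation of $\ll-\gg$-generated subcategories. The only blemish is the sign of the twist in your identification $f_*\BQ_X\simeq f_\sharp\BQ_X(d)[2d]$ (as the paper actually uses relative purity, one has $f_*\BQ_X\simeq f_\sharp\BQ_X(-d)[-2d]$), but this is immaterial since Tate twists and shifts preserve strong dualizability.
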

\begin{proof}
This result is due to Riou \cite{Riou} in the case of the stable motivic homotopy category, and the same proof applies to $\DA$. One can also look at \cite[Lemma 4.2.8]{CDEt}. 
\end{proof}

\begin{remark}
Proposition~\ref{prop:subcats_field} below shows that when $S$ is the spectrum of a field, any motive is geometrically smooth.
  
It is not clear if one should expect $\DA^{\sm}(S)$ (resp. $\DA^{\sm}_{\homo}$, $\DA^{\sm}_{n}$, etc.) to be generated by motives coming from smooth projective morphisms. Informally, when $S$ is a discrete valuation ring, it would mean that a ``motive with good reduction'' is always realisable in the cohomology of a variety with good reduction.

 There is a further reasonable definition of a smooth-like object in $\DA_c(S)$, namely a motive whose realisations have cohomology sheaves which are local systems (in the appropriate sense, e.g. lisse $\ell$-adic sheaves). This is conjecturally equivalent to being strongly dualisable; this equivalence would follow from the conservativity of realisation functors.
\end{remark}

An important property of smooth compact objects is that they satisfy a form of absolute purity.

\begin{prop}\label{prop:sm_abs_purity}
 Let $i:Z\ra S$ be an immersion. For $M\in \DA^{\sm}(S)$ and any $N\in \DA(S)$, there is an isomorphism 
\[
i^*M\otimes i^!N \simeq i^!(M\otimes N)
\]
which is functorial in $M$ and $N$, so that in particular, for any $f:M\ra M'\in \DA^{\sm}_c(S)$ the diagram
\[
\xymatrixcolsep{4pc}
\xymatrix{
i^* M\otimes i^!N \ar[r]_{i^*(f)} \ar[d] & i^* M' \otimes i^!N\ar[d] \\
i^! (M\otimes N) \ar[r]_{i^!(f)} & i^! (M'\otimes N)
}
\]
commutes. If $i$ is a regular immersion of codimension $c$, we have a functorial purity isomorphism
\[
i^*M\simeq i^!M(c)[2c].
\]
\end{prop}
\begin{proof}
  We first reduce to the case of closed immersions. Since we work with noetherian schemes, $i$ is quasi-compact, so that we can write $i=\bar{\imath}\circ j$ with $j$ an open immersion and $\bar{\imath}$ a closed immersion \cite[01QV]{stacks-project}. We then have a natural isomorphism $j^{*}\simeq j^{!}$ and $j^{*}$ is monoidal, so that
  \[
i^{*}M\otimes i^{!}N\simeq j^{*}(\bar{\imath}^{*}M\otimes \bar{\imath}^{!}N)\text{  and  }i^{!}(M\otimes N)\simeq j^{*}\bar{\imath}^{!}(M\otimes N).
    \]
We can thus assume that $i$ is a closed immersion. In this case, by \cite[Lemme 2.3.12]{Ayoub_these_1}, there exists for any $M,N\in \DA(S)$ a map
  \[
i^*M\otimes i^{!}N\ra i^{!}(M\otimes N)
    \]
which by \cite[Lemme 2.3.10, Proposition 2.1.103]{Ayoub_these_1} is defined as the composition
    \[
i^*M\otimes i^!N\stackrel{\sim}\ra i^!i_*(i^*M\otimes i^!N)\stackrel{\sim}{\leftarrow}i^!(M\otimes i_*i^!N)\ra i^!(M\otimes N)
      \]
where the first arrow is induced by the unit of the adjunction $(i_!=i_*,i^!)$ (invertible because $i_*$ is fully faithful), the second arrow is the invertible map $q_d$ of \cite[Lemme 2.3.10]{Ayoub_these_1}, and the third arrow is induced by the counit of the adjunction $(i^*,i_*)$.

This map is functorial in $M,N$, and the functors $i^*,i^!,\otimes$ commute with small sums (the proof is easy and recalled in Lemma~\ref{lem:preservation} below), hence it suffices to show that it is an isomorphism for $M$ strongly dualisable. By construction, it suffices to show that the map $i^!(M\otimes i_*i^!N)\ra i^!(M\otimes N)$ is then an isomorphism, or equivalently, by localisation, that its cone $i^!(M\otimes j_*N)$ vanishes (where $j:S\setminus Z\ra S$ is the complementary open immersion). Let $P\in \DA(Z)$. We have

\begin{eqnarray*}
  \DA(Z)(P,i^!(M\otimes j_*N)) & \simeq & \DA(S)(i_*P,M\otimes j_*N) \\
                             & \simeq & \DA(S)(i_*P\otimes \Homint(M,\BQ),j_*N) \\
                             & \simeq & \DA(S\setminus Z)(j^*(i_*P\otimes \Homint(M,\BQ),N) \\
  & \simeq & \DA(S\setminus Z)(j^*i_*P\otimes j^*\Homint(M,\BQ),N)
\end{eqnarray*}
where we have used adjunctions, the monoidality of $j^*$ and the biduality property for the strongly dualisable object $M$. Since $i^{*}j_{*}=0$, we deduce by Yoneda that $i^{!}(M\otimes j_{*}N)=0$.

In the case of a regular immersion, we combine the result with the absolute purity isomorphism $i^!\BQ\simeq \BQ(-c)[-2c]$.
\end{proof}

\begin{lemma}
\label{lem:subcats_comp}
Let $\CT$ be one of $\DA_{\hom}(S),\ \DA^\coh(S),\ \DA_n(S),\ \DA^n(S)$ or their subcategories of smooth or (strongly) geometrically smooth objects. Then the triangulated category $\CT$ is compactly generated by its generating family, and an object of $\CT$ is compact if and only if it is compact in $\DA(S)$.
\end{lemma}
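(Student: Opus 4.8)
The plan is to reduce the statement to a standard fact about localizing subcategories generated by compact objects, and then carry out a short case check. Recall first that $\DA(S)=\DA^\et(S,\BQ)$ is compactly generated, with compact generators the Tate twists $f_\sharp\BQ_X(m)$ of motives of smooth $S$-schemes $f\colon X\to S$, and that its compact objects are exactly the constructible ones, i.e. the objects of $\DA_c(S)$ \cite[\S 3]{Ayoub_Etale}. The fact we will invoke is the following consequence of Brown representability for compactly generated triangulated categories (cf. \cite[\S 2.1]{Ayoub_these_1}): \emph{if $\CR$ is compactly generated and $\CG$ is a set of objects of $\CR$ which are compact in $\CR$, then $\ll\CG\gg$ is compactly generated by $\CG$; its compact objects are precisely the objects of the thick closure $\langle\CG\rangle$; and $\langle\CG\rangle=\ll\CG\gg\cap\CR^c$.} In particular an object of $\ll\CG\gg$ is compact there if and only if it is compact in $\CR$ --- the nontrivial direction being that a compact object of $\ll\CG\gg$ belongs to $\langle\CG\rangle\subseteq\CR^c$, while conversely a localizing subcategory is closed under coproducts, so $\CR$-compact objects of it remain compact. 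Applying this with $\CR=\DA(S)$ settles the lemma for all of our categories that are defined directly as a $\ll-\gg$, \emph{once we know their generating family consists of objects compact in $\DA(S)$}.

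Verifying the latter is a short case analysis. For $\DA_\homo(S)$, $\DA_n(S)$ and $\DA^\gsm_\homo(S)$ the generators $f_\sharp\BQ_X$ with $f$ smooth are among the standard compact generators of $\DA(S)$. For $\DA^\gsm(S)$ they are Tate twists $f_\sharp\BQ_X(-m)$ of those, and tensoring by the invertible compact object $\BQ(m)$ preserves compactness. For $\DA^\coh(S)$, $\DA^n(S)$ and $\DA^\coh_\gsm(S)$ the generators are $f_*\BQ_X$ with $f\colon X\to S$ proper; here $\BQ_X$ is the unit of $\DA(X)$, hence compact, and $f_*\BQ_X\simeq f_!\BQ_X$ as $f$ is proper, which is compact in $\DA(S)$ because $f_!$ preserves constructibility for $f$ separated of finite type \cite[\S 8]{Ayoub_Etale}, \cite[Thm.~2.2.14]{Cisinski_Deglise_BluePreprint}. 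Finally, for $\DA_\sm(S)$ and its cohomological and homological variants the generators are the strongly dualisable objects $M$ of $\DA(S)$ (lying in $\DA^\coh(S)$, resp. $\DA_\homo(S)$, in the last two cases), and the point is that such an $M$ is automatically compact: the unit $\BQ_S$ is compact, $-\otimes M^\vee$ commutes with coproducts (it has the right adjoint $\Homint(M^\vee,-)$), and therefore for every small family $(Y_j)$
\[
\Hom\big(M,\textstyle\bigoplus_j Y_j\big)\cong\Hom\big(\BQ_S,M^\vee\otimes\textstyle\bigoplus_j Y_j\big)\cong\Hom\big(\BQ_S,\textstyle\bigoplus_j (M^\vee\otimes Y_j)\big)\cong\textstyle\bigoplus_j\Hom\big(\BQ_S,M^\vee\otimes Y_j\big)\cong\textstyle\bigoplus_j\Hom(M,Y_j).
\]
In particular the strongly dualisable objects form an essentially small class (they all lie in $\DA_c(S)$), so these subcategories are generated by a \emph{set} of $\DA(S)$-compact objects and the fact quoted above applies.

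This leaves the four categories defined as intersections, $\DA^\gsm_n(S)=\DA_n(S)\cap\DA^\gsm_\homo(S)$, $\DA^n_\gsm(S)=\DA^n(S)\cap\DA^\coh_\gsm(S)$, $\DA^\sm_n(S)=\DA_n(S)\cap\DA^\sm_\homo(S)$ and $\DA^n_\sm(S)=\DA_n(S)\cap\DA^\coh_\sm(S)$, each of the form $\CT_1\cap\CT_2$ with $\CT_1\in\{\DA_n(S),\DA^n(S)\}$ and $\CT_2$ one of the categories already treated. Each is a localizing subcategory of $\DA(S)$, and its subcategory of objects that are compact in $\DA(S)$ is the intersection of two thick, essentially small subcategories of $\DA_c(S)$, hence itself thick and essentially small; so it suffices to show that these compact objects \emph{generate} the intersection as a localizing subcategory, after which the fact above applies once more with them as generating family. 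This is the step I expect to be the real obstacle: it is not a formal consequence of $\CT_1$ and $\CT_2$ being compactly generated, since the intersection of two localizing subcategories each generated by compact objects need not be generated by compact objects, and the associated colocalisation functors do not in general preserve compactness. I would attack it by analysing the generating families directly --- for instance, decomposing the motives $f_\sharp\BQ_X$ of proper smooth $S$-schemes into summands visibly lying in $\DA^\gsm_\homo(S)$ and tracking which of them lie in $\DA_n(S)$ --- and, where a general base causes trouble, by reducing to the case of a regular base via resolution of singularities by alterations, in the spirit of the rest of the paper. The remaining assertions (compact generation and the detection of compactness) then follow from the general fact as in the other cases.
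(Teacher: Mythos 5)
Your main argument is essentially the paper's proof: the defining generators are compact in $\DA(S)$ (via Ayoub's constructibility results for $f_\sharp$ and for $f_*\simeq f_!$ with $f$ proper, plus the fact that strongly dualisable objects are compact), so each category of the form $\ll\CG\gg$ is compactly generated by $\CG$, its compact objects are $\langle\CG\rangle$ by \cite[Lemma 4.4.5]{Neeman_book}, and these are exactly the objects compact in $\DA(S)$, the converse direction being clear since the subcategory is closed under small sums. Concerning the intersection-defined categories $\DA_n^{\gsm}(S)$, $\DA^n_{\gsm}(S)$, $\DA_n^{\sm}(S)$, $\DA^n_{\sm}(S)$ that you flag as the real obstacle and leave open: the paper's proof does not treat them any differently either --- it simply writes ``let $\CG$ be the generating family of $\CT$'' and runs the same argument, even though, by the remark following Definition~\ref{def:sm_subcats}, these four categories are deliberately not presented by generators of relative dimension $\leq n$. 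So you have not missed an idea that the paper contains; rather, you have correctly located the one point where the lemma, read literally for those cases, is not covered by the formal Neeman-type argument and where the paper is terse. For everything the paper's proof actually establishes, your route and the paper's coincide.
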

\begin{proof}
Write $\CG$ for the generating family of $\CT$. By the fact that strongly dualisable objects in a symmetric monoidal triangulated category with compact unit are automatically compact (for the $\DA^{\sm}(S)$ case) and \cite[Proposition~3.20, Proposition~8.5]{Ayoub_Etale} (for the other cases), we see that all objects of $\CG$ are compact. This means that $\CT$ is compactly generated by $\CG$. Write $\CT_c$ for the full subcategory of objects of $\CT$ which are compact in $\CT$. By \cite[Lemma 4.4.5]{Neeman_book}, $\CT_c=\langle \CG\rangle$. In particular any object of $\CT_c$ is compact in $\DA(S)$; the converse implication is clear.
\end{proof}

\begin{defi}
\label{def:subcats_comp}
We write $\DA^{\coh}_c(S)$, $\DA_{\homo,c}(S),\mathrm{etc}.$ for the full subcategories of compact objects of $\DA^{\coh}(S)$, $\DA_{\homo}(S),\mathrm{etc}$.
\end{defi}

\subsection{Permanence properties}
\label{sec:n_permanence}

The subcategories we have introduced are each stable under certain Grothendieck operations. We start with the compatibilities with the monoidal structure.

\begin{prop}
\label{prop:n_monoidal}
  Let $S$ be a base scheme.
  \begin{enumerate}[label={\upshape(\roman*)}]
  \item \label{coh_tensor} $\DA^{\coh}_{(c)}(S)$ is stable by tensor products and negative Tate twists.
  \item \label{coh_n_tensor} For all $m,n\geq 0$, we have $\DA^m_{(c)}(S)\otimes \DA^n_{(c)}(S) \subset \DA^{m+n}_{(c)}(S)$.
  \item \label{coh_twists} For all $m,n\geq 0$, we have $\DA^m_{(c)}(S)(-n) \subset \DA^{m+n}_{(c)}(S)$.
  \item \label{hom_tensor} $\DA_{\homo,(c)}(S)$ is stable by tensor products and positive Tate twists.
  \item \label{hom_n_tensor} For all $m,n\geq 0$, we have $\DA_{m,(c)}(S)\otimes \DA_{n,(c)}(S) \subset \DA_{m+n,(c)}(S)$.
  \item \label{hom_twists} For all $m,n\geq 0$, we have $\DA_{m,(c)}(S)(n) \subset \DA_{m+n,(c)}(S)$.
  \end{enumerate}
The same properties hold for the smooth and (strongly) geometrically smooth versions of those subcategories.
\end{prop}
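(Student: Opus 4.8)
The plan is to reduce everything to the single generating statement: if $f:X\to S$ and $g:Y\to S$ are proper (resp. smooth) of relative dimension $\leq m$ and $\leq n$ respectively, then $f_*\BQ_X\otimes g_*\BQ_Y$ lies in $\DA^{m+n}(S)$ (resp. the homological analogue), and that $(f_*\BQ_X)(-k)$ lies in $\DA^{m+k}(S)$. Once these are known for generators, the full statements follow by a standard ``devissage'' argument: the tensor product and Tate twist functors commute with small sums, send distinguished triangles to distinguished triangles, and preserve direct factors, so the class of objects $M$ with $M\otimes g_*\BQ_Y\in\DA^{m+n}(S)$ (for fixed $g$) is stable under extensions, small sums, shifts and direct factors; since it contains the generators of $\DA^m(S)$ by the generator case, it contains all of $\DA^m(S)$. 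Repeating the argument in the second variable (now with a fixed object of $\DA^m(S)$) gives \ref{coh_n_tensor}, and the analogous one-variable argument gives \ref{coh_twists}. Items \ref{coh_tensor}, \ref{hom_tensor} are the special cases $m=n=0$ is false --- rather they are the ``unbounded'' versions, obtained by the same devissage without tracking relative dimension; and \ref{hom_n_tensor}, \ref{hom_twists} are dual.

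For the generator computations I would argue as follows. For the cohomological twist \ref{coh_twists}: choosing an auxiliary $\BP^k_S\to S$, or rather using that $\BQ_S(-k)[-2k]$ is a direct factor of $(\BP^k_S\to S)_*\BQ$ (projective bundle formula) and that $\BP^k_S\to S$ is proper smooth of relative dimension $k$, we get $\BQ_S(-k)\in\DA^k(S)$ up to shift; then $(f_*\BQ_X)(-k)=f_*\BQ_X\otimes\BQ_S(-k)$, and by the projection formula $f_*\BQ_X\otimes\BQ_S(-k)\simeq f_*(f^*\BQ_S(-k))=f_*(\BQ_X(-k))$, which is a direct factor (up to shift) of $f_*(\BP^k_X\to X)^*$-pushforward $=(X\times_S\BP^k_S\to S)_*\BQ$; but $X\times_S\BP^k_S\to S$ is proper of relative dimension $\leq m+k$, so this lies in $\DA^{m+k}(S)$, hence so does its direct factor. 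For the tensor product \ref{coh_n_tensor}: by the Künneth/base-change computation, $f_*\BQ_X\otimes g_*\BQ_Y\simeq (f\times_S g)_*\BQ_{X\times_S Y}$ (using proper base change to move $g_*$ past $f^*$, and the projection formula), and $X\times_S Y\to S$ is proper of relative dimension $\leq m+n$, so this is a generator of $\DA^{m+n}(S)$. The homological cases \ref{hom_n_tensor}, \ref{hom_twists} are identical using $f_\sharp$ in place of $f_*$, the $\Ex_\sharp^*$ isomorphism in place of proper base change, and the $\sharp$-projection formula; here positivity of the Tate twist is the natural variance.

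For the smooth and geometrically smooth variants: the monoidal structure of $\DA(S)$ preserves strongly dualizable objects (the tensor product of strong duals is a strong dual) and Tate twists are invertible hence dualizable, so the devissage runs inside $\DA^{\sm}$, $\DA^{\gsm}$ etc.\ without modification; for the $\gsm$ generators one notes that if $f,g$ are moreover smooth then $f\times_S g$ is smooth, so the Künneth computation stays inside the proper-smooth generating family, and similarly $X\times_S\BP^k_S\to S$ is proper smooth. The main obstacle --- though it is really a bookkeeping rather than a conceptual one --- is making sure the Künneth isomorphism $f_*\BQ_X\otimes g_*\BQ_Y\simeq(f\times_S g)_*\BQ_{X\times_S Y}$ is correctly assembled from proper base change and the projection formula (one writes $g_*\BQ_Y\simeq g_*g^*\BQ_S$, pulls back along $f$ and uses $\Ex$, etc.), and keeping the relative dimensions correct throughout; everything else is formal stability of the $\ll-\gg$-construction under the relevant functors.
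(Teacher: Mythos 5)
Your proposal is correct and follows essentially the same route as the paper: reduce to generators using that $\otimes$ and Tate twists commute with small sums and preserve triangles and direct factors, compute the tensor product of generators via the projection formula and proper base change (resp.\ the $\Ex^*_\sharp$ isomorphism and the $f_\sharp$-projection formula), realise $\BQ_S(\mp n)$ as a direct factor of the (co)homological motive of $\BP^n_S$, and track relative dimensions, with the smooth and geometrically smooth cases handled by stability of strong dualizability under tensor product. The only cosmetic difference is that the paper routes the projection formula through $f_!$ together with $f_!\simeq f_*$ for proper $f$, which you use implicitly.
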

\begin{proof}
First, note that $\otimes$ commutes with small sums in both variables, being a left adjoint. This reduces the proof to checking the result for generators. 

Let us prove point \ref{coh_tensor}. Recall that we have a projection formula for $f_!$ and $f^*$ from \cite[Theoreme 2.3.40]{Ayoub_these_1}, i.e., for any finite type separated morphism $f:S\ra T$ and any $M\in \DA(S), N\in \DA(T)$, we have a natural isomorphism 
\[
f_!(M\otimes f^*N)\simeq f_! M\otimes N.
\]
Let $g:X\ra S$ and $h:Y\ra S$ be proper morphisms. Let $Z=X\times_S Y$ and let $g':Z\ra Y$ and $h':Z\ra X$ be the two projections. We have a sequence of isomorphisms

\begin{eqnarray*}
g_*\BQ_X\otimes h_*\BQ_Y & \simeq & g_!\BQ_X\otimes h_!\BQ_Y\\
& \simeq & g_!(\BQ_X\otimes g^*h_! \BQ_Y)\\
& \simeq & g_! h'_! {(g')}^*\BQ_Y\\
& \simeq & g_* h'_* \BQ_Z
\end{eqnarray*}
where the first and fourth isomorphisms follows from properness, the second is the projection formula and the third is the $\Ex^*_!$ isomorphism. This shows that $g_*\BQ_X\otimes h_*\BQ_Y$ is cohomological. The negative Tate twist $\BQ_S(-n)$ is cohomological, as it is a direct factor of $(\BP^n_S\ra S)_*\BQ$. This finishes the proof of \ref{coh_tensor}. The same proof, combined with the fact that relative dimension is stable by base change and adds up in compositions, gives \ref{coh_n_tensor} and \ref{coh_twists}.

For the proof of point \ref{hom_tensor}, we use a parallel argument; we combine the projection formula for $f_\sharp$ and $f^*$ of \cite[Proposition~4.5.17]{Ayoub_these_2} with the $\Ex^*_\sharp$ isomorphism and the fact that $\BQ_S(n)$ is a direct factor of $(\BP^n_S\ra S)_\sharp\BQ$ by the projective bundle formula. The same proof, combined with the fact that relative dimension is stable by base change and adds up in compositions, gives \ref{hom_n_tensor} and \ref{hom_twists}.

Finally, the analoguous statement for smooth and (strongly) geometrically smooth versions follow from similar arguments together with the fact that a tensor product of strongly dualisable objects is strongly dualisable.
\end{proof}

\begin{lemma}\label{lem:preservation}
Let $f:S\ra T $ be a morphism of schemes (resp. a finite type separated morphism of schemes).
  \begin{enumerate}[label={\upshape(\roman*)}]
  \item \label{pre_sums} The operations $f^*,f_*$, (resp. $f_!,f^!$) commute with small sums.
  \item \label{pre_left} The operations $f^*$ (resp. $f_!$) preserve compact objects.
  \item \label{pre_right} Assume $T$ is quasi-excellent. Then the operations  $f_*$ (resp. $f^!$) preserve compact objects.
  \item \label{pre_sharp_sums} If $f$ is smooth, the operation $f_\sharp$ commutes with small sums.
  \item \label{pre_sharp_compact} If $f$ is smooth, the operation $f_\sharp$ preserves compact objects.
    \end{enumerate}
\end{lemma}  
\begin{proof}
By \cite[Proposition 3.19]{Ayoub_Etale}, compact objects in $\DA(S)$ coincide with constructible objects. This immediately implies Statement \ref{pre_sharp_compact}. Statement \ref{pre_left} then follows from \cite[Proposition 8.5]{Ayoub_Etale} and Statement \ref{pre_right} from \cite[Theoreme 8.10]{Ayoub_Etale} (the result, stated for excellent schemes, actually holds for quasi-excellent schemes since Gabber's local uniformisation theorem holds in that generality). 

Statement \ref{pre_sums} is immediate for $f^*,f_!$ since they are left adjoints. The same holds for \ref{pre_sharp_sums}. For $f_*, f^!$, by \cite[Lemme 2.1.28]{Ayoub_these_1} it is enough to see that their left adjoints preserve compact objects, which is the already established Statement \ref{pre_left}.
\end{proof}

\begin{prop}
  \label{prop:permanence_coh}
  Let $f:S\ra T$  be a morphism of schemes. The following operations preserve the subcategories $\DA^{\coh}(-)$.
  \begin{enumerate}[label={\upshape(\roman*)}]
  \item \label{coh^*} $f^*$ for any $f$.
  \item \label{coh_*} $f_*$ when $f$ is separated of finite type and $T$ admits the resolution of singularities by alterations.
  \item \label{coh_!} $f_!$ when $f$ is separated of finite type.
  \item \label{coh^!} $f^!$ when $f$ is quasi-finite separated and $T$ admits the resolution of singularities by alterations.
  \end{enumerate}
Moreover, they also preserve $\DA^{\coh}_{c}$, with the additional assumption that $T$ is quasi-excellent for points \ref{coh_*} and \ref{coh^!}.
\end{prop}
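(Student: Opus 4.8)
The plan is to reduce every assertion to a membership check on the generating objects $g_*\BQ_X$, $g:X\to S$ proper, exploiting that $\DA^\coh(-)$ is triangulated and stable under small sums. First I would record that all four functors commute with small direct sums on the categories involved: $f^*$ and $f_!$ are left adjoints; $f_*$ commutes with small sums because its left adjoint $f^*$ preserves compact objects — indeed, by the $\Ex^*_\sharp$ isomorphism $f^*$ sends the compact generator $\pi_\sharp\BQ_U(n)$ of $\DA(T)$ (with $\pi:U\to T$ smooth) to $\pi'_\sharp\BQ_{U\times_T S}(n)$, again a compact generator — so Neeman's criterion \cite{Neeman_book} applies; and $f^!$ commutes with small sums because $f_!$ preserves compact objects for $f$ separated of finite type (in particular for $f$ quasi-finite separated). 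Hence for each such functor $F$ the class of $M$ with $F(M)\in\DA^\coh$ is a triangulated subcategory stable under small sums, so it is enough to check $F(g_*\BQ_X)\in\DA^\coh$. For the refinement to $\DA^\coh_c(-)$ I would combine this with the fact that $f^*$ (always), $f_!$ ($f$ separated of finite type), and $f_*,f^!$ ($f$ separated of finite type, schemes excellent) preserve compact objects of $\DA(-)$ \cite{Cisinski_Deglise_BluePreprint,Ayoub_Etale}, together with $\DA^\coh_c(S)=\DA^\coh(S)\cap\DA_c(S)$ from Lemma~\ref{lem:subcats_comp}.

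Points \ref{coh^*} and \ref{coh_!} are then formal. For \ref{coh^*}, proper base change gives $f^*(g_*\BQ_X)\simeq g'_*\BQ_{X\times_T S}$ with $g':X\times_T S\to S$ proper. For \ref{coh_!}, since $g$ is proper one has $f_!(g_*\BQ_X)=(fg)_!\BQ_X$; I would Nagata-compactify $fg=\bar h\circ j$ with $j:X\hookrightarrow\bar X$ an open immersion and $\bar h$ proper, so that $(fg)_!\BQ_X=\bar h_*\,j_!\BQ_X$. By localisation, $j_!\BQ_X=j_\sharp\BQ_X$ sits in a triangle $j_!\BQ_X\to\BQ_{\bar X}\to i_*\BQ_Z\rap$, $i:Z\hookrightarrow\bar X$ the reduced complement, hence belongs to $\DA^\coh(\bar X)$; and $\bar h_*=\bar h_!$ preserves $\DA^\coh$ since $\bar h_!(p_*\BQ_W)=(\bar h p)_*\BQ_W$ with $\bar h p$ proper. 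Note this uses no resolution of singularities, consistently with the statement.

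The substance is in \ref{coh_*} (and, similarly, \ref{coh^!}). For \ref{coh_*}, Nagata compactification gives $f_*(g_*\BQ_X)=(fg)_*\BQ_X=\bar h_*\,j_*\BQ_X$; since $\bar h_*$ preserves $\DA^\coh$ as above and $X$, being of finite type over $S$, admits resolution of singularities by alterations, the problem is reduced to showing $j_*\BQ_X\in\DA^\coh(\bar X)$ for an open immersion $j:X\hookrightarrow\bar X$ (with $\bar X$ in the relevant class of schemes). Here I would invoke resolution and cohomological $h$-descent: choosing a projective Galois alteration of $\bar X$ with regular source along which $\bar X\setminus X$ pulls back to a normal crossing divisor, together with compatible alterations of the fibre products of its \v{C}ech nerve, $h$-descent reduces the claim to the case $\bar X$ regular with $\bar X\setminus X$ a normal crossing divisor. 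One then inducts on the number of boundary components: if $\bar X\setminus X=D_1\cup D'$, factor $j$ as $X\hookrightarrow V_1:=\bar X\setminus D_1\hookrightarrow\bar X$; the first map falls under the inductive hypothesis, and for the second, pushforward along the complement of the smooth divisor $D_1$ preserves $\DA^\coh$ by colocalisation and absolute purity ($i_1^!\BQ_{\bar X}\simeq\BQ_{D_1}(-1)[-2]$), the extension term being a negative Tate twist of a closed (hence proper) pushforward, cohomological by Proposition~\ref{prop:n_monoidal}. (Alternatively — and this is presumably the intended route — \ref{coh_*} follows from the constructibility of $f_*$ proved in \cite{Ayoub_Etale,Cisinski_Deglise_BluePreprint} under exactly these hypotheses, together with the observation, implicit in those proofs, that the resulting object is assembled out of proper pushforwards.)

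For \ref{coh^!} I would use Zariski's main theorem to factor $f=p\circ j$ with $j$ an open immersion — handled by \ref{coh^*}, since $j^!=j^*$ by relative purity — and $p$ finite; for finite $p$, the $\Ex^!_*$ isomorphism along a base change reduces $p^!(g_*\BQ_X)$ to a proper pushforward of $q^!\BQ$ with $q$ finite, which one computes by d\'evissage into the finite-flat locus and closed immersions using absolute purity on the regular schemes produced by resolution, or, again, by appealing to the constructibility of $f^!$ in \cite{Cisinski_Deglise_BluePreprint}. The step I expect to be the genuine obstacle is exactly the claim that $j_*\BQ_X$ is a cohomological motive for an open immersion $j$: this is the only place where resolution of singularities by alterations is really needed, and it requires the full compactification–alteration–$h$-descent–purity machinery, in contrast to the purely formal arguments that settle \ref{coh^*} and \ref{coh_!}.
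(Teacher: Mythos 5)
Your reduction to generators, the small-sums arguments, and points \ref{coh^*} and \ref{coh_!} are fine and agree with the paper's proof. The genuine gap is in the hard step. Cohomological $h$-descent cannot be used to ``reduce to the case $\bar X$ regular with normal crossing boundary'': descent along a proper hypercovering presents $j_*\BQ_X$ as a homotopy limit (totalisation) of a cosimplicial object built from the terms of the hypercover, and $\DA^{\coh}$ is only known to be stable under triangles, small sums and direct factors, not under such limits; in this paper $h$-descent is only ever exploited through the induced spectral sequence for morphism groups, never to establish membership in a subcategory. What replaces it is Lemma~\ref{lemma:coh_gen}: following Ayoub's Proposition~2.2.27, one shows that $\DA^\coh$ is \emph{compactly generated} by motives $g_*\BQ_{X'}$ with $X'$ regular and with prescribed normal crossing boundary --- a generation-from-below statement proved by noetherian induction, with alterations entering via trace/direct-factor arguments over a dense open and localisation, and with a non-automatic removal of the positive Tate twists present in Ayoub's generating family (this removal uses point \ref{coh_!}). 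With that lemma in hand the paper proves point \ref{coh^!} for closed immersions \emph{first} (via the $\Ex^!_*$ isomorphism, d\'evissage on the branches of the divisor, and absolute purity), and then \ref{coh_*} drops out by a single colocalisation. Note also that your induction on boundary components requires controlling $i_1^!$ of the non-unit object $\bar\jmath_!N$, which absolute purity for $\BQ_{\bar X}$ does not give; the correct d\'evissage is applied to $i_D^!\BQ_{\bar X}$ for the whole normal crossing divisor, after which one colocalisation suffices.

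Your fallback appeals do not close this gap either: the constructibility theorems for $f_*$ and $f^!$ in \cite{Ayoub_Etale} and \cite{Cisinski_Deglise_BluePreprint} assert preservation of $\DA_c$, not of the strictly smaller subcategory $\DA^\coh$, and the assertion that their outputs are ``assembled out of proper pushforwards'' (and, crucially, without positive Tate twists) is precisely what has to be proved. Similarly, for \ref{coh^!} with $q$ finite, absolute purity is only available for closed immersions between regular schemes, so a ``d\'evissage into the finite-flat locus'' is not backed by the tools at hand; the paper instead uses that over a reduced base $q$ is generically the composite of a finite \'etale morphism and a purely inseparable one, where $q^!\simeq q^*$ by the separation property, and concludes by induction on the dimension, colocalisation, continuity, and the already established points \ref{coh_*} and \ref{coh_!}.
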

\begin{proof}
By Lemma \ref{lem:subcats_comp} and Lemma \ref{lem:preservation}, we see that to prove the result for both $\DA^{\coh}(-)$ and for $\DA^{\coh}_c$ it is enough to show that in each case \ref{coh^*}-\ref{coh^!} the operation sends generators of $\DA^{\coh}_c$ to $\DA^{\coh}$. 

  We prove the results in a slightly different order than in the statement: we first establish \ref{coh^*}, \ref{coh_!} (which contains the special case of \ref{coh_*} for proper morphisms), \ref{coh^!} for closed immersions, \ref{coh_*} and finally \ref{coh^!} in all generality. 

\textit{Proof of \ref{coh^*}:} Proper base change implies that $f^*$ sends generators of $\DA^{\coh}(T)$ to generators of $\DA^\coh(S)$. 

\textit{Proof of \ref{coh_!}:} Let $g:X\ra S$ be a proper morphism. We need to show that $f_!g_*\BQ_X\simeq (f\circ g)_!\BQ_X$ is in $\DA^\coh(T)$. Since $f$ is assumed to be separated of finite type, the same holds for $f\circ g$. Nagata's theorem \cite{Nagata_comp} \cite{Conrad_Nagata} implies that $f\circ g$ admits a compactification, i.e., that there exists a factorisation $f\circ g=\bar{f}\circ j$ with $j:X\ra \overline{X}$ an open immersion and $\bar{f}:\overline{X}\ra T$ a proper morphism. Let $i:Z\ra \overline{X}$ be a complementary closed immersion to $j$. By localisation, we have a distinguished triangle
\[
j_!\BQ_X\ra \BQ_{\overline{X}}\ra i_!\BQ_Z\rap
\]
which after applying $\bar{f}_*\simeq \bar{f}_!$ yields
\[
\bar{f}_*j_!\BQ_X\ra \bar{f}_!\BQ_{\overline{X}}\ra (\bar{f}i)_!\BQ_Z\rap.
\]
By definition, the second and third terms in this triangle are in $\DA^\coh(T)$. This implies that the first, which is isomorphic to $f_!g_*\BQ_X$, is as well.

\textit{Proof of \ref{coh^!} for $f=i$ a closed immersion:}

The blueprint for this proof is taken from Section~2.2.2 of \cite{Ayoub_these_1}. 

Lemma~\ref{lemma:coh_gen} below, applied to $i:S\ra T$, shows that it is enough to prove that, for any $g:X\ra T$ with $X$ connected regular and $g^{-1}(S)$ equal to either $X$ or a normal crossing divisor, the motive $i^!g_* \BQ_X$ is compact. Form the cartesian square
\[
\xymatrix{
Y \ar[d]_{g'} \ar[r]^{i'} & X \ar[d]^g\\
S \ar[r]_{i} & T.
}
\]
We have an $\Ex^!_*$ isomorphism $i^!g_*\BQ_X\simeq g'_*i'^!\BQ_X$. By point \ref{coh_!}, it is enough to show that $i'^!\BQ_X$ is in $\DA^\coh(X)$. By assumption, $Y$ is either equal to $X$ or is a normal crossing divisor; only the second case requires a proof. By \cite[Lemme 2.2.31]{Ayoub_these_1} applied to the branches and point \ref{coh_!} for closed immersions, we reduce to the case of a regular immersion, which then follows from absolute purity and Proposition~\ref{prop:n_monoidal} \ref{coh_tensor}. 


\textit{Proof of \ref{coh_*}:}

Using Nagata's theorem and the proper case of point \ref{coh_!}, it suffices to show that $j_*\BQ_S$ is in $\DA^\coh(T)$ for $j:S\ra T$ an open immersion. This now follows from colocalisation and point \ref{coh^!} for the complementary closed immersion.

\textit{Proof of \ref{coh^!} for $f$ quasi-finite general:}

By the same argument as above, using the $\Ex^!_*$ isomorphism, it is enough to show that $f^!\BQ_T$ is in $\DA^\coh(S)$. Using Zariski's main theorem~\cite[Th\'eor\`eme 8.12.6]{EGAIV_3}, the fact that $j^!\simeq j^*$ for $j$ open immersion and point \ref{coh^*}, we are reduced to the case of finite morphisms. 

If $f$ is finite \'etale, then $f^!\simeq f^*$ again and we are done by point \ref{coh^*}. If $f$ is finite and purely inseparable, then a corollary of the separation property of $\DA$ is that $f^!\simeq f^*$ is an equivalence of categories \cite[Corollaire 2.1.164]{Ayoub_these_1}. In general, we proceed by induction on the dimension of $T$. The proof for the $0$-dimensional case follows the same pattern as the inductive step, so we treat both in parallel. If $T$ is $0$-dimensional, or generically on $T$, say above the image of a dense open immersion $j:U\ra T$, the morphism $f$ is the composite of a finite \'etale morphism followed by a finite purely inseparable morphism. Let $l:V\ra S$ be $j\times_T S$ and $k:W\ra S$ be a complementary closed immersion (take $W$ empty in the $0$-dimensional case). Then $l^!f^!\BQ_T\simeq f_U^!\BQ_U$ is in $\DA^\coh(V)$ by combining the arguments for finite \'etale and finite purely inseparable morphisms above. By point \ref{coh_*}, we get that $l_*l^*f^!\BQ_T$ is in $\DA^\coh(S)$. This concludes the proof for $\dim(T)=0$. In general, by the inductive hypothesis and point \ref{coh_!}, we get that $k_!k^!f^!\BQ_T$ lies in $\DA^\coh(S)$. The colocalisation triangle then shows that $f^!\BQ_T$ lies in $\DA^\coh(S)$. This completes the proof.
\end{proof}

\begin{lemma}\label{lemma:coh_gen}
Let $S$ be a scheme admitting the resolution of singularities by alterations, $f:X\ra S$ a finite type morphism and $T\subset X$ closed. Then $\DA^\coh(X)$ is compactly generated by motives of the form $g_*\BQ_{X'}$ with $g:X'\ra X$ a projective morphism and $X'$ connected regular and $g^{-1}(T)$ equal either to $X'$ or to a normal crossing divisor.   
\end{lemma}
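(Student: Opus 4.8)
The plan is to argue by noetherian induction on the dimension of the source, following the blueprint of \cite[\S 2.2.2]{Ayoub_these_1}. Note first that the morphism $f$ plays no role beyond ensuring that $X$, being of finite type over $S$, itself admits resolution of singularities by alterations (and in particular is excellent). By definition $\DA^\coh(X)=\ll h_*\BQ_{X'}\mid h\colon X'\to X\text{ proper}\gg$, and this category is compactly generated by that family by Lemma~\ref{lem:subcats_comp}. Since the proposed generators $g_*\BQ_{X'}$ are themselves compact objects of $\DA^\coh(X)$, and since the localizing subcategory $\CG_X$ they generate is thick, it suffices to prove that $h_*\BQ_{X'}\in\CG_X$ for every proper $h\colon X'\to X$. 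I would run the induction on $d=\dim X'$, the inductive statement ranging over all closed subsets $T\subseteq X$ at once.

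The first step is to reduce to $h$ \emph{projective} of dimension exactly $d$. Given a proper $h$ with $\dim X'=d$, Chow's lemma yields a projective $q\colon X''\to X'$ with $hq$ projective and $q$ an isomorphism over a dense open $U\subseteq X'$; writing $i\colon W\hookrightarrow X'$ for the reduced complement and $E=q^{-1}(W)$, both $W$ and $E$ are complements of dense opens, so $\dim W,\dim E<d$. Cohomological $h$-descent applied to the abstract blow-up cover $X''\sqcup W\to X'$ gives a distinguished triangle
\[
h_*\BQ_{X'}\to (hq)_*\BQ_{X''}\oplus (hi)_*\BQ_W\to (hq|_E)_*\BQ_E\rap,
\]
whose last two terms lie in $\CG_X$ by the inductive hypothesis (applied to the proper morphisms $hi$ and $hq|_E$ of sources of dimension $<d$), so we are reduced to $(hq)_*\BQ_{X''}$, i.e.\ to $h$ projective. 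Next, using the Mayer--Vietoris triangle for the closed cover of $X'$ by its irreducible components (again a case of cohomological $h$-descent), the inductive hypothesis on the lower-dimensional intersections, and the identity $\BQ_{X'}\simeq\BQ_{X'_{\mathrm{red}}}$ (separation property), one reduces further to $X'$ integral.

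The core is then the case $h\colon X'\to X$ projective with $X'$ integral. Set $Z=h^{-1}(T)$. If $Z=X'$ I would apply resolution by alterations to $(X',X'_{\mathrm{sing}})$; if $Z\subsetneq X'$, then $Z$ is nowhere dense and I apply it to $(X',Z)$. In both cases this produces a projective Galois alteration $p\colon X_1\to X'$ with $X_1$ regular and $(hp)^{-1}(T)=p^{-1}(Z)$ equal to $X_1$ (first case) or to a normal crossing divisor (second case); decomposing $X_1$ into connected components shows $(hp)_*\BQ_{X_1}\in\CG_X$. It remains to descend from $X_1$ to $X'$. Choose a dense open $j\colon V\hookrightarrow X'$ over which $p$ is finite and the quotient $X_1/\Gamma\to X'$ is an isomorphism, with reduced complement $i\colon W\hookrightarrow X'$; let $p_V\colon\widetilde V\to V$ and $p_W\colon p^{-1}(W)\to W$ be the base changes of $p$. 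A transfer (Galois descent) argument with rational coefficients shows that $\BQ_V$ is a direct factor of $p_{V*}\BQ_{\widetilde V}$. Applying the localisation triangle on $X'$ along $(V,W)$ to $p_*\BQ_{X_1}$, using smooth base change along the open immersion $j$ and proper base change along $p$, and pushing forward by $h_*$, one obtains a distinguished triangle exhibiting $h_*j_!p_{V*}\BQ_{\widetilde V}$ as an extension of a shift of $(hip_W)_*\BQ_{p^{-1}(W)}$ by $(hp)_*\BQ_{X_1}$; the first lies in $\CG_X$ by the inductive hypothesis ($\dim p^{-1}(W)<d$) and the second by the previous sentence, so $h_*j_!p_{V*}\BQ_{\widetilde V}\in\CG_X$, hence so does its direct factor $h_*j_!\BQ_V$. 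Finally the localisation triangle $h_*j_!\BQ_V\to h_*\BQ_{X'}\to (hi)_*\BQ_W\rap$, whose last term is in $\CG_X$ by the inductive hypothesis, yields $h_*\BQ_{X'}\in\CG_X$ and closes the induction.

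The main obstacle is the bookkeeping rather than any individual geometric input. One must interleave two nested inductions — the proper case in dimension $d$ is deduced from the projective case in dimension $d$, which in turn is deduced from the full statement in dimensions $<d$ — while propagating the closed subset $T$ and the normal-crossing condition faithfully through Chow's lemma, the alteration of $(X',Z)$, and the restriction to the dense open $V$; in particular one has to check that the various auxiliary loci ($W$, $E$, $p^{-1}(W)$, the component intersections) genuinely drop in dimension, and that both possibilities $h^{-1}(T)=X'$ and $h^{-1}(T)\subsetneq X'$ are covered by the allowed generators. The geometric ingredients themselves — Chow's lemma, de Jong-style resolution of singularities by alterations, the abstract blow-up and localisation triangles, and Galois descent with $\BQ$-coefficients — are standard and can be used as black boxes.
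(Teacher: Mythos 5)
Your proof is correct in substance, and at bottom it runs on the same engine as the paper's: the paper disposes of Lemma~\ref{lemma:coh_gen} in two sentences by citing \cite[Proposition 2.2.27]{Ayoub_these_1} (which proves the analogous generation statement for constructible motives, with Tate-twisted generators and quasi-projective morphisms) and remarking that Proposition~\ref{prop:permanence_coh} supplies the analogue of Corollaire 2.2.21 of loc.\ cit., so that ``the same argument applies verbatim'' once the twists and the quasi-projectivity restriction are removed. What you do differently is to reconstruct that induction self-contained: your Chow's lemma plus abstract blow-up ($h$-descent) triangle is precisely the step that removes the quasi-projectivity restriction which the paper leaves implicit, and the core of both arguments is identical --- a projective Galois alteration of $(X',h^{-1}(T))$, the rational transfer argument exhibiting $\BQ_V$ as a direct factor of $p_{V*}\BQ_{\widetilde V}$ over a dense open, and localisation combined with induction on the dimension of the source. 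Two small points would tighten your write-up without changing its structure: in positive characteristic the quotient $X_1/\Gamma\ra X'$ is only generically finite surjective radicial, not generically an isomorphism, so the direct-factor claim needs \cite[Lemme 2.1.165]{Ayoub_these_1} together with the separation property of $\DA$ (exactly as in the proof of Proposition~\ref{prop:subcats_field}), rather than plain Galois descent; and in the case $h^{-1}(T)=X'$ you can simply apply resolution by alterations to the pair $(X',\emptyset)$, which avoids any appeal to the closedness or density of the singular locus (and the empty preimage is then vacuously a normal crossing divisor). Neither point is a gap; your route and the paper's are the same argument presented at different levels of explicitness, yours having the merit of making the proper-to-projective reduction visible.
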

\begin{proof}
The reference \cite[Proposition~2.2.27]{Ayoub_these_1} specialized to the $\BQ$-linear, separated, homotopical $2$-functor $\DA(-)$ proves a similar statement for the category of constructible objects $\DA_c(S)$ (with added positive Tate twists of the generators, and restriction to quasi-projective morphisms). Once one removes the Tate twists and the restriction to quasi-projective morphisms, one notices that using Statement \ref{coh_!} of Proposition~ above instead of Corollaire 2.2.21 in loc.cit, the proof of loc.cit \cite[Proposition~2.2.27]{Ayoub_these_1} then applies verbatim.
\end{proof}

\begin{prop}
  \label{prop:permanence_hom}
Let $f:S\ra T$ be a morphism of schemes. The following operations preserve the subcategories $\DA_{\homo}(-)$
  and $\DA_{\homo,c}(-)$.
  \begin{enumerate}[label={\upshape(\roman*)}]
  \item \label{hom^*} $f^*$ for any $f$.
  \item \label{hom_sharp} $f_\sharp$ when $f$ is smooth.
  \item \label{hom_!} $f_!$ for any quasi-finite separated morphism $f$.
  \end{enumerate}

\end{prop}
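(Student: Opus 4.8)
The plan is to follow the pattern of Proposition~\ref{prop:permanence_coh}, the homological variant being lighter: only relative purity, which holds unconditionally for smooth morphisms, is invoked, so no resolution of singularities enters. As in \loccit, the statements for $\DA_{\homo,c}(-)$ follow from those for $\DA_\homo(-)$ via the constructibility theorem \cite[Théorème~8.10]{Ayoub_Etale} and Lemma~\ref{lem:subcats_comp}, so I only treat $\DA_\homo(-)$; in each case the relevant functor is a left adjoint, hence commutes with small sums, and it suffices to evaluate it on the generators $g_\sharp\BQ_X$, which we may take with $g$ smooth of pure relative dimension. Points \ref{hom^*} and \ref{hom_sharp} are then immediate: for $f^*$ and $g\colon X\to T$ smooth the $\Ex^*_\sharp$ isomorphism identifies $f^*g_\sharp\BQ_X$ with $g'_\sharp\BQ_{X'}$, where $g'\colon X'=X\times_T S\to S$ is smooth; and for $f_\sharp$ with $f$ smooth and $g\colon X\to S$ smooth one has $f_\sharp g_\sharp\BQ_X\simeq(f\circ g)_\sharp\BQ_X$ with $f\circ g$ smooth.

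For \ref{hom_!}, take $g\colon X\to S$ smooth of pure relative dimension $d$. Relative purity gives $g_\sharp\BQ_X\simeq g_!\BQ_X(d)[2d]$, hence $f_!g_\sharp\BQ_X\simeq(f\circ g)_!\BQ_X(d)[2d]$. If $f$ is étale, $f\circ g$ is smooth of relative dimension $d$ and relative purity gives $(f\circ g)_!\BQ_X(d)[2d]\simeq(f\circ g)_\sharp\BQ_X\in\DA_\homo(T)$. In general, Zariski's main theorem \cite[Théorème~8.12.6]{EGAIV_3} provides a factorisation $f=\bar f\circ j$ with $j$ an open immersion — hence étale, so $j_!\simeq j_\sharp$ and $j_\sharp g_\sharp\BQ_X\simeq(j\circ g)_\sharp\BQ_X$ with $j\circ g$ smooth — and $\bar f$ finite, so it remains to prove that $\bar f_*\simeq\bar f_!$ preserves $\DA_\homo$ for every finite morphism $\bar f\colon\bar S\to T$.

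I would prove this by Noetherian induction on $\dim\bar S$; since finite morphisms include all closed immersions, the closed-immersion case is available at strictly smaller values of the parameter, which is what makes the induction close. Factoring $\bar f$ through its reduced scheme-theoretic image one reduces to $\bar f$ surjective, so $\dim T=\dim\bar S$. Over a dense open $j\colon U\hookrightarrow T$ with $\dim(T\setminus U)<\dim T$, the morphism $\bar f$ is — fibrewise, up to nilpotents and connected components — a finite radicial surjective morphism $q$ followed by a finite étale morphism $p$: a finite étale $p$ has $p_*\simeq p_\sharp$ (relative purity plus properness), preserving $\DA_\homo$ by \ref{hom_sharp}; a finite radicial surjective $q$ has $q^*$ an equivalence by the separation property \cite[Théorème~3.9]{Ayoub_Etale}, \cite[Corollaire~2.1.164]{Ayoub_these_1}, and combining this with the invariance of the small smooth site under universal homeomorphisms and the $\Ex^*_\sharp$ isomorphism shows that $q_*\simeq(q^*)^{-1}$ carries homological generators to homological generators. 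Hence $(\bar f_U)_*$ preserves $\DA_\homo$. Writing $i\colon Z\hookrightarrow T$ for the complementary closed immersion (with $\dim Z<\dim\bar S$) and $\bar f_Z$ for the base change of $\bar f$, the localisation triangle $j_\sharp j^*\bar f_*M\to\bar f_*M\to i_*i^*\bar f_*M\rap$ — together with smooth base change ($j^*\bar f_*M\simeq(\bar f_U)_*(\,\cdot\,)\in\DA_\homo(U)$, then \ref{hom_sharp}), proper base change ($i^*\bar f_*M\simeq(\bar f_Z)_*(\,\cdot\,)\in\DA_\homo(Z)$ by induction), and the inductive hypothesis for the closed immersion $i$ — shows $\bar f_*M\in\DA_\homo(T)$.

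The hard part is the base case, i.e. that $i_*$ preserves $\DA_\homo$ for a closed immersion $i\colon Z\hookrightarrow T$ with $Z$ zero-dimensional, equivalently that $i_*h_\sharp\BQ_Y\in\DA_\homo(T)$ for $h\colon Y\to Z$ smooth; one may assume $Z=\Spec\kappa$ is a closed point. The plan is to spread the situation out, exploiting that $Z$ is Henselian, and to use relative purity to trade relative dimension over $Z$ for ordinary base dimension: the twists cancel in an isomorphism $i_*\pi_\sharp\simeq\pi'_\sharp i'_*$ for $\pi\colon\BA^d_Z\to Z$, $\pi'\colon\BA^d_T\to T$ and $i'\colon\BA^d_Z\hookrightarrow\BA^d_T$, reducing to the closed-immersion pushforward of a $0$-motive over a smooth $T$-scheme; one then handles such a pushforward by spreading the finite étale covers involved out over an étale neighbourhood and combining the localisation triangle with the étale descent satisfied by $\DA_\homo$ (which can itself be deduced from compact generation by the $g_\sharp\BQ_X$). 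I expect this last step — checking that $\DA_\homo$ descends along the relevant covers and that the finite étale covers appearing do spread out — to be the main obstacle; it is the homological counterpart of the use of absolute purity for the unit object in Proposition~\ref{prop:permanence_coh}\ref{coh^!}, carried through with relative purity alone, which accounts for the absence of any resolution-of-singularities hypothesis.
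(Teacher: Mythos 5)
Your overall architecture (left adjoints, generators, ZMT to reduce $f_!$ to the finite case) matches the paper, but the core of your treatment of the finite case has a genuine structural gap: the induction on $\dim\bar S$ does not close. The reduction ``factor $\bar f$ through its reduced scheme-theoretic image to assume $\bar f$ surjective'' already requires knowing that $\iota_*$ preserves $\DA_\homo(-)$ for a closed immersion $\iota\colon \bar T'\ra T$ whose \emph{source} has the same dimension as $\bar S$ --- an instance of the statement at the same value of your induction parameter, so this step is circular. Nor can such closed immersions be recovered from your inductive step: when the image is nowhere dense, any dense open of $T$ either misses it entirely or leaves behind a closed immersion whose source dimension is unchanged, so the parameter never drops; in effect your argument only ever treats closed immersions with zero-dimensional source, which is not where the difficulty lies. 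The paper's proof handles \emph{every} closed immersion $i\colon Z\ra X$ directly, with no induction and no dimension restriction: after Zariski-localising (Lemma~\ref{lemm:hom_Z_loc}) one may assume $g\colon U\ra Z$ is standard smooth over affines, one lifts the defining equations of the standard smooth presentation along $\mathcal O(X)\twoheadrightarrow\mathcal O(Z)$ to get a smooth $\tilde g$ over an open $W\supseteq Z$ of $X$ extending $g$, and the localisation triangle $(W\setminus Z\ra W)_\sharp\tilde g_\sharp\BQ\ra\tilde g_\sharp\BQ\ra i_*g_\sharp\BQ_U\rap$ concludes. This lifting of standard smooth algebras across the closed immersion is the key idea your proposal is missing; your $\BA^d$-plus-\'etale-cover sketch for a closed point is secretly the same mechanism (standard \'etale algebras lift --- Henselianness of the point plays no role and the pair $(\BA^d_T,\BA^d_\kappa)$ is not Henselian anyway), but it is deployed only in the base case.

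A secondary gap is the radicial piece of your generic argument. The ``invariance of the small smooth site under universal homeomorphisms'' is false: smooth schemes over $\bar S$ need not descend along a finite radicial surjective $q$ (already a smooth variety over a purely inseparable extension $l/k$ need not come from $k$), so the fact that $q^*$ is an equivalence with inverse $q_*$ does not formally give $q_*g_\sharp\BQ_Y\in\DA_\homo(T)$ for $Y/\bar S$ smooth. The paper's substitute is to reduce to generic points by continuity (Proposition~\ref{prop:subcats_continuity}) and then invoke Lemma~\ref{lemm:hom_!_fields}, where a Frobenius twist produces an honest smooth scheme over the smaller field whose $\sharp$-motive computes the pushforward; your argument over a dense open of a general base would need this (or an analogue over non-field bases) and does not supply it.
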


\begin{remark}
In the proof of point \ref{hom_!}, we use results from Sections \ref{sec:n_continuity} and \ref{sec:n_field}. The reader can check that we do not use the reference \ref{prop:permanence_hom} \ref{hom_!} in between. We feel this break from logical order is justified by the commodity of stating these properties together.
\end{remark}

\begin{proof}
By Lemma \ref{lem:subcats_comp} and Lemma \ref{lem:preservation}, we see that to prove the result for both $\DA_{\homo}(-)$ and for $\DA_{\homo,c}$ it is enough to show that in each case \ref{hom^*}-\ref{hom_!} the operation sends generators of $\DA_{\homo,c}$ to $\DA_{\homo}$. 

\textit{Proof for \ref{hom^*}:}
The $\mathrm{Ex}^*_{\sharp}$ isomorphism implies that $f^*$ sends generators of $\DA_{\homo}(T)$ to generators of $\DA_\homo(S)$.

\textit{Proof for \ref{hom_sharp}:} The fact that generators are sent to homological motives follows directly from the definition.

\textit{Proof for \ref{hom_!}:} Using Zariski's Main theorem~\cite[Th\'eor\`eme 8.12.6]{EGAIV_3} and \ref{hom_sharp}, we see that it is enough to treat the
case of $f$ finite. 
    
We first do the case of closed immersions. The next lemma is proved using Mayer-Vietoris distinguished triangles.
\begin{lemma}
\label{lemm:hom_Z_loc}
Let $T$ be a scheme and $\CU=\{j_k:U_k\hookrightarrow T\}_{k=1}^n$ be a finite Zariski open covering of $T$. Let $M\in\DA(T)$ Then
\[
M\in \DA_\homo(T)\Longleftrightarrow \text{ for all }1\leq k\leq n,\text{ we have }j_k^*M\in \DA_\homo(T). 
\] \qed
\end{lemma}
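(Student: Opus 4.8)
The plan is to handle the two implications separately. The forward one, $M\in\DA_{\homo}(T)\Rightarrow j_k^*M\in\DA_{\homo}(U_k)$ for all $k$, is immediate from Proposition~\ref{prop:permanence_hom} \ref{hom^*}, since pullback along any morphism preserves homological motives. All the content is in the converse, which I would prove by induction on the size $n$ of the cover, reducing to a two-term cover. The case $n=1$ is trivial. For $n\geq 2$, set $V=U_1\cup\dots\cup U_{n-1}$ with open immersion $j_V\colon V\hookrightarrow T$; then $\{U_1,\dots,U_{n-1}\}$ is a Zariski cover of $V$, and for $k\leq n-1$ the pullback of $j_V^*M$ along $U_k\hookrightarrow V$ equals $j_k^*M\in\DA_{\homo}(U_k)$ (since $U_k\subseteq V$, so the composite $U_k\hookrightarrow V\hookrightarrow T$ is $j_k$), whence $j_V^*M\in\DA_{\homo}(V)$ by the inductive hypothesis. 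Applying the two-term case to the cover $T=V\cup U_n$ --- its hypotheses now hold, $j_V^*M$ being homological by the previous step and $j_n^*M$ by assumption --- then closes the induction. We are reduced to a cover $T=U\cup U'$ with open immersions $a\colon U\hookrightarrow T$, $b\colon U'\hookrightarrow T$, and $a^*M\in\DA_{\homo}(U)$, $b^*M\in\DA_{\homo}(U')$.

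For the two-term case the tool is the Mayer--Vietoris (equivalently, Zariski descent) distinguished triangle of the cover: with $W=U\cap U'$ and $c\colon W\hookrightarrow T$ the open immersion, it reads
\[
c_\sharp c^*M\to a_\sharp a^*M\oplus b_\sharp b^*M\to M\rap.
\]
Since $W\subseteq U$, the object $c^*M$ is the pullback of $a^*M\in\DA_{\homo}(U)$ along $W\hookrightarrow U$, so $c^*M\in\DA_{\homo}(W)$ by Proposition~\ref{prop:permanence_hom} \ref{hom^*}; and since $a$, $b$, $c$ are open immersions, hence smooth, Proposition~\ref{prop:permanence_hom} \ref{hom_sharp} shows that $c_\sharp c^*M$, $a_\sharp a^*M$ and $b_\sharp b^*M$ all lie in $\DA_{\homo}(T)$. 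As $\DA_{\homo}(T)$ is a triangulated subcategory of $\DA(T)$, the remaining vertex $M$ of the triangle lies in $\DA_{\homo}(T)$ as well, which is what we wanted.

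Note that the argument uses only parts \ref{hom^*} and \ref{hom_sharp} of Proposition~\ref{prop:permanence_hom}, both established before part \ref{hom_!}, so no circularity arises with the remark preceding the proof. I do not foresee any serious obstacle here: the only substantive ingredient is the availability of the Mayer--Vietoris triangle over a general noetherian finite-dimensional base, which is part of the foundational Nisnevich --- hence Zariski --- descent properties of $\DA(-)$.
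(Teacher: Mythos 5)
Your argument is correct and is exactly the proof the paper has in mind: the paper dismisses this lemma with the single remark that it ``is proved using Mayer--Vietoris distinguished triangles'', and your write-up simply fills in the routine details (the trivial forward direction via Proposition~\ref{prop:permanence_hom}~\ref{hom^*}, the induction on the number of opens, and the two-out-of-three argument in the Mayer--Vietoris triangle using \ref{hom^*} and \ref{hom_sharp}). No gap and no divergence from the paper's route.
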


Let $i:Z\ra X$ be a closed immersion and $g:U\ra Z$ be a smooth morphism. We need to show that $i_*g_\sharp \BQ_U\in \DA_{\homo}(X)$. There exists a finite open affine cover $\{U_k=\Spec(A_k)\}_{1\leq k\leq n}$ of $U$ and a finite open affine cover $\{Z_k=\Spec(R_k)\}_{1\leq k\leq n}$ of $Z$ with $g(U_k)\subset Z_k$ and such that via $g_k:=g_{|U_k}^{|Z_k}$, the ring $A_k$ takes the form:
\[
A_k=R_k[x_1,\ldots,x_{n_k}]/(f^k_1,\ldots,f^k_{c_k})
\]
with $\left(\det(\frac{\partial f^k_j}{\partial x_i})_{1\leq i,j\leq c_{k}}\right)$ invertible in $A_k$ (i.e. $g_k$ is a standard smooth map). We can choose an open affine cover $\{W_k\}$ of $X$ such that $W_k\cap Z=Z_k$. Applying Lemma \ref{lemm:hom_Z_loc} to the open cover $W_k$ and using base change for closed immersions and smooth base change, we can suppose that $g$ itself is a standard smooth map and that $X=\Spec(R)$ is affine.

In this situation, we can lift the equations $f_j$ to $\tilde{f}_j\in R[x_1,\ldots,x_n]$. The open set $W$ of $X$ over which the resulting map $\tilde{g}:\Spec(R[x_1,\ldots,x_n]/(\tilde{f}_1,\ldots,\tilde{f}_n))\rightarrow X$ is standard smooth contains $Z$, and $\tilde{g}$ extends $g$. We have a localisation triangle
\[
(W\setminus Z\ra W)_\sharp\tilde{g}_\sharp\BQ \ra \tilde{g}_{\sharp}\BQ\ra i_*g_{\sharp}\BQ_U\stackrel{+}{\ra}
\]
where the first two terms are in $\DA_{\hom}(X)$. We deduce that $i_*g_{\sharp}\BQ_U\in\DA_{\hom}(X)$ as wanted.

For a general quasi-finite $f:T\ra S$, using localisation, the case of closed immersions and an induction on the dimension of $S$, we see that we can replace $S$ by any everywhere dense open subset. The case of closed immersions also ensures that we can assume $S$ is reduced. By continuity for $\DA_{\homo}(-)$ (proven in Proposition~\ref{prop:subcats_continuity} below; the proof does not use permanence properties of $\DA_\homo(-)$ besides \ref{hom^*}), we see that we can even replace $S$ by any of its generic points. We are thus reduced to the case of a finite field extension, which follows from the more precise Lemma \ref{lemm:hom_!_fields} below.
\end{proof}

\begin{prop}
  \label{prop:permanence_coh_n}\ 
  \begin{enumerate}[label={\upshape(\roman*)}]
  \item \label{coh_n_pullback} Let $f$ be any morphism of schemes. Then $f^*$ preserves the subcategories $\DA^n(-)$ and
  $\DA^n_c(-)$.
  \item \label{coh_n_!} Let $f:S\ra T$ be separated of finite type and of relative dimension $m$. Then $f_!$ sends $\DA^n(S)$ (resp. $\DA^n_c(S)$) to $\DA^{n+m}(T)$ (resp. $\DA^{n+m}_c(T)$). In particular, if $f$ is quasi-finite, then $f_!$ preserves the subcategories $\DA^n(-)$ and $\DA^n_c(-)$.
  \end{enumerate}
\end{prop}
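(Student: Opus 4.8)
Part (i) is formal. Since $f^*$ is a left adjoint it commutes with small sums, so it suffices to treat a generator $g_*\BQ_X$ of $\DA^n(T)$ with $g\colon X\to T$ proper of relative dimension $\le n$; proper base change gives $f^*g_*\BQ_X\simeq g'_*\BQ_{X'}$ with $g'\colon X'=X\times_T S\to S$ proper of relative dimension $\le n$ (relative dimension is stable under base change), so this lies in $\DA^n(S)$. For (ii), $f_!$ is also a left adjoint, so again one reduces to generators, and $f_!g_*\BQ_X=f_!g_!\BQ_X=(fg)_!\BQ_X$ with $fg\colon X\to T$ separated of finite type of relative dimension $\le n+m$ (relative dimensions add up under composition). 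Hence (ii), and its ``in particular'' clause (the case $m=0$), follows from the following lemma: \emph{if $h\colon X\to T$ is separated of finite type with all fibres of dimension $\le d$, then $h_!\BQ_X\in\DA^d(T)$.}

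I would prove this by induction on $\dim X$, with a secondary induction on the number of irreducible components of $X$. Invariance under nilpotent thickenings reduces to $X$ reduced; a Mayer--Vietoris triangle for the closed cover of $X$ by its components (whose pairwise intersections have dimension $<\dim X$, hence are handled inductively) reduces to $X$ integral; and replacing $T$ by $\overline{h(X)}^{\red}$, which is harmless because pushforward along the resulting closed immersion (proper of relative dimension $0$) preserves $\DA^d$, reduces to $T$ integral and $h$ dominant. The key special case is $h$ quasi-finite: then by Zariski's main theorem \cite[Th\'eor\`eme 8.12.6]{EGAIV_3} we may factor $h=\bar h\circ j$ with $j$ an open immersion and $\bar h$ \emph{finite}; applying $\bar h_*=\bar h_!$ to the localisation triangle $j_!\BQ_X\to\BQ_{\bar X}\to i_*\BQ_Z\rap$ puts $h_!\BQ_X$ in a triangle whose two other terms are generators of $\DA^0(T)$ (finite morphisms have relative dimension $0$), so $h_!\BQ_X\in\DA^0(T)$. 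The same argument works over an arbitrary base, and it is this use of a \emph{finite} compactification which sidesteps the fact that a general Nagata compactification of $h$ may have fibres of dimension $>d$.

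For the general case, let $\eta$ be the generic point of $T$ and $k_0=\dim X_\eta\le d$. By relative Noether normalisation — Noether-normalise the generic fibre $X_\eta$ over $k(\eta)$ and clear denominators — after replacing $X$ by a dense affine open mapping into an affine open of $T$ (the complement has smaller dimension and is dealt with by the inductive hypothesis via a localisation triangle), there is a $T$-morphism $X\to\BA^{k_0}_T$ which is quasi-finite at the generic point, hence on a dense open $U\subseteq X$; the complement $X\setminus U$ is again handled inductively. On $U$ we factor $h|_U$ as $U\xrightarrow{g}\BA^{k_0}_T\xrightarrow{j}\BP^{k_0}_T\xrightarrow{\pi}T$. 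By the quasi-finite case $g_!\BQ_U\in\DA^0(\BA^{k_0}_T)$; the open immersion $j_!$ sends $\DA^0(\BA^{k_0}_T)$ into $\DA^0(\BP^{k_0}_T)$, since on a generator $q_*\BQ_Z$ with $q$ finite one has $j_!q_*\BQ_Z=(jq)_!\BQ_Z$ with $jq$ quasi-finite (again the quasi-finite case); and $\pi$ is proper of relative dimension $k_0$, so $\pi_*=\pi_!$ sends $\DA^0(\BP^{k_0}_T)$ into $\DA^{k_0}(T)\subseteq\DA^d(T)$. Hence $(h|_U)_!\BQ_U=\pi_*j_!g_!\BQ_U\in\DA^d(T)$, and the lemma follows from the localisation triangle for $U\hookrightarrow X$ together with the inductive hypothesis for the closed complement. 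The constructible versions of (i) and (ii) are then deduced, exactly as in the preceding propositions, from constructibility of the six operations \cite[Proposition~8.5]{Ayoub_Etale} and Lemma~\ref{lem:subcats_comp}.

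The step I expect to be the real obstacle is the fibre-dimension bookkeeping in the lemma: the naive approach $f_!=\bar f_*\circ j_!$ through a Nagata compactification fails because $\bar f$ need not have relative dimension $\le n+m$. The resolution is to handle relative dimension $0$ via Zariski's main theorem (whose compactification is finite, hence of relative dimension $0$) and to recover the general relative dimension by pushing forward along the tautologically controlled morphism $\BP^{k_0}_T\to T$, after using relative Noether normalisation to make the remaining morphism quasi-finite.
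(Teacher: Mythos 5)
Your argument is correct, but at the decisive point it takes a genuinely different route from the paper. The paper dispatches \ref{coh_n_!} in one line: it is ``the same as the proof of Proposition~\ref{prop:permanence_coh}~\ref{coh_!}, keeping track of the relative dimensions involved'', i.e.\ the Nagata compactification argument $f_!g_*\BQ_X\simeq \bar f_*j_!\BQ_X$ together with the localisation triangle, with the added bookkeeping that all terms have relative dimension $\leq n+m$ over $T$. As you rightly point out, that bookkeeping is not automatic: an arbitrary Nagata compactification of a morphism of relative dimension $\leq n+m$ can have strictly larger fibres even when the original scheme is dense in it (take $\BA^2\setminus\{0\}\ra\BA^2$ compactified inside the blow-up of $\BA^2$ at the origin), so the paper's one-line reference tacitly needs either a compactification with controlled fibre dimension or a d\'evissage. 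Your proof supplies exactly such a d\'evissage: the relative-dimension-$0$ case is settled by Zariski's main theorem, whose compactification is finite hence of relative dimension $0$ --- which is precisely the trick the paper itself uses elsewhere, in the quasi-finite parts of Proposition~\ref{prop:permanence_coh}~\ref{coh^!} and Proposition~\ref{prop:permanence_hom}~\ref{hom_!} --- and the general case is reduced to it by Noether-normalising the generic fibre, passing to a dense open that factors through a quasi-finite morphism to $\BA^{k_0}_T\subset\BP^{k_0}_T\ra T$, and running a noetherian induction with localisation triangles for the complements. I checked the steps (reduction to generators since $f_!$ commutes with small sums; reduction to $X$ integral, $T$ integral and $h$ dominant; openness of the quasi-finite locus after clearing denominators; $j_!$ preserving $\DA^0$ via the quasi-finite case applied to $jq$ with $q$ finite; $\pi_*\simeq\pi_!$ sending $\DA^0(\BP^{k_0}_T)$ into $\DA^{k_0}(T)\subset\DA^{n+m}(T)$) and they go through. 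In short: the paper's approach buys brevity by leaning on the earlier Nagata argument, while yours buys a fully justified dimension bound at the cost of a longer induction; your treatment of part \ref{coh_n_pullback} and of the constructible variants via Lemma~\ref{lem:subcats_comp} and the constructibility results of Ayoub coincides with the paper's.
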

\begin{proof}
By Lemma \ref{lem:subcats_comp} and Lemma \ref{lem:preservation}, we see that to prove the result for both $\DA^n(-)$ and for $\DA^n_c$ it is enough to show that in each case \ref{coh_n_pullback} and \ref{coh_n_!} the operation sends generators of $\DA^n_c$ to $\DA^n$. 

The case of \ref{coh_n_pullback} follows proper base change and the fact that being of relative
    dimension $\leq n$ is stable by base change.

The proof in the case of \ref{coh_n_!} is the same as that of Proposition~\ref{prop:permanence_coh} \ref{coh_!}, keeping track of the relative dimensions involved.
\end{proof}

\begin{prop}
  \label{prop:permanence_hom_n}\ 
  \begin{enumerate}[label={\upshape(\roman*)}]
  \item \label{hom_n_pullback} Let $f$ be any morphism of schemes. Then $f^*$ preserves the subcategories $\DA_n(-)$ and
  $\DA_{n,c}(-)$.
  \item \label{hom_n_!} Let $f:S\ra T$ be separated and quasi-finite. Then $f_!$ preserves the subcategories $\DA_n(-)$ and $\DA_{n,c}(-)$.
  \end{enumerate}
\end{prop}
\begin{proof}
By Lemma \ref{lem:subcats_comp} and Lemma \ref{lem:preservation}, we see that to prove the result for both $\DA_{n}(-)$ and for $\DA_{n,c}$ it is enough to show that in each case \ref{hom_n_pullback} and \ref{hom_n_!} the operation sends generators of $\DA_{n,c}$ to $\DA_n$. 
 
The case of \ref{hom_n_pullback} follows from the 
    $\mathrm{Ex}^*_{\sharp}$ isomorphism and the fact that being of relative
    dimension $\leq n$ is stable by base change.

The proof in the case of \ref{hom_n_!} is the same as that of Proposition~\ref{prop:permanence_hom} \ref{hom_!}, keeping track of the relative dimensions involved.
\end{proof}

We list some useful corollaries of the results above.

\begin{cor}\label{cor:localisation_subcats}
Let $\CT(-)$ be one of $\DA^\coh(-)$, $\DA_\homo(-)$, $\DA^n(-)$, $\DA_n(-)$ or one of their subcategories of compact objects.
\begin{enumerate}[label={\upshape(\roman*)}]
\item \label{loc_sub} The system $\CT(-)$ localises in the following sense: for $M\in \DA(S)$ and $i:Z\ra S$ and $j:U\ra S$ complementary closed and open immersions, we have $M\in\CT(S)$ if and only if $i^*M\in \CT(Z)$ and $j^*M\in \CT(U)$.
\item \label{rad_sub} Let $f:T\ra S$ be a finite surjective purely inseparable morphism (e.g. a nil-immersion), $M\in \DA(S)$, $N\in \DA(T)$. Then we have $M\in \CT(S)$ if and only if $f^*M\in \CT(T)$, and we have $N\in \CT(T)$ if and only if $f_*N\in \CT(S)$.
\end{enumerate}
\end{cor}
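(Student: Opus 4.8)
The plan is to reduce everything to the permanence properties already proved (Propositions~\ref{prop:permanence_coh}, \ref{prop:permanence_hom}, \ref{prop:permanence_coh_n}, \ref{prop:permanence_hom_n}), together with the localisation triangle for~(i) and the separation property of $\DA$ for~(ii), treating the four possible values of $\CT(-)$ uniformly. The only features of $\CT(-)$ I will need are: it is a triangulated subcategory of $\DA(-)$ (hence satisfies two-out-of-three along distinguished triangles); $f^*$ preserves it for every $f$; and $f_!$ preserves it whenever $f$ is quasi-finite separated (in particular for open and for closed immersions, where moreover $i_!\simeq i_*$ because a closed immersion is proper). In each case the generators are compact and all the functors in play preserve compact objects, so the statements for the subcategories of compact objects will follow from the non-compact ones via Lemma~\ref{lem:subcats_comp}.

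For~(i): the ``only if'' direction is immediate from the preservation of $\CT(-)$ by $i^*$ and $j^*$. For the converse, I would apply the localisation triangle $j_!j^*M\to M\to i_*i^*M\rap$. Since $j$ is an open immersion it is quasi-finite separated, so $j_!j^*M\in\CT(S)$ because $j^*M\in\CT(U)$; since $i$ is a closed immersion, $i_*\simeq i_!$ preserves $\CT(-)$, so $i_*i^*M\in\CT(S)$ because $i^*M\in\CT(Z)$; hence $M\in\CT(S)$ by two-out-of-three. The same triangle shows directly that $M$ is compact in $\DA(S)$ as soon as $i^*M$ and $j^*M$ are, so the compact case needs no extra argument.

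For~(ii): $f$ is finite, radicial and surjective, hence proper (so $f_*\simeq f_!$) and quasi-finite, so both $f^*$ and $f_*$ preserve $\CT(-)$; this gives the implications $M\in\CT(S)\Rightarrow f^*M\in\CT(T)$ and $N\in\CT(T)\Rightarrow f_*N\in\CT(S)$. For the two converses I would invoke the separation property: because $f$ is finite surjective radicial, $f^*\colon\DA(S)\to\DA(T)$ is an equivalence of categories, and its right adjoint $f_*$ is then automatically a quasi-inverse, so the unit $M\xrightarrow{\sim}f_*f^*M$ and counit $f^*f_*N\xrightarrow{\sim}N$ are isomorphisms. Thus $f^*M\in\CT(T)$ forces $M\simeq f_*f^*M\in\CT(S)$ (as $f_*$ preserves $\CT(-)$), and $f_*N\in\CT(S)$ forces $N\simeq f^*f_*N\in\CT(T)$ (as $f^*$ preserves $\CT(-)$); since $f^*$ and $f_*$ are mutually quasi-inverse equivalences of the ambient categories they restrict to equivalences on compact objects, so the compact case is again automatic.

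I do not expect a genuine obstacle here; the proof is essentially bookkeeping. The one point requiring care is checking, in each of the four cases, that the relevant permanence statement really does cover the (quasi-finite separated, resp.\ proper) morphisms $i$, $j$ and $f$ occurring in the argument, and that the class of morphisms in~(ii) is exactly the one (finite surjective radicial) for which the separation property asserts $f^*$ to be an equivalence rather than merely conservative.
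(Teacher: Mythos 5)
Your proof is correct and follows essentially the same route as the paper: part (i) via the localisation triangle together with the permanence of the subcategories under $i^*,j^*,j_!$ and $i_*\simeq i_!$, and part (ii) via the separation property of $\DA$ (which the paper invokes through \cite[Proposition~2.1.163]{Ayoub_these_1}), making $f^*$ an equivalence with quasi-inverse $f_*$ and then applying the same permanence statements.
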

\begin{proof}
Statement \ref{loc_sub} follows directly from localisation and the permanence properties above. Similarly, statement \ref{rad_sub} follows directly from \cite[Proposition~2.1.163]{Ayoub_these_1} (which applies because $\DA(-)$ is separated) and the permanence properties. 
\end{proof}

Finally, let us discuss what happens with internal Homs and duality. 

\begin{cor}\label{cor:homint}
The internal Hom satisfies $\Homint(\DA_{\homo,c}(S),\DA^{\coh}_{(c)}(S))\subset \DA^{\coh}_{(c)}(S)$. In particular, if $S$ is regular and we take $\BQ_S$ as dualising object, then Verdier duality $\BD_S:=\Homint(-,\BQ_S)$ sends compact homological motives to compact cohomological motives. 
\end{cor}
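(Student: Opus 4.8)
The plan is to use the compactness of the first argument to reduce to a single generator $f_\sharp\BQ_X$ of $\DA_{\homo,c}(S)$, and then to rewrite the internal Hom out of such a generator as a composite of functors already known to preserve cohomological motives.

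The key computation is that for a smooth morphism $f\colon X\to S$ and any $B\in\DA(S)$ there is a natural isomorphism
\[
\Homint(f_\sharp\BQ_X,B)\simeq f_*f^*B .
\]
This follows from the projection formula $f_\sharp(f^*C\otimes A)\simeq C\otimes f_\sharp A$ of \cite[Proposition~4.5.17]{Ayoub_these_2}: testing $\Homint(f_\sharp\BQ_X,B)$ against an arbitrary $C\in\DA(S)$, using the adjunctions $f_\sharp\dashv f^*\dashv f_*$ and $\Homint(\BQ_X,f^*B)\simeq f^*B$, one obtains $\Hom(C,\Homint(f_\sharp\BQ_X,B))\simeq\Hom(C,f_*f^*B)$ naturally in $C$, whence the claim by Yoneda.

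Now fix $N\in\DA^\coh(S)$ (resp.\ $N\in\DA^\coh_c(S)$) and let $\CS_N$ be the full subcategory of those $M\in\DA(S)$ with $\Homint(M,N)\in\DA^\coh(S)$ (resp.\ $\in\DA^\coh_c(S)$). Since $\Homint(-,N)$ is a contravariant triangulated functor preserving direct summands, and $\DA^\coh(S)$ (resp.\ $\DA^\coh_c(S)$) is closed under cones, shifts and direct summands, $\CS_N$ is a thick triangulated subcategory. By the displayed formula, $\Homint(f_\sharp\BQ_X,N)\simeq f_*f^*N$ for every smooth $f\colon X\to S$; by Proposition~\ref{prop:permanence_coh}\,\ref{coh^*} we have $f^*N\in\DA^\coh(X)$ (resp.\ $\DA^\coh_c(X)$), and by Proposition~\ref{prop:permanence_coh}\,\ref{coh_*} applied to the smooth, hence separated of finite type, morphism $f$ we then get $f_*f^*N\in\DA^\coh(S)$ (resp.\ $\DA^\coh_c(S)$) — here using that $X$, being of finite type over $S$, admits resolution of singularities by alterations as soon as $S$ does and is excellent as soon as $S$ is. Thus every generator $f_\sharp\BQ_X$ of $\DA_{\homo,c}(S)$ belongs to $\CS_N$, and since Lemma~\ref{lem:subcats_comp} identifies $\DA_{\homo,c}(S)$ with $\langle f_\sharp\BQ_X\mid f\text{ smooth}\rangle$, we conclude $\DA_{\homo,c}(S)\subset\CS_N$; as $N$ was arbitrary, this is the inclusion $\Homint(\DA_{\homo,c}(S),\DA^\coh_{(c)}(S))\subset\DA^\coh_{(c)}(S)$. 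For the statement on duality it remains to observe that $\BQ_S=(\id_S)_*\BQ_S$ is a compact cohomological motive, so applying the inclusion with $N=\BQ_S$ shows that $\BD_S=\Homint(-,\BQ_S)$ carries $\DA_{\homo,c}(S)$ into $\DA^\coh_c(S)$; when $S$ is regular, absolute purity makes $\BQ_S$ a dualizing object and $\BD_S$ genuine Verdier duality.

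I do not anticipate a serious difficulty: the only non-formal ingredients are the projection formula for $f_\sharp$ and the permanence of $\DA^\coh(-)$ under $f^*$ and $f_*$ (Proposition~\ref{prop:permanence_coh}). The point worth emphasizing — and the closest thing to an obstacle — is why the first slot must be taken in $\DA_{\homo,c}(S)$ and not in all of $\DA_{\homo}(S)$: the reduction above relies on $\langle f_\sharp\BQ_X\rangle$ being generated from the $f_\sharp\BQ_X$ using only cones, shifts and direct summands, operations under which $\DA^\coh_{(c)}(S)$ is stable, whereas for a non-compact homological motive one would have to push small sums through $\Homint(-,N)$, which turns them into products, and $\DA^\coh(S)$ need not be closed under products.
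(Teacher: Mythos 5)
Your argument is correct and follows essentially the paper's own route: reduce, via thickness of the relevant subcategory and Lemma~\ref{lem:subcats_comp}, to the generators $f_\sharp\BQ_X$ of $\DA_{\homo,c}(S)$, identify $\Homint(f_\sharp\BQ_X,N)\simeq f_*f^*N$ (which is exactly the content of the Ayoub formulas the paper cites), and conclude by Proposition~\ref{prop:permanence_coh}. The only minor difference is that the paper also reduces the second variable to the generators $g_*\BQ_Y$ (using that $\Homint(M,-)$ commutes with small sums for $M$ compact) and then invokes the $\Ex^*_\sharp$ isomorphism, whereas you apply Proposition~\ref{prop:permanence_coh} (i)--(ii) directly to an arbitrary cohomological $N$; both routes therefore rest on the resolution-by-alterations (and, for the compact case, excellence) hypotheses built into that proposition, a point you correctly flag.
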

\begin{proof}
If $M\in \DA(S)$ is compact, then $\Homint(M,-)$ commutes with small sums. This shows that we can restrict to generators of $\DA^\coh(S)$ in the second variable. Using \cite[Lemma 4.4.5]{Neeman_book}, we see that we can restrict to generators of $\DA_{\homo,c}(S)$ in the first variable. The result then follows from \cite[Proposition~2.3.51-52]{Ayoub_these_1}, the $\Ex^*_\sharp$ isomorphism and Proposition~\ref{prop:permanence_coh} (ii). 
\end{proof}

\begin{lemma}\label{lemm:duality_field}
Let $S$ be a regular scheme. Write $\BD_S:=\Homint(-,\BQ_S):\DA(S)^\op\ra \DA(S)$ for the Verdier duality functor. We have 
\[\BD_S (\DA_{\homo,c}(S))\subset \DA^\coh_c(S)\]
and $\BD_S$ restricts to anti-equivalences of categories
\[ \BD_S:\DA^{\gsm}_{\homo,c}(S)\stackrel{\sim}{\lra} \DA^{\coh}_{\gsm,c}(S)\text{ and }\]
\[ \BD_S:\DA^\sgsm_{n,c}(S)\stackrel{\sim}{\lra} \DA^n_{\sgsm,c}(S).\]
\end{lemma}

\begin{proof}
 For  a separated scheme $X$ of finite type over $S$, consider the more general Verdier duality functor $\BD_{X/S}:=\Homint(-,\pi_X^!\BQ_S):\DA(X)^\op\ra \DA(X)$. By \cite[Th\'eor\`emes 8.12-8.14]{Ayoub_Etale}, this functor preserves compact objects and its restriction to $\DA_c(X)$ is involutive, i.e. an anti-autoequivalence which is its own quasi-inverse.

The first inclusion is a special case of Corollary \ref{cor:homint} but we provide an argument since the same computation is used in the rest of the proof. The behaviour of $\BD_{X/S}$ with respect to the four operations is explained in \cite[Th\'eor\`eme 2.3.75]{Ayoub_these_1}: informally, Verdier duality exchanges $f_*$ and $f_!$, and $f^*$ and $f^!$. Moreover, recall that, for $f$ smooth, relative purity provides an isomorphism $f_\sharp f^* \simeq f_! f^!$. This allows to compute the action of $\BD_{X/S}$ on generating families. For instance, we have, for any $f$ smooth, $\BD_S(f_\sharp f^*\BQ_X)\simeq \BD_S(f_!f^!\BQ_S)\simeq f_* f^* \BD_S(\BQ_S)\simeq f_* f^*\BQ_S$ which is in $\DA^{\coh}(S)$ by Proposition~\ref{prop:permanence_coh} \ref{coh_*}. This proves the first inclusion.

For the equalities for (strongly) geometrically smooth subcategories, note that if $f$ is smooth projective (resp. smooth projective of relative dimension $\leq n$), the same computation shows that $\BD_S(f_\sharp f^*\BQ_X)$ is in $\DA^{\coh}_{\gsm}(S)$ (resp. $\DA^n_{\sgsm}(S)$). This proves one inclusion of the equalities, and the other follows by the involutivity of $\BD$.
\end{proof}

\begin{remark}
  Even on a regular
  scheme, the categories of constructible homological and cohomological motives are
  not anti-equivalent through Verdier duality with dualising object
  $\BQ_{S}$ (see, however, Proposition~\ref{prop:subcats_field} below for the field case). Indeed, assume $S$ regular of dimension $d>0$, let $i:x\ra
  S$ be the inclusion of a closed point $x$ and $j:U\ra S$ be the
  complementary open immersion. Then by colocalisation and absolute purity, $j_*\BQ_U\in \DA^{\coh}(S)$ sits in a triangle
\[
i_*\BQ(-d)[-2d]\ra \BQ_S\ra j_*\BQ_U\rap.
\]
On the other hand, we have $\BD_S(\BQ_S)\simeq \BQ_S\in \DA^{\coh}(S)$ and $\BD_S(i_!i^!\BQ_S)\simeq i_*\BQ_S\in
  \DA^{\coh}(S)$, so that by taking the Verdier dual of the triangle above, we have $\BD_S(j_*\BQ_U)\in\DA^{\coh}(S)$. 

If Verdier duality did exchange homological and cohomological motives, we would have $j_*\BQ_U\in \DA_{\hom}(S)\cap \DA^{\coh}(S)$ which is equal to $\DA_0(S)$ by Corollary
  \ref{coro:omega_0} \ref{hom_coh_intersect} below. We would then also have $i_*\BQ(-d)[-2d]\in \DA_0(S)$; hence, $i^*i_*\BQ(-d)\simeq \BQ(-d)\in \DA_0(x)$. This is not the case, as can be seen in a number of ways; for instance, in the proof of Corollary~\ref{coro:omega_0} \ref{omega_1_twists} we will show that for all $M\in \DA_0(x)$ and $d>0$, we have $\Hom(M,\BQ(-d))=0$.
\end{remark}

\subsection{Continuity}
\label{sec:n_continuity}

We have a continuity result for subcategories of compact objects.

\begin{prop}
 \label{prop:subcats_continuity}
  Let $I$ be a cofiltering small category and $(X_i)_{i\in I}\in
  \Sch^I$ with affine transition morphisms. Let $X=\varprojlim_{i\in
    I}X_i$, which we assume to be noetherian and finite-dimensional. Then $\DA^{\coh}_c(X)$
  (resp. $\DA_{\homo,c}(X)$, $\DA^n_c(X)$, $\DA_{n,c}(X)$) is equal to the $2$-colimit of the
  $\DA^{\coh}_c(X_i)$ (resp. $\DA_{\homo,c}(X_i)$, $\DA^n_c(X_i)$,
  $\DA_{n,c}(X_i)$) via the pullback functors $(X\ra X_i)^*$.
\end{prop}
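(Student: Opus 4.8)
The plan is to bootstrap from the continuity property of the ambient category $\DA_c(-)$, which is known: for $X=\varprojlim_i X_i$ as in the statement, the canonical functor $2\text{-}\varinjlim_i\DA_c(X_i)\to\DA_c(X)$ is an equivalence of triangulated categories \cite{CDEt}. Fix one of the five $2$-functors $\CT(-)$ appearing in the statement. By Lemma~\ref{lem:subcats_comp}, each $\CT(Y)$ is the thick (triangulated, idempotent-complete) subcategory $\langle\CG(Y)\rangle$ of $\DA_c(Y)$ generated by an explicit family $\CG(Y)$: the generators are $(Y'\to Y)_*\BQ_{Y'}$ for $Y'\to Y$ proper (resp. proper of relative dimension $\leq n$) in the cohomological cases, and $(Y'\to Y)_\sharp\BQ_{Y'}$ for $Y'\to Y$ smooth (resp. smooth of relative dimension $\leq n$) in the homological and effective cases. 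Since the pullback functors $p_i^*:=(X\to X_i)^*$ preserve these subcategories by Propositions~\ref{prop:permanence_coh}, \ref{prop:permanence_hom}, \ref{prop:permanence_coh_n} and \ref{prop:permanence_hom_n}, and since the $p_i^*$ are compatible with the transition functors up to the coherent isomorphisms built into the $2$-functor structure of $\DA(-)$, there is a canonical triangulated functor
\[
\Phi:\ 2\text{-}\varinjlim_{i\in I}\CT(X_i)\ \longrightarrow\ \CT(X),\qquad (i,M)\longmapsto p_i^*M,
\]
and it suffices to prove that $\Phi$ is fully faithful and essentially surjective.

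Full faithfulness I expect to be essentially formal. For $M\in\CT(X_i)$, $N\in\CT(X_j)$ and $k$ mapping to both $i$ and $j$, the Hom group in $\CT(X_k)$ between the pullbacks of $M$ and $N$ agrees with the Hom group in $\DA_c(X_k)$, because $\CT(X_k)$ is a \emph{full} subcategory of $\DA_c(X_k)$ whose objects are compact in $\DA(X_k)$ (Lemma~\ref{lem:subcats_comp}). So the Hom group from $(i,M)$ to $(j,N)$ in the $2$-colimit is $\varinjlim_k\Hom_{\DA_c(X_k)}(p_k^{i*}M,p_k^{j*}N)$, which by continuity of $\DA_c(-)$ is $\Hom_{\DA_c(X)}(p_i^*M,p_j^*N)$, and this equals $\Hom_{\CT(X)}(\Phi(i,M),\Phi(j,N))$ since $\CT(X)$ is full in $\DA_c(X)$. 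For essential surjectivity, I would first note that a filtered $2$-colimit of idempotent-complete triangulated categories is again idempotent-complete (an idempotent on $(i,M)$ comes from a morphism at a finite level, the relation $e^2=e$ holds after passing to a further level, and the idempotent splits there), so the source of $\Phi$ is idempotent-complete; being fully faithful, $\Phi$ then identifies it with a \emph{thick} subcategory $\CI\subseteq\CT(X)$. The core of the argument is then to show $\CG(X)\subseteq\CI$: by the spreading-out results of EGA IV \cite[\S 8, \S 17]{EGAIV_3}, any proper (resp. smooth) morphism $X'\to X$ descends to a proper (resp. smooth) morphism $X'_i\to X_i$ with $X'\cong X'_i\times_{X_i}X$, with the relative dimension controlled using the constructibility of fibre dimension \cite[\S 13]{EGAIV_3}, and these properties are stable under arbitrary base change; applying proper base change (cohomological cases) or the $\Ex^*_\sharp$ isomorphism (homological/effective cases) to the cartesian square, the corresponding generator of $\CT(X)$ is $p_i^*$ of a generator of $\CT(X_i)$, hence lies in $\CI$. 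Thus $\CI$ is a thick subcategory of $\CT(X)$ containing $\CG(X)$, so $\CI\supseteq\langle\CG(X)\rangle=\CT(X)$ by Lemma~\ref{lem:subcats_comp}, forcing $\CI=\CT(X)$.

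The five cases are handled uniformly in this way, the only difference being whether one invokes proper base change or the $\Ex^*_\sharp$ isomorphism, together with the corresponding permanence proposition. I expect the main obstacle — really the only non-formal point — to be the spreading-out step: making precise, in a single statement covering all five families of generators simultaneously, that ``proper'', ``smooth'', and ``of relative dimension $\leq n$'' descend along the cofiltered limit to some finite level \emph{and} are preserved under the subsequent base change to $X$ (so that the descended generator pulls back to the given one on the nose, up to the canonical isomorphisms). The secondary technical point to get right is the idempotent-completeness of the $2$-colimit, needed so that the essential image of $\Phi$ is genuinely thick and Lemma~\ref{lem:subcats_comp} can be applied; everything else reduces cleanly to the already-established continuity of $\DA_c(-)$ and the permanence properties proved earlier in this section.
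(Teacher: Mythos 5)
Your proposal is correct and follows essentially the same route as the paper: the paper likewise reduces, via the continuity of morphism groups in $\DA$ (\cite[Proposition~3.19]{Ayoub_Etale}) and the formal argument of \cite[Corollaire 1.A.3]{Ayoub_rigide} applied to the compact generators described in Lemma~\ref{lem:subcats_comp}, to exactly the spreading-out lemma you identify as the only non-formal point (descent of proper/smooth morphisms of relative dimension $\leq n$ to a finite level, the non-smooth relative-dimension case being handled by a reference to the Stacks Project rather than your EGA~IV~\S 13 semicontinuity argument). The only difference is that you spell out the formal 2-colimit comparison (full faithfulness, idempotent completeness, thickness of the essential image) which the paper delegates to the cited references.
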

\begin{proof}
  Using the continuity result for morphisms in $\DA$ from
  \cite[Proposition~3.19]{Ayoub_Etale} and the arguments from
  \cite[Corollaire 1.A.3]{Ayoub_rigide}, it is enough
  to prove the following lemma (which extends \cite[Lemme
  1.A.2]{Ayoub_rigide}).

\begin{lemma}\label{lem:limits}
  With the notation of the proposition, let $Y$ be an $X$-scheme of
  finite presentation. Then there exists an $i\in I$ and an
  $X_i$-scheme $Y_i$ of finite presentation such that $Y\simeq
  Y_i\times_{X_i}X$. Moreover, if $Y/X$ is smooth (resp. proper, of relative
  dimension $\leq n$, smooth of relative dimension $\leq n$), then
  $Y_i$ can be chosen smooth (resp. proper, of relative dimension $\leq n$,
  smooth of relative dimension $\leq n$).
\end{lemma}
\begin{proof}\let\qed\relax
  The first part is \cite[Th\'eor\`eme 8.8.2.(ii)]{EGAIV_3}. For the second part, the case of $Y/X$ proper is \cite[Th\'eor\`eme 8.10.5.(xii)]{EGAIV_3} and \cite[Lemme 1.A.2]{Ayoub_rigide} and its proof cover
  the case of smooth and smooth of relative dimension $\leq n$. The case of morphisms of relative dimension $\leq n$ (without smoothness assumption) is \cite[Tag 05M5]{stacks-project}.
\end{proof}
\end{proof}
We deduce a useful punctual characterisation of compact $n$-motives:

\begin{prop}
\label{prop:punctual_car}
  Let $S$ be a scheme and $M\in\DA_c(S)$. Then the following are
  equivalent.
  \begin{enumerate}[label={\upshape(\roman*)}]
  \item \label{global_n} $M\in \DA_c^{\coh}(S)$ (resp. $\DA_{\homo,c}(S),\
    \DA^n_c(S),\ \DA_{n,c}(S)$).
  \item \label{local_n} For all $s\in S$, we have $s^*M\in \DA_c^{\coh}(s)$
    (resp. $ \DA_{\homo,c}(s),\ \DA^n_c(s),\ \DA_{n,c}(s)$).
  \end{enumerate}
\end{prop}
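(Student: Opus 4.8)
The plan is to obtain both implications from the permanence results of Section~\ref{sec:n_permanence}, the localisation and nil-invariance statements of Corollary~\ref{cor:localisation_subcats}, and the continuity result Proposition~\ref{prop:subcats_continuity}, the argument being a Noetherian induction on $|S|$. Throughout write $\CT(-)$ for whichever of the four subcategories $\DA^\coh_c(-)$, $\DA_{\homo,c}(-)$, $\DA^n_c(-)$, $\DA_{n,c}(-)$ is under consideration; in every one of these cases $\CT(-)$ is stable under $g^*$ for an \emph{arbitrary} morphism $g$ of schemes (Propositions~\ref{prop:permanence_coh}~\ref{coh^*}, \ref{prop:permanence_hom}~\ref{hom^*}, \ref{prop:permanence_coh_n}~\ref{coh_n_pullback} and~\ref{prop:permanence_hom_n}~\ref{hom_n_pullback}, combined with Lemma~\ref{lem:subcats_comp}), and $\CT(-)$ figures in the hypotheses of both Corollary~\ref{cor:localisation_subcats} and Proposition~\ref{prop:subcats_continuity}, so the whole proof is uniform in the four cases. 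The implication \ref{global_n}~$\Rightarrow$~\ref{local_n} is then immediate by taking $g=s\colon\Spec k(s)\to S$.

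For the converse, suppose $M\in\DA_c(S)$ with $s^*M\in\CT(k(s))$ for all $s\in S$; I would show $M\in\CT(S)$ by Noetherian induction on $|S|$, the empty scheme being trivial. By Corollary~\ref{cor:localisation_subcats}~\ref{rad_sub} applied to the nil-immersion $S_\red\to S$ one reduces to the case $S$ reduced: the morphism $S_\red\to S$ is a homeomorphism on underlying spaces preserving residue fields, so the hypothesis is inherited, and $|S_\red|=|S|$ so the induction is undisturbed. Pick a generic point $\eta$ of $S$ and an affine open neighbourhood $U_0=\Spec A$ of $\eta$, corresponding to a minimal prime $\Fp\subset A$; since $A$ is reduced, $k(\eta)=A_\Fp$. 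The principal opens $D(f)$ with $f\in A\setminus\Fp$ are cofinal among the open neighbourhoods of $\eta$ in $S$, form a cofiltered system whose transition maps are localisations (hence affine), and $\varprojlim_f D(f)=\Spec A_\Fp=\Spec k(\eta)$.

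Now I would apply continuity. As $M\in\DA_c(S)$ we have $\eta^*M\in\DA_c(\Spec k(\eta))$, and by hypothesis $\eta^*M\in\CT(k(\eta))$. By Proposition~\ref{prop:subcats_continuity}, $\CT(k(\eta))$ is the $2$-colimit of the categories $\CT(D(f))$ along the pullback functors, sitting inside the $2$-colimit $\DA_c(\Spec k(\eta))$ of the $\DA_c(D(f))$. Since $\eta^*M$ is the image of the global object $M|_{D(f)}$ under these pullbacks and lies in $\CT(k(\eta))$, filteredness of the colimit yields an $f\in A\setminus\Fp$ with $M|_{D(f)}\in\CT(D(f))$; concretely, $\eta^*M$ is isomorphic in the colimit to an object of some $\CT(D(f_0))$, and after shrinking $D(f)\subseteq D(f_0)$ so as to realise this isomorphism at the finite level one concludes $M|_{D(f)}\in\CT(D(f))$, using that pullback along $D(f)\hookrightarrow D(f_0)$ preserves $\CT$. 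Set $V=D(f)$, a nonempty open of $S$; then $|Z|\subsetneq|S|$ for $Z:=S\setminus V$, which I endow with its reduced structure, and let $i\colon Z\hookrightarrow S$, $j\colon V\hookrightarrow S$ be the complementary closed and open immersions.

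To finish, I would verify that $i^*M\in\DA_c(Z)$ again satisfies condition~\ref{local_n}: for $z\in Z$ the residue field of $z$ computed in $Z$ is $k(z)$, and $i\circ z\colon\Spec k(z)\to S$ is the canonical point $z$ of $S$, so $z^*(i^*M)=z^*M\in\CT(k(z))$. By the induction hypothesis, $i^*M\in\CT(Z)$. Together with $j^*M=M|_V\in\CT(V)$, the localisation property Corollary~\ref{cor:localisation_subcats}~\ref{loc_sub} gives $M\in\CT(S)$, completing the induction. The only step that requires genuine care is the invocation of continuity in the third paragraph: one must keep in mind that $\eta^*M$ is simultaneously the restriction of the fixed global motive $M$ and an object of the subcategory $\CT(k(\eta))$ of the $2$-colimit, which is exactly what lets filteredness descend $M$ itself, rather than merely some abstractly isomorphic object, to a single $D(f)$; the remaining ingredients are a routine bookkeeping of the permanence, localisation and nil-invariance results already in place.
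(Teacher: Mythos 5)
Your proof is correct and takes essentially the same route as the paper: one implication from stability of the subcategories under arbitrary pullback, and the converse by reducing to $S$ reduced via Corollary~\ref{cor:localisation_subcats}~\ref{rad_sub}, spreading the property from a generic point to an open neighbourhood via continuity (Proposition~\ref{prop:subcats_continuity}), and concluding by localisation (Corollary~\ref{cor:localisation_subcats}~\ref{loc_sub}) together with noetherian induction on the closed complement. Your explicit unpacking of the $2$-colimit step merely spells out what the paper leaves implicit.
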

\begin{proof}
The direction \ref{global_n}$\Rightarrow$\ref{local_n} follows from the stability by pullbacks for all these subcategories established above. In the other direction, we can assume $S$ is reduced by Corollary~\ref{cor:localisation_subcats} \ref{rad_sub}. We then proceed by noetherian induction. The case of generic points is settled by the hypothesis, we then use Proposition~\ref{prop:subcats_continuity} to spread-out the property to an open set. We conclude by using Corollary~\ref{cor:localisation_subcats} \ref{loc_sub} and the induction hypothesis. 
\end{proof}

\subsection{Over a field}
\label{sec:n_field}

When the base is the spectrum of a field, several of the notions we have introduced coincide.

\begin{prop}
  \label{prop:subcats_field}
Let $k$ be any field; then we have the following equalities.
\[ \DA_{\homo}(k)=\DA_\homo^\sm(k) =\DA_\homo^\gsm(k). \]
\[ \DA^\coh(k)=\DA_\sm^\coh(k) =\DA^\coh_\gsm(k). \]
\[ \DA_n(k)=\DA_n^\sm(k) =\DA_n^\gsm(k)=\DA_n^{\sgsm}(k). \]
\[ \DA^n(k)=\DA^n_\sm(k) =\DA^n_\gsm(k)=\DA^n_{\sgsm}(k). \]
The same equalities hold for the subcategories of compact objects, and $\BD_k$ restricts to anti-equivalences of categories:
\[ \BD_k:\DA_{\homo,c}(k)\stackrel{\sim}{\lra} \DA^{\coh}_c(k):\BD_k\]
\[ \BD_k:\DA_{n,c}(k)\stackrel{\sim}{\lra} \DA^n_c(k):\BD_k\]
\end{prop}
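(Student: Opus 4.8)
The plan is to leverage Lemma~\ref{lemm:duality_field} together with the fact that over a field every proper $k$-scheme admits a compactifiable smooth alteration. First I would reduce everything to the homological statement: the cohomological equalities follow from the homological ones by applying $\BD_k$ and using the anti-equivalences and inclusion already recorded in Lemma~\ref{lemm:duality_field}, and the $n$-graded versions are parallel since $\BD_k$ exchanges $\DA_{n,c}$ and $\DA^n_{\gsm,c}$ (keeping track of relative dimensions, which are preserved by the smooth alterations one uses). So the core claim to establish is $\DA_\homo(k)=\DA_\homo^\gsm(k)$, and its $n$-graded refinement $\DA_n(k)=\DA_n^\gsm(k)$; the intermediate equality with $\DA_\homo^\sm(k)$ is then automatic from the chain of inclusions $\DA_\homo^\gsm(k)\subset \DA_\homo^\sm(k)\subset \DA_\homo(k)$ given by Lemma~\ref{lem:gsm_sm} and the definitions.

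For $\DA_\homo(k)=\DA_\homo^\gsm(k)$, since $\DA_\homo^\gsm(k)\subset \DA_\homo(k)$ always, it suffices to show that the generators $f_\sharp\BQ_X$ of $\DA_\homo(k)$, with $f:X\ra \Spec k$ smooth, already lie in $\DA_\homo^\gsm(k)$. By relative purity $f_\sharp\BQ_X\simeq f_!\BQ_X(d)[2d]$ where $d=\dim X$, so by the permanence and twist-stability properties of $\DA_\homo^\gsm$ it is enough to handle $f_!\BQ_X$, and then by a Nagata-type compactification plus localisation I would reduce to the case of $X$ proper. Now over a field, de Jong's theorem (resolution of singularities by alterations holds since $k$ is trivially excellent of dimension $0$, as recorded in the conventions) furnishes a projective alteration $X'\ra X$ with $X'$ smooth projective; using cohomological $h$-descent (the descent spectral sequence for a suitable hypercovering built from such alterations) together with Proposition~\ref{prop:permanence_hom} and the fact that all the schemes appearing in the hypercovering and its fiber products can be taken smooth projective over $k$, one writes $f_*\BQ_X$ as an iterated extension of objects of the form $(g_j)_*\BQ_{Y_j}$ with $g_j:Y_j\ra \Spec k$ smooth projective, hence in $\DA^\coh_\gsm(k)$; dualizing via Lemma~\ref{lemm:duality_field} returns the statement for homological motives. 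For the $n$-graded version one simply observes that all the alterations and hypercoverings can be chosen so that the relative dimensions stay $\leq n$, since $\dim Y_j\leq \dim X\leq n$ throughout.

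The main obstacle will be organizing the descent argument: one needs a proper hypercovering of $X$ whose terms (and all the fiber products entering the Čech-type spectral sequence) are smooth projective over $k$, and one needs the resulting spectral sequence to converge and to exhibit $f_*\BQ_X$ as built by finitely many extensions from the smooth projective pieces, so that membership in the extension-closed subcategory $\DA^\coh_\gsm(k)$ can be deduced. This is exactly the kind of argument that underlies \cite[Proposition~2.2.27]{Ayoub_these_1} and Lemma~\ref{lemma:coh_gen}; I would cite that machinery, noting that over a field the normal-crossing-divisor condition becomes vacuous at the level of the base point, so the smooth projective generators of $\DA^\coh(k)$ supplied by Lemma~\ref{lemma:coh_gen} are already of the form needed, and the equality $\DA^\coh(k)=\DA^\coh_\gsm(k)$ drops out directly; the homological equalities then follow by the duality reduction of the first paragraph. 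The remaining bookkeeping — compatibility of relative dimension with all the operations used — is routine given Propositions~\ref{prop:permanence_coh_n} and~\ref{prop:permanence_hom_n}.
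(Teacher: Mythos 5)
The central gap is the imperfect-field case. Your argument needs \emph{smooth} projective generators over $k$, but De Jong's theorem and Lemma~\ref{lemma:coh_gen} only produce \emph{regular} projective $k$-schemes $X'$, and over an imperfect field a regular projective variety need not be smooth over $k$; hence $g_*\BQ_{X'}$ is not known to lie in $\DA^{\coh}_{\gsm}(k)$, and your phrase ``the smooth projective generators of $\DA^\coh(k)$ supplied by Lemma~\ref{lemma:coh_gen}'' is unjustified exactly where it matters. The paper's proof devotes most of its length to this point: it first proves the key inclusion over a \emph{perfect} field by induction on $\dim X$, using a De Jong alteration $h:\widetilde{X}\ra X$ with $\widetilde{X}/k$ smooth projective, colocalisation, and a transfer/direct-factor argument (separation plus \cite[Lemme 2.1.165]{Ayoub_these_1}) exploiting that $h$ is generically a purely inseparable morphism followed by a finite \'etale one; it then passes to an arbitrary field via continuity along the perfect closure, the separation property $\id\simeq (l/k)_*(l/k)^*$, and Lemma~\ref{lemm:hom_!_fields}, the Frobenius argument showing $(l/k)_*g_\sharp\BQ_Y$ is again the motive of a smooth projective $k$-variety of the same dimension. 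Your proposal claims ``any field'' but never confronts this, so as written it only covers $k$ perfect.

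Two further problems. First, the descent mechanism you describe does not work as stated: cohomological $h$-descent exhibits $f_*\BQ_X$ as the totalization of an infinite cosimplicial diagram, not as a finite iterated extension of the terms, and membership in $\DA^{\coh}_{\gsm}(k)$ cannot be read off termwise from such a homotopy limit. Your fallback of citing the localisation-induction machinery behind \cite[Proposition~2.2.27]{Ayoub_these_1} and Lemma~\ref{lemma:coh_gen} is the right kind of argument and, over a perfect field, does give $\DA^{\coh}(k)=\DA^{\coh}_{\gsm}(k)$ (regular equals smooth there, and compact generation plus \cite[Lemma 4.4.5]{Neeman_book} finishes) — but this is a localisation induction, not a descent spectral sequence, and there is no off-the-shelf dimension-bounded version of Lemma~\ref{lemma:coh_gen} for $\DA^n$, which is why the paper runs its own induction keeping track of relative dimensions. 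Second, deducing the $n$-graded homological equality by ``applying $\BD_k$'' is not formal: Lemma~\ref{lemm:duality_field} gives $\BD_k(\DA_{\homo,c}(k))\subset\DA^{\coh}_c(k)$ and the anti-equivalence only between the geometrically smooth subcategories; it does \emph{not} give $\BD_k(\DA_{n,c}(k))\subset\DA^n_c(k)$ (indeed $f_*$ is not known to preserve $\DA^n$). The paper closes this with the relative purity and Tate twist argument $\DA_{n,c}(k)(-n)\subset\DA^n_c(k)$ and $\DA^{\gsm}_{n,c}(k)(-n)=\DA^n_{\gsm,c}(k)$, which your outline omits; note also that all duality arguments must first be reduced to compact objects (legitimate, since the generating families are compact), a reduction you should make explicit.
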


\begin{proof}
 In each case, we prove equality by showing that the generating family on each side lies in the other. The generating families used in the definitions of these categories are formed of compact objects, hence it suffices to prove the equalities for the subcategories of compact objects. By Lemma \ref{lem:gsm_sm}, we need only prove the inclusions
\[ \DA_{\homo,c}(k)\subset\DA_{\homo,c}^\gsm(k), \]
\[ \DA^\coh_c(k)\subset\DA^\coh_{\gsm,c}(k), \]
\[ \DA_{n,c}(k)\subset\DA_{n,c}^{\sgsm}(k) \text{ and } \]
\[ \DA^n_c(k)\subset\DA^n_{\sgsm,c}(k). \]

The key is to prove the following claim
\[
\text{For all $n\in \BN$, we have }\BD_k(\DA^n_c(k))\subset \DA^\sgsm_{n,c}(k).\tag{*}\label{claim}  
\]
Indeed, assume Claim \eqref{claim} for the next three paragraphs. Then by looking at generators we also get $\BD_k(\DA^\coh_c(k))\subset \DA^\gsm_{\homo,c}(k)$. By applying $\BD_k$ again and the equivalence of categories of Lemma \ref{lemm:duality_field}, we get inclusions $\DA^n_c(k)\subset \DA^n_{\sgsm,c}(k)$ and $\DA^{\coh}_c(k)\subset \DA^{\coh}_{\gsm,c}(k)$. By applying $\BD_k$ to the inclusion $\BD_k (\DA_{\homo,c}(k))\subset \DA^\coh_c(k)$ of Lemma \ref{lemm:duality_field}, we also obtain $\DA_{\homo,c}(k)\subset \DA^{\gsm}_{\homo,c}(k)$. It remains to see that $\DA^n_c(k)\subset\DA^n_{\sgsm,c}(k)$, which is slightly less clear. 

Let $f:X\ra k$ smooth of relative dimension $i\leq n$ (we can reduce to this case by considering connected components of $X$). By relative purity, we have $f_{\sharp}\BQ_X(-n) \simeq f_!\BQ_X(i-n)[2i]$ which is in $\DA^n_c(k)$ by Proposition~\ref{prop:permanence_coh_n} and \ref{prop:n_monoidal}. This shows that $\DA_{n,c}(k)(-n)\subset \DA^n_c(k)=\DA^n_{\sgsm,c}(k)$ (the last equality having just been established in the previous paragraph). Applying Verdier duality, we get $\BD_k(\DA_{n,c}(k))(n)\subset \BD_k(\DA^n_{\gsm,c}(k))$. By Claim \eqref{claim}, this implies that $\BD_k(\DA_{n,c}(k))(n)\subset\DA_{n,c}^{\sgsm}(k)$. 

Another application of relative purity shows that $\DA_{n,c}^{\sgsm}(k)(-n)=\DA^{n}_{\sgsm,c}(k)$. 
Putting everything together, we have $\BD_k(\DA_{n,c}(k))\subset \DA^n_{\sgsm,c}(k)=\BD_k(\DA^{\sgsm}_{n,c}(k))$ so by the involutivity of $\BD$ we get the missing inclusion $\DA^n_c(k)\subset\DA^n_{\sgsm,c}(k)$. This finishes the proof of the proposition modulo the claim.

The rest of the proof is devoted to show Claim \eqref{claim}. To simplify notations, we write $\pi_Y:Y\ra k$ for the structural morphism of a $k$-scheme $Y$. Using the generating families, we reformulate the claim as follows: for $\pi_X:X\ra k$ proper of relative dimension $\leq n$, we have $\BD_k(\pi_{X*}\BQ_X)\simeq \pi_{X!}\pi_X^!\BQ_k$ in $\DA^\sgsm_{n,c}(k)$. Let $i:X_{\red}\rightarrow X$. Then by localisation we have $\pi_{X!}\pi_X^!\BQ_k\simeq \pi_{X!}i_!i^!\pi_X^!\BQ_k\simeq \pi_{X_\red !}\pi_{X_\red}^!\BQ_k$. Consequently, we can assume that $X$ is reduced.
 
We first treat the case of a perfect field $k$. We proceed by induction on the dimension of $X$. When $X$ is $0$-dimensional, we see that $\pi_X$ is finite \'etale because $k$ is perfect and $X$ is reduced, so that $\pi_{X!}\pi_X^!\simeq \pi_{X\sharp}\pi_X^*$ and we are done. For the induction step, we apply De Jong's resolution of singularities by alterations \cite[Theorem~4.1 and following remark]{De_Jong_alterations} (more precisely, since that reference requires $X$ to be integral, we apply it to every connected component of the normalisation of $X$, and then compose with the normalisation morphism). We obtain an alteration $h:\widetilde{X}\rightarrow X$ with $\widetilde{X}/k$ a smooth projective variety (smoothness can be achieved because $k$ is perfect). Recall that $h$ is proper surjective and generically finite. We choose a diagram of schemes with cartesian squares
\[
\xymatrix{
V \ar[r]_{\tilde{\jmath}} \ar[d]^{h_U} & \widetilde{X} \ar[d]^h & Z \ar[l]^{\tilde{\imath}} \ar[d]^{h_T}\\
U \ar[r]_j & X & T \ar[l]^i
}
\] 
with the following properties.
\begin{itemize}
\item $T$ is a nowhere dense closed subset of $X$ and $U$ is its open complement.
\item $h_U$ can be written as the composite of a purely inseparable finite morphism followed by a finite \'etale morphism.
\end{itemize}

Starting from the distinguished colocalisation triangle for the pair $(X,U)$ and applying $\pi_{X!}$, we obtain a triangle
\[
\pi_{X!}i_*i^!\pi_X^!\BQ_k\ra \pi_{X!}\pi_X^!\BQ_k\ra \pi_{X!}j_*j^!\pi_X^!\BQ_k\stackrel{+}{\ra} 
\]
that we rewrite as
\[
\pi_{T!}\pi_T^!\BQ_k\ra \pi_{X!}\pi_X^!\BQ_k\ra \pi_{X!}j_*\pi_U^!\BQ_k\stackrel{+}{\ra}.
\]
The left-hand term of this last triangle is in $\DA_{n,c}^\sgsm(k)$ by induction. To prove that the middle term is in $\DA_{n,c}^\sgsm(k)$, it remains to prove the same for the right-hand term. Since $h_U$ is finite and the composite of a purely inseparable morphism followed by an \'etale morphism, the separation property of $\DA$ \cite[Theorem~3.9]{Ayoub_Etale} together with \cite[Corollaire 2.1.164]{Ayoub_these_1} implies that there is a natural isomorphism of functors:
\[
h_{U!}h_U^!\simeq h_{U*}h_U^*.
\]
Now, \cite[Lemma 2.1.165]{Ayoub_these_1} implies that $\pi^!_U\BQ_k$ is a direct factor of $(h_U)_*h_U^*\pi_X^!\BQ_k$. Applying the isomorphism just above, we conclude that $\pi^!_U\BQ_k$ is a direct factor of $(h_U)_!h_U^!\pi^!_U\BQ_k$. This last motive is isomorphic to $(h_U)_* \pi_V^! \BQ_k\simeq (h_U)_{*} \tilde{\jmath}^{*} \pi_{\widetilde{X}}^! \BQ_k$ because $h_U$ is proper and $\tilde{\jmath}$ is \'etale. We get that $\pi_{X!}j_*\pi_U^!\BQ_k$ is a direct factor of $\pi_{X!} j_*h_{U*}\tilde{\jmath}^* \pi^!_{\widetilde{X}}\BQ_k\simeq \pi_{X!} h_*\tilde{\jmath}_* \tilde{\jmath}^* \pi^!_{\widetilde{X}}\BQ_k$. We have $\pi_{X!}h_*\simeq \pi_{\widetilde{X}!}$ since $h$ is proper, hence we deduce that $\pi_{X!}j_*\pi_U^!\BQ_k$ is a direct factor of $\pi_{\widetilde{X}!}\tilde{\jmath}_* \tilde{\jmath}^* \pi_{\widetilde{X}}^!\BQ_k$. Applying localisation to the pair $(\widetilde{X},V)$, the fact that $\widetilde{X}/k$ is smooth projective  and the induction hypothesis for $Z$ shows that $\pi_{X!}j_*\pi_U^!\BQ_k$ is in $\DA^\sgsm_{n,c}(k)$. This concludes the proof when $k$ is perfect.

We now treat the case of a general field $k$. By the perfect field case and continuity for $\DA^\sgsm_{n,c}(-)$ (Proposition~\ref{prop:subcats_continuity}) applied to the spectrum of the perfect closure of $k$, we see that there exists a finite purely inseparable extension $l/k$ with $(l/k)^*\pi_{X!}\pi_X^!\BQ_k$ in $\DA^\sgsm_{n,c}(l)$. By the separation property, we have an isomorphism of functors $\id\simeq (l/k)_*(l/k)^*$, so that it is enough to show Lemma \ref{lemm:hom_!_fields} below. This completes the proof of Claim \eqref{claim}, hence of the chains of equalities in the proposition.

Finally, the Verdier duality statement is just a restatement of Lemma \ref{lemm:duality_field} in the light of these chains of equalities.
\end{proof}

\begin{lemma}\label{lemm:hom_!_fields}
For a finite field extension $l/k$ and $g:Y\ra \Spec(l)$ a smooth projective morphism of relative dimension $\leq n$, there exists a smooth projective variety $g':Y'\ra k$ of dimension $\leq n$ such that $(l/k)_* g_\sharp \BQ_Y\simeq g'_\sharp \BQ_{Y'}\in \DA^\sgsm_{n,c}(k)$.    
\end{lemma}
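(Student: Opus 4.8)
The plan is to factor the finite extension $l/k$ through the separable closure $l_s$ of $k$ in $l$, so that $(l/k)_* = (l_s/k)_*\circ(l/l_s)_*$ with $l_s/k$ finite separable and $l/l_s$ finite purely inseparable, treat the two cases separately, and then compose. I would first record that the membership $g'_\sharp\BQ_{Y'}\in\DA^\gsm_{n,c}(k)$ is automatic as soon as $g':Y'\ra\Spec k$ is smooth projective of relative dimension $\leq n$: such a $g'_\sharp\BQ_{Y'}$ is then simultaneously a generator of $\DA_n(k)$ and of $\DA^\gsm_\homo(k)$, hence lies in their intersection $\DA^\gsm_n(k)$, and it is compact by Lemma~\ref{lem:subcats_comp}. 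One may also reduce to $Y$ connected (hence equidimensional over $l$) at the start by treating connected components separately, the statement being additive in $Y$.

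In the separable case, write $p:\Spec l_s\ra\Spec k$; this is a finite \'etale morphism, hence proper and smooth of relative dimension $0$, so relative purity together with properness give natural isomorphisms $p_*\simeq p_!\simeq p_\sharp$. Thus for any $g'':Y''\ra\Spec l_s$ smooth projective of relative dimension $\leq n$ one has $(l_s/k)_*\, g''_\sharp\BQ_{Y''}\simeq p_\sharp g''_\sharp\BQ_{Y''}\simeq (p\circ g'')_\sharp\BQ_{Y''}$, and $p\circ g'':Y''\ra\Spec k$ is smooth (composite of smooth morphisms), projective (composite of projective ones) and of relative dimension $\leq n$; so $Y''$ itself, viewed over $k$, is the required $Y'$.

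The purely inseparable case is the one requiring an idea; here, if $l\neq l_s$, we are in characteristic $p>0$. Write $p:\Spec l\ra\Spec k$, a finite radicial surjective morphism, so that by the separation property for $\DA$ the functor $p^*$ is an equivalence of categories, with quasi-inverse $p_*$. Pick $q=p^e$ with $l^q\subseteq k$, let $\psi:l\ra k$, $x\mapsto x^q$, be the resulting Frobenius homomorphism, and set $Y':=Y\times_{\Spec l,\,\Spec\psi}\Spec k$; since smoothness, projectivity and relative dimension are stable under base change, $Y'\ra\Spec k$ is smooth projective of relative dimension $\leq n$. Writing $\iota:k\hookrightarrow l$ for the inclusion, the composite $\iota\circ\psi:l\ra l$ is the $e$-fold Frobenius $F_l^e$, so one identifies $Y'\times_{\Spec k}\Spec l\cong Y^{(q)}:=Y\times_{\Spec l,\,F_l^e}\Spec l$ compatibly with the structure maps to $\Spec l$. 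The relative Frobenius $F:=F_{Y/l}^e:Y\ra Y^{(q)}$ is a finite radicial surjective $l$-morphism with $g^{(q)}\circ F=g$, where $g^{(q)}:Y^{(q)}\ra\Spec l$ is the structure map; by the separation property $F^*$ is an equivalence, hence $F_*$ is its quasi-inverse, and since $g^*\simeq F^*\circ(g^{(q)})^*$ and the left adjoint of an equivalence is its quasi-inverse, taking left adjoints yields $g_\sharp\simeq(g^{(q)})_\sharp\circ F_*$; therefore $g_\sharp\BQ_Y\simeq(g^{(q)})_\sharp F_* F^*\BQ_{Y^{(q)}}\simeq(g^{(q)})_\sharp\BQ_{Y^{(q)}}$. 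Finally, the $\Ex^*_\sharp$ isomorphism for the cartesian square exhibiting $Y'\times_k l$ as the base change of $Y'/k$ along $p$ gives $p^*g'_\sharp\BQ_{Y'}\simeq(g^{(q)})_\sharp\BQ_{Y^{(q)}}\simeq g_\sharp\BQ_Y$, and applying the quasi-inverse $p_*$ yields $(l/k)_*\, g_\sharp\BQ_Y\simeq g'_\sharp\BQ_{Y'}$.

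To conclude: given $g:Y\ra\Spec l$ smooth projective of relative dimension $\leq n$, the purely inseparable case applied to $l/l_s$ produces $Y''/l_s$ smooth projective of relative dimension $\leq n$ with $(l/l_s)_*\,g_\sharp\BQ_Y\simeq g''_\sharp\BQ_{Y''}$, and the separable case applied to $l_s/k$ then produces $Y'/k$ smooth projective of relative dimension $\leq n$ with $(l_s/k)_*\,g''_\sharp\BQ_{Y''}\simeq g'_\sharp\BQ_{Y'}$; since $(l/k)_*=(l_s/k)_*\circ(l/l_s)_*$, we obtain $(l/k)_*\,g_\sharp\BQ_Y\simeq g'_\sharp\BQ_{Y'}\in\DA^\gsm_{n,c}(k)$ by the first paragraph. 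The main obstacle is exactly the purely inseparable case: $Y$ need not descend to a smooth $k$-scheme at all, so instead one descends the motive $g_\sharp\BQ_Y$, exploiting that the Frobenius twist $Y^{(q)}$ does descend (by construction $Y^{(q)}\cong Y'\times_k l$) while having the same motive as $Y$, the latter because $\DA$ satisfies the separation property for the radicial finite surjective morphism $F$.
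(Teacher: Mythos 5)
Your proof is correct and rests on the same core construction as the paper's: descend $Y$ along a sufficiently high Frobenius power $l\ra k$ to a smooth projective $Y'/k$ (the paper's $Y'$ is exactly your $Y\times_{\Spec l,\,\psi}\Spec k$) and use the separation property of $\DA$ for finite radicial surjective morphisms to identify the motives. The only genuine difference is in the bookkeeping: the paper converts $g_\sharp\BQ_Y$ into $g_*\BQ_Y(n)[2n]$ by relative purity and the projection formula (which is why it first reduces to $Y$ of pure dimension $n$) and then compares $*$-pushforwards via the projection $Y'\ra Y$ and naturality of the absolute Frobenius, whereas you stay with $\sharp$-pushforwards throughout, combining the relative Frobenius $Y\ra Y^{(q)}$ over $l$ (an equivalence on $\DA$ by separation, giving $g_\sharp\simeq (g^{(q)})_\sharp F_*$ and $g_\sharp\BQ_Y\simeq (g^{(q)})_\sharp\BQ_{Y^{(q)}}$) with the $\Ex^*_\sharp$ exchange for the square expressing $Y^{(q)}\simeq Y'\times_k l$, and finally $p_*p^*\simeq \id$. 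This buys a small simplification—no appeal to relative purity or the projection formula in the inseparable case, so your reduction to connected $Y$ is in fact unnecessary—and your explicit treatment of the separable subextension via $p_*\simeq p_!\simeq p_\sharp$ for finite \'etale $p$ supplies the step the paper compresses into its initial reduction to $l/k$ purely inseparable.
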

\begin{proof}
  Let $\tilde{k}$ be the separable closure of $k$ in $l$, so that $\tilde{k}/k$ is finite separable and $l/\tilde{k}$ is finite purely inseparable. Assume the conclusion holds for $l/\tilde{k}$, i.e. that there exists $\tilde{g}:\widetilde{Y}\ra\Spec(\tilde{k})$ smooth projective of dimension $\leq n$ such that $(l/\tilde{k})_* g_\sharp \BQ_Y\simeq \tilde{g}_\sharp \BQ_{\widetilde{Y}}\in \DA^\sgsm_{n,c}(\tilde{k})$. We have $(l/k)_*g_\sharp \BQ_Y\simeq (k'/k)_*\tilde{g}_\sharp \BQ_{\widetilde{Y}}\simeq h_\sharp\BQ_{\widetilde{Y}}$ with $h:\widetilde{Y}\ra \Spec(k)$ since $k'/k$ is finite \'etale, and $\widetilde{Y}$ is smooth projective over $k$, of dimension $\leq n$. This shows that we can that assume $l/k$ is finite purely inseparable.
  
By treating separately the connected components of $Y$, we can assume that $Y$ is of pure dimension $n$. Let $F:\Spec(l)\ra \Spec(l)$ be a high enough power of the Frobenius of $l$ that factors through $k$. We denote by $\overline{F}$ the induced morphism $\Spec(k)\ra \Spec(l)$ and its natural lift $\Spec(k)\ra \Spec(k)$ (the corresponding power of $\mathrm{Fr}_k$). We have the following diagram of schemes, where the upper square is cartesian:
\[
\xymatrix{
Y' \ar[r]^{G} \ar[d]_{\pi_{Y'}} & Y \ar[d]^{\pi_Y}\\
\Spec(k) \ar[r]^{\overline{F}} \ar[rd]_F & \Spec(l) \ar[d]^{(l/k)}\\
  &  \Spec(k).
}
\]
By base change, the $k$-scheme $Y'$ is smooth projective and the morphism $G$ is finite purely inseparable, so that $Y'$ is of dimension $\leq n$. By the separation property of $\DA$, we have
\[
(l/k)_*(\pi_Y)_*\BQ_Y\simeq (l/k)_* (\pi_Y)_* G_*\BQ_{Y'}\simeq (l/k)_* \overline{F}_* (\pi_{Y'})_*\BQ_{Y'}\simeq F_*(\pi_{Y'})_*\BQ_{Y'}.
\]
Let $\mathrm{Fr}_{Y'}$ be the corresponding power of the absolute Frobenius on $Y'$. By naturality of the absolute Frobenius, we have $\pi_{Y'}\circ \mathrm{Fr}_{Y'}=F\circ \pi_{Y'}:Y'\ra \Spec(k)$. We deduce that
\[
F_*(\pi_{Y'})_*\BQ_{Y'}\simeq (\pi_{Y'})_*(\mathrm{Fr}_{Y'})_*\BQ_{Y'}\simeq (\pi_{Y'})_*\BQ_{Y'}\in \DA^n_{\sgsm}(k),
\]
where the last isomorphism follows by separation. By relative purity and the projection formula, we deduce that
\begin{eqnarray*}
(l/k)_* (\pi_Y)_{\sharp}\BQ_{Y} & \simeq & (l/k)_* ((\pi_Y)_* \BQ_Y\otimes \BQ_{l}(n)[2n])\\
& \simeq & (l/k)_*((\pi_Y)_*\BQ_Y)\otimes\BQ_k(n)[2n]\\
& \simeq & (\pi_{Y'})_*\BQ_{Y'}\otimes\BQ_k(n)[2n]\\
& \simeq & (\pi_{Y'})_{\sharp}\BQ_{Y'}.
\end{eqnarray*}
This completes the proof of the lemma.
\end{proof}

\subsection{Homological vs cohomological motives}
\label{sec:n_hom_coh}

\begin{prop}
\label{prop:hom_cohom_twists}
Let $S$ be a scheme and $n\geq 0$. We have
\[
\DA^n_{(c)}(S) = \DA_{n,(c)}(S)(-n)
\]
In particular, we have $\DA^0_{(c)}(S)=\DA_{0,(c)}(S)$.
\end{prop}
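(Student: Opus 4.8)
The plan is to reduce the equality first to the subcategories of compact objects, then to the case where $S$ is the spectrum of a field, where it becomes essentially a restatement of Proposition~\ref{prop:subcats_field}. For the first reduction: by Lemma~\ref{lem:subcats_comp} the categories $\DA^n(S)$ and $\DA_n(S)$ are compactly generated, with subcategories of compact objects $\DA^n_c(S)$ and $\DA_{n,c}(S)$, and compactness in these subcategories agrees with compactness in $\DA(S)$. Since the Tate twist $(-n)\colon \DA(S)\to\DA(S)$ is a triangulated auto-equivalence commuting with small sums, $\DA_n(S)(-n)$ is again a localizing subcategory generated by its compact objects, which are exactly $\DA_{n,c}(S)(-n)$. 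As two localizing subcategories of $\DA(S)$ generated by sets of compact objects agree precisely when their compact parts agree, it suffices to prove $\DA^n_c(S)=\DA_{n,c}(S)(-n)$, which simultaneously settles the compact form of the statement.

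For the second reduction I would invoke the punctual characterization. Both $\DA^n_c(S)$ and $\DA_{n,c}(S)(-n)$ are full subcategories of $\DA_c(S)$, and by Proposition~\ref{prop:punctual_car}, for $M\in\DA_c(S)$ one has $M\in\DA^n_c(S)$ (resp.\ $M(n)\in\DA_{n,c}(S)$) if and only if $s^*M\in\DA^n_c(\kappa(s))$ (resp.\ $s^*(M(n))\in\DA_{n,c}(\kappa(s))$) for every $s\in S$; since $s^*$ is monoidal, $s^*(M(n))\simeq(s^*M)(n)$, so the equality over $S$ follows from the corresponding equality over every field. For a field $k$, Proposition~\ref{prop:subcats_field} identifies $\DA^n_c(k)$ with $\DA^n_{\gsm,c}(k)$ and $\DA_{n,c}(k)$ with $\DA^{\gsm}_{n,c}(k)$, so it remains to check $\DA^n_{\gsm,c}(k)=\DA^{\gsm}_{n,c}(k)(-n)$. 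This comes from relative purity on the smooth projective generators, as in the proof of Proposition~\ref{prop:subcats_field}: for $f\colon X\to k$ smooth projective of pure relative dimension $i\le n$ (to which one may reduce) one has $f_\sharp\BQ_X(-n)\simeq f_!\BQ_X(i-n)[2i]=f_*\BQ_X(i-n)[2i]$, and since $f_*\BQ_X$ is a generator of $\DA^i(k)$ and $n-i\ge 0$, Proposition~\ref{prop:n_monoidal} puts $f_*\BQ_X(i-n)$ in $\DA^n(k)$; geometric smoothness is stable under Tate twists, so $f_\sharp\BQ_X(-n)\in\DA^n_{\gsm,c}(k)$, and the reverse inclusion is symmetric ($g_*\BQ_Y(n)\simeq g_\sharp\BQ_Y(n-j)[-2j]\in\DA^{\gsm}_{n,c}(k)$ for $g$ smooth projective of pure relative dimension $j\le n$). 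The final clause $\DA^0_{(c)}(S)=\DA_{0(,c)}(S)$ is just the case $n=0$.

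I expect the only genuine obstacle to be the inclusion $\DA^n(S)\subseteq\DA_n(S)(-n)$: the generators of $\DA^n(S)$ are the motives $f_*\BQ_X$ of \emph{arbitrary} proper morphisms, possibly with very singular source, and over a general base there is no way to rewrite these through smooth generators — which is exactly why the argument has to descend to residue fields via the punctual characterization, where resolution of singularities by alterations (used in the proof of Proposition~\ref{prop:subcats_field}) makes the translation between the homological and cohomological pictures available. The opposite inclusion $\DA_n(S)(-n)\subseteq\DA^n(S)$ is, by contrast, elementary and can be established directly over any base: after reducing (by local constancy of the relative dimension of a smooth morphism) to a generator $f_\sharp\BQ_X$ with $f\colon X\to S$ smooth of pure relative dimension $i\le n$, relative purity gives $f_\sharp\BQ_X(-n)\simeq f_!\BQ_X(i-n)[2i]$, while $\BQ_X\in\DA^0(X)$ and Proposition~\ref{prop:permanence_coh_n} give $f_!\BQ_X\in\DA^i(S)$, whence $f_!\BQ_X(i-n)\in\DA^n(S)$ by Proposition~\ref{prop:n_monoidal} since $n-i\ge 0$.
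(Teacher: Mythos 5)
Your overall strategy is essentially the paper's, repackaged. The paper does not invoke Proposition~\ref{prop:punctual_car}; it proves the easy inclusion $\DA_n(S)(-n)\subset\DA^n(S)$ on generators by relative purity (exactly your last paragraph) and then handles the hard inclusion by hand: reduce to $S$ reduced, note that the restriction of a compact cohomological $n$-motive to each generic point is (geometrically) smooth by Proposition~\ref{prop:subcats_field} so the field case applies there, spread out over a dense open by continuity (Proposition~\ref{prop:subcats_continuity}), and conclude by localisation, stability of $\DA_n$ under $i_*$ for closed immersions, and noetherian induction. Since the proof of Proposition~\ref{prop:punctual_car} is precisely this reduced/generic-point/continuity/localisation induction, your reduction to residue fields buys the same thing with less writing, and there is no circularity (that proposition depends only on the permanence, localisation and continuity results). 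Your preliminary reduction of the equality of the big localizing subcategories to their compact parts is also fine, both sides being generated by their compact objects by Lemma~\ref{lem:subcats_comp} and the fact that the twist is an auto-equivalence preserving compacts, and it does deliver the ``$(c)$'' form simultaneously.

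The step you should not leave as stated is the field case. You reduce to $\DA^n_{\gsm,c}(k)=\DA^{\gsm}_{n,c}(k)(-n)$ and claim this ``comes from relative purity on the smooth projective generators, to which one may reduce''. But these categories are defined as intersections ($\DA^n\cap\DA^{\coh}_{\gsm}$, resp. $\DA_n\cap\DA^{\gsm}_{\homo}$), not as subcategories generated by smooth projective varieties of dimension $\leq n$; the remark following Definition~\ref{def:sm_subcats} makes exactly this point. Relative purity on such generators only yields the easy inclusion $\DA^{\gsm}_{n,c}(k)(-n)\subset\DA^n_{\gsm,c}(k)$; the inclusion you actually need, $\DA^n_{\gsm,c}(k)\subset\DA^{\gsm}_{n,c}(k)(-n)$, is (given Proposition~\ref{prop:subcats_field}) equivalent to the hard direction of the statement over $k$, so ``to which one may reduce'' is doing the real work without justification. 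What makes it legitimate is not the definition but the de Jong--alteration induction inside the proof of Proposition~\ref{prop:subcats_field}: it shows that $\BD_k(\pi_{X*}\BQ_X)=\pi_{X!}\pi_X^!\BQ_k$, for $\pi_X$ proper of dimension $\leq n$, lies in the thick subcategory built from motives $g_\sharp\BQ_Y$ with $g$ smooth projective of dimension $\leq n$, whence by duality $\DA^n_c(k)$ is generated by $g_*\BQ_Y$ for such $g$; only then does the purity computation $g_*\BQ_Y(n)\simeq g_\sharp\BQ_Y(n-i)[-2i]\in\DA_{n,c}(k)$ close the field case. So either cite the equality $\DA^{\gsm}_{n,c}(k)(-n)=\DA^n_{\gsm,c}(k)$ asserted in that proof, or state and use the generation-by-smooth-projective-$(\dim\leq n)$ fact explicitly; as written, this step is a genuine gap in justification (the paper's own ``smooth case'' phrase at the corresponding point leans on that induction, not on the definition of the $\gsm$ categories).
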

\begin{proof}
In both directions, it is enough to check the inclusion for a family of compact generators.

  Let $f:X\ra S$ be a smooth morphism of relative dimension $i\leq n$ (we can reduce to this case by considering connected components of $S$ and $X$). By relative purity, we have
\[
f_{\sharp}\BQ_X(-n) \simeq f_!\BQ_X(i-n)[2i]
\]
which is in $\DA^n(S)$ by Propositions~\ref{prop:permanence_coh_n} and \ref{prop:n_monoidal}.

The other inclusion is true for smooth cohomological $n$-motives by the same relative purity argument. For general compact cohomological $n$-motives (which include the generating family), we argue as follows. By Corollary~\ref{cor:localisation_subcats} \ref{rad_sub}, we can assume $S$ reduced. We then proceed by noetherian induction. Let $M\in \DA^n(S)$. The restriction of $M$ to any generic point of $S$ is smooth by Proposition~\ref{prop:subcats_field}. There we can apply the smooth case and see that $\eta^* M\in \DA_{n,c}(\eta)(-n)$ for any generic point $\eta$ of $S$. Then we apply continuity for compact homological $n$-motives (Proposition~\ref{prop:subcats_continuity}) to find a dense open immersion $j:U\ra S$ with $j^*M\in \DA_{n,c}(U)(-n)$. Applying the induction hypothesis, localisation and the fact that $i_*$ preserves homological $n$-motives for $i$ closed immersion (Proposition~\ref{prop:permanence_hom_n} (ii)) completes the proof.
\end{proof}

\section{Commutative group schemes and motives}
\label{sec:albanese}

Several motives of interest for this paper are obtained from group schemes or complexes of group schemes. The main examples we are interested in are smooth commutative group schemes (Section \ref{sec:mot_group_schemes}), Deligne $1$-motives (Section \ref{sec:mot_del_mot}), and the smooth Picard complex (Section~\ref{sec:smooth_picard}).

\subsection{Motives of commutative group schemes}
\label{sec:mot_group_schemes}
 In this section, we introduce the relevant definitions and reformulate and extend results from \cite{AHPL} and \cite{Orgogozo} in this language. For the rest of the section, fix a noetherian finite-dimensional scheme $S$. 

In \cite[Thm D.1]{AHPL}, we constructed a functorial cofibrant resolution of the sheaf $G\otimes \BQ$ for $G$ a smooth (locally of finite type) commutative group scheme over $S$. Let us recall the statement, minus the statement about functoriality which we discuss in details immediately afterwards.

\begin{lemma}\cite[Thm D.1]{AHPL}
  \label{lemma:resolution}
Let $(\mathcal{S},\tau)$ be a Grothendieck site. 
We denote $\BZ(-)$ the ``free abelian sheaf'' functor (the sheafification of the sectionwise free abelian group functor).

There is a functor:
\[
A:\Shv_\tau(\CS,\BZ)\ra \mathbf{Cpl}_{\geq 0}\Shv_\tau(\CS,\BZ)
\] 
together with a natural transformation
\[
r:A\rightarrow (-)[0]
\]
satisfying the following properties.
\begin{enumerate}
\item \label{explicit_form_2} For any $\CG\in \Sh_\tau(\CS,\BZ)$ and $i\geq 0$, the sheaf $A(\CG)_i$ is of the form $\bigoplus_{j=0}^{d(i)}\BZ(\CG^{a(i,j)})$ for some $d(i),a(i,j)\in\BN$. 
\item \label{Q-resolution_2} The map $r\otimes \BQ$ is a quasi-isomorphism.
\end{enumerate}
\end{lemma}

Theorem D.1 in \cite{AHPL} also contained a statement about the functoriality with respect to morphisms of sites. As was pointed out by the reviewer, functoriality for morphisms of sites is insufficient for the main application of this theorem to our setting (and indeed for the similar application in \cite{AHPL}), which is to compute pullbacks of certain associated motives. We set out to repair this problem.

Let us first recall a bit of notation. Given a continuous map of sites $u:(\mathcal{S},\tau)\ra (\mathcal{S}',\tau')$, we have an adjunction
\[
u^{s}:\Sh_{\tau}(\mathcal{S},\Set) \leftrightarrow \Sh_{\tau'}(\mathcal{S}',\Set):u_{s}
  \]
and that when $u$ defines a morphism of sites $F:(\mathcal{S}',\tau')\ra(\mathcal{S},\tau)$ (i.e., when $u^{s}$ commutes with finite limits; note that the morphism of sites goes in the inverse direction), we have by definition $F^{-1}:=u^{s}$ and $F_{*}:=u_{s}$.

The functoriality for sheaves of abelian groups (resp. $\BQ$-vector spaces) with respect to continuous morphisms of sites requires some care. We have similarly defined adjunctions
\[
u_{\BZ}^{s}:\Sh_{\tau}(\mathcal{S},\BZ) \leftrightarrow \Sh_{\tau'}(\mathcal{S}',\BZ):u_{s}^{\BZ}
\]
and
\[
u_{\BQ}^{s}:\Sh_{\tau}(\mathcal{S},\BQ) \leftrightarrow \Sh_{\tau'}(\mathcal{S}',\BQ):u_{s}^{\BQ}
\]
for sheaves of abelian groups and $\BQ$-vector spaces \cite[Expos\'e III Proposition 1.7]{SGA4_1}.

\begin{lemma}\label{lem:representables}
  Let $u:(\mathcal{S},\tau)\ra (\mathcal{S}',\tau')$ be a continuous map of subcanonical sites. Let $U\in \CS$, which we identify with the corresponding representable sheaf of sets on $\CS$. Then there are natural isomorphisms
  \[
u^s(U)\simeq u(U)\text{  and  }u_\BZ^s(\BZ(U))\simeq \BZ(u(U))\text{  and  }u_\BQ^s(\BQ(U))\simeq \BQ(u(U))
\]
\end{lemma}  
\begin{proof}
The result for sheaves of sets is clear by adjunction and the Yoneda lemma, and the analoguous result for sheaves of abelian groups and $\BQ$-vector spaces then follows from \cite[Expos\'e III Proposition 1.7.5)]{SGA4_1}.
\end{proof}  

Note that, because $u^s$ does not commute with finite limits in general, $u_\BZ^s$ and $u_\BQ^s$ do not always coincide with $u^s$ after forgetting the algebraic structure. The following result seems well-known and implicit in some places in the literature but I could not find a good reference. 

\begin{prop}\label{prop:pullback_finite_products}
  Let $u:(\mathcal{S},\tau)\ra (\mathcal{S}',\tau')$ be a continuous map of sites such that $u^{s}$ commutes which finite products (and in particular final objects). Then the functors $u_\BZ^s$ and $u_\BQ^s$ coincide with $u^s$ after forgetting the algebraic structure. Moreover, we have a natural isomorphism
  \[
u_\BQ^s(-\otimes\BQ)\simeq u_\BZ^s(-)\otimes\BQ.
\]
\end{prop}  
\begin{proof}
  We first treat the case of $u_\BZ^s$. The idea is to construct another left adjoint $u^{s'}$ to $u^s_{\BZ}$ (resp. $u^s_\BQ$) for which the property is clearly true, and then use the uniqueness of left adjoints up to natural isomorphism. Let $F\in \Sh_\tau(\CS,\BZ)$. The underlying sheaf of sets of $u^{'s}(F)$ is simply $u^s(F)$. The structure of group object of $u^{'s}(F)$ is transported from the one of $F$ by using the isomorphisms
  \[
u^s(F\times F)\simeq u^s(F)\times u^s(F)\text{  and  }u^s(*)\simeq *
  \]
  where $*$ denotes the final objects. It is clear that the corresponding structure maps satisfy the abelian group axioms (using functoriality and associativity of direct products). Now we have, for all $F\in \Sh_\tau(\CS,\BZ)$ and $G\in \Sh_{\tau'}(\CS',\BZ)$,
  \[
\Hom_{\Sh_{\tau'}(\CS',\BZ)}(u^{s'}(F),G)\subset \Hom_{\Sh_{\tau'}(\CS',\Set)}(u^{s}(F),G)
  \]
is the subset of homomorphisms of sheaves of sets compatible with the abelian group structures on $u_s(F)$ and $G$, which by adjunction (including naturality properties of adjunctions) is naturally in bijection with the subset
  \[
\Hom_{\Sh_{\tau}(\CS,\BZ)}(F,u^s_{\BZ}(G))\subset \Hom_{\Sh_{\tau}(\CS,\Set)}(F,u_s(G))
  \]
of homomorphisms of sheaves of sets compatible with the abelian group structures on $F$ and $u_s(G)$, that is, the group of homomorphisms of sheaves of abelian groups. This proves that $u^{s'}$ is a left adjoint to $u_s^{\BZ}$ and completes the proof for $u^s_\BZ$. The same argument, replacing abelian groups by $\BQ$-vector spaces, gives the result for $u_\BQ^s$.
  Finally, write $V^{\BQ,\BZ}_\CS:\Sh_{\tau}(\CS,\BQ)\ra \Sh_{\tau}(\CS,\BZ)$ for the natural forgetful functor. Then we clearly have a natural isomorphism $V^{\BQ,\BZ}_\CS u_s^\BQ\simeq u_s^\BZ V^{\BQ,\BZ}_{\CS'}$ and the natural isomorphism involving tensor products by $\BQ$ follows by adjunction.
\end{proof}  

\begin{prop}\label{prop:EM_functoriality}
 Let $u:(\mathcal{S},\tau)\ra (\mathcal{S}',\tau')$ be a continuous map of sites. Assume that the functor $u^{s}$ on sheaves of sets commutes with finite direct products.
Let $\CG\in \Sh_{\tau}(\mathcal{S},\tau)$ be a sheaf of abelian groups. Then there exists an isomorphism of complexes $b_{u,\CG}:u^s_\BZ(A(\CG))\ra A(u^s_\BZ\CG)$ which is termwise compatible with the isomorphisms $u^s_\BZ(\BZ(\CG^{a(i,j)}))\simeq \BZ(u^s_\BZ(\CG^{a(i,j)}))$ of \cite[Expos\'e III Proposition 1.7.5)]{SGA4_1} and which makes the diagram
\[
\xymatrixcolsep{4pc}
\xymatrix{
u^s_\BZ(A(\CG)) \ar[r]^{u^s_\BZ (r(\CG))} \ar[d]_{b_{u,\CG}}^{\sim}& u^s_\BZ(\CG) \\
A(u_\BZ^s(\CG)) \ar[ru]_{r(u(\CG)} &  
}
\]
commute.
\end{prop}
\begin{proof}
  We go back to the construction of the complex $A(\CG)$ from \cite[Chapter I \S 1]{Breen}. There is an unfortunate shift by $1$ between the definitions of $A$ of \cite[Appendix D]{AHPL} and \cite[Chapter I \S 1]{Breen}, which was not pointed out explicitely in \cite{AHPL}; we follow the convention of \cite[Appendix D]{AHPL}. We do not need to present the whole construction, only the description given after equation (1.8) in loc.cit. For every $q\geq 0$, consider the finite set $\CI_q$ of all tuples $(k_1,\ldots,k_{r-1})$, with all $k_i>0$ and $\sum_{i=1}^{k_{r-1}}k_i=q$ (\cite{Breen} has $q-1$ here; this is where the mismatch of a shift by $1$ occurs). For such a tuple $I$, write $A_I(\CG)=\BZ(\CG^{\times (|I|+1)})$. Then
  \[
A(\CG)_q := \prod_{I\in \CI_q}A_I(\CG) 
\]
and the differentials are constructed in a complicated but purely combinatorial fashion from the addition map of $\CG$. Since the description above leave ambiguous what happens for $q=0$, let's be more explicit: $A(\CG)$ starts with
\[
\BZ(\CG^{\times 2})\times \BZ(\CG^{\times 3})^{\times 2}\times \BZ(\CG^{\times 4})\ra \BZ(\CG^{\times 2})\times \BZ(\CG^{\times 3})\ra \BZ(\CG^\times 2)\ra \BZ(\CG) \ra 0
\]
and the first non-zero differential is $\BZ(\CG^{\times 2})\ra \BZ(\CG), [(g,h)]\mapsto [g]+[h]-[g+h]$.

Since $u^s_\BZ$ commutes with direct products, and applying the isomorphisms of \cite[Expos\'e III Proposition 1.7.5)]{SGA4_1}, the description makes it clear that we have a natural isomorphism
\[
b_{u,\CG}:u^s_\BZ(A(\CG))\simeq A(u^s_\BZ(\CG)).
\]
We also recall the construction of the map $r:A(\CG)\ra \CG$. By the above, we have $A(\CG)_0=\BZ(\CG)$. Moreover, the morphism $A(\CG)_1\ra A(\CG)_0$ composed with the addition map $a_\CG:\BZ(\CG)\ra \CG$ vanishes, so that we get a morphism $r:A(\CG)\ra \CG[0]$. The commutation of the last diagram of the proposition follows easily from this definition.
\end{proof}

We can now come to the application relevant to motives.

\begin{lemma}\label{lem:pullbacks_products}
Let $f:T\ra S$ be a morphism of schemes. We consider $\Sm/S$, $\Sm/T$ as equipped with the \'etale topology. The continuous functor
  \[
f^{-1}:\Sm/S \ra \Sm/T,\ X\mapsto X\times_S T
  \]
and the associated functors $(f^{-1})^s$, $(f^{-1})_\BZ^s$ and $(f^{-1})_\BQ^s$ all commute with finite products. The functors $(f^{-1})_\BZ^s$ and $(f^{-1})_\BQ^s$ coincide with $(f^{-1})^s$ after forgetting the algebraic structures.
\end{lemma}  
\begin{proof}
The fact that $f^{-1}$ commutes with finite products is clear. By \cite[Proposition 2.2.36]{Olsson_stacks_book}, we deduce that $(f^{-1})^s$ commutes with finite products. Since the forgetful functor from abelian groups or $\BQ$-vector spaces to sets preserves limits, we conclude from Proposition \ref{prop:pullback_finite_products} that $(f^{-1})_\BZ^s$ and $(f^{-1})_\BQ^s$ both coincide with $(f^{-1})^s$ after forgetting the algebraic structures and commute with finite products.
\end{proof}  

Note that by \cite[Example 3.1.19]{Morel_Voevodsky}, the functor $(f^{-1})^{s}$ does not preserve finite limits in general.

In the following, we make a slight change of notations to be compatible with our notations for pullback functors between categories of complexes of sheaves of $\BQ$-vector spaces: given a morphism of schemes $f:T\ra S$, we write $\underline{f}^{-1}$ for the functor $(f^{-1})^s_\BQ$ of Lemma \ref{lem:pullbacks_products}, and we write $f^{-1}$ for its (left) derived functor.

\begin{prop}
\label{prop:pullback_complex}
Let $K_*$ be a bounded complex of smooth commutative group schemes over $S$ and $f:T\ra S$ a morphism of schemes. Write $K_{*}\times_{S}T$ for the bounded complex of smooth commutative group schemes over $T$ obtained by base change. We have an isomorphism
\[
R_f:f^{-1} (K_*\otimes \BQ) \stackrel{\sim}{\lra} (K_{*}\times_{S}T)\otimes\BQ
\]
in $D(\Sm/S)$ which is natural in $K_*$. Moreover, $R_f$ is compatible with further pullbacks: for $g:U\ra T$, the diagram
\[
\xymatrix{
g^{-1} f^{-1} (K_*\otimes\BQ) \ar[r]_{\sim} \ar[d]_{R_f}^{\sim} & (fg)^{-1} (K_*\otimes\BQ) \ar[r]_{\sim}^{R_{fg}} & (K_*\times_{S}U)\otimes \BQ \ar[d]^{\sim}\\
g^{-1}((K_*\times_{S}T)\otimes\BQ) \ar[rr]_{R_g}^{\sim} & & (K_*\times_{S}T\times_{T}U)\otimes\BQ
}
\]
commutes.
\end{prop}
\begin{proof}
We apply Lemma \ref{lemma:resolution} to the individual sheaves $K_n$, and use the functoriality of the construction deduced from Proposition \ref{prop:EM_functoriality} and \ref{lem:pullbacks_products}, keeping in mind the change in notation explained before the proposition. This yields a double chain complex $A_{\bullet}(K_*)$ such that $A_{n}(K_{*})=0$ for all $n<0$ together with a map $A_{0}(K_*)\stackrel{r(K_*)}{\ra} K_*$ . For every fixed $n\in\BZ$, the induced morphism $A_{\bullet}(K_{n})\otimes\BQ\stackrel{r_\BQ(K_n)}{\ra} K_{n}[0]\otimes\BQ$ of chain complexes is a quasi-isomorphism. Because $K_{*}$ is bounded, for every $m\in\BZ$ there are finitely many pairs $(p,q)\in \BZ^{2}$ with $A_{p}(K_{q})\neq 0$.

  Let $B_{\BQ}(K_{*})$ be the $\oplus$-total complex of $A_{\bullet}(K_{*})$ and $r_\BQ(K_*):B_{\BQ}(K)_*\ra K_*$ be the morphism of chain complexes induced by the map $A_{0}(K_{*})\ra K_{*}$. By Lemma \ref{lemma:resolution}, Proposition \ref{prop:EM_functoriality} and (the dual of) Lemma \cite[Tag 0133]{stacks-project}, we have the following properties.

\begin{enumerate}[label={\upshape(\roman*)}]
\item The map $r_\BQ(K_*)$ is a quasi-isomorphism. 
\item For all $i\in \BZ$, the sheaf $B_\BQ(K)_i$ is of the form $\BQ(H_i)$ for some smooth commutative group scheme $H_i$ over $S$ (a fibre product of various copies of the $K_n$'s); therefore, $B_{\BQ}(K_*)$ is a cofibrant object in the projective model structure on  $\Cpl(\Shv_{\et}(\Sm/S,\BQ))$, and by the first point $r_{\BQ}(K_{*})$ is a cofibrant resolution of $K_{*}$.
\item Let $f:T\ra S$ be a morphism of schemes. The formation of $B_\BQ(K_*)$ and $r_\BQ(K_*)$ is compatible with pullback, in the sense that there exists an isomorphism of complexes $b_{f,K_*}:\underline{f}^{-1}(B_{\BQ}(K_*))\stackrel{\sim}{\ra} B_{\BQ}(K_*\times_{S}T)$ which makes the following diagram in $\Cpl(\Shv_{\et}(\Sm/T,\BQ))$
\[
\xymatrixcolsep{4pc}
\xymatrix{
\underline{f}^{-1}(B_{\BQ}(K_*)) \ar[r]^{\underline{f}^{-1} (r_\BQ(K_*))} \ar[d]_{b_{f,K_*}}^{\sim}& (K_{*}\times_{S}T)\otimes\BQ \\
B_{\BQ}(K_*\times_{S}T) \ar[ru]_{r_{\BQ}(K_{*}\times_{S}T)} & 
}
\]
commute. Here, we use the fact that the pullback functor on sheaves of $\BQ$-vector spaces coincides with one on sheaves of sets by Proposition \ref{prop:pullback_finite_products}, and the fact that on representable the pullback functor on sheaves of sets is simply the extension of $-\times_{S}T$.
\end{enumerate}
As $r_{\BQ}(K_{*})$ is a cofibrant resolution in the projective model structure, we have an isomorphism in $D(\Sm/S)$ given by
\[
f^{-1} (K_*\otimes\BQ)\stackrel[\sim]{f^{-1}(r_\BQ(K_*))}{\xleftarrow{\hspace*{1cm}}} f^{-1}(B_{\BQ}(K_*))\stackrel{\sim}{\leftarrow} \underline{f}^{-1}(B_\BQ(K_*)).
\]
We define $R_f$ as the composition
\[
f^{-1} (K_*\otimes\BQ)\stackrel[\sim]{f^{-1}(r_\BQ(K_*))^{-1}}{\xrightarrow{\hspace*{1cm}}} \underline{f}^{-1}(B_\BQ(K_*))\stackrel[\sim]{b_{f,K_{*}}}{\xrightarrow{\hspace*{1cm}}} B_{\BQ}(K_{*}\times_{S}T) \stackrel[\sim]{r_{\BQ}(K_{*}\times_{S}T)}{\xrightarrow{\hspace*{1cm}}} (K_*\times_{S}T)\otimes\BQ
\]
where we use the isomorphism $b_{f,K_{*}}$ and the quasi-isomorphism $r_{\BQ}(K_{*}\times_{S}T)$.

The proof of compatibility with further pullbacks is a long exercise in commutative diagrams. The key point is the commutation of
\[\label{diagram}\tag{**}
  \xymatrix{
    \underline{g}^{-1}\underline{f}^{-1}B_{\BQ}(K_{*}) \ar[r]_{\sim} \ar[d]_{b_{f},K_{*}} & \underline{fg}^{-1}B_{\BQ}(K_{*}) \ar[r]^{b_{fg,K_{*}}}_{\sim} & B_{\BQ}(K_{*}\times_{S}U) \ar[d]_{\sim} \\
    \underline{g}^{-1}B_{\BQ}(K_{*}\times_{S}T) \ar[rr]^{b_{g,K_{*}\times_{S}T}} & & B_{\BQ}((K_{*}\times_{S}T)\times_{T}U)
  }
  \]
  which after unraveling the definitions comes down to the fact that for two composable continuous morphisms of sites $u,v$ such that both $u^{s}$ and $v_{s}$ commute with finite products, we have a natural isomorphism $u^{s}_{\BQ}v^{s}_{\BQ}\simeq (uv)^{s}_{\BQ}$, as is clear from the fact that the same is obvious for the right adjoints $u^{s}$ and $v^{s}$.
  
Besides Diagram \eqref{diagram}, the proof of the compatibility then consists in iterated applications of the naturality of isomorphisms of the form $h^{-1}(C)\ra \underline{h}^{-1}(C)$ for $C$ cofibrant and of the naturality of natural isomorphisms $f^{-1}g^{-1}\simeq (gf)^{-1}$ (both derived and underived).

\end{proof}

\begin{cor}
\label{coro:pullback_complex_DA}
Let $K_*$ be a bounded complex of smooth commutative group schemes over $S$ and $f:T\ra S$ be a morphism of schemes. We have natural isomorphisms
\[
R_f:f^*(K_*\otimes \BQ) \stackrel{\sim}{\lra} (K_*\times_{S}T)\otimes\BQ
\]
in $\DA^{\eff}(S)$ and
\[
R_f:f^* \Sigma^\infty (K_*\otimes \BQ)\stackrel{\sim}{\lra} \Sigma^\infty (K_*\times_{S}T)\otimes\BQ
\]
in $\DA(S)$.
These isomorphisms are compatible with further pullbacks in the same way as in the previous proposition.
\end{cor}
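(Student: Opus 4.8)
The plan is to deduce the corollary from Proposition~\ref{prop:pullback_complex} by transporting its conclusion along the functors relating $D(\Sm/S)$, $\DA^{\eff}(S)$ and $\DA(S)$. Recall that $\DA^{\eff}(S)$ is obtained from the derived category $D(\Sm/S)$ of \'etale sheaves on $\Sm/S$ as the (left Bousfield) localization at the $\BA^1$-equivalences, with localization functor $\gamma_S\colon D(\Sm/S)\ra \DA^{\eff}(S)$, and that $\DA(S)$ is obtained from $\DA^{\eff}(S)$ by $\otimes$-inverting the Tate object via $\Sigma^\infty\colon \DA^{\eff}(S)\ra \DA(S)$. The formal input I would use is that the underived pullback $\underline{f}^*$ on complexes of \'etale sheaves is left Quillen for both the model structure computing $D(\Sm/-)$ and the $\BA^1$-localized one computing $\DA^{\eff}(-)$: it preserves cofibrations because $\underline{f}^*\BQ(X)=\BQ(X\times_S T)$ and Bousfield localization does not change the cofibrations, and it sends the $\BA^1$-projections $X\times\BA^1\ra X$ to $\BA^1$-projections, hence sends $\BA^1$-equivalences between cofibrant objects to $\BA^1$-equivalences. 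Consequently the square
\[
\xymatrix{
D(\Sm/S) \ar[r]^{f^*} \ar[d]_{\gamma_S} & D(\Sm/T) \ar[d]^{\gamma_T}\\
\DA^{\eff}(S) \ar[r]_{f^*} & \DA^{\eff}(T)
}
\]
commutes up to canonical natural isomorphism, and likewise $\Sigma^\infty\circ f^*\simeq f^*\circ\Sigma^\infty$ as part of the six operations formalism.

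Granting this, the first isomorphism is obtained by applying $\gamma_S$ to the isomorphism $R_f$ of Proposition~\ref{prop:pullback_complex}. Equivalently---and this is the formulation I would actually write down---the complex $B_\BQ(K_*)$ constructed in the proof of that proposition, being a bounded-below complex whose terms are sheaves of the form $\BQ(H_i)$ with $H_i$ a smooth $S$-group scheme, is cofibrant not only in the model structure computing $D(\Sm/S)$ but also in the $\BA^1$-localized one, so the zig-zag
\[
f^*(K_*\otimes\BQ)\xleftarrow{\ \sim\ } f^*(B_\BQ(K_*))\simeq \underline{f}^*(B_\BQ(K_*))\xrightarrow{\ \sim\ }\underline{f}^*(K_*\otimes\BQ)
\]
defining $R_f$ already makes sense, and is an isomorphism, in $\DA^{\eff}(S)$. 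Applying $\Sigma^\infty$, using $\Sigma^\infty\underline{f}^*(K_*\otimes\BQ)=\Sigma^\infty(\underline{f}^*K_*\otimes\BQ)$ (since $\underline{f}^*$ commutes with rationalisation) together with $\Sigma^\infty f^*\simeq f^*\Sigma^\infty$, yields the second isomorphism. For the compatibility with further pullbacks, I would take the large diagram whose commutativity is verified in $D(\Sm/S)$ in the proof of Proposition~\ref{prop:pullback_complex}, and apply $\gamma_S$ and then $\Sigma^\infty$ to it; the only additional point is that $\gamma_S$ and $\Sigma^\infty$ respect the cocycle isomorphisms $g^*f^*\simeq (fg)^*$, which holds because these are part of the structure of $f^*$ on the motivic categories.

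The one step that is not pure bookkeeping is the compatibility of $f^*$ with $\gamma_S$ (and with $\Sigma^\infty$): one must check that $\underline{f}^*$ is genuinely left Quillen for the $\BA^1$-localized model structures used in \cite[\S 3]{Ayoub_Etale}, so that the derived pullback of a cofibrant complex is computed by the same underived formula whether one works in $D(\Sm/-)$ or in $\DA^{\eff}(-)$. Once this is in place, the corollary is a transport of the diagrams of Proposition~\ref{prop:pullback_complex} through the monoidal localisation and stabilisation functors, and no new calculation is required.
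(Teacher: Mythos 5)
Your proposal is correct and follows essentially the same route as the paper: the paper also deduces the first isomorphism by transporting $R_f$ from Proposition~\ref{prop:pullback_complex} into $\DA^{\eff}$ (possible precisely because the resolution $B_\BQ(K_*)$ is cofibrant for the $\BA^1$-localized structure, which has the same cofibrations), and obtains the second by applying $\Sigma^\infty$ and the commutation $\Sigma^\infty f^*\simeq f^*\Sigma^\infty$. You merely spell out the left-Quillen bookkeeping that the paper leaves implicit in the phrase ``follows directly.''
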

\begin{proof}
The first isomorphism follows directly from Proposition~\ref{prop:pullback_complex}. The second follows from the first together with the commutation of $f^*$ and $\Sigma^\infty$. 
\end{proof} 

For some arguments, we need to use motives with transfers of commutative group schemes. Let $S$ be a scheme and $G$ a smooth (locally of finite type) commutative group scheme over $S$. Recall that the \'etale sheaf $G\otimes\BQ$ on $\Sm/S$ admits a canonical structure of sheaf with transfers, which is functorial in $G$. We write $G^\tr_\BQ$ for the resulting sheaf with transfers. Recall that there are adjunctions 
\[
a_{\tr}:\DA^{(\eff)}(S) \leftrightarrows \DM^{(\eff)}(S):o^{\tr}
\]
which relate motives with and without transfers. By construction, we have \(\underline{o}^\tr G^\tr_\BQ = G_{\BQ}\), and $\underline{o}^\tr$ preserves $\BA^1$-equivalences \cite[Lemme 2.111]{Ayoub_Galois_1}.

\begin{prop}\cite[Proposition 3.10]{AHPL}\label{prop:with_without_tr}
Let $S$ be an excellent scheme and $G$ a smooth commutative group scheme over $S$. Then the counit morphisms
\[
a_{\tr} o^{\tr}G^\tr_\BQ \stackrel{\sim}{\ra} G^\tr_\BQ
\]
in $\DM^\eff(S)$ and 
\[
a_{\tr} o^{\tr} \Sigma^\infty_\tr G^\tr_\BQ \stackrel{\sim}{\ra} \Sigma^\infty_\tr G^\tr_\BQ
\]
in $\DM(S)$ are isomorphisms.
\end{prop}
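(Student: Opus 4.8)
The plan is to deduce the stable statement from the effective one, then reduce the effective statement to the case of a perfect field by a base-change and continuity argument, and finally settle the field case using the comparison between motives with and without transfers with rational coefficients. For the first step: the adjunction $(a_\tr,o^\tr)$ is compatible with stabilisation, i.e.\ there are natural isomorphisms $\Sigma^\infty_\tr a_\tr\simeq a_\tr\Sigma^\infty$ and $\Sigma^\infty o^\tr\simeq o^\tr\Sigma^\infty_\tr$ reflecting the compatibility of $(a_\tr,o^\tr)$ with the two stabilisation adjunctions, and under these $\Sigma^\infty_\tr$ carries the effective counit $a_\tr o^\tr M\to M$ to the stable counit $a_\tr o^\tr\Sigma^\infty_\tr M\to\Sigma^\infty_\tr M$. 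So it suffices to show that the counit $c\colon a_\tr o^\tr G^\tr_\BQ\to G^\tr_\BQ$ is an isomorphism in $\DM^\eff(S)$, equivalently that its cone $C$ vanishes.

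Next I would reduce to the case $S=\Spec k$ a field. The construction is compatible with pullback: for $f\colon T\to S$ the functors $f^*$ commute with $a_\tr$ and $o^\tr$, and --- crucially --- Proposition~\ref{prop:pullback_complex} and Corollary~\ref{coro:pullback_complex_DA} provide a functorial, pullback-compatible $\BQ$-resolution of $G_\BQ$ by representables of smooth commutative group schemes; together these identify $f^*c$ with the counit attached to the $T$-group scheme $\underline{f}^*G$, hence $f^*C$ with the corresponding cone over $T$. I would then argue exactly as in the proof of Proposition~\ref{prop:punctual_car}: by Corollary~\ref{cor:localisation_subcats} we may assume $S$ reduced, and a noetherian induction using localisation and the continuity of $\DM^\eff(-)$ (Proposition~\ref{prop:subcats_continuity}, applied by presenting the generic points of $S$ as cofiltered limits) reduces the vanishing of $C$ to its vanishing over each residue field of $S$. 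Passing to the perfect closure of $k$ is harmless, since the corresponding pullback is an equivalence by the separation property of $\DA$ (resp.\ $\DM$) combined once more with continuity, so we may assume $k$ perfect.

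Over a perfect field $k$ I would invoke the comparison between $\DM^\eff(k,\BQ)$ and $\DA^\eff(k,\BQ)$: rationally, a sheaf with transfers is recovered from its underlying sheaf, so the forgetful functor $o^\tr\colon\DM^\eff(k,\BQ)\to\DA^\eff(k,\BQ)$ is fully faithful (the effective, étale-local avatar of the equivalence $\DA(k)\simeq\DM(k)$ recalled in the introduction, cf.\ \cite{Cisinski_Deglise_BluePreprint}). For an adjunction $a_\tr\dashv o^\tr$, full faithfulness of the right adjoint is precisely the statement that the counit $a_\tr o^\tr\to\id$ is an isomorphism, which gives the claim for $M=G^\tr_\BQ$. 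Should the effective comparison be awkward to quote, an alternative in the field case is to argue directly from the resolution: Lemma~\ref{lemma:resolution} gives $o^\tr G^\tr_\BQ\simeq\Tot(\BQ(H_\bullet))$ with the $H_i$ finite products of copies of $G$, whence $a_\tr o^\tr G^\tr_\BQ\simeq\Tot(\BQ_\tr(H_\bullet))$, and one checks that the induced map to $G^\tr_\BQ$ --- which in degree $0$ is the canonical summation-with-transfers map $\BQ_\tr(G)\to G^\tr_\BQ$ --- is an isomorphism using Voevodsky's theory of homotopy invariant sheaves with transfers \cite{MVW}.

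The hard part will not be the field case --- which is essentially the uniqueness of rational transfers --- but the compatibility of the counit with base change used in the reduction: one must identify $f^*a_\tr o^\tr G^\tr_\BQ\to f^*G^\tr_\BQ$ with the counit for $\underline{f}^*G$ over $T$. The $a_\tr o^\tr$-side of this reduces, via $o^\tr G^\tr_\BQ\simeq G_\BQ$, to the pullback-compatibility of the functorial resolution of $G_\BQ$ established in Proposition~\ref{prop:pullback_complex} and Corollary~\ref{coro:pullback_complex_DA}; the $G^\tr_\BQ$-side requires the analogous fact on the transfers side, namely that the derived pullback $f^*G^\tr_\BQ$ agrees with $(\underline{f}^*G)^\tr_\BQ$. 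This is exactly what those preliminary results were set up to supply. The excellence hypothesis on $S$ enters only through the continuity and noetherian-induction steps.
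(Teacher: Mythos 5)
First, note that the paper does not prove this statement at all: it is quoted from \cite[Proposition 2.10]{AHPL}, so the only ``paper proof'' to compare with is the one there. That argument is not a reduction to points: roughly, excellence is used to equip $G\otimes\BQ$ with traces along finite surjective morphisms (via finite normalisations), making it a qfh-sheaf of $\BQ$-vector spaces, and one then invokes the fact that rational qfh-sheaves carry a canonical transfer structure, which yields the counit isomorphism directly over $S$, in both the effective and the stable setting. Your closing guess that excellence ``enters only through the continuity and noetherian-induction steps'' therefore misses where the hypothesis is actually used.

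As a blind proof your proposal has two genuine gaps. (a) The passage from the effective to the stable statement is not formal: $o^\tr$ is a right adjoint, so it commutes with $\Omega^\infty_\tr$, not with $\Sigma^\infty_\tr$; the natural map $\Sigma^\infty o^\tr M\to o^\tr\Sigma^\infty_\tr M$ is not an isomorphism in general (this is a cancellation-type statement, available over a perfect field via Voevodsky's cancellation theorem but not over a general base, and for the objects at hand it is essentially part of what has to be proved). Without it, applying $\Sigma^\infty_\tr$ to the effective counit only shows that the stable counit becomes invertible after precomposition with an uncontrolled map, which proves nothing. (b) Your reduction to residue fields takes place in $\DM^\eff(S)$, but every tool you cite for it --- Corollary~\ref{cor:localisation_subcats}, Proposition~\ref{prop:punctual_car}, Proposition~\ref{prop:subcats_continuity}, the separation property --- is a statement about $\DA(-)$, where Ayoub's localisation, continuity and separation are available. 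For effective motives \emph{with transfers} over a general base the localisation triangle (equivalently, conservativity of the pair $(j^*,i^*)$) is not known, and pointwise conservativity applied to the non-compact cone of the counit is likewise unavailable, so the noetherian induction never gets started; one would have to test the counit against the generators $\BQ_\tr(X)$ and compare étale hypercohomology directly, which is in effect the route \cite{AHPL} takes. The field case itself (full faithfulness of $o^\tr$ rationally over a perfect field, or the computation via the resolution of Lemma~\ref{lemma:resolution}) is fine and is indeed in the literature, but it is the part of your argument that was never in doubt.
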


An important consequence for us is the following computation, which consists of translating a classical result of Voevodsky to our context, and which we will generalize later on.

\begin{prop}\label{prop:motive_curve_field}
Let $k$ be a field and $C/k$ be a smooth projective geometrically connected curve. There exists a direct sum decomposition
\[
M(C)\simeq \BQ\oplus \Sigma^\infty \Jac(C)_\BQ \oplus \BQ(1)[2]
\]
in $\DA(k)$.
\end{prop}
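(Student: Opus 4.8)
The plan is to reduce the statement to the case of a perfect base field and then to transport it to Voevodsky's category of motives with transfers, where the decomposition of the motive of a curve is classical. The comparison between the two settings will be handled by the equivalence $a_{\tr}$ and by Proposition~\ref{prop:with_without_tr}, so that no splitting needs to be constructed directly inside $\DA(k)$.

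First I would carry out the reduction to a perfect field. Writing $k^{\mathrm{perf}}=\varinjlim l$ over the finite purely inseparable subextensions $l/k$, each transition functor is an equivalence on $\DA$ by the separation property (\cite[Th\'eor\`eme 3.9]{Ayoub_Etale}, \cite[Corollaire 2.1.164]{Ayoub_these_1}), so continuity for compact motives \cite[Proposition~3.19]{Ayoub_Etale} yields an equivalence $(k^{\mathrm{perf}}/k)^{*}\colon\DA(k)\xrightarrow{\ \sim\ }\DA(k^{\mathrm{perf}})$. It sends $M(C)$ to $M(C_{k^{\mathrm{perf}}})$, the Tate objects $\BQ$ and $\BQ(1)[2]$ to themselves, and, using Corollary~\ref{coro:pullback_complex_DA} together with the compatibility of the Jacobian with base change, sends $\Sigma^\infty\Jac(C)_\BQ$ to $\Sigma^\infty\Jac(C_{k^{\mathrm{perf}}})_\BQ$. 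Hence it suffices to treat $k$ perfect, and I replace $k$ by $k^{\mathrm{perf}}$.

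Assume now $k$ perfect, so that $a_{\tr}\colon\DA(k)\xrightarrow{\ \sim\ }\DM(k)$ is an equivalence of categories \cite[Corollary~16.2.22]{Cisinski_Deglise_BluePreprint}. It carries $M(C)$ to the Voevodsky motive $M^{\tr}(C)$ and, since $a_{\tr}$ commutes with $\Sigma^\infty$ and $a_{\tr}o^{\tr}\Jac(C)^{\tr}_\BQ\simeq\Jac(C)^{\tr}_\BQ$ by Proposition~\ref{prop:with_without_tr}, carries $\Sigma^\infty\Jac(C)_\BQ=\Sigma^\infty o^{\tr}\Jac(C)^{\tr}_\BQ$ to $\Sigma^\infty_{\tr}\Jac(C)^{\tr}_\BQ$, the motive with transfers of the Jacobian (equivalently, of the homotopy-invariant sheaf $\underline{\Pic}^0_{C/k}$ placed in degree zero). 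By full faithfulness of $a_{\tr}$ I am thus reduced to proving, in $\DM(k)$,
\[
M^{\tr}(C)\simeq\BQ\oplus\Sigma^\infty_{\tr}\Jac(C)^{\tr}_\BQ\oplus\BQ(1)[2].
\]

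This is the classical computation of the motive of a smooth projective curve, due to Voevodsky (see also \cite{Orgogozo} and \cite{BVK}), and I would invoke it rather than reprove it. Concretely, a zero-cycle of degree one on $C$ — available because we work with $\BQ$-coefficients — splits off $\BQ$ via the structure morphism and $\BQ(1)[2]$ via the fundamental class, and the remaining summand, the first Suslin homology of $C$, is identified with $\underline{\Pic}^0_{C/k}$ through the Suslin--Voevodsky description of the zeroth Suslin homology of an open affine curve in terms of relative Picard groups, together with the localization triangle. I expect this imported input to be the only genuinely non-formal step; everything else is the bookkeeping of the previous two paragraphs, the one subtle point being to confirm that $a_{\tr}$ sends the object $\BQ\oplus\Sigma^\infty\Jac(C)_\BQ\oplus\BQ(1)[2]$ to precisely the right-hand side of the displayed isomorphism, which is exactly the content of Proposition~\ref{prop:with_without_tr}.
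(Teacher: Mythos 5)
Your proposal is correct and follows essentially the same route as the paper: reduce to a perfect field via separation and continuity, pass to $\DM(k)$ through the equivalence $a_{\tr}$ (using Proposition~\ref{prop:with_without_tr} to identify the Jacobian summand with and without transfers), and invoke Voevodsky's classical decomposition of the motive of a curve with a degree-one zero-cycle. The only difference is cosmetic ordering (you reduce to the perfect case first, the paper does it last), so nothing substantive separates the two arguments.
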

\begin{proof}
We first assume that $k$ is perfect. For a smooth projective connected curve $C$ over $k$ with a rational point, Voevodsky has computed the motive $M^{\eff}_{\tr}(C)\in \DM^\eff(k)$ (see e.g. \cite[Proposition~2.5.5]{BVK}) and shown that
\[
M^{\eff}_{\tr}(C)\simeq \BQ\oplus (\Jac(C)^\tr_\BQ) \oplus \BQ(1)[2].
\]
The role of the rational point in this argument can be played by a $0$-cycle of degree $1$ as long as $C$ is geometrically connected; such a cycle exists with rational coefficients on any geometrically connected smooth projective curve. By Proposition~\ref{prop:with_without_tr} and the remarks preceding it, we have
\[\Jac(C)^\tr\simeq a^\tr o^\tr \Jac(C)^\tr\simeq a^\tr \underline{o}^\tr \Jac(C)^\tr \simeq a^\tr \Jac(C).\]
Applying $\Sigma^\infty_\tr$ and using that $a^\tr$ commutes with suspension, we get an isomorphism
\[
M_{\tr}(C)\simeq \BQ\oplus a^\tr \Sigma^\infty (\Jac(C)_\BQ) \oplus \BQ(1)[2]
\]
in $\DM(k)$. The adjunction $a^\tr:\DA(k)\leftrightarrows \DM(k):o^\tr$ is an equivalence of categories by \cite[Corollary~16.2.22]{Cisinski_Deglise_BluePreprint}. This implies that $o^\tr M_{\tr}(C)\simeq o^\tr a^\tr M(C)\simeq M(C)$ and similarly $o^\tr\BQ\simeq\BQ$ and $o^\tr \BQ(1)[2]\simeq \BQ(1)[2]$. Applying $o^\tr$ to the isomorphism above, we thus get an isomorphism
\[
M(C)\simeq \BQ\oplus \Sigma^\infty(\Jac(C)_\BQ)\oplus\BQ(1)[2]
\]
as required. 

Let $k$ be an arbitrary field, and $k^\perf$ a perfect closure of $k$. Write $\xi:\Spec(k^\perf)\ra \Spec(k)$. The field extension $k^\perf/k$ is a filtered union of finite purely inseparable field extensions. By the separation and continuity properties of $\DA_{c}(-)$, the pullback functor $\xi^*:\DA_c(k)\ra \DA_c(k^\perf)$ is an equivalence of categories with inverse $\xi_*$. We thus have
\begin{eqnarray*}
  M(C) & \simeq & \xi_* \xi^* M(C)\\
       & \simeq & \xi_* M(C_{k^\perf}) \\
       & \simeq & \xi_* \BQ \oplus \xi_*\Sigma^\infty \Jac(C_{k^\perf})_\BQ \oplus \xi_*\BQ(1)[2] \\
       & \simeq & \xi_*\xi^*\BQ \oplus \xi_*\xi^*\Sigma^\infty \Jac(C)_\BQ \oplus \xi_*\xi^*\BQ(1)[2] \\
       & \simeq & \BQ\oplus \Sigma^\infty \Jac(C)_\BQ \oplus \BQ(1)[2]
\end{eqnarray*}
where the third isomorphism follows from the perfect field case and the fourth isomorphism uses the base change property of the Jacobian of a curve and Proposition \ref{prop:pullback_complex}.
\end{proof}

We also need an alternative description of the motive $\Sigma^\infty (\Gm\otimes\BQ)$ (a relative, rational version of the standard description of the motivic complex $\BZ(1)$).

\begin{prop}\label{prop:Gm_Q1}
There is a canonical isomorphism
\[
u_S:\Sigma^\infty (\Gm\otimes \BQ) \stackrel{\sim}{\ra} \BQ_S(1)[1]
\] 
in $\DA(S)$. The isomorphism $u_S$ is compatible with pullbacks and the isomorphisms $R_f$ of Corollary~\ref{coro:pullback_complex_DA}: for $f:T\ra S$, the diagram 
\[
\xymatrix{
f^*\Sigma^\infty(\BG_{m,S}\otimes\BQ) \ar[r]^{R_f}_{\sim} \ar[d]_{u_S}^{\sim} & \Sigma^\infty (\BG_{m,T}\otimes\BQ) \ar[d]_{u_T} \\
f^*(\BQ_S(1)[1]) \ar[r]^{\sim} & \BQ_T(1)[1]
}
\]
commutes.
\end{prop}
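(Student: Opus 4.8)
The plan is to realise $u_S$ as the inverse of a canonical morphism
\[
\phi_S\colon \BQ_S(1)[1]\longrightarrow \Sigma^\infty(\Gm\otimes\BQ)
\]
built from the group law of $\Gm$, and then to prove that $\phi_S$ is an isomorphism by reducing, via adjunction, to a computation of $\Hom$-groups out of Tate twists of the unit object, which is ultimately supplied by the motivic cohomology computations of Appendix~\ref{sec:app_mot_coh}.

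\textit{Construction of $\phi_S$.} The unit section $e\colon S\to\Gm$ splits the representable sheaf as $\BQ(\Gm)=\BQ_S\oplus\widetilde{\BQ}(\Gm)$, and the localisation triangle for $\Gm=\BA^1_S\setminus\{0\}$ (together with $\BA^1$-invariance) identifies $\Sigma^\infty\widetilde{\BQ}(\Gm)$ canonically with $\BQ_S(1)[1]$. The addition map $a_\BQ\colon\BQ(\Gm)\to\Gm\otimes\BQ$ --- the $\BQ$-linearisation of $a\colon\BZ(-)\to\id$, realised on cofibrant resolutions by the transformation $\tilde{a}$ of Lemma~\ref{lemma:resolution}(\ref{lifting_2}) --- annihilates the unit section, hence factors through a morphism $\widetilde{\BQ}(\Gm)\to\Gm\otimes\BQ$, and $\phi_S$ is its image under $\Sigma^\infty$. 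That $\phi_S$ is compatible with pullbacks, in the sense of the square in the statement with its vertical arrows reversed, follows from the site-functoriality in Lemma~\ref{lemma:resolution}(\ref{site_functoriality_2}) and Corollary~\ref{coro:pullback_complex_DA} applied to $K_*=\Gm$ placed in degree $0$ (so that $\underline{f}^*\Gm=\Gm$ over the target), combined with the compatibility of the localisation triangle with pullback. Once $\phi_S$ is known to be invertible one sets $u_S:=\phi_S^{-1}$, and the square of the statement is the one just described with its vertical arrows inverted.

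\textit{Reduction to a $\Hom$-computation.} Since $\DA(S)$ is compactly generated by the objects $g_\sharp\BQ_X(n)[m]$ with $g\colon X\to S$ smooth and $n,m\in\BZ$, it is enough to show that $\phi_S$ induces an isomorphism on $\Hom_{\DA(S)}(g_\sharp\BQ_X(n)[m],-)$ for all such generators. Applying the adjunction $g_\sharp\dashv g^*$, the commutation of $g^*$ with Tate twists, the isomorphism $g^*\Sigma^\infty(\Gm\otimes\BQ)\simeq\Sigma^\infty(\Gm\otimes\BQ)$ of Corollary~\ref{coro:pullback_complex_DA}, and the pullback-compatibility of $\phi$, this reduces to the assertion: for every noetherian finite-dimensional scheme $Y$ and all $n,m\in\BZ$, the map $\phi_Y$ induces an isomorphism
\[
\Hom_{\DA(Y)}\big(\BQ_Y(n)[m],\BQ_Y(1)[1]\big)\longrightarrow\Hom_{\DA(Y)}\big(\BQ_Y(n)[m],\Sigma^\infty(\Gm\otimes\BQ)\big).
\]
The source is a motivic cohomology group of $Y$ in weight $1-n$, whose value --- and in particular the vanishing ranges for weights $0$ and $1$ --- is recorded in Appendix~\ref{sec:app_mot_coh}. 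For the target I would use the explicit resolution of $\Gm\otimes\BQ$ by the representable sheaves $\BQ(\Gm^{a(i,j)})$ of Lemma~\ref{lemma:resolution} to obtain a convergent spectral sequence whose $E_1$-page is built from the groups $\Hom_{\DA(Y)}(\BQ_Y(n)[\ast],M_Y(\Gm^{a}))$; by the $\Gm$-bundle formula $M_Y(\Gm^{a})\simeq\bigoplus_{k=0}^{a}\binom{a}{k}\BQ_Y(k)[k]$ these are finite sums of motivic cohomology groups of $Y$, and matching the $E_1$-differentials with those of the bar-type complex underlying Lemma~\ref{lemma:resolution} should identify the abutment with the source group, compatibly with $\phi_Y$.

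\textit{Main obstacle.} The delicate point is precisely this last identification: $B_\BQ(\Gm)$ is an infinite complex, and one has to see that, as a motive, it carries no more motivic cohomology than the finite object $\BQ_Y(1)[1]$ --- equivalently, that $\Gm\otimes\BQ$ and $\BQ_Y(1)[1]$ have the same motivic cohomology over an arbitrary base. A convenient way to organise the bookkeeping is to first treat $Y=\Spec(k)$ a field, where $\BZ(1)\simeq\Gm[-1]$ in $\DM^{\eff}(k)$ is classical and transfers to $\DA(k)$ via the equivalence $\DA(k)\simeq\DM(k)$ and Proposition~\ref{prop:with_without_tr} (as already used implicitly in Proposition~\ref{prop:motive_curve_field}), and then to propagate to a general base through the reduction above and the computations of Appendix~\ref{sec:app_mot_coh}. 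Finally, the compatibility of $u_S$ with pullbacks and with the isomorphisms $R_f$ follows from the pullback-compatibility of $\phi_S$ established above, by passing to inverses.
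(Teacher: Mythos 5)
Your construction of $\phi_S$ (splitting off the unit section and factoring the $\BQ$-linearised addition map through the reduced motive) is sound, and the reduction via compact generators and the adjunction $g_\sharp\dashv g^*$ to Hom-group computations is legitimate; the base-change square would indeed follow once $\phi_S$ is invertible. The genuine gap is the invertibility over a general base, which is exactly the non-formal content of the statement (cf.\ the Remark after Proposition~\ref{prop:Gm_Q1}: even the effective analogue is only known, with transfers, over a normal base). Your two mechanisms do not close it. The bar-resolution spectral sequence is left at ``should identify the abutment'': $B_\BQ(\Gm)$ is an unbounded complex, and identifying its stable motivic cohomology over an arbitrary $Y$ with that of $\BQ_Y(1)[1]$ is equivalent to the statement you are trying to prove, with nontrivial convergence/identification issues that are not addressed. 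Your fallback ``treat the field case and propagate'' also does not work as stated: the generator reduction only reduces the claim over $S$ to the same claim over arbitrary smooth $S$-schemes $Y$, not to fields, and no localisation/continuity/purity/descent mechanism is supplied to pass from residue fields to such $Y$. Finally, the appeal to Appendix~\ref{sec:app_mot_coh} is circular in the relevant weight: the maps $\nu^{1,1}$, $\nu^{2,1}$ and the identifications in Proposition~\ref{prop:mot_coh_1} are themselves defined using Proposition~\ref{prop:Gm_Q1}; and in any case the appendix controls Hom into $\BQ(1)[1]$, whereas the side you must compute is Hom into $\Sigma^\infty(\Gm\otimes\BQ)$.

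For comparison, the paper's proof is short because it outsources the hard step: it quotes the decomposition $M_S(\Gm)\simeq\BQ\oplus\Sigma^\infty(\Gm\otimes\BQ)$ of \cite[Theorem 3.3]{AHPL} for $G=\Gm$, which already comes with compatibility with pullbacks and with the isomorphisms $R_f$ of Corollary~\ref{coro:pullback_complex_DA}; since $\BQ_S(1)[1]$ is by definition the reduced motive of $\Gm$ pointed at the unit section, naturality of that decomposition with respect to the unit section matches the two reduced direct factors and yields $u_S$ together with its base-change compatibility. If you want to avoid citing that theorem, you would need a substitute for the key input over non-field bases — for instance $\BA^1$-locality of $\Gm\otimes\BQ$ over regular bases (as exploited in Proposition~\ref{prop:Ev_1}) combined with localisation, absolute purity, continuity and descent by alterations — none of which appears in your sketch.
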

\begin{proof}
By Theorem~\cite[Theorem 3.3]{AHPL} in the special case $G=\Gm$ (with the ``Kimura dimension'' $\mathrm{kd}(\Gm/S)$ of the statement equal to $1$), there is an isomorphism
\[
\Psi:=\Psi_{\Gm/S}:M_S(\Gm)\simeq \BQ\oplus \Sigma^\infty(\Gm\otimes\BQ).
\]
It is compatible with pullbacks and the isomorphisms $R_f$ of Corollary~\ref{coro:pullback_complex_DA} (This is the precise meaning of ``compatible with pullbacks'' in loc.cit.). By definition, $\BQ_S(1)[1]$ is the reduced motive of $M_S(\Gm)$ pointed at the unit section of $\Gm$, and it follows from the naturality of $\Psi_{G/S}$ applied to the neutral section in $G$ that the direct factor $\BQ_S(1)[1]$ corresponds to the direct factor $\Sigma^\infty(\Gm\otimes\BQ)$. This yields an isomorphism $\widetilde{\Psi}:\BQ_S(1)[1]\simeq \Sigma^\infty(\Gm\otimes\BQ)$, and we put 
$u_S:=\widetilde{\Psi}^{-1}$.
\end{proof}

\begin{remark}
Various results and constructions in this paper could be simplified if we knew the effective analogue of Proposition~\ref{prop:Gm_Q1}, i.e., that the natural map $\BQ(1)\ra \Gm[-1]\otimes\BQ$ in $\DA^{\eff}(S)$ is an isomorphism. The corresponding statement in $\DM^{\eff}(S)$ is known if $S$ is normal \cite[Proposition 11.2.11.]{Cisinski_Deglise_BluePreprint}, hence in $\DA^{\eff}(S)$ for $S$ normal scheme of finite type over a field of characteristic $0$ by \cite[Th\'eor\`eme B.1]{Ayoub_Galois_1}.
\end{remark}

We also need a version with transfers of this statement.

\begin{cor}\label{cor:Gm_Q1_tr}
Let $S$ be an excellent scheme. There is a canonical isomorphism
\[
u_S^\tr:\Sigma^\infty_\tr \Gm^\tr\otimes\BQ \stackrel{\sim}{\ra} \BQ_S(1)[1].
\]
It is compatible with pullbacks in the same way as in Proposition~\ref{prop:Gm_Q1}. Modulo the isomorphism of Proposition~\ref{prop:with_without_tr}, we have in fact
\[
a_{\tr}u_S = u_S^\tr.
\] 
\end{cor}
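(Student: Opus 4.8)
The plan is to transport the isomorphism $u_S$ of Proposition~\ref{prop:Gm_Q1} along the monoidal adjunction $a_\tr:\DA^{(\eff)}(S)\leftrightarrows \DM^{(\eff)}(S):o^\tr$, using Proposition~\ref{prop:with_without_tr} to identify the relevant objects with and without transfers. Recall that $a_\tr$ is monoidal and commutes with $\Sigma^\infty$ (this compatibility is already used in the proof of Proposition~\ref{prop:motive_curve_field}), and that $\underline{o}^\tr$ preserves $\BA^1$-equivalences \cite[Lemme 2.111]{Ayoub_Galois_1}, so that the derived $o^\tr$ carries $\Gm^\tr\otimes\BQ$ to $\Gm\otimes\BQ$. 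First I would invoke Proposition~\ref{prop:with_without_tr} for $G=\Gm$: the counit $a_\tr o^\tr(\Gm^\tr\otimes\BQ)\xrightarrow{\sim}\Gm^\tr\otimes\BQ$ is an isomorphism, hence so is its stabilisation; combined with the previous remark this yields a canonical isomorphism $\alpha_S:a_\tr(\Gm\otimes\BQ)\xrightarrow{\sim}\Gm^\tr\otimes\BQ$ in $\DM^\eff(S)$ and, after applying $\Sigma^\infty_\tr$ and commuting it past $a_\tr$, an isomorphism $a_\tr\Sigma^\infty(\Gm\otimes\BQ)\xrightarrow{\sim}\Sigma^\infty_\tr(\Gm^\tr\otimes\BQ)$ in $\DM(S)$.

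Next, since $a_\tr$ is monoidal and sends the reduced motive of $(\Gm,1)$ to the reduced motive with transfers of $(\Gm,1)$, there is a canonical isomorphism $a_\tr(\BQ_S(1)[1])\xrightarrow{\sim}\BQ_S(1)[1]$ in $\DM(S)$. I then define $u_S^\tr$ to be the composite
\[
\Sigma^\infty_\tr(\Gm^\tr\otimes\BQ)\xrightarrow{\ \sim\ } a_\tr\Sigma^\infty(\Gm\otimes\BQ)\xrightarrow{\ a_\tr(u_S)\ } a_\tr(\BQ_S(1)[1])\xrightarrow{\ \sim\ }\BQ_S(1)[1],
\]
where the outer arrows are the identifications of the previous paragraph. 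By construction this is an isomorphism, and unwinding the first and last identifications — which are exactly those furnished by Proposition~\ref{prop:with_without_tr} and by the monoidality of $a_\tr$ — it satisfies $a_\tr u_S=u_S^\tr$, which is the final assertion of the corollary.

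It remains to check compatibility with pullbacks. For a morphism $f:T\ra S$ the functor $a_\tr$ commutes with $f^*$ (and so does $o^\tr$), so the counit of Proposition~\ref{prop:with_without_tr} is automatically compatible with $f^*$ by naturality of counits; hence so is $\alpha_S$, once one takes into account the isomorphisms $R_f$ of Corollary~\ref{coro:pullback_complex_DA}. The isomorphism $u_S$ is compatible with pullbacks and with the $R_f$ by Proposition~\ref{prop:Gm_Q1}, and the identification $a_\tr(\BQ_S(1)[1])\simeq\BQ_S(1)[1]$ is visibly stable under $f^*$. Assembling these facts into one commutative diagram of the same shape as the one in Proposition~\ref{prop:pullback_complex} gives the claimed compatibility of $u_S^\tr$ with pullbacks. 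The only genuine work is this last diagram chase; the excellence hypothesis on $S$ enters solely through the appeal to Proposition~\ref{prop:with_without_tr}, and there is no geometric obstacle beyond bookkeeping.
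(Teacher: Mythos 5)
Your construction is exactly the paper's: the paper simply \emph{defines} $u_S^\tr$ as $a_\tr u_S$ transported along the identification of Proposition~\ref{prop:with_without_tr} (valid since $S$ is excellent) and notes that the isomorphism and pullback claims then follow from Proposition~\ref{prop:Gm_Q1}. Your proposal is correct and just spells out these same identifications and the naturality diagram chase in more detail.
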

\begin{proof}
For our purposes, it is enough to \emph{define} $u_S^\tr$ as $a_{\tr}u_S$ modulo the isomorphism of Proposition~\ref{prop:with_without_tr}. The claim then follow from Proposition~\ref{prop:Gm_Q1}.   
\end{proof}


\begin{cor}\label{cor:motive_torus}
Let $T/S$ be a torus, and $X_*(T)$ its cocharacter lattice. There is an isomorphism
\[
\Sigma^\infty T_\BQ\simeq \Sigma^\infty X_*(T)_\BQ (1)[1].
\]
In particular, if $S$ is geometrically unibranch, the motive $\Sigma^\infty T_\BQ$ is in $\DA_{1,c}^{\sgsm}(S)$. 
\end{cor}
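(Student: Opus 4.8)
The plan is to deduce the displayed isomorphism from the sheaf-theoretic identity $T\simeq X_*(T)\otimes_{\BZ}\Gm$ together with Proposition~\ref{prop:Gm_Q1}, and then to handle the finiteness statement by a standard trace argument. First I would recall that, for any torus $T/S$, evaluation of cocharacters defines a morphism of \'etale sheaves of abelian groups on $\Sm/S$,
\[
X_*(T)\otimes_{\BZ}\Gm\lra T,\qquad \lambda\otimes u\longmapsto\lambda(u),
\]
which is an isomorphism because it may be checked after an \'etale cover splitting $T$, over which it becomes the evident isomorphism $\underline{\BZ}^{\,r}\otimes_{\BZ}\Gm\simeq\Gm^{r}$. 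Tensoring with $\BQ$ and observing that $X_*(T)_\BQ$ is a locally free sheaf of $\BQ$-vector spaces (so the sheaf tensor product agrees with the derived one), I get an isomorphism $T_\BQ\simeq X_*(T)_\BQ\otimes_\BQ(\Gm\otimes\BQ)$ in $\DA^\eff(S)$. Applying the monoidal functor $\Sigma^\infty$ and Proposition~\ref{prop:Gm_Q1} then yields
\[
\Sigma^\infty T_\BQ\simeq\Sigma^\infty X_*(T)_\BQ\otimes\Sigma^\infty(\Gm\otimes\BQ)\simeq\Sigma^\infty X_*(T)_\BQ\otimes\BQ_S(1)[1]=\Sigma^\infty X_*(T)_\BQ(1)[1].
\]

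For the final clause it suffices to show $\Sigma^\infty X_*(T)_\BQ\in\DA^{\gsm}_{0,c}(S)$: granting this, the twist $(1)[1]$ keeps us in $\DA^{\gsm}_{1,c}(S)$ by Proposition~\ref{prop:n_monoidal}\,\ref{hom_twists} and its geometrically smooth analogue (using that $\BQ_S(1)$ is a direct factor of $(\BP^1_S\to S)_\sharp\BQ$, with $\BP^1_S\to S$ proper smooth), and the first part identifies the result with $\Sigma^\infty T_\BQ$. To prove $\Sigma^\infty X_*(T)_\BQ\in\DA^{\gsm}_{0,c}(S)$, I would use that over the geometrically unibranch scheme $S$ the torus $T$ is isotrivial: there is a finite \'etale surjective $p:S'\to S$ with $X_*(T)|_{S'}$ constant, so that $p^*X_*(T)_\BQ\simeq\BQ_{S'}^{\,\oplus r}$ in $\DA^\eff(S')$. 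The trace map of the finite \'etale morphism $p$ then exhibits $X_*(T)_\BQ$ as a direct summand of $p_*p^*X_*(T)_\BQ\simeq p_*(\BQ_{S'}^{\,\oplus r})$; since $p$ is finite \'etale, $p_*\BQ_{S'}\simeq p_\sharp\BQ_{S'}=M_S(S')$, a generator of $\DA^{\gsm}_0(S)$. Applying $\Sigma^\infty$ (which commutes with $p_\sharp$) shows that $\Sigma^\infty X_*(T)_\BQ$ is a direct summand of $M_S(S')^{\oplus r}\in\DA^{\gsm}_{0,c}(S)$, and since that subcategory is stable under direct factors (Lemma~\ref{lem:subcats_comp}), so is $\Sigma^\infty X_*(T)_\BQ$, which finishes the argument.

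The only genuinely external ingredient is the isotriviality of a torus over a geometrically unibranch base, which is precisely what forces the hypothesis in the last clause; the displayed isomorphism itself is valid over an arbitrary $S$. The remaining points are routine and I do not anticipate a serious obstacle: one needs that $\Sigma^\infty$ is strong monoidal and commutes with $p^*$ and $p_\sharp$, that $X_*(T)_\BQ$ is flat so that the sheaf tensor product computes the derived one, and the identification $p_*\BQ_{S'}\simeq p_\sharp\BQ_{S'}$ for $p$ finite \'etale (properness together with relative purity in relative dimension $0$). If one prefers to sidestep the trace argument, one can instead verify $\Sigma^\infty X_*(T)_\BQ\in\DA_{0,c}(S)$ fibrewise via Proposition~\ref{prop:punctual_car}, reducing to the classical description of Artin motives over a field.
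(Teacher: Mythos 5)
Your argument is correct and follows essentially the same route as the paper: the sheaf isomorphism $X_*(T)\otimes_{\BZ}\Gm\simeq T$ checked \'etale-locally, flatness of the lattice to pass to the derived tensor product, monoidality of $\Sigma^\infty$ together with Proposition~\ref{prop:Gm_Q1}, and, for the geometrically unibranch case, expressing $X_*(T)_\BQ$ rationally as a direct factor of a finite \'etale pushforward of $\BQ$. Your trace argument simply reproves Lemma~\ref{lemm:permutation_torus}(ii), which the paper cites directly, and the extra bookkeeping for the twist $(1)[1]$ matches Proposition~\ref{prop:n_monoidal}.
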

\begin{proof}
In this proof, we distinguish between derived and underived tensor products for clarity. There is a natural morphism $X_*(T)\underline{\otimes}\Gm\ra T$ of \'etale sheaves on $\Sm/S$, which is an isomorphism (this can be checked \'etale locally, hence for a split torus, where it is obvious). Since the functor $\Sigma^\infty$ is monoidal, we have $\Sigma^\infty (X_*(T)_\BQ\otimes (\Gm\otimes\BQ))\simeq \Sigma^\infty (X_*(T)_\BQ)\otimes \Sigma^\infty (\Gm\otimes\BQ)\simeq \Sigma^\infty X_*(T)_\BQ (1)[1]$ (by Proposition~\ref{prop:Gm_Q1}). It remains to check that the tensor product $X_*(T)\underline{\otimes}\Gm$ coincides with the derived tensor product; this follows from the fact that the lattice $X_*(T)$ is \'etale locally free, thus flat.

If $S$ is geometrically unibranch, $X_*(T)_\BQ$ is a direct factor of the sheaf $\BQ(V)$ for $V/S$ finite \'etale by Lemma~\ref{lemm:permutation_torus}, so it is (strongly) geometrically smooth.
\end{proof}

\begin{remark}
For more precise (integral) results on motives attached to tori over a field, see \cite[\S 7]{HK}. 
\end{remark}

We can now give a result which is our main source of compact homological 1-motives.

\begin{prop}\label{prop:smooth_complex_compact}
Let $G$ be a smooth (not necessarily of finite type) commutative group scheme over $S$. Then $\Sigma^\infty G_\BQ$ lies in $\DA_{1,c}(S)$.
\end{prop}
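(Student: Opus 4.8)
\emph{Proof plan.} The strategy is to reduce to the case where $S$ is the spectrum of a perfect field and then to run a dévissage using the structure theory of smooth commutative group schemes together with the special cases already available.

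First, $\Sigma^\infty G_\BQ$ is compact in $\DA(S)$: by \cite[Theorem~3.3]{AHPL} it is a direct factor of the homological motive $M_S(G)=(G\to S)_\sharp\BQ_G$, which is compact because $G\to S$ is of finite type (\cite[Proposition~3.20, Proposition~8.5]{Ayoub_Etale}, as used in Lemma~\ref{lem:subcats_comp}). Hence $\Sigma^\infty G_\BQ\in\DA_c(S)$, and by Proposition~\ref{prop:punctual_car} it suffices to prove that $s^*\Sigma^\infty G_\BQ\in\DA_{1,c}(\kappa(s))$ for every point $s\in S$. Since $s^*\Sigma^\infty G_\BQ\simeq\Sigma^\infty G_{\kappa(s),\BQ}$ by Corollary~\ref{coro:pullback_complex_DA}, we are reduced to $S=\Spec k$ for a field $k$; and, arguing as in the reduction to perfect fields at the end of the proof of Proposition~\ref{prop:subcats_field} (combining Corollary~\ref{cor:localisation_subcats} \ref{rad_sub}, the separation property, continuity for $\DA_{1,c}(-)$ from Proposition~\ref{prop:subcats_continuity}, and the compatibility of $\Sigma^\infty(-)_\BQ$ with pullbacks), we may further assume $k$ perfect.

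Now take $k$ perfect. A short exact sequence of smooth commutative group schemes over $k$, viewed as étale sheaves on $\Sm/k$, stays exact after $-\otimes_\BZ\BQ$ and, since $\Sigma^\infty$ is triangulated, yields a distinguished triangle in $\DA(k)$; as $\DA_{1,c}(k)$ is a triangulated subcategory, it is enough to treat the graded pieces of a suitable filtration of $G$. Combining the connected–étale sequence $0\to G^0\to G\to\pi_0(G)\to0$ with Chevalley's theorem $0\to H\to G^0\to A\to0$ (valid over the perfect field $k$, with $H$ connected affine commutative, hence $H\simeq T\times U$ for a torus $T$ and a smooth connected unipotent group $U$, and $A$ an abelian variety), we are left with four cases. (a) $\pi_0(G)$ is a finite étale $k$-group scheme, so $\Sigma^\infty\pi_0(G)_\BQ$, built from the motives $M_k(\pi_0(G)^{\times a})\in\DA_0(k)$ via the resolution of Lemma~\ref{lemma:resolution}, lies in $\DA_0(k)$, hence — being compact — in $\DA_{0,c}(k)\subset\DA_{1,c}(k)$. (b) For the torus, $\Sigma^\infty T_\BQ\in\DA^{\gsm}_{1,c}(k)\subset\DA_{1,c}(k)$ by Corollary~\ref{cor:motive_torus}, since $\Spec k$ is geometrically unibranch. (c) $U\cong\BA^{\dim U}_k$ as a $k$-variety, so $M_k(U)\simeq\BQ_k$ and the decomposition of \cite[Theorem~3.3]{AHPL} forces $\Sigma^\infty U_\BQ=0$. (d) For an abelian variety $A$, choose a smooth projective geometrically connected curve $C/k$ admitting a surjection $\Jac(C)\twoheadrightarrow A^\vee$ (such a curve exists by classical arguments); by Poincaré complete reducibility $\Jac(C)$ is isogenous to $A^\vee\times B'$ for some abelian variety $B'$, and since $\Sigma^\infty(-)_\BQ$ inverts isogenies (the relevant sheaves are sheaves of $\BQ$-vector spaces) and sends products to direct sums, $\Sigma^\infty A^\vee_\BQ$ is a direct factor of $\Sigma^\infty\Jac(C)_\BQ$, which lies in $\DA_{1,c}(k)$ by Proposition~\ref{prop:motive_curve_field} (it is a direct factor of $M(C)=(C\to k)_\sharp\BQ_C\in\DA_1(k)$). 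Applying this to $A^\vee$ in place of $A$ gives $\Sigma^\infty A_\BQ\in\DA_{1,c}(k)$. (Equivalently, $A$ is the Deligne $1$-motive $[0\to A]$ and one can invoke the results on motives of Deligne $1$-motives over a field.)

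The only genuine content is case (d): a priori $\Sigma^\infty A_\BQ$ is visibly an object of $\DA_{\dim A}(k)$ only through $M(A)$, and landing it in the much smaller category $\DA_1(k)$ requires a real input — either the curve-covering argument above (ultimately Voevodsky's computation of the motive of a curve, Proposition~\ref{prop:motive_curve_field}) or, equivalently, Orgogozo's theory of $1$-motives. Everything else — the structure theory of commutative group schemes over a perfect field, the permanence and localisation properties of $\DA_{1(,c)}(-)$, and the compatibilities of $\Sigma^\infty$ with pullbacks — is routine bookkeeping.
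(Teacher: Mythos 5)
Your proof is correct and follows essentially the same route as the paper's: compactness via \cite{AHPL}, the pointwise criterion of Proposition~\ref{prop:punctual_car}, reduction to a perfect field by separation and continuity, and then d\'evissage into $\pi_0$, toric, unipotent and abelian pieces, with the abelian case settled through Jacobians of curves and Proposition~\ref{prop:motive_curve_field}. The only deviations are cosmetic: you handle the unipotent part via $U\cong\BA^{\dim U}$ (valid since connected smooth unipotent groups over a perfect field are split) together with the indecomposability of $\BQ_k$, where the paper uses a $\Ga$-composition series and \cite[Lemma 7.4.5]{AEH}, and you quote Corollary~\ref{cor:motive_torus} for the torus where the paper redoes the transfer argument by hand.
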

\begin{proof}
  Write $M=\Sigma^\infty G_\BQ$.  By \cite[Theorem~3.3.(3)]{AHPL}, $M$ is a compact motive. It remains to show that $M$ is an homological $1$-motive. The proof of \cite[Theorem~3.3.(3)]{AHPL} essentially establishes this as well, but we provide an argument for convenience. By compactness and Proposition~\ref{prop:punctual_car}, it is enough to show that for all $s\in S$, the motive $s^*M$ is in $\DA_1(s)$.

  Let $\kappa(s)^\perf$ a perfect closure of $\kappa(s)$, and write $\xi:\Spec(\kappa(s)^\perf)\ra \Spec(\kappa(s))$. The field extension $\kappa(s)^\perf/\kappa(s)$ is a filtered union of finite purely inseparable field extensions. By continuity for $\DA_{1,c}(-)$ and Corollary \ref{cor:localisation_subcats} \ref{rad_sub}, it is enough to show that $\xi^* s^*M\in \DA_{1}(\kappa(s)^\perf)$. By Proposition \ref{prop:pullback_complex}, we have $\xi^*s^*M\simeq \Sigma^\infty (G_{\kappa(s)^{\perf}})_{\BQ}$. We are thus reduced to the case where $S$ is the spectrum of a perfect field $k$.

  The group scheme $G$ over the field $k$ has a neutral component $G^\circ$ which is smooth and of finite type. The \'etale quotient sheaf $\pi_0(G)=G/G^\circ$ is representable by an \'etale group scheme (see Lemma \ref{lem:quotient_connected} below and the remark following it), hence can be written as a filtered colimit of \'etale locally constant finite type sheaves of abelian groups. Since we are working with rational coefficients, we can assume that those group schemes are in fact lattices. Using Lemma~\ref{lemm:permutation_torus}, we then conclude that the motive $\Sigma^\infty (G/G^\circ)_\BQ$ lies in $\DA_0(k)\subset \DA_1(k)$. In the case of a smooth commutative connected algebraic group, we reduce by a standard d\'evissage to the cases of unipotent algebraic groups, tori and abelian varieties.

  A unipotent algebraic group over a perfect field has a composition series with $\Ga$ factors, and the motive $\Sigma^\infty\Ga\otimes\BQ$ is trivial by \cite[Lemma 7.4.5]{AEH} (proved in $\DM^{\eff}(k)$, which yields the result in $\DA(k)$ by applying $\Sigma^\infty o_{\tr}$). The case of tori follows from the case of lattices above together with Corollary \ref{cor:motive_torus}. In the case of abelian varieties, using \cite[Theorem~11]{Katz_SpaceFill} reduces us to to the case of a Jacobian $\Jac(C)$ of a smooth projective curve $C/k$ with a rational point. The fact that $\Sigma^\infty (\Jac(C)\otimes\BQ)$ is in $\DA_1(k)$ then follows from Proposition~\ref{prop:motive_curve_field}.
\end{proof}

We now lay the groundwork for the study of the motivic Picard functor in Section~\ref{sec:picard_sm_pr}. Let $n\in \BN$. Recall that the adjunction ``suspension-evaluation'' at the level of spectra induces derived adjunctions
\[
\Sus^n :\DA^{\eff}(S) \leftrightarrows \DA(S):\Ev_n 
\]
with $\Sus^0=\Sigma^\infty$ and, for $M\in \DA^\eff(S)$ and $N\in \DA(S)$, canonical isomorphisms 
\[\Sus^n(M)\simeq \Sigma^\infty M(-n)[-2n]\in \DA(S),
\]
\[
\Ev_n(N)\simeq \Ev_0(M(n)[2n]).  
\]
Using the map $u_S:\Sigma^\infty (\Gm\otimes\BQ)\ra \BQ_S(1)[1]$, we get a map 
\[
\Sus^1(\Gm\otimes\BQ[1])\ra \BQ_S
\]
which by adjunction corresponds to a map
\[
w_S:\Gm\otimes\BQ [1] \ra \Ev_1(\BQ_S).
\]
Over an excellent scheme $S$, there is an analoguous construction for motives with transfers (using the map $u^\tr_S$ instead of $u_S$), resulting in a map
\[
w^\tr_S:\Gm^\tr\otimes\BQ[1]\ra \Ev^\tr_1(\BQ^\tr_S)
\]
in $\DM^\eff(S)$.


Let $f:X\ra S$ be a morphism of schemes. To state the compatibility of $w_S$ with base change, we introduce the  composition
\[
d_f:f^*\Ev_{1} \BQ_S\stackrel{\epsilon}{\lra} \Ev_1\Sus^1 f^*\Ev_1 \BQ_S \simeq \Ev_1 f^*\Sus^1\Ev_1 \BQ_S \stackrel{\eta}{\lra} \Ev_1 f^*\BQ_S\simeq \Ev_1 f^*\BQ_X
\]
where the isomorphism in the middle is the canonical isomorphism $\Sus^1 f^*\simeq f^* \Sus^1$.

\begin{lemma}\label{lemma:Ev_1}
Let $S$ be a noetherian finite-dimensional scheme. If $f:X\ra S$ is any morphism of finite type, the following diagram
\[
\xymatrix{
f^*(\Gm\otimes\BQ[1]) \ar[r]_{\sim}^{R_f} \ar[d]_{f^*w_S}^{\sim} & \Gm\otimes\BQ[1] \ar[d]^{w_X} \\
f^*\Ev_1 \BQ_S \ar[r]_{d_f} & \Ev_1 \BQ_X
}
\]
commutes.
\end{lemma}
\begin{proof}
Going through the definitions of $w_S$ and $d_f$, we see that the diagram in (i) is obtained from the commutative diagram of Proposition~\ref{prop:Gm_Q1} via the adjunction $\Sus^1 \dashv \Ev_1$ and the commutation of $\Sus^1$ and $f^*$.
\end{proof}

The following result is not used in the rest of the paper, but seems of independent interest.

\begin{prop}\label{prop:Ev_1}
\begin{enumerate}[label={\upshape(\roman*)}]
\item Assume $S$ is regular. Then the morphism $w_S$ is an isomorphism. 
\item If $f:X\ra S$ is a morphism of finite type with $X$ and $S$ regular, then $d_f$ is an isomorphism.
\end{enumerate}
\end{prop}
\begin{proof}
Statement (ii) follows from the combination of (i) and Lemma \ref{lemma:Ev_1}, so we are left with proving (i).

Since $\DA^{\eff}(S)$ is generated as a triangulated category by objects of the form $M^{\eff}_S(X)[n]$ for $f:X\ra S$ smooth morphism and $n\in\BZ$, it is enough to show that for such an object, the induced map
\[
\DA^{\eff}(S)(M^{\eff}_S(X)[n],\Gm\otimes\BQ[1]) \stackrel{w_{S*}}{\lra} \DA^{\eff}(S)(M^{\eff}_S(X)[n],\Ev_1(\BQ_S))
\]
is an isomorphism. The idea is to compare both sides to similar morphism groups in the derived category $D(\Sm/S)$. Consider the following diagram.
\[
\xymatrix@C=1em{
D(\Sm/S)(\BQ_S(X)[n],\Gm[1]) \ar[r]^{\mathbf{(\alpha)}} \ar[dd]^{\adj}_{\sim}  &  \DA^{\eff}(S)(M^{\eff}_S(X)[n],\Gm[1]) \ar[r]^{w_{S*}} \ar[d]_{\Sigma^\infty} \ar @{} [rd] |{\mathbf{(B)}} & \DA^{\eff}(S)(M^{\eff}_S(X)[n],\Ev_1(\BQ_S)) \ar[d]^{\sim}_{\adj} \\
\ar @{} [rd] |{\mathbf{(A)}} & \DA(S)(M_S(X)[n],\Sigma^\infty(\Gm)[1]) \ar[r]_{\sim}^{u_{S*}} \ar[d]_{\sim}^{\adj}  \ar @{} [rd] |{\mathbf{(C)}} &  \DA(S)(M_S(X)[n],\BQ_S(1)[2])\ar [d]^{\sim}_{\adj}  \\
D(\Sm/X)(\BQ_X[n],f^*\Gm[1]) \ar[d]_{\sim}^{R_{f*}} & \DA(X)(\BQ_X[n],f^*\Sigma^\infty \Gm[1]) \ar[r]^{(f^*(u_S))_*} \ar[d]^{R_{f*}}_{\sim}  \ar @{} [rd] |{\mathbf{(D)}} & \DA(X)(\BQ_X[n],\BQ_X(1)[2]) \ar@{=}[d]\\
D(\Sm/X)(\BQ_X[n],\Gm[1]) \ar[r] \ar@/_2pc/[rr]_{\mathbf{(\beta)}}& \DA(X)(\BQ_X[n],\Sigma^\infty \Gm[1]) \ar[r]_{\sim}^{u_{X*}} & \DA(X)(\BQ_X[n],\BQ_X(1)[2])
}
\]
The square $(A)$ commutes because the isomorphisms $R_f$ in the derived category and in $\DA$ are compatible by construction. The square $(B)$ commutes by construction of $w_S$ and $u_S$. The square $(C)$ commutes by naturality of the adjunction. Finally, the square $(D)$ commutes by Proposition~\ref{prop:Gm_Q1}. 

To complete the proof that $w_{S*}$ is an isomorphism, it remains to see that the maps $(\alpha)$ and $(\beta)$ are isomorphisms as well. For $(\beta)$, this is precisely the statement of Proposition~\ref{prop:mot_coh_1} \ref{mot_coh_1_1}-\ref{mot_coh_1_3}. Let us prove that $(\alpha)$ is an isomorphism.

Since $S$ is regular, all smooth $S$-schemes are regular. They are in particular reduced, which implies that $\Gm$ is $\BA^1$-invariant on $\Sm/S$, and normal, which implies that $\Pic=H^1(-,\Gm)$ is $\BA^1$-invariant. The higher cohomology groups $H^i(-,\Gm)$ for $i\geq 2$ are torsion on regular schemes by \cite[Proposition~1.4]{BrauerII}. Combined with Lemma \ref{lem:Q_coeffs} below, this shows that the sheaf $\Gm\otimes\BQ$ is $\BA^1$-local in the model category underlying $\DA^{\eff}(S)$. We deduce that the morphism $(\alpha):\D(\Sm/S)(\BQ_S(X)[n],\Gm\otimes\BQ[1])\ra \DA^{\eff}(M^{\eff}_S(X),\Gm\otimes\BQ)$ is an isomorphism. This completes the proof that $w_S$ is an isomorphism.
\end{proof}

\begin{lemma}\label{lem:Q_coeffs}
  Let $S$ be a scheme, and $F$ a sheaf of abelian groups on one of the sites $(\Sm/S)_{\et}$ or $(\Sch/S)_{\et}$. Then the canonical morphism
  \[
H^i_{\et}(S,F)\otimes\BQ\rightarrow H^i_{\et}(S,F\otimes\BQ)
\]
is an isomorphism.
\end{lemma}
\begin{proof}
Given our running assumption that schemes are noetherian finite dimension, this follows from \cite[Proposition 1.11]{CDEt}.
\end{proof}

\subsection{Motives of Deligne $1$-motives}
\label{sec:mot_del_mot}

We relate the category $\CM_1(S)$ of Deligne $1$-motives with rational coefficients (Appendix~\ref{sec:app_deligne}) to $\DA(S)$. Let $\BM=[L\ra G]\otimes\BQ$ be in $\CM_1(S)$. Then by viewing $\BM$ as a complex of \'etale sheaves of $\BQ$-vector spaces on $\Sm/S$, we can associate to $\BM$ an object in $\DA^{\eff}(S)$, which we also denote by $\BM$.

\begin{cor}\label{cor:Deligne_da_1}
  Let $\BM\in \CM_1(S)$. Then $\Sigma^\infty\BM$ lies in $\DA_{1,c}(S)$. If $S$ is moreover assumed to be geometrically unibranch, then the motive $\Sigma^\infty\BM$ is also geometrically smooth, thus lies in $\DA^{\gsm}_{1,c}(S)$.
\end{cor}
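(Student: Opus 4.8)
The plan is to split $\BM=[L\stackrel{u}{\ra}G]\otimes\BQ$ into its lattice and semi-abelian parts by a distinguished triangle, treat each separately, and then reassemble. Regarding $\BM$ as a complex of \'etale sheaves of $\BQ$-vector spaces concentrated in homological degrees $1$ and $0$, the stupid filtration gives a short exact sequence of complexes $0\to G_\BQ[0]\to\BM\to L_\BQ[1]\to 0$, hence, after applying the triangulated functor $\Sigma^\infty$, a distinguished triangle
\[
\Sigma^\infty G_\BQ\ra\Sigma^\infty\BM\ra\Sigma^\infty L_\BQ[1]\rap
\]
in $\DA(S)$. Since $\DA_1(S)$, $\DA^{\gsm}_1(S)=\DA_1(S)\cap\DA^{\gsm}_{\homo}(S)$ and their subcategories of compact objects are triangulated subcategories closed under extensions, and since compactness in these subcategories is detected in $\DA(S)$ (Lemma~\ref{lem:subcats_comp}), it will be enough to locate $\Sigma^\infty G_\BQ$ and $\Sigma^\infty L_\BQ$ in the appropriate one and to check that both are compact.

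I would first dispose of the lattice. The sheaf $L$ is trivialised by some finite \'etale surjection $\pi\colon S'\to S$; as we work with rational coefficients, the transfer for the finite \'etale morphism $\pi$ realises $\Sigma^\infty L_\BQ$ as a direct factor of $\pi_\sharp\pi^*(\Sigma^\infty L_\BQ)\simeq\pi_\sharp(\BQ_{S'}^{\oplus r})= M_S(S')^{\oplus r}$, where $M_S(S')=\pi_\sharp\BQ_{S'}$. As $\pi$ is finite (hence proper) and smooth of relative dimension $0$, the motive $M_S(S')$ lies in $\DA_0(S)\cap\DA^{\gsm}_{\homo}(S)=\DA^{\gsm}_0(S)$, so $\Sigma^\infty L_\BQ$, and hence $\Sigma^\infty L_\BQ[1]$, is a compact object of $\DA^{\gsm}_0(S)\subseteq\DA^{\gsm}_1(S)\subseteq\DA_1(S)$. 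No hypothesis on $S$ is needed for this step.

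For the semi-abelian part together with the first assertion, Proposition~\ref{prop:smooth_complex_compact} already places $\Sigma^\infty G_\BQ$ in $\DA_{1,c}(S)$, so the triangle above gives $\Sigma^\infty\BM\in\DA_{1,c}(S)$ at once. For the geometrically smooth statement, assume $S$ geometrically unibranch. Then $G$ is an extension $0\to T\to G\to A\to 0$ with $T$ a torus and $A$ an abelian scheme (Appendix~\ref{sec:app_deligne}), producing a distinguished triangle $\Sigma^\infty T_\BQ\ra\Sigma^\infty G_\BQ\ra\Sigma^\infty A_\BQ\rap$ in $\DA(S)$. Corollary~\ref{cor:motive_torus} puts $\Sigma^\infty T_\BQ$ in $\DA^{\gsm}_{1,c}(S)$ — this is the one point where geometric unibranchness enters, through the isotriviality of $T$. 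For the abelian scheme, the K\"unneth-type decomposition of \cite[Theorem~3.3]{AHPL} exhibits $\Sigma^\infty A_\BQ[-1]$ as a direct factor of $M_S(A)=(A\to S)_\sharp\BQ_A$, which is a generator of $\DA^{\gsm}_{\homo}(S)$ because $A\to S$ is proper and smooth; combined with $\Sigma^\infty A_\BQ\in\DA_1(S)$ (Proposition~\ref{prop:smooth_complex_compact} again), this gives $\Sigma^\infty A_\BQ\in\DA^{\gsm}_1(S)$, and it is compact. Hence $\Sigma^\infty G_\BQ\in\DA^{\gsm}_{1,c}(S)$, and then $\Sigma^\infty\BM\in\DA^{\gsm}_{1,c}(S)$.

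The delicate point, and the reason the geometrically smooth half of the statement is not purely formal, is the abelian-scheme contribution: $A$ has relative dimension $\dim A$, which may exceed $1$, so $\Sigma^\infty A_\BQ$ does \emph{not} visibly come from proper smooth generators of relative dimension $\leq 1$ — one only sees it as a direct factor of the motive of $A$ itself. This is precisely what makes the generous definition of $\DA^{\gsm}_1(S)$ from Definition~\ref{def:sm_subcats}, namely $\DA_1(S)\cap\DA^{\gsm}_{\homo}(S)$ rather than a version insisting that the geometric smoothness be witnessed by low-dimensional generators, the right one; this matches the remark following that definition. The remaining verifications are routine: that the stupid-filtration and d\'evissage triangles stay distinguished after $\Sigma^\infty$ (immediate, as $\Sigma^\infty$ is triangulated), and that the exact sequence describing $G$ is available in the stated form over a geometrically unibranch base (Appendix~\ref{sec:app_deligne}).
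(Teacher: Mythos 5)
Your overall d\'evissage is the same as the paper's: split $\Sigma^\infty\BM$ by the weight filtration, handle the torus via isotriviality under the geometrically unibranch hypothesis (Corollary~\ref{cor:motive_torus}, Lemma~\ref{lemm:permutation_torus}), and handle the abelian part via the direct-factor statement of \cite[Theorem 3.3]{AHPL} inside $M_S(A)$ with $A\ra S$ smooth proper. The discrepancy in shifts (you place $G$ in degree $0$ and $L$ in degree $1$, the paper has $G[-1]$ and $L$) is harmless here, since membership in $\DA_{1,c}(S)$ and $\DA^{\gsm}_{1,c}(S)$ is shift-invariant. Your observation about why the permissive definition of $\DA^{\gsm}_1(S)$ is needed for the abelian contribution is exactly the point of the remark following Definition~\ref{def:sm_subcats}.

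There is, however, one genuine gap, in the lattice step for the \emph{first} assertion. You claim that $L$ is trivialised by a finite \'etale surjection $\pi:S'\ra S$ and stress that ``no hypothesis on $S$ is needed for this step.'' That is false over a general noetherian base: isotriviality of lattices is precisely what requires $S$ to be geometrically unibranch (Lemma~\ref{lemm:permutation_torus}, following SGA3, Exp.~X), and the hypothesis is not removable --- over a non-unibranch base such as a nodal curve one has $H^1_{\et}(S,\underline{\BZ})\neq 0$, and twisting $\BZ^2$ by an infinite-order unipotent element of $\GL_2(\BZ)$ along such a class produces an \'etale-locally constant, torsion-free, finitely generated group scheme (a lattice in the sense of Definition~\ref{def:gr_schemes}) which is not split by any finite \'etale cover. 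So as written your argument proves the first claim only under the unibranch hypothesis, while the statement is for arbitrary $S$. The repair is small and is in fact what the paper does: $L$ is itself a smooth commutative group scheme ($L\ra S$ is \'etale), so Proposition~\ref{prop:smooth_complex_compact} applies to it directly --- the paper's proof simply applies that proposition to both outer terms of the weight triangle; equivalently, one can use compactness together with the pointwise criterion (Proposition~\ref{prop:punctual_car}) and continuity (Proposition~\ref{prop:subcats_continuity}) to reduce the lattice to the field case, where finite \'etale splitting is available. With that substitution, and keeping your finite-\'etale-splitting argument for $L$ and $X_*(T)$ only in the second part (where geometric unibranchness is assumed), your proof coincides with the paper's.
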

\begin{proof} 
Let $\BM=[L\ra G]\otimes\BQ$. We apply Proposition~\ref{prop:smooth_complex_compact} to the distinguished triangle
\[
\Sigma^\infty G_\BQ[-1] \ra \Sigma^\infty\BM\ra \Sigma^\infty L_\BQ\rap
\]
which proves the first part. Assume now that $S$ is geometrically unibranch. We have a further distinguished triangle
\[
\Sigma^\infty T_\BQ \ra \Sigma^\infty G_\BQ \ra \Sigma^\infty A_\BQ \rap
\]
where $T$ (resp. $A$) is the torus (resp. abelian) part of $G$. The motives $\Sigma^\infty T_\BQ$ and $\Sigma^\infty L_\BQ$ are geometrically smooth by Corollary~\ref{cor:motive_torus} and its proof. The motive $\Sigma^\infty A_\BQ$ is a direct factor of the motive of $A$ by \cite[Theorem 3.3]{AHPL}, so it is geometrically smooth. This completes the proof.
\end{proof}

\begin{remark}
In Corollary \ref{cor:Deligne_da_1}, we do not know if ``geometrically smooth $1$-motive'' can be replaced by ``strongly geometrically smooth $1$-motive''.
\end{remark}  

From Corollary~\ref{coro:pullback_complex_DA} and the definition of $\Sigma^\infty$, we deduce the following.

\begin{cor}
Let $f:T\ra S$ be a morphism of schemes. There is an isomorphism of functors
\[
R_f: f^* \Sigma^\infty \stackrel{\sim}{\lra} \Sigma^\infty f^{-1}:\CM_1(S)\ra \DA(T).
\]
which is compatible with further pullbacks.
\end{cor}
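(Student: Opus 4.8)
The plan is to reduce the statement to Corollary~\ref{coro:pullback_complex_DA} applied to the two-term complex of smooth commutative group schemes underlying a Deligne $1$-motive. The first thing I would record is that the pullback functor $f^{*}\colon\CM_{1}(S)\to\CM_{1}(T)$ is the naive termwise pullback of sheaves: if $\BM=[L\xrightarrow{u}G]\otimes\BQ$, then $f^{*}\BM=[\underline{f}^{*}L\to\underline{f}^{*}G]\otimes\BQ$, and this is again a Deligne $1$-motive over $T$ because an \'etale-locally constant lattice and a semiabelian scheme each remain such after base change (this is built into the set-up of Appendix~\ref{sec:app_deligne}).

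Next I would view $\BM$ as a bounded complex $K_{*}=[L\to G]$ of smooth commutative group schemes over $S$; then by definition of the object $\Sigma^{\infty}(\BM)$ one has $\Sigma^{\infty}(\BM)=\Sigma^{\infty}(K_{*}\otimes\BQ)$ and $\Sigma^{\infty}(f^{*}\BM)=\Sigma^{\infty}\,\underline{f}^{*}(K_{*}\otimes\BQ)$. Corollary~\ref{coro:pullback_complex_DA}, applied to $K_{*}$, then directly produces the natural isomorphism
\[
R_{f}\colon f^{*}\Sigma^{\infty}(\BM)\ \xrightarrow{\ \sim\ }\ \Sigma^{\infty}(f^{*}\BM)\qquad\text{in }\DA(T).
\]
The only remaining thing to check is naturality in $\BM$: a morphism in $\CM_{1}(S)$ is a morphism of the associated two-term complexes of group schemes, and the isomorphisms of Proposition~\ref{prop:pullback_complex} (hence of Corollary~\ref{coro:pullback_complex_DA}) are natural in the complex $K_{*}$, since the resolution functor $A$, and therefore the induced projective resolution $B_{\BQ}(K_{*})\to K_{*}\otimes\BQ$, are functorial.

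Finally, compatibility with a further pullback $g\colon U\to T$ is exactly the cocycle pentagon recorded at the end of Proposition~\ref{prop:pullback_complex} and transported to $\DA$ in Corollary~\ref{coro:pullback_complex_DA}, specialised to $K_{*}=[L\to G]$, together with the evident identification of $g^{*}f^{*}$ with $(fg)^{*}$ on $\CM_{1}$ coming from naive pullbacks. I do not expect a genuine obstacle here: the only point needing care is the identification of the pullback on $\CM_{1}$ with the underived pullback $\underline{f}^{*}$ of sheaves appearing in Corollary~\ref{coro:pullback_complex_DA}, and the fact that $\underline{f}^{*}$ sends Deligne $1$-motives to Deligne $1$-motives — both immediate from the definitions.
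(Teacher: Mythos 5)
Your argument is exactly the paper's: the corollary is deduced from Corollary~\ref{coro:pullback_complex_DA} applied to the two-term complex $[L\to G]$ underlying a Deligne $1$-motive, together with the observation that pullback on $\CM_1$ is the termwise (underived) pullback of the associated complexes. The extra details you spell out (naturality in $\BM$ via functoriality of the resolution, and the cocycle compatibility for a further pullback) are precisely what the paper leaves implicit, so the proposal is correct and matches the paper's proof.
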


As explained in Section~\ref{sec:Weil_Deligne}, we have also a covariant functoriality for finite \'etale morphisms, coming from Weil restrictions of scalars. Here is how this relates to pushforwards of motives.

\begin{lemma}\label{lemm:Weil_pushforward}
Let $f:T\ra S$ be a finite \'etale morphism of schemes. There is an isomorphism of functors
\[
f_*\Sigma^\infty_S \stackrel{\sim}{\lra} \Sigma^\infty_T f_*:\CM_1(T)\ra \DA(S)
\]  
\end{lemma}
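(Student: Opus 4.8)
The plan is to reduce the statement to two standard facts: that for a finite \'etale morphism $f$ the smooth direct image $f_\sharp$ agrees with the direct image $f_*$, and that $\Sigma^\infty$ commutes with $f_\sharp$. Concretely, for $\BM\in\CM_1(T)$ we must produce a natural isomorphism $f_*\Sigma^\infty\BM\simeq\Sigma^\infty(\Res_{T/S}\BM)$ in $\DA(S)$, where on the right $\Res_{T/S}\BM\in\CM_1(S)$ denotes the Weil restriction, and then check that it is compatible with further pullbacks.

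First I would record that a finite \'etale $f:T\ra S$ is simultaneously proper and smooth of relative dimension $0$; hence $f_*\simeq f_!$ by properness and $f_!\simeq f_\sharp(0)[0]=f_\sharp$ by relative purity, which gives a canonical isomorphism $f_\sharp\stackrel{\sim}{\lra}f_*$ of functors $\DA(T)\ra\DA(S)$. The same holds at the level of \'etale sheaves: for $f$ finite \'etale the underived functor $\underline{f}_*$ is exact and, being the finite \'etale induction functor, is both the left and the right adjoint of $f^*$, so $f_\sharp\simeq f_*\simeq\underline{f}_*$ as functors $D(\Sm/T)\ra D(\Sm/S)$, and hence also on the localisations $\DA^{\eff}(-)$.

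Next I would use that $\Sigma^\infty=\Sus^0$ is a cocontinuous (indeed monoidal) functor compatible with the six operations, so that the square of left adjoints
\[
\xymatrix{
\DA^{\eff}(T)\ar[r]^{\Sigma^\infty}\ar[d]_{f_\sharp} & \DA(T)\ar[d]^{f_\sharp}\\
\DA^{\eff}(S)\ar[r]_{\Sigma^\infty} & \DA(S)
}
\]
commutes up to a canonical isomorphism; since all four functors preserve small sums it is enough to check this on the generators $M^{\eff}_T(X)$ with $X\ra T$ smooth, where both composites yield $M_S(X)$ (with $X$ viewed as smooth over $S$ via $f$). Combining, for $\BM\in\CM_1(T)$ regarded as a complex of \'etale sheaves on $\Sm/T$ and thus as an object of $\DA^{\eff}(T)$,
\[
f_*\Sigma^\infty\BM\ \simeq\ f_\sharp\Sigma^\infty\BM\ \simeq\ \Sigma^\infty f_\sharp\BM\ \simeq\ \Sigma^\infty f_*\BM\ \simeq\ \Sigma^\infty(\underline{f}_*\BM),
\]
where the second-from-last isomorphism uses $f_\sharp\simeq f_*$ inside $\DA^{\eff}(T)$ and the last uses the exactness of $\underline{f}_*$.

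Finally I would identify $\underline{f}_*\BM$ with the Weil restriction. Writing $\BM=[L\ra G]\otimes\BQ$, the universal property of the Weil restriction gives canonical isomorphisms of \'etale sheaves $\underline{f}_*L\simeq\Res_{T/S}L$ and $\underline{f}_*G\simeq\Res_{T/S}G$ on $\Sm/S$; since $f$ is finite \'etale, $\Res_{T/S}$ carries lattices to lattices and smooth commutative group schemes to smooth commutative group schemes (so $\Res_{T/S}\BM$ is a bona fide object of $\CM_1(S)$), and since $\underline{f}_*$ is exact no derivation intervenes, so $\underline{f}_*\BM$ is precisely the complex of \'etale sheaves underlying $\Res_{T/S}\BM$. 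This yields the asserted isomorphism; its naturality and compatibility with pullbacks follow by transporting the isomorphisms $R_f$ of Corollary~\ref{coro:pullback_complex_DA} along the same chain of canonical identifications. I expect the only real subtlety to be the first step --- ensuring the purity isomorphism $f_!\simeq f_\sharp$ is available and compatible with $\Sigma^\infty$ inside the effective category $\DA^{\eff}(-)$, where Tate twists are not invertible; this is exactly where finiteness of $f$, forcing the relative dimension (hence the Tate twist) to vanish, is essential.
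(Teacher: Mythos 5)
Your proposal is correct and follows essentially the same route as the paper: the identical chain $f_*\Sigma^\infty \simeq f_\sharp\Sigma^\infty \simeq \Sigma^\infty f_\sharp \simeq \Sigma^\infty f_* \simeq \Sigma^\infty \underline{f}_* \simeq \Sigma^\infty R_f$, with the paper merely reducing first (via the definition of the pushforward in $\CM_1$) to a single smooth commutative group scheme $G$. The only cosmetic difference is at the step $f_*\simeq\underline{f}_*$ in $\DA^{\eff}$, where the paper cites that $\underline{f}_*$ preserves $(\BA^1,\et)$-equivalences for $f$ finite, while you invoke exactness plus finite \'etale ambidexterity, which amounts to the same point.
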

\begin{proof}
Because of the definition of pushforwards in $\CM_1(-)$ (Definition~\ref{defi:pushforward_deligne}), it is enough to show the following: for $G/T$ smooth (not necessarily of finite type) commutative group scheme, there is an isomorphism $f_*\Sigma^\infty G_\BQ\simeq \Sigma^\infty (\Res_f G)_\BQ$, functorial in $G$  (note that we do not claim that the sheaf $\Res_f G$ is representable in this generality). We have a sequence of functorial isomorphisms
\begin{eqnarray*}
f_*\Sigma^\infty G_\BQ & \simeq & f_\sharp \Sigma^\infty G_\BQ\\
& \simeq & \Sigma^\infty f_\sharp G_\BQ\\
& \simeq & \Sigma^\infty f_* G_\BQ\\
& \simeq & \Sigma^\infty \underline{f}_* G_\BQ\\
& \simeq & \Sigma^\infty (\Res_f G)_\BQ  
\end{eqnarray*}
where the first and third isomorphisms follow from the fact that $f$ is finite \'etale, the second comes from the commutation between $\Sigma^\infty$ and $f_\sharp$, the fourth follows from the fact that $\underline{f}_*$ in $\DA^{\eff}(-)$ preserves $(\BA^1,\et)$-equivalences for $f$ finite (an argument can be found in Part A of the proof of~\cite[Lemme B.7]{Ayoub_Galois_1}), and the last is the definition of Weil restriction. This completes the proof.
\end{proof}

\subsection{Picard complexes}
\label{sec:smooth_picard}

Classically the Picard functor of a morphism of schemes $f$ is defined as $R^1f_*\Gm$. We introduce a variant of this construction which includes information about relative connected components. 

\begin{defi}\label{def:Picard_complex}
Let $f:X\ra S$ be a finite type morphism of schemes. The Picard complex $\mathrm{P}(X/S)$ of $X$ over $S$ is the object $\tau_{\geq 0}f_*(\Gm\otimes\BQ[1])\in D_{[0,1]}(\Sm/S)$. 
\end{defi}

\begin{remark}
Recall from \cite[Expos\'e XVIII \S 1.4]{SGA4_3} that there is an equivalence of categories between the category of commutative group stacks over a site $\CS$ (with morphisms taken up to $2$-isomorphisms) and the category $D_{[0,1]}(\Sh(\CS,\BZ))$. The Picard complex corresponds via this equivalence to the smooth Picard stack, i.e., the version for $\Sm/S$ of the usual Picard stack (see e.g. \cite{Brochard_Picard}). This point of view will not be used in the rest of this paper.
\end{remark}

We will also need a version with transfers.

\begin{defi}\label{def:Picard_complex_tr}
  Let $S$ be a scheme, $f:X\ra S$ a finite type morphism of schemes. The Picard complex with transfers $\mathrm{P}^\tr(X/S)$ of $X$ over $S$ is the object $\tau_{\geq 0}f_*(\Gm^\tr\otimes\BQ[1])\in D_{[0,1]}(\mathrm{Cor}/S)$.
\end{defi}

The adjunction adding and forgetting transfers at the level of sheaves with transfers induces an 
\[
a_{\tr}:D(\mathrm{Cor}/S) \leftrightarrows D(\Sm/S):o^{\tr}
\]
where the functor $o^{\tr}$ at the level of sheaves with transfers derives trivially.

TODO: clarify the amplitude restriction and relationship with the motivic result! Because $a^{\tr}$ is left Quillen and $o^{\tr}$ derives trivially, this should be very easy for effective motives, need some extra thought to stabilize.

\begin{lemma}
  Let $S$ be an excellent scheme and $f:X\ra S$ a finite type morphism of schemes. There is a natural isomorphism
  \[
P(X/S)\simeq o^{\tr}P^{\tr}(X/S)
\]
hence by adjunction a natural morphism
\[
a^{\tr}P(X/S)\ra P^{\tr}(X/S)
  \]
\end{lemma}
\begin{proof}
  By \cite[Proposition 3.10.(i)]{AHPL}, we have an isomorphism $\Gm\otimes\BQ\simeq o^{\tr}\Gm^{\tr}\otimes\BQ$ of sheaves with transfers. By construction, $f_{*}$ commutes with $o^{\tr}$ at the presheaf level and $o^{\tr}$ preserves quasi-isomorphisms and $\et$-local equivalences, hence $f_{*}o^{\tr}\simeq o^{\tr}f_{*}$ and $\tau_{\geq 0}o^{\tr}\simeq o^{\tr}\tau_{\geq 0}$. All together, this provides the required isomorphism
  \[
\tau_{\geq 0}f_{*}(\Gm\otimes\BQ[1])\simeq \tau_{\geq 0}f_{*}(o^{\tr}\Gm^{\tr}\otimes\BQ[1])\simeq \tau_{\geq 0}o^{\tr}f_{*}(\Gm^{\tr}\otimes\BQ[1])\simeq o^{\tr}\tau_{\geq 0}f_{*}(\Gm^{\tr}\otimes\BQ[1]).
 \]
  \end{proof}

We note the following result which shows that the truncation in the definition of the Picard complex is sometimes unnecessary; this will not be used in the rest of the paper.
  
\begin{lemma}\label{lemm:Pic_truncation}
 Let $f:X\ra S$ be a smooth morphism with $S$ regular. Then for $i\geq 1$, the sheaf $R^i f_* (\Gm\otimes\BQ[1])\simeq R^{i+1}f_*(\Gm\otimes\BQ)$ is trivial. As a consequence, we have
\[
\mathrm{P}^{(\tr)}(X/S)\stackrel{\sim}{\lra} f_*(\Gm^{(\tr)}\otimes\BQ[1]).
\]  
\end{lemma}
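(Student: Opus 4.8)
The plan is to reduce everything to the vanishing $R^j f_*(\Gm\otimes\BQ)=0$ on $(\Sm/S)_\et$ for $j\geq 2$, and then to deduce this from Grothendieck's computation of $\Gm$-cohomology on regular schemes.

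First I would unwind the cohomology sheaves of $f_*(\Gm\otimes\BQ[1])$. Since $\Gm\otimes\BQ$ sits in a single degree, the cohomology sheaf of $f_*(\Gm\otimes\BQ[1])$ in cohomological degree $i$ is $R^i f_*(\Gm\otimes\BQ[1])\simeq R^{i+1}f_*(\Gm\otimes\BQ)$; this already vanishes for $i\leq -2$, so the complex is concentrated in cohomological degrees $\geq -1$, and with the homological conventions of the paper the truncation $\tau_{\geq 0}$ defining $P(X/S)$ (Definition~\ref{def:Picard_complex}) removes exactly the contributions $R^j f_*(\Gm\otimes\BQ)$ with $j\geq 2$. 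Thus the ``as a consequence'' clause is equivalent to the asserted vanishing. The version with transfers reduces to the version without: the forgetful functor from $\et$-sheaves with transfers to $\et$-sheaves is exact, detects zero objects, and is compatible with $f_*$, so $R^j f_*(\Gm^\tr\otimes\BQ)$ vanishes once its underlying sheaf $R^j f_*(\Gm\otimes\BQ)$ does (alternatively, one runs the argument below verbatim with $\Gm^\tr$).

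For the vanishing, recall that $R^j f_*(\Gm\otimes\BQ)$ is the $\et$-sheafification on $\Sm/S$ of the presheaf $U\mapsto \BH^j_\et(X\times_S U,\Gm\otimes\BQ)=H^j_\et(X\times_S U,\Gm)\otimes\BQ$: here I use that $\Gm\otimes\BQ$ is a sheaf concentrated in degree $0$, that $-\otimes_\BZ\BQ$ commutes with $\et$-cohomology of the noetherian (hence quasi-compact quasi-separated) scheme $X\times_S U$, and that $\et$-cohomology of the representable sheaf $\Gm$ on the big site $\Sm/(X\times_S U)$ agrees with the usual (small-site) $\et$-cohomology, exactly as used in the proof of Proposition~\ref{prop:Ev_1}. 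Now, $S$ being regular, every smooth $S$-scheme $U$ is regular; since $f$ is smooth, $X\times_S U\to U$ is smooth, so $X\times_S U$ is regular as well. By a theorem of Grothendieck \cite[Proposition~1.4]{BrauerII}, $H^j_\et(Y,\Gm)$ is torsion for every regular scheme $Y$ and every $j\geq 2$; hence $H^j_\et(X\times_S U,\Gm)\otimes\BQ=0$ for all such $U$ and all $j\geq 2$. The presheaf above is therefore identically zero for $j\geq 2$, and so is its sheafification.

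The argument is entirely soft. The only points needing (routine) care are keeping the homological/cohomological truncation bookkeeping straight, so that ``$\tau_{\geq 0}$ acts as the identity'' is genuinely equivalent to vanishing in cohomological degrees $\geq 2$, and the two standard identifications invoked above (cohomology commuting with $-\otimes\BQ$, and big- versus small-site $\Gm$-cohomology), both of which the paper already uses implicitly. I do not expect any real obstacle here; the substance is carried entirely by the cited regularity result of Grothendieck.
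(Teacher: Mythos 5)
Your proof is correct and takes essentially the same route as the paper, whose entire argument is the single observation that $H^i_\et(T,\Gm)$ is torsion for $T$ regular and $i\geq 2$ \cite[Proposition~1.4]{BrauerII}, applied to the (regular) schemes $X\times_S U$ for $U\in\Sm/S$. The extra bookkeeping you supply (sheafification of the presheaf of $\Gm$-cohomology, the transfers variant, the truncation conventions) is routine and consistent with what the paper leaves implicit.
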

\begin{proof}
  This follows from Lemma~\ref{lem:Q_coeffs} together with the fact that for a regular scheme $T$ and $i\geq 2$, the \'etale cohomology groups $H^i(T,\Gm)$ are torsion \cite[Proposition~1.4]{BrauerII}.
\end{proof}

We proceed to analyse the structure of $\mathrm{P}(X/S)$, following closely the standard structure theory for the Picard scheme \cite{Kleiman_Picard} and the Picard stack \cite{Brochard_Picard}. We will see that restricting to the smooth site leads to simpler results than in the classical case.

In the sequel, we consider \'etale sheaves of abelian groups and $\BQ$-vector spaces on the two sites $(\Sch/S)_{\et}$ and $(\Sm/S)_{\et}$. We have a continuous functor $\zeta:\Sch/S\ra \Sm/S$. The restriction functor $\zeta_*:\Sh(\Sch/S)\ra \Sh(\Sm/S)$ is exact, since an \'etale scheme over a smooth $S$-scheme is a smooth $S$-scheme. We have $\zeta_*\Gm\simeq \Gm$. The functor $\zeta_*$ commutes with $f_*$ and $\underline{f}_*$, in the sense that there are natural isomorphisms of functors $\zeta_* \underline{f}_*^{\Sch}\simeq \underline{f}_*^{\Sm}\zeta_*$ and $\zeta_* f_*^{\Sch}\simeq f_*^{\Sm}\zeta_*$. By abuse of terminology, we say that a sheaf of sets on $\Sm/S$ is representable if it is isomorphic to the functor $\zeta_* X$ for $X$ a (not necessarily smooth) $S$-scheme; such a scheme is then not uniquely determined up to isomorphism.

\begin{defi}
Let $f:X\rightarrow S$ be a morphism of schemes or algebraic spaces. We say that $f$ is cohomologically flat in degree $0$ if the construction of $\underline{f}_*\CO_X$ commutes with arbitrary base change.  
\end{defi}

Recall that by \cite[Corollaire 4.3.3]{EGAIII_1}, a smooth proper morphism $f$ has a Stein factorisation \[f:X\stackrel{f^\circ}{\ra} \pi_0(X/S):=\Spec_S(\underline{f}_*\CO_X)\stackrel{\pi_0(f)}{\ra} S.\] Moreover, $f$ is in this case cohomologically flat in degree $0$ by \cite[Proposition 7.8.6]{EGAIII_2}, so that the construction of $\pi_0(f)$ commutes with arbitrary base change, and $\pi_0(f)$ is finite \'etale \cite[Remarque 7.8.10.(i)]{EGAIII_2}.

\begin{lemma}\label{lem:stein_torus}
 Let $f:X\rightarrow S$ be a smooth proper morphism. The sheaf $\underline{f}_*\Gm$ is representable by a torus, the Weil restriction $\Res_{\pi_0(f)}\Gm$ (see Definition~\ref{def:Weil}).
\end{lemma}
\begin{proof}
  For any $U\ra S$ smooth, we have \[\underline{f}_*(\Gm)(U)=\CO^\times (X\times_S U) \simeq \CO^\times (\pi_0(X\times U/U)) \simeq \CO^\times (\pi_0(X/S)\times_S U).\]
  This implies the claim.
\end{proof}
 
Next, we look at the Picard sheaf $\sPic_{X/S}:=R^1 f_*\Gm\in \Sh((\Sch/S)_\et,\BZ)$ and its smooth analogue $\sPic^\sm_{X/S}\in \Sh((\Sm/S)_\et,\BZ)$ defined by the same formula on the smooth site. By exactness of $\zeta_*$, we have $\zeta_*\sPic_{X/S}\simeq \underline{\zeta}_*\sPic_{X/S}\simeq \sPic^\sm_{X/S}$.



\begin{lemma}\label{lem:base_change_Pic}
  Let $\pi:S'\ra S$ be a morphism of schemes.
  \begin{enumerate}[label={\upshape(\roman*)}]
  \item There are  natural isomorphisms
\[
v_\pi:\pi^{-1}\sPic_{X/S} \simeq \sPic_{X\times_S S'/S'}.
\]
\item There are natural morphisms
\[
v^{\sm}_\pi:\pi^{-1}\sPic^{sm}_{X/S} \ra \sPic^{sm}_{X\times_S S'/S'},
\]
which are isomorphisms if $\pi$ is smooth.
  \end{enumerate}
  \end{lemma}
\begin{proof}
The sheaf $\sPic_{X/S}$ is the \'etale sheaf associated with the ``naive'' Picard functor $\sPic^{\psh}_{X/S}:T\mapsto \Pic(X\times_S T)$. We have, for any $S'$-scheme $T'$: 
  \[
(\pi^{-1} \sPic^{\psh}_{X/S})(T') = \Pic(X\times_S T')= \Pic((X\times_S S')\times_{S'} T') = \sPic^{\psh}_{X\times_S S'/S'}(T')
  \]
  This equality is functorial in $T'$. After passing to associated sheaves, we get the isomorphism $v_{\pi}$. This concludes the proof of (i).

We now turn to $\sPic^{\sm}_{X/S}$. This sheaf is also the \'etale sheaf associated with the ``naive'' Picard functor $\sPic^{\psh,\sm}_{X/S}$ on $\Sm/S$. We have, for any smooth $S'$-scheme $T'$: 
  \[
(\pi^{-1} \sPic^{\psh,\sm}_{X/S})(T')=\Colim_{T\in (T'\backslash (\Sm/S))} \Pic(X\times_S T) \rightarrow \Pic(X\times_S T') = \sPic^{\psh,\sm}_{X\times_S S'/S'}(T')
  \]
and this defines the morphism $v_{\pi}$. If $\pi$ is smooth, then the category $T'\backslash (\Sm/S)$ has an initial object $T'\rightarrow S'\rightarrow S$ and we get isomorphisms. This proves (ii).
\end{proof}

Let us also define a natural base change map for $P(X/S)$. Consider a cartesian diagram
\[
\xymatrix{
X' \ar[d]_{\tilde{f}} \ar[r]^{\tilde{\pi}} & X \ar[d]^{f} \\
S' \ar[r]_{\pi}  & S
  }
\]
with $\pi$ any morphism of schemes. The following composition
\[
\pi^{-1}P(X/S)=\pi^{-1}\tau_{\geq 0} (f_*\Gm\otimes\BQ[1])\ra \pi^{-1}f_*\Gm\otimes\BQ[1]\ra \tilde{f}_*\tilde{\pi}^{-1}\Gm\otimes\BQ[1]\stackrel{R_{\pi}}{\simeq} \tilde{f}_*\Gm\otimes\BQ[1]
\]
factors through the truncation $\tau_{\geq 0}(\tilde{f}_*\Gm\otimes\BQ[1])=P(X_{S'}/S')$. We denote the resulting morphism by
\[
  V_\pi:\pi^{-1}P(X/S)\ra P(X_{S'}/S').
\]

In general, the construction of $\sPic^\sm$ and $P(X/S)$ does not commute with arbitrary base change, i.e., $v^{\sm}_\pi$ and $V_{\pi}$ are not always isomorphisms.

Representability results for $\sPic$ are subtle in general. Let $f:X\rightarrow S$ be a smooth projective morphism. It is in particular proper, flat and cohomologically flat in degree $0$. By \cite[8.3/1]{BLR}, $\sPic_{X/S}$ is represented by a group algebraic space $\Pic_{X/S}$ over $S$. Note that if $f$ has in addition geometrically connected fibres, then $\Pic_{X/S}$ is in fact a group scheme, separated and locally of finite presentation \cite[8.2/1]{BLR}; since we do not want to restrict to this case, we use algebraic spaces as a technical crutch. Finally, if $S$ is the spectrum of a field $k$, then $\sPic_{X/k}$ is represented by a group scheme which is locally of finite type over $k$, regardless of whether $f$ has geometricaly connected fibres or not \cite[8.2/3]{BLR}.

We want to discuss the identity component $\sPic^{0}_{X/S}$ of $\sPic_{X/S}$. Before we introduce it, let us recall some basic facts about connected components of locally of finite type group schemes over a field. Let $k$ be an arbitrary field and $G$ be a $k$-group scheme which is locally of finite type. Since $G$ is locally noetherian, its connected components are open and closed in $G$, and we denote by $G^0$ the connected component containing the identity.

\begin{defi}\label{def:identity_component}
  Let $f:X\ra S$ be a smooth projective morphism. The group algebraic space $\sPic_{X/S}$ comes with a natural subfunctor $\sPic^{0}_{X/S}$, its \emph{relative identity component}. Let $T\in\Sm/S$. Given a point $s\in S$, through Lemma~\ref{lem:base_change_Pic}, we can restrict a section in $\sPic_{X/S}(T)$ to a section of $\sPic_{X_s/s}(T_s)$. Since $\sPic_{X_s/s}$ is represented by a group scheme $\Pic_{X_s/s}$ over $s$, locally of finite type, it has therefore an identity component $\Pic^0_{X_s/s}$. By definition, a section in $\sPic_{X/S}(T)$ lies in $\sPic^{0}_{X/S}(T)$ if for all $s\in S$, its restriction to $\sPic_{X_s/s}(T_s)$ lies in $\Pic^0_{X_s/s}(T_s)$.
We then also define $\sPic^{\sm,0}_{X/S}$ as $\underline{\zeta}_* \sPic^{0}_{X/S}$.
\end{defi}

\begin{remark}\label{rmk:neutral_field_extension}
  Let $k$ be a field, $L$ be any field extension and $G$ be a $k$-group scheme. Let $T\in \Sm/k$ and $x\in G(T)$. Then since $G^0$ is open and closed in $G$, we see that $x\in G^0(T)$ if and only if for $x_L\in G_L^0(T_L)$. In particular, in Definition \ref{def:identity_component}, it is possible to check that a section lies in $\sPic^{0}_{X/S}(T)$ by pulling back to all geometric points instead.
\end{remark}

We have a further result on base change.

\begin{lemma}\label{lem:base_change_Pic_0}
  Let $\pi:S'\ra S$ a morphism of schemes.
  \begin{enumerate}[label={\upshape(\roman*)}]
  \item There are  natural isomorphisms
\[
v_\pi:\pi^{-1}\sPic^0_{X/S} \simeq \sPic^0_{X\times_S S'/S'}.
\]
\item There are natural morphisms
\[
v^{\sm}_\pi:\pi^{-1}\sPic^{sm,0}_{X/S} \ra \sPic^{sm,0}_{X\times_S S'/S'},
\]
which are isomorphisms if $\pi$ is smooth.
  \end{enumerate}
  \end{lemma}
\begin{proof}
  The maps in the lemma are obtained by restriction from Lemma~\ref{lem:base_change_Pic}. We thus only have to check that the subfunctor $\sPic^0_{X/S}$ is mapped into $\sPic^0_{X\times_S S'/S'}$. Let $T\in \Sch/S'$ and $\alpha\in \sPic^0_{X/S}(T)$. Let $s'$ be a point in $S'$, with corresponding morphism $i':s'\ra S'$. Write $i=\pi i:s'\ra S$. It is easy to see from the construction of the base change map that we have $v_i = v_{i'}\circ (i')^{-1}v_\pi:(i')^{-1}\pi^{-1}\sPic_{X/S}\simeq i^{-1}\sPic_{X/S}\ra \sPic_{X_s/s}$. By definition, we have $v_i(i^{-1}\alpha)\in \sPic^0_{X_{s'}/s'}(T_{s'})$, hence $v_{i'}\circ v_\pi(i^{'})^{-1}(\alpha)\in \sPic^0_{X_{s'}/s'}(T_{s'})$. Since this holds for all $s'\in S'$, we conclude that $v_{\pi}(\alpha)$ lies in $\sPic^0_{X\times_S S'/S'}$.
\end{proof}

\begin{lemma}\label{lem:repr_Pic_0}
Let $f:X\ra S$ be a smooth projective morphism. The functor $\sPic^0_{X/S}$ is representable by a proper group algebraic space $\Pic^0_{X/S}$ over $S$.
\end{lemma}

\begin{proof}
By \cite[8.3/1]{BLR}, it is enough to show that the functor $\sPic^0_{X/S}\rightarrow \sPic_{X/S}$ is relatively representable by a closed immersion and that the resulting group algebraic space $\Pic^0_{X/S}$ is of finite type. 

By \cite[Corollaire 2.3]{Grothendieck_Picard_VI}, this is the case for $\Pic^0_{X/S}$ under the additional assumptions that the geometric fibres of $f$ are integral (or equivalently, connected); note that in this case, $\Pic_{X/S}$ is representable by a scheme. We are going to reduce to this case by \'etale descent. 

By replacing $S$ by the image of $f$ (which is a disjoint union of connected components of $S$ since $f$ is open and closed), we can assume that $f$ is surjective. Write $f:X\stackrel{f^\circ}{\ra} \pi_0(X/S)\stackrel{\pi_0(f)}{\ra}S$ for the Stein factorisation of $f$. The morphism $\pi_0(f)$ is finite \'etale surjective. Let $p:\Pi\rightarrow S$ be an \'etale Galois covering which dominates every connected component of $\pi_0(X/S)$. We thus have that $\Pi':=\pi_0(X/S)\times_S \Pi\simeq \coprod_{i\in I} \Pi_i$ for some finite set $I$ and $\Pi_i\simeq \Pi$. Write $Y=X\times_S \Pi\stackrel{g}{\rightarrow}\Pi$. The Stein factorisation of the morphism $g$ is $Y\rightarrow \Pi'\rightarrow \Pi$. Since $Y\rightarrow \Pi'$ is smooth with connected geometric fibres, we have $Y=\coprod Y_i$ with $g^\circ(Y_i)\subset \Pi_i$, and each morphism $Y_i\rightarrow \Pi$ is smooth projective with geometrically connected fibres. We have
\[
p^{-1}\sPic^{0}_{X/S} \simeq \sPic^0_{Y/\Pi}\simeq \prod_{i\in I} \sPic^{0}_{Y_i/\Pi}
\]
and, by the beginning of the proof, each of the factors in this product is representable by a proper group algebraic space. The same argument applies over $\Pi\times_S \Pi$. Using that \'etale descent for algebraic spaces is effective, we conclude that $\sPic^0_{X/S}$ is representable by a proper group algebraic space. 
\end{proof}

\begin{prop}\label{prop:Pic_smooth_car_0}
Let $S$ be a $\BQ$-scheme and $f:X\ra S$ a smooth projective morphism. The algebraic group space $\Pic^0_{X/S}$ is in fact an abelian scheme over $S$.
\end{prop}
\begin{proof}
  Under these hypotheses, and assuming furthermore that $f$ has geometrically connected fibres, $\Pic^0_{X/S}$ (which is then a group scheme) is smooth, as explained in \cite[Remark 5.21]{Kleiman_Picard}. Let us now drop the assumption on the fibres of $f$. By the \'etale descent argument from Lemma~\ref{lem:repr_Pic_0} and the fact that smoothness can be checked \'etale locally, we deduce that the algebraic group space $\Pic^0_{X/S}$ is smooth and proper. Moreover, by definition of $\Pic^0_{X/S}$, it has geometrically connected fibres. But a smooth proper algebraic group space with geometrically connected fibres is an abelian scheme \cite[Theorem 1.9]{Faltings_Chai}.
\end{proof}

In general, when $S$ is not a $\BQ$-scheme, $\Pic^0_{X/S}$ can have non-reduced fibres. We need a condition under which we can ``extract'' an abelian scheme from $\Pic^0$. Here is a result in that direction.

\begin{prop}\label{prop:Brochard} \cite[Proposition~2.15]{Brochard_Duality}
  Let $S$ be a noetherian scheme, and $G$ a group $S$-algebraic space which is proper, flat and cohomologically flat in degree $0$. Then there exists an abelian scheme $A/S$ and a finite flat group scheme $F/S$ such that $G$ fits into a unique exact sequence
  \[
0\rightarrow A\rightarrow G\rightarrow F\rightarrow 0
  \]
of $S$-group schemes. In particular, $G$ is a scheme.
\end{prop}

The uniqueness in the previous proposition is not stated in \cite{Brochard_Duality}, but follows from the proof as $F$ is shown to be the affinisation $\Spec_{\CO_S}((G\ra S)_*\CO_G)$ of $G$.

This motivates the following definition.

\begin{defi}\label{def:pic_smooth}
Let $f$ be a smooth and proper morphism. We say that $f$ is Pic-smooth if the algebraic space $\Pic^0_{X/S}$ is flat and cohomologically flat in degree $0$. 
\end{defi}

By Proposition~\ref{prop:Pic_smooth_car_0}, the condition of being Pic-smooth is automatic if $S$ is of characteristic $0$.

\begin{lemma}\label{lem:bc_Pic_smooth}
Let $f:X\ra S$ a smooth proper Pic-smooth morphism, and $T\rightarrow S$ be any morphism of schemes. Then $f\times_S T$ is Pic-smooth.  
\end{lemma}
\begin{proof}
This follows from Lemma~\ref{lem:base_change_Pic_0} together with the facts that flatness and cohomological flatness in degree $0$ are stable by base change.
\end{proof}

\begin{prop}\label{prop:generic_pic_smooth}
Let $f:X\ra S$ be a smooth projective morphism. Assume $S$ is reduced. Then there is a dense open set $U\subset S$ such that $f\times_S U$ is Pic-smooth.
\end{prop}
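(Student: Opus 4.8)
The strategy is to combine the representability of $\Pic^{\tau}_{X/S}$ (Theorem~\ref{thm:grothendieck_Picard}\ref{Pic_tau_repr}) with generic flatness and the semicontinuity and base change theorems for coherent cohomology, working one irreducible component of $S$ at a time. First I would reduce to the case where $S$ is integral. Since $S$ is reduced and noetherian it has finitely many irreducible components $S_1,\dots,S_r$, and $V_i:=S\setminus\bigcup_{j\neq i}S_j$ is an open subset of $S$ contained in $S_i$; the $V_i$ are pairwise disjoint, each is dense in $S_i$ and integral, and $V:=\bigsqcup_i V_i$ is a dense open of $S$. As base change of $f$ along the open immersion $V\hookrightarrow S$ and Pic-smoothness both behave well under restriction to opens (using Lemma~\ref{lemma:base_change_Pic} for the open immersion, which is smooth, to identify $\sPic^{\tau}$ of the restriction with the restriction of $\sPic^{\tau}$), it suffices to produce for each $i$ a dense open $U_i\subseteq V_i$ over which $f$ becomes Pic-smooth and then take $U=\bigsqcup_i U_i$. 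So from now on assume $S$ integral.

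Set $P:=\Pic^{\tau}_{X/S}$, which by Theorem~\ref{thm:grothendieck_Picard}\ref{Pic_tau_repr} is a projective $S$-scheme; write $g\colon P\to S$ for its structure morphism, a proper morphism of finite type. By generic flatness over the integral noetherian scheme $S$ (\cite[Th\'eor\`eme 6.9.1]{EGAIV_2}), there is a dense open $U_1\subseteq S$ over which $g$ is flat. Next I would shrink $U_1$ to obtain cohomological flatness in degree $0$: over $U_1$ the morphism $g$ is flat and proper, so by the semicontinuity theorem \cite[Th\'eor\`eme 7.7.5]{EGAIII_2} the function $s\mapsto \dim_{k(s)}H^0(P_s,\CO_{P_s})$ is upper semicontinuous on $U_1$, hence constant on a dense open $U\subseteq U_1$ (using that $U_1$ is irreducible). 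Since $U$ is reduced and $g$ is flat and proper over $U$ with the dimension of $H^0$ of its fibres constant, the theorem on cohomology and base change \cite[Th\'eor\`eme 7.8.4, Proposition 7.9.9]{EGAIII_2} implies that $\underline{g}_*\CO_P|_U$ is locally free and that its formation commutes with arbitrary base change $T\to U$; that is, $g$ is cohomologically flat in degree $0$ over $U$.

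Finally, since the open immersion $U\hookrightarrow S$ is smooth, Lemma~\ref{lemma:base_change_Pic} identifies $\sPic^{(\sm),\tau}_{X\times_S U/U}$ with the restriction to $\Sm/U$ of $\sPic^{(\sm),\tau}_{X/S}$, hence it is representable by $\Pic^{\tau}_{X/S}\times_S U=P\times_S U$; and the two properties just established —flatness over $U$ and cohomological flatness in degree $0$— are precisely what is needed for $f\times_S U$ to be Pic-smooth in the sense of Definition~\ref{def:pic_smooth}. As $U$ is dense in $S$ by construction, this proves the proposition. The only mildly delicate points are the bookkeeping across the irreducible components of the reduced but possibly non-integral base $S$ and the correct invocation of the reduced-base criterion for cohomological flatness; neither is a genuine obstacle.
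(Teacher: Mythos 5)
Your proof is correct and follows essentially the same route as the paper: representability of $\Pic^{\tau}_{X/S}$, generic flatness over the reduced base, and then semicontinuity plus cohomology-and-base-change over a reduced base to get cohomological flatness in degree $0$ of $\Pic^{\tau}\to S$ on a dense open. The only cosmetic differences are your preliminary reduction to integral $S$ (harmless but not needed, since generic flatness already works over a reduced base) and your use of local constancy of $h^0(P_s,\CO_{P_s})$ where the paper invokes local constancy of $h^1$; both criteria yield cohomological flatness in degree $0$ via \cite[\S 7.7--7.8]{EGAIII_2}.
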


\begin{proof}
  Recall that $\Pic^{0}_{X/S}$ is representable by a proper algebraic group space by Lemma~\ref{lem:repr_Pic_0}. We are going to show that, more generally, for any proper $S$-algebraic space $g:Q\ra S$, there exists a dense open set $U$ such that $g\times_S U$ is flat and cohomologically flat in degree $0$.

  Since $S$ is reduced, generic flatness for morphisms of algebraic spaces \cite[Tag 06QS]{stacks-project}\cite[Corollaire 6.9.3]{EGAIV_2} provides a dense open subset $V\subset U$ of $S$ over which $g\times_S V$ is flat.

  By restricting $V$ further, we can assume that $V$ is affine and disjoint union of its irreducible components, and thus reduce to the case $V=\Spec(A)$ affine and integral. Cohomological flatness in degree $0$ on a dense open set of $V$ then follows from \cite[Corollaire 7.3.9]{EGAIII_2}, applied to the homological functor $T_\bullet(-):\mathrm{Mod}(A)\ra \mathrm{Mod}(A),\ N\mapsto R^\bullet g_* g^{-1}\widetilde{N}$, which takes values in finitely generated $A$-modules by properness of $g$.
\end{proof}

\begin{prop}\label{prop:repr_pic_smooth}
  Let $f:X\ra S$ be a smooth projective Pic-smooth morphism. Then $\sPic^{\sm,0}_{X/S}$ is representable by an abelian scheme.
\end{prop}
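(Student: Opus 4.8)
The plan is to deduce the statement from Brochard's structure theorem for $\Pic^{\tau}_{X/S}$, together with the fact that a section of a finite flat group scheme over a reduced scheme that vanishes at every geometric point is zero.

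First I would apply Theorem~\ref{thm:picard_0} \ref{generic_Pic_0}: the morphism $f$ is smooth projective, hence cohomologically flat in degree $0$ (cf.\ the Stein factorisation recalled above), and $\Pic^{\tau}_{X/S}$ is flat by the Pic-smoothness hypothesis, so one gets a short exact sequence of commutative $S$-group schemes
\[
0\lra A\lra \Pic^{\tau}_{X/S}\lra F\lra 0,
\]
with $A:=\Pic^{0,\red}_{X/S}$ an abelian scheme over $S$ and $F$ finite flat over $S$. Since $A$ is smooth, the quotient morphism $\Pic^{\tau}_{X/S}\to F$ is an $A$-torsor, hence smooth surjective; thus the displayed sequence is also a short exact sequence of étale sheaves on $\Sm/S$, and in particular $A(T)=\ker\bigl(\Pic^{\tau}_{X/S}(T)\to F(T)\bigr)$ for all $T\in\Sm/S$. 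It then suffices to identify, inside $\sPic^{\sm,\tau}_{X/S}=\underline{\zeta}_*\Pic^{\tau}_{X/S}$, the subsheaf $\sPic^{\sm,0}_{X/S}=\underline{\zeta}_*\sPic^{0}_{X/S}$ with the subsheaf $A$; by the convention on representability this is exactly the assertion.

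For the inclusion $A\subset\sPic^{\sm,0}_{X/S}$ I would use the geometric-point description of $\sPic^{0}$ in Definition~\ref{def:0_tau}: for any geometric point $\Spec\bar K\to T$ one has
\[
A(\Spec\bar K)=\Pic^{0,\red}_{X_{\bar K}/\bar K}(\bar K)=\Pic^{0}_{X_{\bar K}/\bar K}(\bar K)=\sPic^{0}_{X/S}(\Spec\bar K),
\]
the first equality by compatibility of $A$ with base change (Lemma~\ref{lemma:base_change_Pic} and Theorem~\ref{thm:picard_0}), the second because a scheme and its reduction have the same $\bar K$-points, the third by definition of $\sPic^{0}$ over the spectrum of an algebraically closed field. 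Continuing this chain one more step, using $H^{1}(\bar K,A)=0$, also gives $\sPic^{0}_{X/S}(\Spec\bar K)=\ker\bigl(\Pic^{\tau}_{X_{\bar K}/\bar K}(\bar K)\to F(\bar K)\bigr)$. For the reverse inclusion, let $T\in\Sm/S$ and $t\in\sPic^{0}_{X/S}(T)\subset\Pic^{\tau}_{X/S}(T)$, and let $\bar t\in F(T)$ be its image: the previous identity shows $\bar t$ vanishes at every geometric point of $T$, so, $F\to S$ being separated, the equaliser of $\bar t$ and the zero section is a closed subscheme of $T$ containing every point. As $T$ is reduced — it is smooth over $S$, which is reduced in the situations where this proposition is used (typically $S$ regular) — this closed subscheme must be all of $T$, i.e.\ $\bar t=0$ and $t\in A(T)$; combined with the first inclusion this yields $\sPic^{\sm,0}_{X/S}=A=\Pic^{0,\red}_{X/S}$.

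The main obstacle is this last deduction, i.e.\ getting $\bar t=0$ from the vanishing of $\bar t$ at all geometric points. If $F$ were étale over $S$ it would be automatic, the equaliser of two sections of an étale morphism being open and closed; but $F$ is in general only finite flat (for instance $\Pic^{0}_{X/S}$ can be non-reduced in positive characteristic), so one genuinely needs the test scheme $T$ — hence the base $S$ — to be reduced.
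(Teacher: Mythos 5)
Your proof is correct and follows essentially the same route as the paper's: the exact sequence $0\to \Pic^{0,\red}_{X/S}\to \Pic^{\tau}_{X/S}\to F\to 0$ from Theorem~\ref{thm:picard_0}, the vanishing at all geometric points of the induced section of $F$ (the paper gets this by noting that the smooth curves realising algebraic equivalence map constantly to the finite group scheme $F$, you by identifying $\sPic^{0}$ with $\Pic^{0,\red}$ on geometric points --- the same standard facts), and geometric connectedness of the abelian scheme for the reverse inclusion. Concerning your reducedness caveat: the paper's own proof deduces that the section $T\to F$ is zero from its vanishing at geometric points using only that $T/S$ is smooth, a step which likewise really needs $T$ (hence $S$) reduced, so your explicit restriction mirrors an implicit assumption already present in the paper rather than introducing a new gap.
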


\begin{proof}
By Proposition~\ref{prop:Brochard}, we have a short exact sequence of group schemes
\[
0\ra \Pic^{0,\red}_{X/S}\ra \Pic^{0}_{X/S}\ra F \ra 0
\]
with $F$ a finite flat group scheme with connected fibres (since the fibres of $\Pic^0_{X/S}$ are connected by definition), and $\Pic^0_{X/S}$ represents $\sPic^{0}_{X/S}$. Let us show, more generally, that if $G$ is a group scheme fitting in an exact sequence
\[
0\ra A\ra G\stackrel{\pi}{\ra} F\ra 0
  \]
  with $A$ an abelian scheme and $F$ a finite flat group scheme with connected fibres, then the restriction to $\Sm/S$ of the functor of points of $G$ is representable by $A$.

  Let $T\in \Sm/S$ and $h:T\ra G$ an $S$-morphism. Let $s\in S$. Then $F_s$ is a finite flat connected group scheme over $\kappa(s)$, hence we have $(F_s)_{\red}=\Spec(\kappa(s))$. Hence the morphism $T_s\ra F_s$ factors through the identity section of $F_s$. Using the smoothness of $S$, this implies that the morphism $\pi\circ h:T\ra F$ also factors through the identity section. By the exactness in the middle of
  \[
0\ra A(T)\ra G(T)\ra F(T)
    \]
we see that $h$ comes from $A(T)$. This proves the result.
  \end{proof}

\begin{defi}
Let $f:X\ra S$ be a smooth projective Pic-smooth morphism. We denote by $\Pic^{0,\red}_{X/S}$ the abelian scheme representing $\sPic^{\sm,0}_{X/S}$.
\end{defi}

\begin{lemma}\label{lem:vsm_iso}
  Let $f:X\ra S$ be a smooth projective Pic-smooth morphism. For any morphism $\pi:S'\ra S$, the morphism $v^{\sm}_{\pi}$ induces a isomorphism
  \[
v^{\sm}_{\pi}:\Pic^{0,\red}_{X/S}\times_{S}S' \simeq\Pic^{0,\red}_{X'/S'}.
\]  
\end{lemma}

\begin{proof}
  First, the morphism $X'\ra S'$ is still Pic-smooth by Lemma~\ref{lem:bc_Pic_smooth}; hence the statement makes sense. By combining Lemma~\ref{lem:base_change_Pic_0} and Proposition~\ref{prop:repr_pic_smooth}, we get a morphism $v^{\sm}_{\pi}:\Pic^{0,\red}_{X/S}\times_S S'\ra \Pic^{0,\red}_{X'/S'}$. By the uniqueness of the short exact sequence in Proposition~\ref{prop:Brochard}, we see that, on the other hand, we have a base change isomorphism $\Pic^{0,\red}_{X/S}\times_S S'\simeq \Pic^{0,\red}_{X'/S'}$, and it is not difficult to see that it coincides with $v^{\sm}_{\pi}$.
\end{proof}

\subsection{N\'eron-Severi sheaves}

We now turn to the study of the N\'eron-Severi groups in families. Let us start by recording some properties of N\'eron-Severi groups over algebraically closed fields.

\begin{defi}\label{def:NS_group}
Let $k$ be an algebraically closed field and $X/k$ be a smooth proper variety. Let $L,L'\in \Pic(X)$. We say that $L$ is \emph{algebraically equivalent} to $L'$ if there exists a smooth projective connected curve $C$, two points $x_0,x_1\in C(k)$ and a line bundle $\tilde{L}\in \Pic(X\times_k C)$ such that $x_0^*\tilde{L}\simeq L$ and $x_1^*\tilde{L}\simeq L'$. Algebraic equivalence is compatible with the tensor product of line bundles, and the quotient group is the \emph{N\'eron-Severi group} $\NS(X)$.
\end{defi}  

\begin{lemma}\label{lem:ns_prop}
Let $k$ be an algebraically closed field and $X/k$ be a smooth proper $k$-scheme. 
\begin{enumerate}[label={\upshape(\roman*)}]
\item \cite[Expos\'e XIII Th\'eor\`eme 5.1]{SGA6} \label{ns_fg} The group $\NS(X)$ is finitely generated.
\item \cite[Proposition 3.1]{Maulik_Poonen} \label{ns_invariance} Let $K/k$ be an extension of algebraically closed fields. Then the natural morphism $\NS(X)\ra \NS(X_{K})$ is an isomorphism.
\end{enumerate}    
\end{lemma}

For our purposes, the correct generalisation of N\'eron-Severi groups in families is the following.

\begin{defi}\label{def:NS}
Let $f:X\ra S$ be a smooth projective morphism. We define the \emph{N\'eron-Severi sheaf} as the quotient \'etale sheaf
\[\sNS^{(\sm)}_{X/S}:=\sPic^{(\sm)}_{X/S}/\sPic^{(\sm),0}_{X/S}.\]
\end{defi}

This definition works well over a general scheme $S$. Let us explain an equivalent, group-theoretic perspective, when $S$ is the spectrum of a perfect field $k$. The following result is \cite[II \S 5, Proposition 1.8]{Demazure_Gabriel}.

\begin{lemma}\label{lem:quotient_connected}
  Let $G$ be a group scheme locally of finite type over a field $k$. There exists an \'etale group scheme $\pi_0(G)$ together with a surjective morphism $G\ra \pi_0(G)$ which is the initial \'etale group scheme with a surjective morphism from $G$. It fits into an exact sequence
  \[
0\ra G^0\ra G\ra \pi_0(G)\ra 0
\]
where $G^0$ is the neutral component of $G^0$.
\end{lemma}  

Note that if $G$ is smooth, then $G^0$ is smooth as well, and by general results on quotients by smooth group schemes discussed in the Conventions section, $\pi_0(G)$ also represents the \'etale quotient sheaf $G/G^0$. If $G$ is not smooth, there is still something useful to say in our setting.

\begin{lemma}\label{lem:quotient_perfect}
Let $G$ be a group scheme locally of finite type over a perfect field $k$. Then $\pi_0(G)$ represents the \'etale quotient sheaf $G/G^0$ when $G,G^0$ are seen as \'etale sheaves over $\Sm/k$.
\end{lemma}
\begin{proof}
Since $k$ is perfect, $G_{\red}$ is a closed subgroup scheme of $G$. Then $G_{\red}$, being reduced over a perfect field, is geometrically reduced, and, begin a geometrically reduced group scheme, is smooth. We have $(G_{\red})^0= (G^0)_{\red}$, which we denote in this situation by $G^0_{\red}$. By the universal property of $\pi_0(G)$ as initial \'etale quotient of $G$, we get an isomorphism $\pi_0(G_{\red})\simeq \pi_0(G)$. So we get an exact sequence
\[
0\ra G^0_{\red} \ra G_{\red}\ra \pi_0(G)\ra 0
\]
and since $G^0_{\red}$ is smooth, this sequence gives rise to an exact sequence of \'etale sheaves (both on $\Sch/k$ and $\Sm/k$). But, as sheaves on $\Sm/k$, we have $G=G_{\red}$ and $G^0=G^0_{\red}$. Hence we get $G/G^0\simeq \pi_0(G)$ as \'etale sheaves on $\Sm/k$.
\end{proof}  

In the special case of $\Pic$, we get the following conclusion.

\begin{lemma}\label{lem:NS_pi_0}
Let $k$ be a perfect field and $X/k$ be a smooth projective variety. We have an isomorphism
  \[\sNS^{\sm}_{X/k}\simeq \pi_0(\Pic_{X/k})\]
  as \'etale sheaves on $\Sm/k$.
If $k$ is moreover algebraically closed, then we have an isomorphism
  \[
\pi_0(\Pic_{X/k})\simeq \NS(X).
  \]
\end{lemma}
\begin{proof}
The first part follows from Lemma \ref{lem:quotient_perfect}. The second follows immediately from the fact that $\Pic_{X/k}$ represents the Picard functor of $X$ over $k$ and the fact that over an algebraically closed field, two points of a variety are in the same connected component if and only if they can be linked through the image of a smooth projective connected curve.
\end{proof}

In the context of a proper variety $X$ over a non-necessarily closed field $k$, the \'etale group scheme $\pi_{0}(\Pic_{X/k})$ is sometimes called the N\'eron-Severi group scheme of $X$ over $k$. 

We now return to a general base scheme $S$. The following lemma comes out directly from the exactness of $\zeta_*$ and from Lemmas~\ref{lem:base_change_Pic} and \ref{lem:base_change_Pic_0}.

\begin{lemma}\label{lem:NS_pullback}
Let $f:X\ra S$ be a smooth projective Pic-smooth morphism. We have a canonical isomorphism $\zeta_*\sNS_{X/S}\simeq \sNS^{\sm}_{X/S}$, and the construction of $\sNS_{X/S}$ (resp. $\sNS^\sm_{X/S}$) commutes with base change by an arbitrary morphism (resp. by a smooth morphism).
\end{lemma}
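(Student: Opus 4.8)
The plan is to deduce both assertions purely formally from the exactness of the relevant functors of étale sheaves, together with the base change statements for $\sPic$ and $\sPic^{\sm}$ already recorded in Lemma~\ref{lemma:base_change_Pic}.

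For the comparison $\zeta_*\sNS_{X/S}\simeq \sNS^{\sm}_{X/S}$, I would start from the defining short exact sequence of étale sheaves on $\Sch/S$,
\[
0\ra \sPic^{\tau}_{X/S}\ra \sPic_{X/S}\ra \sNS_{X/S}\ra 0,
\]
and apply $\zeta_*$. Since $\zeta_*$ is exact (the two sites having the same points), this yields a short exact sequence on $\Sm/S$; and since $\zeta_*$ preserves cokernels, while $\zeta_*\sPic_{X/S}\simeq \sPic^{\sm}_{X/S}$ and $\zeta_*\sPic^{\tau}_{X/S}=\underline{\zeta}_*\sPic^{\tau}_{X/S}=\sPic^{\sm,\tau}_{X/S}$ by the definitions recalled above, the cokernel $\zeta_*\sNS_{X/S}$ is identified with $\sPic^{\sm}_{X/S}/\sPic^{\sm,\tau}_{X/S}=\sNS^{\sm}_{X/S}$.

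For base change along an arbitrary $\pi:T\ra S$, the key input is that the pullback functor $\pi^*$ on étale abelian sheaves commutes with sheafification and with the formation of cokernels, hence with the étale quotient sheaf defining $\sNS$. Applying $\pi^*$ to the displayed sequence and invoking the part of Lemma~\ref{lemma:base_change_Pic} asserting that $v_\pi:\pi^*\sPic_{X/S}\simeq \sPic_{X\times_S T/T}$ respects the torsion component, I would identify $\pi^*\sNS_{X/S}$ with $\sPic_{X\times_S T/T}/\sPic^{\tau}_{X\times_S T/T}=\sNS_{X\times_S T/T}$; compatibility with further pullbacks is inherited from the corresponding compatibility in Lemma~\ref{lemma:base_change_Pic}. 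For base change of $\sNS^{\sm}_{X/S}$ along a \emph{smooth} $\pi$ one argues identically, now using that $v_\pi$ is an isomorphism on $\sPic^{\sm}$ for $\pi$ smooth; alternatively this case follows by combining the two statements already proven with the compatibility of $\zeta$ with smooth pullbacks on $\Sm/S$ and $\Sch/S$.

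I do not expect any genuine obstacle here. The only points that need care are that $\zeta_*$ and $\pi^*$ are exact (resp.\ commute with cokernels and sheafification), so that they commute with the quotients defining $\sNS^{(\sm)}$, and that the base change maps $v_\pi$ of Lemma~\ref{lemma:base_change_Pic} respect the torsion subsheaves $\sPic^{(\sm),\tau}$ — which is exactly what that lemma provides.
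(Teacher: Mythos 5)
Your argument is correct and is essentially the paper's own proof: the paper deduces the lemma directly from the exactness of $\zeta_*$ and from Lemma~\ref{lemma:base_change_Pic}, which is exactly the quotient-sequence argument you spell out (including the use of the fact that $v_\pi$ respects the torsion components, and that $\zeta_*$ identifies $\sPic^{(\tau)}$ with $\sPic^{\sm,(\tau)}$). No gaps; you have simply written out the details the paper leaves implicit.
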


We will not use the following observation, but it seems of independent interest in order to understand N\'eron-Severi sheaves.

\begin{lemma}\label{lem:NS_Pic_smooth}
Let $f:X\ra S$ be smooth projective Pic-smooth with $S$ regular. Then for all $T\in \Sm/S$, the natural map
\[
 (\sPic^{\sm}_{X/S}(T)/\sPic^{\sm,0}_{X/S}(T))\otimes\BQ\lra\sNS^{\sm}_{X/S}\otimes\BQ(T).
\]  
is an isomorphism.
\end{lemma}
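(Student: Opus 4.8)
The plan is to rationalise the short exact sequence defining $\sNS^\sm_{X/S}$, evaluate it on $T$, and reduce the statement to the vanishing of $H^1_\et$ of an abelian scheme with rational coefficients. Throughout, abbreviate $\mathcal P=\sPic^\sm_{X/S}$, $\mathcal P^\tau=\sPic^{\sm,\tau}_{X/S}$, $\mathcal N=\sNS^\sm_{X/S}$, viewed as \'etale sheaves of abelian groups on $\Sm/S$; by Definition~\ref{def:NS} they sit in a short exact sequence $0\to\mathcal P^\tau\to\mathcal P\to\mathcal N\to 0$. Since $\BQ$ is flat over $\BZ$ and sheafification is exact, rationalising gives an exact sequence $0\to\mathcal P^\tau_\BQ\to\mathcal P_\BQ\to\mathcal N_\BQ\to 0$. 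I would first identify $\mathcal P^\tau_\BQ$: as $f$ is smooth projective Pic-smooth and $S$ is regular, Proposition~\ref{prop:repr_pic_smooth} together with Theorem~\ref{thm:picard_0} \ref{generic_Pic_0} provide a short exact sequence $0\to A\to \Pic^\tau_{X/S}\to F\to 0$ with $A:=\Pic^{0,\red}_{X/S}$ an abelian scheme over $S$ and $F$ a finite flat commutative group scheme; since $F$ is killed by its order we have $F_\BQ=0$, whence $\mathcal P^\tau_\BQ\simeq A_\BQ$.

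Next I would record an elementary comparison valid for any \'etale sheaf of abelian groups $\mathcal F$ on $\Sm/S$ and any $T\in\Sm/S$ (in particular quasi-compact and quasi-separated): the natural map $\mathcal F(T)\otimes\BQ\to(\mathcal F\otimes\BQ)(T)$ is an isomorphism. It is injective, because a class $\tfrac1n s$ with $s\in\mathcal F(T)$ maps to $0$ in $(\mathcal F\otimes\BQ)(T)$ precisely when $s$ is \'etale-locally torsion, hence torsion as $T$ is quasi-compact, so that $\tfrac1n s=0$ already in $\mathcal F(T)\otimes\BQ$. It is surjective, because writing $\mathcal G=\mathcal F/\mathcal F_\tor$ the sequence $0\to\mathcal G\to\mathcal F\otimes\BQ\to\mathcal G\otimes(\BQ/\BZ)\to 0$, combined with the fact that $H^0_\et(T,-)$ and $H^1_\et(T,-)$ commute with the filtered colimits $\mathcal F_\tor=\varinjlim\mathcal F[n]$ and $\mathcal G\otimes(\BQ/\BZ)=\varinjlim\mathcal G[n]$, shows that $(\mathcal F\otimes\BQ)(T)/\mathrm{im}\,\mathcal F(T)$ is torsion; since $(\mathcal F\otimes\BQ)(T)$ is a $\BQ$-vector space this forces $\mathcal F(T)\otimes\BQ$ to surject onto it. Applying this naturally to $\mathcal P$ and $\mathcal P^\tau$ identifies the source $\sPic^\sm_{X/S}(T)\otimes\BQ/\sPic^{\sm,\tau}_{X/S}(T)\otimes\BQ$ of the map in the statement with $\mathcal P_\BQ(T)/\mathcal P^\tau_\BQ(T)$, and evaluating the rationalised short exact sequence on $T$ identifies the latter with $\ker\bigl(\mathcal N_\BQ(T)\to H^1_\et(T,\mathcal P^\tau_\BQ)\bigr)$. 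As $\mathcal P^\tau_\BQ\simeq A_\BQ$, the lemma reduces to showing $H^1_\et(T,A_\BQ)=0$.

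Since $A_\BQ=A\otimes\BQ$ and \'etale cohomology of the noetherian scheme $T$ commutes with filtered colimits, $H^1_\et(T,A_\BQ)=H^1_\et(T,A_T)\otimes\BQ$ for the abelian scheme $A_T=A\times_S T$ over $T$, so it is enough to see that $H^1_\et(T,A_T)$ is torsion. This is the one non-formal point, and it is where the regularity of $T$ (smooth over the regular scheme $S$) is essential. I would argue that $T$, being regular and noetherian, is a finite disjoint union of integral regular schemes, so we may assume $T$ integral with function field $K$. An element of $\ker\bigl(H^1_\et(T,A_T)\to H^1_\et(K,A_K)\bigr)$ is represented by an $A_T$-torsor $P$, which is smooth and proper over $T$ (being \'etale-locally isomorphic to $A_T$) and admits a $K$-point; that point spreads out to a section of $P$ over a dense open of $T$, and by Weil's extension theorem for abelian schemes over a normal base (rational maps from a smooth scheme over a normal base to an abelian scheme are morphisms) it extends to a section of $P\to T$. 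Hence $H^1_\et(T,A_T)\to H^1_\et(K,A_K)$ is injective, and since $H^1_\et(K,A_K)$ is torsion (Galois cohomology of a profinite group in positive degree), so is $H^1_\et(T,A_T)$.

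The main obstacle is this last step, namely the injectivity of $H^1_\et(T,A_T)\to H^1_\et(K,A_K)$ for an abelian scheme over a regular integral scheme; this is a purity phenomenon specific to torsors under proper group schemes and, in contrast with the corresponding false statement for $\Gm$, relies on the rigidity of abelian schemes via Weil's extension theorem. Everything else is formal bookkeeping with exact sequences, using only the structure of $\Pic^\tau_{X/S}$ recalled earlier in this subsection.
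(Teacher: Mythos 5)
Your reduction is the same as the paper's: rationalise the defining sequence of $\sNS^{\sm}_{X/S}$, observe that the finite flat quotient of $\Pic^{\tau}_{X/S}$ dies rationally, and reduce the lemma to the vanishing of $H^1_{\et}(T,\Pic^{0,\red}_{X/S}\otimes\BQ)$, i.e.\ to the fact that torsors under an abelian scheme over a noetherian regular scheme are torsion. Where you genuinely diverge is at that one non-formal point: the paper simply quotes Raynaud (Propositions XIII 2.3 and 2.6 of \emph{Faisceaux amples}), whereas you re-prove it by showing $H^1_{\et}(T,A_T)\ra H^1_{\et}(K,A_K)$ is injective and then using torsionness of Galois cohomology. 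That argument is essentially correct, but two points in your sketch need tightening if you keep it instead of the citation: (1) Weil's extension theorem is a statement about maps into smooth separated \emph{group schemes} and requires the rational map to be defined at all codimension-$1$ points; for an abelian-scheme target and a regular source this hypothesis is supplied by the valuative criterion of properness at the codimension-$1$ local rings, a step you elide by asserting that a map defined on a dense open automatically extends; (2) the target of your extension problem is the torsor $P$, not the abelian scheme $A_T$, and $P$ is a priori only an algebraic space, so one should first extend the section over all codimension-$1$ points by properness, then trivialise $P$ \'etale-locally, apply Weil's theorem to the resulting maps into $A$, and glue the local extensions using their uniqueness (separatedness plus density). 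On the other hand, your explicit verification that $\mathcal{F}(T)\otimes\BQ\simeq(\mathcal{F}\otimes\BQ)(T)$ for $T$ noetherian is a point the paper's proof uses silently, and making it explicit is a genuine improvement. In short, your route trades the external reference for a self-contained purity argument for abelian-scheme torsors; both work, but spell out the two items above.
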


\begin{proof}
It suffices to prove that in this situation, the cohomology group $H^1_{\et}(T,\sPic^{\sm,0}_{X/S}\otimes\BQ)$ vanishes. By Proposition~\ref{prop:Brochard}, we have a short exact sequence
\[
0\ra \Pic^{0,\red}_{X/S}\ra \Pic^{0}_{X/S}\ra F \ra 0
\]
where $\Pic^{0,\red}_{X/S}$ is an abelian scheme and $F$ is a finite flat commutative group scheme, thus $\Pic^{0}_{X/S}\otimes\BQ\simeq \Pic^{0,\red}_{X/S}\otimes\BQ$ as \'etale sheaves.

Since $T$ is noetherian and regular, \cite[Proposition~XIII 2.6.(ii)]{Raynaud_ample} and \cite[Proposition~XIII 2.3.(ii)]{Raynaud_ample} imply that torsors under the abelian scheme $\Pic^{0,\red}_{X/S}$ are torsion, which shows that $H^1_{\et}(T,\Pic^{0,\red}_{X/S})\otimes\BQ=0$. By Lemma~\ref{lem:Q_coeffs}, we deduce that $H^1_{\et}(T,\Pic^{0,\red}_{X/S}\otimes\BQ)=0$.
\end{proof}

We have a morphism of sites $\alpha:\Sm/S\ra \Et/S$ where $\Et/S$ is the small \'etale site of $S$. Put $\gamma=\alpha\circ \zeta:\Sch/S\ra \Et/S$ (note that $\gamma$ is only a continuous functor). We say that a sheaf $F$ on $\Sm/S$ (resp. $\Sch/S$) is constructible if it is in the essential image of the fully faithful functor $\alpha^{-1}$ (resp. $\gamma^{-1}$) (or equivalently if the counit morphism $\alpha^{-1} \alpha_* F\ra F$ (resp. $\gamma^{-1}\gamma_* F\ra F$) is an isomorphism) and $\alpha_* F$ (resp. $\gamma_* F$) is constructible (as a sheaf of $\BZ$-modules, i.e., we do not require fibres to be finite abelian groups, but only to be finitely generated).

It is well known that the sheaf $\sNS_{X/S}$ is far from being constructible, even for $f$ smooth projective and $S$ regular in characteristic $0$; in particular, the rank of the geometric fibres (which are finitely generated abelian groups by \cite[Exp XIII, Thm 5.1]{SGA6}) is not a constructible function in general \cite[8.4 Remark 8]{BLR}. We are going to see that $\sNS^{\sm}_{X/S}$ is better behaved in some cases.

Let $S$ be a regular scheme and $f:X\rightarrow S$ a smooth projective morphism. We want to define a locally constant \'etale sheaf over $S$; for this it is enough to work connected component by connected component, so that we can assume $S$ to be irreducible, with generic point $\eta$.

Fix a geometric point $\bar{\eta}$ over $\eta$. Recall that by geometric point, we always mean the spectrum of an algebraically closed extension of $\eta$. Since $S$ is regular, the \'etale fundamental group $\pi^\et_1(S,\bar{\eta})$ is isomorphic to the maximal quotient of $\Gal(\kappa(\bar{\eta})^{s}/\kappa(\eta))$ which is unramified at every codimension $1$ point (where we denote by $\kappa(\bar{\eta})^{s}$ the separable closure of $\kappa(\eta)$ in $\kappa(\bar{\eta})$) \cite[Expos\'e V Proposition 8.2]{SGA1}. Recall also that the restriction morphism $\Aut(\kappa(\bar{\eta})/\kappa(\eta))\ra \Gal(\kappa(\bar{\eta})^{s}/\kappa(\eta))$ is always surjective.

By transport of structure, the group $\NS(X_{\bar{\eta}})$ comes with a continuous action of the profinite Galois group $\Gal(\kappa(\bar{\eta})^s/\kappa(\eta))$.

\begin{lemma}\label{lem:unramified}
The induced action of $\Gal(\kappa(\bar{\eta})^s/\kappa(\eta))$ on $\NS(X_{\bar{\eta}})\otimes\BQ$ is unramified, i.e., it factors through an action of the \'etale fundamental group $\pi_1^\et(S,\bar{\eta})$. Moreover, this action factors through a finite quotient of $\pi_1^\et(S,\bar{\eta})$.
\end{lemma}

\begin{proof}
  The $\ell$-adic first Chern class yields a Galois-equivariant morphism $c_1:\NS(X_{\bar{\eta}})\ra H^2(X_{\bar{\eta}},\BQ_l(1))$. The kernel of this morphism consists, by definition, of those classes which are homologically trivial, hence in particular numerically trivial. By \cite{Matsusaka}, numerical equivalence coincides with algebraic equivalence up to torsion for divisors on smooth projective varieties over an algebraically closed field, hence the map is injective after tensoring with $\BQ$. Moreover, since $f$ is smooth and projective, the proper and smooth base change theorems for $\ell$-adic cohomology imply that, for any codimension 1 point $s\in S$, the Galois representation on $H^2(X_{\bar{\eta}},\BQ_l(1))$ is unramified at $s$. By \cite[Exp X, 7.13.10]{SGA6}, the construction of $c_1$ commutes with specialisation; this implies that $\NS(X_{\bar{\eta}})\otimes\BQ$ is also unramified at $s$.

By Lemma \ref{lem:ns_prop}, the group $\NS(X_{\bar{\eta}})$ is finitely generated. This implies that the continuous action of the \'etale fundamental group $\pi_1^\et(S,\bar{\eta})$ on the finite-dimensional $\BQ$-vector space $\NS(X_{\bar{\eta}})\otimes\BQ$ factors through a finite quotient.
\end{proof}

Recall that, given a connected scheme $S$ with a fixed geometric point $\bar{\eta}$ and an abelian group $M$ (considered equipped with the discrete topology) together with a continuous action of the \'etale fundamental group $\pi_1(S,\bar{\eta})$, there is an associated \'etale sheaf $\CM$ on $S$ whose sections on $V\in \Et/S$ are given as follows. Write $V=\coprod_{i=0}^{n} V_i$ for the connected components of $V$. Then $\CM(V):=\prod_{i=0}^n \CM(V_i)$, which reduces us to specify the sections on a connected \'etale $S$-scheme $V$. Choose a geometric point $\bar{v}$. Write $\bar{s}$ for its image in $S$. Then we have an homomorphism $\pi^\et_1(V,\bar{v})\ra \pi_1^\et(S,\bar{s})\simeq \pi^\et_1(S,\bar{\eta})$ (with the isomorphism well-defined up to inner conjugacy), and $\CM(V):= M^{\pi_{1}^{\et}(V,\bar{v}}$ via this homomorphism. This does not depend on the choice of the point $\bar{v}$ or on the choice of ``change of base point'' isomorphism, since two choices lead to conjuguate homomorphisms; in particular, we can and will often choose $\bar{v}$ to be above $\bar{\eta}$. Moreover, when the action of $\pi_1(S,\bar{\eta})$ on $M$ factors through a finite quotient, the corresponding \'etale sheaf $\CM$ is clearly locally constant (indeed, it becomes constant on the finite \'etale Galois cover corresponding to the finite quotient).

\begin{defi}
Let $S$ be a regular scheme and $f:X\rightarrow S$ a smooth projective morphism. The locally constant finitely generated \'etale sheaf of $\BQ$-vector spaces attached to $\NS(X_{\bar{\eta}})\otimes\BQ$ is the \emph{N\'eron-Severi lattice} $\mathcal{N}_{X/S}$ of $X$ over $S$. 
\end{defi}

Let $\pi:S'\ra S$ be a universally open morphism between regular schemes. Then any generic point of an irreducible component of $S'$ is sent to the generic point of an irreducible component of $S$. Let $f:X\ra S$ be a smooth projective morphism, and write $f':X'\ra S'$ for its base change along $\pi$. We are going to define a base change isomorphism $v^{N}_{\pi}:\pi^{-1}\CN_{X/S}\ra \CN_{X'/S'}$. Let us assume for simplicity that $S'$ and $S$ are irreducible (the general case is easily obtained from this, given that $S$, $S'$ are regular). Write $\eta'$, $\eta$ for the generic points; we have $\pi(\eta')=\eta$ by assumption. Choose compatible geometric points $\bar{\eta}'$, $\bar{\eta}$ above them. The morphism $\pi$ induces compatible morphisms of absolute Galois groups and \'etale fundamental groups:
\[
\xymatrix{
\Gal(\kappa(\bar{\eta}')/\kappa(\eta')) \ar[r]^{\pi_*} \ar@{>>}[d] & \Gal(\kappa(\bar{\eta})/\kappa(\eta))  \ar@{>>}[d] \\
\pi_*:\pi^{\et}_1(S',\bar{\eta}') \ar[r]^{\pi_*} & \rightarrow \pi^{\et}_1(S,\bar{\eta}).
}
\]
The locally constant sheaf $\pi^{-1}\CN_{X/S}$ is attached to the representation of $\pi^{\et}_1(S',\bar{\eta}')$ obtained from the one of $\pi^{\et}_1(S,\bar{\eta})$ on $\NS(X_{\bar{\eta}})$ by restriction along $\pi_*$. We also have an induced isomorphism
\[
\NS(X_{\bar{\eta}})\simeq \NS(X'_{\bar{\eta}'})
\]
which is equivariant for the Galois actions (via $\pi_*$); note that it is an isomorphism because the geometric N\'eron-Severi group is invariant under extension of algebraically closed field (Lemma \ref{lem:ns_prop} \ref{ns_invariance}). Combined with the previous observation, this provides the desired isomorphism
\[
v^{N}_{\pi}:\pi^{-1}\CN_{X/S}\stackrel{\sim}{\ra} \CN_{X'/S'}.
  \]
With the same hypotheses, let us now define a morphism
\[e_S:\alpha_*\sNS^{\sm}_{X/S}\ra \mathcal{N}_{X/S}.\]

We first define a morphism $\tilde{c}_S:\alpha_* \sPic^{\sm}_{X/S}\ra \mathcal{N}_{X/S}$. Recall that $\alpha_*\sPic^{\sm}_{X/S}$ is the \'etale sheaf associated to the presheaf $\sPic^{\sm,\mathrm{psh}}_{X/S}:V\in \Et/S\mapsto \Pic(X\times_S V)$. Since $\mathcal{N}_{X/S}$ is an \'etale sheaf, defining $\tilde{c}_S$ is equivalent to writing down a morphism $\sPic^{\sm,\mathrm{psh}}_{X/S}\ra \mathcal{N}_{X/S}$.

Let $V\in \Et/S$, which we can assume to be connected, and $\CL$ be a line bundle on $X\times_S V$. Choose a geometric point $\bar{v}\ra V$ such that there exists a factorisation $\bar{v}\ra \bar{\eta}\ra S$. By definition of $\mathcal{N}_{X/S}$, we have $\mathcal{N}_{X/S}(V)\simeq \NS(X_{\bar{\eta}})_\BQ^{\pi^\et_1(V,\bar{v})}$. We can pullback $\CL$ along $\bar{v}\ra V$ to get $[\CL_{\bar{v}}]\in \Pic(X_{\bar{v}})$. Since $\NS(X_{\bar{v}})\simeq \NS(X_{\bar{\eta}})$ by Lemma \ref{lem:ns_prop}, we get a class in $\NS(X_{\bar{\eta}})\otimes\BQ$. By construction, since it comes from a line bundle on $X\times_S V$, it is fixed by $\pi^\et_1(V,\bar{v})$, and thus defines a section $\tilde{c}_S([\CL])\in \mathcal{N}_{X/S}(V)$.

Let us show that the morphism $\tilde{c}_S$ is trivial on $\alpha_*\sPic_{X/S}^{\sm,0}$. Let $V\in \Et/S$ and $x\in \sPic_{X/S}^{\sm,0}(V)$. Since $\mathcal{N}_{X/S}$ is separated for the \'etale topology, to check that $\tilde{c}_S(x)=0$ we can pass to an \'etale covering of $V$ and assume that $x$ is represented by a line bundle $\CL\in \Pic(X\times_S V)$. By definition of $\sPic_{X/S}^{\sm,0}$ and Remark \ref{rmk:neutral_field_extension}, we have $[\CL_{\bar{v}}]\in Pic^0_{X_{\bar{v}}/\bar{v}}(\bar{v})$, hence the class of $\CL_{\bar{v}}$ in $\NS(X_{\bar{v}})$ vanishes, thus $\tilde{c}_S([\CL])=0$. We conclude that $\tilde{c}_S$ induces a morphism $e_{X/S}:\alpha_*\sNS^{\sm}_{X/S}\simeq \alpha_*\sPic_{X/S}^{\sm}/\alpha_*\sPic^{\sm,0}_{X/S}\ra \mathcal{N}_{X/S}$ as required.

The base change morphisms of Lemma~\ref{lem:base_change_Pic} induce a morphism
  \[
v^{NS}_{\pi}:\pi^{-1}\sNS^{\sm}_{X/S}\ra \sNS^{\sm}_{X'/S'}
\]
By going through the definitions of the various base change maps, one can show the following compatibility. 

\begin{lemma}\label{lem:comp_bc_NS}
Let $\pi:S'\ra S$ be a universally open morphism between regular schemes. The diagram of \'etale sheaves
  \[
\xymatrix{
\pi^{-1} \alpha_* \sNS^{\sm}_{X/S} \ar[d] \ar[r]^{\pi^{-1}e_{X/S}} & \pi^{-1}\CN_{X/S} \ar[dd]^{v^{N}_{\pi}} \\
\alpha_*\pi^{-1} \sNS^{\sm}_{X/S} \ar[d]_{\alpha_* v^{NS}_{\pi}} & \\
\alpha_*\sNS^{\sm}_{X'/S'} \ar[r]_{e_{X'/S'}}& \CN_{X'/S'}
    }
  \]
is commutative.  
\end{lemma}

We now come to the main result of this section.

\begin{prop}\label{prop:NS_smooth}
  Let $f:X\ra S$ be a smooth projective morphism with $S$ regular. Then 
\begin{enumerate}[label={\upshape(\roman*)}]
\item \label{ns_one} $e_{X/S}:\alpha_* \sNS^{\sm}_{X/S}\otimes\BQ\ra \mathcal{N}_{X/S}$ is an isomorphism, and
\item \label{ns_two} the counit morphism $\alpha^{-1}\alpha_* \sNS^{\sm}_{X/S}\otimes\BQ\lra \sNS^{\sm}_{X/S}\otimes\BQ$ is an isomorphism.
\end{enumerate}
In particular, the sheaf $\sNS^\sm_{X/S}\otimes\BQ$ is a locally constant finitely generated sheaf of $\BQ$-vector spaces.
\end{prop}
\begin{proof}

  The morphism in \ref{ns_one} (resp. \ref{ns_two}) is a morphism of \'etale sheaves on $\Et/S$ (resp. $\Sm/S$). To show that it is an isomorphism, it suffices to check on stalks at geometric points of $S$ (resp. at geometric points of all smooth $S$-schemes).

  We first compute the stalks of $\sNS^{\sm}_{X/S}$ on $\Et/S$. Let $\bar{s}$ be a geometric point of $S$. Write $\mathcal{U}$ for the projective system of all \'etale neighbourhoods of $\bar{s}$ in $S$, i.e. the system of all pairs $(U,\bar{u})$ with $U$ an \'etale $S$-scheme and $\bar{u}$ a lift of $\bar{s}$ to $U$. Write $S^{\sh}_{\bar{s}}$ for the projective limit of $\mathcal{U}$, the spectrum of a strict henselisation of the local ring $\mathcal{O}_{S,s}$, and $X^{\sh}_{\bar{s}}=X\times_S S^{\sh}_{\bar{s}}$. Let us write $\nu$ for the generic point of $S^{\sh}_{\bar{s}}$. By definition of $\sNS^{\sm}_{X/S}$, we have an exact sequence of stalks
  \[
0\ra (\sPic^{\sm,0}_{X/S})_{\bar{s}}\ra (\sPic^{\sm}_{X/S})_{\bar{s}}\ra (\sNS^{\sm}_{X/S})_{\bar{s}}\ra 0
    \]
The stalks of a higher direct image is easily computed; e.g., by \cite[Tag 03Q7]{stacks-project}, we have
\begin{eqnarray*}
        (\sPic^{\sm}_{X/S})_{\bar{s}} & \simeq & (R^1 f_*\Gm)_{\bar{s}} \\
                                      & \simeq & H^1(X^{\sh}_{\bar{s}},\Gm) \\
                                      & \simeq & \Pic(X^{\sh}_{\bar{s}}).                                                 
        \end{eqnarray*}
Moreover, for $(V,\bar{v})\in \CU$, it is easy to see that the composition
      \[
\Pic(X\times_S V)\rightarrow \sPic^{\sm}_{X/S}(V)\rightarrow \Pic(X^{\sh}_{\bar{s}})
        \]
coincides with the pullback map on Picard groups. Let us denote $\Pic^0(X^{\sh}_{\bar{s}})$ for the subgroup of $\Pic(X^{\sh}_{\bar{s}})$ consisting of isomorphism classes of line bundles $L$ on $X^{\sh}_{\bar{s}}$ which are such that for all geometric points $\bar{r}$ of $S^{\sh}_{\bar{s}}$, we have $\overline{L_{\bar{r}}}=0\in \NS(X_{\bar{r}})$. Then we have
\[
(\sPic^{\sm,0}_{X/S})_{\bar{s}}\simeq \Pic^0(X^{\sh}_{\bar{s}})
\]
as subgroups of $(\sPic^{\sm}_{X/S})_{\bar{s}}\simeq \Pic(X^{\sh}_{\bar{s}})$. We thus have
\[
(\sNS^{\sm}_{X/S})_{\bar{s}}\simeq \Pic(X^{\sh}_{\bar{s}})/\Pic^0(X^{\sh}_{\bar{s}}).
\]

We now compute the stalk of $\mathcal{N}_{X/S}$ at $\bar{s}$. Write $\pi:S^{h}_{\bar{s}}\ra S$ which is a universally open morphism between regular schemes. Using the isomorphism $v^{N}_{\pi}$, we see that
\[
(\pi^{-1}\mathcal{N}_{X/S})_{\bar{s}}\simeq (\mathcal{N}_{X^{\sh}_{\bar{s}}/S^{\sh}_{\bar{s}}})_{\bar{s}}.
  \]
  The sheaf $\mathcal{N}_{X^{\sh}_{\bar{s}}/S^{\sh}_{\bar{s}}}$ is locally constant on $S^{\sh}_{\bar{s}}$, which is a local stricly henselian scheme, thus has trivial \'etale fundamental group. This implies that the stalks of $\mathcal{N}_{X^{\sh}_{\bar{s}}/S^{\sh}_{\bar{s}}}$ at all generic points are canonically isomorphic. Hence, for any geometric generic point $\bar{\nu}$ of $S^{\sh}_{\bar{s}}$, we get a canonical isomorphism
  \[
(\mathcal{N}_{X/S})_{\bar{s}}\simeq \NS(X_{\bar{\nu}})\otimes\BQ.
    \]
  
Let us now prove that the map \ref{ns_one} is an isomorphism. It suffices to prove that the induced morphism on \'etale stalks at $\bar{s}$ geometric point of $S$ is an isomorphism. The morphism $(e_{X/S})_{\bar{s}}$ is a morphism
\[
(\Pic(X^{\sh}_{\bar{s}})/\Pic^0(X^{\sh}_{\bar{s}}))\otimes\BQ \ra \NS(X_{\bar{\nu}})\otimes\BQ
  \]
and by going through the definitions, it is easy to see that this map is induced by the composition
\[
\Pic(X^{\sh}_{\bar{s}})\ra \Pic(X^{\sh}_{\bar{s}}\times_{S^{\sh}_{\bar{s}}} \nu)=\Pic(X_{\nu})\ra \NS(X_{\bar{\nu}})
  \]
where the first map is the pullback map on Picard groups. Let us show that this map is an isomorphism, at least after tensoring by $\BQ$.

We first show the injectivity (before tensoring by $\BQ$). Let $[L]\in \Pic(X^{\sh}_{\bar{s}})$ with $\overline{L_{\bar{\nu}}}=0$ in  $\NS(X_{\bar{\nu}})$. We have to show that for all geometric points $\bar{r}$ of $S^{\sh}_{\bar{s}}$, we have $\overline{L_{\bar{r}}}=0$ in $\NS(X_{\bar{r}})$. This is the content of Lemma \ref{lem:spe_eq_alg}.
  
We now prove the surjectivity after tensoring by $\BQ$ (we thank the referee for pointing that the original argument for this was wrong). By Lemma \ref{lem:ext_line_bundles}, the morphism $\Pic(X^{\sh}_{\bar{s}})\ra \Pic(X^{\sh}_{\bar{s}}\times_{S^{\sh}_{\bar{s}}} \nu)$ is surjective, so that it is enough to show that the morphism $\Pic(X_{\nu})\ra \NS(X_{\bar{\nu}})$ is surjective after tensoring by $\BQ$.

Since $X^{\sh}_{\bar{s}}\ra S^{\sh}_{{\bar{s}}}$ is a smooth projective morphism, we have seen just before Lemma \ref{lem:stein_torus} that $\pi_{0}(X^{\sh}_{\bar{s}}/S^{\sh}_{{\bar{s}}})$ is a finite \'etale cover of $S^{\sh}_{\bar{s}}$ which is strictly henselian; this implies that $\pi_{0}(X^{\sh}_{\bar{s}}/S^{\sh}_{{\bar{s}}})$ is a trivial \'etale cover of $S^\sh_{\bar{s}}$. In particular, every geometric connected component of $X_{\nu}$ is defined over $\nu$. Since $\bar{s}$ is algebraically closed, every connected component of $X$ contains a rational point. Since $X^\sh_{\bar{s}}\ra S^\sh_{\bar{s}}$ is smooth and $S^\sh_{\bar{s}}$ is henselian, these extend to sections over the whole of $S^\sh_{\bar{s}}$. This implies by the above that every connected component of $X_{\nu}$ contains a rational point. Hence we only need to show the surjectivity of $\Pic(X_{\nu})\ra \NS(X_{\bar{\nu}})$ connected component by connected component and we can assume that $X_{\nu}$ is (geometrically) connected with a rational point.

Under this additional assumption, the Leray spectral sequence for the \'etale sheaf $\Gm$ yields an exact sequence 
\[
\Pic(X_{\nu})\ra \Pic(X_{\bar{\nu}})^{\Gal(\kappa(\bar{\nu})^{s}/\kappa(\nu))}\ra \mathrm{Br}(\nu)
  \]
  and since the Brauer group of a field is torsion, the morphism $\Pic(X_{\nu})\ra \Pic(X_{\bar{\nu}})^{\Gal(\kappa(\bar{\nu})^{s}/\kappa(\nu))}$ is surjective after tensoring with $\BQ$. So it remains to show that $\Pic(X_{\bar{\nu}})^{\Gal(\kappa(\bar{\nu})^{s}/\kappa(\nu))}\otimes\BQ\ra \NS(X_{\bar{\nu}})\otimes\BQ$ is surjective.

  By Lemma \ref{lem:Q_coeffs}, we have $\Pic(X_{\bar{\nu}})^{\Gal(\kappa(\bar{\nu})^{s}/\kappa(\nu))}\otimes\BQ\simeq (\Pic(X_{\bar{\nu}})\otimes\BQ)^{{\Gal(\kappa(\bar{\nu})^{s}/\kappa(\nu))}}$. The morphism $\Pic(X_{\bar{\nu}})\ra \NS(X_{\bar{\nu}})$ is surjective by definition and Galois-equivariant; moreover, its kernel is the group of $\bar{\nu}$-points of the abelian variety $\Pic^{0,\red}_{X_{\nu}/\nu}$. The Weil-Ch\^atelet group $H^{1}(\nu_{\et},\Pic^{0,\red}_{X_{\nu}/\nu})$ is a filtered colimit of torsion abelian groups (as clearly follows with its identification with Galois cohomology and the fact that the $n-$torsion of an abelian variety for any given $n\in\BN$ is finite), hence it is a torsion group. By Lemma \ref{lem:Q_coeffs}, we deduce that $H^{1}(\nu_{\et},\Pic^{0,\red}_{X_{\nu}/\nu}\otimes\BQ)=0$. We conclude that
  \[
    (\Pic(X_{\bar{\nu}})\otimes\BQ)^{{\Gal(\kappa(\bar{\nu})^{s}/\kappa(\nu))}}\ra (\NS(X_{\bar{\nu}})\otimes\BQ)^{{\Gal(\kappa(\bar{\nu})^{s}/\kappa(\nu))}}
    \]
is surjective.
  
By Lemma \ref{lem:unramified}, the Galois action on $\NS(X_{\bar{\nu}})\otimes\BQ$ is unramified, hence it factors through the unramified quotient $\pi_{1}^{\et}(S^{\sh}_{\bar{s}},\bar{\nu})$ \cite[Expos\'e V Proposition 8.2]{SGA1}, which vanishes since $ S^{\sh}_{\bar{s}}$ is strictly henselian. Hence we have $(\NS(X_{\bar{\nu}})\otimes\BQ)^{{\Gal(\kappa(\bar{\nu})^{s}/\kappa(\nu))}}\simeq \NS(X_{\bar{\nu}})\otimes\BQ$. This concludes the proof of surjectivity, and of \ref{ns_one}.

Let us finally prove that \ref{ns_two} is an isomorphism. Let $T\in \Sm/S$ and $\bar{t}$ a geometric point of $T$. We must show that the morphism
\[
(\alpha^{-1}\alpha_* \sNS^{\sm}_{X/S})_{\bar{t}}\otimes\BQ \ra (\sNS^{\sm}_{X/S})_{\bar{t}}\otimes\BQ
  \]
is an isomorphism. By replacing $T$ by a neighbourhood of the image of $\bar{t}$, we can assume that $T$ is integral. By composition, $\bar{t}$ determines a geometric point $\bar{s}$ of $S$, and we have $(\alpha^{-1}\alpha_* \sNS^{\sm}_{X/S})_{\bar{t}}\simeq (\sNS^{\sm}_{X/S})_{\bar{s}}$.
  
Since, for any \'etale $T$-scheme $W$, we have
\[
\sNS^{\sm}_{X/S}(W)=\sNS^{\sm}_{X\times_S T/T}(W)
  \]
  we conclude that
  \[
(\sNS^{\sm}_{X/S})_{\bar{t}}\simeq (\sNS^{\sm}_{X\times_S T})_{\bar{t}}
\]
In the proof of \ref{ns_one} above, we have seen that for any morphism $f:X\ra S$ satisfying the hypothesis of the proposition and with $S$ integral, we have $(\sNS^{\sm}_{X/S})_{\bar{t}}\otimes\BQ\simeq \NS(X_{\eta})$, with $\bar{\eta}$ any geometric point above the generic point $\eta$ of $S$. By Lemma~\ref{lem:bc_Pic_smooth}, the morphism $X\times_S T\ra T$ satisfies these assumptions. Choose $\bar{\theta}$ a geometric generic point of $T$, compatible with $\bar{\eta}$. We deduce that the morphism we are interested in coincides with the natural morphism
\[
\NS(X_{\bar{\eta}})\rightarrow \NS(X_{\bar{\theta}})
\]
which is an isomorphism by Lemma \ref{lem:ns_prop} \ref{ns_invariance}. This completes the proof of \ref{ns_two}, and of the proposition.

\end{proof}

\begin{lemma}\label{lem:ext_line_bundles}
Let $S$ be a regular integral scheme with generic point $\eta$ and $f:X\ra S$ a smooth projective morphism. Then the restriction morphism $\Pic(X)\ra \Pic(X_\eta)$ is surjective. If $S$ is moreover the spectrum of a discrete valuation ring, then it is bijective.
\end{lemma}
\begin{proof}
This is essentially \cite[9.4/Theorem 3]{BLR} (it follows directly from the arguments of the proof in loc.cit.)
\end{proof}  

\begin{lemma}\label{lem:spe_eq_alg}
Let $f:X\ra S$ be a smooth projective morphism with $S$ integral. Let $\CL\in \Pic(X)$. Let $\bar{\eta}$ be a geometric generic point of $S$, and $\bar{s}$ any geometric point of $S$. If $\CL_{\bar{\eta}}$ is algebraically equivalent to $0$, then $\CL_{\bar{s}}$ is algebraically equivalent to $0$.
\end{lemma}
\begin{proof}
Write $\eta$ for the generic point of $S$ and $s$ for the image of $\bar{s}$. Since $S$ is integral, $s$ is a specialisation of $\eta$. By Proposition \cite[7.1.9]{EGAII}, we can find the spectrum $S'$ of a discrete valuation ring $R$ and a morphism $S'\ra S$ sending the generic point of $S'$ to $\eta$ and the closed point to $s$. Using Lemma \ref{lem:ns_prop} \ref{ns_invariance} if necessary to change the geometric points, this implies that we can assume that $S$ is the spectrum of a discrete valuation ring, which we can even assume to be excellent by pulling back further to the completion.

Let $C$ be a smooth projective connected curve over $\bar{\eta}$ with two points $x_0,x_1\in C(\bar{\eta})$ and a line bundle $\widetilde{\CL}\in \Pic(X_{\bar{\eta}}\times_{\bar{\eta}} C)$ such that $x_0^*\widetilde{\CL}\simeq \CL_{\bar{\eta}}$ and $x_1^*\widetilde{\CL}\simeq 0$. The curve $C$, the points $x_0,x_1$ and the line bundle $\widetilde{\CL}$ are defined over a finite extension $K'$ of $\kappa(\eta)$, so that after replacing $S$ by its normalisation in $K'$ we can assume that everything is defined over $\eta$. By Lipman's resolution of singularities for excellent $2$-dimensional schemes \cite{Lipman}, the curve $C$ extends to a regular proper flat $S$-scheme $\mathcal{C}$, and the special fiber $\mathcal{C}_{s}$ is still geometrically connected by Zariski's connectedness theorem. The points $x_0,x_1$ extend to sections $y_0,y_1$ of $\mathcal{C}\ra S$ by properness. By Lemma \ref{lem:ext_line_bundles}, the line bundle $\widetilde{\CL}$ extends to a line bundle $\widehat{\CL}$ on $X\times_S \mathcal{C}$ which satisfies $y_0^*\widehat{\CL}\simeq \CL$ and $y_1^*\widehat{\CL}\simeq 0$. This implies that $\CL_{\bar{s}}$ is algebraically equivalent to $0$ and finishes the proof.
\end{proof}

\subsection{Motivic applications of Picard complexes}

The results of the two previous sections can be applied to define and study some interesting $1$-motives.

\begin{cor}\label{cor:Picard_smooth}
  Assume $S$ is regular. Let $f:X\ra S$ be a smooth projective Pic-smooth morphism of schemes. Then we have natural distinguished triangles
\[
\Sigma^\infty (\underline{f}_* \Gm\otimes\BQ)[1] \ra \Sigma^\infty P(X/S) \ra \Sigma^\infty (\sPic^{\sm}_{X/S}\otimes\BQ)\rap
\]
and
\[
\Sigma^\infty(\sPic^{\sm,0}_{X/S}\otimes\BQ)\ra \Sigma^\infty (\sPic^{\sm}_{X/S}\otimes\BQ) \ra \Sigma^\infty \sNS^\sm_{X/S}\otimes\BQ\rap
\]
and the motive $\Sigma^\infty \mathrm{P}(X/S)$ lies in $\DA^{\gsm}_{1,c}(S)$. Moreover, these two distinguished triangles admit (non-canonical) splittings, so that we have
\[
\Sigma^\infty P(X/S)\simeq \Sigma^\infty (\underline{f}_* \Gm\otimes\BQ)[1] \oplus \Sigma^\infty(\sPic^{\sm,0}_{X/S}\otimes\BQ) \oplus \Sigma^\infty \sNS^\sm_{X/S}\otimes\BQ
\]
\end{cor}
\begin{proof}
  The first distinguished triangle is obtained from the truncation triangle for $P(X/S)$ for the standard t-stucture on $D((\Sm/S)_\et,\BQ)$. The second one follows from the short exact sequence of sheaves
  \[
0\rightarrow \sPic^{\sm,0}_{X/S}\otimes\BQ\rightarrow \sPic^{\sm}_{X/S}\otimes\BQ\rightarrow \sNS^\sm_{X/S}\otimes\BQ\rightarrow 0
  \]
By Lemma \ref{lem:stein_torus}, the sheaf $\underline{f}_*\Gm\simeq \Res_{\pi_0(f)}\Gm$ is representable by a torus. By Proposition \ref{prop:repr_pic_smooth}, the sheaf $\sPic^{\sm,0}_{X/S}$ is representable by the abelian scheme $\Pic^{0,\red}_{X/S}$. Finally, the sheaf $\sNS^{\sm}_{X/S}$ is representable by a lattice by Proposition~\ref{prop:NS_smooth}. From Corollary~\ref{cor:Deligne_da_1}, we conclude that $\Sigma^\infty \mathrm{P}(X/S)$ is in $\DA^{\gsm}_{1,c}(S)$.

To show that the triangles split, it is enough to show that the connecting morphisms vanish. Given the representability results for the various pieces, this follows from Lemma~\ref{lem:Deligne_decomposition} below.  
\end{proof}

\begin{lemma}\label{lem:Deligne_decomposition}
  Let $S$ be a regular scheme. Let $L$ be an  $S$-lattice, $T$ be an $S$-torus and $A$ be an $S$-abelian scheme.
  \begin{enumerate}[label={\upshape(\roman*)}]
  \item $\DA(S)(\Sigma^\infty L_\BQ,\Sigma^\infty T_\BQ[2])=0$
  \item $\DA(S)(\Sigma^\infty A_{\BQ},\Sigma^\infty T_\BQ[2])=0$
  \item $\DA(S)(\Sigma^\infty L_\BQ,\Sigma^\infty A_\BQ[1])=0$
  \end{enumerate}
\end{lemma}

\begin{proof}
  Since $S$ is regular, and in particular geometrically unibranch, Lemma~\ref{lemm:permutation_torus} together with Proposition~\label{prop:Gm_Q1} implies that there exists $e:T\rightarrow S$ finite \'etale such that $\Sigma^\infty L_\BQ$ is a direct factor of $e_\sharp \BQ_T$  and that $\Sigma^\infty T_\BQ$ is a direct factor of $e_*\BQ_T(1)[1]$. By adjunction and Proposition~\ref{prop:pullback_complex}, we then reduce to the case where $L$ is constant and $T$ splits. Point (ii) then says that $\DA(S)(\BQ,\BQ(1)[3])=0$, which is proved in Proposition~\ref{prop:mot_coh_1} \ref{mot_coh_1_3} (or in Proposition~\ref{mot_coh_gg}). By \cite[]{AHPL}, writing $\pi:A\ra S$ for the structure morphism, we see that $\Sigma^\infty A_{\BQ}$ is a direct factor of $\pi_\sharp \BQ_A$. By adjunction, we have
  \[
\DA(S)(\pi_\sharp \BQ_A,\BQ(1)[3])\simeq \DA(A)(\BQ_A,\BQ_A(1)[3])
\]
which vanishes, again by Proposition~\ref{prop:mot_coh_1} \ref{mot_coh_1_3}. This proves (iii). Write $d$ for the fibre dimension of $A/S$. We have $\pi_\sharp \BQ_A\simeq \pi_*\BQ_A(d)[2d]$, hence by adjunction
\[
\DA(S)(\BQ,\pi_\sharp A_{\BQ}[1])\simeq \DA(A)(\BQ,\BQ(d)[2d+1])
\]
This last group vanishes by Proposition~\ref{mot_coh_gg}.
\end{proof}

We also have an application to the question of base change for $P(X/S)$. Recall that a base change map for $P(X/S)$ was defined after the proof of Lemma~\ref{lem:base_change_Pic}.

\begin{cor}\label{cor:Picard_bc}
  Let $S,S'$ be regular schemes, and $f:X\ra S$ be a smooth projective Pic-smooth morphism. Let $\pi:S'\ra S$ be a universally open morphism of schemes. Then the base change map
  \[
V_{\pi}:\pi^{-1}P(X/S)\ra P(X'/S')    
  \]
  is an isomorphism.
\end{cor}
\begin{proof}
By Corollary~\ref{cor:Picard_smooth}, we know how to compute $P(X/S)$ and $P(X'/S')$. By the definition of $V_\pi$, the commutation of $\pi_0(X/S)$ with arbitrary base change and Lemma~\ref{lem:vsm_iso}, we see that $V_{\pi}$ is an isomorphism if and only if $v^{NS}_{\pi}:\pi^{-1}\sNS^{\sm}_{X/S}\ra \sNS^{\sm}_{X'/S'}$ is. By Lemma~\ref{lem:comp_bc_NS} and Proposition~\ref{prop:NS_smooth}, we see that this is the case since $v^N_{\pi}$ is an isomorphism by construction.
\end{proof}

Another important corollary is the comparison with the theory with transfers.

\begin{cor}\label{cor:Picard_tr}
Let $S$ be a regular excellent scheme and $f:X\ra S$ a smooth projective Pic-smooth morphism. We have distinguished natural triangles
\[
(\underline{f}_* \Gm^\tr)\otimes\BQ \ra P(X/S)^\tr_\BQ \ra (\sPic^{\sm,\tr}_{X/S}\otimes\BQ)\rap
\]
and
\[
\sPic^{\sm,0,\tr}_{X/S}\ra (\sPic^{\sm,\tr}_{X/S}\otimes\BQ) \ra \sNS^{\sm,\tr}_{X/S}\otimes\BQ\rap.
\]
and these triangles are (non-canonically) split. Moreover, the natural map 
\[
a^\tr P(X/S) \lra P^\tr(X/S)
\]
is an isomorphism.
\end{cor}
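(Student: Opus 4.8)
The plan is to reduce both assertions to the non-transfer statement of Corollary~\ref{cor:Picard_smooth} together with the comparison result \cite[Proposition~2.10]{AHPL}, recalled here as Proposition~\ref{prop:with_without_tr}.

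\emph{The two triangles with transfers.} By the version with transfers of Lemma~\ref{lemm:Pic_truncation}, for $S$ regular and $f$ smooth one has $P^\tr(X/S)\simeq f_*(\Gm^\tr\otimes\BQ[1])$ and $R^if_*(\Gm^\tr\otimes\BQ)=0$ for $i\ge 2$; the canonical truncation triangle of $f_*(\Gm^\tr\otimes\BQ)$ in $D(\mathrm{Cor}/S)$, shifted by $[1]$, is thus
\[
(\underline{f}_*\Gm^\tr\otimes\BQ)[1]\ra P^\tr(X/S)\ra \sPic^{\sm,\tr}_{X/S}\otimes\BQ\rap,
\]
where $\underline{f}_*\Gm^\tr\simeq\Res_{\pi_0(f)}\Gm$ carries the canonical transfer structure of a torus \cite[Lemma~1.4.4]{BVK}. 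For the second triangle one uses that, $f$ being Pic-smooth, $\sPic^{\sm,\tau}_{X/S}$ is rationally the abelian scheme $\Pic^{0,\red}_{X/S}$ (Theorem~\ref{thm:picard_0}~\ref{generic_Pic_0}, Proposition~\ref{prop:repr_pic_smooth}) and $\sNS^\sm_{X/S}\otimes\BQ$ is a lattice (Proposition~\ref{prop:NS_smooth}): hence the three terms of the defining short exact sequence $0\to\sPic^{\sm,\tau}_{X/S}\to\sPic^{\sm}_{X/S}\to\sNS^\sm_{X/S}\to 0$ carry canonical transfer structures, the sequence stays exact with transfers (being exact as a sequence of étale sheaves), and tensoring with $\BQ$ gives the stated triangle.

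\emph{The comparison map.} For any smooth commutative group scheme $G$ over $S$ one has $o^\tr(G^\tr_\BQ)=G_\BQ$, so the adjunction $a_\tr\dashv o^\tr$ produces a natural map $a_\tr(G_\BQ)\to G^\tr_\BQ$ in $\DM^\eff(S)$, which is an isomorphism by Proposition~\ref{prop:with_without_tr} as $S$ is excellent. Applying this to $\Res_{\pi_0(f)}\Gm$, to $\Pic^{0,\red}_{X/S}$, and to a lattice of which $\sNS^\sm_{X/S}\otimes\BQ$ is a summand after $\otimes\BQ$ (Proposition~\ref{prop:NS_smooth}), one gets that the comparison maps $a_\tr(\text{sheaf})\to(\text{sheaf with transfers})$ are isomorphisms on the torus, abelian-scheme and lattice vertices. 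Now $a_\tr$ is triangulated; applying it to the effective analogues of the two triangles of Corollary~\ref{cor:Picard_smooth} and mapping the result to the two triangles above via these comparison maps — using $a_\tr u_S=u_S^\tr$ of Corollary~\ref{cor:Gm_Q1_tr} for the $\Gm$-factor — yields two morphisms of distinguished triangles. In the second one, two of the three vertex maps are isomorphisms, hence so is $a_\tr(\sPic^{\sm}_{X/S}\otimes\BQ)\to\sPic^{\sm,\tr}_{X/S}\otimes\BQ$; substituting this into the first, two of three vertex maps are again isomorphisms, so $a_\tr P(X/S)\to P^\tr(X/S)$ is an isomorphism, as desired.

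The point requiring the most care — my expected main obstacle — is checking that the comparison maps $a_\tr(-)\to(-)^\tr$ for $\underline{f}_*\Gm$, $\sPic^{\sm}_{X/S}$, $\sPic^{\sm,\tau}_{X/S}$ and $\sNS^{\sm}_{X/S}$ are mutually compatible with the truncation triangle and with the quotient short exact sequence, so that they genuinely assemble into morphisms of distinguished triangles. This is bookkeeping resting on the naturality of the transfer structure on representable sheaves \cite[Lemma~1.4.4]{BVK}, on Lemma~\ref{lemm:Pic_truncation}, and on the construction of the comparison map in Definition~\ref{def:Picard_complex_tr}; once it is in place, all the substance is contained in \cite[Proposition~2.10]{AHPL}.
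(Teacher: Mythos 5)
Your argument is correct and follows the paper's own route: the triangles come from the same structure theory (truncation plus representability of the graded pieces by a torus, an abelian scheme and a lattice under the Pic-smooth and regularity hypotheses), and the comparison map is handled vertex-wise via Proposition~\ref{prop:with_without_tr} (\cite[Proposition 2.10]{AHPL}) together with a two-out-of-three argument on morphisms of distinguished triangles. The paper's proof is just a terser version of exactly this devissage, so no further comparison is needed.
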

\begin{proof}
The distinguished triangles follow from the same arguments as for $P(X/S)$. For $G/S$ a smooth commutative group scheme, the natural map $a^{\tr}G\otimes\BQ\ra G^\tr\otimes\BQ$ is an isomorphism by  \cite[Proposition 3.10]{AHPL}. Since each term of the triangles is represented by a smooth commutative group scheme, we deduce that the map $a^\tr P(X/S)\lra P^\tr(X/S)$ is an isomorphism.
\end{proof}

Finally, we look more closely at the case of a relative smooth projective curve, where things are simpler.

\begin{prop}\label{prop:picard_curve}
Let $f:C\ra S$ be a smooth projective relative curve (with $S$ arbitrary). Then $f$ is Pic-smooth, and $\sNS^{\sm}_{C/S}$ is represented by a lattice canonically isomorphic to $\BQ[\pi_0(C/S)]$. In particular, for any $g:T\ra S$, the morphism $v_g:g^{-1}P(C/S)\ra P(C_T/T)$ is an isomorphism.
\end{prop}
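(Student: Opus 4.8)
The plan is to reduce the whole statement to the classical structure theory of the relative Picard scheme of a curve, organised around the Stein factorisation $C\xrightarrow{f^\circ}B:=\pi_0(C/S)\xrightarrow{\pi_0(f)}S$, in which $\pi_0(f)$ is finite \'etale and $f^\circ$ is smooth projective with geometrically connected one-dimensional fibres. The two facts I would establish first and use throughout are: (a) since $f$ is smooth projective, the formation of $\underline{f}_*\CO_C$ (hence of $B$) commutes with arbitrary base change \cite[7.8.6]{EGAIII_2}, so $f$ is cohomologically flat in degree $0$, $\underline{f}_*\Gm\simeq\Res_{\pi_0(f)}\Gm$ is a torus whose formation commutes with arbitrary base change, and the relative Picard functor $\sPic_{C/S}$ (and its smooth-site avatar $\sPic^\sm_{C/S}$) commutes with arbitrary base change; (b) pushforward along the finite \'etale $\pi_0(f)$ is exact, so that $\sPic^{(\sm)}_{C/S}\simeq \Res_{\pi_0(f)}\sPic^{(\sm)}_{C/B}$, compatibly with the $0$- and $\tau$-parts, and moreover Weil restriction along a finite \'etale morphism carries abelian schemes to abelian schemes.

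For Pic-smoothness I would first argue that $\sPic^\tau_{C/S}=\sPic^0_{C/S}$: on each geometric fibre the quotient $\Pic/\Pic^0$ is the torsion-free group of relative degrees, so $\sPic^\tau$ and $\sPic^0$ have the same geometric fibres, and both are open and closed subgroup schemes of $\Pic_{C/S}$ (Theorem~\ref{thm:grothendieck_Picard}), hence equal. Next, $\Pic^0_{C/B}$ is the relative Jacobian of $f^\circ$, a classical example of an abelian scheme over $B$ (see e.g. \cite{BLR} \S 9.4 or \cite{Kleiman_Picard}), so by (b) $\Pic^0_{C/S}=\Res_{\pi_0(f)}\Pic^0_{C/B}$ is an abelian scheme over $S$. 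An abelian scheme is smooth, a fortiori flat, and -- being proper and flat with geometrically connected reduced fibres -- cohomologically flat in degree $0$; this is exactly the content of Definition~\ref{def:pic_smooth}.

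For the N\'eron--Severi sheaf I would use the relative degree to construct a morphism $\sPic_{C/S}\to\BZ[\pi_0(C/S)]:=\pi_0(f)_*\underline{\BZ}$ with kernel $\sPic^\tau_{C/S}=\sPic^0_{C/S}$; surjectivity \'etale-locally on $S$ comes from the existence of \'etale-local sections of $f^\circ$, which produce relative Cartier divisors of fibrewise relative degree $1$. Hence the induced map $\sNS_{C/S}=\sPic_{C/S}/\sPic^\tau_{C/S}\to\BZ[\pi_0(C/S)]$ is an isomorphism of \'etale sheaves (using (b) to pass from $B$ to $S$); since $\pi_0(C/S)\to S$ is finite \'etale this sheaf becomes constant after an \'etale base change that splits it, so $\sNS_{C/S}$ is a lattice, and tensoring with $\BQ$ gives the stated canonical isomorphism with $\BQ[\pi_0(C/S)]$.

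Finally, for the base-change assertion, I would note that over an arbitrary $S$ the object $\mathrm{P}(C/S)=\tau_{\geq 0}f_*(\Gm\otimes\BQ[1])$ has only two nonzero cohomology sheaves and therefore sits in a distinguished triangle
\[
\underline{f}_*\Gm\otimes\BQ[1]\ra \mathrm{P}(C/S)\ra \sPic^\sm_{C/S}\otimes\BQ\rap
\]
which is natural in $(C,S)$, hence compatible with the comparison maps $v_g$ of Lemma~\ref{lemma:base_change_Pic} (using that $g^*$ is exact on \'etale sheaves and so commutes with the canonical truncation). For $g:T\to S$ arbitrary, applying $g^*$ gives a morphism onto the corresponding triangle for $C_T/T$ whose two outer components are the isomorphisms supplied by fact (a) ($\underline{f}_*\Gm=\Res_{\pi_0(f)}\Gm$ and $\sPic_{C/S}$ both commuting with arbitrary base change), so the triangulated five lemma forces $v_g$ to be an isomorphism. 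The only genuinely non-formal input is the classical statement that the relative Jacobian of a smooth projective curve with geometrically connected fibres is an abelian scheme; the remaining care -- and the place I expect the only real bookkeeping difficulty -- is in keeping the big-site versus small-site, derived versus underived, and $\BQ$-linearisation identifications straight while chasing the base-change maps.
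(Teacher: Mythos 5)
Your route is essentially the paper's: the paper disposes of the connected-fibre case by citing the classical structure theory of $\Pic$ of a relative curve (\cite[Theorem 9.3.1]{BLR}), handles general $\pi_0$ by \'etale descent along the finite \'etale $\pi_0(C/S)$, and deduces the base-change addendum from the compatibility of $\pi_0(C/S)$ with arbitrary base change; your Stein-factorisation/Weil-restriction presentation and the degree-map computation of $\sNS_{C/S}$ are just that argument written out, and they are fine (the parenthetical claim that $\sPic^0$ is ``open and closed'' is unnecessary --- equality $\sPic^\tau=\sPic^0$ follows directly from Definition~\ref{def:0_tau}, both subfunctors being tested on geometric points, where they agree because $\NS$ of a curve is torsion free).

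The one step you should not wave through is the assertion, packaged into your ``fact (a)'', that the smooth-site sheaf $\sPic^{\sm}_{C/S}$ (and hence $P(C/S)$) commutes with \emph{arbitrary} base change. This does not follow from cohomological flatness in degree $0$ (\cite[7.8.6]{EGAIII_2} concerns $\underline{f}_*\CO_C$ and the big-site functor $\sPic_{C/S}$ only), and it is precisely the point the paper flags after Lemma~\ref{lemma:base_change_Pic}: $v_\pi$ for $\sPic^{\sm}$ and $P(X/S)$ is in general an isomorphism only for $\pi$ smooth, because restricting a big-site sheaf to $\Sm/S$ need not commute with $g^*$. In the curve case the claim is true, but the reason is representability by \emph{smooth} group schemes whose formation commutes with base change: $\underline{f}_*\Gm\simeq\Res_{\pi_0(f)}\Gm$, the abelian scheme $\Jac(C/S)$, and the \'etale lattice $\BZ[\pi_0(C/S)]$ --- all of which your intermediate results establish --- so that $\Pic_{C/S}$ is a smooth $S$-group scheme and its functor of points on $\Sm/T$ after pullback is again the restriction of the pulled-back scheme. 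Your final five-lemma argument is correct once the outer base-change isomorphisms are justified this way (together with Proposition~\ref{prop:pullback_complex} to compare derived and underived pullbacks of the resulting complexes of smooth group schemes); as written, it rests on an unproved and incorrectly sourced assertion.
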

\begin{proof}
When $f$ has connected fibres, this is contained in the computation of relative Picard schemes for smooth projective curves in \cite[Theorem 9.3.1]{BLR}. Since $\pi_0(C/S)$ is finite \'etale, the general case follows by \'etale descent. The addendum comes from Corollary~\ref{cor:Picard_bc} and the fact that the construction of $\pi_0(C/S)$ commutes with arbitrary base change.
\end{proof}

We adopt a special notation in this case.

\begin{notation}
Let $f:C\ra S$ be a smooth projective relative curve. We call the abelian scheme $\Pic^{0,\red}_{C/S}$ the \emph{(relative) Jacobian} of $C$ over $S$, and we denote it by $\Jac(C/S)$.  
\end{notation}

Let $f:X\ra S$ be a finite type morphism of schemes. We introduce a morphism $\Theta_f:\Sigma^\infty\mathrm{P}(X/S)(-1)[-2]\lra f_*\BQ_X$ which plays a key role in the computation of the motivic Picard functor in the next section. 

We start with the adjunction morphism
\[
\Sus^1\Ev_1 f_*\BQ_X\stackrel{\eta}{\lra} f_*\BQ_X.
\]
The functors $\Ev_1$ and $f_*$ commute, because they are right derived functors of right Quillen functors which commute at the model category level. We thus have a canonical isomorphism
\[ 
f_*\Ev_1\simeq \Ev_1 f_*:\DA^{\eff}(X)\lra \DA(S).
\]
By composition we obtain a map
\[
\Sus^1 f_* \Ev_1(\BQ_X)\lra f_*\BQ_X.
\]
We then use the morphism $w_S$ described at the end of Section~\ref{sec:mot_group_schemes} to construct a map
\[
\Sus^1 f_*(\Gm\otimes\BQ[1]) \lra f_*\BQ_X.
\] 
Recall that $\Sus^1\simeq \Sigma^\infty (-) (-1)[-2]$ so that we get a morphism
\[
\Sigma^\infty f_*(\Gm\otimes\BQ[1])(-1)[-2]\lra f_*\BQ_X.
\]
Then by composing with the counit morphism $\tau_{\geq 0}(-)\ra \id$, we obtain the desired morphism
\[
\Theta_f : \Sigma^\infty \mathrm{P}(X/S)(-1)[-2]\lra f_*\BQ_X.
\]
We can do the same construction in $\DM(-)$ using $w^\tr_S$ and the pushforward operations in $\DM(-)$, resulting in a morphism
\[
\Theta^\tr_f:\Sigma^\infty_\tr \mathrm{P}^\tr(X/S)(-1)[-2]\lra f_*\BQ_X^\tr
\]
in $\DM(S)$. Later on, we will need an alternative description of the map $\Theta^\tr_f$ at the effective level. Recall from the conventions section that $\DM^{\eff}(-)$ has its own functoriality, in the form of a premotivic category as in \cite[\S 11.1.a]{Cisinski_Deglise_BluePreprint}.

\begin{prop}\label{prop:theta_tr}
  Let $S$ be a regular scheme and $f$ be a smooth projective Pic-smooth morphism.
  \begin{enumerate}[label={\upshape(\roman*)}]
  \item The natural morphism $a^\tr\Sigma^\infty P(X/S)\simeq \Sigma^\infty_\tr a^\tr P(X/S)\ra \Sigma^\infty P^\tr(X/S)$ is an isomorphism by Corollary~\ref{cor:Picard_tr}, and the natural morphism  $a^\tr f_*\BQ_X\ra f_*\BQ_X^\tr$ in $\DM(S)$ is an isomorphism because of the comparison theorem between $\DA$ and $\DM$ on geometrically unibranch schemes \cite[16.2.22]{Cisinski_Deglise_BluePreprint}. Modulo these identifications, we have
\[
a^\tr\Theta_f =\Theta_f^\tr.
\]
\item The morphism $\Theta_f^\tr$ admits the following alternative description. The morphism $\alpha_{G}^{\eff,\tr}:\BQ^\tr(1)[1]\ra \Gm\otimes\BQ$ is an isomorphism in $\DM^{\eff}(X)$ by \cite[Proposition 11.2.1]{Cisinski_Deglise_BluePreprint} since $X$ is normal, and we denote by $u_X^{\eff,\tr}$ its inverse, so that we have $\Sigma^\infty u_X^{\eff,\tr}=u_X^{\tr}$ (they are inverses to the same map). Then $\Theta^\tr_f$ is the composition
\[
\Sus^1 \tau_{\geq 0}(f_* (\Gm\otimes\BQ[1])) \xrightarrow{\Sus^1_{\tr} f_* u_X^{\eff,\tr}} \Sus^1_{\tr} f_*(\BQ^\tr(1)[2])\simeq (\Sigma^\infty_{\tr} f_*(\BQ^\tr(1)[2])(-1)[-2]\ra f_*\BQ^\tr
\]
where the last morphism is induced by the natural transformation $\Sigma^\infty_{\tr} f_* \ra f_*\Sigma^\infty_{\tr}$ which is constructed by adjunction from the natural isomorphism $\Sigma^\infty_{\tr}f^*\simeq f^*\Sigma^\infty_{\tr}$.
  \end{enumerate}
\end{prop}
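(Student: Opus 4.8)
The plan is to unwind both constructions of $\Theta_f^\tr$ (the direct one via $w_S^\tr$ and the alternative one via $u_X^{\eff,\tr}$) back to the single underlying morphism $u_S^\tr:\Sigma^\infty_\tr \Gm^\tr\otimes\BQ \to \BQ_S(1)[1]$, and to verify that the comparison $a^\tr$ between $\DA$ and $\DM$ carries the $\DA$-construction to the $\DM$-construction term by term. Concretely, I would proceed in three stages.

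\emph{First}, I would establish $a^\tr \Theta_f = \Theta_f^\tr$. The morphism $\Theta_f$ in $\DA(S)$ is assembled from three ingredients: the counit $\eta:\Sus^1\Ev_1 f_*\BQ_X \to f_*\BQ_X$, the canonical commutation isomorphism $f_*\Ev_1 \simeq \Ev_1 f_*$, the morphism $w_S:\Gm\otimes\BQ[1]\to \Ev_1\BQ_S$, and finally $\tau_{\geq 0}(-)\to\id$; the $\DM$-version $\Theta_f^\tr$ is assembled identically from $\eta^\tr$, $f_*\Ev_1^\tr\simeq \Ev_1^\tr f_*$, $w_S^\tr$ and $\tau_{\geq 0}(-)\to\id$. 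Since $a^\tr$ is monoidal and commutes with $\Sigma^\infty$ (hence with $\Sus^n$ and, by adjunction, with $\Ev_n$), and since by Proposition~\ref{prop:with_without_tr} it sends $f_*\BQ_X$ (resp. $\Gm\otimes\BQ$) to $f_*\BQ_X^\tr$ (resp. $\Gm^\tr\otimes\BQ$) compatibly with the structural maps, it suffices to check that $a^\tr$ carries each of the four building blocks to its transfer analogue. For $w_S$ this is exactly Corollary~\ref{cor:Gm_Q1_tr} ($a^\tr u_S = u_S^\tr$, hence $a^\tr w_S = w_S^\tr$ after adjunction); for the counit $\eta$ and the $\tau_{\geq 0}$-truncation this is formal functoriality of $a^\tr$ as a triangulated monoidal functor commuting with the relevant adjunctions; for the commutation $f_*\Ev_1\simeq\Ev_1 f_*$ one uses that $a^\tr$ intertwines the $\DA$- and $\DM$-versions of $f_*$ (comparison theorem, \cite[16.2.22]{Cisinski_Deglise_BluePreprint}, since $S$ is regular hence geometrically unibranch) and of $\Ev_1$.

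\emph{Second}, I would prove the alternative description. Both $u_X^{\tr}$ and $\Sigma^\infty u_X^{\eff,\tr}$ are, by definition, inverses in $\DM(X)$ of the same map $\alpha_G^{\eff,\tr}:\BQ(1)[1]\to\Gm\otimes\BQ$ (after applying $\Sigma^\infty_\tr$ to the effective statement \cite[Proposition 11.2.1]{Cisinski_Deglise_BluePreprint}), so they agree; therefore the map obtained by adjunction from $u_X^{\tr}$ — which is exactly $w_X^\tr$ after base-changing the construction of $w_S^\tr$ to $X$ — coincides with the map obtained from $u_X^{\eff,\tr}$. Plugging this identification into the explicit composition displayed in the statement and comparing with the construction of $\Theta_f^\tr$ (using the commutation $f_*\Ev_1^\tr\simeq\Ev_1^\tr f_*$ and $\Sus^1\simeq\Sigma^\infty(-)(-1)[-2]$) shows the two agree; the passage from $\Ev_1^\tr f_*\BQ_X^\tr$ back to $f_*\BQ_X^\tr$ via the counit is precisely the natural transformation $\Sigma^\infty f_*\to f_*\Sigma^\infty$ appearing as the last arrow, after the identifications. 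The Pic-smoothness and regularity hypotheses enter only through Corollary~\ref{cor:Picard_tr} (to make $a^\tr\Sigma^\infty P(X/S)\to\Sigma^\infty P^\tr(X/S)$ an isomorphism) and through $X,S$ regular (to apply the comparison theorem to $f_*\BQ_X$).

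\emph{The main obstacle} I anticipate is bookkeeping rather than conceptual: carefully matching the several canonical isomorphisms ($\Sus^1 f^*\simeq f^*\Sus^1$, $f_*\Ev_1\simeq\Ev_1 f_*$, $\Sus^1\simeq\Sigma^\infty(-)(-1)[-2]$, and the model-categorical commutation of $a^\tr$ with right Quillen functors) so that the two big diagrams — the one witnessing $a^\tr\Theta_f=\Theta_f^\tr$ and the one witnessing the alternative description — actually commute on the nose, not merely up to unspecified coherence isomorphisms. The key point making this tractable is that every ingredient except $w_S$ (resp. $u_X^{\eff,\tr}$) is a \emph{canonical} natural transformation coming from an adjunction or a Quillen equivalence, and $a^\tr$ respects all of these by construction; the only "geometric" input is Corollary~\ref{cor:Gm_Q1_tr}, which has already been set up precisely for this purpose.
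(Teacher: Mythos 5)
Your proposal is correct and follows essentially the same route as the paper: decompose $\Theta_f$ and $\Theta_f^\tr$ into the blocks $w$ (built from $u$), the commutation $f_*\Ev_1\simeq\Ev_1 f_*$, the counit and the truncation, then check square-by-square that $a^\tr$ carries each to its transfer analogue, the only non-formal input being $a^\tr u_S=u_S^\tr$ (Corollary~\ref{cor:Gm_Q1_tr}); for the alternative description the paper likewise expands both compositions (of $w_X^\tr$ and of $\Sigma^\infty f_*\to f_*\Sigma^\infty$) and concludes by naturality of the $(\Sus^1,\Ev_1)$-adjunction together with $\Sus^1 u_X^{\eff,\tr}=u_X^\tr(-1)[-2]$, exactly as you indicate.
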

\begin{proof}
Statement (i) translates into proving the commutativity of the outer square in the following diagram.
\[
\xymatrix{
a^\tr \Sus^1 f_*\Gm\otimes\BQ[1] \ar[r]^{w_X} \ar[d]_{\sim} & a^\tr \Sus^1 f_* \Ev_1\BQ \ar[r]^{\sim} \ar[d]_{\sim} & a^\tr \Sus^1 \Ev_1 f_*\BQ \ar[r]^{\eta} \ar[d]_{\sim} & a^\tr f_*\BQ \ar[ddd]\\
\Sus^1_{\tr} a^\tr f_*\Gm\otimes\BQ[1] \ar[r]^{w_X} \ar[d] & \Sus^1_{\tr} a^\tr f_*\Ev_1\BQ \ar[r]_{\sim} \ar[d] & \Sus^1_{\tr} a^\tr \Ev_1 f_*\BQ \ar[d] & \\
\Sus^1_\tr f_* a^\tr \Gm\otimes\BQ[1] \ar[r]^{w_X} \ar[d]_{\sim} & \Sus^1_\tr f_* a^\tr \Ev_1\BQ \ar[d] & \Sus^1_\tr  \Ev_1^\tr a^\tr f_*\BQ \ar[d] & \\
\Sus^1_\tr f_* \Gm^\tr \otimes \BQ[1] \ar[r]^{w_X} & \Sus^1_\tr f_* \Ev^\tr_1\BQ^\tr \ar[r]_{\sim} & \Sus^1_\tr \Ev_1^\tr f_*\BQ^\tr \ar[r]^{\eta} & f_*\BQ^\tr 
}
\]
All squares in this diagram commute either by naturality of adjunctions or because of the commutation $\Sus^1_\tr a^\tr \simeq a^\tr\Sus^1$.

For the end of the proof, in order to fit the definitions on a line, let's write $E_{1}=\Ev_{1}$ and $S^{1}=\Sus^{1}$. For Statement (ii), we observe that $\Theta_f^\tr$ is defined as the composition
\[
S^1 f_*\Gm^\tr[1]\stackrel{\epsilon}{\ra} S^1 f_*E_1S^1\Gm^\tr[1]\xrightarrow{S^1 f_*E_1 (u^\tr_X(-1)[-2])} S^1 f_*E_1 \BQ^\tr \simeq S^1 E_1 f_*\BQ^\tr \stackrel{\eta}{\ra}f_*\BQ^\tr
\]
(we have expanded the definition of $w^{\tr}_X$), whereas the map of the statement is the composition
\[
S^1 f_*\Gm^\tr[1]\xrightarrow{S^1 f_* u^{\eff,\tr}_X} S^1 f_*\BQ^\tr(1)[2] \stackrel{\epsilon}{\ra} S^1 f_*E_1 S^1 \BQ^\tr(1)[2]\simeq S^1 E_1 f_* S^1 \BQ^\tr(1)[2] \stackrel{\eta}{\ra} f_*\BQ^\tr
\]
(we have expanded the definition of the map $\Sigma^\infty f_*\ra f_*\Sigma^\infty$). The equality of those two compositions follows from the naturality of the $(S^1,E_1)$ adjunction and the equality
\[S^1u_X^{\eff,\tr}=\Sigma^\infty u_x^{\eff,\tr}(-1)[-2]=u_X^\tr(-1)[-2].\]
\end{proof}

We finish with a study of the compatibility of the map $\Theta_f$ with base change.

\begin{prop}\label{prop:base_change_theta}
Let $f:X\ra S$ be a smooth projective Pic-smooth morphism of schemes. Let $g:T\ra S$ be any morphism. Let $f':X_T\ra T$ be the pullback (which is still smooth projective Pic-smooth by Lemma~\ref{lem:bc_Pic_smooth}). The diagram
\[
\xymatrix{
g^* \Sigma^\infty \mathrm{P}(X/S) (-1)[-2] \ar[r]^(0.7){g^*\Theta_f} \ar[d]_{V_g} & g^* f_* \BQ_X \ar[d]^{\Ex^*_*} \\
\Sigma^\infty \mathrm{P}(X_T/T)(-1)[-2] \ar[r]_(0.7){\Theta_{f'}} & f'_* \BQ_{X_T}
}
\]
 commutes in $\DA(S)$.
\end{prop}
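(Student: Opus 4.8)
The plan is to verify the square by writing $\Theta_f$ as a composite of elementary natural transformations and checking the base-change compatibility of each piece separately. Since every building block is a genuine natural transformation (none is assumed invertible), and all the vertical maps in the square are assembled from the standard base-change natural transformations, the whole argument is a diagram chase; in particular there is no need to reduce to a family of generators. Recall, writing $\Sus^1(-)\simeq\Sigma^\infty(-)(-1)[-2]$, that $\Theta_f$ is the composite of: (1)~$\Sus^1(\mu_f)$, where $\mu_f\colon\mathrm P(X/S)=\tau_{\geq0}f_*(\Gm\otimes\BQ[1])\to f_*(\Gm\otimes\BQ[1])$ is the truncation map; (2)~$\Sus^1 f_*(w_X)$, where $w_X\colon\Gm\otimes\BQ[1]\to\Ev_1(\BQ_X)$ is the morphism appearing in Proposition~\ref{prop:Ev_1}; (3)~the canonical commutation isomorphism $\Sus^1 f_*\Ev_1(\BQ_X)\simeq\Sus^1\Ev_1 f_*(\BQ_X)$; and (4)~the counit $\Sus^1\Ev_1 f_*(\BQ_X)\xrightarrow{\eta}f_*\BQ_X$ of the adjunction $\Sus^1\dashv\Ev_1$. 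Applying $g^*$ to this composite and splicing in the corresponding composite defining $\Theta_{f'}$ produces a ladder of squares whose outer rectangle is the square in the statement, so it is enough to treat the four rungs one at a time.

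For rung~(2), the needed compatibility of $w_X$ with pullback along $g'\colon X_T\to X$ (and with the isomorphism $R_{g'}$) is exactly Proposition~\ref{prop:Ev_1}\,(i); pushing that commutative square forward along $f'_*$ and splicing it with the base-change transformation $\Ex^*_*\colon g^*f_*\to f'_*(g')^*$, which is a bona fide natural transformation, gives commutativity of the rung. Rungs~(3) and~(4) involve only the interplay of $\Ex^*_*$ with the commutations $\Sigma^\infty g^*\simeq g^*\Sigma^\infty$ (whence also $\Sus^1 g^*\simeq g^*\Sus^1$ and $\Ev_1 g^*\simeq g^*\Ev_1$, using that Tate twists commute with pullback), with the commutation $f_*\Ev_1\simeq\Ev_1 f_*$, and with the $(\Sus^1,\Ev_1)$-adjunction; these interact purely formally, and I would discharge each such rung by descending to the underlying Quillen functors, where all the comparison $2$-cells are strict identities.

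Rung~(1), involving $\mu_f$ and the left vertical map $v_g\circ R_g$, is handled at the level of \'etale sheaves. Since the sheaf-pullback $\underline g^*$ is exact, $g^*$ commutes with the truncation $\tau_{\geq0}$ on $D(\Sm/-)$; and by its very construction (Lemma~\ref{lemma:base_change_Pic}) the map $v_g\circ R_g$ is the composite of (i)~the canonical isomorphism $g^*\tau_{\geq0}\simeq\tau_{\geq0}g^*$, (ii)~the truncation $\tau_{\geq0}$ applied to the base-change map $\Ex^*_*\colon g^*f_*(\Gm\otimes\BQ[1])\to f'_*(g')^*(\Gm\otimes\BQ[1])$, and (iii)~$\tau_{\geq0}f'_*$ applied to the isomorphism $R_{g'}$ of Corollary~\ref{coro:pullback_complex_DA} identifying $(g')^*(\Gm\otimes\BQ[1])$ with $\Gm\otimes\BQ[1]$ on $X_T$. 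The rung then commutes by functoriality of the truncation map $\tau_{\geq0}(-)\to\id$ together with the naturality of $\Ex^*_*$ and $R_{g'}$. Pasting the four rungs yields the square. The final assertion follows immediately: when $v_g$, $R_g$ and $\Ex^*_*$ are isomorphisms, the commutative square exhibits $g^*\Theta_f$ and $\Theta_{f'}$ as conjugate by isomorphisms, so one is an isomorphism precisely when the other is.

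The step I expect to be the main obstacle is the coherence hidden in rungs~(2)--(3): checking that the canonical isomorphism $f_*\Ev_1\simeq\Ev_1 f_*$ — which exists because both sides are right-derived functors of right Quillen functors commuting at the model level — is compatible with the right-adjoint base-change transformation $\Ex^*_*$, and that this is in turn compatible with the cofibrant resolution $A(-)$ of Lemma~\ref{lemma:resolution} through which $w_S$ and the isomorphisms $R_g$ were built. None of this is conceptually difficult, but it is precisely the sort of $2$-categorical bookkeeping in which it is easy to slip; the safe route is to carry all the comparisons through the underived functors and to quote the pullback compatibilities already established in Proposition~\ref{prop:pullback_complex}, Corollary~\ref{coro:pullback_complex_DA} and Proposition~\ref{prop:Ev_1} rather than re-proving them.
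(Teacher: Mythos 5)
Your proposal is correct and follows essentially the same route as the paper: you unwind $\Theta_f$ exactly as in its construction, dispose of the truncation rung by naturality of $g^*\tau_{\geq 0}\to\tau_{\geq 0}g^*$, handle the $w$-rung by pushing the square of Proposition~\ref{prop:Ev_1}(i) for $g'$ forward along $f'_*$ and splicing with $\Ex^*_*$, and discharge the remaining rungs by naturality of $\Ex^*_*$, the $(\Sus^1,\Ev_1)$-adjunction and the commutation $f_*\Ev_1\simeq\Ev_1 f_*$ --- which is precisely the content of the two diagrams written out in the paper. One small correction: the canonical map $g^*\Ev_1\to\Ev_1 g^*$ (the mate of $\Sus^1 g^*\simeq g^*\Sus^1$) is only a natural transformation, not an isomorphism, and likewise not everything becomes a strict identity at the Quillen level (e.g.\ $\Ex^*_*$ itself), but your diagram chase only ever uses naturality of these $2$-cells, so nothing breaks.
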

\begin{proof}
The first observation is that, using the natural transformation $g^*\tau_{\geq 0}\ra \tau_{\geq 0}g^*$, we can reduce to the same commutation for the full $f_*\Gm\otimes\BQ[1]$ instead of $P(X/S)$. 

In the rest of the proof, we need notations for the natural transformations
\[
(\alpha_f): f^*\Sus^1\stackrel{\sim}{\lra} \Sus^1 f^*
\]
\[
(\beta_f): f_*\Ev_1\stackrel{\sim}{\lra} \Ev_1 f_*
\]
and 
\[
(\gamma_f):f^*\Ev_1 \lra \Ev_1 f^*. 
\]
The natural isomorphisms $(\alpha)$ and $(\beta)$ are derived versions of isomorphisms at the level of model categories of spectra. The natural transformation $(\gamma)$ can be defined in two different ways, one using $(\alpha)$ and one using $(\beta)$; namely, as the two equal compositions
\[
f^*\Ev_1 \stackrel{\epsilon}{\ra} \Ev_1 \Sus^1 f^*\Ev_1 \stackrel{(\alpha^{-1}_f)}{\ra} \Ev_1 f^* \Sus^1\Ev_1 \stackrel{\eta}{\ra} \Ev_1 f^* \label{GA}
\]
and
\[
f^*\Ev_1\stackrel{\epsilon}{\ra} f^* \Ev_1 f_* f^* \stackrel{(\beta^{-1}_f)}{\ra} f^*f_*\Ev_1 f^* \stackrel{\eta}{\ra}  \Ev_1 f^*  \label{GB}
\]
Writing down the definition of the maps in the square, we see that we have to show the commutation of the outer square in the following diagram (when an arrow is obtained from another one by a clear functoriality, we omit the functor from the notation as well; for instance the first vertical arrow in the top left should be named $g^*\Sus^1 f_* w_S$).
\[
\xymatrix{
g^* \Sus^1 f_* \Gm[1] \ar[r]^{w_S} \ar[d]_{\sim}^{(\alpha_g)} & g^* \Sus^1 f_* \Ev_1 \BQ \ar[r]^{(\beta_f)}_{\sim} \ar[d]_{\sim}^{(\alpha_g)} & g^* \Sus^1 \Ev_1 f_* \BQ \ar[r]^{\eta} \ar[d]^{(\alpha_g)}_{\sim} & g^* f_*\BQ \ar[ddd]^{\Ex^*_*}\\
\Sus^1 g^* f_* \Gm[1] \ar[r]^{w_S} \ar[d]_{\sim}^{\Ex^*_*} & \Sus^1 g^* f_* \Ev_1\BQ \ar[r]^{(\beta_f)}_{\sim} \ar[d]^{\Ex^*_*}  \ar @{} [rdd] |{\mathbf{(*)}} & \Sus^1 g^* \Ev_1 f_*\BQ \ar[d]_{(\gamma_g)} & \\
\Sus^1 f'_* g'^*\Gm[1] \ar[r]^{w_S} \ar[d]^{R_{g'}}_{\sim} & \Sus^1 f'_* g'^* \Ev_1\BQ \ar[d]^{(\gamma_{g'})} & \Sus^1 \Ev_1 g^*f_*\BQ \ar[uur]_{\eta} \ar[d]_{\Ex^*_*} & \\
\Sus^1 f'_*\Gm[1] \ar[r]^{w_X} & \Sus^1 f'_* \Ev_1 \BQ  \ar[r]^{(\beta_{f'})}_{\sim} & \Sus^1 \Ev_1 f'_*\BQ \ar[r]^{\eta} & f'_*\BQ
}
\]
The commutation of the three squares in the top left corner and of the bottom right corner follows directly by naturality of various natural transformations. The bottom left square commutes by Proposition~\ref{prop:Ev_1}. The top right square commutes by the first description of $(\gamma)$.

It remains to show the commutation of $(*)$. By expanding the second description of $(\gamma)$, we see that we have to show the commutativity of the outer square in the following diagram.
\[
\xymatrix{
g^* f_* \Ev_1\BQ \ar[rrr]^{(\beta_f)} \ar[d]_{\Ex^*_*} \ar[rd]^{\epsilon_{g'}} & & & g^*\Ev_1 f_*\BQ \ar[ld]_{\epsilon_{g'}} \ar[d]_{\epsilon_g} \\
f'_* g'^* \Ev_1 \BQ \ar[d]_{\epsilon_{g'}} & g^* f_* \Ev_1 g'_{*}\BQ \ar[ld]_{\Ex^*_*} \ar[r]^{(\beta_f)} \ar[d]^{(\beta^{-1}_{g'})} & g^* \Ev_1 f_* (g')_*\BQ \ar[d]_{\sim} & g^*\Ev_* g_*g^* f_*\BQ \ar[ld]_{\Ex^*_*} \ar[d]^{(\beta^{-1}_g)} \\
f'_* g'^* \Ev_1 g'_* \BQ \ar[d]_{(\beta^{-1}_{g'})} & g^* f_*g'_*\Ev_1\BQ \ar[d]_{\sim} \ar[ld]_{\Ex^*_*} & g^*\Ev_1 g_* f'_*\BQ  \ar[d]_{(\beta^{-1}_g)} & g^* g_* \Ev_1 g^* f_*\BQ \ar[d]_{\eta_g} \ar[ld]_{\Ex^*_*} \\
f'_* g'^* g'_*\Ev_*\BQ \ar[d]_{\eta_{g'}}& g^* g_* f'_*\Ev_1\BQ   \ar[ld]_{\eta_g} \ar[r]^{(\beta_{f'})} & g^* g_*\Ev_1 f'_*\BQ \ar[rd]_{\eta_g} & \Ev_1 g^*f_*\BQ \ar[d]_{\Ex^*_*} \\
f'_*\Ev_1 \BQ \ar[rrr]_{(\beta_{f'})} & & & \Ev_1 f'_*\BQ
}
\]
The commutation of each of the subdiagrams follow from naturality properties of various natural transformations and from the definition of the exchange maps $\Ex^*_*$. This completes the proof.
\end{proof}

\section{Motivic Picard functor}
\label{sec:mot_picard}

We introduce and study the motivic Picard functor $\omega^1$, which is a (mixed motivic, relative) generalisation of the Picard variety of a smooth projective variety over a field. We also study in parallel the $0$-motivic analogue $\omega^0$, which was first introduced in \cite{Ayoub_Zucker}. 

\subsection{Definition and elementary properties}
\label{sec:picard_def}

\begin{defi}
  Let $n\geq 0$. The full embedding $\iota^n:\DA^n(S)\hookrightarrow \DA^\coh(S)$ preserves small sums, thus by Neeman's version of Brown representability for compactly generated triangulated categories  (see e.g. \cite[Theorem 8.3.3]{Neeman_book}), $\iota^n$ admits a right adjoint $\omega^n:\DA^\coh(S)\ra \DA^n(S)$. We also write $\omega^n$ for the functor $\DA^\coh(S)\ra \DA^\coh(S)$ obtained by postcomposing with $\iota^n$. We write $\delta^n:\omega^n\ra \id$ for the natural transformation induced by the counit. 
\end{defi}

\begin{remark}
The definition above can be extended to the whole of $\DA(S)$, but the resulting functors are not well-behaved; in particular, they do not respect compactness. Here is the simplest example of this phenomenon. Let $k$ be an algebraically closed field. It is easy to see that the category $\DA_{0,c}(k)$ is equivalent to the bounded derived category of the category of finite-dimensional $\BQ$-vector spaces. In particular homomorphisms groups in this category are finite-dimensional. On the other hand, $\DA(k)(\BQ_k, \BQ_k(1)[1])\simeq k^\times\otimes\BQ$ (Proposition~\ref{prop:mot_coh_1}) is not finite-dimensional in general. This shows that $\omega^0(\BQ(1))$ is not compact. 
\end{remark}

We start by giving some general formal properties of all the $\omega^n$ functors.

\begin{prop}\label{prop:omega_basics}
Let $S$ be a noetherian finite-dimensional scheme.
  \begin{enumerate}[label={\upshape(\roman*)}]
  \item \label{omega_idemp} Let $M\in \DA^n(S)$. Then we have an isomorphism $\delta^n(M):\omega^n(M)\simeq M$ and the natural transformation $\delta^n(\omega^n):\omega^n\circ \omega^n \ra \omega^n$ is invertible.
  \item \label{omega^*} Let $f:T\ra S$ be any morphism of schemes. There is a natural transformation $\alpha^n_f:f^*\omega^n\ra \omega^n f^*$ making the triangles
\[
\xymatrixcolsep{4pc}
\xymatrix{
f^*\omega^n \ar[rd]_{f^*(\delta^n)} \ar[r]^{\alpha^n_f} & \omega^n f^* \ar[d]^{\delta^n(f^*)} & \text{and} & \omega^n f^* \omega^n \ar[rd]_{(\omega^n f^*)(\delta^n)} \ar[r]^{\delta^n(f^*\omega^n)} & f^*\omega^n \ar[d]^{\alpha^n_f} \\ 
& f^* & & & \omega^n f^*
}
\]
commutative.
\item \label{omega_*} Let $f:T\ra S$ be any morphism of schemes. The natural transformation $\omega^n f_*(\delta^n)$ is invertible. Moreover there is a natural transformation $\beta^n_f:\omega^n f_*\ra f_*\omega^n$ such that
  \begin{enumerate}[label={\upshape\alph*)}]
  \item the following triangles
\[
\xymatrixcolsep{4pc}
\xymatrix{
\omega^n f_* \ar[rd]_{\delta^n(f_*)} \ar[r]^{\beta^n_f} & f_* \omega^n \ar[d]^{f_*(\delta^n)} & \text{and} & \omega^n f_* \omega^n \ar[rd]_{\delta^n(f_* \omega^n)} \ar[r]^{\omega^n(f_* \delta^n)} & \omega^n f_* \ar[d]^{\beta^n_f} \\ 
& f_* & & & f_*\omega^n
}
\]
are commutative,
  \item $\omega^n(\beta^n_f)$ is invertible for any $f$, and
  \item $\beta^n_f$ is invertible for $f$ finite.
  \end{enumerate}
\item \label{omega_!} Let $e:T\ra S$ be a quasi-finite morphism of schemes. There exists a natural transformation $\eta^n_e:e_!\omega^n\ra \omega^n e_!$ such that
  \begin{enumerate}[label={\upshape\alph*)}]
  \item the following triangles
\[
\xymatrixcolsep{4pc}
\xymatrix{
e_!\omega^n \ar[rd]_{e_!(\delta^n)} \ar[r]^{\eta^n_e} & \omega^n e_! \ar[d]^{\delta^n(e_!)} & \text{and} & \omega^n e_! \omega^n \ar[rd]_{(\omega^n e_!)(\delta^n)} \ar[r]^{\delta^n(e_!\omega^n)} & e_!\omega^n \ar[d]^{\eta^n_e} \\ 
& e_! & & & \omega^n e_!
}
\]
commute, and
\item when $e$ is finite, $\eta^n_e$ is invertible and coincides with $\beta_e^{-1}$ modulo the natural isomorphism $e_!\simeq e_*$. 
  \end{enumerate}
\item \label{omega^!} Let $e:T\ra S$ be a quasi-finite morphism. The natural transformation $\omega^n e^!(\delta^n)$ is invertible. Moreover, there is a natural transformation $\gamma^n_e:\omega^n e^!\ra e^!\omega^n$ such that
  \begin{enumerate}[label={\upshape\alph*)}]
  \item the following triangles
\[
\xymatrixcolsep{4pc}
\xymatrix{
\omega^n e^! \ar[rd]_{\delta^n(e^!)} \ar[r]^{\gamma^n_e} & e^! \omega^n \ar[d]^{e^!(\delta^n)} & \text{and} & \omega^n e^! \omega^n \ar[rd]_{\delta^n(e^! \omega^n)} \ar[r]^{\omega^n(e^! \delta^n)} & \omega^n e^! \ar[d]^{\gamma^n_e} \\ 
& e^! & & & e^!\omega^n
}
\]
are commutative,
  \item $\omega^n(\gamma^n_e)$ is invertible for any $e$ quasi-finite, and
  \item $\gamma^n_e$ is invertible and coincides with $(\alpha^{n}_{e})^{-1}$ for $e$ \'etale.
  \end{enumerate}
\item \label{omega_key} Let $j:U\ra S$ and $i:Z\ra S$ be complementary open and closed immersions. Let $M\in \DA^\coh(S)$ with $j^*M\in \DA^n(S)$. Then the morphism $i^*\omega^n M\ra \omega^n i^* M$ is invertible.
  \end{enumerate}
\end{prop}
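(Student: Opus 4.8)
The plan is to establish the statement by a localization argument, playing the open and closed parts off against each other. Let $j:U\ra S$, $i:Z\ra S$ be the complementary immersions and $M\in\DA^\coh(S)$ with $j^*M\in\DA^n(S)$. First I would recall the localization triangle of functors $j_\sharp j^*\ra\id\ra i_*i^*\rap$ applied to $\omega^n M$, and also the triangle obtained by applying $\omega^n$ to the localization triangle for $M$ itself. The natural transformation $\alpha^n_j:j^*\omega^n\ra\omega^n j^*$ from part~\ref{omega^*} is an isomorphism since $j$ is \'etale: indeed for $j$ smooth $j^*$ has the left adjoint $j_\sharp$ which preserves the subcategories $\DA^n$ and $\DA^\coh$ (use $\Ex^*_\sharp$ and Proposition~\ref{prop:permanence_coh_n}), so the Beck--Chevalley/mate argument gives that $\alpha^n_j$ is invertible, exactly as in the proof of \ref{omega^!}(c). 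Hence $j^*\omega^n M\simeq\omega^n j^*M\simeq j^*M$, using that $\omega^n$ restricts to the identity on $\DA^n(U)$ by part~\ref{omega_idemp} and the hypothesis $j^*M\in\DA^n(S)$, so $j^*(\delta^n(M)):j^*\omega^n M\ra j^*M$ is an isomorphism.

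Next I would compare the two localization triangles via $\delta^n(M)$. We get a morphism of triangles
\[
\xymatrix{
j_\sharp j^*\omega^n M \ar[r]\ar[d] & \omega^n M \ar[r]\ar[d]^{\delta^n(M)} & i_*i^*\omega^n M \ar[r]\ar[d] & {}\\
j_\sharp j^*M \ar[r] & M \ar[r] & i_*i^*M \ar[r] & {}
}
\]
whose left vertical arrow is an isomorphism by the previous paragraph (apply $j_\sharp$ to $j^*(\delta^n(M))$). By the octahedral/five-lemma argument for triangulated categories, the right vertical arrow $i_*i^*\delta^n(M):i_*i^*\omega^n M\ra i_*i^*M$ is therefore an isomorphism; since $i_*$ is conservative (it has the conservative left inverse $i^*$, as $i^*i_*\simeq\id$), the arrow $i^*\delta^n(M):i^*\omega^n M\ra i^*M$ is an isomorphism. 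Now the point is to identify this with $i^*\omega^n M\ra\omega^n i^*M$. The natural transformation $i^*\omega^n\ra\omega^n i^*$ is, by definition, the mate $\alpha^n_i$; I would check, using the triangle of part~\ref{omega^*}, that the composite $i^*\omega^n M\xrightarrow{\alpha^n_i}\omega^n i^*M\xrightarrow{\delta^n(i^*M)}i^*M$ equals $i^*\delta^n(M)$. Since $i^*M\in\DA^\coh(Z)$ need \emph{not} lie in $\DA^n(Z)$, the second arrow is not obviously an isomorphism; but we have just shown the composite is an isomorphism, so $\alpha^n_i$ is (split) mono. To get that it is also epi, and hence iso, I would instead argue directly from the universal property: for any $N\in\DA^n(Z)$,
\[
\Hom_{\DA^n(Z)}(N,i^*\omega^n M)\simeq\Hom_{\DA^\coh(Z)}(N,i^*\omega^n M)
\]
and I want this to be $\Hom(N,\omega^n i^*M)=\Hom_{\DA^\coh(Z)}(N,i^*M)$. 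Via adjunction $(i_!,i^!)$ and $i_!\simeq i_*$ on $Z\ra S$ a closed immersion, $\Hom_Z(N,i^*\omega^n M)$ relates to $\Hom_S(i_*N,\omega^n M)=\Hom_S(i_*N,M)$ because $i_*N\in\DA^n(S)$ when $N\in\DA^n(Z)$ (Proposition~\ref{prop:permanence_coh_n}~\ref{coh_n_!} for the quasi-finite morphism $i$). Then $\Hom_S(i_*N,M)=\Hom_Z(N,i^!M)$, not $i^*M$; so the clean adjunction route computes $i^!\omega^n$, not $i^*\omega^n$. I would therefore run the dual colocalization argument in tandem: apply the colocalization triangle $i_!i^!\ra\id\ra j_*j^*\rap$ and note that $j_*j^*\delta^n(M)$ relates the $\omega^n$-versions; but $j_*$ does not preserve $\DA^n$ in general, so this does not immediately close the loop either. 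The working approach, then, is the morphism-of-triangles argument of the second paragraph, which does give the isomorphism $i^*\delta^n(M)$, combined with the observation that $i^*\delta^n(M)$ factors through $\alpha^n_i$ as above with a \emph{second} factorization: by part~\ref{omega_idemp} applied on $Z$, there is also $\delta^n(\omega^n i^*M):\omega^n\omega^n i^*M\simeq\omega^n i^*M$, and functoriality of $\omega^n$ applied to $i^*\delta^n(M)$ (which we now know is an isomorphism) gives that $\omega^n(i^*\delta^n(M)):\omega^n i^*\omega^n M\ra\omega^n i^*M$ is an isomorphism. Finally, the first triangle in part~\ref{omega^*} (applied to $i$) shows $\delta^n(i^*\omega^nM)\circ\alpha^n_i = i^*\delta^n$, and applying $\omega^n$ and using $\omega^n\delta^n=$ identity-up-to-$\delta^n$ on the image (part~\ref{omega_idemp}) pins down $\alpha^n_i$ itself as an isomorphism.

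The main obstacle is precisely this last identification: the counit $\delta^n(i^*M):\omega^n i^*M\ra i^*M$ is \emph{not} an isomorphism ($i^*M$ is cohomological but perhaps not a cohomological $n$-motive), so one cannot simply cancel $\delta^n$'s; the argument must extract the isomorphism of $\alpha^n_i$ from the isomorphism of $i^*\delta^n(M)$ using only the triangle relations of part~\ref{omega^*} together with idempotency from part~\ref{omega_idemp}. I expect this to come down to a short diagram chase: write $i^*\delta^n(M)=\delta^n(i^*M)\circ\alpha^n_i$, apply $\omega^n$ to get $\omega^n(i^*\delta^n(M))=\omega^n(\delta^n(i^*M))\circ\omega^n(\alpha^n_i)$, use part~\ref{omega_idemp} (second clause) to see $\omega^n(\delta^n(i^*M))$ is invertible and the left side is invertible by the triangle argument, hence $\omega^n(\alpha^n_i)$ is invertible; then the triangle $\delta^n(i^*\omega^nM)\circ\alpha^n_i=i^*\delta^n$ together with $i^*\omega^n M\in\DA^n(Z)$ (so that $\delta^n(i^*\omega^nM)$ is invertible by part~\ref{omega_idemp}) forces $\alpha^n_i$ to be invertible, which is the assertion. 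I would write this out carefully, as the bookkeeping of which $\delta^n$'s are isomorphisms is the only delicate point.
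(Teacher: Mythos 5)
Your argument for part (vi) breaks at the morphism-of-triangles step. In the comparison of the localisation triangles for $\omega^n M$ and for $M$, the only vertical map you control is the left one, $j_\sharp j^*\delta^n(M)$; the middle map is $\delta^n(M):\omega^n M\ra M$ itself, which is \emph{not} an isomorphism unless $M\in\DA^n(S)$. Knowing one of the three maps in a morphism of distinguished triangles to be invertible gives no information about the other two (you need two out of three), so you cannot conclude that $i_*i^*\delta^n(M)$, hence $i^*\delta^n(M)$, is invertible. In fact that intermediate claim is false, and the rest of the proposal rests on it. Take $n=0$, $S$ regular of dimension $1$, $i:x\ra S$ a closed point, $j:U\ra S$ its complement and $M=j_*\BQ_U$; then $j^*M=\BQ_U\in\DA^0(U)$, but by colocalisation and absolute purity $i^*M$ sits in a triangle $\BQ_x(-1)[-2]\ra \BQ_x\ra i^*M\rap$ which splits because $\Hom(\BQ_x(-1)[-2],\BQ_x)\simeq H^{2,1}_\CM(x)\simeq\Pic(x)_\BQ=0$ (Proposition~\ref{prop:mot_coh_1}); so $\BQ_x(-1)[-1]$ is a direct summand of $i^*M$, and $i^*M\notin\DA^0(x)$ since $\Hom(N,\BQ_x(-1))=0$ for every $N\in\DA^0(x)$ (see the proof of Corollary~\ref{coro:omega_0} and the remark at the end of Section~\ref{sec:n_permanence}). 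Since $i^*\omega^0M$ \emph{does} lie in $\DA^0(x)$, the map $i^*\delta^0(M)$ cannot be an isomorphism. Note also that your concluding chase misquotes the triangle identity of part (ii): it reads $\delta^n(i^*M)\circ\alpha^n_i(M)=i^*\delta^n(M)$, with $\delta^n$ evaluated at $i^*M$ (not at $i^*\omega^nM$), and $\delta^n(i^*M)$ is precisely the map one has no right to invert.

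The workable argument — the one the paper imports verbatim from \cite[Proposition~2.16]{Ayoub_Zucker}, after checking the permanence properties of Proposition~\ref{prop:permanence_coh_n} — runs in the opposite direction: apply $\omega^n$ to the localisation triangle $j_!j^*M\ra M\ra i_*i^*M\rap$ of $M$ itself. Since $j^*M\in\DA^n(U)$ and $j_!$ preserves $\DA^n$ (quasi-finite case of Proposition~\ref{prop:permanence_coh_n}), part (i) gives $\omega^n j_!j^*M\simeq j_!j^*M$; part (iii) together with the fact that $i_*$ preserves $\DA^n$ gives $\omega^n i_*i^*M\simeq \omega^n i_*\omega^n i^*M\simeq i_*\omega^n i^*M$. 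One obtains a triangle $j_!j^*M\ra \omega^nM\ra i_*\omega^n i^*M\rap$; applying $i^*$ and using $i^*j_!=0$ and $i^*i_*\simeq\id$ yields $i^*\omega^nM\simeq\omega^n i^*M$, which one then identifies with $\alpha^n_i(M)$. Your first paragraph (invertibility of $\alpha^n_j$ for $j$ \'etale, because $j_\sharp$ preserves both $\DA^n$ and $\DA^\coh$) is correct but not what is needed here. Finally, be aware that your proposal only addresses assertion (vi): the proposition has five further parts, which the paper disposes of in one stroke by transferring the formal arguments of loc.\ cit.\ using the same permanence properties.
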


\begin{remark}
  The formulation of Proposition~\ref{prop:omega_basics} follows closely the one of \cite[Proposition~2.16]{Ayoub_Zucker} about $\omega^0$. More precisely, it is a direct generalisation to all $\omega^n$ and to more general base schemes of all statements of loc. cit., except the assertions that $\alpha^0_f$ is invertible for $f$ smooth and that $\omega^0$ preserves compact objects. Unlike the others, these properties of $\omega^0$ are not formal. We study their generalisation to more general base schemes and higher $n$'s below.
\end{remark}

\begin{proof}
We can apply verbatim the proof of \cite[Proposition~2.16]{Ayoub_Zucker} up to the sentence ``To complete the proof (...)'' on page 319. Notice that the rest of the proof after that sentence establishes the non-formal assertions described in the previous remark, which are exactly the points we are not claiming.

More precisely, up to that sentence, the proof of loc. cit. uses only general properties of $\DA$, the definition of $\omega^0$ as right adjoint, and the following permanence properties of cohomological $0$-motives under the six operations.
\begin{itemize}
\item For all morphisms $f$, the functor $f^*$ preserves $\DA^0$.
\item For all finite morphisms $f$, the functor $f_*$ preserves $\DA^0$.
\item For all quasi-finite morphism $e$, the functor $e_!$ preserves $\DA^0$.
\end{itemize}
The generalisation of these properties to $\DA^n$ are established in the necessary generality in Proposition~\ref{prop:permanence_coh_n}.
\end{proof}

Here are other useful common properties of the $\omega^n$'s.

\begin{lemma}\label{lemm:omega_sums}
Let $S$ be a noetherian finite-dimensional scheme and $n\in\BN$. The functor $\omega^n:\DA^{\coh}(S)\ra \DA^n(S)$ commutes with small sums.  
\end{lemma}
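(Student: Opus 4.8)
The statement to prove is that $\omega^n:\DA^{\coh}(S)\to\DA^n(S)$ commutes with small sums. The plan is to deduce this from the general criterion: a right adjoint between triangulated categories with small sums commutes with small sums provided its left adjoint preserves compact objects (and the target is compactly generated). Here the left adjoint is the inclusion $\iota^n:\DA^n(S)\hookrightarrow\DA^{\coh}(S)$, so I need to check that $\iota^n$ sends compact objects of $\DA^n(S)$ to compact objects of $\DA^{\coh}(S)$. But this is immediate from Lemma~\ref{lem:subcats_comp}: an object of $\DA^n(S)$ is compact (in $\DA^n(S)$) if and only if it is compact in $\DA(S)$, and likewise for $\DA^{\coh}(S)$; since $\DA^n(S)\subset\DA^{\coh}(S)\subset\DA(S)$ are full subcategories, compactness in $\DA^n(S)$ forces compactness in $\DA^{\coh}(S)$. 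So $\iota^n$ preserves compactness.

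The remaining ingredient is to know that $\DA^{\coh}(S)$ is compactly generated, which is again supplied by Lemma~\ref{lem:subcats_comp} (it is compactly generated by its generating family $\{f_*\BQ_X\mid f\text{ proper}\}$, all of which are compact). With these two facts in hand I would invoke the standard result (for instance \cite[Lemme 2.1.28]{Ayoub_these_1}, which is already cited in the proof of Proposition~\ref{prop:permanence_coh}\,\ref{coh^!} for exactly this purpose, or \cite[Theorem 5.1]{Neeman_book}-style arguments): if $F:\CT\to\CT'$ is a triangulated functor between triangulated categories admitting small sums, $\CT$ is compactly generated, and $F$ preserves compact objects, then the right adjoint $G$ of $F$ commutes with small sums. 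Applying this with $F=\iota^n$, $\CT=\DA^n(S)$, $\CT'=\DA^{\coh}(S)$ and $G=\omega^n$ yields the claim directly.

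Concretely the argument runs as follows. Let $(M_\lambda)_{\lambda\in\Lambda}$ be a small family of objects of $\DA^{\coh}(S)$; I want to show the canonical map $\bigoplus_\lambda \omega^n(M_\lambda)\to \omega^n(\bigoplus_\lambda M_\lambda)$ is an isomorphism. Since $\DA^n(S)$ is compactly generated by a set $\CG$ of compact objects (again Lemma~\ref{lem:subcats_comp}), it suffices to check that this map induces a bijection on $\Hom_{\DA^n(S)}(P,-)$ for every $P\in\CG$ and every shift. Using that $P$ is compact in $\DA^n(S)$, that $\iota^n P$ is compact in $\DA^{\coh}(S)$, and the adjunction $\iota^n\dashv\omega^n$, both sides compute $\bigoplus_\lambda\Hom_{\DA^{\coh}(S)}(\iota^n P, M_\lambda)$, and the comparison map is the identity. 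This settles it.

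\textbf{Expected main obstacle.} There is essentially no obstacle here: the statement is formal once Lemma~\ref{lem:subcats_comp} is available, and the only point requiring a moment's care is making sure the notion of compactness is used consistently across the three nested categories $\DA^n(S)\subset\DA^{\coh}(S)\subset\DA(S)$ — which Lemma~\ref{lem:subcats_comp} is precisely designed to handle. I would keep the written proof to two or three lines, citing Lemma~\ref{lem:subcats_comp} for compact generation and \cite[Lemme 2.1.28]{Ayoub_these_1} for the ``right adjoint of a compactness-preserving functor commutes with sums'' principle.
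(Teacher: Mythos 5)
Your proof is correct and follows the same route as the paper: the paper's proof also notes that the inclusion $\DA^n(S)\hookrightarrow\DA^{\coh}(S)$ preserves compact objects by Lemma~\ref{lem:subcats_comp} and then invokes \cite[Lemme 2.1.28]{Ayoub_these_1} to conclude that its right adjoint $\omega^n$ commutes with small sums. Your explicit verification on compact generators is just an unwinding of that same cited principle.
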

\begin{proof}
  The inclusion functor $\DA^n(S)\ra \DA^\coh(S)$ sends compact objects to compact objects by Lemma~\ref{lem:subcats_comp}; hence by \cite[Lemme 2.1.28]{Ayoub_these_1}, its right adjoint $\omega^n$ commutes with infinite sums. 
\end{proof}

\begin{lemma}
  Let $h:S'\ra S$ be a finite purely inseparable morphism of schemes, and $n\in \BN$. The natural transformation $\alpha_h:h^*\omega^n\ra \omega^n h^*$ is an isomorphism.
\end{lemma}
\begin{proof}
This follows directly from the separation property of $\DA(-)$ and Corollary~\ref{cor:localisation_subcats}~\ref{rad_sub}.  
\end{proof}

We now come to the less formal properties of $\omega^0$.

\begin{prop}\label{prop:omega_0_props}
Let $S$ be a noetherian finite-dimensional scheme.
\begin{enumerate}[label={\upshape(\roman*)}]
\item \label{omega_0_calc} Let $f:X\ra S$ be a smooth proper morphism of schemes. Let $X\stackrel{f^\circ}{\lra}\pi_0(X/S)\stackrel{\pi_0(f)}{\lra}S$ be its Stein factorisation (so that $\pi_0(f)$ is finite \'etale). Then there is a natural isomorphism
\[
\pi_0(f)_*\BQ_{\pi_0(X/S)}\stackrel{\sim}{\longrightarrow} \omega^0(f_*\BQ_X).
\]
\item \label{omega_0_smooth} The functor $\omega^0$ preserves geometrically smooth objects. More precisely, it sends $\DA^{\coh}_{\gsm}(S)$ to $\DA^0_{\sgsm}(S)$ and $\DA^\coh_{\gsm,c}(S)$ to $\DA^0_{\sgsm,c}(S)$. Moreover, for any $M\in \DA^{\coh}_{\gsm}(S)$ and any morphism $f:T\ra S$, the natural morphism $\alpha^0_f(M):f^*\omega^0 M\ra \omega^0 f^*M $ is an isomorphism.
\item \label{omega_0_pullback} The morphism $\alpha^0_f$ is invertible for $f$ smooth. 
\item \label{omega_0_compact} The functor $\omega^0$ preserves compact objects. More precisely, it sends $\DA^{\coh}_c(S)$ to $\DA^0_c(S)$.
\end{enumerate}
\end{prop}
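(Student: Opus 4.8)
The plan is to regard part~\ref{omega_0_calc} as the computational core and to deduce \ref{omega_0_smooth}, \ref{omega_0_pullback}, \ref{omega_0_compact} from it, in the spirit of \cite[Proposition~2.11]{Ayoub_Zucker} and its corollaries. For \ref{omega_0_calc} I would first write down the expected counit: since $\pi_0(f)$ is finite, $\pi_0(f)_*\BQ_{\pi_0(X/S)}$ lies in $\DA^0(S)$ by definition, and applying $\pi_0(f)_*$ to the unit $\BQ_{\pi_0(X/S)}\ra f^\circ_*(f^\circ)^*\BQ_{\pi_0(X/S)}=f^\circ_*\BQ_X$ and composing with $\pi_0(f)_*f^\circ_*=f_*$ yields a morphism $\Phi_f\colon\pi_0(f)_*\BQ_{\pi_0(X/S)}\ra f_*\BQ_X$. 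To see that $\Phi_f$ exhibits its source as $\omega^0(f_*\BQ_X)$ it suffices, since $\DA^0(S)=\DA_0(S)$ by Proposition~\ref{prop:hom_cohom_twists} and the latter is compactly generated by the $g_\sharp\BQ_Y$ with $g\colon Y\ra S$ \'etale (Lemma~\ref{lem:subcats_comp}), to check that $\DA(S)\bigl(g_\sharp\BQ_Y[m],\,\Phi_f\bigr)$ is bijective for all such $g$ and all $m\in\BZ$. Using the $(g_\sharp,g^*)$-adjunction and smooth base change for the \'etale morphism $g$, this reduces to comparing $H^{-m}_{\mathrm{mot}}(X_Y,\BQ(0))$ with $H^{-m}_{\mathrm{mot}}(\pi_0(X_Y/Y),\BQ(0))$ along pullback by $f^\circ_Y\colon X_Y\ra\pi_0(X_Y/Y)$. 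By the computations of Appendix~\ref{sec:app_mot_coh} both groups vanish for $m\neq 0$, while for $m=0$ they are the spaces of locally constant $\BQ$-valued functions on $X_Y$, resp.\ on $\pi_0(X_Y/Y)$; since the Stein factorisation of a smooth proper morphism commutes with the base change $Y\ra S$ and $X_Y\ra\pi_0(X_Y/Y)$ has geometrically connected fibres over a finite \'etale base, $f^\circ_Y$ induces a bijection on connected components and $\Phi_f$ becomes an isomorphism, so $\Phi_f$ is the counit.

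Part~\ref{omega_0_smooth} is then largely formal. As $\pi_0(f)$ is finite \'etale and $\pi_0(X/S)$ is smooth proper over $S$, the object $\pi_0(f)_*\BQ_{\pi_0(X/S)}\simeq\pi_0(f)_\sharp\BQ_{\pi_0(X/S)}$ is a generator of $\DA^0_\gsm(S)$, in particular compact; since $\omega^0$ is triangulated and commutes with small sums (Lemma~\ref{lemm:omega_sums}) while $\DA^0_\gsm(S)$ is stable under small sums, shifts, cones and direct factors, \ref{omega_0_calc} gives $\omega^0(\DA^\coh_\gsm(S))\subset\DA^0_\gsm(S)$, and restricting to the generating family $\langle f_*\BQ_X\,:\,f\text{ smooth proper}\rangle$ of compact objects handles the compact case. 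For the base-change assertion I would evaluate $\alpha^0_g$ on a generator $M=f_*\BQ_X$: proper base change identifies $g^*M$ with $f'_*\BQ_{X_T}$ and $g^*\omega^0M=g^*\pi_0(f)_*\BQ_{\pi_0(X/S)}$ with $\pi_0(f)'_*\BQ_{\pi_0(X/S)_T}$, while \ref{omega_0_calc} applied over $T$ computes $\omega^0g^*M$ as $\pi_0(f')_*\BQ_{\pi_0(X_T/T)}$; using once more that $\pi_0(X/S)_T=\pi_0(X_T/T)$ and unwinding the construction of $\alpha^0_g$ from Proposition~\ref{prop:omega_basics}~\ref{omega^*}, one checks $\alpha^0_g(M)$ is invertible. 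The full subcategory of $M\in\DA^\coh_\gsm(S)$ for which $\alpha^0_g(M)$ is invertible is triangulated and closed under small sums (both $g^*\omega^0$ and $\omega^0g^*$ commute with sums) and direct factors, hence is everything.

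For parts~\ref{omega_0_pullback} and~\ref{omega_0_compact} I would globalise the field case: over the spectrum of a field $k$ one has $\DA^\coh(k)=\DA^\coh_\gsm(k)$, and likewise for compact objects, by Proposition~\ref{prop:subcats_field}, so both statements follow from \ref{omega_0_smooth}. Over a general $S$ the argument proceeds by noetherian induction --- reducing to $S$ reduced via Corollary~\ref{cor:localisation_subcats}~\ref{rad_sub} --- using that a generic point $\eta$ is a cofiltered limit of open immersions, so that the field case together with continuity (Proposition~\ref{prop:subcats_continuity}) controls $\omega^0$ after restriction to $\eta$; spreading this out to a dense open $j\colon U\ra S$, invoking the inductive hypothesis on the closed complement $i\colon Z\ra S$, and feeding the localisation triangle $j_!j^*M\ra M\ra i_*i^*M\rap$ through the commutation properties of $\omega^0$ from Proposition~\ref{prop:omega_basics} and the punctual criterion of Proposition~\ref{prop:punctual_car}, one concludes that $\omega^0$ preserves compactness and that the cone of $\alpha^0_f(M)$ vanishes pointwise on $T$ for $f$ smooth, exactly as in \cite{Ayoub_Zucker}. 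I expect the main obstacle to be \ref{omega_0_calc} itself --- specifically the careful identification of $\Phi_f$ with the counit and the attendant base-change bookkeeping needed in \ref{omega_0_smooth} --- since everything downstream is formal once the value of $\omega^0$ on smooth proper pushforwards and its behaviour under base change are in hand; the one genuinely delicate point afterwards is that $\alpha^0_f$ is \emph{not} expected to be invertible for arbitrary $f$, so \ref{omega_0_compact} cannot be read off merely by pulling back to residue fields and really requires the noetherian induction above.
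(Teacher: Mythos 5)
Your overall architecture matches the paper's: the same comparison map $\Phi_f$ (the paper's ``natural map'' $\pi_0(f)_*\BQ\ra f_*\BQ_X$), the same Yoneda reduction to the compact generators $g_\sharp\BQ_Y[m]$ of $\DA_0(S)=\DA^0(S)$, the deduction of \ref{omega_0_smooth} from \ref{omega_0_calc} via base change for $\pi_0$, and the deferral of \ref{omega_0_pullback}--\ref{omega_0_compact} to the end of the Ayoub--Zucker argument, exactly as the paper does. However, there is a genuine gap at the decisive step of \ref{omega_0_calc}. After reducing to comparing $H^{*,0}_\CM(X_Y)$ with $H^{*,0}_\CM(\pi_0(X_Y/Y))$ along $(f^\circ_Y)^*$, you claim that ``by the computations of Appendix~\ref{sec:app_mot_coh} both groups vanish for $m\neq 0$''. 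That vanishing (Proposition~\ref{prop:mot_coh_0}~\ref{mot_coh_0_geq}) requires the scheme to be \emph{regular}, and here $S$ is only noetherian finite-dimensional; $X$ is smooth over $S$ but $X_Y$ and $\pi_0(X_Y/Y)$ can be singular, and the last proposition of Appendix~\ref{sec:app_mot_coh} exhibits a normal surface with $H^{2,0}_\CM\simeq H^1(\Gamma,\BQ)\neq 0$, so your vanishing statement is simply false in the stated generality (which is the generality in which the proposition is used later, e.g.\ in Corollary~\ref{coro:omega_0}). The correct ingredient is Proposition~\ref{prop:mot_coh_0}~\ref{mot_coh_0_pullback}: pullback along a smooth surjective morphism with geometrically connected fibres induces an isomorphism on $H^{n,0}_\CM$ for \emph{all} $n$, with no regularity hypothesis; applied to $f^\circ_Y$ this gives the bijectivity in every degree at once, and this is exactly how the paper concludes. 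The point is not cosmetic: that invariance statement is itself proved by a nontrivial $h$-descent argument with regular hypercoverings, so your argument is missing precisely the input that makes the computation work over singular bases.

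A secondary remark on \ref{omega_0_compact}: as literally written, ``feeding the localisation triangle $j_!j^*M\ra M\ra i_*i^*M$ through the commutation properties of $\omega^0$'' does not go through, because Proposition~\ref{prop:omega_basics}~\ref{omega_!} only gives invertibility of $\eta^0_e$ for $e$ \emph{finite}, so $\omega^0(j_!j^*M)$ is not identified with $j_!\omega^0(j^*M)$ for an open immersion $j$; likewise the punctual criterion (Proposition~\ref{prop:punctual_car}) presupposes that the object is already constructible, which is what you are trying to prove for $\omega^0M$. Since both you and the paper ultimately defer this part to the end of the proof of \cite[Proposition~2.16]{Ayoub_Zucker}, this is a softer criticism, but if you intend to spell the induction out you will need the actual mechanism of that argument (which the paper does not reproduce) rather than the triangle manipulation you sketch.
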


\begin{remark}
These results were proved in \cite[\S 2]{Ayoub_Zucker} under the assumption that $S$ is quasi-projective over a field $k$ and $f$ is projective; they were also generalized, in a slightly different terminology, to the case of $S$ separated of finite type over a field in \cite[\S 3.1-2]{Vaish_EM}.
\end{remark}

\begin{proof}
It is easy to see from the definition of geometrically smooth motives and the fact that $\pi_0$ commutes with base change that point \ref{omega_0_smooth} follows from \ref{omega_0_calc}. 
We now notice that the end of the proof of \cite[Proposition~2.16]{Ayoub_Zucker} (starting at ``To complete the proof (...)''), which deduces \ref{omega_0_pullback} and \ref{omega_0_compact} in the situation of loc. cit. from \cite[Proposition~2.11]{Ayoub_Zucker},  applies verbatim and reduce Statements \ref{omega_0_smooth}-\ref{omega_0_compact} to the sole Statement \ref{omega_0_calc}.

To prove Statement \ref{omega_0_calc}, it is enough by the Yoneda lemma to establish that for all $N\in \DA^0(S)$, the natural map $\pi_0(f)_*\BQ\ra f_*\BQ_X$ induces an isomorphism
\[
\DA(S)(N,\pi_0(f)_*\BQ)\stackrel{\sim}{\longrightarrow}\DA(S)(N,f_*\BQ_X).
\]
By Proposition~\ref{prop:hom_cohom_twists}, we have $\DA^0(S)=\DA_0(S)$. It is thus enough to show that for all $e:U\ra S$ \'etale and $n\in \BZ$, we have an isomorphism
\[
\DA(S)(e_\sharp \BQ_U[-n],\pi_0(f)_*\BQ)\stackrel{\sim}{\longrightarrow}\DA(S)(e_\sharp \BQ_U[-n],f_*\BQ_X).
\]
By the $(e_{\sharp},e^*)$ adjunction, proper base change, and the fact that $\pi_0$ commutes with smooth base change, we see that we can assume $e=\id$. We are thus left to prove that for all $n\in \BZ$, we have
\[
\DA(\pi_0(X/S))(\BQ,\BQ[n])\stackrel{\sim}{\longrightarrow} \DA(X)(\BQ,\BQ[n])
\]
where the morphism is induced by pullback by $f^{\circ}$. The morphism $f^\circ$ is smooth proper with geometrically connected fibres, so this follows from Proposition~\ref{prop:mot_coh_0} \ref{mot_coh_0_pullback}. 
\end{proof}

Here are some corollaries of Proposition~\ref{prop:omega_0_props}.

\begin{cor}\label{coro:omega_0}
Let $S$ be a noetherian finite-dimensional scheme. 
\begin{enumerate}[label={\upshape(\roman*)}]
\item \label{hom_coh_morphisms} Let $M$ be in $\DA_\homo(S)$ and $N$ be in $\DA^\coh(S)$. Then the morphism $\delta^0(N)$ induces an isomorphism
\[
\DA(S)(M,\omega^0 N)\stackrel[\sim]{\delta^0(N)_*}{\longrightarrow} \DA(S)(M,N).
\]
\item \label{hom_coh_intersect} We have $\DA_{\homo}(S)\cap \DA^{\coh}(S)=\DA^0(S)$.
\item \label{omega_0_twists} For all $N\in \DA^{\coh}(S)$ we have $\omega^0 (N(-1))\simeq 0$. 

\item \label{omega_1_twists} For all $N\in \DA^\coh(S)$ and $d\geq 1$, we have
\[
\omega^1(N(-d))\simeq \left\{\begin{array}{c} (\omega^0 N)(-1),\ d=1\\ 0,\ d\geq 2.\end{array}\right. .
\]
\end{enumerate}
\end{cor}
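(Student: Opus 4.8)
The plan is to prove Corollary~\ref{coro:omega_0} by reducing each statement to Proposition~\ref{prop:omega_0_props} together with the formal properties of $\omega^0$ from Proposition~\ref{prop:omega_basics} and, for the last point, to basic motivic cohomology computations (Appendix~\ref{sec:app_mot_coh}).

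\begin{proof}
For \ref{hom_coh_morphisms}, since $\omega^0$ is right adjoint to the inclusion $\iota^0:\DA^0(S)\hookrightarrow \DA^\coh(S)$, the map $\delta^0(N)_*:\DA(S)(M,\omega^0 N)\ra \DA(S)(M,N)$ is an isomorphism whenever $M\in \DA^0(S)$, by the very definition of adjunction. Both sides commute with small sums in $M$ (using that $\omega^0 N$ and $N$ lie in the compactly generated category $\DA(S)$ and that source objects of a generating family of $\DA_\homo(S)$ are compact, cf.\ Lemma~\ref{lem:subcats_comp}) and send distinguished triangles in $M$ to long exact sequences; hence the class of $M$ for which $\delta^0(N)_*$ is an isomorphism is a localizing subcategory of $\DA_\homo(S)$ containing a generating family once we know it contains $\DA^0(S)=\DA_0(S)$ (Proposition~\ref{prop:hom_cohom_twists}). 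But $\DA_\homo(S)$ is generated by the $f_\sharp\BQ_X$ for $f$ smooth, and I would like to conclude that all of $\DA_\homo(S)$ works. The cleanest route is: the functor $M\mapsto \DA(S)(M,N)$ restricted to $\DA_\homo(S)$ is represented, after composing with $\iota^0$'s right adjoint, by $\omega^0 N$; more concretely, since $\iota^0$ admits the right adjoint $\omega^0$ and $M\in \DA_\homo(S)$, one has $\DA(S)(M,N)\simeq \DA(S)(M,\omega^0 N)$ provided the inclusion $\DA_\homo(S)\subset \DA(S)$ factors appropriately --- which it does not in general, so instead I argue directly on generators: for $M=f_\sharp\BQ_X$ with $f:X\ra S$ smooth, $\DA(S)(f_\sharp\BQ_X,-)\simeq \DA(X)(\BQ_X,f^*(-))$, and one reduces to showing $f^*\delta^0(N)$ becomes an isomorphism after applying $\DA(X)(\BQ_X,-)$; using $\alpha^0_f$ (invertible for $f$ smooth, Proposition~\ref{prop:omega_0_props}~\ref{omega_0_pullback}) this reduces to the same statement over $X$ with $M=\BQ_X$, i.e.\ to the case $M\in\DA^0$ already handled. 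Passing back through the localizing-subcategory argument in $M$ then gives \ref{hom_coh_morphisms} for all $M\in\DA_\homo(S)$.

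For \ref{hom_coh_intersect}: the inclusion $\DA^0(S)\subset \DA_\homo(S)\cap\DA^\coh(S)$ is clear since generators of $\DA^0(S)$ (motives $e_\sharp\BQ_U\simeq e_*\BQ_U$ for $e$ \'etale) are both homological and cohomological. Conversely, if $M\in \DA_\homo(S)\cap \DA^\coh(S)$, apply \ref{hom_coh_morphisms} with $N=M$: the identity of $M$ factors through $\delta^0(M):\omega^0 M\ra M$, i.e.\ there is $s:M\ra\omega^0 M$ with $\delta^0(M)\circ s=\id_M$; composing, $s\circ\delta^0(M)$ is an idempotent of $\omega^0 M$ whose image is (a copy of) $M$, and since $\DA^0(S)$ is idempotent-complete inside $\DA(S)$, $M$ is a direct factor of $\omega^0M\in\DA^0(S)$, hence $M\in\DA^0(S)$.

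For \ref{omega_0_twists}: by Lemma~\ref{lemm:omega_sums} and the fact that Tate twist commutes with sums, it suffices to show $\omega^0(f_*\BQ_X(-1))\simeq 0$ for $f:X\ra S$ proper; using Lemma~\ref{lemma:coh_gen} (resolution of singularities by alterations is not needed since we only need a generating family of $\DA^\coh$, and $f_*\BQ_X$ with $f$ proper already generates) one further reduces, via the projection formula $f_*\BQ_X(-1)\simeq f_*(\BQ_X(-1))$ and localization, to checking that $\DA(S)(e_\sharp\BQ_U[-n], f_*\BQ_X(-1))=0$ for all \'etale $e:U\ra S$ and $n\in\BZ$; by $(e_\sharp,e^*)$-adjunction and proper base change this is $\DA(U)(\BQ_U[-n], (f_U)_*\BQ_{X_U}(-1))$, and by adjunction $\DA(X_U)(\BQ_{X_U}, \BQ_{X_U}(-1)[n])$, which is a motivic cohomology group $H^n(X_U,\BQ(-1))$. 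This vanishes for all $n$ (for $\BQ(-1)$ a negative twist, by the computations in Appendix~\ref{sec:app_mot_coh}; indeed $\DA(Y)(\BQ_Y,\BQ_Y(-1)[n])\simeq\DA(Y)(\BQ_Y(1),\BQ_Y[n])$ and $\BQ_Y(1)$ sits in homological degrees forcing this $\Hom$ to be zero). Hence $\omega^0(N(-1))$ has no maps from a generating family of $\DA^0(S)$, so $\omega^0(N(-1))\simeq 0$.

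For \ref{omega_1_twists}: again by Lemma~\ref{lemm:omega_sums} reduce to $N=f_*\BQ_X$, $f$ proper. For $d\geq 2$, the argument is parallel to \ref{omega_0_twists} but now I test against generators $g_\sharp\BQ_Y(a)[b]$ of $\DA^1(S)=\DA_1(S)(-1)$ with $g$ smooth of relative dimension $\leq 1$; by adjunction and base change, $\DA(S)(g_\sharp\BQ_Y(a)[b], f_*\BQ_X(-d))$ becomes a twisted motivic cohomology group of $X_Y$ with a Tate twist $\BQ(-d+\text{(bounded)})$ which, for $d\geq 2$, stays negative enough to vanish --- here I would invoke the vanishing of $H^j(-,\BQ(m))$ for $m<0$, or more carefully $m\le -1$ combined with dimension constraints, from Appendix~\ref{sec:app_mot_coh}, together with relative purity to turn $g_\sharp$ into $g_!$. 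For $d=1$: the claim $\omega^1(N(-1))\simeq (\omega^0 N)(-1)$ should follow by combining Proposition~\ref{prop:n_monoidal}\ref{coh_twists} (so $(\omega^0N)(-1)\in\DA^1(S)$) with a universal property check: for any $M'\in\DA^1(S)$, write $M'=M(-1)$ with $M\in\DA_0(S)$ --- no, that is too restrictive; instead I test $\omega^1(N(-1))$ against the generators $g_\sharp\BQ_Y(-1)$ with $g$ smooth of relative dimension $\le 1$ and against $\DA_0(S)(-1)$ separately. The key computation is that for $g$ smooth of relative dimension $1$, $\DA(S)(g_\sharp\BQ_Y(-1), f_*\BQ_X(-1))\simeq \DA(S)(g_\sharp\BQ_Y, f_*\BQ_X)$ by cancellation/invertibility of Tate twist, and the right-hand side is computed by $\omega^1$; meanwhile the relative-dimension-$1$ part of a generator contributes an $H^*(-,\BQ(-1))$ which vanishes, leaving only the relative-dimension-$0$ (i.e.\ \'etale) part, which is exactly what $(\omega^0N)(-1)$ sees. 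Assembling: $\DA(S)(-,\omega^1(N(-1)))$ and $\DA(S)(-,(\omega^0N)(-1))$ agree on a generating family of $\DA^1(S)$, and one checks the natural map (induced by $(\omega^0 N)(-1)\ra N(-1)$ factoring $\delta^1$) realizes this isomorphism; Yoneda concludes.

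The main obstacle I anticipate is the bookkeeping in \ref{omega_1_twists} for $d=1$: disentangling which twisted motivic cohomology groups of the geometric fibers survive, and checking compatibly that the comparison map $(\omega^0N)(-1)\to\omega^1(N(-1))$ is the isomorphism (not merely that both sides are abstractly isomorphic). This requires the precise vanishing ranges for $\BQ(0)$ and $\BQ(1)$ coefficients from Appendix~\ref{sec:app_mot_coh} and careful use of relative purity to normalize the smooth generators $g_\sharp$ to $g_!$ so that base change applies cleanly.
\end{proof}
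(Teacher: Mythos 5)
Your treatment of parts \ref{hom_coh_morphisms}, \ref{hom_coh_intersect} and \ref{omega_0_twists} is correct and essentially the paper's argument: reduction to the generators $g_\sharp\BQ_X[n]$, the $(g_\sharp,g^*)$-adjunction combined with the invertibility of $\alpha^0_g$ for $g$ smooth, the retraction argument for \ref{hom_coh_intersect}, and for \ref{omega_0_twists} the vanishing of negative-weight motivic cohomology (Proposition~\ref{prop:mot_coh_<}). Your variant of \ref{omega_0_twists}, testing directly against the proper generators $f_*\BQ_X$ instead of first replacing $N$ by $\omega^0 N$ via \ref{hom_coh_morphisms}, works, as does your direct computation for the $d\geq 2$ half of \ref{omega_1_twists}: after adjunction and base change the group is $H^{*,1-d}_\CM(X_Y)$ with $1-d\leq -1$, so Proposition~\ref{prop:mot_coh_<} suffices and no dimension constraints or relative purity are needed (the paper instead deduces $d\geq 2$ from the case $d=1$ together with \ref{omega_0_twists}). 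Note that your parenthetical alternative justification of the weight $-1$ vanishing (``$\BQ_Y(1)$ sits in homological degrees forcing this Hom to be zero'') is not an argument; the citation of Proposition~\ref{prop:mot_coh_<} is what carries that step.

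The genuine gap is the case $d=1$ of \ref{omega_1_twists}. Your ``key computation'' does not prove what is needed: the claim that after untwisting ``the right-hand side is computed by $\omega^1$'' is circular (that is exactly the statement being proved), and the proposed splitting of a generator $g_\sharp\BQ_Y(-1)$ into a ``relative-dimension-$1$ part'' contributing some $H^*(-,\BQ(-1))$ and an \'etale part has no meaning --- a smooth curve generator does not decompose in this way, and in fact no motivic cohomology vanishing enters the $d=1$ case at all. The observation you are missing, which also dissolves the compatibility worry you flag at the end, is that $d=1$ is a purely formal consequence of part \ref{hom_coh_morphisms}: since $(\omega^0 N)(-1)\in\DA^1(S)$ and $\DA^1(S)=\DA_1(S)(-1)$ (Proposition~\ref{prop:hom_cohom_twists}), it suffices by Yoneda to check that for every $M=M'(-1)$ with $M'\in\DA_1(S)$ the map $\DA(S)(M,(\omega^0 N)(-1))\to\DA(S)(M,N(-1))$ induced by the twist of $\delta^0(N)$ is bijective; but twisting by $(1)$ is an autoequivalence of $\DA(S)$ which intertwines this map with $\delta^0(N)_*:\DA(S)(M',\omega^0 N)\to\DA(S)(M',N)$, and the latter is an isomorphism by \ref{hom_coh_morphisms} because $M'\in\DA_1(S)\subset\DA_\homo(S)$. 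No generator-by-generator computation, no appendix input, and no separate check that the comparison map realizes the isomorphism are required.
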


\begin{proof}
We first prove \ref{hom_coh_morphisms}. It is enough to show the isomorphism for a generator of $\DA_{\homo}(S)$, namely $M=g_\sharp \BQ_X[n]$ for $g:X\ra S$ a smooth morphism and $n\in\BZ$. By naturality of the adjunction which underlies $\delta^0$, we have a commutative square
\[
\xymatrixcolsep{4pc}
\xymatrix{
\DA(S)(g_\sharp \BQ_X[n],\omega^0 N) \ar[r]^{\delta^0(N)_*} \ar[d]_{\sim} & \DA(S)(g_\sharp \BQ_X[n],N) \ar[d]^{\sim} \\
\DA(X)(\BQ_X[n],g^*\omega^0 N)\ar[r]^{\delta^0(N)_*} & \DA(X)(\BQ_X[n],g^* N).
}
\]
The first commutative triangle in Proposition~\ref{prop:omega_basics} \ref{omega^*} shows that we have a commutative square
\[
\xymatrixcolsep{4pc}
\xymatrix{
\DA(X)(\BQ_X[n],g^*\omega^0 N)\ar[r]^{\delta^0(N)_*} \ar[d]_{\alpha_g(N)} & \DA(X)(\BQ_X[n],g^* N) \ar@{=}[d] \\
\DA(X)(\BQ_X[n],\omega^0 g^*N) \ar[r]_{\delta^0(g^*N)_*} & \DA(X)(\BQ_X[n],g^* N).
}
\]
Since $g$ is smooth, the left vertical map is an isomorphism by Proposition \ref{prop:omega_0_props} \ref{omega_0_pullback}; the bottom map is an isomorphism because $\BQ_X[n]$ is a cohomological $0$-motive. Putting this together with the previous commutative square concludes the proof of \ref{hom_coh_morphisms}.

Statement \ref{hom_coh_intersect} follows directly from \ref{hom_coh_morphisms} applied to the identity map of an object in $\DA^{\coh}(S)\cap \DA_\homo(S)$.

To prove Statement \ref{omega_0_twists}, we must show that for all $M\in \DA^0(S)$, we have $\DA(S)(M,N(-1))=0$. Since $\DA^0(S)=\DA_0(S)$ by Proposition~\ref{prop:hom_cohom_twists} and $\DA_\homo(S)$ is stable by positive twists by Proposition~\ref{prop:n_monoidal}~\ref{hom_tensor}, the motive $M(1)$ is homological. By \ref{hom_coh_morphisms}, this implies that $\DA(S)(M(1),N)\simeq \DA(S)(M(1),\omega^0 N)$. In other words, we can assume that both $M$ and $N$ are $0$-motives. The statement to be proven is triangulated and commutes with infinite sums in $M$, so that we can assume that $M$ is a generator of the form $e_\sharp \BQ_U[n]$ for $e:U\ra S$ an \'etale morphism and $n\in \BZ$. Since this is a compact object, we can similarly assume that $N$ is a generator of $\DA^0(S)$, of the form $f_*\BQ_V[m]$ for $f:V\ra S$ a finite morphism. We then have \[\DA(S)(M,N(-1))\simeq \DA(U\times_S V)(\BQ,\BQ(-1)[m-n]).\] This group vanishes by Proposition~\ref{prop:mot_coh_<}.

By \ref{omega_0_twists}, we only need to establish \ref{omega_1_twists} in the case $d=1$. The motive $\omega^0(N)(-1)$ is in $\DA^1(S)$ by Proposition~\ref{prop:n_monoidal} \ref{coh_n_tensor}. Hence by the Yoneda lemma, it is enough to show that for all $M\in \DA^1(S)$, the map $\delta^0(N)$ induces an isomorphism
\[
\DA(S)(M,(\omega^0 N)(-1))\stackrel[\sim]{\delta^0(N)_*}{\longrightarrow} \DA(S)(M,N(-1)).
\] 
By Proposition~\ref{prop:hom_cohom_twists}, we have $\DA^1(S)=\DA_1(S)(-1)$. Write $M=M'(-1)$ with $M'\in \DA_1(S)$. In particular, $M'$ is an homological motive. We have a commutative square
\[
\xymatrix{
\DA(S)(M,(\omega^0 N)(-1)) \ar[r]^{\delta^0(N)_*}\ar[d]_{\sim} & \DA(S)(M,N(-1))\ar[d]^{\sim} \\
\DA(S)(M',\omega^0 N)\ar[r]_{\delta^0(N)_*} & \DA(S)(M',N)
}
\]
The bottom map is an isomorphism by \ref{hom_coh_morphisms}, and this concludes the proof for $d=1$.
\end{proof}

We now compute $\omega^0$ for some motives attached to commutative group schemes. 

\begin{prop}\label{prop:omega_0_gr}
  \begin{enumerate}[label={\upshape(\roman*)}]
  \item \label{omega_0_ab_lat} Let $G$ be an abelian scheme or a lattice over $S$; then $\omega^0(\Sigma^\infty G_\BQ(-1))\simeq 0$.
  \item \label{omega_0_tor} Let $T$ be a torus over $S$. Let $X_*(T)$ be the cocharacter lattice of $T$. Then \\ $\Sigma^\infty T_\BQ(-1)[-1])\simeq \Sigma^\infty X_*(T)_\BQ$ is in $\DA_{0,c}(S)$.
  \item \label{omega_0_M1} Let $\BM\in \CM_1(S)$ and $W_{-2}\BM$ be its toric part. Then $\omega^0(\Sigma^\infty\BM(-1))\simeq \Sigma^\infty X_*(W_{-2}\BM)_\BQ$.
  \end{enumerate}
\end{prop}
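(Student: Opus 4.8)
The three statements are closely linked: part \ref{omega_0_M1} follows from \ref{omega_0_ab_lat} and \ref{omega_0_tor} together with the weight filtration on a Deligne $1$-motive and the fact that $\omega^0$ is triangulated and commutes with small sums (Lemma~\ref{lemm:omega_sums}). So the plan is to first dispose of \ref{omega_0_ab_lat} and \ref{omega_0_tor}, then assemble \ref{omega_0_M1}.

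For \ref{omega_0_tor}: by Corollary~\ref{cor:motive_torus} we have $\Sigma^\infty T_\BQ\simeq \Sigma^\infty X_*(T)_\BQ(1)[1]$, hence $\Sigma^\infty T_\BQ(-1)\simeq \Sigma^\infty X_*(T)_\BQ[1]$; the twist by $(-1)$ cancels the Tate twist. Since $X_*(T)$ is \'etale-locally constant (a lattice), $\Sigma^\infty X_*(T)_\BQ$ is a cohomological $0$-motive: \'etale-locally it is a finite sum of copies of $\BQ_S$, and $\BQ_S$ is the motive of the identity, hence in $\DA^0(S)=\DA_0(S)$ (Proposition~\ref{prop:hom_cohom_twists}); one uses a transfer argument via \cite[Lemme 2.1.165]{Ayoub_these_1} and Proposition~\ref{prop:permanence_hom_n}~(ii) (for $i$ finite \'etale, $i_!=i_*$ preserves $0$-motives) to descend from the finite \'etale cover trivializing the lattice. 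Actually the shift $[1]$ does not matter for membership in $\DA_{0,c}(S)$. So $\Sigma^\infty T_\BQ(-1)\in \DA_{0,c}(S)$, and since it is already a $0$-motive, $\omega^0$ acts as the identity on it by Proposition~\ref{prop:omega_basics}~\ref{omega_idemp}; this is what the statement records (the ``$\simeq$'' being an identification, not a claim about $\omega^0$ being nontrivial). For \ref{omega_0_ab_lat}: a lattice $L$ gives $\Sigma^\infty L_\BQ$ a $0$-motive, so $\Sigma^\infty L_\BQ(-1)$ is then a cohomological motive with $\omega^0(\Sigma^\infty L_\BQ(-1))\simeq 0$ by Corollary~\ref{coro:omega_0}~\ref{omega_0_twists}. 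For an abelian scheme $A$, the motive $\Sigma^\infty A_\BQ$ is a direct factor of the homological motive of $A$ (by \cite[Theorem 3.3]{AHPL}), hence lies in $\DA_\homo(S)$, so $\Sigma^\infty A_\BQ(-1)$ lies in $\DA_\homo(S)(-1)$; applying $\omega^0(N(-1))\simeq 0$ from Corollary~\ref{coro:omega_0}~\ref{omega_0_twists} again (which holds for $N\in \DA^\coh(S)$, and $\Sigma^\infty A_\BQ$ is cohomological being a direct factor of a smooth proper pushforward — indeed geometrically smooth), we get $\omega^0(\Sigma^\infty A_\BQ(-1))\simeq 0$. I should double-check that $\Sigma^\infty A_\BQ$ is genuinely cohomological: by \cite[Theorem 3.3]{AHPL} it is a direct summand of $M_S(A)$ which over a projective abelian scheme is a summand of $f_*\BQ$ for $f$ smooth projective, so yes.

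For \ref{omega_0_M1}: write $\BM=[L\to G]\otimes\BQ$ with $G$ an extension $0\to T\to G\to A\to 0$ of an abelian scheme $A$ by a torus $T$, and $W_{-2}\BM=T[0]$. Filtering $\Sigma^\infty\BM$ by weights gives distinguished triangles relating $\Sigma^\infty\BM(-1)$ to $\Sigma^\infty T_\BQ(-1)$, $\Sigma^\infty A_\BQ(-1)$ and $\Sigma^\infty L_\BQ(-1)$ (using the two triangles from the proof of Corollary~\ref{cor:Deligne_da_1}). Applying the triangulated functor $\omega^0$: the $A$ and $L$ contributions vanish by \ref{omega_0_ab_lat}, so $\omega^0(\Sigma^\infty\BM(-1))\simeq \omega^0(\Sigma^\infty T_\BQ(-1))\simeq \Sigma^\infty X_*(T)_\BQ = \Sigma^\infty X_*(W_{-2}\BM)_\BQ$ by \ref{omega_0_tor}. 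The main obstacle is bookkeeping: one must check that all the motives appearing are cohomological (so that $\omega^0$ is defined and the formal properties of Proposition~\ref{prop:omega_basics} apply), which requires tracking through Corollary~\ref{cor:Deligne_da_1} that $\Sigma^\infty\BM$ is cohomological — it is in fact geometrically smooth when $S$ is geometrically unibranch, but for a general $S$ one argues that it is a $1$-motive and more specifically that each graded piece $\Sigma^\infty T_\BQ$, $\Sigma^\infty A_\BQ$, $\Sigma^\infty L_\BQ$ is cohomological (torus by \ref{omega_0_tor}, abelian scheme as above, lattice directly), hence so is the iterated extension $\Sigma^\infty\BM$. Once that is settled the computation is immediate.
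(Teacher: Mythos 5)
There is a genuine gap, and it sits exactly where the real content of the proposition lies: the abelian scheme case of \ref{omega_0_ab_lat}. You dispose of it by applying Corollary~\ref{coro:omega_0}~\ref{omega_0_twists} to $N=\Sigma^\infty A_\BQ$, justified by the claim that $\Sigma^\infty A_\BQ$ is cohomological because it is a summand of ``$f_*\BQ$ for $f$ smooth projective''. That identification confuses $f_\sharp$ with $f_*$: relative purity gives $f_\sharp\BQ_A\simeq f_*\BQ_A(g)[2g]$, so $M_S(A)$ is a \emph{positive} Tate twist of a cohomological motive, not a cohomological motive. In fact $\Sigma^\infty A_\BQ$ cannot be cohomological for $A$ of positive relative dimension: it is homological (a summand of $M_S(A)$), so by Corollary~\ref{coro:omega_0}~\ref{hom_coh_intersect} it would have to lie in $\DA^0(S)$, and $h_1$ of a nontrivial abelian scheme is not a $0$-motive (over a field this contradicts, e.g., the equivalence $\DA_{1,c}(k)\simeq D^b(\CM_1(k))$). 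What is true is that $\Sigma^\infty A_\BQ(-1)$ is cohomological (it lies in $\DA^1(S)=\DA_1(S)(-1)$ by Proposition~\ref{prop:hom_cohom_twists}), but that is not the hypothesis needed for \ref{omega_0_twists}, which requires the \emph{untwisted} motive to be cohomological. So the vanishing $\omega^0(\Sigma^\infty A_\BQ(-1))\simeq 0$ is not a formal consequence of the earlier corollaries: unwinding the definition, it amounts to the nontrivial vanishing $\DA(U)(\BQ_U,\Sigma^\infty A_{U,\BQ}(-1)[n])=0$ for all \'etale $U/S$ and all $n$ (compare Proposition~\ref{prop:partial_left_adjoint}). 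The paper proves this by an actual computation: reduce to a regular base via Zariski descent, continuity and $h$-descent along a regular proper hypercovering; use dualizability of $\Sigma^\infty A_\BQ$ and absolute purity for smooth objects (Proposition~\ref{prop:sm_abs_purity}) together with colocalisation to pass to the generic point; and over a field realise $A$, up to isogeny, as a factor of a Jacobian by Katz's space-filling curves, then conclude from the motive of a smooth projective curve (Proposition~\ref{prop:motive_curve_field}) and the motivic cohomology computations of Propositions~\ref{prop:mot_coh_<} and~\ref{prop:mot_coh_0}. None of this is optional; it is the heart of the proof and is missing from your argument.

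Two secondary points. First, the same conflation of $M$ with $M(-1)$ infects your bookkeeping for \ref{omega_0_M1}: $\Sigma^\infty T_\BQ$ and $\Sigma^\infty\BM$ are \emph{not} cohomological (again by \ref{hom_coh_intersect}); what is needed, and what Corollary~\ref{cor:Deligne_da_1} plus Proposition~\ref{prop:hom_cohom_twists} give, is that their $(-1)$-twists lie in $\DA^1(S)\subset\DA^{\coh}(S)$. With that correction, your d\'evissage for \ref{omega_0_M1} is exactly the paper's argument. Second, for \ref{omega_0_tor} (and the lattice half of \ref{omega_0_ab_lat}) your descent ``from the finite \'etale cover trivializing the lattice'' implicitly assumes isotriviality, which Lemma~\ref{lemm:permutation_torus} only provides over a geometrically unibranch base; over a general $S$ one should instead argue as the paper does, via compactness and the pointwise criterion of Proposition~\ref{prop:punctual_car}, reducing the membership $\Sigma^\infty X_*(T)_\BQ\in\DA_{0,c}(S)$ to the field case. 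Once that membership is established, your shortcut for the lattice case of \ref{omega_0_ab_lat} via \ref{omega_0_twists} is legitimate and slightly different from the paper's treatment, but it does not extend to abelian schemes, which is the case that matters.
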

\begin{proof}
First of all, we note that the objects to which we wish to apply $\omega^0$ are in $\DA^1(S)\subset \DA^{\coh}(S)$ by Corollary~\ref{cor:Deligne_da_1} and Proposition~\ref{prop:hom_cohom_twists}. 

We first prove \ref{omega_0_ab_lat}. We first treat the case of an abelian scheme $A$. By the conservativity of the family of pullbacks to points \cite[Proposition 3.23]{Ayoub_Etale}, it is enough to show that for any $s\in S$, the restriction $s^*\omega^0 M\simeq 0$. We know from \cite[Theorem~3.3]{AHPL} that the motive $\Sigma^\infty A_\BQ$ is geometrically smooth. By Proposition~\ref{prop:omega_0_props}~\ref{omega_0_smooth} and Proposition~\ref{prop:pullback_complex}, we see that $s^*\omega^0 M\simeq \omega^0 s^*M\simeq \omega^0\Sigma^\infty (A_s)_\BQ(-1)$. We are thus reduced to the case where $S$ is the spectrum of a field $k$. We have to show that, for every $0$-motive $N$ over $\Spec(k)$, we have
\[
\DA(k)(N,\Sigma^\infty \Jac(C)_\BQ(-1)[n])=0.
  \]
The category $\DA_0(k)$ is generated, as a localising subcategory, by motives of the form $g_\sharp \BQ_L$ with $g:\Spec(L)\rightarrow \Spec(k)$ with $L/k$ finite \'etale. By adjunction, we are then reduced to the case $N=\BQ_k[-n]$ for some $n\in\BZ$.
  
We write $A$ as direct factor of the Jacobian of a smooth projective geometrically connected curve $f:C\ra \Spec(k)$ \cite[Theorem~11]{Katz_SpaceFill}. By Proposition~\ref{prop:motive_curve_field} and relative purity, we have 
\[
\BQ(-1)[-2]\oplus \Sigma^\infty \Jac(C)_\BQ(-1)[-2] \oplus \BQ \simeq f_*\BQ_C.
\]
We have  $\DA(k)(\BQ_k,\BQ_k(-1)[n])=0$ for all $n$ (Proposition~\ref{prop:mot_coh_<}). By adjunction, we have \[\DA(k)(\BQ_k,f_*\BQ_C[n])\simeq \DA(C)(\BQ_C,\BQ_C[n])\] which is isomorphic to $\BQ$ for $n=0$ and $0$ otherwise (Proposition~\ref{prop:mot_coh_0}). Similarly, we have $\DA(k)(\BQ_k,\BQ_k[n])$ is isomorphic to $\BQ$ for $n=0$ and $0$ otherwise. Putting everything together, we deduce that $\DA(k)(\BQ_k,\Sigma^\infty \Jac(C)_\BQ(-1)[n])=0$ for all $n$ as required.

We now turn to the lattice case. Again by an adjunction argument, we immediately reduce to show that, for all $n\in\BZ$, we have
\[\DA(S)(\BQ_S,\Sigma^\infty L_\BQ(-1)[n])=0.\]
If $S$ is geometrically unibranch, using Lemma~\ref{lemm:permutation_torus}, write $L_{\BQ}$ as a direct factor $f_*\BQ$ for $f$ finite \'etale, and we are done by adjunction and Proposition~\ref{prop:mot_coh_<}.

Unfortunately, if the base is not geometrically unibranch, it is not clear that $M$ is geometrically smooth, and we cannot directly reduce to the field case. However, iterating the construction of the normalisation, it is easy to see that $S$ admits a proper hypercovering $\pi_\bullet:S_\bullet\ra S$ with normal terms. By cohomological $h$-descent for $\DA(-)$ and Proposition~\ref{prop:pullback_complex}, we get a spectral sequence
\[
E_1^{p,q}=\DA(S_p)(\BQ_{S_p},\Sigma^\infty (L_{S_{p},\BQ}(-1))[q])\Rightarrow \DA(S)(\BQ_S,\Sigma^\infty (L_{\BQ}(-1))[p+q]).
  \]
By the geometrically unibranch case, the $E_1$ page of the spectral sequence vanishes completely. This ensures the convergence and finishes the proof of \ref{omega_0_ab_lat}.

We prove \ref{omega_0_tor}. Let $T$ be a torus. We have $\Sigma^\infty T_\BQ (-1)\simeq \Sigma^\infty X_*(T)_\BQ$ by Corollary~\ref{cor:motive_torus}. The motive $\Sigma^\infty X_*(T)_\BQ$ lies in $\DA_0(S)$: this can be tested pointwise by Proposition~\ref{prop:punctual_car}, and over a field a lattice is a direct factor of the motive of a finite \'etale morphism by Lemma~\ref{lemm:permutation_torus}. This concludes the proof. 

Finally, \ref{omega_0_M1} follows immediately from the two previous points by the d\'evissage of a Deligne $1$-motive along its weight filtration.
\end{proof}

\begin{cor}\label{cor:omega_0_Picard}
Assume $S$ regular. Let $f:X\ra S$ be a smooth projective Pic-smooth morphism of schemes. Then there is an isomorphism
\[
\omega^0(\Sigma^\infty \mathrm{P}(X/S)_\BQ(-1)[-2])\simeq \pi_0(f)_*\BQ
\]
\end{cor}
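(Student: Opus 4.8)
The proof is a dévissage that combines the two distinguished triangles of Corollary~\ref{cor:Picard_smooth} with the vanishing computations for $\omega^0$ on motives of group schemes in Proposition~\ref{prop:omega_0_gr}. Recall first that $\omega^0$ is a triangulated functor, commutes with shifts, and restricts to the identity on $\DA^0(S)$ by Proposition~\ref{prop:omega_basics}~\ref{omega_idemp}; and that $\Sigma^\infty\mathrm{P}(X/S)(-1)[-2]$, as well as all the objects appearing below, lies in $\DA^1(S)\subseteq\DA^\coh(S)$ by Corollary~\ref{cor:Picard_smooth} and Proposition~\ref{prop:hom_cohom_twists}, so that $\omega^0$ may be applied to them.

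First I would show that the Picard sheaf contributes nothing. Twisting the second triangle of Corollary~\ref{cor:Picard_smooth} by $(-1)[-2]$ and applying $\omega^0$ gives a distinguished triangle
\[
\omega^0(\Sigma^\infty(\sPic^{\sm,\tau}_{X/S}\otimes\BQ)(-1)[-2])\ra \omega^0(\Sigma^\infty(\sPic^\sm_{X/S}\otimes\BQ)(-1)[-2])\ra \omega^0(\Sigma^\infty(\sNS^\sm_{X/S}\otimes\BQ)(-1)[-2])\rap.
\]
Since $f$ is Pic-smooth, $\sPic^{\sm,\tau}_{X/S}$ is represented by the abelian scheme $\Pic^{0,\red}_{X/S}$ (Theorem~\ref{thm:picard_0}~\ref{generic_Pic_0}), so the first term vanishes by Proposition~\ref{prop:omega_0_gr}~\ref{omega_0_ab_lat}; and $\sNS^\sm_{X/S}\otimes\BQ$ is a lattice by Proposition~\ref{prop:NS_smooth}, so the third term vanishes by the same statement. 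Hence $\omega^0(\Sigma^\infty(\sPic^\sm_{X/S}\otimes\BQ)(-1)[-2])\simeq 0$.

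Next I would feed this into the first triangle. Twisting $\Sigma^\infty(\underline{f}_*\Gm\otimes\BQ)[1]\ra \Sigma^\infty\mathrm{P}(X/S)\ra \Sigma^\infty(\sPic^\sm_{X/S}\otimes\BQ)\rap$ by $(-1)[-2]$, applying $\omega^0$, and using the previous step to kill the third term, one obtains (noting $\Sigma^\infty(\underline{f}_*\Gm\otimes\BQ)[1](-1)[-2]=\Sigma^\infty(\underline{f}_*\Gm\otimes\BQ)(-1)[-1]$)
\[
\omega^0(\Sigma^\infty\mathrm{P}(X/S)(-1)[-2])\simeq \omega^0(\Sigma^\infty(\underline{f}_*\Gm\otimes\BQ)(-1)[-1]).
\]
It then remains to identify this last object. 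The sheaf $\underline{f}_*\Gm\simeq\Res_{\pi_0(f)}\Gm$ is the Weil restriction of $\Gm$ along the finite \'etale morphism $\pi_0(f):\pi_0(X/S)\ra S$, so Lemma~\ref{lemm:Weil_pushforward} and Proposition~\ref{prop:Gm_Q1} give $\Sigma^\infty(\underline{f}_*\Gm\otimes\BQ)\simeq \pi_0(f)_*\Sigma^\infty(\Gm_{\pi_0(X/S)}\otimes\BQ)\simeq \pi_0(f)_*(\BQ_{\pi_0(X/S)}(1)[1])$, and since $\pi_0(f)$ is finite the projection formula yields $\pi_0(f)_*(\BQ_{\pi_0(X/S)}(1)[1])\simeq(\pi_0(f)_*\BQ_{\pi_0(X/S)})(1)[1]$. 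Therefore $\Sigma^\infty(\underline{f}_*\Gm\otimes\BQ)(-1)[-1]\simeq\pi_0(f)_*\BQ_{\pi_0(X/S)}$, which lies in $\DA^0(S)$ as $\pi_0(f)$ is proper of relative dimension $0$, so $\omega^0$ acts as the identity on it. Combining the three steps gives $\omega^0(\Sigma^\infty\mathrm{P}(X/S)(-1)[-2])\simeq\pi_0(f)_*\BQ_{\pi_0(X/S)}$.

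Since everything reduces to Proposition~\ref{prop:omega_0_gr}, there is no deep obstacle; the points requiring care are the shift bookkeeping — in particular that the $[1]$ placing $\underline{f}_*\Gm$ in the first triangle cancels exactly against the shift in $\Sigma^\infty(\Gm\otimes\BQ)\simeq\BQ(1)[1]$, so that the final Tate twist lands at $\BQ(0)[0]$ with no residual shift — and the identification in the last step of the abstract torus motive $\Sigma^\infty(\Res_{\pi_0(f)}\Gm\otimes\BQ)(-1)[-1]$ with the six-functor pushforward $\pi_0(f)_*\BQ_{\pi_0(X/S)}$.
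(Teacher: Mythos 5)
Your proof is correct and follows essentially the same dévissage as the paper: apply $\omega^0$ to the two triangles of Corollary~\ref{cor:Picard_smooth} twisted by $(-1)[-2]$, kill the abelian-scheme and N\'eron--Severi pieces by Proposition~\ref{prop:omega_0_gr}, and reduce to the toric part $\Sigma^\infty(\Res_{\pi_0(f)}\Gm\otimes\BQ)(-1)[-1]$. The only (harmless) divergence is the final identification: the paper uses $\Sigma^\infty T_\BQ(-1)\simeq \Sigma^\infty X_*(T)_\BQ$ and the fact that the cocharacter lattice of $\Res_{\pi_0(f)}\Gm$ is the permutation lattice attached to $\pi_0(f)$, whereas you instead invoke Lemma~\ref{lemm:Weil_pushforward}, $\Sigma^\infty(\Gm\otimes\BQ)\simeq\BQ(1)[1]$ and the projection formula for the finite morphism $\pi_0(f)$ — both routes are available in the paper and give the same answer with the same shift bookkeeping.
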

\begin{proof}
First, by Corollary~\ref{cor:Picard_smooth}, Proposition~\ref{prop:smooth_complex_compact} and Proposition~\ref{prop:hom_cohom_twists}, $\Sigma^\infty \mathrm{P}(X/S)_\BQ(-1)[-2]$ is in $\DA^1(S)$, and it makes sense to apply $\omega^0$. More precisely, Corollary~\ref{cor:Picard_smooth} together with Proposition~\ref{prop:omega_0_gr} shows that there is an isomorphism
\[
\omega^0(\Sigma^\infty \mathrm{P}(X/S)_\BQ(-1)[-2])\simeq \Sigma^\infty X_*(\Res_{\pi_0(f)}\Gm)_\BQ.
\] 
The cocharacter lattice of the Weil restriction $\Res_{\pi_0(f)}\Gm$ is the permutation lattice associated to $\pi_0(f)$; hence, $\Sigma^\infty X_*(\Res_{\pi_0(f)}\Gm)_\BQ\simeq \pi_0(f)_*\BQ$ as required.
\end{proof}

\subsection{The functors $\omega^n$ over a perfect field}
\label{sec:omega_field}
In this short section, we explain how, for $S$ the spectrum of a perfect field $k$, the functors $\omega^0$ and $\omega^1$ are related to the functors $L\pi_0$ and $\LAlb$ studied in \cite{BVK} and \cite{Ayoub_Barbieri-Viale}.

We need to connect our setup with the categories of effective motives with transferts over $k$. First, we define for every $n\in\BN$ the category $\DM^{(\eff)}_{n,(c)}(k)$ in a similar way as as $\DA_{n,(c)}(k)$, replacing $\DA(k)$ with $\DM^{(\eff)}(k)$ and $f_\sharp \BQ_X$ with $M^{(\eff),\tr}_k(X)$ for $f:X\ra \Spec(k)$ smooth. We also define the category $\DM_{\homo,(c)}(k)$ (resp. $\DM^\coh_{(c)}(k)$ in a similar way as $\DA_{\homo,(c)}(k)$ (resp. $\DA^\coh_{(c)}(k)$).

By construction of $\DM(k)$, there is an adjunction
\[
\Sigma^\infty_{\tr}:\DM^{\eff}(k)\rightleftarrows \DM(k):\Omega^\infty_{\tr}.
\]
\begin{lemma}
\label{lemm:DA_DM_n}
Let $k$ be a field and $n\in \BN$. The adjoint pairs $\Sigma^\infty_\tr \dashv \Omega^\infty_{\tr}$ and $a_\tr\dashv o^\tr$ restrict to equivalences of categories
\[
\DA_{n,(c)}(k) \stackrel[o^\tr]{a_\tr}{\rightleftarrows}\DM_{n,(c)}(k)\stackrel[\Sigma^\infty]{\Omega^\infty}{\rightleftarrows}   \DM^{\eff}_{n,(c)}(k),
\]
\[
\DA_{\homo,(c)}(k) \stackrel[o^\tr]{a_\tr}{\rightleftarrows} \DM_{\homo,(c)}(k)\stackrel[\Sigma^\infty]{\Omega^\infty}{\rightleftarrows} \DM^{\eff}_{(c)}(k),
\]
\[
\DA^\coh_{(c)}(k) \stackrel[o^\tr]{a_\tr}{\rightleftarrows} \DM^{\coh}_{(c)}(k),
\]
\[
\text{ and }\DA^n_{(c)}(k) \stackrel[o^\tr]{a_\tr}{\rightleftarrows} \DM^n_{(c)}(k).
\]
\end{lemma}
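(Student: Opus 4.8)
The plan is to deduce Lemma~\ref{lemm:DA_DM_n} from the fact that the two adjunctions $\Sigma^\infty_\tr \dashv \Omega^\infty_\tr$ and $a_\tr \dashv o^\tr$ are already known to be equivalences on the \emph{ambient} categories, and then to check that each of them carries the relevant generating family on one side to the relevant generating family on the other. Recall first that $a_\tr:\DA^{\eff}(k)\rightleftarrows \DM^{\eff}(k):o^\tr$ and $a_\tr:\DA(k)\rightleftarrows \DM(k):o^\tr$ are equivalences of categories by \cite[Corollary~16.2.22]{Cisinski_Deglise_BluePreprint}, and that $\Sigma^\infty_\tr \dashv \Omega^\infty_\tr$ is an equivalence by Voevodsky's cancellation theorem \cite{Voevodsky_Cancellation} over a perfect field. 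Since all these functors are triangulated, commute with small sums (being left adjoints, resp. their right adjoints preserve compact objects), and send the geometric generators to geometric generators --- concretely $a_\tr(f_\sharp \BQ_X)\simeq M^{\eff,\tr}_k(X)$ for $f:X\to \Spec(k)$ smooth by construction, $\Sigma^\infty_\tr M^{\eff,\tr}_k(X)\simeq M^\tr_k(X)$, and likewise $a_\tr(f_*\BQ_X)\simeq M^{\coh,\tr}_k(X)$ for $f$ proper using \cite[Proposition~2.10]{AHPL} or simply the $\DA\simeq\DM$ comparison for the six operations --- an equivalence $F:\CT\simeq \CT'$ of compactly generated triangulated categories with $F(\CG)=\CG'$ restricts to an equivalence $\ll\CG\gg\simeq \ll\CG'\gg$ and to an equivalence of the subcategories of compact objects $\langle \CG\rangle \simeq \langle \CG'\rangle$. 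This last fact is the ``generated subcategory transport'' principle already used implicitly in the paper (cf. Lemma~\ref{lem:subcats_comp}); I would state it as a short sublemma and prove it by the transfinite induction that defines $\ll -\gg$ and $\langle -\rangle$.

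Concretely, the key steps, in order, are: (1) record that $a_\tr$, $o^\tr$, $\Sigma^\infty_\tr$, $\Omega^\infty_\tr$ are triangulated and that $a_\tr$, $\Sigma^\infty_\tr$ preserve small sums while $o^\tr$, $\Omega^\infty_\tr$ preserve compact objects (the latter because their left adjoints preserve generators, hence compacts, by \cite[Lemme~2.1.28]{Ayoub_these_1}); (2) identify the images of the defining generators: $a_\tr f_\sharp\BQ_X\simeq M^{\eff,\tr}_k(X)$, $a_\tr f_*\BQ_X\simeq M^{\coh,\tr}_k(X)$, and $\Sigma^\infty_\tr M^{\eff,\tr}_k(X)\simeq M^\tr_k(X)$, compatibly with relative dimension; (3) invoke the sublemma to conclude that each equivalence restricts as claimed, both at the level of the big categories $\DA_{n}$, $\DA_\homo$, $\DA^\coh$, $\DA^n$ and, by Lemma~\ref{lem:subcats_comp}, at the level of their subcategories of compact objects (since compact objects are exactly $\langle\CG\rangle$ and the equivalences match the $\CG$'s). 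For the third equivalence in the first displayed line one must note that $\Sigma^\infty_\tr$ is only an equivalence $\DM^{\eff}_c(k)\simeq\DM_c(k)$ after passing to compacts if one works with the subcategories generated by Tate-twisted motives, but here $\DM^{\eff}_{n}(k)$ is by definition generated \emph{without} extra twists by $M^{\eff,\tr}_k(X)$ with $\dim_S X\le n$, and cancellation gives exactly $\Sigma^\infty_\tr M^{\eff,\tr}_k(X)\simeq M^\tr_k(X)$ with $\Omega^\infty_\tr$ a quasi-inverse on this generated subcategory, so no twisting issue arises.

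I expect the main obstacle to be step (2) for the cohomological variants: checking that $a_\tr$ sends $f_*\BQ_X$ (for $f:X\to\Spec(k)$ proper) to $M^{\coh,\tr}_k(X)$ and, more to the point, that it does so \emph{compatibly with relative dimension} so that $\DA^n(k)$ lands in $\DM^n(k)$. Over a field this is not hard --- one uses that $a_\tr$ commutes with $f_!\simeq f_*$ for $f$ proper, which follows from the fact that the comparison functor $\DA(-)\to \DM(-)$ is compatible with the six operations on geometrically unibranch (in particular, over a field, arbitrary) schemes \cite[\S 16]{Cisinski_Deglise_BluePreprint} --- but it requires being slightly careful that the equivalence is not merely abstract but is the canonical comparison functor intertwining $f_*$. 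Once that identification of generators is in place, everything else is the formal transport argument. Finally, the statement about $\Omega^\infty_\tr$ and $o^\tr$ restricting is automatic: a quasi-inverse of an equivalence that restricts to an equivalence between full subcategories is itself the restriction of a quasi-inverse, so no separate verification is needed.

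\begin{proof}
All four functors $a_\tr$, $o^\tr$, $\Sigma^\infty_\tr$, $\Omega^\infty_\tr$ are triangulated; $a_\tr$ and $\Sigma^\infty_\tr$ are left adjoints, hence commute with small sums, and by \cite[Lemme~2.1.28]{Ayoub_these_1} their right adjoints $o^\tr$ and $\Omega^\infty_\tr$ preserve compact objects. By \cite[Corollary~16.2.22]{Cisinski_Deglise_BluePreprint} the adjunction $a_\tr \dashv o^\tr$ is an equivalence of categories on both $\DA^{\eff}(k)\simeq \DM^{\eff}(k)$ and $\DA(k)\simeq \DM(k)$, and by Voevodsky's cancellation theorem \cite{Voevodsky_Cancellation} over the perfect field $k$ the adjunction $\Sigma^\infty_\tr\dashv\Omega^\infty_\tr$ is an equivalence on $\DM^{\eff}(k)\simeq\DM(k)$. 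Moreover these equivalences are the canonical comparison functors, hence commute with the six operations on (geometrically unibranch, in particular all) $k$-schemes; see \cite[\S 16]{Cisinski_Deglise_BluePreprint}.

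We first record an elementary transport principle. Let $F:\CT\ra \CT'$ be an equivalence of compactly generated triangulated categories, and let $\CG$ be a set of compact generators of $\CT$ such that $\CG':=F(\CG)$ is a set of compact generators of $\CT'$. Then $F$ restricts to an equivalence $\ll\CG\gg\simeq\ll\CG'\gg$ and, by \cite[Lemma~4.4.5]{Neeman_book}, to an equivalence of the subcategories of compact objects $\langle\CG\rangle\simeq\langle\CG'\rangle$. Indeed, since $F$ is triangulated and commutes with small sums and direct factors, it preserves each stage of the transfinite induction defining $\ll-\gg$ (start with $\CG[\BZ]$, close under cones, direct factors and small sums; take unions at limit ordinals), and similarly for the induction defining $\langle-\rangle$; applying the same to a quasi-inverse of $F$ gives essential surjectivity.

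It remains to match generators. For $f:X\ra \Spec(k)$ smooth one has, by construction of $\DM(k)$ and $\DA(k)$, canonical isomorphisms $a_\tr(f_\sharp \BQ_X)\simeq M^{\eff,\tr}_k(X)$ and $\Sigma^\infty_\tr M^{\eff,\tr}_k(X)\simeq M^\tr_k(X)$, compatibly with the relative dimension of $X$; thus $a_\tr$ and $\Sigma^\infty_\tr$ carry the defining generating family of $\DA_n(k)$ (resp. $\DA_\homo(k)$) to that of $\DM_n(k)$ (resp. $\DM_{\homo}(k)$), and the latter to that of $\DM^{\eff}_n(k)$ (resp. $\DM^{\eff}(k)$). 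Since $\Sigma^\infty_\tr$ is, over the perfect field $k$, an equivalence with quasi-inverse $\Omega^\infty_\tr$, no Tate-twisting is needed: the generated subcategories are generated by the $M^{(\eff),\tr}_k(X)$ themselves. For $f:X\ra\Spec(k)$ proper, using that the comparison functor commutes with $f_*\simeq f_!$ \cite[\S 16]{Cisinski_Deglise_BluePreprint}, one gets $a_\tr(f_*\BQ_X)\simeq M^{\coh,\tr}_k(X)$, again compatibly with relative dimension, so $a_\tr$ matches the generating families of $\DA^\coh(k)$ and $\DA^n(k)$ with those of $\DM^\coh(k)$ and $\DM^n(k)$ respectively.

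Applying the transport principle to each of the equivalences $a_\tr$ and $\Sigma^\infty_\tr$ with these generating families now yields all the asserted equivalences of the big categories, and --- using Lemma~\ref{lem:subcats_comp} to identify the subcategory of compact objects with $\langle\CG\rangle$ --- of the subcategories of compact objects as well. The statements for the right adjoints $o^\tr$ and $\Omega^\infty_\tr$ follow, since a quasi-inverse of an equivalence restricts to a quasi-inverse between any full subcategories matched by it.
\end{proof}
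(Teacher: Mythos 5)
Your overall strategy---match the generating families and transport the generated subcategories across the comparison functors---is the same as the paper's, but one load-bearing assertion in your proof is false: Voevodsky's cancellation theorem does \emph{not} make $\Sigma^\infty_\tr\dashv\Omega^\infty_\tr$ an equivalence between $\DM^{\eff}(k)$ and $\DM(k)$; it only makes $\Sigma^\infty_\tr$ fully faithful. The stable category $\DM(k)$ is obtained by inverting the Tate twist and contains non-effective objects such as $\BQ(-1)$, which are not in the essential image of $\Sigma^\infty_\tr$. Since your transport sublemma is stated for an equivalence $F$ and obtains essential surjectivity by ``applying the same to a quasi-inverse of $F$'', the step in which you apply it to $\Sigma^\infty_\tr:\DM^{\eff}(k)\ra\DM(k)$ does not go through as written (your plan even flags a related subtlety, but the proof then simply asserts the global equivalence). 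The same over-reach appears, harmlessly but incorrectly, in your citation of \cite[Corollary~16.2.22]{Cisinski_Deglise_BluePreprint} for an \emph{effective} equivalence $\DA^{\eff}(k)\simeq\DM^{\eff}(k)$, which is neither stated there nor needed.

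The repair is exactly what the paper does: use full faithfulness together with $\Sigma^\infty_\tr f_\sharp\simeq f_\sharp\Sigma^\infty_\tr$, and observe that the essential image of a fully faithful triangulated functor commuting with small sums (out of an idempotent-complete source) is closed under shifts, cones, small sums and direct factors; since it contains the generators $M^{\eff,\tr}_k(X)$ with $\dim X\leq n$, it contains all of $\DM_n(k)$ (resp.\ $\DM_\homo(k)$), and the restricted functor is then an equivalence onto that subcategory, with quasi-inverse induced by $\Omega^\infty_\tr$; compacts are then matched via Lemma~\ref{lem:subcats_comp} and \cite[Lemma 4.4.5]{Neeman_book}, as you do. Two smaller points to fix: \cite[Lemme~2.1.28]{Ayoub_these_1} gives commutation with small sums for a right adjoint whose left adjoint preserves compact objects, not preservation of compacts by the right adjoint (for $o^\tr$ this instead follows because it is quasi-inverse to $a_\tr$, and for $\Omega^\infty_\tr$ it is only needed on the generated subcategories, where it follows from the restricted equivalence); and your transport sublemma should be phrased for a family of compact \emph{objects} $\CG$ of $\CT$, not compact generators of $\CT$, since otherwise $\ll\CG\gg=\CT$ and the statement is vacuous for the intended application.
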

\begin{proof}
  The argument is essentially the same for the four series of equivalences; we only give the details for the first one. Recall that $a_\tr:\DM(S)\stackrel{\sim}{\ra} \DA(S)$ is an equivalence of categories for all $S$ geometrically unibranch \cite[Corollary~16.2.22]{Cisinski_Deglise_BluePreprint}, hence in particular for all $\Spec(k)$. Being equivalences of categories, $a_\tr$ and $o^\tr$ both commute with small sums and preserve compact objects. By construction of $a_{\tr}$, we have $a_\tr f_\sharp\simeq f_\sharp a_\tr$ for $f$ smooth. This implies that $a_\tr\dashv o^\tr$ restricts to an equivalence of categories between $\DA_{n,(c)}$ and $\DM_{n,(c)}$.

Let $\phi:\Spec(k^{\perf})\ra \Spec(k)$ be a perfect closure of $k$. The base change functors $\phi^{*}:\DA(k)\ra \DA(k^{\perf})$, $\phi^{*}:\DM(k)\ra \DM(k^{\perf})$ and $\phi^{*}:\DM^{\eff}(k)\ra \DM^{\eff}(k^{\perf})$ are all equivalences of categories: in the first case, this is the separation property of $\DA$, in the second case, this follows from the case of $\DA$ and the comparison isomorphism recalled above, and in the third case, we apply \cite[Corollary 4.13]{Suslin_imperfect}. Moreover, these equivalences commute with the functors in the two adjunctions of the statement (because they are equivalences of categories and commute with the left adjoints). We can thus assume that $k$ is a perfect field.

By Voevodsky's cancellation theorem~\cite{Voevodsky_Cancellation} which applies because $k$ is perfect, the functor $\Sigma_\tr^\infty:\DM^{\eff}\ra \DM(k)$ is fully faithful, so that it restricts to an equivalence of categories $\Sigma_{\tr}^\infty:\DM^{\eff}\ra \DM_{\hom}(k):\Omega_{\tr}^{\infty}$. We have $\Sigma_\tr^\infty f_\sharp\simeq f_\sharp \Sigma_\tr^\infty $ for $f:X\ra \Spec(k)$ smooth; this shows that $\DM_n(k)$ lies in the essential image of $\DM^{\eff}_n(k)$. Again, the equivalence of categories $\Sigma_{\tr}^\infty:\DM^{\eff}\ra \DM_{\hom}(k):\Omega_{\tr}^{\infty}$ preserves compact objects in both directions, hence we get an equivalence of categories between $\DM_{n,(c)}(k)$ and $\DM^{\eff}_{n,(c)}(k)$. This completes the proof.
\end{proof}

By \cite[Theorem 2.4.1]{Ayoub_Barbieri-Viale} specialized to the case of $\BQ$-coefficients, we have a functor
\[
L\pi_0:\DM^{\eff}(k) \ra \DM^{\eff}_0(k)
\] 
(respectively
\[
\LAlb:\DM^{\eff}(k) \ra \DM^{\eff}_1(k))
\] 
which is a left adjoint to the inclusion $\DM^{\eff}_0(k)\ra \DM^{\eff}(k)$ (resp. $\DM^{\eff}_1(k)\ra \DM^{\eff}(k)$) and restricts by \cite[Proposition 2.3.3]{Ayoub_Barbieri-Viale} (resp. \cite[Proposition 2.4.7]{Ayoub_Barbieri-Viale}) to a functor
\[
L\pi_0:\DM^{\eff}_c(k) \ra \DM^{\eff}_{0,c}(k)
\]
(resp.
\[
\LAlb:\DM^{\eff}_{c}(k)\ra \DM^{\eff}_{1,c}(k)).
\]
To be more precise, our notation differs from loc. cit. in the following way. The functor $L\pi_0$ (resp. $\LAlb$) in loc. cit. has as target category $D(\HI_{\leq 0}(k))$ (resp. $D(\HI_{\leq 1}(k))$), the derived category of the abelian category $\Sh(\Spec(k)_\et,\BQ)$ (resp. $\HI_{\leq 1}(k)$  of $1$-motivic sheaves \cite[Definition 1.1.20]{Ayoub_Barbieri-Viale}), which is equivalent by \cite[Lemma 2.3.1]{Ayoub_Barbieri-Viale} (resp.\cite[Theorem 2.4.1.(i)]{Ayoub_Barbieri-Viale}) to $\DM^{\eff}_0(k)$ (resp. $\DM^{\eff}_1(k)$), and the functor we call $L\pi_0$ (resp. $\LAlb$) is obtained by composing the functor of loc. cit. with this equivalence.

\begin{prop}\label{prop:omega_field}
Let $k$ be a perfect field. The functors $\omega^0$ and $\omega^1$ restrict to compact objects. Moreover, when restricting to compact objects, we have isomorphisms of functors
\[
\omega^0\simeq\BD_k o^\tr\Sigma^\infty_\tr L\pi_0 \Omega^\infty_\tr a_\tr \BD_k:\DA^{\coh}_c(k)\ra \DA^0_c(k)    
\]
and
\[
\omega^1\simeq\BD_k o^\tr\Sigma^\infty_\tr \LAlb \Omega^\infty_\tr a_\tr \BD_k:\DA^{\coh}_c(k)\ra \DA^1_c(k).    
\]
\end{prop}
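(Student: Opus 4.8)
The plan is to assemble the statement from the ingredients already in place in this section, treating the $\omega^0$ and $\omega^1$ cases essentially in parallel and reducing everything to formal adjunction yoga plus the two inputs from the literature (Proposition~\ref{prop:omega_0_props}~\ref{omega_0_compact} for $n=0$, and \cite[Proposition 2.3.3, Proposition 2.4.7]{Ayoub_Barbieri-Viale} for the restriction of $L\pi_0$ and $\LAlb$ to compact objects). First I would recall the chain of equivalences $\DA^n_c(k)\simeq \DM^n_c(k)\simeq \DM^{\eff}_{n,c}(k)$ from Lemma~\ref{lemm:DA_DM_n} (together with the homological and cohomological variants), and note that by Proposition~\ref{prop:subcats_field} Verdier duality $\BD_k$ restricts to anti-equivalences $\DA^\coh_c(k)^\op\simeq \DA_{\homo,c}(k)$ and $\DA^n_c(k)^\op\simeq \DA_{n,c}(k)$. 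Dualising the inclusion $\DA_{n,c}(k)\hookrightarrow \DA_{\homo,c}(k)$ turns its \emph{left} adjoint (which, under the equivalences above, is $L\pi_0$ for $n=0$ and $\LAlb$ for $n=1$, transported via $\Omega^\infty_\tr a_\tr$ on one side and $o^\tr\Sigma^\infty_\tr$ on the other) into a \emph{right} adjoint of the inclusion $\DA^n_c(k)\hookrightarrow \DA^\coh_c(k)$. This gives the displayed formula as a candidate right adjoint \emph{on compact objects}.

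Next I would argue that this candidate agrees with (the restriction of) $\omega^n$. For $n=0$ this is immediate: by Proposition~\ref{prop:omega_0_props}~\ref{omega_0_compact} the functor $\omega^0:\DA^\coh(k)\to\DA^0(k)$ already preserves compact objects, so its restriction to $\DA^\coh_c(k)$ is a right adjoint to the inclusion $\DA^0_c(k)\hookrightarrow\DA^\coh_c(k)$, and right adjoints are unique up to canonical isomorphism, hence it coincides with $\BD_k o^\tr\Sigma^\infty_\tr L\pi_0 \Omega^\infty_\tr a_\tr \BD_k$. For $n=1$ I would run the argument sketched just before the proposition: write $\tilde{\omega}^1:=\BD_k o^\tr\Sigma^\infty_\tr \LAlb \Omega^\infty_\tr a_\tr \BD_k$, take $M\in\DA^\coh_c(k)$, and observe that the counit $\tilde{\omega}^1 M\to M$ factors through $\omega^1 M$ by the universal property of $\omega^1$ (the target $\tilde\omega^1 M$ lies in $\DA^1(k)$). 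To see that $\tilde\omega^1 M\to\omega^1 M$ is an isomorphism in $\DA^1(k)$, use that $\DA^1(k)$ is compactly generated (Lemma~\ref{lem:subcats_comp}) and that for every compact $N\in\DA^1_c(k)$ both $\Hom(N,\tilde\omega^1 M)\to\Hom(N,M)$ and $\Hom(N,\omega^1 M)\to\Hom(N,M)$ are bijections — the first by the adjunction defining $\tilde\omega^1$ (here compactness of $N$ and of the generators matters, so that the adjunction on compact objects actually computes the relevant Hom), the second by the adjunction defining $\omega^1$. Hence $\omega^1$ sends compact objects to compact objects and its restriction is $\tilde\omega^1$.

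The one genuinely delicate point — and the place I would spend the most care — is the bookkeeping needed to identify the transported left adjoint with $L\pi_0$ resp.\ $\LAlb$: one must check that the composite equivalences $\DA_{n,c}(k)\simeq\DM^{\eff}_{n,c}(k)$, $\DA_{\homo,c}(k)\simeq\DM^{\eff}_c(k)$, $\DA^\coh_c(k)\simeq\DM^\coh_c(k)$ are compatible with the relevant inclusions, and that $\LAlb$ as defined in \cite{Ayoub_Barbieri-Viale} (with target $D(\HI_{\leq 1}(k))$, then transported along the equivalence $D(\HI_{\leq 1}(k))\simeq\DM^{\eff}_1(k)$) really is the left adjoint of $\DM^{\eff}_{1,c}(k)\hookrightarrow\DM^{\eff}_c(k)$ — all of which is recalled in the paragraph preceding the proposition but needs to be invoked with the correct functors ($a_\tr$, $o^\tr$, $\Sigma^\infty_\tr$, $\Omega^\infty_\tr$) in the correct order. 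Everything else is an application of uniqueness of adjoints and of the fact (used repeatedly above) that $a_\tr$, $o^\tr$, $\Sigma^\infty_\tr$, $\Omega^\infty_\tr$ and $\BD_k$ all preserve compactness, so the adjunctions in play restrict cleanly to the subcategories of compact objects.
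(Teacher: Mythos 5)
Your proposal is correct and follows essentially the same route as the paper: transport $L\pi_0$ and $\LAlb$ (restricted to compact objects via \cite{Ayoub_Barbieri-Viale}) through the equivalences of Lemma~\ref{lemm:DA_DM_n} and the duality anti-equivalences of Proposition~\ref{prop:subcats_field} to obtain a right adjoint on compact objects, then identify it with $\omega^0$ by uniqueness of adjoints (using Proposition~\ref{prop:omega_0_props}) and with $\omega^1$ by factoring the counit $\tilde\omega^1 M\to M$ through $\omega^1 M$ and testing against compact generators of $\DA^1(k)$. This is precisely the argument given in the paragraphs preceding the proposition, so nothing further is needed.
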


\begin{proof}
By Proposition~\ref{prop:subcats_field}, the duality functor $\BD_k$ restricts to give anti-equivalences of categories \newline $\DA^\coh_c(k)^\op\simeq \DA_{\homo,c}(k)$ and $\DA^n_c(k)^\op\simeq \DA_{n,c}(k)$ for any $n\in\BN$. By Lemma~\ref{lemm:DA_DM_n}, this implies that the inclusion $\DA_{0,c}(k)\ra \DA_c(k)$ (resp. $\DA_{1,c}(k)\ra \DA_c(k)$) admits as right adjoint the composition
\[
\BD_k o^\tr\Sigma^\infty_\tr L\pi_0 \Omega^\infty_\tr a_\tr \BD_k:\DA^{\coh}_c(k)\ra \DA^0_c(k)    
\]
(resp.
\[
\BD_k o^\tr\Sigma^\infty_\tr \LAlb \Omega^\infty_\tr a_\tr \BD_k:\DA^{\coh}_c(k)\ra \DA^1_c(k)).    
\]
In the case $n=0$, we already know that the functor $\omega^0$ restricts to compact objects by Proposition~\ref{prop:omega_0_props} \ref{omega_0_compact}, so that this right adjoint and the restriction of $\omega^0$ (which we also denote by $\omega^0$) coincide. In the case $n=1$, we argue as follows. Write temporarily $\tilde{\omega}^1:=\BD_k o^\tr\Sigma^\infty_\tr \LAlb \Omega^\infty_\tr a_\tr \BD_k$. Let $M\in \DA^{\coh}_c(k)$. There is a morphism $\tilde{\omega}^1 M\ra M$ in $\DA^{\coh}_c(k)$, which by the adjunction property of $\omega^1$ factors through a morphism $\tilde{\omega}^1 M\ra \omega^1 M$ in $\DA^1(k)$. The category $\DA^1(k)$ is compactly generated, hence to show that this morphism is an isomorphism, it is enough to show that for every $N\in \DA^1_c(k)$, the induced morphism $\DA^1_c(k)(N,\tilde{\omega}^1 M)\ra \DA^1_c(k)(N,\omega^1 M)$ is an isomorphism. This follows from the adjunction properties of both functors. We deduce that $\omega^1$ restricts to compact objects, and that this restriction is related to $\LAlb$ by the formula above.
\end{proof}

Finally, we use another result of \cite{Ayoub_Barbieri-Viale} to show that the $\omega^n$'s for $n\geq 2$ are not well-behaved, at least over ``large'' fields.

\begin{prop}\label{prop:omega_>_nc}
Let $n\geq 2$ and $k$ be an algebraically closed field of infinite transcendence degree over $\BQ$, e.g. $k=\BC$. Then $\omega^n:\DA^\coh(k)\ra \DA^n(k)$ does not preserve compact objects.
\end{prop}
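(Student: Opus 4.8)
The plan is to argue by contradiction and reduce the statement to a known obstruction for effective motives over large fields. Suppose $\omega^n$ preserved compact objects. Since the inclusion $\iota^n\colon \DA^n(k)\hookrightarrow \DA^{\coh}(k)$ already preserves compact objects (Lemma~\ref{lem:subcats_comp}), the adjunction $\iota^n\dashv \omega^n$ would restrict to an adjunction $\iota^n_c\dashv \omega^n_c$ between the categories of compact objects. I would then apply Verdier duality: by Proposition~\ref{prop:subcats_field} and Lemma~\ref{lemm:duality_field}, $\BD_k$ induces anti-equivalences $\DA^{\coh}_c(k)\simeq \DA_{\homo,c}(k)$ and $\DA^n_c(k)\simeq \DA_{n,c}(k)$ carrying $\iota^n_c$ to the inclusion $\iota_{n,c}\colon \DA_{n,c}(k)\hookrightarrow \DA_{\homo,c}(k)$. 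Conjugating the adjunction $\iota^n_c\dashv \omega^n_c$ through $\BD_k$ then turns the \emph{right} adjoint $\omega^n_c$ into a \emph{left} adjoint $\lambda_{n,c}\colon \DA_{\homo,c}(k)\to \DA_{n,c}(k)$ of $\iota_{n,c}$.

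Next I would transport this along the equivalences of Lemma~\ref{lemm:DA_DM_n}, which identify $\DA_{\homo,c}(k)$ with $\DM^{\eff}_c(k)$ and $\DA_{n,c}(k)$ with $\DM^{\eff}_{n,c}(k)$ compatibly with the inclusions, obtaining a left adjoint to $\DM^{\eff}_{n,c}(k)\hookrightarrow \DM^{\eff}_c(k)$. A standard formal step — any triangulated functor on the compact objects of a compactly generated triangulated category extends uniquely to a coproduct-preserving functor on the whole category, and an adjunction between such coproduct-preserving functors is detected on compact objects — upgrades this to a left adjoint $\DM^{\eff}(k)\to \DM^{\eff}_n(k)$ of the inclusion $\DM^{\eff}_n(k)\hookrightarrow \DM^{\eff}(k)$ which moreover preserves compact objects. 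In other words, $\DM^{\eff}_n(k)$ would be a reflective subcategory of $\DM^{\eff}(k)$ with a compactness-preserving reflector, extending the motivic Albanese picture of Proposition~\ref{prop:omega_field} to $n\ge 2$. This is exactly what is excluded, for $n\ge 2$ and $k$ algebraically closed of infinite transcendence degree over $\BQ$, by the work of Ayoub and Barbieri-Viale on the motivic Albanese \cite{Ayoub_Barbieri-Viale}; concretely, the obstruction reflects the infinite-dimensionality of the relevant motivic cohomology (in the spirit of Mumford's theorem on zero-cycles of surfaces over such fields), which prevents the higher Albanese construction from preserving constructibility. This contradiction proves the proposition.

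Every step except the invocation of \cite{Ayoub_Barbieri-Viale} is a formal manipulation of adjunctions and of Verdier duality over a field, so I expect the genuine input — and thus the heart of the matter — to be precisely the failure of reflectivity of $\DM^{\eff}_{\le n}(k)$ for $n\ge 2$ over large fields; the role of the argument here is only to package it into a statement about $\omega^n$. The one point demanding a little care is the passage from the adjunction on compact objects to one on the full categories: one must check that the extended functor lands in, and is built from, compact objects (using that $\DM^{\eff}_n(k)$ is compactly generated) and that the unit and counit, being natural transformations of coproduct-preserving functors, are recovered from their restrictions to compacts, so that the triangle identities hold on the nose.
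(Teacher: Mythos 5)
Your skeleton is the paper's: assume compactness is preserved, conjugate the restricted adjunction through Verdier duality (Proposition~\ref{prop:subcats_field}, Lemma~\ref{lemm:duality_field}) to turn $\omega^n$ on compacts into a left adjoint of the inclusion $\DA_{n,c}(k)\hookrightarrow \DA_{\homo,c}(k)$, transport it to $\DM^{\eff}_{n,c}(k)\hookrightarrow \DM^{\eff}_c(k)$ via Lemma~\ref{lemm:DA_DM_n}, and contradict Ayoub--Barbieri-Viale. The one place you diverge is the extra step where you try to upgrade the compact-level left adjoint to a coproduct-preserving left adjoint $\DM^{\eff}(k)\to \DM^{\eff}_n(k)$ on the full categories, and this is both the weakest link and unnecessary. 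It is not a ``standard formal step'' at the level of bare triangulated categories: a triangulated functor defined on the compact objects does not canonically extend to a coproduct-preserving functor on a compactly generated triangulated category (homotopy colimits are not functorial; one needs an enhancement, and even granting one, you would still have to check that the extension lands in $\DM^{\eff}_n(k)$ and that unit and counit extend so as to satisfy the triangle identities --- exactly the points you flag but do not resolve). The paper sidesteps all of this with a remark you should adopt: the nonexistence result of \cite[\S 2.5]{Ayoub_Barbieri-Viale} is \emph{stated} for a left adjoint to $\DM^{\eff}_n(k)\to \DM^{\eff}(k)$, but its proof (the Mumford-type infinite-dimensionality input you correctly identify as the real content) only uses the existence of the adjoint on compact objects. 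With that observation, your compact-level left adjoint already yields the contradiction and the extension step can be deleted; as written, your argument has a gap precisely at that step, though the surrounding reductions are correct and identical to the paper's.
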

\begin{proof}
We prove this by contradiction. Assume that $\omega^n$ preserves compact objects and write again $\omega^n:\DA^\coh_c(k)\ra \DA^n_c(k)$ for the restriction. By Proposition~\ref{prop:subcats_field}, the duality functor $\BD_k$ restricts to anti-equivalences of categories $\DA^\coh_c(k)^\op\simeq \DA_{\homo,c}(k)$ and $\DA^n_c(k)^\op\simeq \DA_{n,c}(k)$. This implies that the composition $\BD_k \circ (\omega^n)^\op \circ \BD_k:\DA_{\homo,c}(k)\ra \DA_{n,c}(k)$ provides a left adjoint to the inclusion $\DA_{n,c}(k)\ra \DA_{\homo,c}(k)$.

By Lemma~\ref{lemm:DA_DM_n}, this also provides a left adjoint to $\DM^{\eff}_{n,c}(k)\ra \DM^{\eff}_c(k)$, which does not exists by \cite[\S 2.5]{Ayoub_Barbieri-Viale} (note that the assumption there is the existence of a left adjoint to $\DM^{\eff}_n(k)\ra \DM^{\eff}(k)$ but the proof only uses the existence of the adjoint on compact objects). This contradiction finishes the proof.
\end{proof}

\subsection{Computation and finiteness of the motivic Picard functor}
\label{sec:picard_sm_pr}

We can now compute $\omega^1$ in an important special case.

\begin{theo}\label{theo:omega_1_Pic_smooth} 
Let $f:X\ra S$ be a smooth projective Pic-smooth morphism, with $S$ regular excellent. The morphism $\Theta_f:\Sigma^\infty \mathrm{P}(X/S)(-1)[-2] \ra f_*\BQ_X$ of Section~\ref{sec:smooth_picard} induces an isomorphism
\[
\omega^1 f_* \BQ_X \simeq \Sigma^\infty \mathrm{P}(X/S)(-1)[-2].
\]
In particular, the motive $\omega^1f_*\BQ_X$ is compact.

\end{theo}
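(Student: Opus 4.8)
The plan is to verify that $\Theta_f$ induces the claimed isomorphism by checking the universal property of $\omega^1$, which by the Yoneda lemma amounts to showing that for all $N\in\DA^1(S)$ the map
\[
\DA(S)(N,\Sigma^\infty\mathrm{P}(X/S)(-1)[-2])\stackrel{(\Theta_f)_*}{\lra}\DA(S)(N,f_*\BQ_X)
\]
is an isomorphism. First I would record that the left-hand object does lie in $\DA^1(S)$: by Corollary~\ref{cor:Picard_smooth} the motive $\Sigma^\infty\mathrm{P}(X/S)$ is in $\DA^{\gsm}_{1,c}(S)$, hence $\Sigma^\infty\mathrm{P}(X/S)(-1)[-2]\in\DA^1(S)$ by Proposition~\ref{prop:n_monoidal}, so that $\Theta_f$ is a morphism with source in $\DA^1(S)$ and target the cohomological motive $f_*\BQ_X$, and it makes sense to ask whether it is a unit-type morphism. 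Both sides of the displayed map commute with small sums in $N$ (the target because $f_*\BQ_X$ need not be compact, but $\DA^1(S)$ is compactly generated by the $g_\sharp\BQ_U(-1)$ with $g$ smooth of relative dimension $\le 1$, and one reduces to those generators, which are compact in $\DA(S)$ by Lemma~\ref{lem:subcats_comp}); so it suffices to treat $N=g_\sharp\BQ_U(-1)[m]$.

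The main device is the base-change compatibility of $\Theta_f$ established in Proposition~\ref{prop:base_change_theta}: since $f$ is smooth projective Pic-smooth and $S$ is regular, for any $g:T\to S$ smooth the morphisms $v_g$ and $\Ex^*_*$ are isomorphisms (smoothness gives $v_g$ an isomorphism by Lemma~\ref{lemma:base_change_Pic}, and $\Ex^*_*$ is smooth base change), so $g^*\Theta_f$ is an isomorphism if and only if $\Theta_{f'}$ is, where $f':X_T\to T$. Thus the question reduces, after adjunction $(g_\sharp,g^*)$ and a Tate twist, to showing that $\Theta_{f}$ induces an isomorphism on $\DA(S)(\BQ_S(-1)[m],-)=\DA(S)(\BQ_S,-(1))[m]$ for all regular $S$ — and then by continuity (Proposition~\ref{prop:subcats_continuity}), $h$-descent and resolution by alterations (to reduce to regular base, as in the proof of Proposition~\ref{prop:omega_0_gr}) and noetherian induction via the colocalisation triangle together with absolute purity, one reduces to $S$ the spectrum of a field $k$, indeed a perfect field after a finite purely inseparable base change and the separation property. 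I would also invoke Proposition~\ref{prop:theta_tr} to pass to the situation with transfers where Voevodsky-type computations are available.

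Over a perfect field $k$, the hypotheses make $\mathrm{P}(X/S)$ explicit via the two triangles of Corollary~\ref{cor:Picard_smooth}: $\Sigma^\infty\mathrm{P}(X/k)(-1)[-2]$ is an iterated extension of $\Sigma^\infty(\underline{f}_*\Gm)_\BQ(-1)[-1]\simeq\Sigma^\infty X_*(\Res_{\pi_0(f)}\Gm)_\BQ[-1]$, of $\Sigma^\infty\mathrm{Pic}^{0,\red}_{X/k,\BQ}(-1)[-2]$, and of $\Sigma^\infty\sNS^\sm_{X/k,\BQ}(-1)[-2]$. On the target side I would use the comparison with $\LAlb$ from Proposition~\ref{prop:omega_field}, i.e. $\omega^1 f_*\BQ_X\simeq \BD_k o^\tr\Sigma^\infty_\tr\LAlb\,\Omega^\infty_\tr a_\tr\BD_k(f_*\BQ_X)$, together with the classical description of $\LAlb$ of the motive of a smooth projective variety in terms of its Picard variety (via \cite{BVK}, \cite{Ayoub_Barbieri-Viale}) — concretely, $\BD_k(f_*\BQ_X)\simeq f_\sharp\BQ_X(-d)[-2d]$ with $d=\dim X/k$, whose $\LAlb$ (on the relevant weight pieces) recovers exactly the Picard data above. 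Matching the two and checking that the comparison morphism is precisely the one induced by $\Theta_f$ (using the alternative description of $\Theta_f^\tr$ in Proposition~\ref{prop:theta_tr}) finishes the field case, hence the general case, and compactness of $\omega^1 f_*\BQ_X$ follows since $\Sigma^\infty\mathrm{P}(X/S)(-1)[-2]\in\DA^{\gsm}_{1,c}(S)\subset\DA_c(S)$. The hard part will be the last step: pinning down the identification over a perfect field so that the abstract adjunction morphism $\omega^1 f_*\BQ_X\to f_*\BQ_X$ is genuinely the concrete morphism $\Theta_f$ — this requires carefully tracking the unit/counit of $\Sus^1\dashv\Ev_1$, the map $w_S$, and the Barbieri-Viale–Kahn computation of $\LAlb$ of a Chow motive, rather than just knowing the two objects are abstractly isomorphic.
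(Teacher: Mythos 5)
Your overall skeleton (test against the compact generators $g_\sharp\BQ_C(-1)$, use Proposition~\ref{prop:base_change_theta} with $g$ smooth to reduce to $N=\BQ_S$, then localise and reduce to the field case) is the same as the paper's, but there are two places where the proposal has a genuine gap. First, your field case runs through Proposition~\ref{prop:omega_field} and the Barbieri-Viale--Kahn/Ayoub--Barbieri-Viale computation of $\LAlb$, and you yourself concede that the key point --- that the resulting abstract isomorphism $\omega^1 f_*\BQ_X\simeq \Sigma^\infty \mathrm{P}(X/k)(-1)[-2]$ is induced by the concrete morphism $\Theta_f$ --- is left open. That identification is exactly the hard content, and the $\LAlb$/duality route makes it harder rather than easier. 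The paper avoids it entirely: over a perfect field it uses $a^\tr\Theta_f=\Theta^\tr_f$ and the alternative description of $\Theta^\tr_f$ via $u_X^{\eff,\tr}$ (Proposition~\ref{prop:theta_tr}), then the projection formula $\Lambda$ identifies the map on $\DM(k)(g_\sharp\BQ^\tr_C,-)$ with a map between effective Hom groups on which $\Sigma^\infty_\tr$ is an isomorphism by the cancellation theorem; no $\LAlb$, no duality, and the morphism being tested is manifestly $\Theta_f$ itself. (The paper only relates the result to $\RPic$ of BVK in a remark afterwards.)

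Second, the general-case reduction ``by continuity, colocalisation and absolute purity one reduces to the spectrum of a field'' is too optimistic as stated: restriction to the generic point is \emph{not} an isomorphism on either side in the degrees corresponding to units and Picard groups, so there is no clean d\'evissage to $\kappa(S)$. What the paper does is compare the two localisation long exact sequences for $\mathrm{HP}^{*}(X/S)$ and $H^{*,1}_\CM(X)$ over dense opens: codimension $\geq 2$ strata are killed by absolute purity for the strongly dualisable motive $\Sigma^\infty\mathrm{P}(X/S)(-1)$ together with the twist-vanishing of Corollary~\ref{coro:omega_0}, while the codimension-one boundary terms on the Picard side are computed by $\omega^0(\Sigma^\infty\mathrm{P}(X/S)(-1)[-2])\simeq\pi_0(f)_*\BQ$ (Corollary~\ref{cor:omega_0_Picard}) and matched with the weight-zero motivic cohomology of the special fibre via Proposition~\ref{prop:mot_coh_0}; only then does passing to the limit and applying the field case plus the five lemma conclude. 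Your sketch omits precisely this matching of boundary terms, which is where Pic-smoothness and regularity of $S$ are used. (Minor points: the appeal to $h$-descent and alterations is out of place here since $S$ is regular by hypothesis, and in the reduction to a perfect field one must check by hand that $v_h$ is an isomorphism for the purely inseparable $h$, since Lemma~\ref{lemma:base_change_Pic} only covers smooth base change; the paper does this by comparing N\'eron--Severi Galois modules.)
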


\begin{proof}
Assume $S$ is a regular scheme. First of all, the motive $\Sigma^\infty \mathrm{P}(X/S)$ lies in $\DA_{1,c}(S)$ by Corollary~\ref{cor:Picard_smooth}. By Proposition~\ref{prop:hom_cohom_twists}, this implies that $\Sigma^\infty (\mathrm{P}(X/S)\otimes \BQ)(-1)[-2]$ lies in $\DA^1_c(S)$. We deduce that $\Theta_f$ induces a morphism $\Sigma^\infty (\mathrm{P}(X/S)\otimes \BQ)(-1)[-2] \ra \omega^1 f_*\BQ_X$. It remains to show that this is an isomorphism. We have also observed that $(\Sigma^\infty \mathrm{P}(X/S))(-1)[-2]$ is compact, so this will also establish the last claim.

We first treat the case when $S$ is the spectrum of a perfect field $k$. The proof proceeds by reduction to a computation in the category of effective Voevodsky motives $\DM^{\eff}(k)$. By Proposition~\ref{prop:hom_cohom_twists}, the category $\DA^1(k)$ is compactly generated by motives of the form $g_\sharp\BQ_C(-1)$ for a smooth curve $g:C\ra k$. We thus have to show that for all such $g$ and all $n\in\BZ$, the map 
\[
\DA(k)(g_\sharp \BQ_C(-1)[-n],\Sigma^\infty \mathrm{P}(X/k)(-1)[-2])\stackrel{(\Theta_{f})_*}{\lra} \DA(k)(g_\sharp \BQ_C(-1)[-n], f_*\BQ_X)
\]
induced by $\Theta_f$ is an isomorphism (this turns out to hold for any smooth $C$, not only for curves, as the argument below shows).

First, using that $a^\tr \Theta_f=\Theta_f^\tr$ modulo a certain isomorphism (Proposition~\ref{prop:theta_tr}), this is equivalent to the morphism
\[
\DM(k)(g_\sharp \BQ^\tr_C,\Sigma^\infty_\tr \mathrm{P}^\tr(X/k)[n-2])\stackrel{(\Theta^\tr_{f})_*}{\lra} \DM(k)(g_\sharp \BQ^\tr_C, f_*\BQ^\tr_X(1)[n])
\]
being an isomorphism. By Lemma~\ref{lem:eff_proj_form}, we have a commutative diagram.
\[
\xymatrix{
\DM(k)(g_\sharp \BQ^\tr_C,\Sigma^\infty f_*\BQ^\tr_X(1)[n]) \ar[r] & \DM(k)(g_\sharp \BQ^\tr_C, f_*\BQ^\tr_X(1)[n]) \\
\DM^{\eff}(k)(g_\sharp \BQ^\tr_C,f_*\BQ^\tr_X(1)[n]) \ar[u]^{\Sigma^\infty}_{\sim} & \DM(k)(g_\sharp \BQ^\tr_C\otimes f_\sharp\BQ_X,\BQ^\tr_k(1)[n]) \ar[u]^{\Lambda}_{\sim} \\
\DM^{\eff}(k)(g_\sharp \BQ^\tr_C\otimes f_{\sharp} \BQ^\tr_X,\BQ^\tr_k(1)[n]) \ar[u]^{\Lambda^{\eff}}_{\sim} \ar@{=}[r] & \DM^{\eff}(g_\sharp \BQ^\tr_C\otimes f_{\sharp} \BQ^\tr_X,\BQ^\tr_k(1)[n]) \ar[u]^{\Sigma^\infty}_{\sim}.
}
\]
Using the alternative description of $\Theta_f^\tr$ from Proposition~\ref{prop:theta_tr} with $u_X^{\eff,\tr}$ and the fact that $u_X^{\eff,\tr}$ is an isomorphism, we see that we have to show that the top morphism in the previous diagram is an isomorphism. 

The maps induced by $\Sigma^\infty $ are isomorphisms because of the Cancellation theorem \cite{Voevodsky_Cancellation} (this is where we use the hypothesis $k$ perfect), hence the top morphism is an isomorphism. This concludes the proof in the case $k$ perfect.

We now turn to the case of $S=\Spec(k)$ with $k$ an arbitrary field. Let $k^\perf$ be a perfect closure of $k$ and $h:\Spec(k^\perf)\ra \Spec(k)$ be the canonical morphism. Write $T=\Spec(k^\perf)$. By Proposition~\ref{prop:base_change_theta} and applying $\omega^1$, we have a commutative diagram
\[
\xymatrix{
h^*\Sigma^\infty\mathrm{P}(X/S)(-1)[-2]\ar[r] \ar[d]_{V_h\circ R_h} & \omega^1(h^* f_* \BQ_X) \ar[d]^{\omega^1(\Ex^*_*)}\\
\Sigma^\infty \mathrm{P}(X_T/T)(-1)[-2] \ar[r]_(0.6){\Theta_{f'}} & \omega^1(f'_* \BQ_{X_T}).
}
\]
By Corollary~\ref{cor:Picard_bc}, the morphism $V_h$ is an isomorphism. Since $R_h$ is an isomorphism, we see that the left vertical map in the diagram is an isomorphism. By proper base change, the right vertical map is an isomorphism. We are reduced to prove that $\Theta_{f'}$ is an isomorphism, which follows from the perfect field case.

We now consider the general case. We can assume that $S$ is connected, and hence integral. The statement of the theorem is equivalent to the following claim: for all $M\in \DA^1(S)$, the map $\Theta_f$ induces an isomorphism
\[
\DA(S)(M,\Sigma^\infty \mathrm{P}(X/S)(-1)[-2])\stackrel{\sim}{\lra} \DA(S)(M, f_*\BQ_X).
\]
We first make a series of reformulations. By Proposition~\ref{prop:hom_cohom_twists} and the definition of $\DA_1(S)$, the category $\DA^1(S)$ is compactly generated by objects of the form $g_\sharp\BQ_C(-1)$ for a smooth curve $g:C\ra S$. We can thus state the theorem as follows: for every smooth curve $g:C\ra S$ and all $n\in\BZ$, the map 
\[
\DA(S)(g_\sharp \BQ_C(-1)[-n],\Sigma^\infty \mathrm{P}(X/S)(-1)[-2])\stackrel{\Theta_{f*}}{\lra} \DA(S)(g_\sharp \BQ_C(-1)[-n], f_*\BQ_X)
\]  
is an isomorphism. By adjunction, this is equivalent to the statement that the map  
\[
\DA(C)(\BQ_C(-1)[-n],g^*\Sigma^\infty \mathrm{P}(X/S)(-1)[-2])\xrightarrow{(g^*\Theta_f)_*} \DA(C)(\BQ_C(-1)[-n], g^*f_*\BQ_X)
\]  
is an isomorphism. Let $f':X_C\ra C$ be the pullback of $f$ along $g$. The morphism $f'$ is Pic-smooth by Lemma~\ref{lem:bc_Pic_smooth} and $C$ is regular. By Proposition~\ref{prop:base_change_theta} and the fact that $v^{\sm}_g$ is an isomorphism because $g$ is smooth (Lemma~\ref{lem:base_change_Pic}), the morphism $(g^*\Theta_f)_*$ above is an isomorphism if and only the morphism 
\[
\DA(C)(\BQ_C,\Sigma^\infty \mathrm{P}(X_C/C)[n-2])\stackrel{\Theta_{f'}}{\lra}\DA(C)(\BQ_C,f'_*\BQ_{X_C}(1)[n])
\]
is an isomorphism. By adjunction, the right-hand side is isomorphic to the motivic cohomology group $H^{n,1}_\CM(X_{C})$. For concision, let us introduce the ad hoc notation
\[
\mathrm{HP}^{n-2}(X/S):=\DA(S)(\BQ_S,(\Sigma^\infty \mathrm{P}(X/S))[n-2]).
\]
To conclude, since $f'$ still satisfies all the hypotheses of the theorem, we are reduced to prove that the map
\[
\mathrm{HP}^{n-2}(X/S) \ra H^{n,1}_{\CM}(X)
  \]
induced by $\Theta_{f}$ is an isomorphism for all $n\in\BZ$ and all $f:X\ra S $ as in the statement of the theorem.
  
As $S$ and $X$ are regular, we know from Proposition~\ref{prop:mot_coh_1} how to compute $H^{n,1}_{\CM}(X)$: it is zero for $n\neq 1,2$, and we have explicit isomorphisms relating it to $\CO^\times(X)_\BQ$ if $n=1$ (resp. $\Pic(X)_\BQ$ if $n=2$). The idea of the rest of the proof is to apply a localisation argument similar to the proof of Proposition~\ref{prop:mot_coh_1}. 
Let $j:U\ra S$ be a non-empty open set and $i:Z\ra S$ its reduced closed complement. Then by colocalisation, we get a morphism of long exact sequences
\[
\xymatrix@C=1em{
\ldots \ar[r] & \DA(Z)(\BQ_Z, i^!\Sigma^\infty \mathrm{P}(X/S)[n-2]) \ar[r] \ar[d]  & \mathrm{HP}^{n-2}(X/S) \ar[r] \ar[d] & \mathrm{HP}^{n-2}(X_U/U) \ar[r] \ar[d] & \ldots \\
\ldots \ar[r] & \DA(Z)(\BQ_Z,i^!(f_*\BQ_X(1)[n])) \ar[r] & H^{n,1}_\CM(X) \ar[r] & H^{n,1}_\CM(X_U) \ar[r] & \ldots
}
\] 
Since every closed subscheme of $S$ is excellent and reduced, hence has open non-empty regular locus, we can choose a stratification $Z=Z_0\supset Z_1\supset \ldots \supset Z_{d}=\emptyset$ in such a way that for all $k$, the scheme $Z_k\setminus Z_{k+1}$ is regular of codimension $d_k$ in $S$ and in such a way that $(Z\setminus Z_1)$ contains all points of codimension $1$ of $Z$ in $S$ (so that $d_k\geq 2$ for $k\geq 1$). Let $i_k: Z_k\setminus Z_{k+1}\ra S$ be the corresponding regular locally closed immersion.

By Corollary~\ref{cor:Picard_smooth}, the motive $\Sigma^\infty \mathrm{P}(X/S)(-1)$ is in $\DA^{1}_{\gsm}(S)$. By absolute purity in the form of Proposition~\ref{prop:sm_abs_purity}, for any $k$, we have $i_k^!\Sigma^\infty \mathrm{P}(X/S)\simeq i^*_k\mathrm{P}(X/S)(-d_k)[-2d_k]$. In particular, by Corollary~\ref{coro:omega_0} \ref{omega_0_twists}, we have $\omega^0(i_k^!\Sigma^\infty \mathrm{P}(X/S))\simeq 0$ for $k\geq 1$. This shows that by inductively applying absolute purity and colocalisation, we get a morphism of long exact sequences
\[
\xymatrix@C=1em{
\ldots \ar[r] & \DA(Z)(\BQ_{Z\setminus Z_1}, i_0^*\Sigma^\infty \mathrm{P}(X/S)(-1)[n-4]) \ar[r] \ar[d]  & \mathrm{HP}^{n-2}(X/S) \ar[r] \ar[d] & \mathrm{HP}^{n-2}(X_U/U) \ar[r] \ar[d] & \ldots \\
\ldots \ar[r] & H^{n-2,0}(X_{Z\setminus Z_1}) \ar[r] & H^{n,1}_\CM(X) \ar[r] & H^{n,1}_\CM(X_U) \ar[r] & \ldots
}
\] 
Write $Z'=Z\setminus Z_1$. The motive $i_0^*\Sigma^\infty \mathrm{P}(X/S)(-1)[n-4]$ lies in $\DA^{\coh}(Z')$, so that 
\[
\DA(Z')(\BQ_{Z'}, i_0^*\Sigma^\infty \mathrm{P}(X/S)(-1)[n-4])\simeq \DA(Z')(\BQ_{Z'},\omega^0(i_0^*\Sigma^\infty \mathrm{P}(X/S)(-1)[n-4])).
\]
Using Corollary~\ref{cor:Picard_smooth}, we apply Proposition~\ref{prop:omega_0_props} \ref{omega_0_smooth} to get an isomorphism  
\[\omega^0 (i^*\Sigma^\infty \mathrm{P}(X/S)(-1)[n-4])\simeq i^* \omega^0(\Sigma^\infty \mathrm{P}(X/S)(-1)[n-4]).\] 
By Corollary~\ref{cor:omega_0_Picard}, we then have 
\[\omega^0(\Sigma^\infty \mathrm{P}(X/S)(-1)[n-4])\simeq \pi_0(f)_*\BQ[n-2].\]
We deduce that 
\[
\DA(Z')(\BQ_{Z'},i^!\Sigma^\infty \mathrm{P}(X/S)[n-2])\simeq H^{n-2,0}(\pi_0(X_{Z'}/{Z'})).
\]
We rewrite this into the previous commutative diagram to get
\[
\xymatrix{
\ldots \ar[r] & H^{n-2,0}(\pi_0(X_{Z'}/Z')) \ar[r] \ar[d]_{(\pi_0)^*}  & \mathrm{HP}^{n-2}(X/S) \ar[r] \ar[d] & \mathrm{HP}^{n-2}(X_U/U) \ar[r] \ar[d] & \ldots \\
\ldots \ar[r] & H^{n-2,0}(X_{Z'}) \ar[r] & H^{n,1}_\CM(X) \ar[r] & H^{n,1}_\CM(X_U) \ar[r] & \ldots
}
\]
By Proposition~\ref{prop:mot_coh_0}, since $X_{Z'}$ and $\pi_0(X_{Z'}/Z')$ are both regular and have the same set of connected components, the map $(\pi_0)^*$ is an isomorphism for all $n$, and the groups $H^{n-2,0}(X_{Z'})$ vanish for $n\neq 2$. As a consequence, we see that the pullback map $\mathrm{HP}^{n-2}(S)\ra \mathrm{HP}^{n-2}(U)$ is an isomorphism for $n\neq 1,2$, and there is a commutative diagram with exact horizontal lines
\[
\xymatrix@C=1em{
0 \ar[r] & \mathrm{HP}^{-1}(X/S) \ar[r] \ar[d] & \mathrm{HP}^{-1}(X_U/U) \ar[r] \ar[d] & \BQ^{\pi_0(X_{Z'})} \ar[r] \ar@{=}[d] & \mathrm{HP}^{0}(X/S) \ar[r] \ar[d] & \mathrm{HP}^{0}(X_U/U) \ar[r] \ar[d] &  0\\
0 \ar[r] & H^{1,1}_\CM(X_S) \ar[r]  & H^{1,1}_\CM(X_U) \ar[r]  & \BQ^{\pi_0(X_{Z'})} \ar[r] & H^{2,1}_\CM(X_S) \ar[r]  & H^{2,1}_\CM(X_U) \ar[r]  &  0.\\
}
\]
We then pass to the limit over all non-empty closed subsets $Z$ and use continuity for $\DA$. For $n\neq 1,2$, we obtain that $\mathrm{HP}^{n-2}(S)\ra \mathrm{HP}^{n-2}(\kappa(S))$ is an isomorphism. By the field case and Proposition \ref{prop:mot_coh_1}, we have $\mathrm{HP}^{n-2}(\kappa(S))\simeq 0\simeq H^{n,1}_\CM(\kappa(s))\simeq H^{n,1}_{\CM}(S)$ for such $n$'s, and this concludes the proof for $n\neq 1,2$. For $n=1,2$, we obtain a commutative diagram
\[
\xymatrix@C=1em{
0 \ar[r] & \mathrm{HP}^{-1}(X/S) \ar[r] \ar[d] & \mathrm{HP}^{-1}(X_{\kappa(S)}/\kappa(S)) \ar[r] \ar[d] & \Pi \ar[r] \ar@{=}[d] & \mathrm{HP}^{0}(X/S) \ar[r] \ar[d] & \mathrm{HP}^{0}(X_{\kappa(S)}/\kappa(S)) \ar[r] \ar[d] &  0\\
0 \ar[r] & H^{1,1}_\CM(X_S) \ar[r]  & H^{1,1}_\CM(X_{\kappa(S)}) \ar[r]  & \Pi \ar[r] & H^{2,1}_\CM(X_S) \ar[r]  & H^{2,1}_\CM(X_{\kappa(S)}) \ar[r]  &  0\\
}
\]
with $\Pi$ a group which can be expressed in terms of the sheaf $\pi_0(X/S)$, but which we do not need to know explicitly. Applying the already established result in the field case (for the function field $\kappa(S)$), we see that the second and fifth vertical maps are isomorphisms. By the five lemma, we conclude that the first and fourth one are as well. This finishes the proof.
\end{proof}

The following lemma, which relates Grothendieck operations in the effective and non-effective settings, was used in the proof above.

\begin{lemma}\label{lem:eff_proj_form}
  Let $S$ be a field, $f:X\ra \Spec(k)$ a smooth $k$-variety, and $M,N\in \DM^{(\eff)}(k)$. There exists natural isomorphisms
\[
  \Lambda^{(\eff)}_{M,N}:\DM^{(\eff)}(k)(M\otimes f_{\sharp}\BQ,N)\simeq\DM^{(\eff)}(k)(M,f_* f^* N)
\]
such, for $M,N\in \DM^{\eff}(k)$, the diagram
\[
\xymatrix{
\DM(k)(\Sigma^\infty_{\tr}M,\Sigma^\infty_{\tr} f_*f^*N ) \ar[r] & \DM(k)(\Sigma^\infty_{\tr}M, f_*f^*\Sigma^\infty_{\tr}N) \\
\DM^{\eff}(k)(M,f_*f^* N) \ar[u]^{\Sigma^\infty}_{\sim} & \DM(k)(\Sigma^\infty_{\tr} M\otimes f_\sharp\BQ^{\tr}_X,\Sigma^\infty_{\tr} N) \ar[u]^{\Lambda}_{\sim} \\
\DM^{\eff}(k)(M\otimes f_{\sharp} \BQ^\tr_X,N) \ar[u]^{\Lambda^{\eff}}_{\sim} \ar@{=}[r] & \DM^{\eff}(k)(M\otimes f_{\sharp} \BQ^\tr_X,N) \ar[u]^{\Sigma^\infty}_{\sim}
}
\]
commutes.
\end{lemma}
\begin{proof}
Let $A,B\in \DM^{(\eff)}(k)$. We have smooth projection formula isomorphisms
  \[
\mathrm{sp}^{(\eff)}:f_\sharp (f^*A\otimes B)\stackrel{\sim}{\ra} A\otimes f_\sharp B.
\]
Moreover, there are natural isomorphisms $\Sigma^\infty_{\tr}f_{\sharp}\simeq f_{\sharp}\Sigma^\infty_{\tr}$ and $\Sigma^\infty_{\tr} f^*\simeq f^*\Sigma^\infty_{\tr}$, and modulo these natural isomorphisms, we have $\Sigma^\infty_{\tr} \mathrm{sp}^{\eff}=\mathrm{sp}$.

We now define $\Lambda$ as
\[
\DM^{(\eff)}(M\otimes f_{\sharp}\BQ,N)\stackrel{(\mathrm{sp}^{(\eff)})^{-1}}{\simeq} \DM^{(\eff)}(k)(f_\sharp f^*M,N)\simeq \DM^{(\eff)}(M,f_* f^*N)
  \]
where the second map is given by the two adjunctions $(f_\sharp,f^*)$ and $(f^*,f_*)$.

The commutation of the diagram then follows from $\Sigma^\infty \mathrm{sp}^{(\eff)}=\mathrm{sp}$.
\end{proof}

\begin{remark}
In view of the non-canonical decomposition of $P(X/S)$ from Corollary~\ref{cor:Picard_smooth}, Theorem \ref{theo:omega_1_Pic_smooth} can be interpreted as a $1$-motivic analogue of Deligne's decomposition theorem for smooth projective morphisms.  
\end{remark}

\begin{remark}
In the special case of $S=\Spec(k)$ with $k$ a perfect field, the theorem is closely related to computations of $\LAlb$ and $\RPic$ from \cite[\S 9]{BVK}. Let us sketch this connection. Let $f:X\ra \Spec(k)$ be a smooth projective variety. Then $X$ is automatically Pic-smooth, and the morphism 
\[
 (\Sigma^\infty \mathrm{P}(X/k))(-1)[-2] \ra \omega^1 f_* \BQ_X.
\]
induced by $\Theta_f$ is an isomorphism. By Proposition~\ref{prop:omega_field}, we have the following isomorphisms.

\begin{eqnarray*}
\omega^1 f_*\BQ_X & \simeq &  \BD_k o^\tr\Sigma^\infty_\tr \LAlb \Omega^\infty_\tr a_\tr \BD_k f_*\BQ_X \\
& \simeq & o^\tr \BD^\tr_k \Sigma^\infty_{\tr} \LAlb M^{\eff,\tr}_k(X) 
\end{eqnarray*}
where we have used the same arguments as in Section~\ref{sec:omega_field} to pass from $\DA$ to $\DM^{\eff}$. Moreover, by Lemma~\ref{lemm:duality_eff} below, we can write

\begin{eqnarray*}
 o^\tr \BD^\tr_k \Sigma^\infty_{\tr} \LAlb M^{\eff,\tr}_k(X)  & \simeq & o^\tr (\Sigma^\infty \Homint^{\eff}(\LAlb M^{\eff,\tr}_k(X), \BQ(1)))(-1) \\
& \simeq & o^\tr (\Sigma^\infty_\tr \RPic(X)) (-1)
\end{eqnarray*}
where $\RPic(X)$ is the motive introduced in \cite[Definition 8.3.1]{BVK} and we have used the duality between $\LAlb(X)$ and $\RPic(X)$ in \cite[\S 4.5]{BVK}. At this point, we have an isomorphism
\[
o^\tr (\Sigma^\infty_\tr \RPic(X))(-1)\simeq (\Sigma^\infty \mathrm{P}(X/k))(-1)[-2].
\]
We now apply $a^\tr$, use the isomorphism $a^\tr\Sigma^\infty\simeq \Sigma^\infty_\tr a^\tr$, Corollary~\ref{cor:Picard_tr}, and the cancellation theorem \cite{Voevodsky_Cancellation}: this yields an isomorphism
\[
\RPic(X)\simeq \mathrm{P}^\tr(X/k)[-2].
\]
We are now in position to connect with the results of \cite{BVK}: modulo this isomorphism, the distinguished triangles of Corollary~\ref{cor:Picard_tr} for $\mathrm{P}^\tr(X/k)$ give an alternative proof of \cite[Corollary 9.6.1]{BVK} in the special case where $X$ is smooth projective and we have $\BQ$-coefficients.
\end{remark}

\begin{lemma}
\label{lemm:duality_eff}
Let $k$ be a perfect field. We have for $M\in \DM^{\eff}_{n,c}(k)$ a natural isomorphism
\[
\BD^\tr_k \Sigma^\infty_\tr M\simeq (\Sigma^\infty_{\tr} \Homint^\eff(M,\BQ(n)))(-n).
\]
\end{lemma}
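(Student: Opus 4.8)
\textbf{Proof sketch for Lemma~\ref{lemm:duality_eff}.}

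The plan is to reduce the statement to the standard duality behaviour of the Verdier duality functor $\BD^\tr_k$ on $\DM_c(k)$ under the suspension--delooping adjunction, and then to recognize that the internal Hom computation in $\DM^\eff(k)$ is compatible with suspension after a Tate twist. First I would recall that, since $k$ is perfect, the cancellation theorem \cite{Voevodsky_Cancellation} makes $\Sigma^\infty_\tr:\DM^\eff(k)\ra \DM(k)$ fully faithful, and by Lemma~\ref{lemm:DA_DM_n} it restricts to an equivalence onto the subcategory generated by the relevant generators; in particular $\DM^\eff_{n,c}(k)$ sits inside $\DM_{n,c}(k)$. For $M\in \DM^\eff_{n,c}(k)$ (hence a constructible motive), $\Sigma^\infty_\tr M$ is a constructible object of $\DM(k)$, so $\BD^\tr_k\Sigma^\infty_\tr M=\Homint(\Sigma^\infty_\tr M,\BQ)$ makes sense and is again constructible.

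The key step is the computation $\BD^\tr_k\Sigma^\infty_\tr M\simeq \Sigma^\infty_\tr \Homint^\eff(M,\BQ(n))(-n)$. I would argue as follows. An object $M\in\DM^\eff_{n,c}(k)$ is, up to retracts and finite extensions, built from generators $g_\sharp\BQ^\tr_X$ with $g:X\ra\Spec(k)$ smooth of dimension $\leq n$. Since both sides of the claimed isomorphism are triangulated functors of $M$ (contravariant) commuting with finite direct sums and retracts, it suffices to check it on such generators, and functoriality of the maps involved then upgrades the pointwise check to a natural isomorphism. For $M=g_\sharp\BQ^\tr_X$ with $X$ smooth proper (one reduces to this by a compactification/localisation dévissage, keeping track of dimensions, exactly as in the proof of Proposition~\ref{prop:permanence_coh}), relative purity gives $g_\sharp\BQ^\tr_X\simeq g_!g^!\BQ^\tr_k(-d)[-2d]$ with $d=\dim X\le n$, and $\BD^\tr_k$ exchanges $g_!\leftrightarrow g_*$, $g^!\leftrightarrow g^*$, so $\BD^\tr_k(g_\sharp\BQ^\tr_X)\simeq g_*g^*\BQ^\tr_k(d)[2d]$. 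On the other hand $\Homint^\eff(g_\sharp\BQ^\tr_X,\BQ(n))\simeq g_*g^*\BQ(n)$ by the projection/internal-Hom adjunction for the smooth morphism $g$ (the ``projection formula'' isomorphism $\Lambda$ used in the proof of Theorem~\ref{theo:omega_1_Pic_smooth}), combined with relative purity to absorb the shift; twisting by $(-n)$ and applying $\Sigma^\infty_\tr$ matches the previous expression after another application of cancellation to compare $g_*g^*\BQ^\tr_k(d)[2d]$ in $\DM(k)$ with the suspension of its effective counterpart. The twist by $\BQ(n)$ rather than $\BQ(d)$ is harmless since $d\le n$ and $\BQ(n-d)$ is invertible and effective.

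The main obstacle I expect is bookkeeping: ensuring that the internal Hom computed in $\DM^\eff(k)$, then suspended and twisted, agrees \emph{naturally} (not just objectwise) with the internal Hom computed in $\DM(k)$. This is where the hypothesis that $M$ is constructible and $k$ perfect is essential — it guarantees $\Sigma^\infty_\tr$ is fully faithful and that $\BD^\tr_k$ is an anti-autoequivalence on $\DM_c(k)$ which is its own quasi-inverse — and where one must invoke the compatibility of $\Sigma^\infty_\tr$ with the six operations and with internal Homs of invertible (Tate) objects. Concretely, I would phrase the natural transformation as the composite $\Sigma^\infty_\tr\Homint^\eff(M,\BQ(n))(-n)\to \Homint(\Sigma^\infty_\tr M,\Sigma^\infty_\tr\BQ(n))(-n)\simeq \Homint(\Sigma^\infty_\tr M,\BQ(n)(-n))\simeq \Homint(\Sigma^\infty_\tr M,\BQ)=\BD^\tr_k\Sigma^\infty_\tr M$, and check it is an isomorphism by the generator argument above. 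Once the field-theoretic inputs are in place, the verification on generators is the kind of routine relative-purity plus adjunction computation already carried out several times earlier in the paper, so no new idea beyond careful tracking of twists is needed.
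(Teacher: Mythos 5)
Your strategy --- construct the comparison map $\Sigma^\infty_\tr\Homint^\eff(M,\BQ(n))(-n)\ra \Homint(\Sigma^\infty_\tr M,\BQ)=\BD^\tr_k\Sigma^\infty_\tr M$ and verify it on generators by purity and adjunction --- is a genuinely different route from the paper's, which never dévisses along generators: there one computes $\Omega^\infty_\tr\BD^\tr_k(\Sigma^\infty_\tr M(-n))\simeq \Homint^\eff(M,\BQ(n))$ for all $M$ simultaneously by adjunction, monoidality of $\Sigma^\infty_\tr$, the cancellation theorem and Yoneda, and the only non-formal step is that $\BD^\tr_k(\Sigma^\infty_\tr M(-n))$ lies in the image of $\Sigma^\infty_\tr$ (Proposition~\ref{prop:hom_cohom_twists}, Lemma~\ref{lemm:DA_DM_n}, Proposition~\ref{prop:subcats_field}), so that the counit $\Sigma^\infty_\tr\Omega^\infty_\tr\ra\id$ is invertible on it. Your route could be made to work, but as written it has two problems.

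First, a fixable slip: for $g:X\ra\Spec(k)$ smooth of pure dimension $d$, relative purity gives $g_\sharp\BQ^\tr_X\simeq g_!g^!\BQ^\tr_k$, not $g_!g^!\BQ^\tr_k(-d)[-2d]$; hence $\BD^\tr_k(g_\sharp\BQ^\tr_X)\simeq g_*\BQ^\tr_X$ with no twist or shift, and your two sides as displayed differ by $(d)[2d]$ --- there is no shift left to ``absorb''. With the corrected formula the check does go through for $X$ smooth \emph{proper}, where $\Homint^\eff(M^{\eff,\tr}_k(X),\BQ(n))\simeq M^{\eff,\tr}_k(X)(n-d)[-2d]$ by Poincaré duality and cancellation. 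Second, and this is the genuine gap: the reduction to smooth proper generators of dimension $\leq n$ is not the ``compactification/localisation dévissage as in Proposition~\ref{prop:permanence_coh}'' --- that proposition concerns stability of $\DA^{\coh}$ under $f_*$ and does not keep you inside $\DM^{\eff}_{n,c}(k)$. What you actually need is Proposition~\ref{prop:subcats_field} (transported via Lemma~\ref{lemm:DA_DM_n}), i.e. $\DA_{n,c}(k)=\DA^{\gsm}_{n,c}(k)$, whose proof is not routine (de Jong alterations, Lemma~\ref{lemm:hom_!_fields}); equivalently, if you keep the non-proper generators $g_\sharp\BQ^\tr_X$ you must know that $\BD^\tr_k(\Sigma^\infty_\tr M)(n)$ is effective, which is the same non-formal input. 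Cancellation only identifies Homs out of suspensions of effective objects, and these do not generate $\DM(k)$, so without this effectivity statement your comparison map cannot be shown to be invertible; this is precisely the ingredient the second paragraph of the paper's proof supplies.
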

\begin{proof}
Let $N,M\in \DM^{\eff}_c(k)$. By adjunction, monoidality of $\Sigma^\infty_\tr$ and the cancellation theorem \cite{Voevodsky_Cancellation}, there is a sequence of natural isomorphisms
\begin{eqnarray*}
\DM^{\eff}(k)(N,\Omega^\infty_\tr \BD^\tr_k (\Sigma^\infty_\tr M(-n))) & \simeq & \DM(k)(\Sigma^\infty_\tr N,\BD^\tr_k(\Sigma^\infty_\tr M(-n))) \\
& \simeq & \DM(k)(\Sigma^\infty_\tr N\otimes \Sigma^\infty_\tr (M)(-n), \BQ) \\
& \simeq & \DM(k)(\Sigma^\infty_\tr (N\otimes M),\BQ(n)) \\
& \simeq & \DM^\eff(k)(N\otimes M,\BQ(n)) \\
& \simeq & \DM^\eff(k)(N,\Homint^\eff(M,\BQ(n)))
\end{eqnarray*}
which provides, by the Yoneda lemma, a natural isomorphism $\Omega^\infty_\tr \BD^\tr_k(M(-n))\simeq \Homint^{\eff}(M,\BQ(n))$. We apply $\Sigma^\infty_\tr$ to get an isomorphism $\Sigma^\infty_\tr\Omega^\infty_\tr \BD^\tr_k(M(-n))\simeq \Sigma^\infty\Homint^{\eff}(M,\BQ(n))$. 

Moreover, the motive $\BD^\tr_k(\Sigma^\infty_\tr M(-n))$ lies in $\DM_{\homo,c}(k)$ by Proposition~\ref{prop:hom_cohom_twists}, Lemma~\ref{lemm:DA_DM_n} and Proposition~\ref{prop:subcats_field}. Because of the cancellation theorem \cite{Voevodsky_Cancellation}, the counit $\Sigma^\infty_\tr \Omega^\infty_\tr \ra \id$ is an isomorphism on $\DM_{\homo}(k)$; hence $\Sigma^\infty_\tr \Omega^\tr \BD^\tr(\Sigma^\infty_\tr M(-n))\simeq \BD^\tr(\Sigma^\infty_\tr M(-n))\simeq (\BD^\tr \Sigma^\infty_\tr M)(n)$. Combining this with the previous paragraph completes the proof.
\end{proof}

In the special case of a relative curve, because the N\'eron-Severi rank is constant, we can remove the regularity hypothesis on the base. This yields a general computation of the motive of a smooth projective curve. Recall that for a smooth morphism $f:X\ra S$, we write $M_{S}(X)$ for the homological motive $f_{\sharp}\BQ_{X}$  (this notation is sometimes convenient because it does not refer to $f$).

\begin{cor}\label{coro:computation_curve}
  Let $f:C\ra S$ be a smooth projective curve.
  \begin{enumerate}[label={\upshape(\roman*)}]
  \item   The morphism 
\[
\Theta_f:\Sigma^\infty \mathrm{P}(C/S)(-1)[-2] \ra f_*\BQ_C
\]
is an isomorphism, and induces an isomorphism
\[
\Sigma^\infty \mathrm{P}(C/S)\simeq M_S(C).
\]
\item If $S$ is regular, we then have (non-canonical) isomorphisms
\[
  f_\sharp \BQ_C \simeq M_S(\pi_0(C/S))\oplus \Sigma^\infty \Jac(C/S)\oplus M_S(\pi_0(C/S))(1)[2]
\]
and
\[
  f_* \BQ_C \simeq M_S(\pi_0(C/S))\oplus \Sigma^\infty \Jac(C/S)(-1)[-2]\oplus M_S(\pi_0(C/S))(-1)[-2].
\]
\item If $f$ has geometrically connected fibres and a section $\sigma:S\ra C$, we have canonical isomorphisms
\[
  f_\sharp \BQ_C \simeq \BQ_S\oplus \Sigma^\infty \Jac(C/S)\oplus \BQ_S(1)[2]
\]
and
\[
    f_* \BQ_C \simeq \BQ_S\oplus \Sigma^\infty \Jac(C/S)(-1)[-2]\oplus \BQ_S(-1)[-2].
\]
  \end{enumerate}

\end{cor}
\begin{proof}
Let us show that $\Theta_f$ is an isomorphism. By~\cite[Proposition 3.24]{Ayoub_Etale}, it is enough to show that $s^*\Theta_f$ is an isomorphism for any $s\in S$. By Proposition \ref{prop:base_change_theta}  and Proposition \ref{prop:picard_curve}, we are then reduced to the case when $S$ is the spectrum of a field. The fact that $\Theta_f$ is then an isomorphism is a special case of Theorem \ref{theo:omega_1_Pic_smooth}. The claims in (i) and (ii) then follow from Corollary \ref{cor:Picard_smooth} and Proposition \ref{prop:picard_curve}.
  Let us assume further that $f$ has geometrically connected fibres and a section $\sigma:S\ra C$. The section $\sigma$ yields sections of the lattices $\BQ(\pi_0(C/S))$ and $\sNS^{\sm}_{C/S}\simeq \BQ(\pi_0(C/S))$ (Proposition~\ref{prop:picard_curve}), which can be used to produce splittings of the distinguished triangles computing $\Sigma^\infty P(C/S)(-1)[-2]$ in Proposition~\ref{cor:Picard_smooth}. From this and (i), we get the decompositions in (iii).
\end{proof}

As an application of the computation, we can now prove a fundamental finiteness result for $\omega^1$.

\begin{theo}\label{thm:finiteness_omega1}
Let $S$ be a noetherian finite-dimensional excellent scheme. Assume that $S$ admits resolution of singularities by alterations. Then the functor $\omega^1:\DA^\coh(S)\ra \DA^1(S)$ preserves compact objects.
\end{theo}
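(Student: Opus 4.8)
The plan is to reduce the statement, via the structure theory of $\DA^\coh(S)$, to the computation of $\omega^1$ in the favorable situation already handled in Theorem~\ref{theo:omega_1_Pic_smooth}. First I would recall that by Lemma~\ref{lem:subcats_comp} the category $\DA^\coh_c(S)$ equals $\langle \CG\rangle$ for $\CG$ the family of $g_*\BQ_{X'}$ with $g:X'\ra S$ projective; moreover, using Lemma~\ref{lemma:coh_gen} (which applies since $S$ has resolution of singularities by alterations), we may take $\CG$ to consist of $g_*\BQ_{X'}$ with $X'$ connected regular. Since $\omega^1$ is triangulated and commutes with small sums (Lemma~\ref{lemm:omega_sums}), and since $\langle \CG\rangle$ is the smallest thick subcategory containing $\CG$, it suffices to prove $\omega^1(g_*\BQ_{X'})$ is compact for such generators. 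The key difficulty is that $g$ need not be smooth: $X'$ is regular but the fibers of $g$ over $S$ may be bad, so we cannot apply Theorem~\ref{theo:omega_1_Pic_smooth} directly.

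To handle this, I would follow the strategy of \cite{Ayoub_Zucker}: induct on the dimension of $S$ and use the localization/blow-up techniques there to reduce to the case where $g:X'\ra S$ is, generically on $S$, smooth projective and Pic-smooth. More precisely, since $\omega^1$ commutes with $i_!$ for $i$ a closed immersion (Proposition~\ref{prop:omega_basics}~\ref{omega_!}) and with $i^*$ on the relevant localization triangle (Proposition~\ref{prop:omega_basics}~\ref{omega_key}), a Noetherian induction on $S$ lets me replace $S$ by a dense open subset at each stage; combined with the localization triangle $j_!j^*\ra \id\ra i_*i^*$ and the induction hypothesis on the closed complement, I reduce to finding, for each generator $g_*\BQ_{X'}$, a dense open $U\subseteq S$ over which $\omega^1(g_*\BQ_{X'})|_U$ is compact. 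Over such $U$ (after shrinking, using generic flatness, generic smoothness in characteristic zero or de~Jong alterations more generally, and Proposition~\ref{prop:generic_pic_smooth} to get Pic-smoothness), one arranges $g$ to be built out of smooth projective Pic-smooth pieces via alterations and closed strata of smaller dimension; the alteration step is absorbed using cohomological $h$-descent for $\DA$ together with the fact that $\omega^1$ commutes with the relevant operations, exactly as in the proof of Proposition~\ref{prop:permanence_coh}. Then Theorem~\ref{theo:omega_1_Pic_smooth} computes $\omega^1(f_*\BQ_X)\simeq \Sigma^\infty P(X/S)(-1)[-2]$ on these pieces, which is compact by Corollary~\ref{cor:Picard_smooth}.

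Putting this together: given a generator $g_*\BQ_{X'}$ with $X'$ connected regular, I produce a stratification of $S$ into regular locally closed subsets $S_k$ such that $g^{-1}(S_k)\ra S_k$ factors through an $h$-hypercover by smooth projective Pic-smooth morphisms (plus lower-dimensional corrections), apply Theorem~\ref{theo:omega_1_Pic_smooth} and $h$-descent stratum by stratum, and reassemble using the localization triangles and Proposition~\ref{prop:omega_basics}~\ref{omega^*}, \ref{omega_!}, \ref{omega_key}, which guarantee that $\omega^1$ commutes with all the gluing operations involved. Since compactness is detected by Lemma~\ref{lem:subcats_comp} (an object of $\DA^1(S)$ is compact iff compact in $\DA(S)$, iff it lies in $\langle g_\sharp\BQ_C(-1)\rangle$), and since the class of objects with compact $\omega^1$ is thick and closed under the triangles above, the induction closes.

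The main obstacle I anticipate is the alteration/descent bookkeeping in the inductive step: ensuring that after replacing $X'$ by an alteration $\widetilde{X}$ one can still control $\omega^1$ of the difference terms, and that the $h$-descent spectral sequence for $\DA$ is compatible with $\omega^1$ in the strong sense needed (this uses that $\omega^1$ commutes with $f_*$ up to the natural transformation $\beta^1_f$, which is invertible only for finite $f$ — so one must phrase the descent using $f_!$ and quasi-finite maps, or combine $\beta^1$ with $\Ex^*_*$ carefully as in Proposition~\ref{prop:base_change_theta}). Everything else — the reductions to regular base, to generically Pic-smooth $g$, and the final invocation of Theorem~\ref{theo:omega_1_Pic_smooth} and Corollary~\ref{cor:Picard_smooth} — is routine given the results already established in the paper.
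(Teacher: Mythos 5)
Your reduction to the generators $g_*\BQ_{X'}$ of Lemma~\ref{lemma:coh_gen} is fine, and your list of tools (noetherian induction, localisation triangles, Proposition~\ref{prop:omega_basics}, generic Pic-smoothness, Theorem~\ref{theo:omega_1_Pic_smooth}) is the right one; but the step you yourself flag as the ``main obstacle'' is a genuine gap, not bookkeeping. You propose to deal with a generator whose structural morphism is not smooth by resolving it through alterations and cohomological $h$-descent and then ``commuting $\omega^1$ with the relevant operations''. This fails on two counts. First, $\omega^1$ does not commute with proper pushforward: by Proposition~\ref{prop:omega_basics}~\ref{omega_*} the transformation $\beta^1_f$ is invertible only for $f$ finite, and the maps occurring in an alteration or $h$-hypercover are proper, surjective and only generically finite, so neither of your suggested fixes applies (for proper $f$ one has $f_!\simeq f_*$, so ``phrasing the descent with $f_!$'' changes nothing, and Proposition~\ref{prop:base_change_theta} concerns base change of $\Theta_f$, not commutation of $\omega^1$ with $f_*$). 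Second, even granting such commutations, $h$-descent presents $g_*\BQ_{X'}$ as a homotopy limit over the hypercover (equivalently, it only gives a descent spectral sequence on morphism groups); compactness is not preserved under such limits and a thick subcategory is not closed under them, so knowing that $\omega^1$ is compact on the smooth projective Pic-smooth terms of the hypercover does not yield compactness of $\omega^1(g_*\BQ_{X'})$. In short, the generic part of the induction — exactly the non-formal content of the theorem — is not established by your argument.

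The paper's proof avoids ever applying $\omega^1$ to a ``bad'' pushforward. Starting from a compact cohomological $M$, it chooses (using Proposition~\ref{prop:generic_pic_smooth}) a dense regular open $j:U\ra S$ over which the finitely many generating pushforwards become Pic-smooth, takes the colocalisation triangle $i_*i^!M\ra M\ra j_*j^*M$ (which stays cohomological by Proposition~\ref{prop:permanence_coh}), handles $i^!M$ by noetherian induction, and then — this is the move missing from your plan — replaces $j_*j^*M$ by $N=j_*\omega^1(j^*M)$ via the isomorphism $\omega^1(j_*j^*M)\simeq\omega^1(j_*\omega^1 j^*M)$ of Proposition~\ref{prop:omega_basics}~\ref{omega_*}. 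Theorem~\ref{theo:omega_1_Pic_smooth} makes $\omega^1(j^*M)$, hence $N$, compact, with $j^*N\in\DA^1(U)$; applying $\omega^1$ to the localisation triangle $j_!j^*N\ra N\ra i_*i^*N$ and using that $j_!j^*N$ already lies in $\DA^1_c(S)$, that $\beta^1_i$ is invertible for the closed immersion $i$, the key point Proposition~\ref{prop:omega_basics}~\ref{omega_key}, and the induction hypothesis over $Z$, gives compactness of $\omega^1 N$ and hence of $\omega^1 M$. If you want to repair your write-up, replace the stratum-by-stratum alteration/$h$-descent step by this substitution trick; no descent inside the $\omega^1$-computation is needed.
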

\begin{proof} We follow the argument of \cite[Proposition~2.14 (vii)]{Ayoub_Zucker} for the case of $\omega^0$, with minor changes.

By Corollary~\ref{cor:localisation_subcats} \ref{rad_sub} we can assume that $S$ is reduced. We prove the result by noetherian induction on $S$. Let $M$ be in $\DA^{\coh}_c(S)$. Since $M$ is compact and cohomological, Lemma~\ref{lem:subcats_comp}, Proposition~\ref{prop:subcats_field} and continuity implies that there exists a dense open set $V\subset S$ and a finite family $\{f_i\}_{i=1}^n$ of smooth projective morphisms $f_i:X_i\ra V$ such that $M_V$ lies in the triangulated subcategory generated by the motives $f_{i*}\BQ_{X_i}$. By Proposition~\ref{prop:generic_pic_smooth}, there exists an everywhere dense open subset $U\subset V$ such that $f_i\times_S U$ is Pic-smooth for every $i$. We can moreover assume that $U$ is regular. Write $j:U\ra S$ for the open immersion and $i:Z\ra S$ for the complementary reduced closed immersion. By Proposition~\ref{prop:permanence_coh}, because of the hypothesis of resolution of singularities by alterations for $S$, the colocalisation triangle 
\[
i_*i^! M \ra M \ra j_* j^* M\rap
\]
lies in $\DA^{\coh}(S)$. We apply $\omega^1$ and use Proposition~\ref{prop:omega_basics} \ref{omega_*} to obtain a distinguished triangle
\[
i_* \omega^1(i^! M)\ra \omega^{1} M \ra \omega^1(j_* j^* M)\rap.
\]
By induction, we know that $\omega^1(i^!M)$ is compact, so it is enough to show that $\omega^1(j_* j^* M)$ is as well. By Proposition~\ref{prop:omega_basics} \ref{omega_*}, we have an isomorphism $\omega^1(j_* j^* M)\simeq \omega^1(j_* \omega^1 j^* M)$. Put $N=j_*\omega^1(j^*M)$; we have to show that $\omega^1(N)$ is compact. The motive $j^* M$ lies in the triangulated subcategory generated by the motives $(f_i\times_S U)_*\BQ$ with $f_i\times_S U$ smooth projective Pic-smooth and $U$ regular; hence by Theorem~\ref{theo:omega_1_Pic_smooth}, $\omega^1(j^*M)$ is compact. This implies that $N$ is compact, with $j^*N\in \DA^1(U)$. In particular, we have $j_! j^*N\in \DA^1_c(S)$. Thus applying $\omega^1$ to the localisation triangle for $N$ and using Proposition~\ref{prop:omega_basics} \ref{omega_*} yield a distinguished triangle
\[
j_! j^* N\ra \omega^1 N\ra i_* \omega^1 i^* N\rap.
\]
By Proposition~\ref{prop:omega_basics} \ref{omega_key}, we have $i^*\omega^1(N)\simeq \omega^1(i^*N)$, which is compact by induction. This completes the proof that $\omega^1 N$ is compact, and the proof of the theorem.
\end{proof}

\section{Motivic $t$-structures}
\label{sec:t_structure}

We introduce the motivic $t$-structures on $\DA_1(S)$ and $\DA^1(S)$ and study how Deligne $1$-motives relates to its heart.

\subsection{Conservativity of realisations of $1$-motives}
\label{sec:conservativity}

As we have explained in the introduction, in our approach to the motivic $t$-structure for relative $1$-motives, the conservativity of realisation functors is a necessary first step to ensure uniqueness. Recall from \cite{Ayoub_Betti} that for $k$ a field of characteristic $0$ with a fixed complex embedding $\sigma:k\ra \mathbb{C}$ and $S$ scheme of finite type over $k$, there is a covariant Betti realisation functor
\[
R_{B,\sigma}:\DA(S)\ra D(S^{\an},\BQ)
\]
with target the derived category of sheaves of $\BQ$-vector spaces on the complex analytic space $S^{\an}$.

 Similarly, we fix a prime $\ell$, and let $S$ be a $\BZ[\frac{1}{\ell}]$-scheme. Let $D_c(S,\BQ_\ell)$ be subcategory of complexes with constructible cohomology in the derived category of $\BQ_\ell$-sheaves $S$ in the sense of Ekedahl \cite{Eke}.
By \cite[Section 9]{Ayoub_Etale}, there is a covariant $\ell$-adic realisation functor 
\[ R_\ell: \DA_c(S)\to D_c(S,\BQ_\ell).\] 

\begin{prop}\label{prop:real_conservative}
With the notations and hypotheses above, the functors $R_{B,\sigma}$ and $R_\ell$, restricted to either of $\DA_{0,c}(S)$, $\DA_{1,c}(S)$ or $\DA^{1}_{c}(S)$ are conservative.
\end{prop}

\begin{proof}
Since $\DA_{0,c}(S)\subset \DA_{1,c}(S)$ and $\DA^1_c(S)=\DA_{1,c}(S)(-1)$, it is enough to treat the case of $\DA_{1,c}(k)$. Artin's comparison theorem between Betti and $\ell$-adic cohomology, including in the relative setting \cite[Expos\'e XVI.4, Th\'eor\`eme 4.1]{SGA4_3}, implies that, for any $M\in\DA_{c}(S)$ for $S$ of finite type over $k$, any embedding $\sigma:k\hookrightarrow \BC$ and any prime number $\ell$, we have $R_{B,\sigma}(M)=0\Leftrightarrow R_{\ell}(M)=0$. It is thus enough to treat the $\ell$-adic case.

If $k$ is a perfect field of characteristic $p\neq \ell$, we have a $t$-exact equivalence of triangulated categories $\Sigma^\infty: D^b_c(\CM_1(k))\simeq\DA_{1,c}(k)$ by \cite{Orgogozo}. By Lemma~\ref{lem:t_conservative} below, we only have to check that the induced functor $R^\heartsuit_\ell$ from $\CM_1(k)$ to either $\BQ$ or $\BQ_l$-vector spaces is conservative. Using the weight filtration on Deligne $1$-motives, it is enough to show that if $\CM$ is a pure  object in $\CM_1(k)$ with trivial realisation, it is itself $0$. This follows from the computation of the realisation of such a motive in \cite[5.2]{AHPL}.

The $\ell$-adic realisation commutes with pullbacks along finite type morphisms \cite[Theoreme 9.7]{Ayoub_Etale}, hence by continuity for $\DA$ and for the constructible $\ell$-adic derived category it commutes with pullbacks along morphisms which can be written as cofiltered limits of morphisms with affine transition morphisms. In particular, $R_{\ell}$ commutes with pullbacks along the inclusion of a point in a scheme, and with pullbacks along arbitrary field extensions. Combining the two, we deduce that $R_{\ell}$ commutes with pullbacks along any morphism $i_{\bar{s}}:\bar{s}\ra S$ with $\bar{s}$ the spectrum of an algebraically closed field.

Let $M\in \DA_{1,c}(S)$. Assume that $R_\ell(M)=0$. By the previous paragraph, for any $i_{\bar{s}}:\bar{s}\ra S$ geometric point, we have $R_\ell(i_{\bar{s}}^* M)=0$. By the perfect field case above, we have $i_{\bar{s}}^*M=0$. By \cite[Lemma A.6]{AHPL}, the family of such pullback functors is conservative, and we conclude that $M=0$. This concludes the proof of conservativity.

\end{proof}

\begin{lemma}\label{lem:t_conservative}
Let $F:\CT\ra \CT'$ be a $t$-exact functor between triangulated categories equipped with $t$-structures. Assume that the $t$-structure on $\CT$ is bounded and that the induced functor $F^\heartsuit:\CT^\heartsuit\ra \CT'^\heartsuit$ is conservative. Then $F$ is conservative.
\end{lemma}
\begin{proof}
Let $M\in \CT$. Assume that $F(M)=0$. Then $H_n F(M)=F(H_n M)=0$ for all $n\in\BZ$. Since $F^\heartsuit$ is conservative, we have $H_n M=0$ for all $n\in\BZ$. Since the $t$-structure on $\CT$ is bounded, we deduce that $M=0$.
\end{proof}

\subsection{Construction of the t-structures}
\label{sec:generators}
 
We fix a (noetherian, finite-dimensional) base scheme $S$ for the rest of this section. We want to define $t$-structures by generators and relations. This is possible in the context of compactly generated triangulated categories.

\begin{prop}{\cite[Lemme 2.1.69, Proposition~2.1.70]{Ayoub_these_1}}
Let $\CT$ be a compactly generated triangulated category and $\CG$ be a family of compact objects in $\CT$. Define $\CT_{\geq 0}=\llangle\CG \rrangle_+$ and $\CT_{<0}$ as the right orthogonal of $\CG[\BN]$, i.e., the full subcategory of all objects $N$ with 
\[\forall n\in \BN,\ \forall G\in \CG,\ \Hom(G,N[-n])=0.\] 
Then $(\CT,\ \CT_{\geq 0},\ \CT_{<0})$ is a $t$-structure on $\CT$, which we denote by $t(\CG)$ and call the $t$-structure generated by $\CG$ on $\CT$. 
\end{prop}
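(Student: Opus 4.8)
The plan is to check directly the three defining properties of a $t$-structure for the pair $(\CT_{\geq 0},\CT_{<0})$: stability under the appropriate shifts, the orthogonality $\Hom(\CT_{\geq 0},\CT_{<0})=0$, and the existence of the truncation triangles (uniqueness of those triangles is then automatic from orthogonality). This is essentially Morel's argument in \cite[Lemme 2.1.69, Proposition~2.1.70]{Ayoub_these_1}; I recall its structure. Throughout I would use that $\CT$ has small coproducts, hence admits sequential homotopy colimits, and that the members of $\CG$ are compact.

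The shift conditions are immediate: $\CT_{\geq 0}=\ll\CG\gg_+$ is stable under $[+1]$ by construction, and $\CT_{<0}$ is stable under $[-1]$ since $\Hom(G,N[-1][-n])=\Hom(G,N[-(n+1)])$ with $n+1\in\BN$. For orthogonality I would set $\mathcal S=\{G[n]\,:\,G\in\CG,\ n\in\BN\}$, so that $\CT_{<0}=\mathcal S^{\perp}$ by definition, and note that the left orthogonal ${}^{\perp}(\mathcal S^{\perp})$ is stable under small coproducts and under extensions (by the obvious long exact sequences), is stable under $[+1]$ exactly because $\mathcal S^{\perp}$ is stable under $[-1]$, and contains $\mathcal S$ and hence $\CG$; therefore $\CT_{\geq 0}=\ll\CG\gg_+\subseteq{}^{\perp}(\mathcal S^{\perp})$, which is the orthogonality statement.

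The substantive step is the construction of the truncation triangles. Given $X\in\CT$, I would build a tower $X=E_0\xrightarrow{\nu_0}E_1\xrightarrow{\nu_1}E_2\to\cdots$ by letting $J_k$ be the (small) set $\coprod_{G\in\CG,\,n\in\BN}\Hom(G[n],E_k)$ and defining $E_{k+1}$ by the distinguished triangle
\[
\bigoplus_{\phi\in J_k}G_\phi[n_\phi]\xrightarrow{(\phi)_\phi}E_k\xrightarrow{\nu_k}E_{k+1}\rap,
\]
where $\phi\colon G_\phi[n_\phi]\to E_k$. Put $B=\Hocolim_k E_k$, with canonical map $X\to B$ and fibre $A$, giving a triangle $A\to X\to B\rap A[1]$. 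Compactness of $G[n]$ gives $\Hom(G[n],B)=\mathrm{colim}_k\Hom(G[n],E_k)$; each $\phi\in\Hom(G[n],E_k)$ maps to $\nu_k\circ\phi=0$ in the next term, being a component of a map immediately followed by the next arrow of a triangle, so $\Hom(G[n],B)=0$ and $B\in\CT_{<0}$. For $A$: one has $\Cone(\nu_k)\simeq\bigoplus_{\phi}G_\phi[n_\phi+1]$, which lies in $\CT_{\geq 1}:=\ll\{G[m]:m\geq 1\}\gg_+$ because $n_\phi+1\geq 1$; since $\Cone(X\to E_0)=0$ and the octahedral axiom applied to $X\to E_k\to E_{k+1}$ produces a triangle $\Cone(X\to E_k)\to\Cone(X\to E_{k+1})\to\Cone(\nu_k)\rap$, induction gives $\Cone(X\to E_k)\in\CT_{\geq 1}$ for all $k$, and then $\Cone(X\to B)\simeq\Hocolim_k\Cone(X\to E_k)$ lies in $\CT_{\geq 1}$ as well, $\CT_{\geq 1}$ being stable under coproducts, extensions and $[+1]$ (hence under sequential homotopy colimits). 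Since $\Cone(X\to B)\simeq A[1]$, we get $A\in\CT_{\geq 1}[-1]=\CT_{\geq 0}$, which completes the triangle.

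These three properties are precisely the $t$-structure axioms, so $t(\CG)=(\CT,\CT_{\geq 0},\CT_{<0})$ is a $t$-structure. I expect the only real obstacle to be the index bookkeeping in the tower: one must arrange that the cones $\Cone(\nu_k)$ involve only strictly positive shifts of generators, since this is exactly what makes the positive part $A$ land in $\ll\CG\gg_+$ and not in a shifted version; and compactness of $\CG$ is what forces the successively killed morphisms to vanish in the homotopy colimit.
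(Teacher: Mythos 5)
Your argument is correct: the paper itself gives no proof of this proposition (it is quoted from Ayoub's thesis), and your construction --- checking the shift and orthogonality axioms formally and producing the truncation triangle by the small-object argument, iteratively coning off all maps from the $G[n]$, $n\geq 0$, and passing to the homotopy colimit using compactness --- is precisely the argument of the cited \cite[Lemme 2.1.69, Proposition~2.1.70]{Ayoub_these_1}. The only implicit points (that $\CG$ is a small set so each $J_k$ is small, and that the cone of $X\to\Hocolim_k E_k$ may be identified with $\Hocolim_k\Cone(X\to E_k)$ via the usual $3\times 3$ argument) are standard and harmless.
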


We can now introduce our candidate generating families. The definition uses Deligne $1$-motives over a base: for definitions and notations, we refer to the first section of Appendix \ref{sec:app_deligne}.

\begin{defi}
\label{def:generators}
We define classes of objects in $\DA(S)$ as follows. We put
\[
\JG_S=\left\{ e_\sharp\Sigma^\infty (K\otimes\BQ)|\ e:U\ra S\text{ \'etale},K=\begin{array}{c} \text{ one of } \BZ,\ \Gm[-1]\text{, or }\Jac(C/U)[-1]\text{ for } \\ \text{$C/U$ smooth projective curve} \\ \text{ with geometrically connected fibres}\\ \text{ and a section }U\ra C\end{array}\right\},
\]
and
\[
\DG_S=\left\{e_\sharp\Sigma^\infty(\BM)|\ e:U\ra S\text{ \'etale },\ \BM\in \CM_1(U)\right\}.
\]
We call objects $\JG_S$ (resp. $\DG_S$) \emph{Jacobian generators} (resp. \emph{Deligne generators}).
\end{defi}

By construction, we have $\JG_{S}\subset \DG_{S}$. Jacobian generators are useful because we understand better relative motives of curves than of abelian schemes.

\begin{lemma}
\label{lem:gen_ops}
\begin{enumerate}[label={\upshape(\roman*)}]
\item \label{gen_pullbacks} Let $f:T\ra S$ be a morphism of schemes. Then we have $f^*\JG_S\subset \JG_T$ and $f^*\DG_S\subset \DG_T$.
\item \label{gen_!} Let $e:T\ra S$ be an \'etale morphism. Then $e_\sharp\JG_T\subset \JG_S$ and $e_\sharp \DG_T\subset \DG_S$. 
\end{enumerate}
\end{lemma}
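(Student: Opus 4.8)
The two assertions are natural compatibility statements for the three generating families under pullback along arbitrary $f$ and along the $\sharp$-operation for \'etale morphisms; in both cases the strategy is to treat each of the three families by unwinding its definition and invoking the base-change compatibilities already established in the paper. I would organise the proof family by family.

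For \ref{gen_pullbacks}: the key inputs are that $f^*$ commutes with $e_\sharp$ for \'etale $e$ (the $\Ex^*_\sharp$ isomorphism), that $f^*$ commutes with $\Sigma^\infty$, and the base-change isomorphisms $R_f$ of Corollary~\ref{coro:pullback_complex_DA} and the corollary after Corollary~\ref{cor:Deligne_da_1} (which gives $f^*\Sigma^\infty \simeq \Sigma^\infty f^*$ on $\CM_1(-)$). For $\CCG_S$, given $C/S$ a smooth curve, $f^*M_S(C)\simeq M_T(C_T)$ by the usual pullback compatibility for motives of schemes, and $C_T/T$ is again a smooth curve; moreover the defining triangle for $M^\bot_S(C)$ commutes with base change (as noted just before Definition~\ref{def:generators}, the construction of $\pi_0(C/S)$ and of the triangle $M^\bot_S(C)\to M_S(C)\to M_S(\pi_0(C/S))\rap$ is stable under arbitrary base change up to isomorphism), so $f^*(M^\bot_S(C)[-1])\simeq M^\bot_T(C_T)[-1]\in\CCG_T$. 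For $\JG_S$, given $e:U\to S$ \'etale and $K$ one of $\BZ$, $\Gm[-1]$, $\Jac(C/U)[-1]$, form the cartesian square with $e':U_T\to T$ and $g:U_T\to U$; then $f^*e_\sharp\Sigma^\infty(K\otimes\BQ)\simeq e'_\sharp g^*\Sigma^\infty(K\otimes\BQ)\simeq e'_\sharp\Sigma^\infty(g^*K\otimes\BQ)$ by $\Ex^*_\sharp$ and $R_g$, and $g^*\BZ\simeq\BZ$, $g^*\Gm\simeq\Gm$, while $g^*\Jac(C/U)\simeq\Jac(C_{U_T}/U_T)$ by compatibility of the relative Jacobian with base change (Proposition~\ref{prop:picard_curve} and the notation following it), with $C_{U_T}/U_T$ a smooth projective curve; hence $f^*(e_\sharp\Sigma^\infty(K\otimes\BQ))\in\JG_T$. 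For $\DG_S$, given $e:U\to S$ \'etale and $\BM\in\CM_1(U)$, the same $\Ex^*_\sharp$ argument together with the corollary after Corollary~\ref{cor:Deligne_da_1} reduces to $g^*\BM$, which lies in $\CM_1(U_T)$ since pullback of Deligne $1$-motives along $g$ is defined (Appendix~\ref{sec:app_deligne}); hence $f^*(e_\sharp\Sigma^\infty(\BM))\in\DG_T$.

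For \ref{gen_!}: here $e:T\to S$ is \'etale, so $e_\sharp$ is defined and, crucially, composition of \'etale morphisms is \'etale and $e_\sharp e'_\sharp\simeq(e\circ e')_\sharp$. For $\CCG_T$, if $C/T$ is a smooth curve then $e_\sharp M_T(C)\simeq M_S(C)$ (viewing $C$ as a smooth $S$-scheme via $C\to T\to S$, which is smooth but generally not a curve over $S$ — however the definition of $\CCG_S$ only requires $C/S$ to be a smooth curve, so I need to double-check: the generators $M_S(C)$ in $\CCG_S$ are indexed by smooth curves $C/S$, i.e.\ relative dimension one; since $T/S$ is \'etale (relative dimension zero), $C\to S$ has relative dimension one, so $C/S$ is indeed a smooth curve and $e_\sharp M_T(C)\simeq M_S(C)\in\CCG_S$). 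For the $M^\bot$ part I use that $\pi_0(C/S)\simeq\pi_0(C/T)$ (since $T/S$ is \'etale, relative connected components are unchanged) together with compatibility of the defining triangle with the \'etale morphism, giving $e_\sharp(M^\bot_T(C)[-1])\simeq M^\bot_S(C)[-1]\in\CCG_S$; if a direct identification of the $\pi_0$'s is awkward, I would instead note $e_\sharp$ is exact and apply it termwise to the distinguished triangle, identifying $e_\sharp M_T(C)\simeq M_S(C)$ and $e_\sharp M_T(\pi_0(C/T))\simeq M_S(\pi_0(C/T))\simeq M_S(\pi_0(C/S))$ via the \'etale pushforward of the defining \'etale algebraic space. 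For $\JG_T$ and $\DG_T$, if $e':U\to T$ is \'etale then $e\circ e':U\to S$ is \'etale, so $e_\sharp(e'_\sharp\Sigma^\infty(K\otimes\BQ))\simeq(e\circ e')_\sharp\Sigma^\infty(K\otimes\BQ)$ is manifestly an element of $\JG_S$ (resp.\ $e_\sharp(e'_\sharp\Sigma^\infty\BM)\simeq(e\circ e')_\sharp\Sigma^\infty\BM\in\DG_S$), with no base-change subtlety at all.

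\textbf{Main obstacle.} The only genuinely non-formal point is the behaviour of the auxiliary triangle defining $M^\bot_S(C)$: one must verify that the canonical morphism $M_S(C)\to M_S(\pi_0(C/S))$ and its chosen cone commute with pullback and with \'etale $\sharp$-pushforward up to isomorphism. For pullback this is exactly the base-change compatibility of $\pi_0$ asserted before Definition~\ref{def:generators}; for $e_\sharp$ with $e$ \'etale one uses that \'etale base change does not alter geometric fibres, hence $\pi_0(C/S)\simeq e_\sharp$-image of $\pi_0(C/T)$, and that $e_\sharp$ is a triangulated functor so sends a distinguished triangle to a distinguished triangle. Since the triangle is only well-defined up to non-canonical isomorphism anyway, this suffices. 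Everything else is a direct bookkeeping exercise with $\Ex^*_\sharp$, $R_f$, and associativity of $(-)_\sharp$ along \'etale morphisms.
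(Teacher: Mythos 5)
Your argument is correct and follows essentially the same route as the paper, whose entire proof consists of citing the $\Ex^*_\sharp$ isomorphism together with Corollary~\ref{coro:pullback_complex_DA} for point (i) and observing that point (ii) is immediate from the definitions; your family-by-family bookkeeping (a smooth curve over $T$ is a smooth curve over $S$, $e_\sharp e'_\sharp\simeq (e\circ e')_\sharp$, base change of Jacobians and of Deligne $1$-motives) is exactly what those citations compress. The one point the paper leaves implicit, the compatibility of the triangle defining $M^\bot$ with pullback and with $e_\sharp$ via the identification of $\pi_0(C/T)$ over $S$ with $\pi_0(C/S)$, you handle correctly by remarking that the cone is only specified up to non-canonical isomorphism.
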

\begin{proof}
Point \ref{gen_pullbacks} follows from the $\Ex^*_\sharp$ isomorphism and Corollary~\ref{coro:pullback_complex_DA}. Point \ref{gen_!} follows directly from the definition.
\end{proof}

We now come to a more difficult stability property.

\begin{prop}
\label{prop:generators_closed_imm}
Assume $S$ to be excellent and let $i:Z\ra S$ be a closed immersion. Then
\[
i_*\langle \JG_Z\rangle_{(+)}\subset \langle \JG_S\rangle_{(+)}.
\]
\end{prop}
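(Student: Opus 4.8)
The statement reduces to computing the action of $i_*$ on each type of Jacobian generator. Since $i$ is a closed immersion, for any \'etale $e:U\to Z$ there is a cartesian square and, if $\bar e:\bar U\to S$ is an \'etale map with $\bar U\times_S Z\simeq U$ (which exists \'etale-locally but not globally), one would like $i_*e_\sharp\simeq \bar e_\sharp \bar i_*$ for the induced closed immersion $\bar i:U\to \bar U$. This works after a Zariski (even \'etale) refinement, so using Lemma~\ref{lemm:hom_Z_loc}-style arguments together with base change for closed immersions one reduces to showing: for $K\in\{\BZ,\ \Gm[-1],\ \Jac(C/Z)[-1]\}$ with $C/Z$ a smooth projective curve, the motive $i_*\Sigma^\infty(K\otimes\BQ)$ lies in $\langle\JG_S\rangle_{(+)}$. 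Actually $e_\sharp$ and $i_*$ do not commute in general, so more care is needed: one should instead work directly on $S$, noting $i_*e_\sharp\Sigma^\infty(K\otimes\BQ)$ and using that the generators $e_\sharp(-)$ of $\JG_S$ are built from \'etale $U\to S$; after spreading out one reduces the whole problem to the case $e=\id_Z$, i.e.\ to showing $i_*\Sigma^\infty(K\otimes\BQ)\in\langle\JG_S\rangle_{(+)}$ for the three model sheaves $K$.

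\textbf{Key steps.} First, handle $K=\BZ$: here $\Sigma^\infty(\BZ\otimes\BQ)=\BQ_Z$ and $i_*\BQ_Z=i_\sharp^{\,?}$---no, $i$ is not smooth, so instead one observes $i_*\BQ_Z$ is the fiber of $\BQ_S\to j_*\BQ_U$ for $j$ the open complement (localisation triangle $i_*i^*\to\id$ applied to $\BQ_S$ gives $j_\sharp j^*\BQ_S\to\BQ_S\to i_*\BQ_Z$). But $\BQ_S, j_\sharp j^*\BQ_S=j_\sharp\BQ_U$ are both in $\langle\JG_S\rangle$ (they are curve-generator-type objects: $\BQ_S=\pi_{0\sharp}\BQ$ for the trivial curve, $j_\sharp\BQ_U\in\JG_S$), so $i_*\BQ_Z\in\langle\JG_S\rangle_{(+)}$. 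Second, $K=\Gm[-1]$: by Proposition~\ref{prop:Gm_Q1} $\Sigma^\infty(\Gm\otimes\BQ)[-1]\simeq\BQ_Z(1)[1][-1]=\BQ_Z(1)$, wait---$\BQ_Z(1)[1][-1]=\BQ_Z(1)$; one needs $i_*\BQ_Z(1)$ in $\langle\JG_S\rangle_{(+)}$. Using absolute purity if $Z,S$ are regular one has $i_*\BQ_Z(1)\simeq i_*i^!\BQ_S(1+c)[2c]$ for $c=\mathrm{codim}$, which sits in a colocalisation triangle with $\BQ_S(1+c)[2c]$ and $j_*\BQ_U(1+c)[2c]$; by Appendix~\ref{sec:app_mot_coh} and Proposition~\ref{prop:Gm_Q1} these twists of $\BQ$ are expressible via $\Gm$-generators, but the shifts/twists are wrong for $\langle\JG_S\rangle_{(+)}$ in general. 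This is where the \emph{curve generators} $\CCG_S$ are genuinely needed and one should invoke Lemma~\ref{lem:gen_simple} in reverse---but that lemma only gives $\langle\JG_S\rangle_{(+)}\subset\langle\CCG_S\rangle_{(+)}$, not the converse. So the right move is to present $i_*\BQ_Z(1)$ as built from $M^\bot_S(C)$-type objects for suitable curves $C/S$ lying over $Z$, or to first prove the statement for $\langle\CCG_-\rangle$ and then transfer. Third, $K=\Jac(C/Z)[-1]$: using Corollary~\ref{coro:computation_curve} and Corollary~\ref{cor:Picard_smooth}, $\Sigma^\infty\Jac(C/Z)\otimes\BQ[-1]$ is an iterated extension of the other two types plus $M^\bot_Z(C)[-1]$; pushing forward, $i_*M_Z(C)=i_*f_\sharp\BQ_C$ where $f:C\to Z$; one chooses a smooth projective curve $\bar C/S$ (or over an \'etale neighbourhood) restricting to $C$ over $Z$, then uses the localisation triangle on $S$ for $\bar C$ to write $i_*f_\sharp\BQ_C$ in terms of the motive $M_S(\bar C)\in\CCG_S$ and a generic term.

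\textbf{Main obstacle.} The real difficulty is that $i_*$ and $e_\sharp$ do not commute, and that the classes $\JG_S$ are defined using $e_\sharp$ with $e$ \'etale, so $i_*$ of such a generator is not manifestly of the same shape. The cleanest path is probably: (1) prove an auxiliary lemma that $i_*$ of any object built from smooth curves over $Z$ (i.e.\ from $\langle\CCG_Z\rangle$) lies in $\langle\CCG_S\rangle$---this is the geometric heart, done by choosing compactifications/spreadings of curves from $Z$ to $S$ and using localisation triangles and Proposition~\ref{prop:permanence_coh} to control error terms; (2) then descend from $\langle\CCG_-\rangle$ back to $\langle\JG_-\rangle$ using Lemma~\ref{lem:gen_simple} and a careful argument that the specific generators $M_S(C), M^\bot_S(C)[-1]$ arising are in fact in $\langle\JG_S\rangle_{(+)}$ (which holds because, as in the proof of Lemma~\ref{lem:gen_simple}, each such curve generator is an extension of $\Gm$- and $\Jac$-generators). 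Keeping track of the $(+)$ decorations throughout---making sure every extension, sum, and positive shift used stays inside $\langle\JG_S\rangle_+$ and not merely $\langle\JG_S\rangle$---is the bookkeeping obstacle, and the geometric obstacle is step (1): arranging that curves over $Z$ extend to curves over $S$ (or over an \'etale neighbourhood covering $Z$), which one handles by spreading out and noetherian induction on $S$.
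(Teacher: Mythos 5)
Your scaffolding (reduce to the three model generators, use localisation triangles over $S$, extend the data from $Z$ to an \'etale neighbourhood in $S$, argue by noetherian induction) is the right shape and matches the paper's strategy, but two of your concrete steps do not work as written. For $K=\Gm[-1]$ there is no need for absolute purity, regularity hypotheses, or a detour through curve generators: by Proposition~\ref{prop:Gm_Q1} one has $\BQ(1)[1]\simeq \Sigma^\infty(\Gm\otimes\BQ)$ over \emph{any} base, so $\BQ_S(1)$ and $j_\sharp\BQ_U(1)$ are themselves (shifts of) Jacobian generators, and the twisted localisation triangle $j_\sharp\BQ_U(1)\ra \BQ_S(1)\ra i_*\BQ_Z(1)\rap$ settles this case exactly as your $K=\BZ$ case — the paper literally says ``the same proof, twisting by $\BQ(1)[1]$''. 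Your fallback of first proving the statement for $\langle\CCG_-\rangle$ and then transferring via $\langle\CCG\rangle_{(+)}=\langle\JG\rangle_{(+)}$ is circular in this development: that comparison of generating families (Proposition~\ref{prop:generators_DA1}, not the one-sided Lemma~\ref{lem:gen_simple}) is itself proved by noetherian induction \emph{using} the present proposition, so you cannot invoke it here without an independent proof of comparable difficulty.

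The genuinely missing idea is the mechanism by which curves and finite \'etale covers over $Z$ get extended. Spreading out in the sense of EGA~IV~\S 8 only produces extensions at a \emph{generic point of $S$} (a limit of opens of $S$); a smooth projective curve over a closed point $s\in S$, or over the generic point of a positive-codimension $Z$, does not extend over a Zariski neighbourhood in general, so your assertion that one can choose $\bar C/S$ (or over an \'etale neighbourhood ``covering $Z$'') restricting to $C$ is unjustified and in general false. The paper's route, absent from your sketch, is: induct on $\dim Z$ to reduce to a single point (a closed point when $\dim Z=0$, the generic point $\eta$ of $Z$ in the induction step, after shrinking so that $e$ is finite \'etale and absorbing $e_\sharp$ — via Weil restriction, Lemma~\ref{lemm:Weil_pushforward}, in the Jacobian case — rather than simply ``reducing to $e=\id_Z$''); then use deformation theory of curves (SGA1, unobstructedness of curves) to lift $C$ to $\Spec(\hat{\CO}_{S,s})$, Artin approximation to descend to the henselization, and only then spreading out to obtain a smooth projective curve over a pointed \'etale neighbourhood $(U\ra S,s)$ — which is precisely why $\JG_S$ is defined with arbitrary \'etale $e:U\ra S$ rather than open immersions. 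With that extension in hand, the localisation triangle over $U$, the functor $(U\ra S)_\sharp$, and the induction hypothesis on the closed complement finish the argument; without the deformation/approximation step the geometric heart of the proof is missing.
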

\begin{proof}
Let $r:Z_{\red}\ra Z$ be the canonical closed immersion. Localisation implies that $\id\simeq r_* r^*$. Since $r^*$ preserves $\JG$ by Lemma~\ref{lem:gen_ops}, we see that it is enough to show the property for $i\circ r$. We can thus assume $Z$ reduced.

We proceed by induction on the dimension of $Z$. If $\dim(Z)=0$, because $Z$ is reduced, it is a disjoint union of closed points of $S$. Then $i_*$ is canonically the direct sum of the corresponding push-forwards, so we can assume that $Z$ is a single closed point $s\in S$. 

There are three different types of generators in $\JG_s$. Fix $e:V\ra s$ an \'etale morphism. Since $s$ is a point, $e$ is actually finite \'etale. By Lemma \ref{lem:deformation_theory} \ref{etale}, there exists an open neighbourhood $s\in U\stackrel{c}{\hookrightarrow} S$ and an \'etale morphism $\tilde{e}:\widetilde{V}\ra U$ extending $e$. We form the commutative diagram
\[
\xymatrix{
\widetilde{V}^\circ \ar[r]^{\tilde{\jmath}} \ar[d]_{\tilde{e}^\circ} & \widetilde{V} \ar[d]_{\tilde{e}} & V \ar[d]_{e} \ar[l]_{\tilde{\imath}} \\
U\setminus s \ar[r]^{\bar{\jmath}} & U & s \ar[l]_{\bar{\imath}}
}
\]
with cartesian squares.

We first consider the case of a generator $e_{\sharp}\BQ$. By localisation, we have a distinguished triangle
\[
\bar{\jmath}_!\bar{\jmath}^*\tilde{e}_\sharp \BQ \ra \tilde{e}_\sharp\BQ \ra \bar{\imath}_*\bar{\imath}^*\tilde{e}_\sharp\BQ \rap
\]
to which we apply $c_\sharp$ and then rewrite as
\[
(c \bar{\jmath})_\sharp\tilde{e}^\circ_\sharp \BQ\ra c_\sharp\tilde{e}_\sharp \BQ \ra i_* e_\sharp \BQ\rap.
\]
The motives $(c \bar{\jmath})_\sharp\tilde{e}^\circ_\sharp \BQ$ and $c_\sharp\tilde{e}_\sharp \BQ$ are in $\JG_S$, so this triangle shows that $i_* e_\sharp \BQ$ lies in $\langle \JG_S \rangle_+$.

The case of a generator of the form $e_{\sharp}\Sigma^\infty\Gm\otimes\BQ[-1]\simeq e_\sharp \BQ(1)$ (cf. Proposition~\ref{prop:Gm_Q1}) follows from essentially the same proof, twisting by $\BQ(1)$.

We now do the case of a generator of the form $e_{\sharp}\Sigma^\infty \Jac(C/V)_\BQ[-1]$ with $f:C\ra V$ a smooth projective curve with geometrically connected fibres and a distinguished section $\sigma:V\ra C$. We have an isomorphism
\[
i_* e_\sharp \Sigma^\infty \Jac(C/V)_\BQ\simeq (ie)_!  \Sigma^\infty \Jac(C/V)_\BQ\simeq (c\tilde{e}\tilde{\imath})_! \Sigma^\infty \Jac(C/V)_\BQ\simeq c_{\sharp}\tilde{e}_\sharp \tilde{\imath}_*  \Sigma^\infty \Jac(C/V)_\BQ
  \]
  which reduces us to show that $\tilde{\imath}_*\Sigma^\infty \Jac(C/V)$ lies in $\langle \JG_{\widetilde{V}} \rangle_+$. Since $V$ has finitely many points, a simple argument shows that, up to restricting $\widetilde{V}$ we can assume that $V$ and $\widetilde{V}$ are connected, with $V$ consisting of a single point $v$.
  

We use standard results from the deformation theory of curves, summarized in Lemma \ref{lem:deformation_theory} \ref{curve}. The outcome is that we have a pointed \'etale neighbourhood  $(d:\widetilde{V}'\ra \widetilde{V},v)$ of $(\widetilde{V},v)$ and a smooth projective curve $\tilde{f}:\breve{C}\ra \widetilde{V}'$ with geometrically connected fibers which extends $C$, together with a section $\breve{\sigma}$ (which extends $\sigma$). By the arguments of the previous paragraph, we can replace $\widetilde{V}$ by $\widetilde{V}'$ and assume that $\breve{C}$ and $\breve{\sigma}$ are defined over $\widetilde{V}$.

We form the following diagram of schemes with cartesian squares
\[
\xymatrix{
\breve{C}^0 \ar[r]^{\breve{\jmath}} \ar[d]_{\tilde{f}^\circ} & \breve{C} \ar[d]_{\tilde{f}} & C  \ar[l]_{\tilde{\imath}} \ar[d]  \\
\widetilde{V}^\circ \ar[r]^{\bar{\jmath}} & \widetilde{V}  & v \ar[l]_{\bar{\imath}}   \\
}
\]
We have a localisation triangle
\[
\bar{\jmath}_! \bar{\jmath}^*\Sigma^\infty \Jac(\breve{C}/\widetilde{V})_\BQ \ra \Sigma^\infty \Jac(\breve{C}/\widetilde{V})_\BQ\ra \bar{\imath}_! \bar{\imath}^*\Sigma^\infty \Jac(\breve{C}/\widetilde{V})_\BQ\rap  
\]
which we rewrite using Corollary~\ref{coro:pullback_complex_DA} to obtain
\[
\bar{\jmath}_{\sharp} \Sigma^{\infty}\Jac(\breve{C}^\circ/\widetilde{V}^{\circ})_\BQ \ra \Sigma^\infty\Jac(\breve{C}/\widetilde{V})_\BQ\ra i_*\Sigma^\infty\Jac(C/v)_\BQ\rap
\]
The first two terms of this complex are in $\JG_{\widetilde{V}}$, so this shows that $i_*\Jac(C/v)_\BQ$ is in $\langle \JG_{\widetilde{V}} \rangle_+$. This concludes the proof in the case $\dim(Z)=0$.

We now come to the induction step. Let $M\in \JG_Z$. Write for the moment $M=e_\sharp \Sigma^\infty G\otimes\BQ$ with $G$ one of the three possible types and $e:U\ra S$ an \'etale morphism.

Let $k:W\ra Z$ be a dense open irreducible subset such that $e_W$ is finite \'etale. Let $l:T\ra Z$ be the complementary reduced closed immersion; let further $k':W'\ra S$ be an open immersion with $W'\cap Z=W$ and $l':T'\ra S$ be the complementary reduced closed immersion. Write $m:W\ra W'$ and $n:T\ra T'$ for the induced closed immersions. 

We have a localisation triangle for $k,l$ to which we apply $i_!$ and get
\[
i_! k_!k^*M\ra i_*M \ra i_*l_* l^*M\rap 
\]
which can be rewritten as
\[
k'_!m_! k^*M\ra i_*M \ra (l'\circ n)_* l^*M\rap.
\]
By Lemma \ref{lem:gen_ops} \ref{gen_pullbacks} , we have $k^*M\in \JG_W$ and $l^*M\in  \JG_Z $. We have $\dim(T)<\dim(Z)$ so that by induction the third term of this triangle is in $\langle \JG_S\rangle_+$. Moreover $k'_!$ preserves $\langle \JG\rangle_+$ by Lemma \ref{lem:gen_ops} \ref{gen_pullbacks}. Together, this means that to show that $i_*M$ is in $\langle\JG_S \rangle_+$, we need only show that $m_! k^*M$ is in $\langle \JG_{W'}\rangle_+$. We are thus reduced to the case where $Z$ is irreducible (with generic point $\eta$) and $e$ is a finite \'etale morphism.

The rest of the induction step consists of applying the same type of spreading out and deformation arguments we used in the $\dim(Z)=0$ case to $G_\eta$. Since the three cases are similar and the case of $G=\Jac(C/S)$ with $f:C\ra S$ smooth projective curve is the most complicated, we only spell out that one. Using the same argument as for $\dim(Z)=0$ based on Lemma \ref{lem:deformation_theory} \ref{etale}, we can essentially assume $e=\id$ and $V=S$, which we do here for simplicity of notation.

By Lemma \ref{lem:deformation_theory} \ref{curve}, which applies to the non-closed point $\eta\in S$ as well, we can find a pointed \'etale neighbourhood $(e:W\ra S,x\ra \eta)$ of $(S,\eta)$ and a smooth projective curve $\tilde{f}:\breve{C}\ra W$ (with geometrically connected fibres and a section) which extends $C_\eta$.

Let $V=\overline{\{x\}}\subset W$ be the closure of $x$. By spreading-out, there exists an open neighbourhood $V^\circ\subset V$ of $x$ and a dense open subset $Z^\circ\subset Z$ such that $\tilde{f}$ induces an isomorphism $V^\circ\simeq Z^\circ$ (since it is an isomorphism above $\eta$). By localisation and the induction hypothesis, we can assume that $Z^\circ=Z$. We now have a smooth projective curve above an open set of $S$ (with geometrically connected fibres and a section)  which extends $f$, and we can then conclude by localisation as in the end of the proof of the $\dim(Z)=0$ case. This finishes the proof.
\end{proof}

\begin{lemma}\label{lem:deformation_theory}
Let $S$ be a scheme and $s\in S$.
  \begin{enumerate}[label={\upshape(\roman*)}]
  \item \label{etale}
Let $e:V\ra s$ be a \'etale morphism. There exists an open neighbourhood $s\in U\stackrel{c}{\hookrightarrow} S$ and an \'etale morphism $\tilde{e}:\widetilde{V}\ra U$ extending $e$.
  \item \label{curve} Assume $S$ is moreover excellent. Let $f:C\ra s$ be a smooth projective geometrically connected curve and a section $\sigma:s\ra S$. There exists a pointed \'etale neighbourhood $(c:W\ra S,s)$ of $(S,s)$ and a smooth projective curve $\tilde{f}:\breve{C}\ra W$ with geometrically connected fibers which extends $C$, together with a section $\breve{\sigma}:S\ra \breve{C}$ which extends $\sigma$.
  \end{enumerate}
\end{lemma}
\begin{proof}
Let us prove Statement \ref{etale}. The scheme $V$ is a disjoint union of spectra of separable field extensions of $\kappa(s)$, and to prove the statement it is enough to prove it for each connected component. The statement for each of those components follows from \cite[Expos\'e I, Proposition 8.1]{SGA1}.
  
Let us prove Statement \ref{curve}. By \cite[Expos\'e III, Corollaire 7.4]{SGA1}, the curve $C$ can be deformed to a smooth projective curve $\widehat{C}$ on the complete local ring $\Spec(\widehat{\CO}_{S,s})$. By smoothness of $\widehat{C}$ and Hensel's lemma, we can lift $\sigma$ to a section $\hat{\sigma}$ of $\widehat{C}$.

Recall that a functor $F:\Sch/S\ra\Set$ is called limit-preserving if for all filtered systems of $\CO_{S}$-algebras $(B_{\lambda})$ the natural map $\Colim_{\lambda}F(\Spec(B_{\lambda}))\ra F(\Lim_{\lambda}\Spec(B_{\lambda}))$ is a bijection. Consider the functor $\mathrm{Curv_*}:\Sch/S\ra \Set$ which to an $S$-scheme $T$ associates the set of pairs $(\CC\ra T,\sigma:T\ra \CC)$ where $\CC$ is a smooth projective relative curve and $\sigma$ is a section. We have $(\widehat{C},\hat{\sigma})\in \mathrm{Curv_*}(\Spec(\widehat{\CO}_{S,s}))$. The functor $\mathrm{Curv_*}$ is limit-preserving by \cite[Th\'eor\`eme 8.8.2.(i)-(ii)]{EGAIV_3}, \cite[Th\'eor\`eme 8.10.5.(xii)]{EGAIV_3} and \cite[Lemme 1.A.2]{Ayoub_rigide} (and its proof).

By Artin approximation (in the form of \cite[Corollary 2.2]{Artin_approximation}, which is known to hold over a general excellent scheme after the work of Conrad-De Jong \cite{Conrad_de_Jong}), there exists a pointed étale neighbourhood $(W,s)$ and $(\breve{C},\breve{\sigma})\in \mathrm{Curv_*}(W)$ which coincides with $(\widehat{C},\hat{\sigma})$ at the first order, i.e., which lifts the original pair $(C,\sigma)$. Finally, geometric connectedness of fibers for proper flat morphisms of finite presentation with geometrically reduced fibers is an open property \cite[Th\'eor\`eme 12.2.4.(vi)]{EGAIV_3}, and this implies that up to refining $W$ we can assume that $\breve{C}$ has geometrically connected fibers.
\end{proof}

The deformation theory argument in the proof of Proposition \ref{prop:generators_closed_imm} is the reason why we have introduced an arbitrary \'etale morphism in the definitions of $\DG$ and $\JG$, instead of say an open immersion. A simplification of the same proof yields the following $0$-motivic analogue.

\begin{lemma}\label{lem:generators_closed_imm_0}
Let $i:Z\ra S$ be a closed immersion. Then
\[
i_*\langle e_{\sharp}\BQ|\ e:U\ra Z\text{ \'etale }\rangle_{(+)}\subset \langle f_{\sharp}\BQ|\ f:V\ra S\text{ \'etale }\rangle_{(+)}.
\]  
\end{lemma}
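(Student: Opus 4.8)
The plan is to mimic the proof of Proposition~\ref{prop:generators_closed_imm} but in the much simpler setting where the only generators are the motives $e_\sharp\BQ$ for $e$ \'etale; in particular, since there are no curve or Jacobian generators to deform, the "deformation theory" steps disappear entirely and only the spreading-out of \'etale morphisms remains.

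First, as in Proposition~\ref{prop:generators_closed_imm}, I reduce to $Z$ reduced by writing $\id\simeq r_*r^*$ for $r:Z_\red\to Z$ the canonical closed immersion, using that $r^*$ preserves the generating family $\{e_\sharp\BQ\}$ (which is immediate from the $\Ex^*_\sharp$ isomorphism, e.g.\ Lemma~\ref{lem:gen_ops}~\ref{gen_pullbacks} restricted to the $0$-motivic generators). Then I argue by noetherian induction on $\dim Z$. In the base case $\dim Z=0$, $Z$ is a disjoint union of closed points and $i_*$ decomposes as a direct sum, so I may assume $Z=\{s\}$ is a single closed point and $e:V\to s$ is finite \'etale. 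By standard spreading-out \cite[\S 8]{EGAIV_3} there is an open neighbourhood $s\in U\stackrel{c}{\hookrightarrow} S$ and a finite \'etale $\tilde e:\widetilde V\to U$ extending $e$, fitting in a diagram with cartesian squares as in the proof of Proposition~\ref{prop:generators_closed_imm}. Localisation for the pair $(U\setminus s,\ s)$, applied to $\tilde e_\sharp\BQ$ and pushed forward by $c_\sharp$, gives a distinguished triangle
\[
(c\tilde\jmath)_\sharp (\tilde e^\circ)_\sharp\BQ \ra c_\sharp\tilde e_\sharp\BQ \ra i_* e_\sharp\BQ \rap
\]
whose first two terms are of the form $f_\sharp\BQ$ with $f$ \'etale, so $i_*e_\sharp\BQ\in\langle f_\sharp\BQ\mid f\ \text{\'etale}\rangle_{(+)}$.

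For the induction step, let $M=e_\sharp\BQ$ with $e:U\to S$ \'etale and $Z$ of positive dimension. As in Proposition~\ref{prop:generators_closed_imm}, choose a dense open $k:W\to Z$ over which $e_W$ is finite \'etale, with reduced closed complement $l:T\to Z$; extend $W$ to an open $k':W'\to S$ with $W'\cap Z=W$, and let $m:W\to W'$, $l':T'\to Z$, $n:T\to$ (the relevant closed subscheme). Applying $i_!$ to the localisation triangle for $(W,T)$ inside $Z$ yields
\[
k'_!\, m_!\, k^*M \ra i_*M \ra (l'n)_*\, l^*M \rap .
\]
The third term lies in $\langle f_\sharp\BQ\mid f\ \text{\'etale}\rangle_{(+)}$ by the induction hypothesis ($\dim T<\dim Z$), and $k'_!$ preserves this subcategory (again $\Ex^*_\sharp$), so it remains to treat $m_!k^*M$, i.e.\ the case where $Z$ is irreducible and $e$ finite \'etale; absorbing $e_\sharp$ we may take $e=\id$, $M=\BQ_S$. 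Now spreading out the \'etale morphism near the generic point $\eta$ of $Z$ — which is the same elementary argument as in the $\dim Z=0$ case, no deformation theory needed — produces an open neighbourhood of $\eta$ in $S$ over which $\id_Z$ is covered by an \'etale extension, and one concludes by localisation exactly as at the end of the $\dim Z=0$ case. The only mild subtlety (hence the "main obstacle", which is genuinely minor here) is the bookkeeping of the nested closed and open immersions when reducing from a general reduced $Z$ to the irreducible case; this is entirely parallel to Proposition~\ref{prop:generators_closed_imm} and poses no new difficulty.
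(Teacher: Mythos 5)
Your overall strategy is the right one and is exactly what the paper intends (it only says the lemma follows from ``a simpler version of the same proof'' as Proposition~\ref{prop:generators_closed_imm}): reduce to $Z$ reduced, induct on $\dim Z$, handle $\dim Z=0$ by spreading out the finite \'etale morphism and a localisation triangle, and in the induction step use the localisation triangle for a dense open $W\subset Z$ over which $e$ is finite \'etale. The base case and that bookkeeping are fine. However, the induction step as written contains an invalid reduction: ``absorbing $e_\sharp$ we may take $e=\id$, $M=\BQ_S$''. In the proof of Proposition~\ref{prop:generators_closed_imm}, ``absorbing'' means replacing a group scheme $G$ over $U$ by its Weil restriction $R_eG$ over $Z$ via $e_\sharp\Sigma^\infty G\simeq\Sigma^\infty R_eG$ (Lemma~\ref{lemm:Weil_pushforward}), which stays inside the generating family $\DG_Z$; here the generating family is literally $\{f_\sharp\BQ\}$, there is nothing to absorb into, and $e_\sharp\BQ_U$ for a nontrivial finite \'etale $e$ is \emph{not} in the subcategory generated by $\BQ_Z$ (already over a field it is a nontrivial Artin motive). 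Moreover, if one really could take $e=\id$, the whole lemma would be immediate from the single triangle $j_\sharp\BQ_{S\setminus Z}\ra\BQ_S\ra i_*\BQ_Z\rap$ with no induction, which should have been a warning sign. The subsequent sentence about ``an open neighbourhood of $\eta$ in $S$ over which $\id_Z$ is covered by an \'etale extension'' does not parse and cannot substitute for the missing step.

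The correct induction step keeps $M=e_\sharp\BQ_U$ with $e:U\ra Z$ finite \'etale and spreads out $e$ itself, exactly as in your base case but at the (possibly non-closed) point $\eta$: a finite \'etale $\kappa(\eta)$-algebra lifts to a finite \'etale $\CO_{S,\eta}$-algebra (lift the minimal polynomial of a primitive element of each factor; the non-\'etale locus is closed in the finite cover, its image is a closed subset of $\Spec\CO_{S,\eta}$ missing the closed point, hence empty), and then descends by \cite[\S 8]{EGAIV_3} and \cite[Proposition 17.7.8]{EGAIV_4} to a finite \'etale $\tilde e:\widetilde U\ra W$ over a Zariski open neighbourhood $W$ of $\eta$ in $S$; alternatively, argue over the henselization and a pointed \'etale neighbourhood as in the curve case of Proposition~\ref{prop:generators_closed_imm}. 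Since $\tilde e|_{W\cap Z}$ and $e$ agree over $\eta$, they agree over a dense open $Z^\circ\subset W\cap Z$, and by localisation plus the induction hypothesis one may assume $Z^\circ=W\cap Z=Z$. Then the localisation triangle
\[
(W\setminus Z\ra W)_\sharp\,(\tilde e|_{W\setminus Z})_\sharp\BQ\ \ra\ \tilde e_\sharp\BQ\ \ra\ i_*e_\sharp\BQ\ \rap
\]
(pushed into $S$ along the \'etale map $W\ra S$, which preserves the generating family) exhibits $i_*e_\sharp\BQ$ in $\langle f_\sharp\BQ\mid f\text{ \'etale over }S\rangle_+$, completing the induction. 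With this replacement your argument is correct and matches the paper's intended proof.
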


We can now exhibit new generating families for $\DA_1(S)$ and $\DA^1(S)$.

\begin{prop}
\label{prop:generators_DA1}
Let $S$ be a noetherian finite-dimensional excellent scheme.
\begin{enumerate}[label={\upshape(\roman*)}]
\item \label{jg_dg} $\langle \JG_S\rangle_{(+)} = \langle \DG_S \rangle_{(+)}$.
\item \label{gen_1} We have \[\DA_{1,c}(S)= \langle \JG_S\rangle = \langle \DG_S\rangle\] and \[\DA_1(S) =\llangle \JG_S\rrangle = \llangle \DG_S\rrangle.\]
\item \label{gen^1} We have
\[\DA^1_c(S)=\langle \JG_S(-1)\rangle = \langle \DG_S (-1)\rangle\] and \[\DA_1(S) =\llangle \JG_S (-1)\rrangle = \llangle \DG_S(-1)\rrangle.\]
\end{enumerate}
\end{prop}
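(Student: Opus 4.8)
The three families $\CCG_S$, $\JG_S$, $\DG_S$ are all formed of compact objects (for $\CCG_S$ and $\JG_S$ this is clear from Proposition~\ref{prop:smooth_complex_compact}; for $\DG_S$ it follows from Corollary~\ref{cor:Deligne_da_1}), so in each case $\langle-\rangle$ consists of compact objects and $\ll-\gg$ is compactly generated by $\langle-\rangle$ via \cite[Lemma 4.4.5]{Neeman_book}. Hence it suffices to prove the chains of equalities \ref{jg_dg} and the compact statements in \ref{gen_1}, \ref{gen^1}; the statements for $\ll-\gg$ then follow formally by passing to the generated triangulated-with-small-sums subcategory. Moreover \ref{gen^1} is obtained from \ref{gen_1} by applying the Tate twist $(-1)$ throughout, using $\DA^1_c(S)=\DA_{1,c}(S)(-1)$ (Proposition~\ref{prop:hom_cohom_twists}) and the fact that $(-1)$ is an equivalence commuting with all the operations involved; so the real content is \ref{jg_dg} together with the identification of $\langle\DG_S\rangle$ (say) with $\DA_{1,c}(S)$.

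For \ref{jg_dg}: the inclusion $\langle\JG_S\rangle_{(+)}\subset\langle\CCG_S\rangle_{(+)}$ is Lemma~\ref{lem:gen_simple}, and $\JG_S\subset\DG_S$ gives $\langle\JG_S\rangle_{(+)}\subset\langle\DG_S\rangle_{(+)}$. It remains to show $\langle\CCG_S\rangle_{(+)}\subset\langle\JG_S\rangle_{(+)}$ and $\langle\DG_S\rangle_{(+)}\subset\langle\JG_S\rangle_{(+)}$. For the first: a curve generator $M_S(C)$ for $f:C\ra S$ a smooth curve should be analysed by choosing (étale-locally, using Lemma~\ref{lem:gen_ops}) a smooth compactification, splitting off $\pi_0$, and using the localisation triangle relating $M_S(C)$ to the motive of its smooth projective compactification and the (lower-dimensional, hence $0$-dimensional) complement; the projective curve case is then handled via Corollary~\ref{coro:computation_curve} and the triangles of Corollary~\ref{cor:Picard_smooth}, exactly as in the proof of Lemma~\ref{lem:gen_simple} but run in the other direction, reducing $M^\bot_S(C)[-1]$ and $M_S(C)$ to Jacobian generators, $\Gm$-generators and $\pi_0$-terms. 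For $\langle\DG_S\rangle_{(+)}\subset\langle\JG_S\rangle_{(+)}$: given $\BM=[L\ra G]\otimes\BQ\in\CM_1(U)$, dévissage along the weight filtration reduces to $\Sigma^\infty L_\BQ$, $\Sigma^\infty T_\BQ$ (toric part) and $\Sigma^\infty A_\BQ$ (abelian part); lattices and tori reduce via Corollary~\ref{cor:motive_torus} and a transfer/étale-descent argument to $e_\sharp\BQ$ and $e_\sharp\Sigma^\infty(\Gm\otimes\BQ)$ type generators, and an abelian scheme $A/U$ is handled by writing it étale-locally as a direct factor of a Jacobian — here one needs a relative version of the Katz space-filling argument, or a spreading-out to reduce to a field and then invoke \cite[Theorem~11]{Katz_SpaceFill}, combined with Proposition~\ref{prop:generators_closed_imm} to propagate across the strata.

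The compact identification $\DA_{1,c}(S)=\langle\JG_S\rangle$ (hence $=\langle\CCG_S\rangle=\langle\DG_S\rangle$ by \ref{jg_dg}) is proved by a double inclusion. The inclusion $\langle\JG_S\rangle\subset\DA_{1,c}(S)$ is immediate since every element of $\JG_S$ is a compact homological $1$-motive by Proposition~\ref{prop:smooth_complex_compact} and Proposition~\ref{prop:permanence_hom} \ref{hom_sharp}. For the converse, by Lemma~\ref{lem:subcats_comp} it is enough to show that each generator $g_\sharp\BQ_X$ with $g:X\ra S$ smooth of relative dimension $\leq 1$ lies in $\langle\JG_S\rangle$; one reduces to $X$ of pure relative dimension $0$ or $1$ and then, since $\langle\JG_S\rangle$ is closed under direct factors, to a curve, at which point one applies the curve analysis above (the relative-dimension-$0$ case is Lemma~\ref{lem:generators_closed_imm_0}-type and even easier). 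I expect the main obstacle to be precisely the reduction, in the $\DG_S\subset\langle\JG_S\rangle$ step, of an \emph{arbitrary abelian scheme over an arbitrary base} to Jacobian generators: one cannot simply cite Katz over a field, and one must either spread out and apply noetherian induction using Proposition~\ref{prop:generators_closed_imm} to glue the generic-point behaviour to the whole base, or find an étale-local presentation of $A$ up to isogeny as a factor of a relative Jacobian. Careful bookkeeping of which of the $\langle-\rangle$ versus $\langle-\rangle_+$ closures is available at each step (direct factors are allowed throughout, negative shifts only in the non-plus version) will also require attention, but that is routine.
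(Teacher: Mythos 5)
Your overall route is the one the paper takes: prove the chains of equalities in (i) by two-way inclusions (one direction being Lemma~\ref{lem:gen_simple} plus $\JG_S\subset\DG_S$, the other by reducing curve and Deligne generators to Jacobian generators), get (ii) essentially from the definition of $\DA_1(S)$ and Lemma~\ref{lem:subcats_comp}, and get (iii) by twisting via Proposition~\ref{prop:hom_cohom_twists}; you also correctly identify Proposition~\ref{prop:generators_closed_imm} and Katz's theorem at a generic point as the key ingredients for the abelian part, which is exactly what the paper does.

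There is, however, a genuine gap in your treatment of the curve generators: you propose to ``choose (étale-locally) a smooth compactification'' of a smooth relative curve $C/S$ and then use localisation. Such a smooth projective relative compactification does not exist étale-locally over a general base $S$; and even at a generic point $\eta$ the smooth projective completion of $C_\eta$ need only exist after a finite purely inseparable extension when $\kappa(\eta)$ is imperfect, a case your sketch ignores and which ``étale-locally'' cannot repair. The paper's proof instead runs the same generic-point/spreading-out/noetherian-induction mechanism that you reserve for the abelian-scheme case: pass to a finite inseparable extension $\eta'/\eta$ where $\bar C_{\eta'}$ exists, use the separation property and Lemma~\ref{lemm:hom_!_fields} to rewrite $h_*M_{\eta'}(\bar C_{\eta'})$ and $h_*M_{\eta'}(\partial C_{\eta'})$ as the motives of a smooth projective curve and a finite étale scheme over $\eta$ itself, spread the resulting triangle out to a dense open $V\subset S$, conclude over $V$ via Corollary~\ref{coro:computation_curve} (together with the identification $\pi_0(C_V/V)\simeq\pi_0(\widetilde C_V/V)$, needed to handle $M^\bot_S(C)[-1]$), and then propagate to all of $S$ by noetherian induction using Proposition~\ref{prop:generators_closed_imm}. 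With the curve step repaired in this way (and the same generic-point argument made explicit for lattices and tori, where one needs $S$ reduced so that the generic locus is normal before invoking permutation lattices/tori), your argument becomes the paper's proof.
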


\begin{proof}
Let us prove Point~\ref{jg_dg}. Using Lemma~\ref{lem:gen_ops} and localisation, we can assume that $S$ is reduced. By definition, $\JG_{S}\subset\DG_{S}$, hence $\langle \JG_S \rangle_{(+)}\subset \langle \DG_S \rangle_{(+)}$. We prove the reverse inclusion by noetherian induction on $S$. Since $\langle \langle \CG \rangle_+\rangle = \langle \CG \rangle$ for any family $\CG$, it is enough to treat the $+$ version. Let $M$ be in $\DG_S$. By Proposition \ref{prop:generators_closed_imm}, Lemma \ref{lem:gen_ops} and localisation, to proceed with the induction, it is enough to show that there exists a non-empty open set $j:V\ra S$ such that $j^*M$ lies in $\langle \JG_V\rangle_+$. 

A lattice (resp. a torus) on a reduced scheme is generically a direct factor of a permutation lattice (resp. torus) by~\cite[Exp. X 6.2]{SGA3_2_old}, while an abelian scheme on $S$ is generically and up to isogeny a direct factor of the relative Jacobian of a smooth projective curve with a rational point by \cite[Theorem~11]{Katz_SpaceFill} applied at a generic point and a spreading out argument. This implies that for any $M\in\DG_S$, there exists a non-empty open $j:V\ra S$ such that $j^*M$ is a direct factor of a motive in $\JG_V$. This completes the proof of Point~\ref{jg_dg}.

For Point~\ref{gen_1}, it is enough to show that $\DA_{1,c}(S)= \langle \DG_S\rangle$. Over an arbitrary field $k$, we have that $\DA_{1,c}(k)$ is generated by motives of smooth projective curves by Proposition~\ref{prop:subcats_field}, and those lies in $\langle\DG_k\rangle$ by Proposition~\ref{prop:motive_curve_field}. In the other direction, it is enough to show that the image by $\Sigma^\infty$ of pure Deligne 1-motives over $k$ lie in $\DA_{1,c}(k)$; this is an easy case of Corollary \ref{cor:Deligne_da_1}. By continuity for both sides, we can apply noetherian induction, localisation and use the stability by $i_*$ of both sides (Proposition~\ref{prop:permanence_hom_n} for $\DA_{1,c}$, Proposition~\ref{prop:generators_closed_imm} for $\langle\DG\rangle$). This finishes the proof.
\end{proof}

Recall that $\DA^{1}(S)=\DA_{1}(S)(-1)$ by Proposition \ref{prop:hom_cohom_twists}. We come to the main definition of this paper. 

\begin{defi}\label{main_def_1}
Let $S$ be a noetherian finite-dimensional scheme. The \emph{motivic $t$-structure} $t_{\MM,1}(S)$ on $\DA_1(S)$ (resp. $t^1_{\MM}(S)$ on $\DA^1(S)$) is the $t$-structure $t(\DG_S)$ (resp. $t(\DG_S(-1))$). The heart of $t_{\MM,1}$ (resp. $t^1_{\MM}$) is the \emph{abelian category of $1$-motivic sheaves} $\MM_1(S)$ (resp. $\MM^1(S)$).

\end{defi}

The two abelian categories $\MM_1(S)$ and $\MM^1(S)$ are equivalent via Tate twists, but embedded differently in $\DA(S)$. From Proposition~\ref{prop:generators_DA1} we immediately get the following statement.

\begin{cor}\label{cor:t_gens} Assume that $S$ is excellent. Then
  $t_{\MM,1}=t(\JG_S)=t(\DG_S)$ (resp. $t^1_{\MM}=t(\JG_S(-1))=t(\DG_S(-1))$).
\end{cor}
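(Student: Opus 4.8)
The statement to prove is Corollary~\ref{cor:t_gens}, namely that $t(\CCG_S) = t(\JG_S) = t(\DG_S)$ and the twisted analogue. The plan is to reduce everything to the comparison of the generated $t$-structures in Morel's formalism, and then invoke Proposition~\ref{prop:generators_DA1}~\ref{jg_dg}. First I would recall that by the definition of a generated $t$-structure (the cited \cite[Lemme 2.1.69, Proposition~2.1.70]{Ayoub_these_1}), the $t$-structure $t(\CG)$ attached to a family $\CG$ of compact objects depends only on the subcategory $\ll\CG\gg_+$, or, what is equivalent here, on $\langle \CG\rangle_+$: indeed the nonnegative part is $\CT_{\geq 0} = \ll\CG\gg_+$ and the strictly negative part is cut out by the vanishing condition $\Hom(G, N[-n]) = 0$ for all $G\in\CG$, $n\in\BN$, which only sees the family up to the operations of extension, positive shift, and direct factor (and small sums, which do not affect the vanishing condition since each $G$ is compact).

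The key input is then precisely the chain of equalities $\langle \CCG_S\rangle_{+} = \langle \JG_S\rangle_{+} = \langle \DG_S\rangle_{+}$ established in Proposition~\ref{prop:generators_DA1}~\ref{jg_dg}. Since all three families consist of compact objects (curves give compact motives by Lemma~\ref{lem:subcats_comp}, and the Jacobian and Deligne generators are compact by Proposition~\ref{prop:smooth_complex_compact} and Corollary~\ref{cor:Deligne_da_1}), this equality of the $+$-closures forces the associated aisle $\ll -\gg_+$ to coincide and hence the truncation functors, so the three $t$-structures on $\DA_1(S)$ agree. Concretely, I would argue: $N$ lies in the strictly negative part of $t(\CCG_S)$ iff $\Hom(G, N[-n]) = 0$ for all $G\in\CCG_S$, $n\ge 0$; since $\langle \CCG_S\rangle_+ = \langle \JG_S\rangle_+ = \langle \DG_S\rangle_+$ and the vanishing condition against a generator is inherited by extensions, positive shifts, and direct factors, this vanishing against $\CCG_S$ is equivalent to the same vanishing against $\JG_S$ and against $\DG_S$. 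Hence the three strictly negative parts coincide; being parts of $t$-structures on the same category with the same negative part, the $t$-structures are equal.

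For the twisted statement, $t^1_\MM = t(\CCG_S(-1))$, I would simply apply the Tate twist functor $(-1)\colon \DA_1(S)\xrightarrow{\sim}\DA^1(S)$, which is a triangulated equivalence by Proposition~\ref{prop:hom_cohom_twists}; it transports $\langle \CCG_S\rangle_+$, $\langle \JG_S\rangle_+$, $\langle \DG_S\rangle_+$ to their twists, so the equality of $+$-closures is preserved, and the same argument as above gives $t(\CCG_S(-1)) = t(\JG_S(-1)) = t(\DG_S(-1))$. There is no real obstacle here: the statement is essentially a formal consequence of Proposition~\ref{prop:generators_DA1}, whose difficult content (the stability under pushforward along closed immersions, Proposition~\ref{prop:generators_closed_imm}, and the generic spreading-out/deformation arguments) has already been carried out. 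The only point requiring a sentence of care is the observation that $t(\CG)$ genuinely depends only on $\langle\CG\rangle_+$ and not on the specific generating family — which is immediate from the explicit description of both halves of $t(\CG)$ in Morel's proposition.

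\begin{proof}
By \cite[Lemme 2.1.69, Proposition~2.1.70]{Ayoub_these_1}, for a family $\CG$ of compact objects the $t$-structure $t(\CG)$ has nonnegative part $\ll\CG\gg_+$ and strictly negative part equal to the full subcategory of objects $N$ such that $\Hom(G,N[-n])=0$ for all $G\in\CG$ and $n\in\BN$. The latter condition is stable under extensions, positive shifts and direct factors in the variable $G$, so it only depends on $\langle \CG\rangle_+$; since the objects of $\CCG_S$, $\JG_S$ and $\DG_S$ are compact (Lemma~\ref{lem:subcats_comp}, Proposition~\ref{prop:smooth_complex_compact}, Corollary~\ref{cor:Deligne_da_1}), Proposition~\ref{prop:generators_DA1} \ref{jg_dg} gives $\langle \CCG_S\rangle_+=\langle \JG_S\rangle_+=\langle \DG_S\rangle_+$, whence the strictly negative parts of $t(\CCG_S)$, $t(\JG_S)$ and $t(\DG_S)$ coincide. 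Two $t$-structures on the same triangulated category with the same strictly negative part are equal, so $t_{\MM,1}=t(\CCG_S)=t(\JG_S)=t(\DG_S)$. Applying the Tate twist equivalence $(-1)\colon \DA_1(S)\stackrel{\sim}{\lra}\DA^1(S)$ of Proposition~\ref{prop:hom_cohom_twists} transforms these three generating families into $\CCG_S(-1)$, $\JG_S(-1)$, $\DG_S(-1)$ and hence yields $t^1_\MM=t(\CCG_S(-1))=t(\JG_S(-1))=t(\DG_S(-1))$.
\end{proof}
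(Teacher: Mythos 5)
Your proof is correct and matches the paper's (the paper simply states that the corollary follows immediately from Proposition~\ref{prop:generators_DA1}; your write-up spells out exactly the intended reason, namely that a generated $t$-structure $t(\CG)$ depends on the compact generating family $\CG$ only through $\langle\CG\rangle_+$, plus transport along the Tate-twist equivalence for the cohomological variant). No gaps.
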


We introduce a parallel definition for $0$-motives. Recall that $\DA_{0}(S)=\DA^{0}(S)$ by Proposition \ref{prop:hom_cohom_twists}, 

\begin{defi}\label{main_def_0}
The \emph{motivic $t$-structure} $t_{\MM,0}(S)=t^0_{\MM}(S)$ on $\DA_0(S)=\DA^0(S)$ is the $t$-structure generated by the family of objects of the form $e_\sharp \BQ$ with $e:T\ra S$ \'etale. The heart of $t^0_{\MM}$ is the \emph{abelian category of $0$-motivic sheaves} $\MM^0(S)$.
\end{defi}

\begin{remark}
The $t$-structure $t_{\MM,0}(S)$ is somewhat similar to the \emph{homotopy $t$-structure} on the whole of $\DA(S)$, which we define, following \cite[Definition 2.2.41]{Ayoub_these_1}, as the $t$-structure generated by the objects $f_\sharp \BQ(n)[n]$ for all $f:T\ra S$ smooth and $n\in \BZ$. It is likely that the homotopy $t$-structure restricts to $\DA_0(S)$ and that its restriction is $t_{\MM,0}(S)$.
\end{remark}

We now discuss some elementary exactness properties of Grothendieck operations with respect to the motivic $t$-structures.

\begin{prop}\label{prop:elementary_exactness}
The following properties hold for $t_{\MM,1}$, $t^1_{\MM}$ and $t_{\MM,0}$.
\begin{enumerate}[label={\upshape(\roman*)}]
\item \label{exact^*} Let $f$ be a morphism of schemes; then $f^*$ is $t$-positive.
\item \label{exact_!} Let $f$ be a quasi-finite separated morphism between excellent schemes; then $f_!$ is $t$-positive.
\item \label{exact_e^*} Let $e$ be an \'etale morphism; then $e^*$ is $t$-exact.
\item \label{exact_f_*} Let $f$ be a finite morphism between excellent schemes; then $f_*$ is $t$-exact.
\end{enumerate}
Let $\epsilon\in \{0,1\}$; the following properties hold for $t^\epsilon_{\MM}$.
\begin{enumerate}[resume, label={\upshape(\roman*)}]
\item \label{exact_omega_*} Let $f$ be a morphism of schemes; then $\omega^\epsilon f_*$ is $t$-negative.
\item \label{exact_omega^!} Let $f$ be a quasi-finite separated morphism of schemes between excellent schemes; then $\omega^\epsilon f^!$ is $t$-negative.
\end{enumerate}
\end{prop}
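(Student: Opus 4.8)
The proof rests on two formal principles about generated $t$-structures, together with the concrete inputs already established in Sections~\ref{sec:subcats} and~\ref{sec:generators}. First, if $F$ is a triangulated functor between compactly generated categories with generated $t$-structures $t(\CG)$, $t(\CG')$ which commutes with small sums and satisfies $F(\CG)\subseteq \ll\CG'\gg_+$, then $F$ is $t$-positive: indeed $\ll\CG\gg_+$ is generated from $\CG$ by extensions, positive shifts and small sums, all of which $F$ respects. Second, for any adjunction $L\dashv R$ of triangulated functors between categories with $t$-structures, $L$ is $t$-positive iff $R$ is $t$-negative, and $L$ is $t$-negative iff $R$ is $t$-positive; this follows from the orthogonality descriptions $\CT_{\le -1}=(\CT_{\ge 0})^\perp$, $\CT_{\ge 0}={}^\perp(\CT_{\le -1})$ and the adjunction isomorphism. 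I will also use the description, immediate from the definition of $\CT_{<0}$ in Morel's proposition, that $N\in\CT_{\le 0}$ iff $\Hom(G,N[-m])=0$ for every generator $G$ and every $m\ge 1$. Throughout, the statements for $t^1_\MM$ on $\DA^1(S)=\DA_1(S)(-1)$ reduce to those for $t_{\MM,1}$ by the $t$-exact equivalence $(-1)\colon\DA_1(S)\to\DA^1(S)$, which matches generating families, together with the compatibility of $f^*,f_!,f_*,e^*,\omega^\epsilon$ with Tate twists (for $\omega^1$ see Corollary~\ref{coro:omega_0}); for $t_{\MM,1}$ one may work with any of the equivalent generating families of Corollary~\ref{cor:t_gens}, switching between $\CCG$, $\JG$ and $\DG$ as convenient.

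\textbf{Parts (i), (iii) and the immersion cases.} For (i), $f^*$ is a left adjoint, hence commutes with small sums, and it sends a curve generator $M_S(C)$ (resp.\ $M^\bot_S(C)[-1]$, resp.\ $e_\sharp\BQ$) to the corresponding generator over $T$ by the $\Ex^*_\sharp$ isomorphism and the base-change compatibility of the triangle defining $M^\bot_S(C)$ (resp.\ by compatibility with Tate twists in the $t^1_\MM$-case); so $f^*$ is $t$-positive by the first principle. For an \'etale morphism $e$, $e_\sharp$ likewise commutes with small sums and sends generators to generators ($e_\sharp e'_\sharp=(ee')_\sharp$, and $ee'$ remains \'etale, a smooth curve, or carries along a Deligne $1$-motive), so $e_\sharp$ is $t$-positive; by the second principle applied to $e_\sharp\dashv e^*$ this makes $e^*$ $t$-negative, which with (i) gives (iii). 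The same second principle applied to $j_\sharp\dashv j^*$ for an open immersion $j$ shows $j_!=j_\sharp$ is $t$-exact (it is $t$-positive by the first principle, $t$-negative because $j^*$ is $t$-positive). For a closed immersion $i$, $i_*=i_!$ commutes with small sums; Proposition~\ref{prop:generators_closed_imm} and Lemma~\ref{lem:generators_closed_imm_0} (combined with Corollary~\ref{cor:t_gens}) say exactly that $i_*$ sends the generating family into $\langle\CG_S\rangle_+\subseteq\ll\CG_S\gg_+$, so $i_*$ is $t$-positive; it is $t$-negative by the second principle applied to $i^*\dashv i_*$ using that $i^*$ is $t$-positive. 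Thus $i_*$ is $t$-exact for closed immersions.

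\textbf{Part (iv), the main obstacle.} Let $f\colon T\to S$ be finite. The $t$-negativity of $f_*$ follows from the second principle applied to $f^*\dashv f_*$ and the $t$-positivity of $f^*$. The difficulty is $t$-positivity of $f_*$, for which I would argue by d\'evissage. Since $f_*=f_!$ commutes with small sums and preserves $\DA^\epsilon(-)$ (Propositions~\ref{prop:permanence_coh_n}, \ref{prop:permanence_hom_n}), it suffices to treat $f_*G$ for $G$ a single generator. Writing $G=e_\sharp H$ with $e$ \'etale and factoring $fe=q\circ l$ by Zariski's main theorem with $l$ an open immersion and $q$ finite, one reduces ($f_*e_\sharp H=q_*(l_\sharp H)$, and $l_\sharp H$ is a generator over the compactification) to proving $q_*(\text{generator})\in\DA^\epsilon(S)_{\ge 0}$ for $q$ finite. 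Now: (a) if $q$ is finite \'etale then $q_*\simeq q_\sharp$ and sends generators to generators; (b) if $q$ is finite radicial surjective the separation property makes $q^*$ an equivalence of categories which, by topological invariance of the small \'etale site (resp.\ nil-invariance of $\CM_1$ and of smooth curves), carries the generating family of $\DA^\epsilon(S)$ \emph{onto} that of $\DA^\epsilon(T)$; hence $q^*$ and $q_*=(q^*)^{-1}$ are $t$-exact; (c) a closed immersion is covered by the previous paragraph. For a general finite $q\colon T'\to S$ one runs a Noetherian induction on $\dim S$: after reducing to $S,T'$ reduced (via (b) for $T'_{\mathrm{red}}\hookrightarrow T'$ and $S_{\mathrm{red}}\hookrightarrow S$), the generic fibre is, up to nilpotents, a finite sum of finite field extensions each factoring as a purely inseparable extension followed by a separable one; spreading this out gives a dense open $V\subseteq S$ over which $q_V$ is handled by cases (a)--(b); the closed complement $Z$ has smaller dimension, so $(q_Z)_*$ is $t$-exact by induction; and one concludes by applying $q_*=q_!$ to the localisation triangle of $(Z_{T'},V_{T'})$ in $\DA(T')$, which (by proper base change) becomes $(j_S)_!(q_V)_*j^*(-)\to q_*(-)\to (i_S)_*(q_Z)_*i^*(-)\to$, now $t$-positive on $\DA^\epsilon(T')_{\ge 0}$ because $(j_S)_!$ is $t$-positive, $(i_S)_*$ is $t$-positive (closed immersion case), and $(q_V)_*$, $(q_Z)_*$, $j^*$, $i^*$ are $t$-positive. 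The care needed to spread out the generic structure and to match the two localisation triangles via proper base change is the technical heart of the argument.

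\textbf{Parts (ii), (v), (vi).} Given (iv), part (ii) is immediate: for $f$ quasi-finite separated write $f=p\circ j$ with $j$ an open immersion and $p$ finite, so $f_!=p_*j_!$ is a composite of two $t$-positive functors. For (v), let $M\in\DA^\epsilon(T)_{\le 0}$; to show $\omega^\epsilon f_*M\in\DA^\epsilon(S)_{\le 0}$ it suffices, by the explicit description of $\CT_{\le 0}$, to check that $\Hom_{\DA^\epsilon(S)}(G,\omega^\epsilon f_*M[-m])=0$ for every generator $G$ and $m\ge 1$; since $\omega^\epsilon$ is right adjoint to the inclusion of $\DA^\epsilon(S)$ this group equals $\Hom_{\DA(S)}(G,f_*M[-m])\cong\Hom_{\DA(T)}(f^*G,M[-m])$, which vanishes because $f^*G\in\DA^\epsilon(T)_{\ge 0}$ by (i) while $M[-m]\in\DA^\epsilon(T)_{\le -1}$. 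Part (vi) is the same argument with $f^!$ in place of $f_*$: the adjunction $f_!\dashv f^!$ turns the group into $\Hom_{\DA(T)}(f_!G,M[-m])$, which vanishes since $f_!G\in\DA^\epsilon(T)_{\ge 0}$ by (ii) (using that $f_!$ preserves $\DA^\epsilon(-)$ for $f$ quasi-finite separated) and $M[-m]\in\DA^\epsilon(T)_{\le -1}$.
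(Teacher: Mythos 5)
Your overall architecture is essentially the paper's: verify the generators plus commutation with small sums for $t$-positivity, pass to right adjoints for $t$-negativity, handle closed immersions via Proposition~\ref{prop:generators_closed_imm} (resp.\ Lemma~\ref{lem:generators_closed_imm_0}), run a noetherian induction with localisation and continuity for the finite/quasi-finite case, and conclude (v), (vi) by the formal adjunction argument. The reorganisation of proving (iv) first and deducing (ii) via Zariski's main theorem, instead of proving (ii) directly and getting (iv) from $f_*\simeq f_!$ as the paper does, is harmless. The genuine problem is your case (b), which is exactly the delicate point of the whole proposition. You assert that for $q$ finite radicial surjective the equivalence $q^*$ ``carries the generating family of $\DA^\epsilon(S)$ onto that of $\DA^\epsilon(T)$'', justified by topological invariance of the small \'etale site and ``nil-invariance of $\CM_1$ and of smooth curves''. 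A finite radicial surjective morphism is a universal homeomorphism, not a nil-immersion, and smooth curves and Deligne $1$-motives do not descend along it: along the normalisation $\Spec k[t]\to\Spec k[t^2,t^3]$ of the cusp (characteristic $0$, both schemes reduced, and $q^*$ an equivalence by separation), an elliptic curve over $\BA^1_k$ with $j$-invariant $t$ is not pulled back from the base, and there is no Frobenius available to repair this up to isogeny. Even in the case you really need, a purely inseparable field extension $l/k$ (or its spread-out over a dense open $V$ in characteristic $p$), literal descent fails; what is true is only the rational Frobenius-twist statement that every object of $\CM_1(l)$ is isogenous to a pullback, resp.\ that pushforward of the motive of a smooth projective curve is again such a motive -- and that is precisely the content of Lemma~\ref{lemm:hom_!_fields} together with the separation property, not a formal invariance. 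Since $q^*$ being a $t$-positive equivalence does not make $q_*=(q^*)^{-1}$ $t$-positive without this essential surjectivity, step (b) does not go through as written, in particular when you invoke it over the non-field base $V$.

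The repair is the paper's route: in the induction, after reducing to reduced schemes (which only needs the closed-immersion case and localisation, not any descent statement) and restricting to a dense open, do not treat the radicial piece over all of $V$; use continuity (Proposition~\ref{prop:subcats_continuity}, together with compactness of the objects involved) and localisation to reduce the $t$-positivity of $q_*$ applied to a generator to the generic points of the base, i.e.\ to a finite purely inseparable field extension (trivial in characteristic $0$), and there conclude by the separation property and Lemma~\ref{lemm:hom_!_fields} for the curve/Jacobian generators (the $\BQ$- and $\Gm$-type generators being immediate). With that substitution, the remaining parts of your argument -- (i), (iii), the closed-immersion case, the deduction of (ii) from the finite case, and the adjunction arguments for (v) and (vi) -- are correct and coincide with the paper's proof.
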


\begin{proof}
By Proposition~\ref{prop:permanence_hom_n} (resp. \ref{prop:permanence_coh_n}) and the very definition of $\omega^0$ and $\omega^1$, all the operations above are well-defined. We prove the proposition for $t_{\MM,1}$; the proof for the corresponding statements for $t^1_{\MM}$ is then obtained by twisting by $\BQ(-1)$, and the proof for $t^0_{\MM}$ is completely analoguous (using Lemma \ref{lem:generators_closed_imm_0}  instead of Proposition \ref{prop:generators_closed_imm})

 Let $f:S\ra T$ be any morphism of schemes (resp. a quasi-finite separated morphism between excellent schemes). Then $f^*$ (resp. $f_!$) commute with small sums since it is a left adjoint. By \cite[Lemme 2.1.78]{Ayoub_these_1}, to prove statements \ref{exact^*}, \ref{exact_!}, it remains to show that $f^* \DG_T\subset \DA_1(S)_{\geq 0}$ and that when $f$ is quasi-finite, $f_!\DG_S\subset \DA_1(S)_{\geq 0}$.

In the case of $f^*$, the result follows from Lemma \ref{lem:gen_ops} \ref{gen_pullbacks}.

For the case of $f_!$, we proceed in several steps. If $e$ is an \'etale morphism, we have $e_!\DG_S\subset \DG_T$ by definition. If $i$ is a closed immersion, we have $i_!\DG_S\subset \langle\DG_T\rangle_+$ by Proposition~ \ref{prop:generators_closed_imm} and Proposition~\ref{prop:generators_DA1}. Let $f$ be an arbitrary quasi-finite morphism. At this point, we know that for a open immersion $j$ (resp. a closed immersion $i$), the functors $j_!$ and $j^*$ (resp. the functors $i_!$ and $i^*$) are $t$-positive. This shows that to prove that an object $M$ is $t$-positive, one can proceed by localisation. A noetherian induction together with the \'etale case above then reduce us to the case where $f$ is finite surjective inseparable, and allows us further to restrict to an arbitrary dense open set of the base. Using continuity, this reduces us to the field case, where we can apply Lemma~\ref{lemm:hom_!_fields}.

Let $f$ be an \'etale morphism (resp. a finite morphism between excellent schemes). We have seen above that $f^*$ (resp. $f*\simeq f_!$) is $t$-positive. Moreover, since $e_!\simeq e_\sharp$ (resp. $f^*$) is $t$-positive, its right adjoint $e^*$ (resp. $f_*$) is $t$-negative. This proves \ref{exact_e^*} (resp. \ref{exact_f_*}).

Let $f:S\ra T$ be a morphism (resp. a quasi-finite separated morphism between excellent schemes). We have seen above that $f^*:\DA^1(T)\ra \DA^1(S)$ (resp. $f_!:\DA^1(S)\ra \DA^1(T)$) is $t$-positive, so its right adjoint $\omega^1 f_*$ (resp. $\omega^1 f_!$)  is $t$-negative. This proves \ref{exact_omega_*} (resp. \ref{exact_omega^!}).
\end{proof}

From the definition, we also get a partial result about the Betti and $\ell$-adic realisation functors. 

\begin{prop}\label{prop:elementary_realisations}
\begin{itemize}
  \item \label{RB_pos} Let $k$ be a field with a fixed complex embedding $\sigma$ and $S$ be a scheme of finite type over $k$. The functor $R_{B,\sigma}$, restricted to either $\DA_0(S)$, $\DA_1(S)$ or $\DA^1(S)$ is $t$-positive with respect to the motivic $t$-structure and the standard $t$-structure.
  \item \label{Rl_pos} Let $\ell$ be a prime, and let $S$ be a japanese $\BZ[\frac{1}{\ell}]$-scheme. The functor $R_{\ell}$, restricted to either $\DA_{0,c}(S)$, $\DA_{1,c}(S)$ or $\DA^1_c(S)$, sends compact $t_{\MM}$-positive objects to positive objects in the standard $t$-structure.
\end{itemize}
\end{prop}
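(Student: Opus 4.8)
\textbf{Proof proposal for Proposition~\ref{prop:elementary_realisations}.}

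The plan is to reduce both statements to the observation that realisation functors commute with the six operations and are exact for the standard $t$-structure on the geometric generators. Both $R_{B,\sigma}$ and $R_\ell$ are covariant monoidal triangulated functors which commute with $f^*$, $f_!$, $f_\sharp$ and $\Sigma^\infty$ (for $R_\ell$ on constructible objects only); this is the content of \cite{Ayoub_Betti} and \cite[Section 9]{Ayoub_Etale}. For the $t$-positivity statement \ref{RB_pos}, I would argue as follows. By Proposition~\ref{prop:generators_DA1} together with Corollary~\ref{cor:t_gens}, the positive part $\DA_1(S)_{\geq 0}$ of the motivic $t$-structure is $\ll \DG_S \gg_+$, i.e., generated under extensions, small sums and $[+1]$ by the Deligne generators $e_\sharp \Sigma^\infty(\BM)$ for $e:U\ra S$ \'etale and $\BM\in \CM_1(U)$; similarly for $\DA_0(S)$ with generators $e_\sharp \BQ$, and for $\DA^1(S)$ after a Tate twist. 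Since $R_{B,\sigma}$ is triangulated and commutes with small sums (being a left adjoint, or directly by construction), the essential image of $\DA_1(S)_{\geq 0}$ will be contained in the smallest subcategory of $D(S^{\an},\BQ)$ stable under extensions, small sums and $[+1]$ containing the objects $R_{B,\sigma}(e_\sharp \Sigma^\infty \BM)$. Because the standard $t$-structure on $D(S^{\an},\BQ)$ is itself generated in this way (its positive part is stable under those operations), it suffices to check that each generator realises to a complex concentrated in non-negative degrees. Using $R_{B,\sigma}\circ e_\sharp \simeq e^{\an}_! \circ R_{B,\sigma}$ (relative purity in the analytic setting, or directly the compatibility with $f_\sharp$) and the $t$-exactness of $e^{\an}_!$ for an \'etale $e$, we reduce to computing $R_{B,\sigma}(\Sigma^\infty \BM)$ for $\BM=[L\ra G]\otimes\BQ\in \CM_1(U)$. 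This is, by the very construction of the realisation on the effective motive attached to a complex of sheaves, quasi-isomorphic to the complex $[L\ra G]\otimes \BQ$ placed in homological degrees $[0,1]$ (or the associated local system / constructible complex), hence concentrated in degrees $\{0,1\}\subset \BZ_{\geq 0}$; for the $0$-motivic case the generator realises to a sheaf in degree $0$. This gives \ref{RB_pos}.

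For \ref{Rl_pos} the argument is formally identical but one must be slightly more careful because the $\ell$-adic realisation is only defined on $\DA_c(S)$, so one cannot directly invoke stability under small sums. Instead I would use that a compact $t_{\MM}$-positive object $M\in \DA_{1,c}(S)_{\geq 0}$ lies in the subcategory $\langle \DG_S\rangle_+$ of objects built from the Deligne generators by \emph{finite} extensions, direct factors and positive shifts — this is again Proposition~\ref{prop:generators_DA1}~\ref{jg_dg} — and that $R_\ell$ is triangulated and sends direct factors to direct factors, so it maps this finite-construction category into the subcategory of $D_c(S,\BQ_\ell)$ generated under finite extensions, direct factors and $[+1]$ by the realisations of the generators. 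As in the Betti case, $R_\ell(e_\sharp \Sigma^\infty \BM)\simeq e_!^{\et}(\text{complex in degrees }[0,1])$ is concentrated in non-negative degrees for the standard $t$-structure on $D_c(S,\BQ_\ell)$, and $e_!$ is $t$-exact for $e$ \'etale; hence $R_\ell(M)$ is in the non-negative part. One technical point to pin down is that the generators $e_\sharp \Sigma^\infty \BM$ and the boundary objects $M^\bot_S(C)$ are indeed compact and constructible, so that $R_\ell$ applies to them and to all objects of $\langle \DG_S\rangle_+$; this follows from Corollary~\ref{cor:Deligne_da_1}, Proposition~\ref{prop:smooth_complex_compact} and Lemma~\ref{lem:subcats_comp}.

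The only mildly delicate point — and the place I expect the most bookkeeping — is verifying that the realisation of the effective motive attached to a Deligne $1$-motive $\BM=[L\ra G]\otimes\BQ$, viewed as an object of $\DA^{\eff}(U)$ and then suspended, is genuinely concentrated in homological degrees $0$ and $1$ for the \emph{standard} (not perverse) $t$-structure on the target. This is where one uses that $L$ is a lattice (realising to a local system in degree $0$) and $G$ is a smooth commutative group scheme, semi-abelian up to the weight filtration, whose associated sheaf realises to a complex in degree $0$ as well, so that the two-term complex $[L\to G]$ sits in degrees $[0,1]$; one then has to track the shift conventions carefully (the paper uses homological indexing, and $\Sigma^\infty$ does not shift). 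Granting this, everything else is a routine propagation of the positivity through the operations $e_!$, $e_\sharp$, $f^*$ and $\Sigma^\infty$, each of which is $t$-exact or $t$-positive on the standard $t$-structure of the target, exactly parallel to the argument that $f^*$, $f_!$, $e^*$ are $t$-positive or $t$-exact in Proposition~\ref{prop:elementary_exactness}.
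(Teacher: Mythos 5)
Your overall strategy is the same as the paper's: reduce, via the structure of generated $t$-structures, to checking that the Deligne generators $e_\sharp\Sigma^\infty\BM$ realise to non-negative objects, use compatibility of $R_{B,\sigma}$ and $R_\ell$ with $e_!$ and the standard $t$-exactness of $e_!$ for $e$ \'etale, and then treat $\BM=[L\ra G]\otimes\BQ$ by splitting off the lattice and the semi-abelian part. The gap is exactly at the point you flag as ``mildly delicate'', and the justification you offer there does not work. It is not true ``by the very construction of the realisation'' that $R(\Sigma^\infty\BM)$ is the two-term complex $[L\ra G]\otimes\BQ$ viewed as a constructible complex: the realisation functors are defined by (derived) analytification/\'etale descent on the generators $M_S(X)$, $X$ smooth over $S$, and their value on the motive of a representable sheaf is not the naive analytic or \'etale incarnation of that sheaf. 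The basic counterexample is already $\Sigma^\infty(\Gm\otimes\BQ)\simeq\BQ(1)[1]$ (Proposition~\ref{prop:Gm_Q1}), whose Betti realisation is $\BQ(1)[1]$ and certainly not the (non-constructible) sheaf $\CO^{\times,\an}\otimes\BQ$; likewise for an abelian scheme $A$ the realisation of $\Sigma^\infty A_\BQ[-1]$ is the relative first homology local system, not the sheaf of sections of $A^{\an}$. That this realisation is indeed a local system placed in degree $0$ is a genuine theorem, and the paper simply cites it: \cite[Proposition 5.1.(2)]{AHPL} for $R_B$ and \cite[5.2]{AHPL} for $R_\ell$. Without that input (or a proof of it), your argument for the semi-abelian part has no content.

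The lattice part has the same defect in your write-up, but there the paper shows how to fix it with material already at hand: one cannot just say ``$L$ realises to a local system in degree $0$''; instead one uses the commutation of $R$ with the six operations, localisation and the standard $t$-exactness of $j_!j^*$ and $i_*i^*$ to argue by noetherian induction, reduce to a non-empty open with $U_\red$ normal, write $L\otimes\BQ$ as a direct factor of $h_!\BQ_T$ for $h$ finite \'etale (Lemma~\ref{lemm:permutation_torus}), and then use $t$-exactness of $h_!$. If you replace your ``by construction'' step with the citation to \cite{AHPL} for $\Sigma^\infty G[-1]$ and this localisation/finite-\'etale argument for $L$, your proof becomes essentially the paper's; the remaining bookkeeping you describe (compactness of the generators, the small-sums versus compact-objects dichotomy between the Betti and $\ell$-adic cases) is handled correctly.
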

\begin{proof}
  Because of the definition of the motivic $t$-structures above, and the structure of $t$-positive and compact objects in a generated $t$-structure, it is enough to show that the image of a compact generator is $t$-positive for the standard $t$-structure. The three cases being similar, let us treat the one of $\DA_1(S)$. Let $e:U\ra S$ be an \'etale morphism, $\BM=[L\ra G]\otimes\BQ\in \CM_1(U)$ and $M=e_!\Sigma^\infty_U\BM\in \DG_U$ (recall that $e_\sharp\simeq e_!$ as $e$ is \'etale). 

Write $R$ for either $R_B$ or $R_\ell$ (with the appropriate hypothesis on $S$). Then $R\BM\simeq e_!R(\Sigma^\infty_U\BM)$ with $e_!$ the corresponding Grothendieck operations on derived categories of sheaves (by \cite[Theoreme 3.19]{Ayoub_Betti} for $R=R_B$ and \cite[Theoreme 9.7]{Ayoub_Etale} for $R=R_\ell$). Since the functor $e_!$ is then $t$-exact for the standard $t$-structures, we only need to show that $R(\Sigma^\infty_U\BM)$ is $t$-positive. Let us show that it is in fact in the heart of the standard $t$-structure. We can show this separately for $\BM=[L\ra 0]\otimes\BQ$ and $\BM=[0\ra G]\otimes\BQ$, i.e., we need to compute $R(\Sigma^\infty L_{\BQ})$ and $R(\Sigma^\infty G_{\BQ}[-1])$. 

Note that because of the commutation of $R$ with the six operations, localisation and the $t$-exactness of $j_!j^*$ and $i_*i^*$ for the standard $t$-structures, we can always restrict to a non-empty open set of $U$ and argue by noetherian induction. We can then assume $U_\red$ to be normal (since $S$ is assumed japanese), and then write $L$ as a direct factor of $h_!\BQ_T$ for $h:T\ra U$ finite \'etale using Lemma \ref{lemm:permutation_torus}. Applying again the commutation of $R$ with $h_!$
 and the $t$-exactness of $h_!$ for the standard $t$-structures, we conclude that $R(\Sigma^\infty L_{\BQ})$ is in the heart.

In the case of $\Sigma^\infty G_{\BQ}[-1]$, our claim follows from the computation of the realisation of such a motive in \cite[Proposition 5.1.(2)]{AHPL} (for $R_B$) and \cite[5.2]{AHPL} (for $R_\ell$). This completes the proof.
\end{proof}

\begin{remark}
The $t$-exactness of pullbacks by arbitrary morphisms and of realisation functors has been proven in \cite{constructible_1-mot}.
\end{remark}  

There are simple connections between the $t$-structures for $0$ and $1$-motives.

\begin{prop}\label{t_struct_01}
Let $S$ be a noetherian finite-dimensional excellent scheme.
\begin{enumerate}[label={\upshape(\roman*)}]
\item \label{incl_01_exact} The inclusion of $\DA_0(S)$ into $\DA_1(S)$ is $t$-exact.
\item \label{restr_01} The $t$-structure $t_{\MM,1}(S)$ restricts to $\DA_0(S)$, and its restriction coincide with $t_{\MM,0}(S)$.
\end{enumerate}
\end{prop}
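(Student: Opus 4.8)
The statement asserts two things: (i) the inclusion $\DA_0(S)\hookrightarrow \DA_1(S)$ is $t$-exact for the motivic $t$-structures $t_{\MM,0}$ and $t_{\MM,1}$; and (ii) $t_{\MM,1}(S)$ restricts to $\DA_0(S)$ with restriction equal to $t_{\MM,0}(S)$. Observe that (ii) follows formally from (i) together with the fact that a $t$-exact inclusion of triangulated categories automatically restricts the ambient $t$-structure to the subcategory --- the only subtlety being to check that both truncation functors of $t_{\MM,1}$ applied to an object of $\DA_0(S)$ land again in $\DA_0(S)$, which is exactly $t$-exactness in the appropriate sense. So the real content is (i).

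\textbf{Step 1: reduce to comparing generators.} Recall that $t_{\MM,0}(S)$ is generated (Definition~\ref{main_def_0}) by the family $\CG_0:=\{e_\sharp\BQ\mid e:T\ra S\text{ \'etale}\}$, while $t_{\MM,1}(S)=t(\DG_S)$ by Corollary~\ref{cor:t_gens}, with $\DG_S$ the family of Deligne generators. Since $\DA_0(S)=\ \ll\CG_0\gg$ and $\DA_0(S)=\DA^0(S)=\DA_{0(,c)}(S)$ by Proposition~\ref{prop:hom_cohom_twists}, and since $\CG_0\subset \DG_S$ (take $\BM=[0\ra \BZ]$, i.e. $K=\BZ$, in the definition of $\DG_S$, or rather $\JG_S$, which is contained in $\DG_S$ by Lemma~\ref{lem:gen_simple}), the inclusion $\DA_0(S)\subset \DA_1(S)$ is well-defined and preserves compact objects by Lemma~\ref{lem:subcats_comp}. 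By \cite[Lemme 2.1.78]{Ayoub_these_1} it commutes with small sums, so for $t$-positivity it suffices to check that each object of $\CG_0$ lies in $\DA_1(S)_{\geq 0}=\ \ll\DG_S\gg_+$; but this is immediate since $\CG_0\subset \DG_S\subset \ll\DG_S\gg_+$. Thus the inclusion is $t$-positive.

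\textbf{Step 2: $t$-negativity, i.e. the inclusion preserves the (strictly) negative parts.} This is the part requiring genuine work. We must show that if $N\in\DA_0(S)$ satisfies $\Hom_{\DA(S)}(e_\sharp\BQ, N[-n])=0$ for all \'etale $e$ and all $n\in\BN$ (so $N\in\DA_0(S)_{<0}$ for $t_{\MM,0}$), then also $\Hom_{\DA(S)}(G, N[-n])=0$ for all Deligne generators $G\in\DG_S$ and all $n\geq 0$ (so $N\in\DA_1(S)_{<0}$ for $t_{\MM,1}$); the converse inclusion $\DA_1(S)_{<0}\cap\DA_0(S)\subset \DA_0(S)_{<0}$ is trivial since $\CG_0\subset \DG_S$. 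Equivalently, by the description of generators (using $\JG_S$ in place of $\DG_S$, which is legitimate by Corollary~\ref{cor:t_gens} and Proposition~\ref{prop:generators_DA1}\ref{jg_dg}), it suffices to show that for $e:U\ra S$ \'etale, $\Hom(e_\sharp\Sigma^\infty(\Gm_U\otimes\BQ)[-1][-n], N)=0$ and $\Hom(e_\sharp\Sigma^\infty\Jac(C/U)_\BQ[-1][-n], N)=0$ for all $n\geq 0$, whenever $N\in\DA_0(S)_{<0}$. By the $(e_\sharp, e^*)$ adjunction and the fact that $e^*N\in\DA_0(U)_{<0}$ (because $e^*$ is $t$-exact for $t_{\MM,0}$ by the étale case of Proposition~\ref{prop:elementary_exactness}\ref{exact_e^*}, applied with $\epsilon=0$; note $\CG_0$ is stable under $e^*$ up to the $\Ex^*_\sharp$ isomorphism), we reduce to the case $e=\id$, $U=S$. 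So the crux is: for $N\in\DA_0(S)_{<0}$, show $\Hom(\Sigma^\infty(\Gm_S\otimes\BQ)[-1-n], N)=0$ and $\Hom(\Sigma^\infty\Jac(C/S)_\BQ[-1-n], N)=0$ for all $n\geq 0$.

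\textbf{Step 3: the key vanishing.} For the $\Gm$-generator, Proposition~\ref{prop:Gm_Q1} gives $\Sigma^\infty(\Gm_S\otimes\BQ)[-1]\simeq \BQ_S$, so $\Hom(\Sigma^\infty(\Gm_S\otimes\BQ)[-1-n],N)=\Hom(\BQ_S[-n],N)=0$ since $\BQ_S=(\id)_\sharp\BQ\in\CG_0$ and $N\in\DA_0(S)_{<0}$. For the Jacobian generator $\Sigma^\infty\Jac(C/S)_\BQ[-1]$, we argue that the motive $\Sigma^\infty\Jac(C/S)_\BQ[-1]$, after any pullback, lies in the subcategory $\ll\CG_0\gg_+$ "up to a shift" --- more precisely, I expect to show directly that $\Hom(\Sigma^\infty A_\BQ[-m], N)=0$ for $A$ an abelian scheme over $S$, $N\in\DA_0(S)_{<0}$, and all $m\in\BZ$. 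The natural strategy, mirroring the localisation/noetherian-induction pattern used repeatedly in this paper (e.g. in the proof of Proposition~\ref{prop:omega_0_gr}\ref{omega_0_ab_lat} and Theorem~\ref{theo:omega_1_Pic_smooth}), is: reduce by Zariski descent and continuity for $\DA(-)$, then by $h$-descent, to $S$ regular; apply absolute purity via Proposition~\ref{prop:sm_abs_purity} to $\Sigma^\infty A_\BQ$ (which is smooth, being geometrically smooth by \cite[Theorem 3.3]{AHPL}) along a stratification by regular strata; use that $\omega^0$ of the resulting twisted pieces vanishes by Corollary~\ref{coro:omega_0}\ref{omega_0_twists} together with Corollary~\ref{coro:omega_0}\ref{hom_coh_morphisms} (which identifies $\Hom$ from a homological motive into a cohomological one with $\Hom$ into its $\omega^0$); thereby reduce to $S$ a field, where Proposition~\ref{prop:motive_curve_field} and the explicit computation $\DA(k)(\BQ_k,\Sigma^\infty\Jac(C)_k[m])=0$ carried out in the proof of Proposition~\ref{prop:omega_0_gr}\ref{omega_0_ab_lat} finishes the job. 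Here one uses crucially that $N$ itself is a $0$-motive (so $\Hom(-,N)$ is pinned down by $\omega^0$), not merely $t_{\MM,0}$-negative --- but this is fine since we are free to assume $N\in\DA_0(S)$.

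\textbf{Main obstacle.} The subtle point --- and where I expect the real difficulty --- is Step 3 for the Jacobian generator: transporting the field-case vanishing $\DA(k)(\BQ_k,\Sigma^\infty\Jac(C)_k[m])=0$ to an arbitrary regular, then arbitrary, base via absolute purity and the vanishing properties of $\omega^0$, while keeping careful track of twists and of the fact that $N$ is an \emph{arbitrary} object of $\DA_0(S)_{<0}$ rather than a generator. One must be careful that the localisation triangles, after applying $\Hom(\Sigma^\infty A_\BQ,-)$ to $N$, only involve pieces of the form $i^!N$ for regular immersions $i$, which are again $0$-motivic (by Proposition~\ref{prop:permanence_coh} / the analogue for $\DA^0=\DA_0$) and $t_{\MM,0}$-negative-up-to-shift (using $\omega^0 i^!(\delta^0)$ invertible, Proposition~\ref{prop:omega_basics}\ref{omega^!}, and Proposition~\ref{prop:elementary_exactness}\ref{exact_omega^!}). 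Once this bookkeeping is done, the five-lemma and continuity close the induction exactly as in the proof of Theorem~\ref{theo:omega_1_Pic_smooth}. The remaining assertion \ref{incl_01_exact} that the inclusion is $t$-exact is then the conjunction of Steps 1 and 2, and \ref{restr_01} follows formally.
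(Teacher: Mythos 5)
Your Steps 1--2 and the formal deduction of \ref{restr_01} from \ref{incl_01_exact} are fine and agree with the paper's proof; the problem is Step 3, where the actual content lies. First, your treatment of the torus-type generator rests on a false identification: Proposition~\ref{prop:Gm_Q1} gives $\Sigma^\infty(\Gm\otimes\BQ)\simeq \BQ_S(1)[1]$, so $\Sigma^\infty(\Gm\otimes\BQ)[-1]\simeq \BQ_S(1)$, \emph{not} $\BQ_S$. Since $\BQ_S(1)$ is not among the generators $e_\sharp\BQ$ of $t_{\MM,0}$, the vanishing $\Hom(\BQ_S(1)[n],N)=0$ for $N\in\DA_0(S)_{<0}$ is not a formal consequence of the negativity of $N$; it is true, but needs an argument (e.g.\ reduce, by compactness of $\BQ_S(1)$, to $N$ a compact generator $a_*\BQ_X[m]$ of $\DA_0(S)=\DA^0(S)$ with $a$ finite, then adjunction and the weight~$-1$ vanishing of Proposition~\ref{prop:mot_coh_<}). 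As written, this step fails.

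Second, the abelian/Jacobian case --- which you yourself flag as the main difficulty --- is only a sketch, and the sketched route is problematic: localising in the target variable requires $i^!N$ for an \emph{arbitrary} negative $N\in\DA_0(S)$ to stay $0$-motivic (or be controlled by $\omega^0$), a permanence property the paper only has under resolution-of-singularities hypotheses (cf.\ Proposition~\ref{prop:permanence_coh}), which are not assumed in the statement; and you forgo the easy reduction, via compactness of $\Sigma^\infty A_\BQ$, to $N$ a generator. The paper's actual argument is different and much lighter: it proves in one stroke that $\DA(S)(\Sigma^\infty G_\BQ,P)=0$ for \emph{every} semiabelian $G$ (so torus, $\Gm$ and abelian parts are handled uniformly) and \emph{every} $P\in\DA^0(S)$, by reducing through compactness, adjunction along a finite morphism and Proposition~\ref{prop:pullback_complex} to $\Hom(\Sigma^\infty G_\BQ,\BQ[m])$, and then using the $[n]_G$-eigenspace decomposition of $M_S(G)$ from \cite[Theorem 3.3]{AHPL} together with the isomorphism $\pi^*:H^{m,0}_\CM(S)\simeq H^{m,0}_\CM(G)$ of Proposition~\ref{prop:mot_coh_0}\ref{mot_coh_0_pullback} ($\pi$ smooth surjective with connected fibers), so that all weight-zero motivic cohomology of $G$ lives in the unit eigenspace $\BQ_S$ and the factor $\Sigma^\infty G_\BQ$ contributes zero --- no induction on the base and no absolute purity. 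The triangle $\Sigma^\infty G_\BQ[-1]\ra\Sigma^\infty\BM\ra\Sigma^\infty L_\BQ$ then reduces everything to lattices, which your use of $\JG_S$ handles correctly. So the skeleton of your proof is right, but the key vanishing is wrong in one case and unproved in the other.
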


\begin{proof}
Let us prove Statement~\ref{incl_01_exact}. The inclusion functor commutes with small sums. The generators $e_\sharp \BQ$ ($e:U\ra S$ \'etale) of $t^0_{\MM}$ are also $t$-positive for $t_{\MM,1}$; this implies that the inclusion is $t$-positive. 

Let us now show the inclusion $\DA_1(S)$ is $t$-negative. Let $N\in \DA^0(S)_{\leq 0}$. We have to show that for every \'etale morphism $e:U\ra S$, $\BM=[L\ra G]\in\CM_1(U)$, and $n\in\BN^*$, we have $\DA(S)(e_\sharp (\Sigma^\infty \BM)[n],N)=0$. Using the $e_\sharp\dashv e^*$ adjunction and the fact that $e^*$ is $t$-negative (Proposition~\ref{prop:elementary_exactness}~\ref{exact_e^*}), we reduce to the case $e=\id$. We have a distinguished triangle
\[
\Sigma^\infty G_\BQ[-1] \ra \Sigma^\infty\BM\ra \Sigma^\infty L_\BQ  \rap.
\]
Let us first show that, for all $P\in \DA^0(S)$, we have $\DA(S)(\Sigma^\infty G_\BQ,P)=0$. Because $\Sigma^\infty G$ is compact, this vanishing statement can be checked on compact generators, so that we can assume that $P$ is of the form $a_*\BQ_X[m]$ for some $a:X\ra S$ finite and $m\in \BZ$. Using the $a^*\dashv a_*$ adjunction and Proposition~\ref{prop:pullback_complex}, we see that we can assume $a=\id$, so we have to show that $\DA(S)(\Sigma^\infty G_\BQ,\BQ[m])=0$. By~\cite[Theorem 3.3]{AHPL}, $\Sigma^\infty G_\BQ$ is a direct factor of $M_S(G)$, characterised as the $n$-eigenspace for the morphism induced by $[n]_G$ for any $n\neq 1$, and that $M_S(G)$ has also a direct factor $\BQ_S$, characterised as the $1$-th eigenspace for $[n]_G$. We have $\DA(S)(M_S(G),\BQ[m])\simeq H^{m,0}_\CM(G)$; since $\pi:G\ra S$ is smooth surjective with connected fibres, we deduce by Proposition~\ref{prop:mot_coh_0}~\ref{mot_coh_0_pullback} that $\pi^*:H^{m,0}_\CM(S)\ra H^{m,0}_\CM(G)$ is an isomorphism. Looking at the action of $[n]_G$, this shows that all the weight $0$ motivic cohomology of $G$ comes from the direct factor $\BQ_S$ of $M_S(G)$, and accordingly we deduce that $\DA(S)(\Sigma^\infty G_\BQ,\BQ[m])=0$ as claimed. This shows that $\DA(S)(\Sigma^\infty\BM[n],N)\simeq \DA(S)(\Sigma^\infty L_\BQ[n],N)$.

On the other hand, the motive $\Sigma^\infty L(-1)$ is in $\DA_0(S)$ and $t_{\MM,0}$-positive; this would be clear for $S$ normal since $L$ is then a direct factor of a permutation lattice, in general this can be checked by noetherian induction starting from a non-empty open set $V\subset U$ with $V_\red$ normal (possible since $U$ is excellent), using localisation, Proposition~\ref{prop:pullback_complex} and Proposition~\ref{prop:elementary_exactness}. Since by hypothetis $N$ is $t_{\MM,0}$-negative, we have $\DA(S)(\Sigma^\infty L_\BQ[n],N)=0$. This completes the proof that $\DA_0(S)\ra \DA_1(S)$ is $t$-negative, hence $t$-exact.

We now prove Statement~\ref{restr_01}. Write $ _0 \tau_{\geq 0}$ and $ _1\tau_{\geq 0}$ for the truncation functors of $t_{\MM,0}$ and $t_{\MM,1}$. We have to show that for every $M\in \DA_0(S)$, we have $ _1\tau_{\geq 0}M\in \DA_0(S)$ and $ _1\tau_{\geq 0}M\simeq\  _0\tau_{\geq 0}M$. But this follows immediately from the $t$-exactness of the inclusion, proved above.
\end{proof}

\begin{remark}
It is also likely that Proposition~\ref{t_struct_01} holds for $t^1_{\MM}(S)$; this seems to require more delicate vanishing results.
\end{remark}

\subsection{The $t$-structures over a field}
\label{t_struct_field}
In this short section, we compare our $t$-structures for homological $0$ and $1$-motives with the existing work on $t$-structures for $\DM^{\eff}_0(k)$ and $\DM^{\eff}_1(k)$ with $k$ a perfect field \cite{Orgogozo}~\cite{Ayoub_2-mot}, and we extend the results from these references to a possibly imperfect field.

For clarity, let us treat first the simpler case of $0$-motives. Let $k$ be a perfect field. We reformulate the treatement in \cite[\S 2]{Orgogozo}. There is a functor $\Sh_{\et}(k,\BQ)\ra \DM^{\eff}(k)$ (any sheaf of $\BQ$-vector spaces on the small \'etale site has a canonical extension as an \'etale sheaf with transfers on $\Sm/k$) which extends to a triangulated functor $D(\Sh_{\et}(k,\BQ))\ra \DM^{\eff}(k,\BQ)$. This factors through $\DM^{\eff}_0(k)$, and the resulting functor is an equivalence of categories $\CR^{\eff,0}_{\tr}:D(\Sh_{\et}(k))\simeq \DM^{\eff}_0(k)$.

 Another approach consists in first introducing the \emph{homotopy $t$-structure} on $\DM^{\eff}(k)$; this is the $t$-structure induced on $\DM^{\eff}(k)$ from the standard $t$-structure on $D(\Sh((\mathrm{Cor}/k)_\et,\BQ))$, but for our purposes it is best described as the $t$-structure on the triangulated category $\DM^{\eff}(k)$ compactly generated by the family of objects of the form $M^{\eff,\tr}_k(X)$ for all $X\in \Sm/k$ \cite[Proposition 3.3]{Ayoub_2-mot}. We claim that the homotopy $t$-structure restricts to $\DM^{\eff}_0(k)$, and that the restriction coincides with the $t$-structure generated by the family of objects of the form $M^{\eff,\tr}_k(Y)$ for all $Y/k$ finite \'etale. To do this, it suffices to show that the inclusion functor $\DM^{\eff}_0(k)\ra \DM^{\eff}(k)$ is $t$-exact for those two $t$-structures; it is $t$-positive because of the inclusion of generators, and $t$-negative because its left adjoint $L\pi_0$ is $t$-positive since $L\pi_0((M^{\eff,\tr}_k(X))\simeq M^{\eff,\tr}_k(\pi_0(X/k))$ for any $X/k$ smooth. 

It is easy to see that the $t$-structures on $\DM^{\eff}_0(k)$ introduced in the two previous paragraphs coincide. Moreover, through the equivalence of categories of Lemma~\ref{lemm:DA_DM_n}, we get an equivalence of categories $\CR^0:D(\Sh_{\et}(k,\BQ))\ra \DA_0(k)$, and this is a $t$-exact equivalence of triangulated categories when we equip $\DA_0(k)$ with $t_{\MM,0}$.

Finally, these $t$-structures on $\DM^{\eff}_0(k)$ and $\DA_0(k)$ restrict to compact objects; more precisely, there are equivalences of categories $D^b(\Sh_{\et,c}(k,\BQ))\simeq \DM^{\eff}_{0,c}(k)\simeq \DA_{0,c}(k)$ and the restriction of the $t$-structure coincides with the standard $t$-structure on the bounded derived category.

Let now $k$ be a general field and let $h:\Spec(k^{\perf})\ra \Spec(k)$ be a perfect closure. We have a commutative diagram
\[
\xymatrix{
D(\Sh_\et(k)) \ar[r]_{\CR^0} \ar[d]^\sim_{h^*} & \DA_0(k) \ar[d]^\sim_{h^*} \\
D(\Sh_\et(k^\perf)) \ar[r]_{\CR^0}^{\sim} & \DA_0(k^{\perf}) 
}
\]
where the bottom horizontal functor is an equivalence by the case of a perfect field, the left vertical functor is an equivalence because the \'etale sites of $k$ and $k^\perf$ are canonically isomorphic via $h$, and the right vertical functor is an equivalence by the separation property of $\DA(-)$ and Corollary~\ref{cor:localisation_subcats}~\ref{rad_sub}. Moreover, the functor $h^*:D(\Sh_\et(k))\ra D(\Sh_\et(k^\perf))$ is clearly $t$-exact, the functor $h^*:\DA_0(k)\ra \DA_0(k^\perf)$ is $t$-exact because it is a quasi-inverse of the $t$-exact functor $h_*$ (Proposition~\ref{prop:elementary_exactness}~\ref{exact_f_*}), and $\CR^0:D(\Sh_{\et}(k^\perf))\ra \DA_0(k^\perf)$ is $t$-exact by the perfect field case. This proves that the top arrow is also a $t$-exact equivalence of triangulated categories. There is a similar diagram in the compact case which we will not spell out. Let us summarise the results so far.

\begin{prop}\label{prop:0_motives_field}
  Let $k$ be a field. The $t$-structure $t_{\MM,0}$ restricts to compact objects, and we have equivalences of $t$-categories
\[
\CR^0:(D(\Sh_\et(k,\BQ)),\std)\stackrel{\sim}{\lra} (\DA_0(k),t_{\MM,0})
\]
\[
\CR^0:(D^b(\Sh_{\et,c}(k,\BQ)),\std)\stackrel{\sim}{\lra} (\DA_{0,c}(k),t_{\MM,0}).
\]
\end{prop}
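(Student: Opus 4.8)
The plan is to assemble the chain of equivalences set up in the discussion above and to transport $t$-structures along it; once this is done, the statement is formal. First I would dispose of the case of a perfect field $k$. There the functor $\CR^{\eff,0}_\tr:D(\Sh_\et(k,\BQ))\to\DM^{\eff}_0(k)$ is already known to be an equivalence, so the issue is to see that it is $t$-exact when the source carries its standard $t$-structure and the target carries the homotopy $t$-structure restricted to $\DM^{\eff}_0(k)$. For this I would first check that the homotopy $t$-structure on $\DM^{\eff}(k)$ really does restrict to $\DM^{\eff}_0(k)$: the inclusion is $t$-positive because it sends the generators $M^{\eff,\tr}_k(Y)$, $Y/k$ finite \'etale, into generators, and it is $t$-negative because its left adjoint $L\pi_0$ is $t$-positive, which itself follows from $L\pi_0 M^{\eff,\tr}_k(X)\simeq M^{\eff,\tr}_k(\pi_0(X/k))$ for $X/k$ smooth. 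The restricted $t$-structure is then the $t$-structure generated by the $M^{\eff,\tr}_k(Y)$, and comparing hearts --- both equal to $\Sh_\et(k,\BQ)$ --- identifies it with the $t$-structure transported from the source. I would then transport further along the equivalences $\DM^{\eff}_0(k)\simeq\DM_0(k)\simeq\DA_0(k)$ of Lemma~\ref{lemm:DA_DM_n}, which are $t$-exact because they commute with the functors $f_\sharp$ and so respect the generating families defining $t_{\MM,0}$; this gives the first equivalence of $t$-categories for $k$ perfect.

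Next I would pass to compact objects, still over a perfect field. The standard $t$-structure on $D(\Sh_\et(k,\BQ))$ restricts to the bounded derived subcategory $D^b(\Sh_c(k,\BQ))$, and under the equivalence just obtained this subcategory corresponds to $\DM^{\eff}_{0,c}(k)\simeq\DA_{0,c}(k)$ by Lemma~\ref{lemm:DA_DM_n} together with the standard identification of the compact objects with bounded complexes of constructible sheaves. In particular $t_{\MM,0}$ restricts to compact objects; this is worth stressing, because a $t$-structure defined by generators and relations need not restrict to compacts, and here the property is only obtained a posteriori, through the equivalence of $t$-categories.

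Finally, for an arbitrary field $k$ with perfect closure $h:\Spec(k^\perf)\to\Spec(k)$, I would reduce to the perfect case using the commutative square relating $\CR^0(k)$ and $\CR^0(k^\perf)$ displayed above. Its bottom arrow is an equivalence of $t$-categories by the perfect-field case; its left vertical arrow is an equivalence of $t$-categories because the small \'etale sites of $k$ and $k^\perf$ are canonically identified via $h$; and its right vertical arrow is an equivalence by the separation property of $\DA(-)$ together with Corollary~\ref{cor:localisation_subcats}~\ref{rad_sub}, and is moreover $t$-exact, being inverse to $h_*$, which is $t$-exact by Proposition~\ref{prop:elementary_exactness}~\ref{exact_f_*}. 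Hence $\CR^0(k)$ is an equivalence of $t$-categories, and the analogous square with $D(\Sh_\et)$ replaced by $D^b(\Sh_c)$ and $\DA_0$ by $\DA_{0,c}$ yields the second assertion. The only step carrying genuine content is the perfect-field input --- the $t$-exactness of $\DM^{\eff}_0(k)\hookrightarrow\DM^{\eff}(k)$ coming from the behaviour of $L\pi_0$ on representable motives, and the resulting identification of the generated $t$-structure with a standard one, borrowed essentially from \cite{Orgogozo} and \cite{Ayoub_2-mot}; everything else is bookkeeping with adjunctions and the comparison equivalences already at our disposal.
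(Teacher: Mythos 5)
Your proposal follows the paper's own route essentially verbatim: the same perfect-field input (restriction of the homotopy $t$-structure to $\DM^{\eff}_0(k)$ via $t$-exactness of the inclusion, using $L\pi_0 M^{\eff,\tr}_k(X)\simeq M^{\eff,\tr}_k(\pi_0(X/k))$, identification with the Orgogozo equivalence, transport through Lemma~\ref{lemm:DA_DM_n}), the same passage to compact objects, and the same reduction of the general field to the perfect closure via the commutative square, separation, and the $t$-exactness of $h_*$. It is correct, with your ``comparing hearts'' remark playing the role of the step the paper dismisses as easy.
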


We now turn to the case of $1$-motives. Assume again momentarily that $k$ is a perfect field. By \cite[Lemma 1.4.4]{BVK}, for any commutative locally of finite type $k$-group scheme $G$, the sheaf represented by $G$ on $\Sm/k$ has a canonical structure of \'etale sheaf with transfers. Write $G^\tr$ for this sheaf with transfers, with $\underline{o}^\tr G^\tr\simeq G$. 

Applying this construction at the level of complexes, Orgogozo defines in \cite[3.3.2]{Orgogozo} a functor which we will denote by
\[
\CR^{\eff,\tr}_1: \CM_1(k)\ra \DM^{\eff}_c(k).
\]
The category $\CM_1(k)$ is in this situation an abelian category \cite[Lemme 3.2.2]{Orgogozo} and this functor can in fact be extended to a functor
\[
\CR^{\eff,\tr}_1: D^b(\CM_1(k))\ra \DM^{\eff}_c(k).
\]
This functor factors through $\DM^{\eff}_{1,c}(k)$ (denoted as $ d_1 \DM^{\eff}_\gm(k)$ in loc. cit.) and the resulting functor is then an equivalence of categories \cite[Theorem 3.4.1]{Orgogozo}. In particular, this provides a $t$-structure on $\DM^{\eff}_{1,c}(k)$, which we will denote by $t^{\mathrm{Or}}_1(k)$. By the equivalence between $\DM^{\eff}_{1,c}(k)$ and $\DA_{1,c}(k)$, we get a $t$-structure on $\DA_{1,c}(k)$ which we also denote by $t^{\mathrm{Or}}_1(k)$. Moreover, by comparing $\CR^{\eff,\tr}_1$ with $\Sigma^\infty$, we get that the functor
\[
\Sigma^\infty: D^b(\CM_1(k))\ra \DA_{1,c}(k)
  \]
  is an equivalence of $t$-categories. By \cite[Proposition~3.3.3]{Orgogozo} and \cite[Proposition~3.2.4]{Orgogozo}, we have the following computation of morphisms groups in $\DA_{1,c}(k)$.

\begin{prop}\label{prop:morphisms_field}
Let $k$ be a field, $M_1,M_2\in\CM_1(k)$ and $n\in \BZ$. Then
\begin{eqnarray*}
\DA(k)(\Sigma^\infty M_1,\Sigma^\infty M_2[n]) & \simeq & \Ext^n_{\CM_1(k)}(M_1,M_2) \\
& \simeq & 0,\ n \neq 0,1.
\end{eqnarray*} 
\end{prop}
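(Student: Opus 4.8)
The plan is to reduce the statement to its already-known counterpart for a perfect field, and there to invoke the comparison with Orgogozo's category. First I would record that by the comparison equivalence $\Sigma^\infty:(D^b(\CM_1(k)),\std)\xrightarrow{\sim}(\DA_{1,c}(k),t_{\MM,1})$ established just above (valid for an arbitrary field $k$), the group $\DA(k)(\Sigma^\infty M_1,\Sigma^\infty M_2[n])$ is canonically identified with $D^b(\CM_1(k))(M_1,M_2[n])$, which by the standard interpretation of Hom-groups in a bounded derived category is $\YExt^n_{\CM_1(k)}(M_1,M_2)$ (ordinary Yoneda Ext in the abelian category $\CM_1(k)$, since $M_1,M_2$ sit in degree $0$). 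This gives the first isomorphism for free; the work is entirely in the vanishing $\Ext^n_{\CM_1(k)}(M_1,M_2)=0$ for $n\neq 0,1$.

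For the vanishing, I would argue as follows. For a perfect field this is exactly \cite[Proposition~3.2.4]{Orgogozo} (together with \cite[Proposition~3.3.3]{Orgogozo}), which computes these Ext-groups and shows they are concentrated in degrees $0$ and $1$; so the proposition holds when $k$ is perfect. For a general field $k$, let $h:\Spec(k^\perf)\ra\Spec(k)$ be a perfect closure. The functor $h^*:\DA_1(k)\ra\DA_1(k^\perf)$ is an equivalence of $t$-categories (separation property of $\DA(-)$, Corollary~\ref{cor:localisation_subcats}~\ref{rad_sub}, and Proposition~\ref{prop:elementary_exactness}~\ref{exact_f_*}, as used in the preceding paragraphs), and it is compatible with $\Sigma^\infty$ via $R_h$ (Corollary~\ref{coro:pullback_complex_DA}) and with the equivalence $\CM_1(k)\simeq\CM_1(k^\perf)$ on hearts. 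Hence $\DA(k)(\Sigma^\infty M_1,\Sigma^\infty M_2[n])\simeq \DA(k^\perf)(\Sigma^\infty h^*M_1,\Sigma^\infty h^*M_2[n])$, and the perfect-field case applies. This yields the vanishing for all $k$, and combining with the first paragraph completes the proof.

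The only real subtlety — and the step I expect to be the main obstacle to write cleanly — is making the identification $\DA(k)(\Sigma^\infty M_1,\Sigma^\infty M_2[n])\simeq\Ext^n_{\CM_1(k)}(M_1,M_2)$ precise and checking that it is the \emph{same} identification under which Orgogozo's computation is stated, rather than merely an abstract isomorphism of groups. In the perfect case this is literally the content of \cite[\S 3]{Orgogozo} (his equivalence $D^b(\CM_1(k))\simeq\DM^{\eff}_{1,c}(k)$ and the transfer to $\DA$ via Lemma~\ref{lemm:DA_DM_n}), so it suffices to note that our $\Sigma^\infty:D^b(\CM_1(k))\ra\DA_{1,c}(k)$ is, modulo those equivalences, Orgogozo's functor $\CR^{\eff,\tr}_1$ — which was observed in the discussion preceding the proposition (the natural isomorphism $\Sigma^\infty_\tr\CR^{\eff,\tr}\simeq a_\tr\CR$). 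For general $k$ one then transports along $h^*$ as above. No further computation is needed; everything is a matter of assembling the comparison statements already in place.
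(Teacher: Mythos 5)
Your proposal is correct and follows essentially the same route as the paper, which gives no separate proof but states that the result "is deduced by the comparison results above from \cite[Proposition~3.3.3]{Orgogozo} and \cite[Proposition~3.2.4]{Orgogozo}" — i.e.\ exactly your combination of the equivalence of $t$-categories $\Sigma^\infty:(D^b(\CM_1(k)),\mathrm{std})\simeq(\DA_{1,c}(k),t_{\MM,1})$ (itself built via the perfect-closure argument and the identification with Orgogozo's functor $\CR^{\eff,\tr}_1$) with Orgogozo's Ext computations. Your closing remark about checking that the identification agrees with Orgogozo's is the right point of care, and it is handled precisely as you say by the compatibility $\Sigma^\infty_\tr\CR^{\eff,\tr}\simeq a_\tr\CR$ noted in the preceding discussion.
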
 

We can now show the following basic result.

\begin{prop}\label{prop:1_motives_field}
Let $k$ be a field and $k^\perf$ a perfect closure. The $t$-structure $t_{\MM,1}$ restricts to compact objects, and we have an equivalence of $t$-categories
\[
\Sigma^{\infty}:(D^b(\CM_1(k^{\perf})),\std)\lra (\DA_{1,c}(k),t_{\MM,1}).
\]
\end{prop}

\begin{proof}
  We first assume that $k$ is perfect. Let us show that the $t$-structure $t_{\MM,1}(k)$ on $\DA_1(k)$ restricts to $\DA_{1,c}(k)$, and that its restriction is $t^{\mathrm{Or}}_1(k)$. For this, it is enough to show that if $M\in \DA_{1,c}(k)$ is $t^{\mathrm{Or}}_1(k)$-positive (resp. negative), it is $t_{\MM,1}(k)$-positive (resp. negative). Using the equivalence $\Sigma^\infty$, it is clearly enough to show this for $M=\Sigma^\infty(\BM)$ with $\BM\in \CM_1(k)$. By construction of $t_{\MM,1}(k)=t(\Sigma^\infty(\CM_1(k)))$, we see that $M$ is $t_{\MM,1}(k)$-positive. It remains to show that $M$ is $t_{\MM,1}(k)$-negative, i.e., that for all $\BN\in \CM_1(k)$ and $k>0$, we have
\[
\DA(k)(\Sigma^\infty \BN[k],\Sigma^\infty \BM)=0.
\]
This is a special case of Proposition~\ref{prop:morphisms_field}.

Let now $k$ be a general field and $h:\Spec(k^\perf)\ra \Spec(k)$ be a perfect closure. The functor $h^*:(\DA_1(k),t_{\MM,1})\ra (\DA_1(k^\perf),t_{\MM,1})$ is an equivalence of $t$-categories by the separation property of $\DA(-)$, Corollary~\ref{cor:localisation_subcats}~\ref{rad_sub}, and Proposition~\ref{prop:elementary_exactness}~\ref{exact_f_*}. It then follows from the perfect case above that $t_{\MM,1}(k)$ restricts to compact objects.
\end{proof}

\subsection{Deligne $1$-motives and the heart}
\label{sec:morphisms}

In this section, we compute certain morphism groups between objects in $\DA_1(S)$ and $\DA^1(S)$ and deduce various properties of the motivic $t$-structure. 

The following theorem shows the advantage of the Deligne generating family: it lies in the heart of the motivic $t$-structure.

\begin{theo} \label{theo:gen_in_heart}
Let $S$ be a noetherian finite-dimensional excellent scheme. We have $\DG_S\subset \MM_1(S)$ (resp. $\DG_S(-1)\subset \MM^1(S)$).
\end{theo}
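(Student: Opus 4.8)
The plan is to reduce the statement to a morphism vanishing which, by the definition of a generated $t$-structure, is exactly what must be checked. By Corollary~\ref{cor:t_gens}, the motivic $t$-structure $t_{\MM,1}(S)$ is also the $t$-structure generated by $\DG_S$, and a generator $M$ of a generated $t$-structure lies in the heart if and only if $M\in \DA_1(S)_{\geq 0}$ (which is automatic, since $\DG_S\subset \ll\DG_S\gg_+$) and $M\in \DA_1(S)_{\leq 0}$, i.e. $\Hom_{\DA(S)}(N,M[-n])=0$ for all $n\geq 1$ and all $N\in\DG_S$. So the whole content is: for any two \'etale morphisms $e:U\ra S$, $e':U'\ra S$, any $\BM\in\CM_1(U)$, $\BM'\in\CM_1(U')$, and any $n\geq 1$,
\[
\DA(S)\bigl(e'_\sharp\Sigma^\infty\BM',\ e_\sharp\Sigma^\infty\BM\,[-n]\bigr)=0.
\]
The twisted statement $\DG_S(-1)\subset\MM^1(S)$ then follows immediately by applying the equivalence $(-)(-1)$ of Proposition~\ref{prop:hom_cohom_twists}, which identifies $t_{\MM,1}(S)$ with $t^1_{\MM}(S)$.

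First I would simplify the source generator. Using the adjunction $e'_\sharp\dashv e'^*$, the commutation $e'^*\Sigma^\infty\simeq\Sigma^\infty e'^*$ (Corollary~\ref{coro:pullback_complex_DA}) and the stability $e'^*\DG_S\subset\DG_{U'}$, the Hom group becomes $\DA(U')\bigl(\Sigma^\infty\BM',\ e'^*e_\sharp\Sigma^\infty\BM[-n]\bigr)$; by the $\Ex^*_\sharp$ isomorphism the object $e'^*e_\sharp\Sigma^\infty\BM$ is again of the form $\tilde e_\sharp\Sigma^\infty(\text{pullback of }\BM)$ for an \'etale $\tilde e$ over $U'$. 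Relabelling, I am reduced to computing $\DA(S)\bigl(\Sigma^\infty\BM',\ e_\sharp\Sigma^\infty\BM[-n]\bigr)$ with $\BM'\in\CM_1(S)$, $e:U\ra S$ \'etale, $\BM\in\CM_1(U)$, $n\geq 1$. Now I would run a noetherian induction on $S$, as in the proofs of Proposition~\ref{prop:generators_closed_imm} and Proposition~\ref{prop:generators_DA1}. Using Corollary~\ref{cor:localisation_subcats}~\ref{rad_sub} reduce to $S$ reduced. Pick a dense open $j:V\ra S$, with reduced closed complement $i:Z\ra S$, over which $e$ becomes nicer (e.g.\ $V$ normal and $e_V$ a disjoint union of finite \'etale pieces, so that over $V$ both Deligne motives reduce, up to the previous manipulations, to the Jacobian/lattice/torus generators $\JG$). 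Apply $\Hom(\Sigma^\infty\BM',-)$ to the localisation triangle $j_!j^*(e_\sharp\Sigma^\infty\BM)\ra e_\sharp\Sigma^\infty\BM\ra i_*i^*(e_\sharp\Sigma^\infty\BM)\rap$; by induction the $i_*i^*$-term contributes nothing (its restriction to $Z$ is again a Deligne generator over $Z$, and $\Sigma^\infty\BM'$ restricted to $Z$ is as well, so $\Hom$'s in degrees $\geq 1$ vanish by the induction hypothesis together with the fact that $i_*$ is fully faithful on its image); so it suffices to treat the open piece, i.e.\ I may assume $S$ normal and that $\BM,\BM'$ are built, via the weight filtration, out of lattices, tori, and relative Jacobians, with the source generator $e_\sharp$ absorbed into a Weil restriction using Lemma~\ref{lemm:Weil_pushforward} and Corollary~\ref{cor:motive_torus}, so $e=\id$.

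At this point the computation is local and geometric, and I would pass to the generic point. Using continuity for $\DA(-)$ (Proposition~\ref{prop:subcats_continuity} and the continuity of Hom groups from \cite[Proposition~3.19]{Ayoub_Etale}), the vanishing over all dense opens reduces to the vanishing over the generic point $\eta=\kappa(S)$; but over a field this is exactly Proposition~\ref{prop:morphisms_field}, which gives $\DA(k)(\Sigma^\infty M_1,\Sigma^\infty M_2[n])\simeq\Ext^n_{\CM_1(k)}(M_1,M_2)=0$ for $n\neq 0,1$, and vanishing in degree $n\geq 2$ is what we need (degree $1$ is allowed). Actually, to glue the generic-point vanishing back to $S$ I should argue more carefully: the relevant statement is that $\Sigma^\infty\BM$ restricted to small enough opens has no $\Hom$ from $\Sigma^\infty\BM'$ in degrees $\geq 1$, which combined with the induction on the complement proves the claim on $S$. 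The honest way to organise this is: prove by noetherian induction the statement ``$\DA(S)(\Sigma^\infty\BM',\Sigma^\infty\BM[-n])=0$ for $n\geq 1$'' for $\BM,\BM'\in\CM_1(S)$ (and the $e_\sharp$-version), with the inductive step handled by localisation plus continuity plus the field case.

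The main obstacle I expect is precisely making the localisation-plus-continuity reduction rigorous for \emph{Hom} groups between Deligne motives, rather than for objects of subcategories: the localisation triangle only directly controls $e_\sharp\Sigma^\infty\BM$ as the middle term, and one must check that restricting $\BM'$ and $\BM$ to $Z$ (resp. $V$) genuinely yields Deligne $1$-motives over $Z$ (resp. $V$) to which the induction hypothesis applies — this is where the flexibility of allowing arbitrary \'etale $e:U\ra S$ in the definition of $\DG_S$, and the base-change compatibilities for Deligne $1$-motives from Appendix~\ref{sec:app_deligne}, are used. A secondary subtlety is the treatment of the weight-graded pieces: for the abelian-scheme part one needs that over a normal (indeed regular, after a further $h$-descent as in the proof of Proposition~\ref{prop:omega_0_gr}) base the motive $\Sigma^\infty A_\BQ$ is a direct factor of a relative Jacobian's motive, reducing to the curve case; and the extension arguments gluing the graded pieces back together need the vanishing in the right degrees, which is why only degrees $\geq 1$ (not $\geq 0$) can be killed. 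Once the field case (Proposition~\ref{prop:morphisms_field}) is in hand, everything else is bookkeeping with triangles, adjunctions and continuity.
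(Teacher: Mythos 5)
Your initial reduction is fine: since $t_{\MM,1}=t(\DG_S)$ by Corollary~\ref{cor:t_gens}, a Deligne generator is automatically in $\DA_1(S)_{\geq 0}$, and it lies in the heart if and only if $\DA(S)(e'_\sharp\Sigma^\infty\BM',e_\sharp\Sigma^\infty\BM[-n])=0$ for all $n\geq 1$; the twist to $t^1_{\MM}$ is also immediate. The gap is in the inductive step, which is where essentially all of the actual proof lives. After applying $\Hom(\Sigma^\infty\BM',-)$ to the localisation triangle $j_!j^*(e_\sharp\Sigma^\infty\BM)\ra e_\sharp\Sigma^\infty\BM\ra i_*i^*(e_\sharp\Sigma^\infty\BM)\rap$, the closed term is indeed handled by induction (via the $(i^*,i_*)$ adjunction and base change), but the open term is of the form $\DA(S)(\Sigma^\infty\BM',j_!X)$, and since $j_!$ is a \emph{left} adjoint this Hom group does not reduce to a Hom group over $U$; neither the induction hypothesis nor continuity applies to it. Continuity only says that the colimit of these groups over shrinking opens computes the Hom at the generic point, and vanishing of a colimit does not give vanishing of any individual term, so "pass to the generic point and invoke Proposition~\ref{prop:morphisms_field}" does not close the argument. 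The alternative of using colocalisation (so that the open term becomes $\Hom(j^*\Sigma^\infty\BM',j^*(\cdots))$ by the $(j^*,j_*)$ adjunction) trades this for a term involving $i^!$ of a Deligne motive, which over a general noetherian base cannot be computed without absolute purity, i.e.\ regularity hypotheses the theorem does not assume (this is exactly why the later Theorem~\ref{theo:deligne_full}, which does compute Homs between Deligne motives, is restricted to regular $S$ and uses $\omega^0$, $\omega^1$ and purity).

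The paper sidesteps this by testing $t$-negativity against the \emph{curve} generators $\CCG_S$ rather than against $\DG_S$: via the $(\pi_\sharp,\pi^*)$ adjunction for the smooth curve $\pi:C\ra S$, every Hom group becomes a motivic cohomology group in weight $0$ or $1$, where Appendix~\ref{sec:app_mot_coh} gives the needed vanishing. The problematic $j_!$ term is then handled not by adjunction but by \emph{extending the target across} $S$: after reductions, the Deligne motive over the dense open is $\BQ_V$, $\BQ_V(1)$, or (in the abelian case, after reducing to a Jacobian) a direct factor of $f_!\BQ_D(1)[1]$ for a relative curve $D$, which is extended by $\BQ_S$, $\BQ_S(1)$, or via a Nagata compactification $\overline{D}$; localisation then runs in the direction where the closed part is covered by the induction and the global part is a motivic cohomology computation. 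There is also a genuinely delicate boundary case your sketch has no counterpart for: in the abelian-scheme case the long exact sequence only gives the vanishing $(\CV^\bot_n)$ for $n<-1$ directly, and the missing case $n=-1$ requires proving injectivity of $\CO^\times(\pi_0(C_{\overline D}/\overline D))_\BQ\ra\CO^\times(C_{\overline D})_\BQ$ after an $h$-descent to a regular model. So while your reformulation of what must be proved is correct, the claim that the rest is "bookkeeping with triangles, adjunctions and continuity" is where the proof is missing: the $j_!$ obstruction, the choice of test family, and the $n=-1$ injectivity are the substance of the paper's argument.
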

\begin{proof}
The generators $\DG_{S}$ (resp. $\DG_{S}(-1)$) are $t$-positive by definition, it remains to show that they are $t$-negative.

Using the generating family $\JG_S$ (Corollary \ref{cor:t_gens}), this translates into the following vanishing statement. Let $S$ be a noetherian finite-dimensional scheme. Let $e:U\ra S$ be an \'etale morphism, and $N=e_{\sharp}\Sigma^\infty K\otimes\BQ$ be a Jacobian generator. Let $P=f_!\Sigma^\infty\BM\in \DG_S$ (i.e., $f:V\ra S$ \'etale, $\BM\in \CM_1(V)$). Then we show, for all $n<0$, that
\[
\DA(S)(N,P[n])=0\tag{$\CV_n(P)$}
\]
In the resp. case, $\BM$ is a successive extension of pure Deligne $1$-motives, so that we can assume that $\BM$ is pure.

By the $(e_{\sharp},e^*)$ adjunction and Proposition~\ref{prop:pullback_complex}, we can assume that $e=\id$. By localisation and Proposition~\ref{prop:pullback_complex}, we can assume that $S$ is reduced. By Zariski's main theorem, there exists a factorisation $f=\bar{f}\circ j$ with $\bar{f}:\overline{V}\ra S$ finite and $j:V\ra \overline{V}$ an everywhere dense open immersion; we can assume $\overline{V}$ is reduced as well. Combining this with the $(\bar{f}^*,\bar{f}_*)$ adjunction, and Proposition~\ref{prop:pullback_complex}, we see that we can assume $f=j$ is an everywhere dense open immersion. We write $i:Z\ra S$ for the complementary reduced closed immersion. 

We want to prove $(\CV_n(P))$ by induction on the dimension of $S$. In each case, to treat the case of $\dim(S)=0$, we reduce immediately to the case of $\Spec(k)$ for $k$ a field and apply Proposition~\ref{prop:morphisms_field}. We are thus left with the induction step.

First, we do a general reduction. Let $l:W\ra S$ an everywhere dense open immersion with $W\subset V$ and $k:Y\ra S$ the complementary reduced closed immersion. Then by localisation we have exact sequences
\[
\DA(S)(N,l_!l^* P[n])\ra \DA(S)(N,P[n]) \ra \DA(N,k_* k^* P[n])
\]
and in both cases the right term vanishes for $n<0$ by adjunction and the induction hypothesis (since $\dim(Z)<\dim(S)$). This means we can replace $P$ with 
\[
l_! l^*P\simeq l_! l^* j_!\Sigma^\infty \BM\simeq (W\ra S)_!(W\ra V)^* \Sigma^\infty \BM\simeq (W\ra S)_!\Sigma^\infty \BM_W
\]
where we have used the $\Ex_!^*$ isomorphism and Corollary~\ref{coro:pullback_complex_DA}. In other words, we can replace the dense open subscheme $V$ by any smaller dense open $W$.

There are three types of Deligne generators and three types of Jacobian generators, which lead to a distinction in nine cases. To lighten the notation, we index them by weights: for instance, the case where $\BM=[L\ra 0]$ and $K=\Gm$ will be labelled $(0,-2)$.

\underline{Cases $(0,*)$:}

Let $\BM$ be $[L\ra 0]\otimes\BQ$ with $L$ a lattice on $V$.

Replacing $V$ by a smaller open, we can assume $V$ to be normal (since $V$ is reduced and excellent). This allows us by Lemma~\ref{lemm:permutation_torus} to write $\Sigma^\infty\BM$ as a direct factor of $e_* \BQ$ for a finite \'etale morphism $e:T\ra V$. Applying Zariski's main theorem to the morphism $j\circ e:T\ra S$ and adjunction, we reduce to the case $P=\BQ_V$. The motive $P$ then extends to a motive on $S$, namely $\BQ_S$. By localisation, we have an exact sequence
\[
\DA(S)(N,i_*\BQ[n-1])\ra \DA(S)(N,j_!\BQ[n])\ra \DA(S)(N,\BQ[n])
\]
and the left term vanishes for $n<0$ by adjunction and induction on the dimension. This means we can assume $V=S$.

If we are in case $(0,0)$ (resp. $(0,-2)$), then we have $N=\BQ_S$ (resp. $N=\BQ_S(1)$). By adjunction and  Proposition~\ref{prop:mot_coh_0} \ref{mot_coh_0_<} (resp. Proposition~\ref{prop:mot_coh_<} ), we get $\DA(S)(N,\BQ[n])=0$ for $n<0$.

It remains to treat the case $(0,-1)$. Let $C\rightarrow S$ be a smooth projective with geometrically connected fibres and a section $\sigma$. We have $N=\Sigma^\infty \Jac(C/S)_\BQ[-1]$, which by Corollary~\ref{coro:computation_curve} is a direct factor of $M_S(C)[-1]$. By adjunction, we thus have that $\DA(S)(N,\BQ_S[n])$ is a direct factor of $\DA(C)(\BQ_C,\BQ_C[n+1])$. For $n<-1$, this group vanishes by Proposition~\ref{prop:mot_coh_0}\ref{mot_coh_0_<}. For $n=0$, we apply Proposition~\ref{prop:mot_coh_0} \ref{mot_coh_0_0} and get $\BQ^{\pi_0(S)}\simeq \DA(S)(\BQ_S,\BQ_S)\rightarrow \DA(C)(\BQ_C,\BQ_C)\simeq \BQ^{\pi_0(C)}$. The map on $\pi_0$ is an isomorphism since $C$ has geometrically connected fibres. This shows that the constribution of the direct factor $\Sigma^\infty \Jac(C/S)_\BQ[-1]$ is $0$, and proves the case $n=-1$. This finishes the treatment of the cases $(0,*)$.

\underline{Cases $(-2,*)$:}

Let now $\BM$ be of the form $[0\ra T]\otimes\BQ$ with $T$ a torus on $V$. As in the proof for a lattice, we can replace the dense open $V$ by a smaller dense open normal subscheme, thus to a permutation torus using Lemma \ref{lemm:permutation_torus}, then finally to $T=\Gm$. Then $\Sigma^\infty\BM \simeq \BQ_V(1)$ extends to a motive on $S$, namely $\BQ_S(1)$. By localisation, we have an exact sequence
\[
\DA(S)(N,i_*\BQ(1)[n-1])\ra \DA(S)(N,j_!\BQ(1)[n])\ra \DA(S)(N,\BQ(1)[n])
\]
and the left term vanishes for $n<0$ by adjunction and induction. This means we can assume $V=S$.

If we are in case $(0,0)$ (resp. $(0,-2)$), then we have $N=\BQ_S$ (resp. $N=\BQ_S(1)$). By adjunction and Proposition~\ref{prop:mot_coh_1}~\ref{mot_coh_1_leq} (resp. Proposition~\ref{prop:mot_coh_0} \ref{mot_coh_0_<}), we get $\DA(S)(N,\BQ(1)[n])=0$ for $n<0$.

It remains to treat the case $(0,-1)$. Let $C\rightarrow S$ be a smooth projective with geometrically connected fibres and a section $\sigma$. We have $N=\Sigma^\infty \Jac(C/S)_\BQ[-1]$, which by Corollary~\ref{coro:computation_curve} is a direct factor of $M_S(C)[-1]$. By adjunction, we thus have that $\DA(S)(N,\BQ_S(1)[n])$ is a direct factor of $\DA(C)(\BQ_C,\BQ_C(1)[n+1])$. For all $n<0$, this group vanishes by Proposition~\ref{prop:mot_coh_1}\ref{mot_coh_1_leq}. 

\underline{Cases $(-1,*)$:}

Let $\BM$ finally be of the form $[0\ra A]\otimes\BQ$ with $A$ an abelian scheme on $V$. As in the two previous cases, we can replace the dense open $V$ by any smaller dense open. Using \cite[Theorem~11]{Katz_SpaceFill} and continuity, this lets us assume that there exists a smooth projective curve $f:D\ra V $ with geometrically connected fibres together with a section $s:V\ra D$ such that the $\Sigma^\infty [0\ra A]$ is a direct factor of $\Sigma^\infty [0\ra \Jac(D/V)]$. In the following, we replace $A$ by $\Jac(D/V)$.

Unlike in the two previous cases, we cannot ensure that the curve $D$ extends to a smooth projective curve over $S$, so we have to work around this. From Corollary \ref{coro:computation_curve}, we have an isomorphism $f_\sharp \BQ_D \simeq \BQ_V\oplus \Sigma^\infty \Jac(D/V)_\BQ\oplus \BQ_V(1)[2]$; hence $\Sigma^\infty \BM\simeq \Sigma^\infty \Jac(D/V)_\BQ[-1]$ is a direct factor of $f_\sharp\BQ_D[-1]$. By relative purity, we have $f_\sharp\BQ_D[-1]\simeq f_!\BQ_D(1)[1]$.

We apply Nagata's theorem \cite{Nagata_comp} \cite{Conrad_Nagata} to compactify $f$ over $S$: there exists an open immersion $\bar{\jmath}:D\ra \overline{D}$ and a proper morphism $\bar{f}:\overline{D}\ra S$ with $j\circ f=\bar{f}\circ\bar{\jmath}$. Write $\bar{\imath}:Y\ra \overline{D}$ for the complementary closed immersion; note that because $f$ was proper over $V$, we can choose the compactification $\overline{D}$ so that $Y$ lies entirely over $Z$, and we have a commutative diagram with cartesian squares.
\[
\xymatrix{
  D \ar[d]_{f} \ar[r]_{\bar{\jmath}} & \overline{D} \ar[d]_{\bar{f}} & Y \ar[l]^{\bar{\imath}} \ar[d]\\
  V \ar[r]_{j} & S & Z \ar[l]^{i}
  }
\]
This implies that $j_! f_!\simeq \bar{f}_!\bar{\jmath}_!\simeq \bar{f}_*\bar{\jmath}_!$; hence $j_! f_!\BQ_D(1)[1]\simeq \bar{f}_*\bar{\jmath}_!\BQ_D(1)[1]$. The motive $\bar{\jmath}_!\BQ_D(1)[1]$ extends to a motive on $\overline{D}$, namely $\BQ_{\overline{D}}(1)[1]$. By localisation, we have an exact sequence
\[
  \xymatrix@C=1em{
    \DA(\overline{D})(\bar{f}^*N,\bar{\imath}_*\BQ(1)[n]) \ar[r] & \DA(\overline{D})(\bar{f}^*N,\bar{\jmath}_!\BQ(1)[n+1])  \ar[r] & \DA(\overline{D})(\bar{f}^*N,\BQ(1)[n+1]) \ar[d] \\
    & &  \DA(\overline{D})(\bar{f}^*N,\bar{\imath}_*\BQ(1)[n+1])
    }
  \]
  The left term is isomorphic to $\DA(Y)((\bar{f}\bar{\imath})^*N, \BQ(1)[n])$. Since $(\bar{f}\bar{\imath})^*N$ is a Jacobian generator on $Y$, the vanishing of this group for $n<0$ was proved in Case $(-2,*)$. Similarly, for $n<-1$, the right term vanishes by Cases $(-2,*)$. We can thus assume $n=-1$ in the end of the proof, so that we are looking at the exact sequence
  \[0\ra \DA(\overline{D})(\bar{f}^*N,\bar{\jmath}_!\BQ(1))  \ra \DA(\overline{D})(\bar{f}^*N,\BQ(1)) \ra \DA(\overline{D})(\bar{f}^*N,\bar{\imath}_*\BQ(1))\]
  and we need to show that the direct factor of leftmost term corresponding to the direct factor $\Sigma^\infty \Jac(D/V)_\BQ[-1]$ of $f_{\sharp}\BQ_{D}[-1]$ vanishes. In fact, in cases $(-1,0)$ and $(-1,1)$, the whole of the leftmost term vanishes, as we'll see below.
  
  If we are in case $(-1,0)$, we have $N=\BQ_S$, hence $\bar{f}^*N=\BQ_{\overline{D}}$, and the group $\DA(\overline{D})(\BQ,\BQ(1))$ vanishes by Proposition~\ref{prop:mot_coh_1}\ref{mot_coh_1_leq}. This concludes the proof for $(-1,0)$.

  If we are in case $(-1,-1)$, we have $N=\Sigma^\infty \Jac(C/S)\otimes\BQ[-1]$ with $C\rightarrow S$ a smooth projective with geometrically connected fibres and a section $\sigma$. Hence $\bar{f}^* N\simeq \Sigma^\infty \Jac(C\times_S \overline{D}/\overline{D})\otimes\BQ[-1])$. The morphism group $\DA(\overline{D})(\Sigma^\infty \Jac(C\times_S \overline{D}/\overline{D})\otimes\BQ[-1],\BQ(1))$ vanishes by Lemma~\ref{lem:mor_ab_tor} below; this concludes the proof for $(-1,-1)$.
  
  If we are in case $(-1,2)$, we have $N=\BQ_S(1)$, hence $\bar{f}^*N=\BQ_{\overline{D}}(1)$. We have \[\BQ^{\pi_0(\overline{D})}\simeq\DA(\overline{D})(\BQ_{\overline{D}}(1),\BQ(1)) \ra \DA(\overline{D})(\BQ_{\overline{D}}(1),\bar{\imath}_*\BQ(1))\simeq \BQ^{\pi_0(Y)}\] by Proposition~\ref{prop:mot_coh_0}~\ref{mot_coh_0_0}, hence
  \[
\DA(\overline{D})(\bar{f}^*N,\bar{\jmath}_!\BQ(1))\simeq \Ker(\BQ^{\pi_0(\overline{D})}\rightarrow \BQ^{\pi_0(Y)}).
    \]
    On the other hand, we have, by the same argument
    \[
      \DA(S)(N,j_!\BQ(1))\simeq \Ker(\BQ^{\pi_0(S)}\rightarrow \BQ^{\pi_0(Z)}).
    \]
    Since $Y\simeq \overline{D}\times_S Z$, we have $\pi_0(Y)\simeq \pi_0(\overline{D})\times_{\pi_0(S)}\pi_0(Z)$ (in fact, since $S$ is normal and $f$ has geometrically connected fibers, Zariski's connectedness theorem implies that $\bar{f}$ has geometrically connected fibers and $\pi_{0}(\overline{D})\simeq \pi_{0}(S)$, but we do not need this). This implies that the map
    \[
\DA(\overline{D})(\bar{f}^*N,\bar{\jmath}_!\BQ(1))\ra \DA(S)(N,j_!\BQ(1))
    \]
    is an isomorphism. By looking at the direct factor decomposition of $f_\sharp \BQ_D$, we conclude that
    \[
\DA(S)(N,j_!\Sigma^\infty \Jac(D/V)_\BQ[-2])=0
\]
which finishes the proof of the case $(-1,-2)$.
\end{proof}

\begin{lemma}\label{lem:mor_ab_tor}
  Let $S$ be a noetherian finite-dimensional scheme. Let $A$ be an abelian scheme and $T$ be a torus. Let $n\leq 0$. Then
  \[
\DA(S)(\Sigma^\infty A\otimes\BQ,\Sigma^\infty (T\otimes\BQ)[n])=0
    \]
  \end{lemma}
  \begin{proof}
Let $\pi_\bullet:S_\bullet\ra S$ be a $h$-hypercovering. Write $A_p$ (resp. $T_p$) for $A\times_S S_p$ (resp. $T\times_S S_p$). We have a descent spectral sequence
\[
 E_1^{p,q}=\DA(S_p)(\Sigma^\infty A_p\otimes\BQ,\Sigma^\infty T_p\otimes\BQ[q])\Rightarrow \DA(S)(\Sigma^\infty A\otimes\BQ,\Sigma^\infty T\otimes\BQ[p+q]).
\]
Let $\theta:A\ra S$ be the structure morphism of $A$. The motive $\Sigma^\infty A\otimes\BQ$ is a direct factor of $\theta_\sharp\BQ_A$, functorially in $S$. We see that
\[\DA(S_p)(\Sigma^\infty A_p\otimes\BQ,\Sigma^\infty (T_p\otimes\BQ)[n])\]
is a direct factor of   
\[\DA(A_{S_p})(\BQ_{A_p},\Sigma^\infty (T_p\times_{S_p} A_p)\otimes\BQ[n]).\]
We apply the previous spectral sequence for $\pi_\bullet$ the Cech covering associated to an \'etale covering trivializing $T$; since $\DA(A_{S_p})(\BQ_{A_p},\BQ(1)[1][n])$ vanishes for $n< 0$ by Proposition~\ref{prop:mot_coh_1}~\ref{mot_coh_1_leq}, the corresponding spectral sequence converges and we have $\DA(S)(\Sigma^\infty A\otimes\BQ,\Sigma^\infty T\otimes\BQ[n])=0$ for $n<0$.

It remains to treat the case $n=0$ with $T=\Gm$. We apply the previous spectral sequence for $\pi_\bullet$ the Cech covering associated to an affine Zariski cover of $S$; by the previous paragraph, this spectral sequence converges, so that we are reduced to the case when $S=\Spec(R)$ is affine. By a continuity argument, we reduce to the case where $R$ is of finite type of a Dedekind ring, in particular satisfying resolution of singularities by alterations. Then $S$ admits an $h$-hypercovering $\pi:S_\bullet\ra S$ with regular terms. By the descent spectral sequence, which again converges by the previous paragraph, it is then enough to show the result for $S$ regular, $n=0$ and $T=\Gm$.

Again, we write $\Sigma^\infty A\otimes\BQ$ as a direct factor of $\theta_\sharp \BQ_A$. Since $S$ and $A$ are regular, Proposition~\ref{prop:mot_coh_1}~\ref{mot_coh_1_1} implies that 
\[
\DA(S)(\BQ,\BQ(1)[1])\simeq \CO^\times(S)\otimes\BQ
  \]
  and
  \[
\DA(A)(\BQ,\BQ(1)[1])\simeq \CO^\times(A)\otimes\BQ.
    \]
Since the induced morphism $\theta$ is proper with geometrically connected fibres, the map $\CO^\times(S)\otimes\BQ\ra \CO^\times(A)\otimes\BQ$ is an isomorphism. This implies that 
\[
\DA(S)(\Sigma^\infty A\otimes\BQ,\BQ(1)[1])=0
\]
and concludes the proof.
\end{proof}

\begin{cor}\label{cor:motive_group_heart}
Let $S$ be a noetherian finite-dimensional excellent scheme. Let $G$ be a smooth commutative group scheme with connected fibres. Then the motive $\Sigma^\infty G_{\BQ}[-1]$ lies in $\MM_1(S)$.  
\end{cor}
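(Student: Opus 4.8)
The plan is to reduce the statement about a smooth connected group scheme $G$ to a statement about Deligne $1$-motives, which we already know lie in $\MM_1(S)$ by Theorem~\ref{theo:gen_in_heart}. The key observation is that $\Sigma^\infty G[-1]$ is a two-step truncation of the Deligne $1$-motive $[0\ra G^{\mathrm{ab}}]$ after d\'evissage along the weight filtration, but since $G$ is not assumed to be an extension of an abelian scheme by a semi-abelian scheme over a general base, one cannot literally present $G$ as (the second term of) a Deligne $1$-motive. Instead, I would argue pointwise and by localisation, exploiting that $\MM_1(S)$, being the heart of a generated $t$-structure, is detected by a punctual criterion together with spreading out.

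First I would recall that $\Sigma^\infty G_\BQ$ lies in $\DA_{1,c}(S)$ by Proposition~\ref{prop:smooth_complex_compact}, so $M:=\Sigma^\infty G_\BQ[-1]$ is a compact object of $\DA_1(S)$; it therefore makes sense to ask whether it is in the heart. By Proposition~\ref{prop:generators_DA1}~\ref{jg_dg} the family $\DG_S$ generates the $t$-structure, so $M\in\MM_1(S)$ amounts to showing that $M$ is $t$-positive (which will follow from the distinguished triangle relating $M$ to $\Sigma^\infty$ of a Deligne $1$-motive, or directly from the fact that over a field $G[-1]$ is a $1$-motive) and $t$-negative, i.e. $\DA(S)(P[n],M)=0$ for all $P\in\DG_S$ and all $n<0$ as well as $\DA(S)(P,M[m])=0$ for $m<0$. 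By the usual $e_\sharp\dashv e^*$ adjunction and the stability of $\DG$ under pullback (Lemma~\ref{lem:gen_ops}), combined with localisation and Proposition~\ref{prop:pullback_complex}, I would reduce the needed vanishings to the case where the base is reduced and eventually, via continuity (Proposition~\ref{prop:subcats_continuity}) and the separation property, to the case where $S$ is the spectrum of a field $k$ — in fact a perfect field after a further purely inseparable descent.

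Over a perfect field $k$, a smooth connected commutative group scheme $G$ \emph{does} sit canonically as the connected part of a Deligne $1$-motive: indeed $G$ is an extension of an abelian variety by a semi-abelian group, so $[0\ra G]\otimes\BQ\in\CM_1(k)$ and $\Sigma^\infty G_\BQ[-1]=\Sigma^\infty([0\ra G]\otimes\BQ)$. Hence over $k$ perfect the statement is precisely a special case of Theorem~\ref{theo:gen_in_heart} (or of the equivalence $D^b(\CM_1(k))\simeq \DA_{1,c}(k)$ from the previous section). The vanishing statements then follow from Proposition~\ref{prop:morphisms_field}, which computes $\Hom$-groups between motives of Deligne $1$-motives and shows they are concentrated in degrees $0$ and $1$. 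One then propagates back up: the $t$-negativity over a field, combined with the localisation long exact sequences and noetherian induction on $\dim(S)$ (using that $i_*$ preserves $\DG$-generated subcategories by Proposition~\ref{prop:generators_closed_imm}), gives the result over a general base.

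The main obstacle is the spreading-out/localisation bookkeeping: one must be careful that the Deligne $1$-motive structure on $G$, which only exists after restriction to a point (or to a sufficiently small base over which $G$ becomes an extension of an abelian scheme by a torus and a unipotent part), can be handled by the same dense-open replacement arguments used in the proof of Theorem~\ref{theo:gen_in_heart}. Concretely, to check $t$-negativity of $M$ against a generator $P=f_!\Sigma^\infty\BM$ one restricts $P$ to smaller and smaller dense opens, using that over a dense open of a reduced base $G$ itself fibers into the standard pieces, and then invokes the field case. Since $G$ need not extend nicely, the cleanest route is probably to avoid extending $G$ altogether and instead only ever restrict: test $M\in\MM_1(S)$ punctually, noting that $s^*M = \Sigma^\infty G_s[-1]$ is in $\MM_1(s)$ for each point $s$, and then upgrade from ``in the heart fiberwise'' to ``in the heart'' using that the truncation functors of $t_{\MM,1}$ commute with the relevant pullbacks on compact objects — this last commutation is the delicate point and would need the exactness results of Proposition~\ref{prop:elementary_exactness} together with a noetherian induction. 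I expect this gluing step, rather than the field case, to be where the real work lies.
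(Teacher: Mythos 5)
Your reduction to the field case is fine in spirit, but the step you yourself flag as ``where the real work lies'' is a genuine gap, and it is exactly the step the paper avoids rather than solves. There is no punctual criterion for membership in $\MM_1(S)$: the $t$-structure $t_{\MM,1}(S)$ is a generated $t$-structure, the paper explicitly says it does not know how to glue the fiberwise $t$-structures, and Proposition~\ref{prop:elementary_exactness} only gives that $f^*$ is $t$-positive for a general morphism (it is $t$-exact only for \'etale $f$, and $f_*$ only for finite $f$). So ``$s^*M\in\MM_1(s)$ for all $s$ implies $M\in\MM_1(S)$'' is not available, and your proposal leaves it open. The actual proof never tests the heart fiberwise: after reducing by noetherian induction and localisation (using that $i_*$ is $t$-exact for closed immersions and that the heart is extension-closed) to producing a dense open $j:U\ra S$ with $j_!\Sigma^\infty G_U[-1]\in\MM_1(S)$, it replaces the \emph{motive} of $G$ generically by the motive of a semi-abelian scheme: over the perfection of the generic point one kills the unipotent part (whose rational motive is trivial), pushes forward along the purely inseparable extension and uses separation to get $\Sigma^\infty G_{\eta}\simeq \Sigma^\infty H'_{\BQ}$ with $H'$ semi-abelian over $\eta$, and then spreads this isomorphism out over a dense open $U$. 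At that point $j_!\Sigma^\infty H'_{\BQ}[-1]=j_\sharp\Sigma^\infty([0\ra H']\otimes\BQ)$ is literally a Deligne generator, so Theorem~\ref{theo:gen_in_heart} applies directly; no commutation of truncations with pullback is ever needed.

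Two further inaccuracies in your field-case discussion: over a perfect field a smooth connected commutative group is an extension of an abelian variety by a product of a torus and a \emph{unipotent} group, so $[0\ra G]$ is in general not a Deligne $1$-motive; one must first pass to $H=G/U$, which is harmless only because $\Sigma^\infty U_\BQ$ is trivial rationally. And your other suggested route --- proving the generator-vanishings $\DA(S)(P[n],\Sigma^\infty G[-1])=0$ directly by ``continuity plus localisation plus noetherian induction'' --- would amount to redoing the hard part of Theorem~\ref{theo:gen_in_heart} for a group scheme whose standard d\'evissage does not spread out over the base, whereas the generic replacement by $H'$ lets you quote that theorem instead of reproving it.
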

\begin{proof}
By noetherian induction and localisation, we can assume that $S$ is reduced and it is enough to show that there exists $j:U\ra S$ a dense open immersion such that $j_!\Sigma^\infty G_U[-1]$ is in $\MM_1(S)$. We can also assume $S$ to be irreducible; let $\eta$ be its generic point and $h:\eta^{\perf}\ra \eta$ a perfect closure. Then $G_{\bar{\eta}}$ is a smooth commutative connected algebraic group over a perfect field, hence there exists an exact sequence
\[
0\ra U\ra G_{\eta^\perf}\ra H\ra 0
\] 
with $U$ unipotent connected and $H$ a semi-abelian variety. Since $\eta^\perf$ is perfect, the motive $\Sigma^\infty U\otimes\BQ$ is trivial (apply \cite[Lemma 7.4.5]{AEH} to a composition series of $U$), thus the morphism $h^*\Sigma^\infty G_{\BQ}\simeq \Sigma^\infty G_{\bar{\eta},\BQ}\ra \Sigma^\infty H_{\BQ}$ is an isomorphism.

By Lemma~\ref{lem:semiab_insep}, there is an abelian variety $H'$ over $\eta$ such that \[\Sigma^\infty(H'_{\eta^{\perf}}\otimes\BQ)\simeq \Sigma^\infty (H\otimes\BQ).\] Appplying the separation property of $\DA(-)$, we get an isomorphism $\Sigma^\infty G_{\eta,\BQ}\simeq \Sigma^\infty H'_{\BQ}$. By continuity, we can arrange for such an isomorphism to hold over a dense open set $U$ of $S$. We then have $j_!\Sigma^\infty G_{U,\BQ}[-1]\simeq j_!\Sigma^\infty H'_{\BQ}[-1]$ and this last motive is in $\MM_1(S)$ by Theorem~\ref{theo:gen_in_heart}.
\end{proof}

\begin{lemma}\label{lem:semiab_insep}
Let $l/k$ be a purely inseparable field extension, and $H$ a semi-abelian variety over $l$. Then there exists a semi-abelian variety $H'$ over $k$ such that $\Sigma^\infty((H'_{l})\otimes\BQ)\simeq \Sigma^\infty(H\otimes\BQ)$ in $\DA(l)$.
\end{lemma}

\begin{proof}
  We can clearly assume $\charact(k)=p>0$. By \cite[Lemma 3.10]{Brion_isogeny}, there exists a (smooth) commutative algebraic group $G'$ over $k$ and an epimorphism $f:H\ra G'_l$ such that $\Ker(f)$ is infinitesimal (in particular, killed by a power of $p$). By \cite[Corollary 2.13]{Brion_isogeny}, which applies over any field of positive characteristic, there exists an epimorphism of commutative algebraic $k$-groups $g:G'\ra H'\times U'$ with $H'$ semi-abelian, $U'$ split unipotent and $\Ker(g)$ finite (in particular, killed after tensoring by $\BQ$). We deduce that
  \[
H\otimes \BQ\simeq (H'\times_k U')_l\otimes\BQ.
    \]
    But the motive $\Sigma^\infty U'_l\otimes\BQ$ is trivial since $U'$ is split unipotent (apply \cite[Lemma 7.4.5]{AEH} to a composition series). We deduce that
    \[
\Sigma^\infty (H\otimes \BQ)\simeq \Sigma^\infty (H'_l\otimes\BQ).
      \]
\end{proof}

In the course of the proof of Theorem \ref{theo:gen_in_heart} (in the lattice case), we also established the vanishing statements necessary to prove the following lemma.
\begin{lemma}\label{lemm:genn0_in_heart}
Let $S$ be a noetherian finite-dimensional scheme. Let $e:U\ra S$ be an \'etale morphism. Then $e_\sharp \BQ\in \MM_0(S)$.
\end{lemma}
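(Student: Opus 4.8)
The statement to prove is Lemma~\ref{lemm:genn0_in_heart}: for $e:U\to S$ \'etale, the motive $e_\sharp\BQ\in\MM_0(S)$. Since $\MM_0(S)$ is the heart of $t_{\MM,0}$, which is the $t$-structure \emph{generated} by the very family $\{e_\sharp\BQ\mid e:U\to S\text{ \'etale}\}$, the object $e_\sharp\BQ$ is tautologically $t$-positive. The content is therefore to prove $t$-negativity, i.e.\ the vanishing
\[
\DA(S)(f_\sharp\BQ_V,\ e_\sharp\BQ[n])=0\qquad\text{for all }f:V\to S\text{ \'etale and all }n<0.
\]
The key observation, which I would highlight, is that this is precisely the family of vanishing statements established \emph{en passant} in the lattice case of the proof of Theorem~\ref{theo:gen_in_heart}. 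Indeed, in that proof one reduces (via localisation, Proposition~\ref{prop:pullback_complex}, Zariski's main theorem and the $\Ex^*_!$ isomorphism) to $\BM=[L\to 0]$ with $V=S$, $L$ a lattice, and then further to the case $P=\BQ_S$; the exact sequences produced there, combined with adjunction and Proposition~\ref{prop:mot_coh_0}~\ref{mot_coh_0_<} and \ref{mot_coh_0_pullback}, yield exactly $\DA(S)(M_S(C),\BQ[n])=0$ for $n<0$, which after the same $(f^*,f_\sharp)$-adjunction reductions is the statement above.

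First I would reduce to $S$ reduced by Corollary~\ref{cor:localisation_subcats}~\ref{rad_sub} and then proceed by noetherian induction. Using the $(f_\sharp,f^*)$-adjunction we rewrite the target as $\DA(V)(\BQ_V,f^*e_\sharp\BQ[n])$; by the $\Ex^*_\sharp$ isomorphism $f^*e_\sharp\BQ$ is again of the form $e'_\sharp\BQ_{V'}$ for $e':V'\to V$ \'etale, so we may assume $f=\id$, i.e.\ it suffices to show $\DA(S)(\BQ_S,e_\sharp\BQ[n])=0$ for $n<0$. Next, applying Zariski's main theorem to $e$, write $e=\bar e\circ j$ with $\bar e$ finite and $j$ a dense open immersion; using the $(\bar e^*,\bar e_*)$-adjunction and Proposition~\ref{prop:pullback_complex} we may absorb $\bar e_*$ and reduce to $e=j$ a dense open immersion. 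Now localisation for the pair $(U,Z)$ with $i:Z\to S$ the complementary reduced closed immersion gives a long exact sequence relating $\DA(S)(\BQ_S,j_!\BQ[n])$, $\DA(S)(\BQ_S,\BQ_S[n])$ and $\DA(S)(\BQ_S,i_*i^*\BQ_S[n-1])=\DA(Z)(\BQ_Z,\BQ_Z[n-1])$; the middle term vanishes for $n<0$ by Proposition~\ref{prop:mot_coh_0}~\ref{mot_coh_0_<}, and the last term vanishes for $n<0$ by the noetherian induction hypothesis applied on $Z$ (with $\dim Z<\dim S$). This forces $\DA(S)(\BQ_S,j_!\BQ[n])=0$ for $n<0$, completing the induction; the base case $\dim S=0$ reduces, via Corollary~\ref{cor:localisation_subcats}~\ref{rad_sub} and the disjoint-union decomposition, to $S=\Spec(k)$ a field, where it follows from Proposition~\ref{prop:morphisms_field} (or directly from Proposition~\ref{prop:0_motives_field}) since $e_\sharp\BQ$ is the image of a lattice placed in degree $0$ under $\CR^0$.

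The main (and really the only) subtlety is the bookkeeping at the start: making sure that after each reduction step the object whose vanishing one wants to establish is still of the form $j_!(\text{lattice})$ over the new base, so that the induction hypothesis applies verbatim; this is exactly the juggling of the $\Ex^*_!$ isomorphism with Proposition~\ref{prop:pullback_complex} carried out in the lattice part of the proof of Theorem~\ref{theo:gen_in_heart}. Since all the needed vanishings are already in place there, I would phrase the proof as: \emph{the $t$-positivity is by definition, and the $t$-negativity is the content of the lattice case of the proof of Theorem~\ref{theo:gen_in_heart}, specialised to $\BM=[L\to 0]$ and $P=\BQ_S$}, adding only the one-line $(f_\sharp,f^*)$-adjunction remark that lets one take $f=\id$. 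No genuinely new input beyond Propositions~\ref{prop:mot_coh_0}, \ref{prop:pullback_complex} and the permanence properties of $\DA_0(-)$ is required.
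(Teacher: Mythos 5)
Your proposal is correct and is essentially the paper's own argument: the paper proves this lemma simply by remarking that the required vanishing statements were already established in the lattice case of the proof of Theorem~\ref{theo:gen_in_heart}, which is exactly the reduction (adjunction, Zariski's main theorem, localisation plus Proposition~\ref{prop:mot_coh_0}) you spell out. The only cosmetic difference is that you invoke noetherian induction for the term over $Z$, which in fact vanishes directly by Proposition~\ref{prop:mot_coh_0}~\ref{mot_coh_0_<}; this changes nothing.
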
 

Using the same strategy as in the proof of the abelian scheme case (reduction to Jacobian, extension of the curve), one can also prove the following related vanishing result.

\begin{prop}\label{prop:partial_left_adjoint}
Let $S$ be a noetherian finite-dimensional scheme. Let $e:U\ra S$ be an \'etale morphism and $A/U$ be an abelian scheme. Then for all $n\in\BZ$, we have
\[
\DA(S)(\BQ,e_\sharp \Sigma^\infty A(-1)[n])=0.
\]
\end{prop}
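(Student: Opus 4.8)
\textbf{Proof proposal for Proposition~\ref{prop:partial_left_adjoint}.}

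The plan is to follow exactly the template of the abelian-scheme case in the proof of Theorem~\ref{theo:gen_in_heart}, but now testing against the constant motive $\BQ_S$ rather than against curve generators, and keeping track of a single twist by $\BQ(-1)$. First I would reduce to a clean geometric situation: by Corollary~\ref{cor:localisation_subcats}~\ref{rad_sub} we may assume $S$ reduced, and by localisation (using Proposition~\ref{prop:pullback_complex} and the $\Ex^*_!$ isomorphism to rewrite $j^* e_\sharp$), together with noetherian induction on $\dim S$, it suffices to prove the vanishing after replacing $S$ by any dense open subset; so we may assume $S$ irreducible. Applying Zariski's main theorem to $e:U\ra S$ and the $(\bar{e}^*,\bar{e}_*)$ adjunction with Proposition~\ref{prop:pullback_complex}, we reduce to the case where $e=j:V\ra S$ is a dense open immersion with reduced closed complement $i:Z\ra S$. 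Finally, by \cite[Theorem~11]{Katz_SpaceFill}, continuity, and shrinking $V$, I may assume $A$ is a direct factor (rationally, up to isogeny) of the relative Jacobian $\Jac(D/V)$ of a smooth projective curve $f:D\ra V$ admitting a section; so it is enough to treat $A=\Jac(D/V)$.

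The key computational input is then that, by Corollary~\ref{coro:computation_curve}, $\Sigma^\infty \Jac(D/V)_\BQ[-1]$ is a direct factor of $f_\sharp \BQ_D[-1]\simeq f_!\BQ_D(1)[1]$ (relative purity), so that $j_!\Sigma^\infty A(-1)[n]$ is a direct factor of $j_! f_!\BQ_D[n-1]\simeq (jf)_!\BQ_D[n-1]$. Now I would compactify $jf:D\ra S$ using Nagata's theorem, obtaining an open immersion $\bar{\jmath}:D\ra \overline{D}$ and a proper $\bar{f}:\overline{D}\ra S$, chosen (as in the proof of Theorem~\ref{theo:gen_in_heart}) so that the boundary $Y=\overline{D}\setminus D$ lies entirely over $Z$; then $j_! f_!\simeq \bar{f}_*\bar{\jmath}_!$ and $\bar{\jmath}_!\BQ_D$ sits in the localisation triangle $\bar{\jmath}_!\BQ_D\ra \BQ_{\overline{D}}\ra \bar{\imath}_*\BQ_Y\rap$. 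Applying $\bar{f}_*\simeq \bar{f}_!$ and the $(\BQ_S,-)$-adjunction $\DA(S)(\BQ_S,\bar{f}_*(-))\simeq \DA(\overline{D})(\BQ_{\overline{D}},-)$, the group $\DA(S)(\BQ_S, j_!\Sigma^\infty A(-1)[n])$ becomes a direct factor of a group computed on $\overline{D}$ from the motivic cohomology $\DA(\overline{D})(\BQ_{\overline{D}},\BQ_{\overline{D}}[n-1])$ and $\DA(Y)(\BQ_Y,\BQ_Y[n-1])$ via the boundary triangle. By Appendix~\ref{sec:app_mot_coh} (Proposition~\ref{prop:mot_coh_0}), after reducing $\overline{D}$ to be regular via cohomological $h$-descent along a regular alteration hypercover (exactly as in the proof of Proposition~\ref{prop:mot_coh_0}~\ref{mot_coh_0_pullback}), these groups are concentrated in degrees $0$ and $1$; chasing through the long exact sequences, the only potentially nonzero contributions to $\DA(S)(\BQ_S, j_!\Sigma^\infty A(-1)[n])$ would come from degrees $n=0$ and $n=1$, and in those degrees one identifies the relevant maps with $\CO^\times(\overline{D})_\BQ\to \CO^\times(Y)_\BQ$ and $\Pic(\overline{D})_\BQ\to\Pic(Y)_\BQ$ restriction maps. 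The point that makes everything vanish is that $\Sigma^\infty A[-1]$ is the part of $f_\sharp\BQ_D$ \emph{complementary} to both $\BQ_V$ and $\BQ_V(1)[2]$: using the section $s:V\to D$ one writes $f_\sharp\BQ_D\simeq \BQ_V\oplus \Sigma^\infty\Jac(D/V)_\BQ\oplus\BQ_V(1)[2]$, and the $\BQ_V$, $\BQ_V(1)[2]$ summands account for exactly the surviving cohomology, so the Jacobian summand contributes $0$ in every degree.

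The main obstacle I anticipate is the bookkeeping in the last step: ensuring that after compactification and $h$-descent the relevant restriction maps on $\CO^\times$ and $\Pic$ really are the ones carrying all of the cohomology of $\Sigma^\infty\BQ_V \oplus \Sigma^\infty\BQ_V(1)[2]$, so that the direct-factor $\Sigma^\infty A[-1]$ is genuinely killed — this is the same delicate splitting/injectivity analysis that appears near the end of the proof of Theorem~\ref{theo:gen_in_heart} (the $n=-1$ case there), and it must be redone here with the test object $\BQ_S$ and the single Tate twist, rather than with $M_S(C)$ and $\BQ(1)$. Everything else (the reductions, Nagata, the purity identifications, the vanishing ranges from Appendix~\ref{sec:app_mot_coh}) is routine given the machinery already developed. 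Once the splitting is in place, the five lemma applied to the comparison of long exact sequences, together with the field case handled implicitly through $\CO^\times$ and $\Pic$ of regular schemes, finishes the proof.
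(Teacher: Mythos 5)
Your reductions are the right ones and match the strategy the paper has in mind (the paper itself only indicates "reduction to Jacobian, extension of the curve"): Zariski's main theorem plus the $(\bar e^*,\bar e_*)$ adjunction to make $e$ a dense open immersion $j:V\ra S$, shrinking $V$ by localisation and noetherian induction on $\dim S$, Katz's theorem to replace $A$ by $\Jac(D/V)$ for a smooth projective curve $f:D\ra V$ with a section, and a Nagata compactification $\bar f:\overline D\ra S$ with boundary $Y$ over $Z$ so that $j_!f_!\simeq\bar f_*\bar\jmath_!$. Two small corrections: you cannot literally "replace $S$ by any dense open subset" — the source $\BQ_S$ stays on $S$ and there is no adjunction against $l_!$ in the target; what the localisation/induction argument gives is that you may shrink the open $V$ over which $A$ lives, keeping the base $S$ and a $j_!$ in the target (which is in fact how you use it afterwards). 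Also, since $f_\sharp\BQ_D(-1)\simeq f_!\BQ_D[2]$, the shift should be $[n+2]$, not $[n-1]$; harmless for a vanishing-in-all-degrees statement.

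The genuine gap is the endgame, which is the only non-formal part of the proof. After the twist $(-1)$ the groups you must control are \emph{weight-zero} motivic cohomology groups of $\overline D$, $Y$, $S$, $Z$ (Proposition~\ref{prop:mot_coh_0}: the invariant is $\BQ^{\pi_0}$, plus possibly higher groups on non-regular schemes, cf.\ the surface example at the end of Appendix~\ref{sec:app_mot_coh}); the maps $\CO^\times(\overline D)_\BQ\ra\CO^\times(Y)_\BQ$ and $\Pic(\overline D)_\BQ\ra\Pic(Y)_\BQ$ you invoke are the weight-one groups relevant to the $n=-1$ step of Theorem~\ref{theo:gen_in_heart}, imported into the wrong weight. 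More seriously, the assertion that the $\BQ_V$ and $\BQ_V(1)[2]$ summands "account for exactly the surviving cohomology" is precisely equivalent to the proposition being proved (the $\BQ_V$ summand becomes $\BQ_V(-1)$ after the twist and dies by Proposition~\ref{prop:mot_coh_<}; what remains to show is that the unit $\BQ_V[2]$ summand exhausts $\DA(\overline D)(\BQ_{\overline D},\bar\jmath_!\BQ_D[m])$ in every degree), and the mechanism you propose for it does not work: a five-lemma comparison of the localisation sequences of $(\overline D,Y)$ and $(S,Z)$ would need the pullbacks $H^{m,0}_\CM(S)\ra H^{m,0}_\CM(\overline D)$ and $H^{m,0}_\CM(Z)\ra H^{m,0}_\CM(Y)$ to be isomorphisms in all degrees, but $\bar f$ and $Y\ra Z$ are proper and not smooth, so Proposition~\ref{prop:mot_coh_0}~\ref{mot_coh_0_pullback} does not apply, the Nagata boundary $Y$ is completely uncontrolled, and regularising $\overline D$ alone by an $h$-hypercover produces no compatible regularisation of $S$, $Z$, $Y$. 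Unlike in Theorem~\ref{theo:gen_in_heart}, where for $n<0$ the ambient weight-one groups simply vanish by Proposition~\ref{prop:mot_coh_1}~\ref{mot_coh_1_leq}, here there is no vanishing range to hide behind: one must actually prove the cancellation, e.g.\ by stratifying $Z$ into regular pieces, using absolute purity and the computation of $\omega^0$ on the smooth summands, and reducing to the generic point and the field case, in the style of the proofs of Proposition~\ref{prop:omega_0_gr} and Theorem~\ref{theo:omega_1_Pic_smooth}. As written, this central step is asserted rather than proved, so the proposal is not yet a proof.
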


We deduce an additional compatibility relation between the motivic $t$-structures on $0$ and $1$-motives.
\begin{cor}\label{cor:omega_1_exact}
Let $S$ be a noetherian finite-dimensional excellent scheme. The functor 
\[\omega^0:(\DA^1(S),t^1_{\MM})\ra (\DA^0(S),t^0_{\MM})\]
 is $t$-exact.  
\end{cor}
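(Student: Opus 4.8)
The plan is to show that $\omega^1$ restricted to the subcategory $\DA^0(S) = \DA_0(S)$ of $\DA^1(S)$ is the identity (by Proposition~\ref{prop:omega_basics}~\ref{omega_idemp}, since $\DA^0(S)\subset \DA^1(S)$), so the content of the statement is really about how $\omega^1$ interacts with the two $t$-structures. First I would recall that $t^1_{\MM}$ and $t^0_{\MM}$ are generated $t$-structures, in the sense of Morel's proposition, and that the inclusion $\iota:\DA^0(S)\hookrightarrow \DA^1(S)$ is $t$-exact (Proposition~\ref{t_struct_01}~\ref{restr_01}, transported through the Tate twist). Thus $\omega^1$ is right adjoint to the $t$-exact inclusion $\iota$, and a general formal fact about adjoints tells us $\omega^1$ is automatically $t$-negative (a right adjoint to a $t$-positive functor is $t$-negative). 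So the only thing to prove is that $\omega^1$ is $t$-positive.

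The key step is therefore: show that $\omega^1$ sends $t^1_{\MM}$-positive objects of $\DA^1(S)$ into $t^0_{\MM}$-positive objects of $\DA^0(S)$. By Lemma~\ref{lemm:omega_sums}, $\omega^1$ commutes with small sums, and since $\DA^1(S)_{\geq 0} = \ll \DG_S(-1)\gg_+$ (Corollary~\ref{cor:t_gens}) is generated under extensions, small sums and $[+1]$ by the Deligne generators, it suffices by \cite[Lemme 2.1.78]{Ayoub_these_1} to check that $\omega^1$ applied to each generator $e_\sharp \Sigma^\infty(\BM)(-1)$ with $e:U\to S$ étale and $\BM=[L\to G]\in \CM_1(U)$ lands in $\DA^0(S)_{\geq 0}$. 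Using the commutation of $\omega^1$ with $e_\sharp\simeq e_!$ for $e$ étale (Proposition~\ref{prop:omega_basics}~\ref{omega_!}, which also gives $t$-positivity of $e_\sharp$ for $t^0_{\MM}$ via Proposition~\ref{prop:elementary_exactness}), this reduces to the case $e=\id$, i.e. to computing $\omega^1(\Sigma^\infty(\BM)(-1))$ for $\BM\in \CM_1(S)$. Now devissage along the weight filtration of $\BM$ gives a triangle relating $\Sigma^\infty(\BM)(-1)$ to $\Sigma^\infty G_\BQ(-1)[-1]$ and $\Sigma^\infty L_\BQ(-1)$; applying $\omega^1$ and using Corollary~\ref{coro:omega_0}~\ref{omega_1_twists} together with Proposition~\ref{prop:omega_0_gr} (which computes $\omega^0$ of such twisted motives) expresses $\omega^1$ of the generator in terms of $\omega^0(\Sigma^\infty G_\BQ)$ and $\omega^0(\Sigma^\infty L_\BQ)$, shifted appropriately. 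Since $\omega^0 \Sigma^\infty L_\BQ$ is (a direct factor of) a motive of the form $a_*\BQ$ with $a$ finite étale, hence in $\MM_0(S)$ by Lemma~\ref{lemm:genn0_in_heart}, and similarly for the toric contribution, these all lie in $\DA^0(S)_{\geq 0}$, giving the desired $t$-positivity.

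I expect the main obstacle to be bookkeeping rather than a genuine difficulty: one must carefully track the degree shifts coming from the identity $\omega^1(N(-1))\simeq (\omega^0 N)(-1)$ of Corollary~\ref{coro:omega_0}~\ref{omega_1_twists}, from the twist $\Sigma^\infty\BM(-1)$, and from the $[-1]$ in the triangle $\Sigma^\infty G_\BQ[-1]\to \Sigma^\infty(\BM)\to \Sigma^\infty L_\BQ\rap$, and verify that the resulting objects sit in non-negative degrees for $t^0_{\MM}$. A secondary subtlety is checking that the various natural transformations ($\omega^1$ versus $e_\sharp$, versus weight truncations) are compatible enough to run the devissage; but all of this is provided by Proposition~\ref{prop:omega_basics} and the exactness results of Proposition~\ref{prop:elementary_exactness}. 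One should also note that the analogous statement $\DA^1(S)_{\leq 0}$ maps into $\DA^0(S)_{\leq 0}$ is the $t$-negativity we got for free from adjunction, so no separate argument is needed there; combining the two gives $t$-exactness of $\omega^1$.
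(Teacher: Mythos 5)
Your skeleton matches the paper's proof: $t$-negativity is formal from the adjunction with the $t$-positive inclusion $\DA^0(S)\hookrightarrow \DA^1(S)$, and $t$-positivity is reduced, via commutation with small sums, to the generators $e_\sharp\Sigma^\infty(\BM)(-1)$ with $e$ \'etale and $\BM\in\CM_1(U)$, which one then d\'evisses along the weight filtration. But your key reduction to $e=\id$, justified by ``commutation of the functor with $e_\sharp\simeq e_!$ for $e$ \'etale (Proposition~\ref{prop:omega_basics}~\ref{omega_!})'', is exactly the step that fails. That proposition only supplies a natural transformation $e_!\omega^n\ra \omega^n e_!$ and proves it invertible for $e$ \emph{finite}; for a general \'etale $e$ the two functors do not commute, and the paper explicitly flags this point (``we cannot apply directly Proposition~\ref{prop:omega_0_gr}~\ref{omega_0_M1}''). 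This is not bookkeeping: with the $e_\sharp$ kept in place, the vanishing on the abelian part, $\omega^0(e_\sharp\Sigma^\infty A(-1))\simeq 0$, is a genuinely non-formal statement which the paper proves separately as Proposition~\ref{prop:partial_left_adjoint} (vanishing of $\DA(S)(\BQ,e_\sharp\Sigma^\infty A(-1)[n])$ for all $n$, by the curve-extension strategy of Theorem~\ref{theo:gen_in_heart}); the lattice part is killed by Corollary~\ref{coro:omega_0}~\ref{omega_0_twists} since $(e_\sharp\Sigma^\infty L)(-1)\in\DA_0(S)(-1)$; what remains is $\omega^0(e_\sharp\Sigma^\infty T(-1))\simeq e_\sharp\Sigma^\infty X_*(T)_\BQ$, which is $t$-positive. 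Your proposal has no substitute for Proposition~\ref{prop:partial_left_adjoint} once the commutation is withdrawn, so the positivity half is incomplete.

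Two secondary points. The functor in question is the right adjoint of the inclusion $\DA^0(S)\ra\DA^1(S)$, i.e.\ the restriction of $\omega^0$ to $\DA^1(S)$; your appeal to Corollary~\ref{coro:omega_0}~\ref{omega_1_twists} (which computes $\omega^1$ of negative twists) and to $\omega^0(\Sigma^\infty G_\BQ)$ without the $(-1)$-twist mixes up which functor and which normalisation is being computed---after a correct reduction one would simply cite Proposition~\ref{prop:omega_0_gr}~\ref{omega_0_M1} with the twist in place. Also, the claim that the inclusion $\DA^0(S)\hookrightarrow\DA^1(S)$ is $t$-exact by ``transporting'' Proposition~\ref{t_struct_01} through the Tate twist does not work ($t^0_{\MM}$ carries no twist while $t^1_{\MM}=t_{\MM,1}(-1)$ does), and the paper explicitly leaves the cohomological analogue as a conjecture; fortunately only $t$-positivity of the inclusion is needed for your formal adjunction step, and that does hold by inspecting generators.
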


\begin{proof}
The functor $\omega^0:\DA^1(S)\ra \DA^0(S)$, defined as the restriction of $\omega^0$ to $\DA^1(S)$, is the right adjoint to the inclusion $\DA^0(S)\ra\DA^1(S)$. This inclusion is $t$-positive by looking at generators, which implies that its right adjoint $\omega^0$ is $t$-negative. 

It remains to show $\omega^0$ is $t$-positive. By Lemma~\ref{lemm:omega_sums}, $\omega^0$ commutes with small sums. It is thus enough to show that a family of compact generators of $\DA^1(S)$ is sent to $t$-positive objects. By Proposition~\ref{prop:generators_DA1}, $\DA^1(S)$ is compactly generated by $\DG_S(-1)$. Let $e:U\ra S$ be an \'etale morphism and $\BM=[L\ra G]\in \CM_1(U)$, and let $e_\sharp (\Sigma^\infty \BM) (-1)\in \DG_S(-1)$. We also write $A$ (resp. $T$) for the abelian (resp. torus) part of $G$. We have to be careful because $\omega^0$ and $e_\sharp\simeq e_!$ do not commute in general and we cannot apply directly Proposition~\ref{prop:omega_0_gr}~\ref{omega_0_M1}. However, we have distinguished triangles
\[
e_\sharp\Sigma^\infty T (-1) \ra e_\sharp\Sigma^\infty \BM (-1) \ra e_\sharp\Sigma^\infty W_{\geq -1}\BM (-1) \rap
\]
and
\[
e_\sharp\Sigma^\infty A(-1) \ra e_\sharp\Sigma^\infty W_{\geq -1} \BM(-1)\ra e_\sharp\Sigma^\infty L (-1) \rap.
\]
The motive $(e_\sharp \Sigma^\infty L)(-1)$ is in $\DA_0(S)(-1)$, which by Corollary~\ref{coro:omega_0}~\ref{omega_0_twists} implies that its $\omega^0$ vanishes. We will show that we have $\omega^0(e_\sharp \Sigma^\infty A(-1))\simeq 0$. Using the generating family of $\DA_{0}(S)$, we have to show that, for all $f:W\ra S$ \'etale and all $n\in\BZ$, we have $\DA(S)(f_\sharp\BQ[-n], e_\sharp \Sigma^\infty A(-1))=0$. By adjunction, the $\Ex^*_\sharp$ isomorphism and Proposition~\ref{prop:pullback_complex}, we can assume $f=\id$ and apply Proposition~\ref{prop:partial_left_adjoint}.

All together, this means that $\omega^0(e_{\sharp}\Sigma^\infty \BM(-1))\simeq \omega^0(e_\sharp\Sigma^\infty T (-1))\simeq e_{\sharp} \Sigma^\infty X_*(T)$ (Proposition~\ref{prop:omega_0_gr}~\ref{omega_0_tor}) which is $t$-positive for $t_{\MM,0}(S)$. This completes the proof. 
\end{proof}

Notice that at this point we do not know if the motivic $t$-structures restricts to compact objects. A weaker result in that direction is the following result.

\begin{cor}\label{cor:compact_bounded}
Let $S$ be a noetherian finite-dimensional excellent scheme. Any compact object in either $\DA_0(S)$, $\DA^1(S)$ or $\DA_1(S)$ is bounded for the motivic $t$-structure, i.e., it has only finitely many non-zero homology objects. 
\end{cor}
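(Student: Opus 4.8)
The plan is to deduce boundedness from the fact, established in Proposition~\ref{prop:generators_DA1}, that each of the categories $\DA_0(S)$, $\DA^1(S)$, $\DA_1(S)$ is compactly generated by an \emph{explicit} family of objects which lie in the heart of the motivic $t$-structure. Concretely: for $\DA_1(S)$ the family $\DG_S$ generates and $\DG_S\subset\MM_1(S)$ by Theorem~\ref{theo:gen_in_heart}; for $\DA^1(S)$ one uses $\DG_S(-1)\subset\MM^1(S)$; and for $\DA_0(S)$ the family $\{e_\sharp\BQ\mid e:U\ra S\text{ \'etale}\}$ generates and lies in $\MM_0(S)$ by Lemma~\ref{lemm:genn0_in_heart}. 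Since a compact object is by Lemma~\ref{lem:subcats_comp} an object of $\langle\CG\rangle$ for the relevant generating family $\CG$, it belongs to the smallest thick (triangulated, idempotent-complete) subcategory containing $\CG$.

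First I would record the general principle: in any triangulated category with a $t$-structure, the full subcategory $\CT^b$ of $t$-bounded objects (objects with only finitely many nonzero homology objects) is a thick triangulated subcategory. Indeed it is stable under shifts trivially; it is stable under cones because a distinguished triangle $X\ra Y\ra Z\rap$ gives a long exact sequence in homology, so if two of $X,Y,Z$ have bounded homology so does the third (the nonzero homology degrees of the third are contained in the union of a neighbourhood of those of the other two); and it is stable under direct summands because $H_n(X)$ is a summand of $H_n(X\oplus X')$, so if $X\oplus X'$ is bounded then so is $X$. Next I would observe that every object of the generating family is $t$-bounded --- in fact concentrated in degree $0$ --- by the heart membership statements cited above. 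Therefore $\langle\CG\rangle\subset\CT^b$, and since every compact object lies in $\langle\CG\rangle$, every compact object is $t$-bounded. This handles all three cases uniformly once one feeds in the appropriate generating family and the appropriate heart-membership result; for $\DA^1(S)$ one notes that $t^1_{\MM}$ is $t_{\MM,1}$ twisted by $\BQ(-1)$, so $\DG_S(-1)$ lies in $\MM^1(S)$ by Theorem~\ref{theo:gen_in_heart} together with the $t$-exactness of the Tate twist.

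There is essentially no obstacle here: the entire content has already been placed earlier in the paper (the generating families, their heart membership, and the description of compact objects as $\langle\CG\rangle$). The only mild point requiring care is the stability of $\CT^b$ under cones, where one should be slightly attentive that ``bounded'' means bounded on both sides, so the long-exact-sequence argument does bound the third term on both ends; and the use of Lemma~\ref{lem:subcats_comp} to know that compact $=$ in $\langle\CG\rangle$ rather than merely in $\ll\CG\gg$. I would also note in passing that this argument does not give any uniform bound on the amplitude depending on the object, nor does it show the $t$-structure restricts to $\langle\CG\rangle$ (which would additionally require that truncations of compact objects are compact); it only gives boundedness of each individual compact object, which is exactly what is asserted.

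\begin{proof}
We treat the three cases uniformly. In a triangulated category $\CT$ equipped with a $t$-structure, let $\CT^b\subset\CT$ be the full subcategory of objects $X$ such that $H_n(X)=0$ for all but finitely many $n\in\BZ$. Then $\CT^b$ is a thick triangulated subcategory: it is visibly stable under shifts; given a distinguished triangle $X\ra Y\ra Z\rap$, the associated long exact homology sequence shows that the set of indices $n$ with $H_n$ nonzero for the third vertex is contained in the union of such sets for the other two, up to a shift by one, so $\CT^b$ is stable under cones; and since $H_n(X)$ is a direct summand of $H_n(X\oplus X')$, it is stable under direct summands.

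Now consider $\DA_1(S)$ with the $t$-structure $t_{\MM,1}$. By Proposition~\ref{prop:generators_DA1}~\ref{gen_1} together with Lemma~\ref{lem:subcats_comp}, every compact object lies in $\langle\DG_S\rangle$, the smallest thick triangulated subcategory containing $\DG_S$. By Theorem~\ref{theo:gen_in_heart}, every object of $\DG_S$ lies in $\MM_1(S)$, hence is concentrated in homological degree $0$, hence lies in $\DA_1(S)^b$. Since $\DA_1(S)^b$ is a thick triangulated subcategory, it contains $\langle\DG_S\rangle$, and therefore every compact object of $\DA_1(S)$ is bounded for $t_{\MM,1}$.

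For $\DA^1(S)$ with $t^1_{\MM}$, recall that $t^1_{\MM}$ is the image of $t_{\MM,1}$ under the equivalence $(-)(-1)$, which is $t$-exact by construction; hence $\DG_S(-1)\subset\MM^1(S)$ by Theorem~\ref{theo:gen_in_heart}. By Proposition~\ref{prop:generators_DA1}~\ref{gen^1} and Lemma~\ref{lem:subcats_comp}, every compact object of $\DA^1(S)$ lies in $\langle\DG_S(-1)\rangle$, and the same argument as above shows it is bounded for $t^1_{\MM}$.

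For $\DA_0(S)$ with $t_{\MM,0}$, by definition (Definition~\ref{main_def_0}) and Lemma~\ref{lem:subcats_comp}, every compact object lies in $\langle e_\sharp\BQ\mid e:U\ra S\text{ \'etale}\rangle$. By Lemma~\ref{lemm:genn0_in_heart}, each $e_\sharp\BQ$ lies in $\MM_0(S)$, so is concentrated in degree $0$; the argument above again shows every compact object of $\DA_0(S)$ is bounded for $t_{\MM,0}$.
\end{proof}
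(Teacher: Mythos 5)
Your proof is correct and follows essentially the same route as the paper: compact objects lie in the thick subcategory $\langle\CG\rangle$ generated by a family of compact generators contained in the heart (Lemma~\ref{lem:subcats_comp}, Proposition~\ref{prop:generators_DA1}, Theorem~\ref{theo:gen_in_heart}, Lemma~\ref{lemm:genn0_in_heart}), and boundedness propagates through shifts, cones and summands — which is exactly the "induction argument" the paper invokes, made explicit by observing that $t$-bounded objects form a thick triangulated subcategory.
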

\begin{proof}
The argument is the same for the three categories, let us explain it for $\DA_1(S)$. Let $M\in \DA_{1,c}(S)$. Since $\DA_{1,c}(S)=\langle \DG_S\rangle$, the motive $M$ is obtained by a finite number of steps from objects of $\DG_S$ by by taking cones of morphisms, shifts and direct factors. Since $\DG_S$ lies in the heart (Theorem \ref{theo:gen_in_heart}), this implies immediately that $M$ is bounded.
\end{proof}

The proof of the following result is also very similar to the proof of Theorem \ref{theo:gen_in_heart}, hence we include it here.

\begin{prop}\label{prop:t_non_degenerate}
Let $S$ be a noetherian finite-dimensional excellent scheme. The $t$-structures $t_{\MM,0}(S)$, $t_{\MM,1}(S)$ and $t^1_{\MM}(S)$ are non-degenerate. 
\end{prop}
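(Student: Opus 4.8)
Recall that a $t$-structure is non-degenerate precisely when $\bigcap_n \CT_{\geq n} = 0$ and $\bigcap_n \CT_{\leq n} = 0$. For a $t$-structure generated by a family $\CG$ of compact objects, the negative-exhaustiveness, i.e. $\bigcap_n \CT_{\leq -n} = 0$, is automatic: an object $N$ in $\bigcap_n \CT_{\leq -n}$ satisfies $\Hom(G, N[m]) = 0$ for all $G \in \CG$ and all $m \in \BZ$, and since $\CG$ is a generating family for the ambient compactly generated category this forces $N = 0$. So the content is the positive-exhaustiveness: $\bigcap_n \CT_{\geq n} = 0$. The plan is to prove this for $t_{\MM,0}(S)$, $t_{\MM,1}(S)$, and deduce $t^1_{\MM}(S)$ by the Tate twist equivalence $\DA^1(S) = \DA_1(S)(-1)$.

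First I would record the reduction steps, which parallel those in the proof of Theorem~\ref{theo:gen_in_heart}: using localisation (Corollary~\ref{cor:localisation_subcats}), the behaviour of $j_!$, $j^*$, $i_*$, $i^*$ with respect to the generated $t$-structure (Proposition~\ref{prop:elementary_exactness}), and continuity (Proposition~\ref{prop:subcats_continuity}), one reduces checking $\bigcap_n \CT(S)_{\geq n} = 0$ by noetherian induction to the case where $S$ is the spectrum of a field; the key point is that if $M \in \bigcap_n \CT(S)_{\geq n}$ then $j^*M$ and $i^*M$ lie in the corresponding intersections over $U$ and $Z$, so if those vanish then by the localisation triangle $M = 0$. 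One also needs to know $M$ is determined by its restrictions to a dense open and its complement, which is Corollary~\ref{cor:localisation_subcats}~\ref{loc_sub}. Over a field $k$, for $\DA_0(k)$ one uses the equivalence $\CR^0 : D(\Sh_\et(k,\BQ)) \xrightarrow{\sim} (\DA_0(k), t_{\MM,0})$ of Proposition~\ref{prop:0_motives_field}, under which the statement becomes the (standard, elementary) non-degeneracy of the standard $t$-structure on the derived category of an abelian category with enough injectives and arbitrary coproducts --- here one must be slightly careful since $D(\Sh_\et(k,\BQ))$ is the \emph{unbounded} derived category, but the standard $t$-structure there is non-degenerate because a complex with vanishing cohomology in all degrees is acyclic. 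For $\DA_1(k)$ one argues similarly: the heart $\MM_1(k)$ is $\Ind(\CM_1(k))$ (Remark after Proposition on $1$-motives over a field), or more directly, an object $M \in \bigcap_n \DA_1(k)_{\geq n}$ pairs trivially with all shifts of objects $\Sigma^\infty\BM$, $\BM \in \CM_1(k)$, by Proposition~\ref{prop:morphisms_field}, combined with the connectivity estimate; one then concludes $M=0$ from the fact that $\DG_k$ generates.

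The genuinely delicate point --- and the main obstacle --- is controlling connectivity in the generated $t$-structure well enough to run the noetherian induction, because $\ll\CG\gg_+$ is defined by a transfinite construction and membership of $M$ in $\CT_{\geq n}$ for all $n$ is a strong but non-constructive hypothesis. The cleanest route is to avoid any ``convergence'' issue by phrasing everything through the vanishing characterization of $\CT_{<0}$ and the truncation functors: if $M \in \bigcap_n \CT_{\geq n}$, then for each $n$ one has $\tau_{<n} M = 0$, hence $M \simeq \tau_{\geq n} M$ for all $n$; I would then show that $\Hom(P, M) = 0$ for every compact object $P$ in $\langle \CG \rangle$ by using that $P$ is bounded (Corollary~\ref{cor:compact_bounded}) so that $\Hom(P, \tau_{\geq n}M) = 0$ once $n$ exceeds the top degree of $P$, and that $\langle\CG\rangle$ compactly generates $\CT$. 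This argument is in fact uniform across the three cases and sidesteps the field reduction for the positive half, although the field reduction is still the natural way to package things; I would present the compact-generator argument as the main line, invoking Corollary~\ref{cor:compact_bounded} as the crucial input, and note that the negative half is immediate from the definition of $\CT_{<0}$ together with the fact that the generating family generates $\DA_{0}(S)$, $\DA_1(S)$, respectively $\DA^1(S)$, which are the relevant ambient triangulated categories.
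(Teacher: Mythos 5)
Your reduction of the problem to the coconnective half is fine (an object orthogonal to all shifts of a generating family of the whole category is zero), but your ``main line'' for the connective half has a genuine gap. You claim that if $P$ is compact and bounded for the $t$-structure (Corollary~\ref{cor:compact_bounded}), then $\Hom(P,\tau_{\geq n}M)=0$ once $n$ exceeds the top degree of $P$. The orthogonality axiom of a $t$-structure only gives $\Hom(\CT_{\geq 0},\CT_{<0})=0$; it says nothing about maps \emph{from} a bounded object \emph{into} a highly connective one. Already in $D(\BZ)$ one has $\Hom(\BZ/p,\BZ[1])=\Ext^1(\BZ/p,\BZ)\neq 0$ with $\BZ/p$ in the heart and $\BZ[1]$ in $D_{\geq 1}$; vanishing of such groups for $n\gg 0$ is a finite-cohomological-dimension statement, not a formal consequence of boundedness. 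This is exactly the point where the real content lies: the paper invokes Ayoub's criterion (\cite[Proposition 2.1.73]{Ayoub_these_1}, plus a two-family variant for $t_{\MM,1}$ since $\CCG_S$ and $\JG_S$ generate the same $t$-structure), whose hypothesis is a \emph{uniform} bound $d_A$ with $\Hom(A,B[n])=0$ for all generators $B$ and all $n\geq d_A$. Establishing this bound (here $n>\dim(S)$, resp.\ $n>\dim(S)+4$) is the bulk of the proof: it uses Zariski's main theorem, compactification, localisation, relative purity, and crucially the vanishing of weight $0$ and $1$ motivic cohomology above the dimension (Proposition~\ref{mot_coh_gg}, which rests on the $K$-theoretic vanishing and is where quasi-excellence enters). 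None of this is supplied by your argument, and without it the transfinite construction of $\CT_{\geq n}$ gives no control on $\Hom(P,M)$ for $M\in\bigcap_n\CT_{\geq n}$.

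Your alternative ``Route A'' (noetherian induction down to fields) also does not go through as stated for arbitrary, possibly non-compact, objects of $\bigcap_n\CT_{\geq n}$: the continuity statement (Proposition~\ref{prop:subcats_continuity}) applies only to compact objects, so from $\eta^*M=0$ you cannot spread out to conclude $j^*M=0$ on a dense open, and the induction stalls. Over a field the Ext-vanishing of Proposition~\ref{prop:morphisms_field} would indeed suffice, and it is precisely the relative analogue of that vanishing, in the uniform quantitative form required by Ayoub's criterion, that the paper proves over a general base.
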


\begin{proof}
Since $t^1_{\MM}=t_{\MM,1}(-1)$, it is enough to treat the cases of $t_{\MM,0}$ and $t_{\MM,1}$. These $t$-structures are defined as generated $t$-structures. By \cite[Proposition 2.1.73]{Ayoub_these_2}, to show that a $t$-structure of the form $t(\CG)$ on a triangulated category $\CT$ for a family of compact objects $\CG$ is non-degenerate, it is enough to check that $\CT=\llangle \CG \rrangle$ and that for $A\in\CG$, there exists an integer $d_A\geq 0$ such that for all $B\in \CG$, $\Hom(A,B[n])=0$ for $n\geq d_A$. 

Let us check these conditions for $t_{\MM,0}$, using the generating family $\CG_0=\{e_\sharp \BQ| e:U\ra S\text{ \'etale}\}$. By definition, we have $\DA_0(S)=\llangle \CG_0\rrangle$. Let $e:U\ra S$ and $h:V\ra S$ be \'etale morphisms. We will prove that
\[
\forall n> \dim(S),\ \DA(S)(e_\sharp \BQ,h_\sharp\BQ[n])=0.
\]
 By the $(e_{\sharp},e^*)$ adjunction, we can assume $e=\id$. Using Zariski's main theorem, we compactify $h$ into $h=\bar{h}\circ j$ with $j:V\ra \overline{V}$ a dense open immersion and $\bar{h}:\overline{V}\ra S$ a finite morphism. Using the $(\bar{h}^*,\bar{h}_*)$ adjunction, we see that we can assume $h=j$ a dense open immersion. Notice that through these reductions, the dimension of the base does not increase. Write $i:Z\ra S$ for the complementary closed immersion to $j$. We have $\dim(Z)\leq \dim(S)-1$. By localisation and adjunction, we have an exact sequence
\[
\DA(Z)(\BQ,\BQ[n-1])\ra \DA(S)(\BQ, j_!\BQ[n])\ra \DA(S)(\BQ,\BQ[n])
\]
The two outer group vanish because of Proposition~\ref{mot_coh_gg} (noticing that $n-1>\dim(S)-1\geq\dim(Z)$), and this completes the proof that $t_{\MM,0}$ is non-degenerate.

We now look at the case of $t_{MM,1}$. Again by \cite[Proposition 2.1.73]{Ayoub_these_2} applied to the generating family $\JG_S$, it suffices to prove that
\[
\forall M,N \in \JG_S,\forall n>\dim(S)+4,\ \DA(S)(M,N[n])=0.
\]

Let us first concentrate on $M$. We have an \'etale morphism $e:U\rightarrow S$ and $M$ has the form $e_\sharp \Sigma^\infty (K\otimes\BQ)$ with $K$ one of $\BZ$, $\Gm[-1]$ or $\Jac(C/U)$ with $C$ a smooth projective curve with geometrically connected fibres and a section. We have $\dim(U)\leq \dim(S)$, hence by adjunction and Lemma~\ref{lem:gen_ops}, we can assume $e=\id$. Then, using Proposition~\ref{prop:Gm_Q1} and Corollary~\ref{coro:computation_curve}, in every case, we can write $M$ as a direct factor of the motive $M_S(C')[\epsilon]$ with $C'/S$ a smooth curve and $\epsilon\in\{0,-1\}$. By adjunction again, and taking into account that $\dim(C')\leq \dim(S)+1$, we are reduced to showing
\[
\forall N \in \JG_S,\forall n>\dim(S)+3,\ \DA(S)(\BQ_{S},N[n])=0.
\]
We now go into the case distinction for $N$. Let $p:V\ra S$ be an \'etale morphism. The motive $N$ is of one of the following forms: $p_\sharp \BQ$, $p_\sharp \BQ(1)$ or $p_\sharp\Jac(X/V)[-1]$ for $\pi:X\ra V$ a smooth projective curve with geometrically connected fibres and a section. By Zariski's main theorem, localisation and adjunction, we can assume $e=j$ is an open immersion (this does not change the dimension). In the first two cases, we apply the same argument as for $t_{\MM,0}$: by localisation, we can assume $p=\id$ and then apply Proposition~\ref{mot_coh_gg}. Let us focus on the Jacobian case. We write $\Jac(X/V)[-1]$ as direct factor of $M_S(X)[-1]$ by Corollary~\ref{coro:computation_curve}, then compactify $j\circ \pi=\bar{\pi}\circ \bar{\jmath}$ with $\bar{\jmath}:X\ra \overline{X}$ dense open immersion and $\bar{\pi}:\overline{X}\ra S$ a proper morphism using Zariski's main theorem. Writing $\bar{\imath}:Z\ra \overline{X}$ for the complementary closed immersion to $\bar{\jmath}$ and using localisation and relative purity, we have an exact sequence
\[
\DA(Z)(\BQ_Z, \BQ(1)_{Z}[n])\ra \DA(S)(\BQ_S, j_\sharp M_S(X)[-1]) \ra \DA(\overline{X})(\BQ_{\overline{X}}, \BQ_{\overline{X}}(1)[n+1]).
\] 
We have $\dim(Z),\dim(\overline{X})\leq \dim(S)+1$, hence the two outer groups vanish for $n>\dim(S)+3$ by Proposition~\ref{mot_coh_gg}. This completes the proof that $t_{\MM,1}$ is non-degenerate.
\end{proof}

Finally, we compute more precisely the morphisms between Deligne $1$-motives over a regular base.

\begin{theo}\label{theo:deligne_full}
Let $S$ be a regular excellent scheme, $\BM_1,\BM_2\in\CM_1(S)$ and $n\in\BZ$. Then
\[
\DA(S)(\Sigma^\infty\BM_1,\Sigma^\infty \BM_2[n])\simeq \left\{ \begin{array}{c}  0,\ n<0 \\ \CM_1(S)(\BM_1,\BM_2),\ n=0 \\ 0,\ n\geq 3. \end{array}\right.
\]
In particular, the functor $\Sigma^\infty:\CM_1(S)\ra \MM_1(S)$ is fully faithful.
\end{theo}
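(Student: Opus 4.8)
The plan is to reduce everything to cohomological computations by a noetherian induction on $S$, following the template of the proof of Theorem~\ref{theo:gen_in_heart} but keeping track of all degrees. First I would observe that the vanishing for $n<0$ is immediate from Theorem~\ref{theo:gen_in_heart}, since $\Sigma^\infty\BM_1,\Sigma^\infty\BM_2\in\DG_S\subset \MM_1(S)$ and the $t$-structure $t_{\MM,1}$ is a genuine $t$-structure on $\DA_1(S)$: $\Hom$ groups between objects of the heart vanish in negative degrees. For $n\geq 3$, I would use the weight filtration on each $\BM_i$ to reduce to the cases where $\BM_1$ and $\BM_2$ are each of pure type (lattice, torus, or abelian scheme). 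Then, exactly as in the proof of Theorem~\ref{theo:gen_in_heart}, one uses Zariski's main theorem to reduce from an \'etale morphism to a dense open immersion, localisation and Proposition~\ref{prop:pullback_complex}/Corollary~\ref{coro:pullback_complex_DA} to replace the open $V$ by any smaller one, normality to write lattices and tori as direct factors of $e_*\BQ$ and $e_*\BQ(1)$ for $e$ finite \'etale, and \cite[Theorem~11]{Katz_SpaceFill} together with Corollary~\ref{coro:computation_curve} to write abelian schemes as direct factors of $f_!\BQ_D(1)[1]$ for a (compactified) relative curve. After these reductions the groups become motivic cohomology groups $H^{i,j}_\CM(-)$ with $j\in\{0,1\}$, of regular schemes of dimension bounded in terms of $\dim(S)$, whose vanishing in high degrees follows from the computations in Appendix~\ref{sec:app_mot_coh} (Proposition~\ref{prop:mot_coh_0}, Proposition~\ref{prop:mot_coh_1}, Proposition~\ref{mot_coh_gg}); the regularity of $S$ is crucial here since it bounds the motivic cohomology in a uniform range (in particular $H^{i,1}_\CM$ is concentrated in degrees $1,2$ and $H^{i,0}_\CM$ in degree $0$ for regular schemes, by the appendix).

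The heart of the matter is the computation in degree $n=0$, i.e.\ proving that $\Sigma^\infty$ is fully faithful. For this I would again reduce by weights to pairs of pure type, and use that over a regular base the relevant morphism groups can be computed: $\DA(S)(\BQ,\BQ)=\BQ^{\pi_0(S)}$, $\DA(S)(\BQ,\BQ(1)[1])=\CO^\times(S)_\BQ$, $\DA(S)(\BQ,\BQ(1)[2])=\Pic(S)_\BQ$, and the relevant groups involving abelian schemes, all from Appendix~\ref{sec:app_mot_coh}. The strategy is to compare, term by term along the weight filtration, the group $\DA(S)(\Sigma^\infty\BM_1,\Sigma^\infty\BM_2)$ with $\Hom_{\CM_1(S)}(\BM_1,\BM_2)$: the latter is also computed by a spectral sequence/filtration argument from the $\Ext$ and $\Hom$ groups between pure $1$-motives, which by the theory of Appendix~\ref{sec:app_deligne} reduce to $\Hom$ and $\Ext^1$ of lattices, tori, and abelian schemes — these are exactly $\CO^\times$, $\Pic$, and divisorial-correspondence groups. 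I would check the comparison is compatible with the connecting maps; for this it is cleanest to first dispose of the off-diagonal vanishing ($n\geq 3$ and the relevant $n=1,2$ pieces that must vanish for weight reasons) so that only a short exact sequence survives in each weight step, then apply the five lemma.

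The main obstacle I expect is the bookkeeping of the weight filtration interacting with the motivic $t$-structure and the identification of the middle terms: specifically, matching $\DA(S)(\Sigma^\infty T_1, \Sigma^\infty A_2[1])$ and similar ``cross-weight'' groups with the corresponding $\Ext^1_{\CM_1(S)}$, which requires knowing that certain motivic cohomology groups (like $\DA(S)(\Sigma^\infty A^\vee\otimes T(-1), \BQ)$-type expressions) compute extensions of $1$-motives in the expected way — essentially a relative, rational version of Orgogozo's field-level computation (Proposition~\ref{prop:morphisms_field}), whose proof I would mimic. A secondary technical point is handling the abelian-scheme case where the relative curve $D/U$ does not extend smoothly over $S$; as in Theorem~\ref{theo:gen_in_heart} one compactifies and works on $\overline{D}$, using $h$-descent (via alterations, legitimate since $S$ is quasi-excellent — note here we have only assumed $S$ regular, so I would either add excellence or reduce to the finite-type-over-Dedekind case by continuity as is done elsewhere in the paper) to further reduce $\overline{D}$ to a regular scheme before invoking the appendix. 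Once these reductions are in place, each individual vanishing or identification is a routine consequence of the motivic cohomology computations already assembled.
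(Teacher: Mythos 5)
Your reduction of the $n<0$ case to Theorem~\ref{theo:gen_in_heart} is correct (and is a slicker derivation of that clause than the paper's), but the rest of the plan has a genuine gap, concentrated in the abelian-scheme pieces. Your dévissage realizes a weight-graded piece of type abelian scheme as a direct factor of (the Jacobian of) a relative curve only over a dense open $U\subset S$, never over $S$ itself, so you are forced to compactify; the compactified total space $\overline{D}$ (and a fortiori fibre products of two such, which appear when both $\BM_1$ and $\BM_2$ have abelian parts) is \emph{not} regular. At that point the concentration results you invoke fail: Proposition~\ref{prop:mot_coh_1} gives $H^{*,1}_\CM$ in degrees $1,2$ only for regular schemes, Proposition~\ref{mot_coh_gg} gives vanishing only in degrees $>\dim+2i$ (a dimension-dependent bound, useless for the uniform claim $n\geq 3$), and passing to a regular $h$-hypercovering by alterations does not help, because the descent spectral sequence smears the degree-$1,2$ weight-one cohomology of the regular terms into total degrees up to roughly the dimension of the hypercovering. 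Note the asymmetry with the proof of Theorem~\ref{theo:gen_in_heart} that you are modelling yourself on: there only the \emph{lower} vanishing (degrees $\leq 0$, valid for arbitrary schemes by Proposition~\ref{prop:mot_coh_1}\ref{mot_coh_1_leq}) was needed on the singular compactification, which is exactly why that argument goes through; the \emph{upper} vanishing you need here is the part that is lost without regularity.

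The $n=0$ step is also substantially underestimated. Matching $\DA(S)(\Sigma^\infty A_1[-1],\Sigma^\infty A_2[-1])$ with $\Hom_S(A_1,A_2)_\BQ$, and controlling the cross-weight degree-one groups well enough for a five-lemma argument, amounts to proving a relative version of Orgogozo's theorem over $S$ — that is most of the content of the statement, not a routine consequence of the appendix; and the remark following the theorem (the $\BP^1_\BC$ example comparing $\YExt^2_{\CM_1}$ with the $\DA$-group) shows the comparison maps between $\CM_1$-extensions and $\DA$-extensions are genuinely not isomorphisms in nearby degrees, so the compatibility/injectivity inputs to your five lemma need real proofs. The paper circumvents both difficulties at once by reducing to the generic point $\eta$: restriction $\eta^*$ is fully faithful on $\CM_1$ over a normal base (Proposition~\ref{prop:smooth_1_mot}) and $\CM_1(\eta)$ is the colimit of the $\CM_1(U)$ (Proposition~\ref{prop:continuity_1-mot}), so by continuity for $\DA$ and the field-level computation (Proposition~\ref{prop:morphisms_field}) everything reduces to showing that $j^*$ is bijective on the relevant Hom groups for $n\neq 1,2$; the error terms $\DA(Z)(i^*\Sigma^\infty\BM_1,i^!\Sigma^\infty\BM_2[n+1])$ are then killed by absolute purity for the strongly dualizable motive $\Sigma^\infty\BM_2(-1)$ (Proposition~\ref{prop:sm_abs_purity}, Corollary~\ref{cor:Deligne_da_1}) together with the $\omega^1/\omega^0$ computations (Corollary~\ref{coro:omega_0}, Proposition~\ref{prop:omega_0_gr}), which in the end require only weight-zero motivic cohomology of regular schemes. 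If you want to salvage your approach, you would need either this generic-point mechanism or some substitute for the missing high-degree vanishing on singular compactifications; as written, the proposal does not supply one.
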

\begin{proof}
  By considering the connected components, we reduce to the case where $S$ is irreducible. The idea of the proof is that in the range we are considering, i.e., for $n\neq 1,2$, everything happens at the generic point $\eta$ of $S$. Let $j:U\ra S$ be an open immersion with $U \neq \emptyset$. The restriction functor $j^*:\CM_1(S)\ra \CM_1(U)$ is fully faithful by Proposition~\ref{prop:smooth_1_mot}. Moreover the category $\CM_1(\eta)$ is the $2$-colimit of the $\CM_1(U)$ for $U$ running through all non-empty open sets of $S$ by Proposition~\ref{prop:continuity_1-mot}. This implies that $\CM_1(S)(\BM_1,\BM_2)\simeq \CM_1(\eta)(\eta^* \BM_1,\eta^*\BM_2)$.

On the $\DA(-)$ side, by continuity and Proposition~\ref{prop:pullback_complex}, we have that
  \[\DA(\eta)(\eta^*\Sigma^\infty \BM_1,\eta^*\Sigma^\infty \BM_2[n])\simeq \Colim_{U \neq \emptyset} \DA(U)(j^* \Sigma^\infty \BM_1, j^*\Sigma^\infty \BM_2[n]).\] Furthermore, by Proposition~\ref{prop:morphisms_field}, we have an isomorphism 
\[\DA(\eta)(\Sigma^\infty \eta^* \BM_1,\Sigma^\infty \eta^* \BM_2[n])\simeq \Ext^n_{\CM_1(\eta)}(\BM_1,\BM_2)\stackrel{n\neq 0,1}{\simeq} 0.\]

Putting everything together, we see that the statement of the proposition follows from the claim that $j^*:\DA(S)(\Sigma^\infty \BM_1,\Sigma^\infty \BM_2[n])\ra \DA(U)(j^*\Sigma^\infty \BM_1, j^* \Sigma^\infty \BM_2[n])$ is bijective for $n\neq 1,2$. Write $i:Z\ra S$ for the reduced complementary closed immersion of $U$ in $S$. Consider the localisation exact sequence
\[
\xymatrix{
\ldots \ar[r] & \DA(Z)(i^*\Sigma^\infty \BM_1,i^!\Sigma^\infty \BM_2[n]) \ar[r] & \DA(S)(\Sigma^\infty \BM_1,\Sigma^\infty \BM_2[n]) \ar[d]^{j^*} \\ 
\ldots & \DA(Z)(i^*\Sigma^\infty \BM_1,i^!\Sigma^\infty \BM_2[n+1])\ar[l] & \DA(U)(j^*\Sigma^\infty \BM_1, j^*\Sigma^\infty \BM_2[n]) \ar[l] \\ 
}
\]
We have to prove the vanishing of $\DA(Z)(i^*\Sigma^\infty \BM_1,i^!\Sigma^\infty \BM_2[n+1])$ for $n\neq 2$. By Proposition~\ref{coro:pullback_complex_DA}, we have $i^*\Sigma^\infty \BM_1\simeq \Sigma^\infty \BM_{1,Z}$. Using that any closed subscheme of $S$ is excellent and reduced and thus has a non-empty open regular locus, we can stratify $Z$ by regular constructible subschemes and applying further localisations, we can reduce to the case where $Z$ is also regular of some codimension $1+e$ with $e\geq 0$. By absolute purity in the form of Proposition~\ref{prop:sm_abs_purity}, which applies by~Corollary~\ref{cor:Deligne_da_1}, we then have $i^!\Sigma^\infty \BM_2[n+1]\simeq i^*\Sigma^\infty \BM_2(-1-e)[n-1-2e]\simeq \Sigma^\infty \BM_{2,Z}(-1-e)[n-1-2e]$. We know, again from Corollary~\ref{cor:Deligne_da_1}, that the motive $\Sigma^\infty \BM_{1,Z}(-1)$ lies in $\DA^1(S)$, hence we have an isomorphism
\begin{align*}
  \DA(Z)(\Sigma^\infty \BM_{1,Z}, \Sigma^\infty\BM_{2,Z}(-1-e)[n-1-2e]) \\
\simeq  \DA(Z)(\Sigma^\infty \BM_{1,Z}(-1), \omega^1(\Sigma^\infty\BM_{2,Z}(-2-e)[n-1-2e])). 
\end{align*}  
The motive $\Sigma^\infty\BM_{2,Z}(-1)$ is cohomological, so by Corollary~\ref{coro:omega_0} the group on the right hand side vanishes unless $e=0$. If $e=0$, we have further $\omega^1(\Sigma^\infty\BM_{2,Z}(-1)(-1))\simeq \omega^0(\Sigma^\infty\BM_{2,Z}(-1))(-1)$. This motive was computed in Proposition~\ref{prop:omega_0_gr}~\ref{omega_0_M1} and we get \[\omega^0(\Sigma^\infty\BM_{2,Z}(-1))(-1)\simeq \Sigma^\infty X_*(W_{-2}\BM_{2,Z})(-1).\] 

To sum up, we are reduced to showing that for $S$ regular, $\BM\in\CM_1(S)$ and $L$ lattice over $S$, the morphism group $\DA(S)(\Sigma^\infty\BM,\Sigma^\infty L_\BQ[n-1])$ is $0$ for $n\neq 2$. Since $S$ is normal, the motive $\Sigma^\infty L_\BQ$ is a direct factor of $e_* \BQ$ for $e:T\ra S$ finite \'etale (Lemma~\ref{lemm:permutation_torus}). By adjunction, we are then reduced to the case $L=\BZ$. Write $\BM=[N\ra G]$ with $N$ a lattice and $G$ a semi-abelian scheme. We have a distinguished triangle 
\[\Sigma^\infty [0\ra G]\ra \Sigma^\infty \BM\ra \Sigma^\infty [N\ra 0]\rap\]
which shows that we can treat separately the cases $\BM =[N\ra 0]$ and $\BM = [0\ra G]$.

In the case $\BM=[N\ra 0]$, we again write $N$ as a direct factor of a permutation lattice, which implies that $\Sigma^\infty \BM$ is a direct factor of $e'_\sharp\BQ$ with $e':T'\ra S$ finite \'etale. By adjunction, we are then reduced to a computation of weight zero motivic cohomology on a regular scheme, which vanishes for $n\neq 2$ by Propositions~\ref{prop:mot_coh_<} and~\ref{prop:mot_coh_0}.

In the second case, we have $\Sigma^\infty\BM =\Sigma^\infty G_\BQ[-1]$, which by \cite[Theorem 3.3]{AHPL} is a direct factor of $M_S(G)$. We are then done using the $((G\ra S)_\sharp, (G\ra S)^*)$ adjunction and Propositions~\ref{prop:mot_coh_<} and~\ref{prop:mot_coh_0}.
\end{proof}

\appendix

\section{Deligne $1$-motives}
\label{sec:app_deligne}


We gather necessary results on Deligne $1$-motives \cite[\S 10]{Deligne_HodgeIII} over general base schemes which we could not find in the literature. Useful references besides Deligne's original work are \cite{Jossen_thesis}, \cite[Appendix C]{BVK}. 

\subsection{Definitions}

\begin{defi}\label{def:gr_schemes}
Let $S$ be a scheme. We say that a commutative group scheme $G/S$ is
\begin{enumerate}[label={\upshape(\roman*)}]
\item \emph{discrete} if it is \'etale locally constant finitely generated.
\item a \emph{lattice} if it is discrete and torsion free.
\end{enumerate}
A lattice of the form $f_{*}\BZ$ for $f$ a finite \'etale morphism is called a \emph{permutation lattice}.
\end{defi}

Before we come to the definition of Deligne $1$-motives, let us discuss a recurrent technical point about lattices and tori over general schemes. In general, it is not the case that a discrete group scheme is isotrivial in the \'etale topology. However, we have the following useful lemma.

\begin{lemma}\label{lemm:permutation_torus}
Let $S$ be a locally noetherian, geometrically unibranch scheme. Let $L$ be a lattice over $S$ (resp. $T$ be a torus over $S$). 
\begin{enumerate}[label={\upshape(\roman*)}]
\item $L$ (resp. $T$) is isotrivial, i.e., it becomes split after passing to a finite \'etale cover of $S$.   
\item The sheaf $L\otimes\BQ\in \Sh(\Sm/S)$ (resp. $T\otimes\BQ\in \Sh(\Sm/S)$ is a direct factor of the sheaf $f_*\BQ$ (resp. $f_*(\Gm\otimes\BQ)$) for $f:V\ra S$ a finite \'etale cover.
\end{enumerate}
\end{lemma}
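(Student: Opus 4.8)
\textbf{Proof plan for Lemma~\ref{lemm:permutation_torus}.}

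The plan is to reduce everything to the lattice case first, then to the connectedness of $S$, and finally to a standard statement in SGA3. First I would observe that (ii) is a formal consequence of (i): if $L$ becomes split after a finite \'etale cover $f:V\ra S$, then the adjunction unit $L\ra f_*f^*L$ and the trace/transfer map $f_*f^*L\ra L$ (available after tensoring with $\BQ$, using the degree of $f$) exhibit $L\otimes\BQ$ as a direct factor of $f_*(f^*L\otimes\BQ)$; since $f^*L$ is split, $f^*L\otimes\BQ$ is a finite direct sum of copies of $\BQ$, so $f_*(f^*L\otimes\BQ)$ is a finite direct sum of copies of $f_*\BQ$, and $L\otimes\BQ$ is a direct factor of a finite direct sum of copies of $f_*\BQ$, hence (refining $f$ if necessary, or simply allowing $V$ to be a disjoint union of copies) a direct factor of $f_*\BQ$ for a suitable finite \'etale $f$. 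The torus case follows by applying $-\otimes^{\underline{}}\Gm$, exactly as in the proof of Corollary~\ref{cor:motive_torus}: the natural map $X_*(T)\,\underline{\otimes}\,\Gm\ra T$ is an isomorphism (checkable \'etale-locally on a split torus), $X_*(T)$ is a lattice, so $X_*(T)\otimes\BQ$ is a direct factor of some $f_*\BQ$, and tensoring with $\Gm\otimes\BQ$ (which is flat in the relevant sense, $\Gm$ being \'etale-locally free) gives $T\otimes\BQ$ as a direct factor of $f_*\BQ\otimes(\Gm\otimes\BQ)\simeq f_*(\Gm\otimes\BQ)$ by the projection formula for the finite \'etale morphism $f$.

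Next I would reduce (i) to the case where $S$ is connected: a lattice (resp.\ torus) is determined by a locally constant sheaf of finitely generated free abelian groups (resp.\ its cocharacter sheaf), and over a locally noetherian scheme the connected components are open, so one may work component by component; for a torus one also uses that $T$ is isotrivial iff $X_*(T)$ is. So assume $S$ connected; being geometrically unibranch and connected, $S$ is irreducible. Now a lattice over $S$ corresponds to a continuous representation of the fundamental group(oid) of $S$ on a finitely generated free abelian group, and the content of (i) is that this representation factors through a finite quotient, equivalently that the corresponding locally constant sheaf is trivialized by a \emph{finite} \'etale cover rather than merely a pro-\'etale one. This is precisely the statement that for a geometrically unibranch (normal, or more generally unibranch) connected scheme, locally constant constructible sheaves are exactly the finite \'etale representations, which is \cite[Exp.~X, 6.1--6.2]{SGA3_2_old} (the reference already cited in the excerpt) — indeed \cite[Exp.~X, 6.2]{SGA3_2_old} states exactly that over a geometrically unibranch base every lattice (and every torus) is isotrivial. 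So the cleanest route is to cite that result directly for (i).

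The only real subtlety — the main obstacle — is making sure the reduction to the connected case is legitimate and that the passage from "locally constant" to "trivialized by a \emph{finite} \'etale surjection" genuinely uses geometric unibranchness; over a general normal base this is classical, but one should phrase the hypothesis so that \cite{SGA3_2_old} applies verbatim (geometrically unibranch and locally noetherian is exactly their setting). The rest is formal homological algebra: the transfer argument for direct factors after $\otimes\BQ$, and the projection formula for a finite \'etale morphism, both of which are entirely routine. I would therefore present (i) as an immediate citation of \cite[Exp.~X, 6.2]{SGA3_2_old} (with the connected-components reduction spelled out in one sentence), and devote the bulk of the written proof to the transfer/projection-formula derivation of (ii) from (i), cross-referencing the torus computation in the proof of Corollary~\ref{cor:motive_torus} to avoid repetition.
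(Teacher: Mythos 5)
Your proposal is correct and follows essentially the same route as the paper: point (i) is quoted from SGA3 (for tori the precise reference is [Exp.\ X, Th\'eor\`eme 5.16], while 6.2 is what the paper uses for the lattice/discrete case), and point (ii) is the same $\BQ$-linear transfer argument, with the same disjoint-union-of-copies trick to turn the direct sum $f_*\BQ^{\oplus r}$ into a single $f_*\BQ$ for a finite \'etale $f$. The only cosmetic difference is that the paper handles the torus case of (ii) by running the transfer argument directly on $T$, whereas you pass through $X_*(T)\underline{\otimes}\Gm$ and the projection formula; both are valid.
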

\begin{proof}
Point (i) for lattices follows from the discussion in \cite[Exp. X 6.2]{SGA3_2_old}. For tori, it is precisely \cite[Exp. X Th\'eor\`eme 5.16]{SGA3_2_old}. 

We now prove Point (ii). Let $L$ be a lattice over $S$. By (i), we can find a finite \'etale cover $g:V\ra S$ such that $g^*L$ is split, say $g^*L\simeq \BZ^r$. Because $g$ is finite \'etale, $L$ becomes a direct factor of $g_*g^*L$ after inverting $\deg(f)$ by a transfer argument. We thus have that $L\otimes\BQ$ is a direct factor of $g_*g^*L\otimes\BQ\simeq g_*\BQ^r$. Write $f:V^{\coprod r}\ra $ for the coproduct  of $r$ copies of $g$. Then $g_*\BQ^{\coprod r}\simeq f_*\BQ$. This concludes the proof of (ii) for lattices. The case of tori follows by the same argument.
\end{proof}

\begin{defi}\label{def:1-mot}
  Let $S$ be a scheme. A $2$-term complex of commutative $S$-group schemes
  \[ M=[L\longrightarrow G]
  \]
is called a \emph{Deligne $1$-motive} over $S$ if $L$ is a lattice and $G$ is a semi-abelian scheme. A morphism of Deligne $1$-motives is a morphism of complexes of group schemes, or equivalently a morphism of complexes of the associated representable sheaves on $(\Sm/S)_\et$. We denote by $\CM_1(S,\BZ)$ the category of Deligne $1$-motives. It is an additive category, with biproducts induced by fibre products of $S$-group schemes.

A Deligne $1$-motive $M=[L\ra G]$ comes with a $3$-term functorial weight filtration, defined as
\[W_{-2}M=[0\longrightarrow T]\]
\[W_{-1}M=[0\longrightarrow G]\]
\[W_{0}M=M.\]

\end{defi}

\begin{notation}
Let $f:[L\rightarrow G]\rightarrow [L'\rightarrow G']$ be a morphism of Deligne $1$-motives. We use the notation $f_L$, $f_G$, $f_A$, $f_T$ for the induced maps $\Gr^W_0f:L\ra L'$, $W_{-1} f:G\ra G'$, $\Gr^W_{-1} f:A\ra A'$, $\Gr^W_{-2} f:T\ra T'$.
\end{notation}

\begin{defi}

Let $f:S'\rightarrow S$ be any morphism of schemes. Then pullback of $S$-group schemes along $f$ induces an additive functor
\[
f^*:\CM_1(S,\BZ)\rightarrow \CM_1(S',\BZ).
\]
\end{defi}

We are not so much interested in $1$-motives per se but rather in the objects they define in the derived category of sheaves with rational coefficients.

\begin{lemma}
  \label{lemma:qiso_Deligne}
 Any morphism in $\CM_1(S,\BZ)$ which induces a quasi-isomorphism of complexes of abelian sheaves on $(\Sm/S)_{\et}$ is an isomorphism.
\end{lemma}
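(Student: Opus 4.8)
\textit{Proof sketch.} Let $\phi:M=[L\to G]\to M'=[L'\to G']$ be a morphism of Deligne $1$-motives which is a quasi-isomorphism of the associated complexes of \'etale sheaves on $\Sm/S$, placed in degrees $0$ and $-1$. I want to produce an inverse in $\CM_1(S,\BZ)$, i.e.\ a morphism of two-term complexes of group schemes $\psi:M'\to M$ with $\psi\phi=\id$ and $\phi\psi=\id$. The plan is to exploit the rigidity of the individual group schemes involved: a morphism between two lattices (resp.\ abelian-by-torus schemes) that is an isomorphism of \'etale sheaves is already an isomorphism of group schemes, because the functor from group schemes of these types to \'etale sheaves on $\Sm/S$ is fully faithful (both are representable, and morphisms of sheaves between representable objects are morphisms of schemes by Yoneda). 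So the genuine content is to reduce the quasi-isomorphism hypothesis on the two-term complex to the assertion that $\phi_L:L\to L'$ and $\phi_G:G\to G'$ are themselves isomorphisms of sheaves.

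First I would analyze the long exact cohomology sequence of the complexes. Since $L\to G$ has image a discrete subgroup in $G$ (the map $L\to G$ of a $1$-motive is a monomorphism of sheaves — $L$ being torsion-free and the map having trivial kernel, which one checks on geometric fibers), the complex $[L\to G]$ has $H^{0}=0$ and $H^{-1}=\ker(L\to G)$; more to the point, what matters is the mapping cone of $\phi$. The morphism $\phi$ fits into a morphism of short exact sequences of complexes $0\to G[1]\to [L\to G]\to L\to 0$, hence by the nine-lemma / snake argument applied to the cones, $\phi$ is a quasi-isomorphism of the two-term complexes if and only if the induced maps on the ``rows'' $\phi_L:L\to L'$ and $\phi_G:G\to G'$ are both quasi-isomorphisms of the (single-term) complexes, i.e.\ isomorphisms of sheaves. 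Concretely: the cone of $\phi$ is acyclic; projecting via $W_0/W_{-1}$ gives that $\cone(\phi_L)$ is acyclic, hence $\phi_L$ is a sheaf isomorphism; then the five-lemma on the two exact triangles $G[1]\to M\to L$ and $G'[1]\to M'\to L'$ forces $\cone(\phi_G)$ acyclic, hence $\phi_G$ is a sheaf isomorphism as well.

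Once $\phi_L$ and $\phi_G$ are isomorphisms of \'etale sheaves on $\Sm/S$, full faithfulness of the embedding of lattices (resp.\ abelian-by-torus schemes) into $\Sh(\Sm/S)$ gives inverse morphisms of group schemes $\psi_L:L'\to L$ and $\psi_G:G'\to G$. It remains to check that $(\psi_L,\psi_G)$ is a morphism of complexes, i.e.\ that the square with $L'\to G'$, $\psi_L$, $\psi_G$, $L\to G$ commutes; this follows by inverting the corresponding commuting square for $\phi$, since all four maps are now invertible and the relation $\psi_G\circ(L'\to G')\circ\phi_L = \psi_G\circ\phi_G\circ(L\to G)=(L\to G)$ combined with $\phi_L$ invertible gives $\psi_G\circ(L'\to G') = (L\to G)\circ\psi_L$. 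Thus $\psi=(\psi_L,\psi_G)\in\CM_1(S,\BZ)$ is a two-sided inverse to $\phi$.

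The only genuinely non-formal input is the statement that the map $L\to G$ underlying a $1$-motive is a monomorphism of sheaves, so that the cohomology of $[L\to G]$ is concentrated correctly; this is part of the definition of a $1$-motive in Deligne's sense (or can be recovered from the hypotheses in Definition~\ref{def:1-mot} together with the fact that a torsion-free discrete group scheme has no nonzero maps to its own torsion), and I do not expect it to cause trouble. The potential subtlety — and the step I would be most careful about — is the full faithfulness claim for abelian-by-torus schemes mapping into \'etale sheaves on the \emph{smooth} site $\Sm/S$ rather than on $\Sch/S$; but since both sites have the same points and the group schemes in question are smooth (semi-abelian schemes are smooth), a morphism of the represented sheaves on $\Sm/S$ is detected on henselian local rings of smooth $S$-schemes and hence extends uniquely to a morphism of schemes, so this is harmless. $\square$
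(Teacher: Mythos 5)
There is a genuine gap at the central step of your argument. You claim that the quasi-isomorphism of the two-term complexes forces $\phi_L$ and $\phi_G$ to be quasi-isomorphisms (hence sheaf isomorphisms) separately, ``by the nine-lemma / snake argument applied to the cones'' and by ``projecting via $W_0/W_{-1}$''. This is not a formal consequence: acyclicity of $\mathrm{Cone}(\phi)$ does not imply acyclicity of $\mathrm{Cone}(\phi_L)$ or of $\mathrm{Cone}(\phi_G)$ (compare $[\BZ\xrightarrow{\;\id\;}\BZ]\to 0$, a quasi-isomorphism of two-term complexes with neither component an isomorphism). What the diagram chase actually gives is that the square formed by $u,u',\phi_L,\phi_G$ is bicartesian, i.e.\ that $u$ induces isomorphisms $\Ker(\phi_L)\simeq\Ker(\phi_G)$ and $\Coker(\phi_L)\simeq\Coker(\phi_G)$ of sheaves. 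The real content of the lemma, which your proposal never supplies, is to show that these common kernels and cokernels vanish, and this cannot be done by homological algebra alone: it is exactly here that one must use the structure of the weight-graded pieces. Namely, $\Ker(\phi_L)$ is a torsion-free discrete group scheme while $\Ker(\phi_G)$ has semi-abelian identity component and finite $\pi_0$, so an isomorphism between them forces both to be trivial; similarly $\Coker(\phi_L)$ is discrete while $\Coker(\phi_G)$ has connected fibers, forcing both to vanish. This is precisely how the paper argues. Once $\phi_L$ and $\phi_G$ are known to be sheaf isomorphisms, your concluding steps (full faithfulness of representable sheaves on $\Sm/S$, inverting the commuting square) are fine and agree with what the paper leaves implicit.

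A secondary but telling error: the fact you single out as ``the only genuinely non-formal input'', namely that $u\colon L\to G$ is a monomorphism of sheaves, is false in general and is not part of Definition~\ref{def:1-mot} (e.g.\ $[\BZ\xrightarrow{\;0\;}\Gm]$ is a perfectly good Deligne $1$-motive), and it plays no role in a correct proof. The genuinely non-formal input is instead the dichotomy ``discrete versus connected/semi-abelian'' applied to $\Ker$ and $\Coker$ of the components, which is the step missing from your sketch.
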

\begin{proof}
  Let $f=(f_L,f_G):[L_1\longrightarrow G_1]\rightarrow
  [L_2\longrightarrow G_2]$ be a quasi-isomorphism of complexes of
  sheaves. By a diagram chase, this is equivalent to $\Ker(f_L)\simeq
  \Ker(f_G)$ and $\Coker(f_L)\simeq \Coker(f_G)$. Since $\Ker(f_L)$ is
  locally constant finitely generated free and $\Ker(f_G)$ is a group
  scheme whose identity component is semi-abelian and with finite
 $\pi_0$, they must be both trivial. Similarly, $\Coker(f_L)$ is
  discrete and $\Coker(f_G)$ has connected fibres, so they must be
  both trivial. Hence $f$ is an isomorphism.
\end{proof}
We can consequently think of $\CM_1(S,\BZ)$ as a full subcategory of
$D(\Cpl(\Sh((Sm/S)_{\et},\BZ)))$.

\begin{defi}
Let $S$ be a noetherian scheme. We write $\CM_1(S)$ for the idempotent completion of the $\BQ$-linear category $\CM_1(S,\BZ)\otimes \BQ$.  We say that a morphism in $\CM_1(S)$ is integral if it comes from $\CM_1(S,\BZ)$. For $f:S'\rightarrow S$ morphism of schemes, we still write $f^*$ for the induced additive functor $\CM(S)\ra \CM(S')$.
\end{defi}

By the above, we can and do view $\CM_1(S)$ as a full subcategory of $D(\Cpl(\Sh((\Sm/S)_\et,\BQ)))$. In practice, the idempotent completion in the definition does not affect anything that we do in this paper, and we will allow ourselves statements of the form ``Let $\BM=[L\ra G]\otimes\BQ$ be an object in $\CM_1(S)$'' without spelling out the immediate reduction to that case. If $k$ is a field, the category $\CM_{1}(k,\BZ)\otimes\BQ$ is idempotent complete since it is an abelian category \cite[Lemme 3.2.2]{Orgogozo}.

\subsection{Continuity and smoothness}

We think of Deligne $1$-motives as "$1$-motivic local systems" over the base
$S$. The results in this section, which have classical analogues for local
systems and lisse $\ell$-adic sheaves, justify in part this intuition.

We start with a lemma about discrete group schemes.

\begin{lemma}
\label{lemm:continuity_discrete}
Let $S$ be a locally noetherian japanese scheme, $\eta$ its scheme of generic points. Then the category of discrete group schemes on $\eta$ is the 2-colimit of the categories of discrete group schemes on dense open subschemes of $S$. The same statement holds for the category of lattices.
\end{lemma}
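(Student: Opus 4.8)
The statement is really two assertions packaged together: continuity (essential surjectivity and full faithfulness after $2$-colimit) for discrete group schemes, and the same for lattices. Since a lattice is precisely a torsion-free discrete group scheme, and torsion-freeness is detected fiberwise, the lattice case will follow formally once the discrete case is established, so I would concentrate on discrete group schemes. The key observation is that a discrete group scheme over a scheme $T$ is the same datum as a locally constant constructible sheaf of finitely generated abelian groups on the small \'etale site $T_\et$, so the statement reduces to a statement about such sheaves, and ultimately to the well-known continuity of the small \'etale topos (and of constructible \'etale sheaves) under cofiltered limits of schemes with affine transition maps; here the relevant limit is $\eta = \varprojlim_U U$ over dense open subschemes $U \subseteq S$.

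First I would set up the equivalence: given a discrete group scheme $G/T$, the functor $U' \mapsto G(U')$ on $T_\et$ is a locally constant constructible sheaf, and conversely such a sheaf is represented by a discrete group scheme (finite \'etale locally it is constant, and constant group schemes are clearly representable; one glues by \'etale descent, which is effective for affine schemes, hence for these). This is functorial and compatible with pullback, so the category of discrete group schemes on $T$ is equivalent to the category $\mathsf{Loc}_c(T_\et)$ of locally constant constructible sheaves of finitely generated abelian groups. Next I would invoke continuity of constructible \'etale sheaves: for $\eta = \varprojlim_{U} U$ with $U$ ranging over the (cofiltered) system of dense open subschemes of the locally noetherian scheme $S$ — the transition maps being open immersions, in particular affine — the pullback functors induce an equivalence $2\text{-}\varinjlim_U \mathsf{Constr}(U_\et) \xrightarrow{\sim} \mathsf{Constr}(\eta_\et)$ (see \cite[Expos\'e VII]{SGA4_2} or \cite[\S 8]{EGAIV_3} for the scheme-theoretic spreading-out ingredients, together with the topos-theoretic limit formalism of \cite[Expos\'e VI]{SGA4_2}). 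One then has to check that this equivalence restricts to the subcategories of \emph{locally constant} constructible sheaves: a constructible sheaf on $\eta_\et$ is automatically locally constant (since $\eta$ is a finite disjoint union of spectra of fields), so essential surjectivity onto $\mathsf{Loc}_c(\eta_\et)$ is immediate; and if $\mathcal{F}$ is a constructible sheaf on some $U_\et$ whose pullback to $\eta$ is locally constant, then after passing to a smaller dense open $U'$ it becomes locally constant on $U'_\et$, because the locus where a constructible sheaf fails to be locally constant is a constructible subset not meeting the generic points, hence contained in a proper closed subset whose complement is dense. The Japanese (Nagata) hypothesis enters exactly to guarantee that finite \'etale covers spread out and, more importantly, to control the normalization so that ``locally constant'' behaves well — concretely it is used through Lemma~\ref{lemm:permutation_torus} and the spreading-out arguments, and is needed to handle the possible failure of isotriviality.

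The two things to verify by hand are full faithfulness and the spreading-out of the local-constancy condition. Full faithfulness says $\CM(\eta)$-morphisms between pullbacks equal the colimit of $U$-morphisms; translated through the sheaf equivalence this is the statement that $\Hom_{\eta_\et}(\eta^*\mathcal{F}, \eta^*\mathcal{G}) = \varinjlim_U \Hom_{U_\et}(\mathcal{F}|_U, \mathcal{G}|_U)$ for $\mathcal{F},\mathcal{G}$ locally constant constructible, which is a special case of continuity of $\mathrm{RHom}$ (or just $\mathrm{Hom}$ in degree $0$) for constructible sheaves under the limit — a morphism of sheaves on $\eta$ is a section of an internal Hom sheaf which is again constructible, and sections of constructible sheaves spread out. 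The main obstacle, and the part deserving genuine care, is making the passage ``constructible and generically locally constant $\Rightarrow$ locally constant on a dense open'' precise and functorial: one must argue that for a morphism $\mathcal{F} \to \mathcal{G}$ over $\eta$ realized over some $U$, shrinking $U$ to simultaneously make $\mathcal{F}$, $\mathcal{G}$ locally constant and the morphism defined is possible because only finitely many constructible ``bad loci'' are involved and none meets $\eta$, so their union avoids a dense open. I would write this out using the stratification result for constructible \'etale sheaves \cite[Expos\'e IX, Proposition~2.5]{SGA4_3} exactly as it is already invoked elsewhere in this paper, combined with noetherianity of $S$ to make the relevant partially ordered sets cofiltered.

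\begin{remark}
The proof for lattices is identical, appending at each step the observation that torsion-freeness of the stalks (equivalently, of the geometric fibers of the associated group scheme) is preserved by pullback and is detected at the generic points, so the subcategory of lattices is stable under all the functors involved.
\end{remark}
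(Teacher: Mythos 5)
Your overall strategy (translate discrete group schemes into locally constant sheaves on the small \'etale site, spread out over a dense open, and control everything by finite \'etale trivializations) is close in spirit to the paper's argument, which works directly with group schemes, trivializes them by finite \'etale Galois covers over normal opens, and spreads out the Galois cover and the representation using EGA~IV~\S 8. But as written your proof has a genuine gap at its central hinge: the claim that a locally constant constructible sheaf of finitely generated abelian groups is ``finite \'etale locally constant'', hence representable by a discrete group scheme after gluing by descent. Over a general base this is false: \'etale-locally constant group schemes with finitely generated fibres need not be isotrivial (this is exactly the phenomenon flagged in the remark preceding Lemma~\ref{lemm:permutation_torus}, whose conclusion requires a geometrically unibranch base, cf.\ SGA\,3, Exp.\ X). Since in the essential-surjectivity step you must pass back from the spread-out sheaf on a dense open $U$ to an honest discrete group scheme on $U$, this representability is precisely what has to be proved, and your justification for it does not hold; moreover constant group schemes with infinite fibres are not affine over the base, so ``descent is effective for affine schemes, hence for these'' also does not apply as stated. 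The repair is exactly what the paper does and what you only gesture at: reduce to $S$ reduced by topological invariance of the \'etale site, use the japanese hypothesis to get that the normal locus is open (so normal dense opens are cofinal in the system), and over such opens invoke isotriviality for discrete group schemes on geometrically unibranch schemes, after which finite \'etale (Galois) descent of constant group schemes is unproblematic. Without carrying out this reduction, the step where the sheaf is re-promoted to a group scheme fails; note also that this, and not the spreading out of finite \'etale covers, is where ``japanese'' actually enters.

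A secondary, repairable issue: the continuity statements of SGA\,4 you invoke for constructible sheaves are for torsion coefficients, so they do not directly give the $2$-colimit statement (nor the $\Hom$-spreading) for sheaves with finitely generated $\BZ$-stalks. For the objects at hand the correct substitute is the hands-on Galois argument: after shrinking to a normal open, both sheaves are trivialized by a common finite \'etale Galois cover, morphisms on either side are identified with equivariant maps of finitely generated Galois modules, and the transition system becomes constant; combined with spreading out of finite \'etale covers and of automorphism groups (EGA~IV, 8.8.2, 8.10.5, 17.7.8) this yields both essential surjectivity and the $\Hom$-colimit. That is, once the isotriviality/normality reduction is in place, your sheaf-theoretic packaging collapses to essentially the paper's proof, so the sheaf language buys little while adding the representability question you left unaddressed.
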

\begin{proof}
The statement is equivalent to the following results.
\begin{enumerate}[label={\upshape(\roman*)}]
\item For $L/\eta$ discrete group scheme, there exists $U\subset S$ dense open and $L'/U$ discrete such that $L\simeq \eta^*L'$. Moreover, if $L$ is a lattice, one can choose $L'$ to be a lattice as well.
\item For $U\subset S$ dense open, $L,L'/U$ discrete, we have
\[
\Hom(\eta^*L,\eta^*L')\simeq \Colim_{V\subset U}\Hom((V\ra U)^*L,(V\ra U)^*L').
\]
\end{enumerate}
We first make some reductions which apply both to (i) and (ii). By the topologically invariance of the \'etale site, we can assume $S$ to be reduced. Since $S$ is locally noetherian japanese and reduced, the normal locus of $S$ is open and non-empty \cite[Proposition~6.13.2]{EGAIV_2}. So any small enough open set $U$ in $S$ is normal, and in particular geometrically unibranch. By the discussion in \cite[Exp. X 6.2]{SGA3_2_old}, discrete group schemes on geometrically unibranch schemes are split by finite \'etale covers. Moreover, for any small enough open set $U$, the set of connected components (open by local noetherianness) of $U$ and of $\eta$ coincide. We can thus reduce to the case where $\eta$ is connected (i.e., $S$ irreducible).

We prove (i). Since $\eta$ itself is normal, there is a finite \'etale Galois cover $\tilde{\eta}/\eta$ such that $L_{\tilde{\eta}}$ is constant. In other words, $L$ corresponds to a representation $\rho$ of $\Gal(\tilde{\eta}/\eta)$ on a finitely generated abelian group $F$. By \cite[Th\'eor\`eme 8.8.2, Th\'eor\`eme 8.10.5]{EGAIV_3} and \cite[Th\'eor\`eme 17.7.8]{EGAIV_4} there exists a $U\subset S$ dense open and $\widetilde{U}/U$ finite \'etale such that $\widetilde{U}\times_U \eta\simeq \tilde{\eta}$. Up to shrinking $U$, one can assume $U$ to be normal. By \cite[Th\'eor\`eme 8.8.2]{EGAIV_3} applied to the finite group $\Gal(\tilde{\eta}/\eta)$, up to shrinking $U$ one can assume that $\Aut(\widetilde{U}/U)\simeq \Gal(\tilde{\eta}/\eta)$ ( in particular $\widetilde{U}/U$ is Galois). Then the representation of $\Gal(\widetilde{U}/U)$ on $F$ corresponding to $\rho$ via this isomorphism defines a discrete group scheme $L'/U$ such that $L\simeq \eta^*L'$ as required. The addendum about lattices follows from the construction, i.e., $L'$ is a lattice if $L$ is.

We now prove (ii). Let $U\subset S$ be a dense open subset, $L,L'/U$ discrete group schemes. We can shrink $U$ and assume it is normal. Let $\tilde{U}/U$ be a finite \'etale Galois covering trivializing $L$ and $L'$. We thus get two finitely generated abelian groups $F,F'$ with representations $\rho,\rho'$ of $\Gal(\tilde{U}/U)$. Let $\tilde{\eta}:=\tilde{U}\times_U \eta$. Then $\tilde{\eta}/\eta$ is Galois with  $G:=\Gal(\tilde{U}/U)\simeq \Gal(\tilde{\eta}/\eta)$. Then the system in the right-hand side of (ii) is constant and both sides of (ii) are in bijection with $\Hom_{G}(\rho,\rho')$. This concludes the proof.
\end{proof}

\begin{remark}
It is not clear to the author how to extend this result to a more general continuity result for discrete group schemes on a projective limit of schemes with affine transition morphisms.
\end{remark}

We deduce from this a continuity result for Deligne 1-motives.

\begin{prop}
\label{prop:continuity_1-mot}
  Let $S$ be a locally noetherian japanese scheme, $\eta$ its scheme of generic points. Then the category $\CM_1(\eta,\BZ)$ (resp. $\CM_1(\eta)$) is the 2-colimit of the categories $\CM_1(U,\BZ)$ (resp. $\CM_1(U)$) for all dense open subsets $U\subset S$.
\end{prop}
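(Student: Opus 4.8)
The plan is to reduce the statement about Deligne $1$-motives to the already established continuity result for discrete group schemes (Lemma~\ref{lemm:continuity_discrete}) together with standard spreading-out and limit arguments from \cite[\S 8]{EGAIV_3} for abelian-by-torus schemes. Since a Deligne $1$-motive is a two-term complex $[L\ra G]$ with $L$ a lattice and $G$ an abelian-by-torus scheme, and since morphisms of $1$-motives are just morphisms of such complexes, both the essential surjectivity and full faithfulness parts of the $2$-colimit assertion split along the pieces: $L$, $G$ (and internally $G$ as an extension of an abelian scheme $A$ by a torus $T$), and the structure map $L\ra G$.

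First I would treat \emph{essential surjectivity}. Given a Deligne $1$-motive $\BM_\eta=[L_\eta\ra G_\eta]$ over $\eta$, I would first spread out $G_\eta$: after reducing to $S$ reduced and irreducible (using topological invariance of the \'etale site and that small enough opens are normal, hence geometrically unibranch, as in the proof of Lemma~\ref{lemm:continuity_discrete}), the abelian-by-torus scheme $G_\eta$ extends to an abelian-by-torus scheme $G_U$ over some dense open $U$ by the limit formalism of \cite[Th\'eor\`emes 8.8.2, 8.10.5, 8.11.1]{EGAIV_3} applied to the pieces $A_\eta$ (an abelian scheme, which spreads out as an abelian scheme because properness, smoothness and the group structure all spread out) and $T_\eta$ (a torus of locally constant rank, which spreads out by Lemma~\ref{lemm:continuity_discrete} applied to its cocharacter lattice together with the equivalence between tori and lattices via $\mathrm{Hom}(-,\Gm)$), shrinking $U$ so the extension class also extends. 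Similarly $L_\eta$ extends to a lattice $L_U$ over a dense open (shrinking again), by the addendum in Lemma~\ref{lemm:continuity_discrete}. Finally the morphism $L_\eta\ra G_\eta$ is a section over $\eta$ of the fppf sheaf $\underline{\mathrm{Hom}}(L_\eta,G_\eta)$; since $L$ is \'etale-locally free of finite rank and we have already arranged everything to be defined over $U$, this is a finite collection of $U$-points of $G_U$ defined over $\eta$, each of which extends to a $U'$-point for some dense open $U'\subset U$ by \cite[Th\'eor\`eme 8.8.2]{EGAIV_3} applied after an \'etale localisation splitting $L$; compatibility of these sections under the lattice relations is again a condition that holds over a dense open. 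This produces a Deligne $1$-motive $\BM_{U'}$ with $\eta^*\BM_{U'}\simeq \BM_\eta$.

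For \emph{full faithfulness} of the functor $\mathop{\mathrm{colim}}_U \CM_1(U,\BZ)\ra \CM_1(\eta,\BZ)$: given $\BM_1,\BM_2\in \CM_1(U,\BZ)$, a morphism $\eta^*\BM_1\ra \eta^*\BM_2$ is a pair $(f_L,f_G)$ with $f_L:L_{1,\eta}\ra L_{2,\eta}$ a morphism of lattices and $f_G:G_{1,\eta}\ra G_{2,\eta}$ a morphism of abelian-by-torus schemes, making the square commute. By Lemma~\ref{lemm:continuity_discrete}, $f_L$ descends to some dense open. For $f_G$: decomposing along the weight filtration, $\mathrm{Hom}(G_{1,\eta},G_{2,\eta})$ is built from $\mathrm{Hom}(T_{1,\eta},T_{2,\eta})$ (a group of lattice maps between cocharacter lattices, handled by Lemma~\ref{lemm:continuity_discrete}), $\mathrm{Hom}(A_{1,\eta},A_{2,\eta})$ (homomorphisms of abelian schemes over $\eta$ spread out and are detected on dense opens by \cite[Th\'eor\`eme 8.8.2]{EGAIV_3}, using that abelian schemes are of finite presentation), and a $\mathrm{Hom}$ or $\mathrm{Ext}^1$ term relating torus and abelian parts, all of which spread out; any two descents agree after further shrinking, again by \cite[Th\'eor\`eme 8.8.2]{EGAIV_3}. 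The commutativity of the square descends because it is an equality of maps of finitely presented objects. Passing to the rational versions $\CM_1(U),\CM_1(\eta)$ is then immediate since $(-)\otimes\BQ$ and idempotent completion commute with filtered $2$-colimits.

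The main obstacle I anticipate is organising the spreading-out of the abelian-by-torus scheme $G$ and, in particular, of its extension structure $0\ra T\ra G\ra A\ra 0$: one must know not only that $A$ and $T$ individually extend but that the \emph{extension class} in $\mathrm{Ext}^1_U(A_U,T_U)$ extends and is compatible, which requires a little care with the relevant $\mathrm{Ext}$ sheaves and their continuity. All of this is standard limit-theory bookkeeping built on \cite[\S 8]{EGAIV_3} and the structure theory recalled in \cite[2.11]{Faltings_Chai}, but it is the only genuinely non-formal point; the lattice part is exactly Lemma~\ref{lemm:continuity_discrete} and the morphism-descent steps are routine applications of \cite[Th\'eor\`eme 8.8.2]{EGAIV_3}.
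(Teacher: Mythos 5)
Your overall strategy (reduce to essential surjectivity plus descent of morphisms, handle lattices and tori via Lemma~\ref{lemm:continuity_discrete}, and use the limit theorems of \cite[\S 8]{EGAIV_3} for the rest) is the same as the paper's, but there is a genuine gap at exactly the point you flag. For essential surjectivity you propose to spread out $A_\eta$ and $T_\eta$ separately and then extend the \emph{extension class} to a class in $\mathrm{Ext}^1_U(A_U,T_U)$, and you dismiss this as ``standard limit-theory bookkeeping built on \cite[\S 8]{EGAIV_3}''. It is not: the cited theorems give spreading out of finitely presented schemes, morphisms and various properties, but they say nothing about continuity of $\mathrm{Ext}^1$ of abelian schemes by tori, and as written your argument never produces the group scheme $G_U$. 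To make your route work you would need an extra input, e.g.\ the Barsotti--Weil identification of $\mathrm{Ext}^1(A,\Gm)$ with points of the dual abelian scheme (so that, after splitting $T$ \'etale-locally and descending, the class becomes a finite collection of points of a finitely presented scheme, which do spread out), together with representability of the dual over $U$. The paper sidesteps all of this: it spreads out $G_\eta$ \emph{directly} as a smooth finitely presented group scheme using \cite[Th\'eor\`emes 8.8.2, 8.10.5]{EGAIV_3} and \cite[Proposition~17.7.8]{EGAIV_4}, then spreads out the maps in the sequence $T'\ra G'\ra A'$ and checks exactness over a smaller open by \cite[Th\'eor\`eme 8.10.5]{EGAIV_3}; no $\mathrm{Ext}$ continuity is ever needed.

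A second, softer point: for full faithfulness you decompose $\mathrm{Hom}(G_{1,\eta},G_{2,\eta})$ along the weight filtration, invoking ``a $\mathrm{Hom}$ or $\mathrm{Ext}^1$ term relating torus and abelian parts''. A morphism of semi-abelian schemes does not split along this filtration, so recombining the graded pieces again runs into extension data; this detour is both vague and unnecessary. Since $G_1$ and $G_2$ are of finite presentation over $U$, a morphism over $\eta$ extends to a scheme morphism over a dense open by \cite[Th\'eor\`eme 8.8.2]{EGAIV_3}, and the conditions of being a group homomorphism and of commuting with the structure maps $L_i\ra G_i$ hold over a dense open (after the same Galois-descent reduction you use for the lattice part). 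That is the paper's argument, and it closes the remaining imprecision in your sketch.
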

\begin{proof}
The case of $\CM_1(-)$ follows directly from the one of $\CM_1(-,\BZ)$. We have to show that 
  \begin{enumerate}[label={\upshape(\roman*)}]
  \item for all $M\in \CM_1(\eta,\BZ)$, there exists $U\subset S$ dense open and
    $M'\in \CM_1(U,\BZ)$ such that $M\simeq \eta^*M'$, and that
  \item for all $U\subset S$ dense open and all $M,N\in\CM_1(U,\BZ)$:
    \[
    \CM_1(\eta,\BZ)(\eta^*M,\eta^*N)\simeq \Colim_{V\subset
      U}\CM_1(V,\BZ)((V\ra U)^*M,(V\ra U)^*N).
    \]
  \end{enumerate}
 We prove (i). Let $M=[L\rightarrow
  G]\in\CM_1(\eta,\BZ)$ with the extension $0\rightarrow T\rightarrow
  G\rightarrow A\rightarrow 0$.

  By \cite[Th\'eor\`eme 8.8.2.(ii), Scholie 8.8.3, Th\'eor\`eme
  8.10.5.(xii)]{EGAIV_3} and \cite[Proposition~17.7.8]{EGAIV_4}, we
  can find an $U\subset S$ and a smooth group scheme $G'/U$ such
  that $G\simeq G'\times_{U}\eta$. Recall that an abelian scheme is by
  definition a smooth proper group scheme with connected fibres, hence by \cite[Th\'eor\`eme
  8.8.2.(ii), Scholie 8.8.3, Th\'eor\`eme 8.10.5.(xii)]{EGAIV_3} and
  \cite[Proposition~17.7.8]{EGAIV_4}, we can shrink $U$ and find an
  abelian scheme $A'/U$ such that $A\simeq
  A'\times_{U}\eta$. By Lemma \ref{lemm:continuity_discrete} and the duality between lattices and tori, we can shrink $U$ and assume that there exists a lattice $L'$ and a torus $T'$ over $U$ such that $L\simeq L'\times_U\eta$ and $T\simeq T'\times_U\eta$. 

We have spread out the pure pieces of $M$, it remains to glue them together. By \cite[Th\'eor\`eme 8.8.2.(i)]{EGAIV_3}, up to shrinking $U$, we have morphisms $T'\ra G'\ra A'$ which restrict to the extension defining $G$. By a standard argument based on \cite[Th\'eor\`eme 8.10.5]{EGAIV_3}, up to shrinking $U$, this is in fact an exact sequence of group schemes. Finally, we have to spread out the morphism $L\ra G$. This can be done by the same Galois descent argument as in the end of the proof of Lemma \ref{lemm:continuity_discrete}.

Let us now prove (ii). In $\CM_1(-,\BZ)$, the components of a morphism are morphisms of (group) schemes. It is enough to spread them out one by one because the resulting diagram will commute by schematic density of $\eta$ in $S$. We have treated morphisms of lattices in Lemma \ref{lemm:continuity_discrete}. The case of morphisms of semi-abelian schemes (which are in particular of finite presentation) is a direct application of \cite[Th\'eor\`eme 8.8.2.(i)]{EGAIV_3}.
\end{proof}

When the base scheme is moreover reduced or even normal, we can say more.
\begin{prop}
\label{prop:smooth_1_mot}
  Let $S$ be a locally noetherian japanese scheme, $i:\eta\rightarrow S$ its scheme of generic
  points.
  \begin{enumerate}[label={\upshape(\roman*)}]
  \item Suppose $S$ reduced. Then the pullback functor $\eta^*:\CM_1(S,\BZ)\ra \CM_1(\eta,\BZ)$ (resp. $\eta^*:\CM_1(S)\rightarrow \CM_1(\eta)$) is faithful.
  \item Suppose moreover that $S$ is noetherian and normal. Then $\eta^*$ is fully faithful.
  \end{enumerate}
\end{prop}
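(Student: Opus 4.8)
The claim is a statement about Deligne $1$-motives: for $S$ noetherian excellent reduced with scheme of generic points $\eta$, the restriction functor $\eta^*:\CM_1(S,\BZ)\to\CM_1(\eta,\BZ)$ is faithful, and if moreover $S$ is normal it is fully faithful (and similarly with $\BQ$-coefficients, which formally follows). The plan is to reduce both assertions to classical schematic-density statements about morphisms of $S$-group schemes. Since the $\BQ$-linear statements follow from the integral ones by applying $-\otimes\BQ$ and idempotent completion, I would work throughout with $\CM_1(-,\BZ)$.

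First I would treat faithfulness. A morphism $f:M\to M'$ in $\CM_1(S,\BZ)$ is a pair of morphisms of commutative $S$-group schemes $(f_L:L\to L',\ f_G:G\to G')$ making the square commute. To say $\eta^*f=0$ is to say $f_L\times_S\eta=0$ and $f_G\times_S\eta=0$. Now $L,L'$ are lattices, hence separated and of finite type, and $G,G'$ are abelian-by-torus, hence separated of finite type; moreover the source group schemes are flat over $S$. Since $S$ is reduced, $\eta\hookrightarrow S$ is schematically dense. A morphism of $S$-schemes that vanishes (in the group-scheme sense, i.e. equals the zero section composed with the structure map) after base change to a schematically dense subscheme of the source's base must itself be zero: the locus where two morphisms into a separated $S$-scheme agree is closed, and it contains a schematically dense open, hence is everything. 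Concretely I would invoke that for $X$ flat and of finite type over reduced $S$ and $Y$ separated over $S$, two $S$-morphisms $X\to Y$ agreeing over each generic point agree, because $X$ is then also reduced and the equalizer is a closed subscheme containing a dense open. This gives $f_L=0$ and $f_G=0$, hence $f=0$, proving (i).

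For (ii), fullness, I need to extend a morphism $g:\eta^*M\to\eta^*M'$ over $\eta$ to a morphism over $S$, assuming $S$ normal. The strategy is to extend the two components $g_L$ and $g_G$ separately and then observe the resulting square automatically commutes by schematic density (as in the proof of Proposition~\ref{prop:continuity_1-mot}(ii)). For the lattice component $g_L:L_\eta\to L'_\eta$: lattices over a normal (hence geometrically unibranch) scheme are isotrivial by the discussion in \cite[Exp.~X 6.2]{SGA3_2_old}, so after a finite \'etale Galois cover both $L$ and $L'$ become constant, and $g_L$ corresponds over $\eta$ to a $\Gal$-equivariant homomorphism of the underlying abelian groups; but the relevant finite \'etale cover of $\eta$ is the base change of a finite \'etale cover of $S$ (since $S$ is normal with function field(s) $\eta$, and finite \'etale covers of $\eta$ correspond to covers of $S$ up to shrinking — here one genuinely uses normality, which lets one take normalizations inside the covers of $\eta$), and the $\Gal$-action matches, so the same equivariant homomorphism defines $g_L$ over $S$ already. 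This is the Galois-descent argument used at the end of the proof of Lemma~\ref{lemm:continuity_discrete}, except that over a normal base one does not need to shrink $S$. For the group-scheme component $g_G:G_\eta\to G'_\eta$: here I would use that $G$ is an abelian-by-torus scheme over normal $S$, and appeal to extension-of-morphisms results in the style of the N\'eron/Weil extension theorem and \cite{Faltings_Chai}, or more simply to the standard fact (Grothendieck–Raynaud) that a homomorphism between semi-abelian schemes over a normal base is determined by, and extends from, its restriction to the generic points — this rests on the N\'eron mapping property for abelian schemes over a discrete valuation ring together with a codimension-one purity argument valid because $S$ is normal.

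The main obstacle I anticipate is precisely the extension of $g_G$ across codimension $\geq 1$: over an arbitrary normal excellent base one must check that a generically defined homomorphism of abelian-by-torus schemes extends, which is where the normality hypothesis is truly essential (the equalizer/graph argument alone only gives uniqueness, not existence). I would handle this by reducing to the two pure pieces via the weight filtration: extend the torus part using isotriviality of tori over geometrically unibranch schemes (\cite[Exp.~X Th.~5.16]{SGA3_2_old}), extend the abelian part using the N\'eron property at codimension-one points plus \cite[Prop.~XIII 2.6]{Raynaud_ample}-type results on the flasqueness of abelian schemes over regular (here normal suffices after localizing) bases, and then reassemble the extension $G$ and the morphism using that $\Ext$ and $\Hom$ sheaves are determined generically. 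Once all components are extended, commutativity of the defining square over $S$ follows from faithfulness (part (i)) applied to the difference of the two composites, completing the proof.
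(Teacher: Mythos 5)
Your proposal is correct and takes essentially the same route as the paper: faithfulness via the reduced-to-separated density argument (the paper reduces to dense opens by continuity and cites \cite[Lemme 7.2.2.1]{EGAI}), and fullness by extending the two components separately over the normal base — the lattice part via \'etale covers/Galois descent (the paper cites \cite[Expos\'e I, Corollaire 10.3]{SGA1}) and the semi-abelian part via the extension theorem \cite[Proposition 2.7]{Faltings_Chai} — with commutativity of the square forced by the uniqueness/density argument. Two asides in your write-up should be repaired, though they do not affect the argument: flat of finite type over a reduced base does not imply the total space is reduced (use smoothness of lattices and abelian-by-torus schemes instead), and finite \'etale covers of $\eta$ do not in general extend to \'etale covers of $S$ (normalization yields a possibly ramified cover) — but you do not need this, since isotriviality over the geometrically unibranch scheme $S$ already furnishes the trivializing cover over $S$ itself.
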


\begin{proof}
Let us prove (i). By Proposition~\ref{prop:continuity_1-mot} this is equivalent to the faithfulness of the functor $j^*$ for all $j:U\rightarrow V$ dense
 open immersions. It is enough to show faithfullness of $j^*$ separately for morphisms of discrete group schemes and semi-abelian schemes, and in both cases it follows from the "reduced to separated" uniqueness criterion \cite[Lemme 7.2.2.1]{EGAI}.

  We now prove (ii). By Proposition~\ref{prop:continuity_1-mot}, it is enough to prove fullness for the functor $j^*$ for all dense open immersions $j:U\rightarrow V$. Let
$M=[L\stackrel{u}{\rightarrow}G]$, $M'=[L'\stackrel{u'}{\rightarrow}G']\in\CM_1(V,\BZ)$ and $f_U=(f^L_U,f^G_U):j^*M\rightarrow j^*M'$. First, using the normality of $V$ and \cite[Expos\'e I Corollaire 10.3]{SGA1}, the morphism $f^L_U$ extends uniquely to a morphism $f^L:L\rightarrow L'$. Second, using the normality of $V$ and \cite[Proposition~2.7]{Faltings_Chai}, the morphism $f^G_U$ extends uniquely to a morphism $f^G:G\rightarrow G'$. The uniqueness ensures that $(f^L,f^G)$ is a morphism $M\rightarrow M'$ which extends $f_U$.
\end{proof}

\subsection{Pushforward and Weil restriction}
\label{sec:Weil_Deligne}
Let $g:S'\rightarrow S$ be a finite \'etale morphism. We want to define a pushforward functor $g_*:\CM_1(S')\ra\CM_1(S)$ using Weil restriction of scalars. Recall the following definition.

\begin{defi}\label{def:Weil}
  Let $g:S'\rightarrow S$ be a morphism of schemes and $X/S'$ be a $S'$-scheme. The \emph{Weil restriction} $\Res_gX$ is the presheaf of sets on $\Sch/S$ defined for any $S$-scheme $U$ by
\[
\Res_gX(U)=X(U\times_S S').
\]
\end{defi}

If $X/S'$ is a commutative group scheme (or more generally an fppf sheaf of abelian groups on $\Sch/S$), then $\Res_gX$ is naturally an fppf sheaf of abelian groups on $\Sch/S$. Moreover, the construction of $\Res_g$ is functorial and compatible with base change on $S$. We summarise results on the representability of $\Res_g X$ from the litterature.

\begin{prop}\label{prop:Weil}
Let $g:S'\rightarrow S$ be a morphism of schemes and $X/S'$ be a $S'$-scheme.
  \begin{enumerate}[label={\upshape(\roman*)}]
  \item \cite[Theorem~1.5]{Olsson_restriction} Assume that $g$ is proper flat of finite presentation. Then $\Res_gX$ is representable by an algebraic space (note that we will only need the case $g$ finite flat, which is presumably easier, but we could not find a reference).
 \item \cite[7.6/5]{BLR} Assume that $g$ is finite flat. If $X$ is smooth (resp. of finite presentation) then $\Res_gX$ (which exists at least as an algebraic space by (i)) is smooth (resp. of finite presentation).
  \item \cite[7.6/5]{BLR} Assume that $g$ is finite \'etale. If $X$ is proper then $\Res_gX$ (which exists at least as an algebraic space by (i)) is proper.
  \item \cite[7.6/2]{BLR} Let $h:X\rightarrow Y$ be a closed immersion of $S'$-schemes. Then $\Res_gh:\Res_gX\rightarrow \Res_gY$ is a closed immersion of presheaves. As a corollary, if $X/S$ if affine, then $\Res_gX$ is representable by an affine scheme.
\end{enumerate}
\end{prop}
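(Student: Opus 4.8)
The proposition is, for the most part, a compilation of results available in the literature, so the plan is to quote them and then supply the missing reductions only for the one part that needs a genuine argument, namely the corollary in (iv). Statement (i) is \cite[Theorem~1.5]{Olsson_restriction}; in our applications we only ever use the case where $g$ is finite flat, but we are content to quote the general statement. Statements (ii) and (iii) are \cite[7.6/5]{BLR}: one checks that smoothness (resp.\ local finite presentation) of $X/S'$ is inherited by $R_g X/S$ when $g$ is finite flat, and properness when $g$ is finite \'etale, the latter through the valuative criterion together with the fact that a finite \'etale base change of the spectrum of a valuation ring is a finite product of spectra of valuation rings. Although \cite{BLR} states these for $R_g X$ a scheme, the arguments carry over verbatim to algebraic spaces. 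The first assertion of (iv) — that $R_g h$ is a closed immersion of presheaves when $h$ is a closed immersion — is \cite[7.6/2]{BLR}.

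For the corollary, assume $X/S'$ is affine. Affineness over $S$ is Zariski-local on $S$, so I may assume $S$, hence the finite $S$-scheme $S'$, affine, say $S'=\Spec(R')$; then $X=\Spec(A)$ for an $R'$-algebra $A$. Writing $A$ as the filtered union of its finitely generated $R'$-subalgebras $A_i$, one has $X=\varprojlim_i \Spec(A_i)$, and since $R_g$ is right adjoint to base change along $g$ it commutes with this limit, so $R_g X=\varprojlim_i R_g\Spec(A_i)$; a filtered limit of affine $S$-schemes is affine over $S$, so it suffices to show each $R_g\Spec(A_i)$ is affine over $S$. Each $\Spec(A_i)$ is a closed subscheme of some $\BA^{n_i}_{S'}$, so by the first part of (iv) it is enough to see that $R_g\BA^n_{S'}$ is affine over $S$. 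But $g$ being finite flat, $g_*\CO_{S'}$ is a finite locally free $\CO_S$-module, and inspecting the functor of points (using $\Gamma(T\times_S S',\CO)=\Gamma(T,(g_*\CO_{S'})|_T)$ since $g$ is affine) shows that $R_g\BA^n_{S'}$ is represented by the affine $S$-scheme $\Spec_S\Sym_{\CO_S}\bigl((g_*\CO_{S'})^{\vee\oplus n}\bigr)$, which is Zariski-locally on $S$ a finite-dimensional affine space over $S$. Hence $R_g X$ is affine over $S$.

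The only non-formal ingredient here is Olsson's representability theorem in (i), which I do not reprove. The mildest subtlety in our own part of the argument is the treatment of an affine $X/S'$ which is not of finite type over $S'$; this is exactly what the filtered-limit step above handles, via the fact that $R_g$, being a right adjoint, commutes with filtered limits and that a filtered limit of affine schemes is affine.
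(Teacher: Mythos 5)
Your proposal is correct and matches the paper's treatment: the paper gives no argument beyond the citations to Olsson and to \cite[7.6]{BLR}, exactly as you do for (i)--(iii) and the closed-immersion part of (iv). Your added argument for the affine corollary (reduce to $S$ affine, write a general affine $X$ as a filtered limit of finite-type closed subschemes of affine spaces, use that $R_g$ is a right adjoint and that $R_g\BA^n_{S'}\simeq\Spec_S\Sym\bigl((g_*\CO_{S'})^{\vee\oplus n}\bigr)$ for $g$ finite locally free) is sound and correctly supplies the detail the paper leaves implicit in the reference, under the finite flat hypothesis on $g$ that is indeed intended there.
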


We now use the results above to analyse Weil restriction of pure $1$-motives. We are spared from having to consider algebraic spaces by the following result.

\begin{prop}\label{prop:weil_pieces}
  Let $g:S'\rightarrow S$ be a finite \'etale morphism. Let $T/S'$ be a torus (resp. $L/S'$ be a lattice, $A/S'$ an abelian scheme). Then $\Res_g T$ is a torus (resp. $\Res_g L$ is a lattice, $\Res_g A$ is an abelian scheme).
\end{prop}
\begin{proof}
  By Proposition~\ref{prop:Weil} (iv), we know that $\Res_g T$ is representable by a affine $S'$-group scheme. To show that it is a torus, because Weil restriction is compatible with base change, it is enough to show this \'etale locally on $S$, so that we can assume that $S'=\cup_{i=1}^n S\ra S$, and then $\Res_g T=\prod_{i=1}^n T$ is clearly a torus.

A lattice is in particular an \'etale group scheme over $S$, and \'etale morphisms (including not of finite type) satisfy effective descent in the \'etale topology (see e.g. the reference \cite[Theorem 5.19]{Rydh_descent} which proves a much stronger result) hence to show that $\Res_g L$ is a lattice it is enough to show this \'etale locally on $S$, so we can once more assume that $S'=\cup_{i=1}^n S\ra S$, and then $\Res_g L=\prod_{i=1}^n L$ is again clearly a lattice.
  
  By Proposition~\ref{prop:Weil} (i)-(iii), we know that $\Res_gA$ is representable by a proper smooth algebraic group space over $S$. By \cite[Theorem~1.9]{Faltings_Chai}, this implies that $\Res_g A$ is an abelian scheme. 
\end{proof}

Now we tackle the case of semi-abelian schemes.

\begin{lemma}\label{lemm:weil_exact}
Let $g:S'\ra S$ be a morphism of schemes. 
\begin{enumerate}[label={\upshape(\roman*)}]
\item \label{left_exact} When restricted to fppf sheaves of abelian groups, the functor $\Res_g$ is left exact.
\item \label{smooth_surj} Assume that $g$ is finite flat. Let $f:G\ra H$ be a smooth and surjective morphism between commutative groups schemes of finite presentation. Then the morphism of algebraic group spaces $\Res_g f:\Res_g G\ra \Res_g H$ is smooth and surjective.
\item \label{weil_exact} Assume $g$ is finite flat. Let $0\ra G'\stackrel{i}{\ra} G\stackrel{p}{\ra} G''\ra 0$ be an exact sequence of smooth commutative $S$-group schemes with $G\ra G''$ flat (and hence smooth). The sequence \[0\ra \Res_g G'\ra \Res_g G\ra \Res_g G''\ra 0\] is exact.  
\end{enumerate}
\end{lemma}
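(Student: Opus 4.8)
The plan is to prove the three parts of Lemma~\ref{lemm:weil_exact} in the order stated, since each subsequent part uses the previous one.

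For \ref{left_exact}, I would argue directly from the definition $R_g X(U) = X(U\times_S S')$: the functor $U\mapsto U\times_S S'$ is exact (base change along $g$ is flat, but in any case $-\times_S S'$ on the category of $S$-schemes preserves the relevant covers, so it induces a morphism of sites), hence precomposition with it is left exact on sheaves; equivalently, $R_g$ is right adjoint to the restriction-of-scalars functor $g^*$ on fppf sheaves of abelian groups, so it automatically preserves kernels and is left exact. This part is essentially formal.

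For \ref{smooth_surj}, the key point is the infinitesimal lifting criterion. Smoothness of $R_g f$: given a square-zero extension $U_0\hookrightarrow U$ of affine $S$-schemes and a $U$-point of $R_g H$ lifting to a $U_0$-point of $R_g G$, we pull back along $-\times_S S'$; since $g$ is finite flat, $U_0\times_S S'\hookrightarrow U\times_S S'$ is again a square-zero extension (the ideal is $I\otimes_{\CO_S}\CO_{S'}$, still square-zero), and smoothness of $f$ provides the required lift over $S'$, which by the adjunction descends to the desired $U$-point of $R_g G$. Surjectivity: a point of $R_g H$ over $U$ is a point of $H$ over $U\times_S S'$; since $f$ is smooth surjective it has sections fppf-locally on $U\times_S S'$, and because $g$ is finite flat one can (after an fppf base change on $U$) find such a section; by the adjunction this gives a point of $R_g G$ mapping to the given one, fppf-locally on $U$. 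I should be slightly careful that $R_gG$, $R_gH$ are algebraic spaces (Proposition~\ref{prop:Weil}~(i)), so ``smooth surjective morphism'' is meant in the category of algebraic spaces, but the lifting-criterion and fppf-local-section arguments are insensitive to this.

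For \ref{weil_exact}, I combine the two previous parts: by \ref{left_exact}, $0\to R_gG'\to R_gG\to R_gG''$ is exact, so it only remains to see that $R_g p:R_gG\to R_gG''$ is an epimorphism of fppf sheaves. Since $p:G\to G''$ is flat and surjective between smooth group schemes it is smooth and surjective, so \ref{smooth_surj} applies and gives that $R_g p$ is smooth surjective, hence in particular an fppf epimorphism; thus the sequence is exact. The main obstacle, such as it is, is \ref{smooth_surj}: one has to handle the passage of a square-zero (resp.\ fppf-local) situation through the base change $-\times_S S'$ cleanly, using that $g$ finite flat keeps nilpotent ideals nilpotent and keeps quasi-compactness, and one must phrase everything so that it makes sense for algebraic spaces rather than schemes. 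Everything else is adjunction bookkeeping.
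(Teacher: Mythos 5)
Parts \ref{left_exact} and \ref{weil_exact}, and the smoothness half of \ref{smooth_surj}, are fine and essentially follow the paper's route: left exactness is formal (right adjoint to pullback of sheaves), smoothness of $R_gf$ comes from the infinitesimal criterion together with the observation that $U_0\times_S S'\hookrightarrow U\times_S S'$ is again a square-zero thickening, and \ref{weil_exact} is exactly ``kernel from \ref{left_exact}, fppf epimorphism from \ref{smooth_surj}''.

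The genuine gap is in your surjectivity argument for \ref{smooth_surj}. You say: a $U$-point of $R_gH$ is a point of $H$ over $U\times_S S'$; $f$ has sections fppf-locally on $U\times_S S'$; and ``because $g$ is finite flat one can (after an fppf base change on $U$) find such a section.'' That last step is precisely the nontrivial content of the statement and is asserted, not proved: an fppf (or \'etale) cover $W\to U\times_S S'$ is in general not refined by a cover of the form $V\times_S S'$ with $V\to U$ fppf, and the natural candidate for $V$ is the Weil restriction $R_{g_U}W$, whose surjectivity over $U$ is exactly the same kind of statement you are trying to prove --- so as written the argument is circular (or at best incomplete). The way to close it, and the way the paper argues, is to remember that surjectivity of the morphism of algebraic spaces $R_gf$ can be checked on geometric points, and that $R_g$ commutes with base change on $S$: one reduces to $S=\Spec(k)$ with $k$ algebraically closed, where $S'$ is a finite product of Artinian local $k$-algebras with residue field $k$. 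Then $R_gG(k)=G(S')\to H(S')=R_gH(k)$, and a given $S'$-point of $H$ lifts to $G$ by first lifting over the closed points (using surjectivity of $f$ and $k=\bar{k}$) and then over the nilpotent thickenings (using formal smoothness of $f$). Some argument of this kind --- reduction to Artinian points plus infinitesimal lifting --- is needed; without it the surjectivity claim, and hence the exactness on the right in \ref{weil_exact}, is not established.
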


\begin{proof}
Point \ref{left_exact} is clear from the definition. We turn to point \ref{smooth_surj}. The fact that $\Res_g f$ is smooth follows from the infinitesimal criterion of smoothness (and does not require that we are working with group schemes). The surjectivity can be tested pointwise on $S$, so that by compability of $\Res_g$ with base change we can assume that $S$ is the spectrum of a field $k$. Surjectivity is a geometric property, so that we can assume $k$ to be algebraically closed as well. We then have to check the surjectivity of the induced map $\Res_g G(k)=G(S')\ra \Res_g H(k)=H(S')$ on $k$-points. Since $S'/k$ is finite flat, it is a product of finite local algebras. Surjectivity then follows from the surjectivity of $f$, the fact that $k$ is algebraically closed, and the formal smoothness of $f$. Note that if $g$ is finite \'etale, we do not need $f$ smooth.

For \ref{weil_exact}, it is enough to check that $\Res_g G'$ is the scheme-theoretic kernel of $\Res_g p$ and that $\Res_g p$ is an fppf morphism. The first assertion follows from \ref{left_exact}, and the second from \ref{smooth_surj}.
\end{proof}

\begin{prop}\label{prop:weil_sab}
  Let $g:S'\rightarrow S$ be finite \'etale and $G/S$ be a semi-abelian scheme. Then $\Res_g G$ is an semi-abelian scheme.
\end{prop}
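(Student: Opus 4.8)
The statement asserts that for $g:S'\to S$ finite \'etale and $G/S'$ an abelian-by-torus scheme, the Weil restriction $R_g G$ is an abelian-by-torus scheme. Since $G$ is an abelian-by-torus scheme, by definition it sits in a (unique) exact sequence $0\to T\to G\to A\to 0$ with $T$ a torus over $S'$ and $A$ an abelian scheme over $S'$, and $T$ has locally constant toric rank. The plan is to apply $R_g$ to this exact sequence and identify each term. First I would invoke Lemma~\ref{lemm:weil_exact}~\ref{weil_exact}: the morphism $G\to A$ is smooth (being the quotient by a torus, it is smooth and surjective) and in particular flat, so the hypotheses of \ref{weil_exact} are met (note $g$ finite \'etale is in particular finite flat), and we obtain an exact sequence of fppf sheaves of abelian groups
\[
0\to R_g T\to R_g G\to R_g A\to 0.
\]

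Next I would identify the outer terms using Proposition~\ref{prop:weil_pieces}. By Proposition~\ref{prop:weil_pieces}~\ref{weil_torus_lattice}, $R_g T$ is a torus over $S$; and since $g$ is \'etale, Proposition~\ref{prop:weil_pieces}(2) gives that $R_g A$ is an abelian scheme over $S$. Therefore $R_g G$ is an extension, as an fppf sheaf, of an abelian scheme by a torus. From the exact sequence, $R_g G$ is an extension of a smooth representable group scheme ($R_g A$) by a smooth affine representable group scheme ($R_g T$), and since extensions of smooth affine group schemes are affine over the base, $R_g G$ is representable by a smooth $S$-group scheme; its fibers are extensions of an abelian variety by a torus, hence connected, so $R_g G$ is semi-abelian. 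It remains to check the locally constant toric rank condition. The toric rank of the fiber $(R_g G)_s$ over a point $s\in S$ equals the rank of $(R_g T)_s$; since $R_g T$ is a torus over $S$ (in the sense of Definition~\ref{def:gr_schemes}, i.e.\ a torus, which over a general base has locally constant rank by the structure theory of tori, e.g.\ via \cite[Exp. X]{SGA3_2_old}), its rank is locally constant, and so is that of $R_g G$. Hence $R_g G$ is an abelian-by-torus scheme, as claimed.

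I do not expect a serious obstacle here, as the argument is a formal assembly of Proposition~\ref{prop:weil_pieces} and Lemma~\ref{lemm:weil_exact}, together with the already-discussed fact that extensions of smooth affine group schemes stay affine and thus representable. The one point requiring a little care is the representability of $R_g G$ itself: a priori Proposition~\ref{prop:Weil}~(i)--(iii) only guarantees $R_g G$ is a proper-or-smooth algebraic space, but combining the exact sequence above with the known representability (indeed affineness) of $R_g T$ and the known representability of the abelian scheme $R_g A$ upgrades $R_g G$ to an honest scheme; alternatively one can cite \cite[Theorem~1.9]{Faltings_Chai} once smoothness and the semi-abelian fiber structure are in hand, exactly as in the proof of Proposition~\ref{prop:weil_pieces}(2). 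The uniqueness of the torus--abelian decomposition (\cite[2.11]{Faltings_Chai}) then shows the decomposition $0\to R_g T\to R_g G\to R_g A\to 0$ we produced is \emph{the} canonical weight filtration of $R_g G$, which is a convenient bonus but not strictly needed for the bare statement.
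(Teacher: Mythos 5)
Your proof is correct and follows essentially the same route as the paper, which simply observes that the statement "follows directly from Proposition~\ref{prop:weil_pieces} and Lemma~\ref{lemm:weil_exact}~\ref{weil_exact}": apply $R_g$ to $0\to T\to G\to A\to 0$ and identify the outer terms. The extra details you supply (representability of $R_g G$ as a scheme via the affine torus torsor over $R_g A$, and the locally constant toric rank of the fibers) are exactly the points the paper leaves implicit, and your treatment of them is sound.
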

\begin{proof}
The result follows directly from Proposition~\ref{prop:weil_pieces} and Lemma \ref{lemm:weil_exact} \ref{weil_exact}.
\end{proof}

\begin{defi}\label{defi:pushforward_deligne}
  Let $g:S'\rightarrow S$ be a finite \'etale morphism. We define the Weil restriction of a Deligne $1$-motive $M=[L\stackrel{u}{\ra} G]\in \CM^\BZ_1(S')$ as $\Res_g M = [\Res_g L\stackrel{\Res_g u}{\ra} \Res_g G]$ which is in $\CM_1(S)$ by Propositions \ref{prop:weil_pieces} and \ref{prop:weil_sab}. This induces a functor
\[
g_*:\CM^{\BZ}_1(S')\rightarrow \CM^{\BZ}_1(S).
\]
\end{defi}

\section{Motivic cohomology in degrees $(*,\leq 1)$}
\label{sec:app_mot_coh}

We gather here some computations of rational motivic cohomology groups which are used at various places in this paper. Most of the following is present, explicitely or implicitely, in \cite[\S 11]{Ayoub_Etale} and in the K-theoretic interpretation of rational motivic cohomology provided by the comparison with Beilinson motives \cite[\S 14]{Cisinski_Deglise_BluePreprint}.

\begin{notation}
Let $S$ be a noetherian finite-dimensional scheme. For $p,q\in\BZ$, we write $H^{p,q}_\CM(S):= \DA(S)(\BQ_S,\BQ_S(q)[p])$.
\end{notation}

\begin{prop} \cite[Proposition~11.1 (b)]{Ayoub_Etale}
\label{prop:mot_coh_<}
Let $S$ be a noetherian finite-dimensional scheme. For all $w<0$ and $n\in \BZ$, we have $H^{n,w}_\CM(S)\simeq 0$.
\end{prop}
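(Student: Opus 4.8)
The statement asserts that rational motivic cohomology $H^{n,w}_\CM(S) = \DA(S)(\BQ_S, \BQ_S(w)[n])$ vanishes whenever the weight $w$ is negative. The plan is to reduce this to a known statement about the triangulated category of motives, cited as \cite[Proposition~11.1 (b)]{Ayoub_Etale}. In fact, since this is directly quoted from Ayoub's work, the cleanest proof is essentially a pointer to that reference, perhaps with a short indication of why it applies in the present generality (no regularity hypothesis, only noetherian and finite-dimensional).

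First I would recall the $\BA^1$-homotopy-theoretic origin of negative Tate twists. The object $\BQ_S(1)$ is, up to shift, the reduced motive of $\Gm$ pointed at the unit section, so $\BQ_S(1)[1]$ is a direct factor of $M_S(\Gm)$; dually, $\BQ_S(-1)[-2]$ appears as a direct factor of $(\BP^1_S \to S)_*\BQ$. The key structural input is that $\DA(S)$ is generated (with small sums) by motives of the form $f_\sharp \BQ_X(r)[r]$ for $f: X \to S$ smooth and $r \in \BZ$ — or, restricting to the generators relevant to the homotopy $t$-structure, by $f_\sharp\BQ_X(r)[2r]$ — and that the negative part of the Tate-twist filtration is trivial in a suitable sense. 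Concretely, the statement follows from the fact that for $f: X \to S$ smooth, $\DA(X)(\BQ_X, \BQ_X(w)[n]) = 0$ for $w < 0$, which is the content of Ayoub's computation; one then passes from generators to arbitrary objects using that $\BQ_S$ is compact (so $\DA(S)(\BQ_S, -)$ commutes with small sums) together with a standard dévissage, but here it is simplest to just invoke \cite[Proposition~11.1 (b)]{Ayoub_Etale} directly, which is stated for exactly this class of schemes.

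The main (and only) subtlety worth a sentence is checking that the hypotheses of \cite[Proposition~11.1]{Ayoub_Etale} match: loc. cit. works with the \'etale-local, rationally-linear stable homotopy category over noetherian finite-dimensional bases, which is precisely our $\DA(S) = \DA^\et(S,\BQ)$, and imposes no further hypothesis for the vanishing in negative weights (unlike the positive-weight computations, which sometimes need regularity). So the proof is a two-line citation. I expect no genuine obstacle here; the statement is recorded in the appendix precisely because it is elementary and used repeatedly, not because it is hard.

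\begin{proof}
This is \cite[Proposition~11.1 (b)]{Ayoub_Etale}: for any noetherian finite-dimensional scheme $S$, the motivic cohomology $H^{n,w}_\CM(S) = \DA(S)(\BQ_S,\BQ_S(w)[n])$ vanishes for all $w<0$ and all $n\in\BZ$. (Concretely, the negative Tate twists $\BQ_S(w)[2w]$ with $w<0$ occur as direct factors of motives of projective spaces, and the vanishing follows from the projective bundle formula together with the fact that $\DA(-)$ is orientable and that no cohomology of $S$ contributes in negative weights.)
\end{proof}
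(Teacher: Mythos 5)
Your proof is correct and matches the paper exactly: the paper offers no argument beyond the citation to \cite[Proposition~11.1 (b)]{Ayoub_Etale}, which is precisely what you do, and the hypotheses there (noetherian, finite-dimensional, \'etale topology, rational coefficients) do match $\DA(S)=\DA^\et(S,\BQ)$ as used here. Only note that your parenthetical ``concrete'' sketch is not load-bearing and is a bit circular (``no cohomology of $S$ contributes in negative weights'' is essentially the statement itself), so it is best left out or clearly marked as heuristic; the citation alone is the proof.
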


\begin{prop}\label{mot_coh_gg}
Let $S$ be a noetherian finite-dimensional quasi-excellent scheme (respectively, a regular and finite-dimensional scheme). For all $i\in \BN$ and $n>\dim(S)+2i$ (resp. $n>2i$), we have $H^{n,i}_\CM(S)\simeq 0$.
\end{prop}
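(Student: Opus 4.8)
The statement is a vanishing bound for rational motivic cohomology: $H^{n,i}_\CM(S) = 0$ for $n > \dim(S) + 2i$ over a quasi-excellent noetherian finite-dimensional $S$. The natural approach is to reduce to the known case $i = 0$, where the comparison with Beilinson motives identifies $H^{n,0}_\CM(S)$ with a piece of rational algebraic $K$-theory, and where such bounds follow from dimension estimates. First I would reduce to $S$ reduced using Corollary~\ref{cor:localisation_subcats}~\ref{rad_sub} (or directly the separation property of $\DA$, since $S_{\red}\to S$ is a nil-immersion and the motivic cohomology groups are unchanged). Then I would proceed by noetherian induction on $S$: choosing a dense open $j:U\to S$ with $U$ regular (possible since $S$ is quasi-excellent, hence has open regular locus) and complementary reduced closed immersion $i:Z\to S$ with $\dim(Z) \le \dim(S) - 1$, the colocalisation triangle $i_!i^!\BQ_S \to \BQ_S \to j_*j^*\BQ_S \rap$ yields a long exact sequence relating $H^{n,i}_\CM(S)$ to $H^{n,i}_\CM(U)$ and the groups $\DA(Z)(\BQ_Z, i^!\BQ_S(i)[n])$.

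For the regular open part, I would use absolute purity: if $U$ is regular and we stratify $Z$ into regular locally closed pieces $i_k : Z_k \to S$ of codimension $c_k \ge 1$, then $i_k^!\BQ_S \simeq \BQ_{Z_k}(-c_k)[-2c_k]$, so the contribution of the stratum $Z_k$ to $H^{n,i}_\CM(S)$ is $H^{n-2c_k, i-c_k}_\CM(Z_k)$. When $c_k > i$ this group vanishes by Proposition~\ref{prop:mot_coh_<} (negative weight). When $c_k \le i$, we have $\dim(Z_k) \le \dim(S) - c_k$ and $n - 2c_k > \dim(S) + 2i - 2c_k = \dim(S) + 2(i - c_k) \ge \dim(Z_k) + c_k + 2(i-c_k) \ge \dim(Z_k) + 2(i-c_k)$, so the inductive hypothesis (applied to $Z_k$, which is of strictly smaller dimension, with twist $i - c_k \ge 0$) gives the vanishing. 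Combined with the long exact sequence, this reduces the vanishing of $H^{n,i}_\CM(S)$ for $n > \dim(S) + 2i$ to the same statement for the regular scheme $U$.

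It then remains to prove the bound for $S$ regular. Here I would invoke the identification of rational motivic cohomology of a regular scheme with (a graded piece of) rational algebraic $K$-theory via \cite[Corollary 14.2.14]{Cisinski_Deglise_BluePreprint}: $H^{n,i}_\CM(S) \simeq \mathrm{gr}^i_\gamma K_{2i-n}(S)_\BQ$. Since $S$ is regular noetherian of dimension $d := \dim(S)$, it has a covering by $d+1$ affine opens on which $K$-theory can be controlled, or more directly one can use the Brown–Gersten/descent spectral sequence for Zariski (or Nisnevich) cohomology whose $E_2$-term $H^p_{\Zar}(S, \mathcal{K}_q)$ vanishes for $p > d$; this forces $K_m(S)_\BQ$, and hence each of its $\gamma$-graded pieces, to vanish for $m < -d$, i.e. $H^{n,i}_\CM(S) = 0$ for $2i - n < -d$, which is exactly $n > d + 2i$. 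Alternatively, one can stay purely within $\DA$: by continuity for $\DA$ (Proposition~\ref{prop:subcats_continuity}-style arguments, or \cite[Proposition~3.19]{Ayoub_Etale}) reduce to $S$ of finite type over a Dedekind ring or a field, then use $h$-descent to a smooth situation and the classical bound for smooth schemes over a field. The main obstacle is making this last regular-base input precise and properly cited: the excerpt states the $i=0$ case (Proposition~\ref{prop:mot_coh_0}, referenced but not displayed) and the negative-weight vanishing, so the cleanest route is to bootstrap from those via the purity/induction mechanism above, and the delicate point is checking the numerology in the inductive step — that the codimension shifts interact correctly with both the dimension drop and the weight drop — which is the computation displayed in the second paragraph.
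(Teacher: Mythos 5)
There is a genuine gap at the heart of your inductive step. After choosing a dense regular open $U\subset S$ with reduced closed complement $Z$, you stratify $Z$ by regular locally closed pieces $i_k:Z_k\to S$ and claim $i_k^!\BQ_S\simeq \BQ_{Z_k}(-c_k)[-2c_k]$. Absolute purity (as stated in this paper and in general) requires \emph{both} the subscheme and the ambient scheme to be regular; here $S$ is singular — that is the whole point of the induction — so the purity isomorphism is unavailable, and $i^!\BQ_S$ is an uncontrolled object on $Z$. The analogous stratification-plus-purity arguments that do appear in the paper (e.g.\ in the proof of Proposition~\ref{prop:mot_coh_1}) are always carried out with a regular ambient scheme. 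So your long exact sequence reduces the claim for $S$ to the claim for regular $U$ plus groups $\DA(Z)(\BQ_Z,i^!\BQ_S(i)[n])$ that you cannot identify with motivic cohomology of lower-dimensional schemes, and the induction does not close. (Your regular-base input is fine, indeed overkill: for $S$ regular one even has $H^{n,i}_\CM(S)=0$ for all $n>2i$, e.g.\ because negative rational $K$-groups of regular noetherian schemes vanish; the entire content of the proposition is the singular case, which is exactly where your argument breaks.)

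The paper avoids this by a purely $K$-theoretic route: $H^{n+2i,i}_\CM(S)$ is a direct factor of $\DA(S)(\BQ[n],\bigoplus_j\BQ(j)[2j])$, which via the comparison with Beilinson motives and the identification $\bigoplus_j\BQ(j)[2j]\simeq \mathrm{KGL}_{\BQ,S}$ equals $\mathrm{KH}_n(S)\otimes\BQ$; one then invokes the vanishing $\mathrm{KH}_n(S)\otimes\BQ=0$ for $n<-\dim(S)$ from Kelly's theorem, which is where quasi-excellence is really used (through cdh-descent and resolution by alterations). In your proposal quasi-excellence only serves to provide an open regular locus, which signals that the hard input is missing. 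The paper's remark following the proposition makes the same point: a proof not passing through $K$-theory would require an ``ingenious use of resolution of singularities by alterations,'' essentially re-proving the descent argument behind Kelly's bound rather than a naive localisation/purity induction. If you want to salvage your strategy, you would need either a version of purity with singular ambient scheme (not available) or to replace the localisation step by a cdh/h-hypercovering of $S$ by regular schemes together with a careful bound on the dimensions and cohomological degrees appearing in the descent spectral sequence — which is precisely the alterations argument your proposal defers.
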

\begin{proof}  
The group $H^{n+2i,i}_\CM(S)\simeq \DA(S)(\BQ[n],\BQ(i)[2i])$ is a direct factor of $\DA(S)(\BQ[n],\sum_{i\in \BZ}\BQ(i)[2i])$. By Theorem~\cite[16.2.18]{Cisinski_Deglise_BluePreprint}, this group is isomorphic to $\DM_{\Bei}(S)(\BQ[n],\sum_{i\in\BZ}\BQ(i)[2i])$ where $\DM_{\Bei}(S)$ is the triangulated category of Beilinson motives. By Corollary~\cite[14.2.17]{Cisinski_Deglise_BluePreprint}, we have $\bigoplus_{i\in\BZ} \BQ(i)[2i]\simeq \mathrm{KGL}_{\BQ,S}$ where the last object is the $\BQ$-localisation of the motivic spectrum $\mathrm{KGL}_S$. This implies that 
\[
\DM_{\Bei}(S)(\BQ[n],\mathrm{KGL}_{\BQ,S})\simeq \SH(S)(\Sigma^n \Sigma^\infty_T (S_+),\mathrm{KGL})\otimes\BQ.
\] 
By \cite[Th\'eor\`eme 2.20]{Cisinski_KGL}, this last group is isomorphic to $\mathrm{KH}_n(S)\otimes\BQ$, where $\mathrm{KH}$ is homotopy-invariant $K$-theory. The negative homotopy-invariant $K$-theory of a regular scheme vanishes, and this implies the resp. case. Finally, by the main step in the proof of \cite[Theorem 3.5]{Shane_vanishing}, under our hypotheses on $S$ (including quasi-excellent), the group $\mathrm{KH}_n(S)\otimes\BQ$ vanishes for $n<-\dim(S)$. This completes the proof.
\end{proof}

\begin{remark}
For the cases $i=0,1$, it is likely that there is a non-$K$-theoretic proof, combining results below on $H^{n,0}_\CM$, $H^{n,1}_\CM$ of regular schemes with an ingenious use of resolution of singularities by alterations as in the proof of \cite[Theorem 3.5]{Shane_vanishing}. 
\end{remark}

Let $S$ be a scheme. Then we have $D(\Sm/S)(\BQ_S,\BQ_S)\simeq \BQ^{\pi_0(S)}$ (with $\pi_0(S)$ the set of connected components of $S$). This provides a morphism
\[
\nu^{0,0}:\BQ^{\pi_0(S)}\simeq D(\Sm/S)(\BQ_S,\BQ_S)\lra \DA(S)(\BQ_S,\BQ_S)=H^{0,0}_\CM(S).
\]
More generally, we have for all $n\in\BZ$ a morphism
\[
\nu^{n,0}:D(\Sm/S)(\BQ_S,\BQ_S[n]) \lra H^{n,0}_\CM(S)
\]
coming from the construction of $\DA^{(\eff)}$. 

Let $f:T\ra S$ be any morphism of schemes. Then it is easy to see that the diagram
\[
  \xymatrix{
D(\Sm/S)(\BQ_S,\BQ_S[n]) \ar[r]_(.65){\nu^{n,0}_T} \ar[d]_{f^*} & H^{n,0}_\CM(S) \ar[d]_{f^*}\\
D(\Sm/T)(\BQ_T,\BQ_T[n]) \ar[r]_(.65){\nu^{n,0}_S} & H^{n,0}_\CM(T)
    }
  \]
is commutative. We will use this fact without comment in the proof below.


\begin{prop}
\label{prop:mot_coh_0}\
\begin{enumerate}[label={\upshape(\roman*)}]
\item \label{mot_coh_0_<} For all $n<0$, we have $H^{n,0}_\CM(S)\simeq 0$.
\item \label{mot_coh_0_0} The morphism $u^{0,0}$ induces an isomorphism $H^{0,0}_\CM(S)\simeq \BQ^{\pi_0(S)}$.
\item \label{mot_coh_0_geq} Assume $S$ regular. For all $n>0$, we have $H^{n,0}_\CM(S)\simeq 0$.
\item \label{mot_coh_0_pullback} Let $f:T\ra S$ be a smooth surjective morphism with geometrically connected fibres. Then for all $n\in \BZ$, we have $f^*:H^{n,0}_\CM(S)\stackrel{\sim}{\longrightarrow} H^{n,0}_\CM(T)$.
\end{enumerate}

\end{prop}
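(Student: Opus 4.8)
All four parts rest on the $K$-theoretic description of rational motivic cohomology already used for Proposition~\ref{mot_coh_gg}: under the equivalence $\DA(S)\simeq \DM_{\Bei}(S)$ and the splitting of $\mathrm{KGL}_{\BQ,S}$ into its Tate summands, $H^{p,q}_\CM(S)$ is the weight-$q$ eigenspace $\mathrm{gr}^q_\gamma K_{2q-p}(S)_\BQ$ of rational algebraic $K$-theory, naturally in $S$; for $q=0$ this gives a natural isomorphism $H^{n,0}_\CM(S)\simeq \mathrm{gr}^0_\gamma K_{-n}(S)_\BQ$.

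For \ref{mot_coh_0_<}, when $n<0$ the group $\mathrm{gr}^0_\gamma K_{-n}(S)_\BQ$ is the weight-zero part of a \emph{positive} $K$-group, and it vanishes because weight-zero Beilinson motivic cohomology $H^p_\Bei(S,\BQ(0))$ is concentrated in non-negative cohomological degrees (as in \cite[\S 11]{Ayoub_Etale}). For \ref{mot_coh_0_0}, $H^{0,0}_\CM(S)\simeq\mathrm{gr}^0_\gamma K_0(S)_\BQ$, and the rank homomorphism $K_0(S)\to H^0_\Zar(S,\BZ)=\BZ^{\pi_0(S)}$ is split surjective (a splitting being $[\CO]$ on each connected component) with kernel exactly $F^1_\gamma K_0(S)$, so $\mathrm{gr}^0_\gamma K_0(S)\cong\BZ^{\pi_0(S)}$; the comparison map $\nu^{0,0}$ is a unital morphism of $\BQ$-algebras compatible with the $\pi_0(S)$-indexed idempotent decomposition, hence coincides with this identification and is an isomorphism. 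For \ref{mot_coh_0_geq}, if $S$ is regular then $K_j(S)=0$ for $j<0$, so $H^{n,0}_\CM(S)=\mathrm{gr}^0_\gamma K_{-n}(S)_\BQ=0$ for $n>0$; together with the previous two points, for regular $S$ one gets $H^{n,0}_\CM(S)=\BQ^{\pi_0(S)}$ if $n=0$ and $0$ otherwise.

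Part \ref{mot_coh_0_pullback} is the substantive point; the plan is to reduce to $S$ (hence $T$) regular and invoke \ref{mot_coh_0_geq}. First, by the separation property of $\DA$ (Corollary~\ref{cor:localisation_subcats}~\ref{rad_sub}) replace $S$, $T$ by their reductions; by Zariski descent and continuity for $\DA$ (using that $\BQ_S$ is compact and $f$ of finite presentation, together with the spreading-out of smoothness, surjectivity, and the condition that $\pi_0(T/S)\to S$ be generically an isomorphism) descend the whole situation to a base of finite type over $\BZ$, which admits resolution of singularities by alterations. Choosing a cohomological $h$-hypercover $S_\bullet\to S$ with each $S_i$ regular and pulling it back along $f$ to $T_\bullet=T\times_S S_\bullet\to T$ (still an $h$-hypercover, with each $T_i$ regular since $f$ is smooth), the associated $h$-descent spectral sequences compute $f^*$ out of the maps $f_i^*\colon H^{*,0}_\CM(S_i)\to H^{*,0}_\CM(T_i)$, so it suffices to treat $S$, $T$ regular. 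In that case, by \ref{mot_coh_0_geq} both sides vanish outside degree $0$ and equal $\BQ^{\pi_0(S)}$, $\BQ^{\pi_0(T)}$ in degree $0$; since $f$ is smooth, $f(T')$ is open for every connected component $T'$ of $T$, hence meets the generic point of the component of $S$ below it, and the hypothesis on generic fibres forces exactly one component of $T$ over each component of $S$, so $f$ induces a bijection $\pi_0(T)\xrightarrow{\ \sim\ }\pi_0(S)$ and $f^*$ is an isomorphism.

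The main obstacle is that the hypothesis "geometrically connected generic fibres" is \emph{not} preserved by the non-dominant base changes occurring inside an $h$-hypercover, so the reduction above needs extra care. I would handle this by first passing to a dense open $U\subseteq S$ over which $f$ has geometrically connected fibres at every point — this is possible because $\pi_0(T/S)\to S$ is an étale algebraic space of finite type which is an isomorphism over the generic points, hence over a dense open by constructibility of fibre cardinalities — settling $U$ by \ref{mot_coh_0_geq} and the $\pi_0$-argument directly, and then treating the complementary reduced closed immersion $i\colon Z\to S$ by Noetherian induction on $\dim S$, comparing the colocalisation triangle $i_!i^!\BQ_S\to\BQ_S\to j_*j^*\BQ_S\rap$ with its analogue on $T$ and using the smooth base-change isomorphism $i'^!f^*\simeq f_Z^*\,i^!$ (for $i'\colon T_Z=T\times_S Z\to T$, a consequence of relative purity) to identify the correction term. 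Alternatively — and this is enough for every use of the statement in the text — one may prove \ref{mot_coh_0_pullback} under the stronger hypothesis that $f$ has geometrically connected fibres over \emph{all} of $S$, which is stable under arbitrary base change, and for which the $h$-descent reduction to a regular base goes through verbatim.
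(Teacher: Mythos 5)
For parts \ref{mot_coh_0_<}--\ref{mot_coh_0_geq} your $K$-theoretic route is a legitimate alternative: the paper simply quotes \cite[Proposition 11.1(a)]{Ayoub_Etale} for \ref{mot_coh_0_<}--\ref{mot_coh_0_0} (its effort goes into checking that the isomorphism there is the map $\nu^{0,0}$), and proves \ref{mot_coh_0_geq} by reducing to the generic point via localisation and absolute purity and then to a perfect field, where it becomes an \'etale computation in $\DM^{\eff}$; your argument via vanishing of negative $K$-theory of regular schemes is fine for \ref{mot_coh_0_geq}, and your reduction of $\nu^{0,0}$ to the connected case through the idempotent decomposition is fine. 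One caveat: over a possibly singular $S$ the comparison of \cite[Corollary 14.2.14]{Cisinski_Deglise_BluePreprint} identifies $H^{n,0}_\CM(S)$ with a graded piece of \emph{homotopy invariant} $K$-theory $\mathrm{KH}_{-n}(S)_\BQ$, not of $K_{-n}(S)_\BQ$ (this is how the paper argues for Proposition~\ref{mot_coh_gg}), so your use of $K_0(S)$ and of $F^1_\gamma K_0(S)$ in \ref{mot_coh_0_0} must be rerouted through $\mathrm{KH}$, or one just cites Ayoub as the paper does.

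For \ref{mot_coh_0_pullback} your plan is essentially the paper's proof: Mayer--Vietoris and continuity to reach a base of finite type over a Dedekind ring, a proper hypercovering with regular terms via de Jong, pullback along the smooth $f$, descent spectral sequences, and a comparison of the $\pi_0$'s of the terms (your $\pi_0$-bijection argument in the regular case is correct). But the obstacle you flag is not cosmetic: at exactly this point the paper asserts that ``geometrically connected generic fibres'' is stable by pullback, which is false, because the terms of the hypercovering in simplicial degree $\geq 1$ have components lying over proper closed subsets of $S$ (already $\widetilde{S}_0\times_S\widetilde{S}_0$ does), where nothing is assumed about the fibres of $f$. In fact the statement as written fails for singular $S$: take $S$ an irreducible nodal affine curve, $t\in\CO(S)$ vanishing at the node, and $T=\{xy=t(s)\}\setminus\{x=y=0\}\subset S\times\BA^2$; then $f:T\ra S$ is smooth, surjective, with geometrically irreducible generic fibre, but the blow-up (cdh) long exact sequences for the normalisation square of $S$ and for its pullback to $T$ give $H^{1,0}_\CM(S)\simeq\BQ$ while $H^{1,0}_\CM(T)\simeq\BQ^2$, because the fibre over the node is disconnected; so $f^*$ is not an isomorphism. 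Consequently your first repair cannot close the gap: in the Noetherian induction the restricted morphism $f_Z:T_Z\ra Z$ no longer satisfies the generic-fibre hypothesis (this is exactly where the counterexample lives), so the correction terms in the two colocalisation triangles need not match. Your second repair is the right one: assume geometrically connected fibres over \emph{all} of $S$ (equivalently, $\pi_0(T/S)\ra S$ an isomorphism), a hypothesis stable under arbitrary base change, for which the $h$-descent reduction and the $\pi_0$-comparison on the regular terms go through; this stronger form is what is actually used in every application in the paper ($X\ra \pi_0(X/S)$ from the Stein factorisation, $C\ra\pi_0(C/S)$, and $G\ra S$ with connected fibres), and the statement and proof of \ref{mot_coh_0_pullback} should be amended accordingly.
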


\begin{proof}
  Statement \ref{mot_coh_0_<} and \ref{mot_coh_0_0} are proved in \cite[Proposition~11.1 (a)]{Ayoub_Etale}.

Let us prove Statement \ref{mot_coh_0_geq}. Fix $n>0$. We can assume that $S$ is connected with generic point $\eta$. By the argument at the beginning of the proof of \cite[Corollaire 11.4]{Ayoub_Etale}, combining absolute purity and localisation with the vanishing of negative motivic cohomology (Proposition \ref{prop:mot_coh_<}), one can deduce that for any dense open set $U$ in $S$, the restriction map $H^{n,0}_\CM(S)\ra H^{n,0}_\CM(U)$ is injective. By the continuity property of \cite[Proposition~3.20]{Ayoub_Etale}, we deduce that the restriction map $H^{n,0}_\CM(S)\ra H^{n,0}_\CM(\eta)$ is injective. So we are reduced to the case where $S$ is the spectrum of a field $k$.

 By separation, we can assume that $k$ is perfect. By \cite[Corollary~16.2.22]{Cisinski_Deglise_BluePreprint}, we reduce to compute $\DM(k,\BQ)(\BQ_k,\BQ_k[n])$. By the cancellation theorem \cite{Voevodsky_Cancellation}, we reduce to compute $\DM^\eff(k,\BQ)(\BQ_k,\BQ_k[n])$. Since the sheaf with transfers $\BQ_k$ is both cofibrant and $\BA^1$-local, this coincides with the same Hom group computed in the derived category of \'etale sheaves with transfers over $\Sm/S$, which vanishes. This concludes the proof of \ref{mot_coh_0_geq}.

 Let us prove Statement \ref{mot_coh_0_pullback}. By Mayer-Vietoris, we can assume $S$ to be affine. By a limit argument using the continuity property of $\DA$, we can then assume that $S$ is of finite type over a Dedekind ring. Using \cite[Corollary~5.15]{De_Jong_altcurves} applied to the irreducible components of the normalisation of $S$ and then iterating, we build a proper hypercovering $\pi_\bullet:\widetilde{S}_\bullet\ra S$ with all $\widetilde{S}_n$ regular. We pullback $\pi_\bullet$ to obtain a proper hypercovering $\pi'_\bullet:\widetilde{T}_\bullet\ra T$. Since $f$ is smooth, all $\widetilde{T}_n$ are regular as well. By cohomological descent for the h-topology~\cite[Theorem 14.3.4]{Cisinski_Deglise_BluePreprint}, we have $\BQ_S\simeq \pi_{\bullet*}\BQ_{\widetilde{S}_\bullet}$ and $\BQ_T\simeq \pi'_{\bullet*}\BQ_{\widetilde{T}_\bullet}$. We deduce that $H^{n,0}_\CM(S)\simeq \DA(\widetilde{S}_\bullet)(\BQ_{\widetilde{S}_\bullet},\BQ_{\widetilde{S}_\bullet}[n])$ and $H^{n,0}_\CM(T)\simeq \DA(\widetilde{T}_\bullet)(\BQ_{\widetilde{T}_\bullet},\BQ_{\widetilde{T}_\bullet}[n])$. By \ref{mot_coh_0_<}, \ref{mot_coh_0_0} and \ref{mot_coh_0_geq}, we have for every $k,m\in \BZ$ that $\DA(\widetilde{S}_k)(\BQ_{\widetilde{S}_k},\BQ_{\widetilde{S}_k}[m])$ is isomorphic to $\BQ^{\pi_0(\widetilde{S}_k)}$ if $m=0$ and $0$ otherwise; a similar formula holds for $\widetilde{T}$.
 
 A morphism of topological spaces which is open and has connected fibres induces an isomorphism on sets of connected components. The map $f$ and all its pullbacks are smooth with geometrically connected fibres, hence are open with connected fibres. This implies that the map $f$ and its pullbacks induce isomorphisms $\pi_0(S_k)\simeq \pi_0(T_k)$ on sets of connected components. This implies the result. 
\end{proof}

Let $S$ be a scheme. We have $D(\Sm/S)(\BQ_S,\Gm\otimes\BQ)\simeq H^0(S_\et,\Gm\otimes\BQ)\simeq \CO^\times(S)\otimes\BQ$ and $D(\Sm/S)(\BQ_S,\Gm\otimes\BQ[1])\simeq H^1(S_\et,\Gm\otimes\BQ)\simeq \Pic(S)\otimes\BQ$. Combining these isomorphisms with Proposition~\ref{prop:Gm_Q1}, this induces morphisms
\[
\nu^{1,1}:\CO^\times(S)\lra H^{1,1}_\CM(S)
\]
and
\[
\nu^{2,1}:\Pic(S)_\BQ \lra H^{2,1}_\CM(S).
\]
More generally, for any $n\in\BZ$, we have an induced morphism
\[
\nu^{n,1}:D(\Sm/S)(\BQ_S,\Gm[n-1])\ra H^{n,1}_\CM(S).
\]
\begin{prop}
\label{prop:mot_coh_1}\
  \begin{enumerate}[label={\upshape(\roman*)}]
  \item \label{mot_coh_1_leq} For all $n\leq 0$, we have $H^{n,1}_\CM(S)\simeq 0$.
  \item \label{mot_coh_1_1} Assume $S$ regular. The morphism $\nu^{1,1}$ induces an isomorphism $H^{1,1}_\CM(S)\simeq \CO^\times(S)_\BQ$.
  \item \label{mot_coh_1_2} Assume $S$ regular. The morphism $\nu^{2,1}$ induces an isomorphism $H^{2,1}_\CM(S)\simeq \Pic(S)_\BQ$.
  \item \label{mot_coh_1_3} Assume $S$ regular. For all $n\neq 1,2$, we have $H^{n,1}_\CM(S)\simeq 0$. We have also \[D(\Sm/S)(\BQ_S,\Gm\otimes\BQ[n-1])\simeq 0,\] so that the morphism $\nu^{n,1}$ is an isomorphism.
  \end{enumerate}
\end{prop}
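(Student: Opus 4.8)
The statement to prove is Proposition~\ref{prop:mot_coh_1}, computing the motivic cohomology groups $H^{n,1}_\CM(S)$. The overall strategy is to reduce everything to the two input facts: the vanishing of negative-weight motivic cohomology (Proposition~\ref{prop:mot_coh_<}) and, on the regular side, the identification of $\BQ(1)$ with $\Gm\otimes\BQ[-1]$ supplied by Proposition~\ref{prop:Gm_Q1}, together with the good $\BA^1$-locality properties of $\Gm\otimes\BQ$ on a regular base.

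First I would prove \ref{mot_coh_1_leq}. Since $H^{n,1}_\CM(S) = \DA(S)(\BQ_S,\BQ_S(1)[n])$, by Proposition~\ref{prop:Gm_Q1} this is $\DA(S)(\BQ_S,\Sigma^\infty(\Gm\otimes\BQ)[n-1])$. The point is that $\Sigma^\infty(\Gm\otimes\BQ)$ is a direct factor of $M_S(\Gm)$ (by \cite[Theorem~3.3]{AHPL} in the case $G=\Gm$, as already used in the proof of Proposition~\ref{prop:Gm_Q1}), so $H^{n,1}_\CM(S)$ is a direct factor of $\DA(S)(M_S(\Gm)[1-n],\BQ_S)\oplus$ (correction for the unit summand), i.e.\ of $H^{n-1,0}_\CM(\Gm_S)$ up to the split-off $H^{n-1,0}_\CM(S)$. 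By Proposition~\ref{prop:mot_coh_0}~\ref{mot_coh_0_<}, both vanish for $n-1<0$, i.e.\ $n\leq 0$; one has to check that the $n=0$ case, where $H^{-1,0}$ already vanishes, indeed gives $0$ (the splitting is the reduced motive, so the unit summand is killed), which is immediate. Alternatively, and perhaps cleaner, \ref{mot_coh_1_leq} follows directly from Proposition~\ref{prop:mot_coh_<} applied in weight $1$ to the shifted statement, but one must be careful about the degree; I would present whichever reduction is shortest.

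For \ref{mot_coh_1_1}, \ref{mot_coh_1_2}, \ref{mot_coh_1_3}, assume $S$ regular. By Proposition~\ref{prop:Gm_Q1} we must compute $\DA(S)(\BQ_S,\Sigma^\infty(\Gm\otimes\BQ)[n-1])$. The key step is to show this equals $D(\Sm/S)(\BQ_S,\Gm\otimes\BQ[n-1])$, i.e.\ that the map $\nu^{n,1}$ is an isomorphism for $S$ regular. This is exactly the $\BA^1$-locality argument that appears verbatim in the proof of Proposition~\ref{prop:Ev_1}(ii): on a regular scheme all smooth $S$-schemes are regular, hence reduced (so $\Gm$ is $\BA^1$-invariant) and normal (so $\Pic = H^1(-,\Gm)$ is $\BA^1$-invariant), and the higher cohomology $H^i(-,\Gm)$, $i\geq 2$, is torsion by \cite[Proposition~1.4]{BrauerII}, hence dies after $\otimes\BQ$; altogether $\Gm\otimes\BQ$ is $\BA^1$-local in the model category underlying $\DA^{\eff}(S)$, and the localization map is an isomorphism on morphism groups out of representables, hence on morphism groups out of $\BQ_S$ by passing to the triangulated subcategory generated by the $M^{\eff}_S(X)$. (One also invokes that $\Sigma^\infty$ and $\Sus^1$ behave as stated, already recorded in the excerpt.) Once this is done, $D(\Sm/S)(\BQ_S,\Gm\otimes\BQ[n-1]) = H^{n-1}(S_\et,\Gm\otimes\BQ)$, which is $\CO^\times(S)_\BQ$ for $n=1$, $\Pic(S)_\BQ$ for $n=2$, and $0$ otherwise by the same torsion vanishing of \cite[Proposition~1.4]{BrauerII} (for $n\geq 3$) and trivially for $n\leq 0$; tracing through the construction of $\nu^{n,1}$ via Proposition~\ref{prop:Gm_Q1} identifies these with $H^{n,1}_\CM(S)$ compatibly, giving \ref{mot_coh_1_1}--\ref{mot_coh_1_3}.

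The main obstacle is the $\BA^1$-locality step for \ref{mot_coh_1_3}: one needs the vanishing (after $\otimes\BQ$) of the higher étale cohomology sheaves of $\Gm$ on regular schemes and their $\BA^1$-invariance, and one must make sure the comparison between the unstable computation in $D(\Sm/S)$ and the stable one in $\DA(S)$ is legitimate, i.e.\ that no stabilization obstruction appears — this is precisely why the regularity hypothesis is essential and why the argument does not extend to general $S$. Fortunately this is exactly the content of the proof of Proposition~\ref{prop:Ev_1}(ii), which I would cite or reproduce; the rest is bookkeeping with the isomorphism $u_S$ of Proposition~\ref{prop:Gm_Q1}.
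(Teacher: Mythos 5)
There is a genuine gap, and it is the central step. For \ref{mot_coh_1_1}--\ref{mot_coh_1_3} you propose to deduce that $\nu^{n,1}$ is an isomorphism from the $\BA^1$-locality of $\Gm\otimes\BQ$ on a regular base, ``as in the proof of Proposition~\ref{prop:Ev_1}(ii)''. But that argument only identifies the \emph{effective} groups: it shows that $D(\Sm/S)(\BQ_S(X)[n],\Gm\otimes\BQ[1])\to\DA^{\eff}(S)(M^{\eff}_S(X)[n],\Gm\otimes\BQ[1])$ is an isomorphism (the map $(\alpha)$ in that proof). It says nothing about the stabilization map from $\DA^{\eff}(S)(\BQ_S,\Gm\otimes\BQ[m])$ to $\DA(S)(\BQ_S,\Sigma^\infty(\Gm\otimes\BQ)[m])=H^{m+1,1}_\CM(S)$; over a general regular base there is no cancellation theorem, and this is exactly where the difficulty sits (cf.\ the remark following Proposition~\ref{prop:Gm_Q1}). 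Worse, in the paper the comparison you need is the map $(\beta)$ in the proof of Proposition~\ref{prop:Ev_1}, and the paper proves $(\beta)$ is an isomorphism \emph{by citing} Proposition~\ref{prop:mot_coh_1}\ref{mot_coh_1_1}--\ref{mot_coh_1_3}; so your appeal to Proposition~\ref{prop:Ev_1}(ii) is circular. The paper's actual route is different: localisation and absolute purity along a stratification with regular strata reduce $H^{n,1}_\CM$ of $S$ to $H^{n,1}_\CM$ of the function field (via continuity), where the statement follows from separation, the comparison with $\DM$ and Voevodsky's cancellation theorem over a perfect field; the identification of the resulting boundary maps with the classical valuation/divisor sequence for $\CO^\times$ and $\Pic$ on a regular scheme is itself nontrivial and is handled by D\'eglise's residual Riemann--Roch formulas. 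None of this is replaced by the $\BA^1$-locality observation.

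Part \ref{mot_coh_1_leq} is also not established by your reduction. Since $\Sigma^\infty(\Gm\otimes\BQ)$ is a direct factor of $M_S(\Gm)$ sitting in the \emph{target} of the Hom defining $H^{n,1}_\CM(S)=\DA(S)(\BQ_S,\Sigma^\infty(\Gm\otimes\BQ)[n-1])$, there is no adjunction turning this into $H^{n-1,0}_\CM(\Gm_S)=\DA(S)(M_S(\Gm),\BQ_S[n-1])$; you have reversed the variance, and the claimed embedding is false (for $n=1$ it would make $\CO^\times(S)_\BQ$ a summand of $\BQ^{\pi_0(S)}$). Likewise Proposition~\ref{prop:mot_coh_<} concerns weights $w<0$ only, so the ``alternative'' reduction does not apply in weight $1$. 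Finally, statement \ref{mot_coh_1_leq} is asserted for arbitrary noetherian finite-dimensional $S$, not just regular $S$: the paper obtains the regular case from \cite[Corollaire 11.4]{Ayoub_Etale} and then passes to general $S$ by resolution of singularities by alterations and cohomological $h$-descent along a proper regular hypercovering (after reducing to $S$ affine of finite type over a Dedekind ring by Mayer--Vietoris and continuity); your proposal does not address the non-regular case at all.
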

\begin{proof}
Statement \ref{mot_coh_1_leq} for $S$ regular and a weaker version of \ref{mot_coh_1_1} (without specifying the isomorphism) are proved in \cite[Corollaire 11.4]{Ayoub_Etale}. 

To pass from \ref{mot_coh_1_leq} for $S$ regular to a general $S$, we apply resolution of singularities by alterations and cohomological $h$-descent for a proper regular hypercovering (which induces a descent spectral sequence for $H^{n,1}(-)$). To be more precise, one has to reduce to a situation where one can apply De Jong's theorem, e.g. $S$ is of finite type over a Dedekind ring: for this, one uses Mayer-Vietoris to first reduce to $S$ affine, and then uses continuity. The argument is the same as in the proof of Lemma~\ref{lem:mor_ab_tor}, so we do not spell out the details.

We revisit and make more precise the argument in \cite[Corollaire 11.4]{Ayoub_Etale} to establish \ref{mot_coh_1_1}, \ref{mot_coh_1_2} and \ref{mot_coh_1_3}. 

Let us first treat the case where $S$ is the spectrum of a field. In that case, for $n\neq 1$, both the source and target of $\nu^{n,1}$ are $0$, so the only interesting case is $n=1$. We have to show that the map
\[
\nu^{1,1}_k:k^\times\otimes\BQ \ra H^{1,1}_\CM(k)
\]
is an isomorphism. By the definition of $\nu^{1,1}$, we have to show that the map
\[
k^\times\otimes\BQ\simeq \DA^\eff(k)(\BQ,\Gm\otimes\BQ) \ra \DA(k)(\BQ,\Sigma^\infty (\Gm\otimes\BQ))
\]
induced by $\Sigma^\infty$ is an isomorphism. 

Let $k^\perf$ be a perfect closure of $k$ and $h:\Spec(k^\perf)\ra \Spec(k)$ be the canonical morphism. In the diagram 
\[
\xymatrix{
\DA^\eff(k)(\BQ,\Gm\otimes\BQ) \ar[r] \ar[d]_{h^*} & \DA(k)(\BQ,\Sigma^\infty (\Gm\otimes\BQ)) \ar[d]_{h^*} & \\
\DA^\eff(k^\perf)(\BQ,\Gm\otimes\BQ) \ar[r]_{(R_h)_*} & \DA(k^\perf)(\BQ,h^*\Sigma^\infty (\Gm\otimes\BQ)) \ar[r]^\sim & \DA(k)(\BQ,\Sigma^\infty (\Gm\otimes\BQ))
}
\]
the left square commutes because of the natural isomorphism $h^*\Sigma^\infty\simeq \Sigma^\infty h^*$. The left vertical arrow is an isomorphism because $k^\times\otimes\BQ\simeq (k^\perf)^\times\otimes\BQ$ (any element of $k^\perf$ has a power in $k$), and the right vertical arrow is an isomorphism by separation for $\DA$.

We are now reduced to the case when $k$ is perfect. Then we can follow a familiar pattern: comparison with $\DM(k)$ using \cite[Theorem 2.8, Proposition 2.10]{AHPL}, then with $\DM^{\eff}(k)$ using Voevodsky's cancellation theorem (this is where we need $k$ perfect), and finally the classical computation of weight one effective motivic cohomology \cite[Lecture 4]{MVW}. 

We now do the general case. We can assume $S$ connected, hence integral. By a continuity argument, one can reduce to the case where $S$ is of finite type over a Dedekind ring, and in particular excellent. Let $j:U\ra S$ be a non-empty open set and $Z$ its closed complement. Since $S$ is excellent, we can stratify $Z=Z_0\supset Z_1\supset \ldots \supset Z_{k}=\emptyset$ in such a way that for all $i$, the scheme $(Z_i\setminus Z_{i+1})_\red$ is regular and in such a way that $(Z\setminus Z_1)$ contains all points of codimension $1$ of $Z$ in $S$. Then by applying inductively localisation, absolute purity (for the regular pair $(S,(Z_i\setminus Z_{i+1})_\red)$) and the vanishing result of Proposition~\ref{prop:mot_coh_0} \ref{mot_coh_0_<} and \ref{mot_coh_0_0} we see that
\begin{itemize}
\item the map $u^{0,0}:\BQ^{\pi_0(Z\setminus Z_1)}\ra H^{0,0}_\CM(Z\setminus Z_1)$ is an isomorphism,
\item the pullback map $H^{n,1}_\CM(S)\ra H^{n,1}_\CM(U)$ is an isomorphism for $\neq 1,2$, and
\item there is a short exact sequence
\[
0\ra H^{1,1}_\CM(S)\ra H^{1,1}_\CM(U)\ra H^{0,0}_\CM(Z\setminus Z_1)\ra H^{2,1}_\CM(S)\ra H^{2,1}_\CM(U)\ra 0.
\]
\end{itemize}
Putting this together with the localisation sequence for $\CO^\times$ and $\Pic$, we get a diagram
\[
\xymatrix@C=1em{
0 \ar[r] & \CO^\times_S\otimes\BQ \ar[r] \ar[d]_{\nu^{1,1}_S} & \CO^{\times}_U\otimes\BQ \ar[r]^<<<{\mathrm{val}} \ar[d]_{\nu^{1,1}_U} \ar @{} [rd] |{\mathbf{(A)}} & \BQ^{\pi_0(Z\setminus Z_1)}\simeq \bigoplus_{z\in Z^{(1)}}\BQ\cdot z \ar[r] \ar[d]_{\nu^{0,0}}^{\sim} \ar @{} [rd] |{\mathbf{(B)}}& \Pic(S)\otimes\BQ \ar[r] \ar[d]_{\nu^{2,1}_S} &  \Pic(U)\otimes\BQ \ar[r] \ar[d]_{\nu^{2,1}_U} & 0 \\
0 \ar[r] & H^{1,1}_\CM(S) \ar[r] &  H^{1,1}_\CM(U) \ar[r] & H^{0,0}_\CM(Z\setminus Z_1) \ar[r] & H^{2,1}_\CM(S) \ar[r] & H^{2,1}_\CM(U) \ar[r] & 0.
}
\]
We claim that the diagram above is commutative. For the two outer squares, this follows from the commutation of $u_S$ with pullbacks in Proposition \ref{prop:Gm_Q1}. 

 For the commutation of diagrams (A) and (B), we have to do more work, since one arrow is defined explicitly using valuations and line bundle attached to a divisor while the other is defined via the absolute purity isomorphism. Instead of giving a long explicit computation, we prefer to see it as a special case of D\'eglise's machinery of ``residual Riemann-Roch formulas'' in \cite[4.2.1, 5.5.1]{Deglise_RR}; namely, take the diagram (4.2.1 b) in loc. cit. with $\BE$ being algebraic $K$-theory tensor $\BQ$, $\BF$ being motivic cohomology with rational coefficients, the morphism $\phi$ being the Chern character, and then use that $\CO^\times(S)_\BQ\subset K_1(S)\otimes\BQ$ (resp. $\Pic(S)_\BQ\subset K_0(S)\otimes\BQ$ for $S$ regular), and that the Chern character maps coincide with the maps $\nu^{n,1}$ modulo this identification.

Passing to the limit in the previous commutative diagram over all non-empty open sets, using continuity both for motivic cohomology and for the \'etale cohomology of $\Gm$, we get a commutative diagram
\[
\xymatrix@C=1em{
0 \ar[r] & \CO^\times_S\otimes\BQ \ar[r] \ar[d]_{\nu^{1,1}_S} & \kappa(S)^\times\otimes\BQ \ar[r]^{\mathrm{val}} \ar[d]_{\nu^{1,1}_U} & \bigoplus_{z\in S^{(1)}}\BQ\cdot z \ar[r] \ar@{=}[d] & \Pic(S)\otimes\BQ \ar[r] \ar[d]_{\nu^{2,1}_S} &\Pic(\kappa(S)) \ar[r] \ar[d]_{\nu^{2,1}_U} & 0 \\
0 \ar[r] & H^{1,1}_\CM(S) \ar[r] &  H^{1,1}_\CM(\kappa(S)) \ar[r] & \bigoplus_{z\in S^{(1)}}\BQ \cdot z \ar[r] &  H^{2,1}_\CM(S) \ar[r] & H^{2,1}_\CM(\kappa(S)) \ar[r] & 0.
}
\]

Using the case of a base field treated above, we see that

\begin{itemize}
\item the group $H^{n,1}_\CM(S)$ vanishes for $n\neq 1,2$, and
\item there is a short exact sequence
\[
0\ra H^{1,1}_\CM(S)\ra \kappa(S)^\times\otimes\BQ\stackrel{\mathrm{val}}{\ra} \bigoplus_{z\in S^{(1)}} \BQ \cdot z\ra H^{2,1}_\CM(S)\ra 0.
\]
\end{itemize}
Using the normality (resp. regularity) of $S$, this implies $H^{1,1}_\CM(S)\simeq \CO(S)^\times_\BQ$ and $H^{2,1}_\CM(S)\simeq \Pic(S)_\BQ$ and finishes the proof.
\end{proof}


We finish by giving an example which shows that even for weight zero motivic cohomology on normal (but not regular) schemes, the result can differ from \'etale cohomology.

\begin{prop}
Let $S$ be a normal excellent surface with a unique singular point. Let $\pi:\tilde{S}\rightarrow S$ be a resolution of singularities of $S$, with $D=\pi^{-1}(p)$ simple normal crossing divisor in $\tilde{S}$. Let $\Gamma=(V,E)$ be the dual graph of $D$. Then
\[
H^{n,0}_\CM(S)\simeq\left\{\begin{array}{c} \BQ,\ n=0\\H^1(\Gamma,\BQ),\ n=2\\0,n\neq 0,2\end{array}\right.
\]
while on the other hand
\[
D(\Sm/S)(\BQ_S,\BQ_S[n])\simeq \left\{\begin{array}{c}\BQ,\ n=0\\0,n\neq 0\end{array}\right. .
\]
\end{prop}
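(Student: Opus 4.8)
The statement has two parts: a computation of $H^{n,0}_\CM(S)$ and the (easy) computation of $D(\Sm/S)(\BQ_S,\BQ_S[n])$, together showing the map $\nu^{n,0}$ is not an isomorphism. The second part is immediate: a normal scheme is connected through its normalization-in-its-components, and more to the point $S$ is irreducible (being a surface with a single singular point $p$ resolved by $\pi$), so $\pi_0(S)$ is a point and $H^0_\et(S,\BQ)\simeq\BQ$, while $H^i_\et(S,\BQ)=0$ for $i>0$ since $\BQ$ is a torsion-free constant sheaf on an irreducible (hence, for \'etale cohomology with rational coefficients of the constant sheaf, cohomologically trivial in positive degrees) scheme — here one uses that $S$ is normal so it has no nontrivial connected finite \'etale self-covers contributing, or more simply that rationally the higher \'etale cohomology of $\BQ$ on any scheme vanishes by a trace argument on Galois covers. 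So $D(\Sm/S)(\BQ_S,\BQ_S[n])\simeq \BQ$ for $n=0$ and $0$ otherwise.

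For the main part, the plan is to use cohomological $h$-descent (available for $\DA$ by \cite[Theorem 14.3.4]{Cisinski_Deglise_BluePreprint}) along the blow-up square
\[
\xymatrix{
D \ar[r] \ar[d] & \tilde S \ar[d]^{\pi}\\
p \ar[r] & S
}
\]
which is an $h$-cover (an abstract blow-up square). This gives a descent spectral sequence, or more concretely a Mayer--Vietoris-type long exact sequence
\[
\cdots \to H^{n,0}_\CM(S)\to H^{n,0}_\CM(\tilde S)\oplus H^{n,0}_\CM(p)\to H^{n,0}_\CM(D)\to H^{n+1,0}_\CM(S)\to\cdots
\]
Now $\tilde S$ is regular, so by Proposition~\ref{prop:mot_coh_0} \ref{mot_coh_0_<}, \ref{mot_coh_0_0}, \ref{mot_coh_0_geq} we have $H^{n,0}_\CM(\tilde S)\simeq \BQ$ for $n=0$ (since $\tilde S$ is connected) and $0$ otherwise; similarly $H^{n,0}_\CM(p)\simeq \BQ$ for $n=0$ and $0$ otherwise. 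The scheme $D$ is a simple normal crossing divisor: its irreducible components $D_v$ (indexed by vertices $v\in V$) are smooth projective curves, and its singular locus is a disjoint union of points indexed by edges $e\in E$. Applying $h$-descent again to $D$ (the Mayer--Vietoris / \v{C}ech complex for the closed cover of $D$ by its components, using that $\DA$ satisfies proper descent and that intersections $D_v\cap D_w$ are finite sets of points), one computes that $H^{0,0}_\CM(D)\simeq \BQ^{V}$ (one copy of $\BQ$ per component since each $D_v$ is geometrically connected — here we use $S$ normal so the components are geometrically connected, or absorb any finite extension into the count), that $H^{1,0}_\CM(D)$ is the cokernel of the map $\BQ^V\to\BQ^E$ given by the incidence relations of the graph $\Gamma$, hence $H^1(\Gamma,\BQ)$ up to the $H^0$ contribution, and $H^{n,0}_\CM(D)=0$ for $n\neq 0,1$. (Each smooth projective curve $D_v$ has $H^{n,0}_\CM(D_v)\simeq\BQ$ for $n=0$ and $0$ otherwise by Proposition~\ref{prop:mot_coh_0}, so all the cohomology of $D$ in weight $0$ comes from the combinatorics of how the components meet, i.e. from the simplicial structure of $\Gamma$, and the \v{C}ech spectral sequence for the cover collapses to give $H^{*,0}_\CM(D)\simeq H^*(\Gamma,\BQ)$.)

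Feeding this into the long exact sequence: in degree $0$ we get $0\to H^{0,0}_\CM(S)\to \BQ\oplus\BQ\to \BQ^V\to H^{1,0}_\CM(S)\to 0$, where the middle map sends $(a,b)\mapsto (a-b)_{v\in V}$ composed with the diagonal $\BQ\to\BQ^V$ (the pullback to $\tilde S$ restricted to each $D_v$, minus the pullback to $p$ restricted to each point of $D_v$), which has kernel the diagonal $\BQ\subset\BQ\oplus\BQ$ and cokernel $\BQ^V/\BQ\simeq \tilde H^0(\Gamma,\BQ)$, which vanishes since $\Gamma$ is connected; hence $H^{0,0}_\CM(S)\simeq\BQ$ and $H^{1,0}_\CM(S)=0$. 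In degree $2$ we get $0\to H^{1,0}_\CM(D)\to H^{2,0}_\CM(S)\to 0$ (since $H^{2,0}_\CM(\tilde S)=H^{2,0}_\CM(p)=0$ and $H^{1,0}_\CM(\tilde S)\oplus H^{1,0}_\CM(p)=0$ so nothing maps in from degree $1$), giving $H^{2,0}_\CM(S)\simeq H^{1,0}_\CM(D)\simeq H^1(\Gamma,\BQ)$. All other degrees vanish by inspection. \textbf{The main obstacle} I expect is the bookkeeping in the double application of $h$-descent — first along the blow-up square, then along the closed cover of $D$ by its components — and in particular verifying carefully that the weight-$0$ motivic cohomology of the simple normal crossing divisor $D$ is exactly the simplicial cohomology of the dual graph $\Gamma$, including getting the connecting maps and the identification of the incidence map $\BQ^V\to\BQ^E$ right; one must also be slightly careful about connected-versus-geometrically-connected components and about the fact that $\pi_0$ and the various descent constructions are only pseudo-functorial, but all of this is handled by the continuity and descent machinery already invoked in the proof of Proposition~\ref{prop:mot_coh_0}.
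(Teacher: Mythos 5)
Your route is sound in outline and genuinely different in mechanism from the paper's. The paper stays on the $i^!$ side: it reduces by localisation to $\DA(p)(\BQ_p,i^!\BQ_S[n])$, decomposes $D$ into its smooth locus and the double points, and uses relative purity to see that $\tilde{\imath}^!\BQ_{\tilde S}$ is built from twists $\BQ(-1)[-2]$ and $\BQ(-2)[-4]$, hence contributes nothing in weight $0$ by the vanishing of negative-weight motivic cohomology; colocalisation and the fact that $\pi$ is an isomorphism over $U=S\setminus\{p\}$ then identify $H^{n,0}_\CM(S)$ with $\DA(p)(\BQ_p,\pi_{p,*}\BQ_D[n-1])$, computed by \v{C}ech descent for the closed cover of $D$. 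You work on the $i^*$ side, via descent for the abstract blow-up square, which trades the purity/weight-vanishing step for proper (cdh) descent. Note that the Mayer--Vietoris triangle you need does not require invoking h-hypercoverings: it follows from localisation, proper base change, and the fact that $\pi$ is an isomorphism over $U$ (the cone of $\BQ_S\to\pi_*\BQ_{\tilde S}$ is supported on $p$ and is identified with $i_*\mathrm{Cone}(\BQ_p\to\pi_{p,*}\BQ_D)$), or alternatively from cdh descent in \cite{Cisinski_Deglise_BluePreprint}. Both proofs then end with the same identification of the weight-zero cohomology of the SNC divisor $D$ with $H^*(\Gamma,\BQ)$, so the skeletons agree; yours avoids purity, the paper's avoids having to justify descent for the square.

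Two concrete slips to fix. First, $H^{0,0}_\CM(D)\simeq\BQ^{\pi_0(D)}=\BQ$ (the dual graph $\Gamma$ is connected because $S$ is normal), not $\BQ^V$; correspondingly $\mathrm{coker}(\BQ\oplus\BQ\to\BQ^V)=\BQ^V/\BQ$ has dimension $|V|-1$ and is not $\tilde{H}^0(\Gamma,\BQ)$, so the degree-$0$ portion of your long exact sequence, as literally written, would output a nonzero $H^{1,0}_\CM(S)$ whenever the resolution graph has more than one vertex. The correct input is the one you state at the end of the same paragraph, $H^{n,0}_\CM(D)\simeq H^n(\Gamma,\BQ)$, and with that used consistently the sequence does give $H^{0,0}_\CM(S)\simeq\BQ$, $H^{1,0}_\CM(S)=0$, $H^{2,0}_\CM(S)\simeq H^1(\Gamma,\BQ)$ and zero otherwise; so this is an internal inconsistency rather than a failure of the method, but it must be cleaned up. Second, your fallback justification that higher \'etale cohomology of $\BQ$ vanishes ``on any scheme by a trace argument'' is false: for a nodal curve one has $H^1_\et(-,\BQ)\simeq\BQ$, and indeed the point of the proposition is that weight-zero motivic cohomology detects exactly such classes on the non-regular $S$. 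What is actually needed (and what the paper uses) is the vanishing for a \emph{normal} scheme: on a normal scheme the \'etale cohomology of $\BQ$ agrees with Zariski cohomology of the constant sheaf on an irreducible space, which is flasque, whence $D(\Sm/S)(\BQ_S,\BQ_S[n])=0$ for $n>0$.
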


\begin{proof}
The last statement comes from the fact that the \'etale cohomology of a normal scheme with $\BQ$-coefficients is trivial. So we concentrate on the first. For $n\leq 0$, the result follows from \ref{prop:mot_coh_0}, so we assume $n>0$.

We have the cartesian diagram of schemes:
\[
\xymatrix{
U\ar@{=}[d] \ar[r]^{\tilde{\jmath}} & \tilde{S} \ar[d]^{\pi} & D \ar[l]_{\tilde{\imath}} \ar[d]^{\pi_p}\\
U \ar[r]^{j} & S & p \ar[l]_{i} 
}
\]
Localisation yields the long exact sequence:
\[
\xymatrix@C=1em{
\DA(S)(\BQ_S,\BQ_S[n-1])\ar[r] & \DA(U)(\BQ_U,\BQ_U[n-1]) \ar[r] & \DA(p)(\BQ_p,i^!\BQ_S[n])\ar[r] & \DA(S)(\BQ_S,\BQ_S[n])\ar[d] \\
&  & & \DA(U)(\BQ_U,\BQ_U[n]).
}
\]
By Proposition~\ref{prop:mot_coh_0} (using that $U$ is regular, and that $\pi_{0}(S)\simeq \pi_{0}(U)$), this yields an isomorphism $\DA(p)(\BQ_p,i^!\BQ_S[n])\simeq \DA(S)(\BQ_S,\BQ_S[n])$.

Write $ \{D_v\}_{v\in V}$ for the set of irreducible components of $D$ and $p_e$ for the intersection point $D_v\cap D_{v'}$ for $e=vv'\in E$. We set $Z=\bigcup_{e\in E}\{p_e\}$ and $\mathring{D}=D\setminus Z$. Write $k:\mathring{D}\rightarrow D$, $l:Z\rightarrow D$. Localisation gives a distinguished triangle
\[
l_*(\tilde{\imath}\circ l)^!\BQ_{\tilde{S}}\rightarrow \tilde{\imath}^!\BQ_{\tilde{S}}\rightarrow k_*(\tilde{\imath}\circ k)^!\BQ_{\tilde{S}}\stackrel{+}{\rightarrow}.
\]
By the relative purity theorem for $\DA$ (see \cite[1.6.1]{Ayoub_these_1} and \cite[Corollaire 3.10]{Ayoub_Etale}) applied to the regular immersions $\tilde{\imath}\circ l$ and $\tilde{\imath}\circ k$, this triangle takes the form:
\[
l_*\BQ_{Z}(-2)[-4]\rightarrow \tilde{\imath}^!\BQ_{\tilde{S}}\rightarrow k_*\BQ_{\mathring{D}}(-1)[-2]\stackrel{+}{\rightarrow}.
\]
So we get the exact sequence:
\[
 \DA(Z)(\BQ_Z,\BQ_Z(-1)[n-2])\rightarrow \DA(D)(\BQ_D,\tilde{\imath}^!\BQ_{\tilde{S}}[n])\rightarrow  \DA(\mathring{D})(\BQ_{\mathring{D}},\BQ_{\mathring{D}}(-2)[n-4])
\]
By Proposition~\ref{prop:mot_coh_<}, the groups on the left and on the right are zero for all $n\in \BZ$, so we conclude that $\DA(D)(\BQ_D,\tilde{\imath}^!\BQ_{\tilde{S}}[n])=0$ for all $n\in \BZ$.

 Now, the fact that $\pi_U$ is an isomorphism, colocalisation and base change for immersions (see \cite[1.4.6]{Ayoub_these_1}) implies that $\Cone(i^!\BQ_S\rightarrow \pi_{p,*}\tilde{\imath}^!\BQ_{\tilde{S}})\simeq \Cone(\BQ_p\rightarrow \pi_{p,*}\BQ_D)$. Combining with the previous result, we get that for all $n\in\BZ$:
\[
\DA(S)(\BQ_S,\BQ_S[n])\simeq \DA(p)(\BQ_p,\Cone(\BQ_p\rightarrow \pi_{p,*}\BQ_D)[n-1]))\simeq \DA(p)(\BQ_p,\pi_{p,*}\BQ_D[n-1])
\]
(where the last isomorphism follows as $n>1$). 

Using Cech descent for closed covers and Proposition~\ref{prop:mot_coh_<}, it is then easy to see that this last group is isomorphic to $\BQ$ if $n=0$ (note that $\Gamma$ is connected by normality of $S$), isomorphic to $H^1(\Gamma,\BQ)$ if $n=1$, and $0$ otherwise.

\end{proof}

\bibliography{bibthese}
\bibliographystyle{alpha}


\end{document}